\newtheorem{thm}{Theorem}[section]
\newtheorem{lem}[thm]{Lemma}
\newtheorem{prp}[thm]{Proposition}
\newtheorem{cor}[thm]{Corollary}
\newtheorem{dfn}[thm]{Definition}
\newtheorem{cnj}[thm]{Conjecture}
\newtheorem{que}[thm]{Question}
\theoremstyle{definition}
\newtheorem{notation}[thm]{Notation}
\newtheorem{construction}[thm]{Construction}
\newtheorem{example}[thm]{Example}
\newtheorem{remark}[thm]{Remark}
\theoremstyle{plain}
\newcommand{\rem}[1]{}
\newcommand{\gap}[1]{{\color{blue} #1}}
\newcommand{\C}{\mathbb{C}}
\newcommand{\F}{\mathbb{F}}
\newcommand{\K}{\mathbb{K}}
\newcommand{\N}{\mathbb{N}}
\newcommand{\Q}{\mathbb{Q}}
\newcommand{\R}{\mathbb{R}}
\newcommand{\Z}{\mathbb{Z}}
\newcommand{\HH}{{\mathrm{H}}}
\newcommand{\frakm}{{\mathfrak{m}}}
\newcommand{\calA}{{\mathcal{A}}}
\newcommand{\calC}{{\mathcal{C}}}
\newcommand{\calF}{{\mathcal{F}}}
\newcommand{\calG}{{\mathcal{G}}}
\newcommand{\calH}{{\mathcal{H}}}
\newcommand{\calO}{{\mathcal{O}}}
\newcommand{\calP}{{\mathcal{P}}}
\newcommand{\calV}{{\mathcal{V}}}
\newcommand{\bbA}{{\mathbb{A}}}
\newcommand{\bbK}{{\mathbb{K}}}
\newcommand{\bfG}{{\mathbf{G}}}
\newcommand{\bfH}{{\mathbf{H}}}
\newcommand{\what}[1]{\widehat{#1}}
\newcommand{\veps}{\varepsilon}
\newcommand{\vphi}{\varphi}
\newcommand{\idealof}{\unlhd} % \vartriangleleft
\newcommand{\suchthat}{\,:\,}
\newcommand{\where}{\,|\,}
\newcommand{\quo}[1]{\overline{#1}}
\newcommand{\leftmod}{{\setminus}}
\newcommand{\Trings}[1]{\left< #1 \right>}
\newcommand{\trings}[1]{\langle {#1} \rangle}
\newcommand{\floor}[1]{\lfloor {#1} \rfloor}
\newcommand{\ceil}[1]{\lceil {#1} \rceil}
\DeclareMathOperator{\Aut}{Aut} %
\DeclareMathOperator{\Char}{char} %
\DeclareMathOperator{\coker}{coker} %
\DeclareMathOperator{\End}{End} %
\DeclareMathOperator{\Hom}{Hom} %
\DeclareMathOperator{\id}{id} %
\DeclareMathOperator{\im}{im} %
\DeclareMathOperator{\ord}{ord} %
\DeclareMathOperator{\rank}{rank}
\DeclareMathOperator{\Spec}{Spec} %
\DeclareMathOperator{\supp}{supp} %
\newcommand{\nGL}[2]{\mathrm{GL}_{#2}({#1})}
\newcommand{\nSL}[2]{\mathrm{SL}_{#2}({#1})}
\newcommand{\nSU}[2]{\mathrm{SU}_{#2}({#1})}
\newcommand{\nMat}[2]{\mathrm{M}_{#2}(#1)}
\newcommand{\uSL}{{\mathbf{SL}}}
\newcommand{\uSp}{{\mathbf{Sp}}}
\newcommand{\uSU}{{\mathbf{SU}}}
\newcommand{\nGm}[1]{{\mathbf{G}}_{\mathbf{m},{#1}}}
\newcommand{\dirlim}{\underrightarrow{\lim}\,}
\newcommand{\invlim}{\underleftarrow{\lim}\,}
\newcommand{\units}[1]{{#1^\times}}
\numberwithin{equation}{section} % equations numbers include the section number
\renewcommand{\paragraph}{%
  \@startsection{paragraph}{4}%
  {\z@}{1.25ex \@plus 1ex \@minus .2ex}{-1em}%
  {\normalfont\normalsize\bfseries}%
}
\DeclareMathOperator{\res}{res} % restriction maps
\DeclareMathOperator{\Sh}{Sh}
\newcommand{\from}{\leftarrow}
\newcommand{\aug}[1]{{#1}_+} % augmented sheaf associated to a group
\newcommand{\Ham}{{\mathrm{Ham}}} % Hamming norm
\newcommand{\schoose}[2]{{\textstyle {{#1} \choose {#2}}}} % text-style choose
\newcommand{\ws}[1]{\|{#1}\|} % weighted support norm
\newcommand{\nws}[1]{\|{#1}\|_{\Ham}} % Hamming norm
\newcommand{\cupp}{\cup} % cup product
\newcommand{\lMod}[1]{{#1}\textrm{-}{\mathrm{Mod}}}
\newcommand{\topl}[1]{{#1}^\circ} % topological space associated to a simplcial complex
\newcommand{\smp}[1]{{#1}^\triangle} % sheaf on X associated to a sheaf on topl{X}.
\newcommand{\Comm}[1]{{{#1}\textrm{-}\mathsf{Alg}}}
\newcommand{\catGrp}{{\mathsf{Grp}}}
\begin{document}
\title{
On Good $2$-Query Locally Testable Codes from Sheaves on High Dimensional Expanders
%(On) Good $2$-Query Locally Testable Codes from Sheaves on Simplicial Complexes
}

\date{}

\author{Uriya A.\ First \\
Department of Mathematics\\
University of Haifa
\and
Tali Kaufman\\
Computer Science Department\\ 
Bar-Ilan University}

%%%%%%%%%%%%%%%%%%%%%%%%%%%%%%%%%%%%%%%%%%%%%%%%%%%%%%%%%%%%%%%%%%%%

\maketitle

\begin{abstract}

We expose a strong connection between good $2$-query locally testable codes (LTCs) and high dimensional expanders. 
Here,
an LTC is called  good if it has constant rate and linear distance. Our emphasis in this work is on LTCs testable  with only $2$ queries. These are are harder to construct than general LTCs, and are of particular interest to theoretical computer science. 

The connection we make between $2$-query LTCs and high dimensional expanders is done by introducing a new object called a {\em sheaf} that is put on top of a high dimensional expander. Sheaves are vastly studied in topology. Here, we introduce sheaves on simplicial complexes. Moreover, we define a notion of an {\em expanding sheaf} that has not been studied before. 

We present a framework to get good infinite families of $2$-query LTCs from expanding sheaves on high dimensional expanders, utilizing towers of coverings of these high dimensional expanders. 
Starting with a high dimensional expander and an expanding sheaf, 
our framework produces an infinite 
family of codes admitting a $2$-query tester.
If the initial sheaved high dimensional expander
satisfies some  conditions,
which can be checked in constant time,
then these codes form a family of good $2$-query LTCs.

We  give candidates for sheaved high dimensional expanders
which can be fed into our framework, in the form of an iterative process
which (conjecturally) produces such candidates given a high dimensional
expander and a special auxiliary sheaf. (We could not
verify the prerequisites of our framework for these candidates 
directly because of computational limitations.)
We analyze this process experimentally
and heuristically, and identify some  properties 
of the fundamental group of the high dimensional expander at hand
which
are sufficient (but not necessary) to get the desired sheaf, and consequently
an  infinite family of good $2$-query LTCs.

\end{abstract}

%\setcounter{tocdepth}{2}
%\tableofcontents

%\setcounter{section}{-1}
\setcounter{page}{1}
\newpage

\tableofcontents
\newpage

\section{Introduction}
\label{sec:intro}

%\subsection{Overview}

\paragraph{Locally Testable Codes.}

A locally testable code (LTC) is an error correcting code
admitting a randomized algorithm --- called a \emph{tester} --- which, given access to a word,
can decide with high probability
whether it is close to a codeword or not
by querying just a few 
(i.e.\ $O(1)$) of its letters.
More formally, the tester must accept all codewords,
and the probability of rejecting a word outside the code
is at least proportional to its Hamming distance from the code.
%a formal definition is given below.
Thus, upon transmitting a codeword along a noisy channel, the receiver can  
probe just a few letters to assess whether the codeword
was significantly corrupted or not, and decide accordingly whether to 
decode it, or ask   for retransmission.
The probability of rejecting a   word which is very far from the code
(i.e.\ of relative Hamming distance $\geq \eta$)
is the called ($\eta$-)soundness of the LTC.
%An LTC is \emph{good} if it has linear distance and rate.

LTCs also play a major role in the construction of  probabilistically
checkable proofs (PCPs), as   almost all known PCPs include them as building blocks.
The length of such PCPs is related to various properties
of the LTC, e.g., its distance, rate and the efficiency of the the testing;
see 
\cite{Goldreich_2011_locally_testable_codes_proofs} for a survey.

A family of LTCs is called \emph{good} if the codes in that family have constant rate and linear distance.

\paragraph*{$\mathbf{2}$-Query LTCs.}

A subclass of  LTCs of particular importance is the $2$-query LTCs,
i.e., LTCs admitting a tester probing just two letters;
the alphabet size may be large.
Such LTCs admit an even stronger connection to PCPs, and also to the Unique Games Conjecture (UGC).
However, they are also known to be 
somewhat constrained.
Indeed, by  \cite{BenSasson_2003_bounds_2query_codes},
there are no $2$-query LTCs with linear distance
and constant rate on a binary alphabet, and likewise
for \emph{linear}  
$2$-query LTCs on any finite field alphabet. 
See \cite{Kol_2016_bounds_two_query_LTCs,KolR12} for further restrictions.

\paragraph*{Some Notable Constructions of LTCs.}
LTCs are generally difficult to construct for the reason that
random low-density parity check (LDPC) codes  are usually not locally
testable. Rather, the parity checks should be designed
to admit   redundancy. $2$-query LTCs are even harder to come by.

Some notable examples of LTCs  include Reed--Muller codes, which have
linear distance and polylogarithmic message length 
\cite{Friedl_1995_total_degree_tests}, \cite{Rubinfeld_1996_characterizations_of_polys},
and the LTCs of Ben-Sasson--Sudan \cite{BenSasson_2008_short_PCPs_polylog_query}
and Dinur \cite{Dinur_2007_PCP_theorem_gap_amplification},
which have linear distance and inverse poly-logarithmic  rate.
It was further studied how the group under which a code is invariant affects its local testability. With respect to that, it was shown that affine invariant codes with the so called ``single orbit'' property are locally testable \cite{KaufmanS08-affine-LTC}. 
Dinur, Evra, Livne, Lubotzky and Mozes \cite{Dinur_2021_locally_testable_codes_preprint}
and Panteleev and Kalachev \cite{Panteleev_2021_good_quantum_LTCs}
have recently and independently of each other constructed  infinite families of LTCs with 
constant (large) query size,   linear distance and constant rate.\footnote{
	An earlier version of this paper  was written and circulated independently of these works.
}
(Panteleev and Kalachev also constructed good LDPC quantum codes in {\it op.\ cit.})

\paragraph*{Local-Testability Follows From an Underlying High-Dimensional Expander.}

The past decade had seen an emerging trend of using \emph{high-dimensional expanders} for
constructing locally testable codes and other property testers,
e.g., 
\cite{Kaufman_2014_high_dim_expanders_property_testing},
\cite{Kaufman_2016_isoperimetic_inequalities}, 
\cite{Evra_2016_cosystolic_expanders_arxiv_version}, 
\cite{Dikstein-LTC},
\cite{Kaufman_2021_amplified_local_testability_preprint}
to name just a few. 
Loosely speaking, these
works share a common theme: one uses a $2$-dimensional object, e.g., a $2$-dimensional simplicial
complex, or a $3$-layer partially ordered set, in order
to define a code. One then relates   expansion properties of the object
at hand to the testability of the code, making use of the $3$ layers of the object
(vertices, edges and triangles in the case of a simplicial complex).
See \cite{Kaufman_2021_amplified_local_testability_preprint} for an aximatization 
of this approach.

Despite the extensive research, so far, 
the prototypical high dimensional expanders did
not give rise to LTCs with linear distance and constant rate. 
%Rather, they 
%have only been able to 
%better explain known examples of LTCs.
It was particularly expected that 
the \emph{Ramanujan complexes} of Lubozky, Samuels and Vishne 
\cite{Lubotzky_2005_explicit_constructions_of_Ramanujan_complexes}
(see also Li \cite{Li_2004_ramanujan_hypergraphs}  and Sarveniazi \cite{Sarveniazi_2007_explicit_Ramanujan_hypergraphs}),
which are often considered  as the 
prototype of   high dimensional expanders, should give rise to LTCs.
For comparison, the recent LTCs constructed in \cite{Dinur_2021_locally_testable_codes_preprint} and 
\cite{Panteleev_2021_good_quantum_LTCs}
use special square complexes, which do not seem to admit higher-dimensional
analogues. 

\subsection{Main Contributions}

This work concerns with the construction of $2$-query
LTCs 
%and other low-query LTCs
basing on high-dimensional expanders, e.g.\ Ramanujan complexes.
Our contributions are the following.

%This work  presents the following contributions to subject of locally testable codes.

\paragraph{A Framework for Constructing Good 2-Query LTCs from Expanding Sheaves on High Dimensional Expanders.} We present a general framework ---
called the \emph{tower paradigm} --- for constructing 
$2$-query LTCs
from high dimensional expanders, e.g.\ Ramanujan complexes, by introducing a new piece of data: a \emph{sheaf} on the 
expander at hand.  

In more detail, our framework takes as input (constant sized)  initial data consisting of a ``small'' 
high dimensional expander and a sheaf. We also assume that the ``small'' high dimensional
expander admits an infinite family of \emph{coverings}, which is the case for many
known high-dimensional expanders.
Using the constant-sized initial data and the coverings, we construct an infinite family of codes (with length
tending to $\infty$) admitting a natural $2$-query tester.
We  then show that if the constant sized initial data satisfies a list of conditions,
which can be verified by a finite (constant sized) computation,
then the entire infinite family of codes is a family of $2$-query LTCs with linear distance and constant rate;
see  \S\ref{subsec:tower-paradigm}
for more details and Theorem~\ref{TH:tower-paradigm} for a precise statement.

This result consists of two components of independent interest.
The first is  a new \emph{local-to-global principle} 
which allows us to show that a $2$-query code arising from an \emph{expanding sheaf} on a high-dimensional expander
is locally testable and has linear distance by means of local conditions (Theorem~\ref{TH:cosystolic-exp-from-links}, 
Corollary~\ref{CR:cse-from-links-dim-2}, Remark~\ref{RM:main-ineqs-for-2dim-case-with-Oppenheim}); 
this is a vast generalization of  
\cite{Kaufman_2016_isoperimetic_inequalities},
\cite{Evra_2016_cosystolic_expanders_arxiv_version}, 
\cite{Kaufman_2018_cosystolic_expanders}  
which moreover works under milder assumptions.
The second is a \emph{rate conservation} method (Theorem~\ref{TH:rate-conservation}), used to maintain a constant 
rate among the infinite family of codes we construct.

\paragraph{Examples of $\mathbf2$-Query LTCs With Linear Distance and Conjectural Constant Rate.} 

We construct candidates for the constant-sized initial data 
--- consisting of a   (constant sized) expander and an expanding sheaf ---
required for our framework. 
To that end, we first construct  
sheaved high-dimensional expanders   fulfilling 
the   conditions guaranteeing testability and linear distance.
Then, we present an iterative process which takes such a sheaf and
modifies it to create a new sheaf which,
conjecturally, also satisfies the conditions guaranteeing a constant rate.
This gives  rise to explicit infinite
families   of $2$-query LTCs with linear distance and conjectural constant rate.
See \S\ref{subsec:explicit-example}
for an explicit example of how such a family of codes might look like, and
Theorem~\ref{TH:main-candidates} 
and Remark~\ref{RM:main-candidates} for  precise statements.
While verifying that each such family has constant rate could be done in a finite
(constant sized) computation involving the initial data, doing so is presently not
possible due 
to computational limitations. 

%Starting with    a high-dimensional expander and a sheaf fulfilling 
%the   conditions guaranteeing testability and linear distance (which we also construct),
%and modifies the sheaf so that it, conjecturally,
%also satisfies the conditions guaranteeing linear rate.
%This gives  rise to explicit infinite
%families   of $2$-query LTCs with linear distance and conjectural linear rate  
%(see  \S\ref{subsec:explicit-example} and Theorem~\ref{TH:main-candidates}).

The iterative process   works by artificially
creating or eliminating \emph{cohomology classes}
(in the cohomology of the sheaf); see \S\ref{subsec:quotient}.
We analysed it using computer simulations, and 
identified conditions involving the fundamental group of the expander and the sheaf
to modified
which, once met,   guarantee that the process outputs a modified sheaf satisfying all the requirements (Conjecture~\ref{CJ:dimension-of-E2}).
We also justify these    conditions with a heuristic theoretical argument.
They are not necessary for the success of the process, though.

We remark that our $2$-query LTCs do not violate the restrictions
proved in \cite{BenSasson_2003_bounds_2query_codes} since they are not linear
and use a very large alphabet $\Sigma=\F_2^m$.
If we treat   each letter in the alphabet $\Sigma$ as an $m$-letter string in $\F_2$,
then they  become linear codes over the alphabet $\F_2$. 
We further note that the soundness of the codes does not depend on the alphabet size.
In addition, our LTCs
are not \emph{lifted codes}, in contrast to the presently known LTCs. 
%examples, our LTCs are not \emph{lifted codes}.

\paragraph{Some Implications to Quantum Codes.} 
Our framework 
can also be used to construct infinite families of other low-query LTCs (non-linear, with large alphabet)
and (linear) \emph{quantum CSS codes} whose $X$-side is locally testable and has linear distance.
%Our methods  also give  rise to infinite families of \emph{quantum CSS codes} whose $X$-side is locally testable and has linear distance. 
Our rate conservation method applies in these contexts, but securing the required conditions on the initial data is still out of reach.

\subsection{Conceptual and Methodological Contributions}

%In addition to the above concrete results, we also make the following conceptual and
%methodological contributions.

\paragraph{Exposing a Connection Between $\mathbf 2$-Query LTCs and  High Dimensional Expansion.} 
Our results 
expose a strong relation between $2$-query LTCs and high dimensional expanders. Connections between general local testability and high-dimensional expansion was studied previously, but in this work, we show that high dimensional expansion is moreover related to a stronger notion of local testability, namely, to $2$-query locally testable codes. 
This connection was already glimpsed on in the work
of
the second author 
%Kaufman
and Lubotzky \cite{Kaufman_2014_high_dim_expanders_property_testing}
under the broader connection between high dimensional expansion (coboundary expansion) and general local testability.
Alas,  the only $2$-query LTC   obtained in that work {had}   to have  two codewords (a sting of $1$s and a string of $0$s), and so it was not regarded as a true LTC.
In this work we show that by using sheaves, we can (conjecturally) get good $2$-query LTCs from high dimensional expanders, namely, the intrinsic barrier that high dimensional expanders can not give $2$-LTCs with satisfactory rate is overcome. 

\paragraph{Introducing Sheaves on Simplicial Complexes and Expanding Sheaves.}
Loosely speaking, a sheaf is a layer of linear algebra data that is put on top of a simplicial complex. Sheaves are vastly studied in topology and algebraic geometry. Here, we introduce a discrete 
variation of the topological definition: sheaves on simplicial complexes. Moreover, we define a notion of an {\em expanding sheaf} that has not been studied before in topology, nor elsewhere. 
%We already noted the usefulness of this new concept to getting $2$-query LTCs.

\paragraph{Utilizing Coverings of High Dimensional Expanders as a Way to  Reduce Obstructions
and Getting New Examples.} 
We use \emph{coverings} of high dimensional expanders,
and more specifically \emph{towers of coverings}, 
both in our framework for getting good $2$-query LTCs
and in finding initial data to feed into the framework. 

When establishing the tower paradigm, we use coverings to   obtain
new examples of expanding sheaves from existing ones, generating
infinitely many examples from a single base example. 
Specifically, given a ``big'' simplicial complex covering a ``small'' one
and a sheaf on the small complex, we can construct a sheaf on
the big complex by \emph{pulling back} the sheaf on the small complex along the covering map.
Pullback of sheaves  is a well-known construction in topology. At the basis of our framework
%for constructing $2$-query LTCs
lies the observation that the pullback of a sheaf inherits many properties, e.g., local  expansion conditions, 
from the original sheaf.

Our second use of coverings is in applying our framework, as they allows us to reduce
obstructions. In more detail, the conditions on the initial data for our framework
which guarantee constant rate depend on the dimension of the first \emph{cohomology space}.
Loosely speaking, the larger it is, the further away we are from satisfying these conditions.
We use coverings together with the  \emph{pushforward} construction from topology 
to create sheaves of dimension that is significantly larger
than the dimension of the obstructing cohomology space.
For certain coverings of high dimensional expanders arising form number theory, 
this approach results in expanding sheaves of arbitrarily large dimension,
but such that the obstruction to rate conservation 
remains constant,
ultimately becoming negligible in dimension to the sheaf. It is those sheaves that we feed into our iterative
process, which (conjecturally) eliminates the relatively small obstruction.

\subsection{Acknowledgements}

We are grateful to Shmuel Weinberger for some useful conversations.

The computer simulations discussed in this work 
were written by the first author in {\tt Python},
and 
were executed on the Hive computer cluster at the University of Haifa, which is partly funded by ISF grant 2155/15.
We thank Eyal Confeld for writing earlier simulations    in {\tt Mathematica}.

Tali Kaufman's research is supported by an ERC grant and an ISF grant.

\section{Overview of The Main Results}
\label{sec:overview}

%\subsubsection{Structure of this Introduction.} 

We now survey the main results of the paper.
In  \S\ref{subsec:hdx}, we give some relevant background on high dimensional expanders. 
We then introduce {\em sheaves on simplicial complexes}
in \S\ref{subsec:intro-sheaves}, and
the important notion of {\em expanding sheaves} in \S\ref{subsec:expanding-sheaves}.
Next, in \S\ref{subsec:coverings-intro}, we discuss   coverings of high dimensional expanders and highlight their important role in our framework for obtaining good $2$-query LTCs from expanding sheaves on high dimensional expanders. The framework itself,
called the \emph{tower paradigm},   is then
presented in  \S\ref{subsec:tower-paradigm}.
In \S\ref{subsec:finding-int-data-intro}, we present a method
which conjecturally produces   the constant-sized initial data (consisting
of a sheaved high-dimensional expander) required for our framework; here
we use coverings once more. 
Lastly, in \S\ref{subsec:explicit-example}, we give an explicit  
family of $2$-query LTCs
which arises from our framework\gap{}.
This family has linear distance and we conjecture that,
for an appropriate choice of parameters, it has constant rate.

The outline of the paper is given in \S\ref{subsec:paper-organization}.

\subsection{High Dimensional Expanders}
\label{subsec:hdx}

Of the various flavors of high dimensional expansion 
which have emerged in the past two decades --- all of which generalize expansion in graphs ---
the two   most   relevant
for our purpose are \emph{coboundary expansion} and \emph{cosystolic expansion}. 
%We briefly recall
%them, and explain their close relation to testability.

\paragraph*{Some History.}
Coboundary expansion  originated in
the works of
Linial--Meshulam \cite{Linial_2006_homological_connectivity}
and Meshulam--Wallach \cite{Meshulam_2009_homological_connectivity}
on the cohomology of random simplicial complexes, and the work
of Gromov \cite{Gromov_2010_expanders_and_top_II} on the minimum amount of overlapping
forced by mapping
a simplicial complex to $\R^n$. Cosystolic expansion  is
a more relaxed version of coboundary expansion
developed in   \cite{Dotterrer_2018_topological_overlap},
\cite{Kaufman_2016_isoperimetic_inequalities}, \cite{Evra_2016_cosystolic_expanders_arxiv_version}
in order to extend the reach of Gromov's methods.

The first connections between high dimensional expansion and property
testing were observed and studied in \cite{Kaufman_2014_high_dim_expanders_property_testing}.

\paragraph*{Cochains, Cocycles and Coboundaries.}

Let $X$ be a simplicial complex.\footnote{
	All simplicial complexes are assumed to be finite, unless indicated otherwise.
} We write $X(i)$ to denote the set of $i$-dimensional faces
of $X$, e.g., $X(0)$, $X(1)$ and $X(2)$ stand for the vertices, edges and triangles of $X$, respectively.
The simplicial complex $X$ also has a single empty face, of dimension $-1$.

Let $i\in \N\cup\{ -1,0\}$.
Recall that an \emph{$i$-cochain} on $X$ with coefficients in $\F_2$
is an assignment of an element in $\F_2$ to each $i$-face of $X$, i.e., a vector $f\in \F_2^{X(i)}$.
We set $C^i=C^i(X,\F_2)=\F_2^{X(i)}$ and 
write the $x$-coordinate of $f\in C^i$ as $f(x)$.  
As usual, the \emph{$i$-th coboundary map} $d_i:C^i\to C^{i+1}$ is defined by
\begin{equation}\label{EQ:classical-coboundary}
(d_if)(y)=\sum_{\text{$x$ is an $i$-face of $y$}} f(x) 
\end{equation}
for all $f\in C^i$, $y\in X(i+1)$.
A standard computation shows that $d_{i+1}\circ d_i=0$.
The spaces of \emph{$i$-coboundaries} and \emph{$i$-cocycles} are now defined as
\begin{equation}\label{EQ:Bi-Zi-def}
B^i=B^i(X,\F_2)=\im d_{i-1}
\qquad\text{and} 
\qquad
Z^i=Z^i(X,\F_2)=\ker d_i,
\end{equation}
respectively,
where $d_{-2}=0$ by convention.
We have $B^i\subseteq Z^i\subseteq C^i$ because $d_i\circ d_{i-1}=0$, 
and the quotient space $Z^i/B^i$ is the  \emph{$i$-th cohomology}
space $\HH^i(X,\F_2)$.\footnote{
	What we have defined here is the \emph{reduced} cohomology of $X$ with $\F_2$-coeffients,  
	denoted $\tilde{\HH}^i(X,\F_2)$ later in the paper and elsewhere. The ordinary, non-reduced,
	cohomology is defined in the same manner with the difference
	that the empty face of $X$ is ignored, i.e., one sets $C^{-1}=0$ and $d_{-1}=0$.
}

\paragraph*{Expansion of Cochains is a Form of Local Testability of The Cocycle Code.}

Given a simplicial complex $X$,
we may regard the $i$-cocycles $Z^i=Z^i(X,\F_2)$ as a linear code inside $C^i=C^i(X,\F_2)=\F_2^{X(i)}$.
This code is called the \emph{$i$-cocycle code}, and it admits a natural $(i+2)$-query tester: given $f\in C^i$, choose a face $y\in X(i+1)$ uniformly at random and
accept $f$ if 
$(d_if)(y)=0$ (cf.\ \eqref{EQ:classical-coboundary}).
By definition,\footnote{
	Definitions concerning codes are recalled in Section~\ref{sec:ltc-and-css}.
} this tester makes $Z^i$ into an 
an \emph{$\veps$-testable code}  inside $C^i$  if and only if
%\begin{equation}\label{EQ:cbe-F2-first}
%\displaystyle{\frac{\|f\|_{\Ham}}{| X(i+1)|} \geq \veps\cdot \frac{d_{\Ham}(f,Z^i)}{| X(i)|}=
%\veps\cdot \frac{\min\{\|f'\|_{\Ham}\where f'\in f+Z^i\}}{| X(i)|}}
%\qquad\forall \,f\in C^i, 
%\end{equation}
\begin{equation}\label{EQ:cbe-F2-first}
\frac{\|d_i f\|_{\Ham}}{d_{\Ham}(f,Z^i)}\geq \veps
%\displaystyle{\frac{\norham{d_if}}{| X(i+1)|} \geq \veps\cdot \frac{d_{\Ham}(f,Z^i)}{| X(i)|}=
%\veps\cdot \frac {\min\{\|f'\|_{\Ham}\where f'\in f+Z^i\}}{| X(i)|}}
\qquad\forall \,f\in C^i-Z^i, 
\end{equation}
where $\|\cdot\|_{\Ham}$ and $d_{\Ham}$ denote the normalized Hamming norm and distance
(in $\F_2^{X(i)}$ or $\F_2^{X(i+1)}$), respectively.
As for the distance of $Z^i$, since $B^i$ typically
contains vectors with small support (unless $i=0$), the best we could for 
is the existence of $\delta>0$ such that
\begin{equation}\label{EQ:cbe-F2-second}
\|g\|_{\Ham}\geq \delta \qquad\forall\,g\in Z^i-B^i. 
\end{equation}

Conditions  \eqref{EQ:cbe-F2-first} and~\eqref{EQ:cbe-F2-second} can also 
be viewed as statements concerning the expansion of $i$-cochains under $d_i$.
When both of these conditions hold, $X$ is said to be 
an \emph{$(\veps,\delta)$-cosystolic expander} in dimension $i$.\footnote{
	Warning: The cosystolic expansion considered
	later in this work and also in other sources   involves weights on the faces of $X$,
	which we have suppressed here for simplicity. See \S\ref{subsec:coboundary-exp} for details.
}
If moreover $Z^i=B^i$ (equivalently $\HH^i(X,\F_2)=0$),
then   $X$ is said to be an \emph{$\veps$-coboundary expander} in dimension $i$
(the parameter $\delta$ plays no role as $Z^i-B^i=\emptyset$).
Thus, $X$ is an $\veps$-coboundary expander if and only if the code $B^i\subseteq C^i$
is $\veps$-testable with respect to the natural tester.

\paragraph*{Expansion in Dimension 0: The Case of Graphs.}
Since the $0$-cocycle code $Z^0$ of a simplicial complex $X$
is determined  by its underlying graph, we might as well assume that $X$ is a graph.
In this case, if $f\in C^0(X,\F_2)$ has
support $A\subseteq X(0)$, then the support of $d_0 f$ is precisely
the set of edges leaving $A$. Note further
that $B^0\subseteq\F_2^{X(0)}$ consists of exactly two vectors, namely $(0,\dots,0)$ and $(1,\dots,1)$.
Consequently, $X$ is an   $\veps$-coboundary expander
in dimension $0$ if and only if $X$ is an $\veps$-expander in usual sense, i.e.,
\begin{equation}\label{EQ:graph-expansion}
\frac{|E(A,X(0)-A)|}{\min\{|A|,|X(0)-A|\}}\geq \veps\frac{|X(1)|}{|X(0)|}\qquad\forall\,\emptyset\neq A\subsetneq X(0).
\end{equation}

Similarly, a graph $X$ is an $(\veps,\delta)$-cosystolic expander in dimension $0$ if and only
if each connected component of $X$ is an $\veps$-expander
consisting of at least $\delta$-fraction of the vertices in $X$.

Shifting back our point of view to codes,
we also note
that   $X$ is
an $(\veps,\delta)$-cosystolic expander if and only if the code $Z^0\subseteq \F_2^{X(0)}$ is $\veps$-testable
with respect to its natural $2$-query tester
and has    relative distance $\geq \delta$.
On the other hand, the message length of $Z^0$ is meager --- it is the number of connected components of $X$,
thus bounded from above by $\frac{1}{\delta}$.

\paragraph*{Expansion in Higher Dimensions.}

In contrast to the case of $0$-cocycle codes, if $i>0$, then the code $Z^i=Z^i(X,\F_2)$ 
may have constant rate, but its distance
is typically small, becuase $B^i$ usually contains   vectors of small support. However, in this case, the 
code $Z^i$ can be enriched into a \emph{quantum CSS code}.
Moreover, if   $X$ is an $(\veps,\delta)$-cosystolic expander in dimension $i$,
then the $X$-side of this quantum CSS code
is $\veps$-testable and has relative
distane $\geq \delta$; see \cite{Evra_2016_cosystolic_expanders_arxiv_version}, or \S\ref{subsec:css} for a generalization.

\paragraph*{Intrinsic Barrier to Good `Ordinary' Cocycle Codes.}
The last two paragraphs demonstrate an intrinsic  difficulty in trying
to construct good LTCs from high dimensional expanders: either the rate or the distance are small. We will see below that \emph{sheaves} allow us to bypass this natural limitation. 

\medskip

%Before, we move to present sheaves we demonstrate an important schema of obtaining global "local testability" from local "local testability" in high dimensional expanders. This schema with the new notion of sheaves will both play a major role in our new framework of constructing good 2-LTCs from high dimensional expanders.

Before  we move to present sheaves, we recall an important method for obtaining ``global'' local testability from ``local'' local testability in the context
of cocycle codes coming from high dimensional expanders. A generalization
of this method to sheaves   will   play a major role in our new framework for constructing good 2-LTCs from high dimensional expanders.

\paragraph*{Local Local-Testability Implies Global Local-Testablity.}

We have seen that the 
testability of the $0$-cocycle code of $X$, i.e., the
cosystolic expansion of  $X$ in dimension
$0$, is determined directly by the expansion of its underlying graph.

For  higher dimensions, the most prominent method for proving that a simplicial complex
$X$ is a good cosystolic expander in a desired dimension  $i\in\{1,\dots,\dim X-2\}$ 
is a \emph{local-to-global
principle} established in \cite{Kaufman_2016_isoperimetic_inequalities} for
$i=1$ and \cite{Evra_2016_cosystolic_expanders_arxiv_version} in general.

Recall that the \emph{link} of a simplicial complex $X$ at a face $z\in X$
is $X_z=\{x-z\where z\subseteq x\in X\}$. If $z\neq\emptyset$, the link $X_z$ is
called a \emph{proper link} of $X$. 
The main result of \cite{Evra_2016_cosystolic_expanders_arxiv_version} states
that if each of the proper links $X_z$ is a  
good coboundary expander in a range of dimensions, and if the underlying
graph of $X$ is a sufficiently good expander, then $X$ is an $(\veps,\delta)$-coboundary
expander in dimension $i$  with $\veps$, $\delta$ depending
on the implicit expansion constants.
In fact, by Oppenheim's Trickling Down Theorem \cite[Theorem~1.4]{Oppenheim_2015_vanishing_of_cohomology},
we can replace the expansion condition on the underlying graph of $X$
with requiring that $X$ is connected  and all its proper links are sufficiently good coboundary expanders
in dimension $0$.
The main theorem of \cite{Evra_2016_cosystolic_expanders_arxiv_version} can therefore
be summarized as: good coboundary expansion at the links (informally called ``local'' local-testablity) implies  cosystolic expansion (informally called ``global'' local-testability).
Among our main results is a  generalization of this principle to \emph{sheaves}.

\subsection{Sheaves on Simplicial Complexes}
\label{subsec:intro-sheaves}

Loosely speaking,
a sheaf is a layer of linear-algebra data put on top of a simplicial complex.

We comment about the relation between our sheaves and related notions,
e.g.,  sheaves on topological spaces, Jordan and Livne's local
systems \cite[\S2]{Jordan_1997_Ramanujan_local_systems} and Friedman's sheaves on graphs
\cite{Friedman_2015_sheaves_on_graphs}
at the end.

\paragraph*{Sheaves on Graphs.}
Let $\F$ be a field. 
An $\F$-sheaf on  a graph $X$ consists of 
\begin{enumerate}[label=(\arabic*)]
	\item an $\F $-vector space $\calF(x)$ for every $x\in X(0)\cup X(1)$, and
	\item a  linear map $\res^{\calF}_{e\from u}:\calF(u)\to \calF(e)$
	for every edge $e \in X(1)$ and vertex $u\in X(0)$ with $u\subseteq e$.
\end{enumerate}
The maps $\res^{\calF}_{e\from u}$ are called \emph{restriction maps}.

\paragraph*{Examples.}
The most basic example of an $\F$-sheaf on a graph $X$ is obtained by taking $\calF(x)=\F$
for all $x\in X(0)\cup X(1)$ and setting all the restriction maps to be $\id_{\F}$.

\label{intro-ex:sheaf-exp-codes}
A more interesting  example that will be revisited later  can be constructed as follows:
Let $X$ be a $k$-regular graph. 
Given $v\in X(0)$, we write $E(v)$ to denote the set of edges containing $v$. 
For every   $v\in X(0)$,
choose  $n(v)\in \{0,1,\dots,k\}$
and an injective linear transformation $T_v:\F^{n(v)}\to \F^{E(v)}\cong \F^k$,
and write $C_v=\im T_v$. Using this data, we define an $\F$-sheaf on $X$ by setting
\begin{itemize}
	\item $\calF(v)=\F^{n(v)}$ for each vertex $v\in X(0)$,
	\item $\calF(e)=\F$ for each edge $e\in X(1)$, and
	\item $\res^{\calF}_{e\from v}=\mathrm{Proj}_e\circ T_v$ for every
	edge $e\in  X(1)$
	and vertex $v\subseteq e$, where $\mathrm{Proj}_e:\F^{E(v)}\to \F$ is the projection
	onto the $e$-coordinate.
\end{itemize}
We shall see below (\S\ref{subsec:expanding-sheaves}) 
that if all the $C_v$  are good codes, then
this example gives rise to a good \emph{$0$-cocycle code}.
In fact, this  
is a sheaf-theoretic variation on the famous expander codes of Sipser and Spielman
\cite{Sipser_1996_expander_codes}; their presentation
in \cite{Meshulam_2018_graph_codes_arxiv_version} demonstrates the similarity.
%However, unlike  is not a \emph{lifted code}.

\paragraph*{Sheaves on Simplicial Complexes.}
Sheaves on simplicial complexes are defined in the same manner as sheaves on graphs,
with the difference that one needs to impose an extra assumption.
Formally, an
$\F $-sheaf $\calF$ on $X$ consists of 
\begin{enumerate}[label=(\arabic*)]
	\item an $\F $-vector space $\calF(x)$ for every nonemtpy face $x\in X$, and
	\item a  linear map $\res^{\calF}_{y\from x}:\calF(x)\to \calF(y)$
	for every pair of nonempty faces $x,y\in X$ with $x\subsetneq y$,
%\footnote{
%	It is also common to set $\res^{\calF}_{x\from x}=\id_{\calF(x)}$.
%}
\end{enumerate}
subject to the requirement
$
\res^{\calF}_{z\from y}\circ \res^{\calF}_{y\from x}=\res^{\calF}_{z\from x}
$
whenever $x\subsetneq y\subsetneq z$.
This requirement is vacuous if $X$
is a graph, but is very restrictive if $\dim X>1$.
Figure~\ref{FG:sheaf} illustrates of the data of a sheaf on a $2$-dimensional simplicial
complex, the arrows representing the restriction maps; the extra requirement 
means that the diagram of vector spaces on the right commutes.

\begin{figure}[ht]
\caption{A simplicial complex $X$ (left) and the data of a sheaf $\calF$ on $X$ (right).}
\begin{center}
\includegraphics[height=3.5cm]{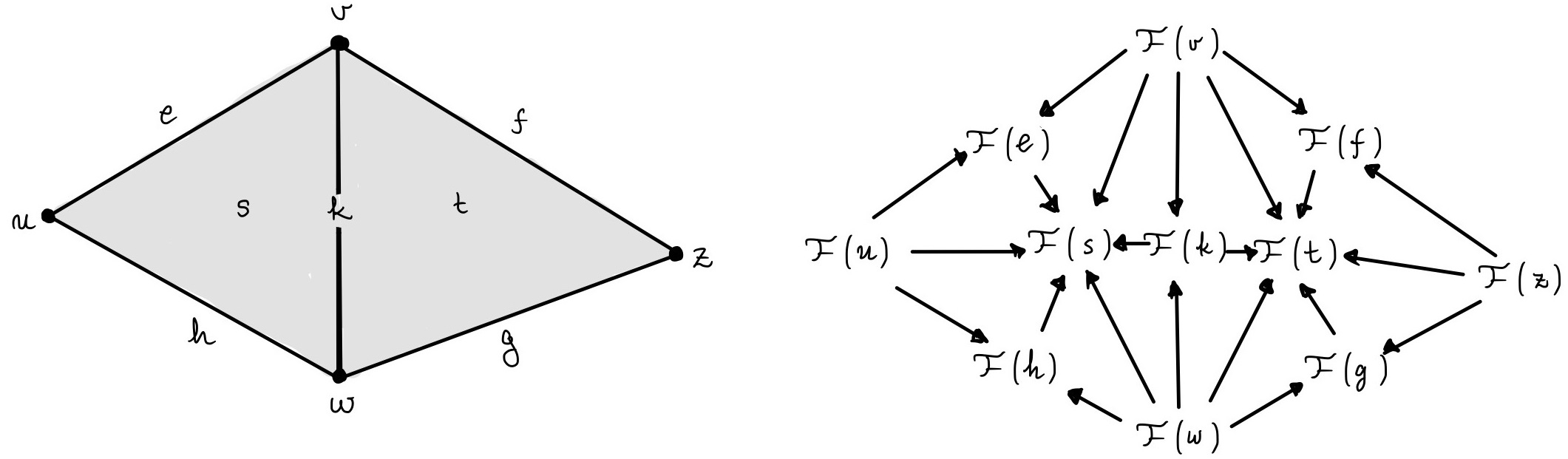}
\end{center}
\label{FG:sheaf}
\end{figure}

For the sake of simplicity, we henceforth  consider only $\F_2$-sheaves, and call them sheaves
for brevity. 

As with graphs, one can fix an $\F_2$-vector space $V$
and define    a sheaf $\calF_V$ on $X$ 
(denoted $V_X$
later on) 
by  taking $\calF_{V}(x)=V$
for every face $x\in X-\{\emptyset\}$
and setting all the restriction maps to be the identity. Such sheaves
are called \emph{constant}. More sophisticated examples will be considered (and needed) below.

%We will typically keep $\dim \calF:=
%\max\{\dim \calF(x)\where x\in X-\{\emptyset\}\}$ bounded while letting $X$ grow.

\paragraph*{Augmented Sheaves.}

It is sometimes convenient to modify the definition
of a sheaf on $X$ by requiring that $\calF(x)$ and $\res_{y\from x}^\calF$
are also defined when $x$ is the empty face; we call  this extended structure
an \emph{augmented sheaf}.
Two examples of this kind will be important for our discussion.

The first is $\calF_{\F_2}^+$, obtained by taking $\calF(x)=\F_2$
for all $x\in X$ (including the empty face) and setting all the restriction maps to be $\id_{\F_2}$.
Replacing $\F_2$ with a general $\F_2$-vector space $V$ gives   the \emph{constant augmented sheaf}
$\calF_V^+$ (also denoted $V_+$ later on).

The second example is obtained by restricting a sheaf $\calF$ on $X$ to a proper link. Formally,
given a nonempty $z\in X$, let $\calF_z$ denote the \emph{augmented} sheaf on $X_z$
defined by $\calF_z(x)=\calF(x\cup z)$ and $\res^{\calF_z}_{y\from x}=\res^{\calF}_{y\cup z\from x\cup z}$
for all $x,y\in X_z$ with $x\subsetneq y$.

\paragraph*{Cochains, Cocycles and Coboundaries}

Similarly to \S\ref{subsec:hdx}, given
a sheaf (resp.\ augmented sheaf) $\calF$
on a simplicial complex $X$ and
$i\in\N\cup \{0\}$ (resp.\ $i\in\N\cup\{0,-1\}$), we can construct 
vector spaces of
$i$-cochains, $i$-cocycles and $i$-coboundaries with coefficients in $\calF$. 
The only difference is that
we evoke the restriction maps of $\calF$ when defining $d_i$.
Specifically, put $C^i=C^i(X,\calF)=\prod_{x\in X(i)}\calF(x)$ and
define  $d_i:C^i\to C^{i+1}$ by
\begin{equation}\label{EQ:local-cocycle-cond}
(d_i f)(y)=\sum_{\text{$x$ is an $i$-face of $y$}} \res^{\calF}_{y\from x} (f(x))
\end{equation}
for every $f\in C^i$ and $y\in X(i+1)$;
cf.\ \eqref{EQ:classical-coboundary}.\footnote{
	In order to generalize this to  
	$\F$-sheaves with $\F$ a field  of characteristic not $2$, one  should be introduce
	signs to \eqref{EQ:local-cocycle-cond}; 
	see \S\ref{subsec:sheaf-coh}. 
} 
%We have $d_{i+1}\circ d_i=0$, and 
The vector spaces of $i$-coboundaries and $i$-cochains
are defined as in~\eqref{EQ:Bi-Zi-def} and denoted $B^i(X,\calF)$ and $Z^i(X,\calF)$,
respectively, and the $i$-th cohomology group of $\calF$ is $\HH^i(X,\calF)=Z^i(X,\calF)/B^i(X,\calF)$.

If we take $\calF$ to be the constant augmented sheaf $\calF^+_{\F_2}$,
then this recovers $C^i(X,\F_2)$, $Z^i(X,\F_2)$ and $B^i(X,\F_2)$ considered in \S\ref{subsec:hdx}.

\paragraph*{Related Notions to Sheaves.}

The sheaves we have defined here are inspired by \emph{sheaves on topological spaces},
which are ubiquitous to topology and algebraic geometry,
see \cite[Chapter~II]{MacLane_1994_intro_to_topos_theory}
or \cite{Iversen_1986_cohomology_of_sheaves}, for instance. In fact, 
our sheaves
can be reinterpreted as sheaves on certain topological spaces
(associated to the simplicial complex at hand)
in such  a  way that the cohomology spaces  remain  the same;
see Appendix~\ref{sec:comparison}.
In contrast, augmented sheaves and their cohomology do not fit nicely into this setting.

The \emph{local systems  on graphs} defined 
by Jordan and Livne \cite[\S2]{Jordan_1997_Ramanujan_local_systems}
can be viewed as $\R$-sheaves in which all the restriction maps are isomorphisms.
The cohomology theory developed in {\it op.\ cit.} then  agrees with ours.

Friedman's   sheaves on graphs  \cite{Friedman_2015_sheaves_on_graphs} 
are defined like our sheaves, but with the restriction maps going from the edges to the vertices.
From the perspective of our work, they should perhaps be called \emph{co-sheaves}, or sheaves valued
in the opposite category of $\F$-vector spaces. They admit a   \emph{homology} theory,
rather than a cohomology theory.

\subsection{Expanding  Sheaves}
\label{subsec:expanding-sheaves}

Having   introduced  sheaves on simplicial complexes,
which are discrete versions of the sheaves   commonly studied in topology,
we turn to   present  the   new concept of an \emph{expanding sheaf},
which      has not been previously studied   in a topological context.
Expanding sheaves will play a pivotal role in our framework for constructing  good $2$-query LTCs.

Henceforth, $X$ is a simplicial complex  and $\calF$ is a sheaf or an augmented sheaf on $X$.

\paragraph*{Expansion of Sheaves.}

Given $f\in C^i=C^i(X,\calF)$,
let $\supp f=\{x\in X(i)\suchthat f(x)\neq 0\}$. The normalized Hamming norm
of $f$ is $\|f\|_{\Ham}=\frac{|\supp f|}{|X(i)|}$,
and   normalized Hamming distance between $f,g\in C^i$ is $d_{\Ham}(f,g):=\|f-g\|_{\Ham}$.

With this notation at hand, the notions   of cosystolic expansion and coboundary
expansion recalled in \S\ref{subsec:hdx} extend verbatim to sheaves.
That is, the sheaf $\calF$, or a sheaved complex $(X,\calF)$, 
is said be an $(\veps,\delta)$-cosystolic
expander in dimension $i$ if \eqref{EQ:cbe-F2-first} and \eqref{EQ:cbe-F2-second} hold for cocycles
with coefficients in $\calF$, and an
$\veps$-cosystolic expander in dimension $i$
if $\HH^i(X,\calF)=0$ (equiv.\ $B^i=Z^i$)  and
\eqref{EQ:cbe-F2-first} holds.

The situation  considered in \S\ref{subsec:hdx} now arises as the special
case where $\calF$ is the constant augmented sheaf $\calF^+_{\F_2}$.
Note, however, that we have shifted the focus from the expansion of the simplicial complex
$X$ to the expansion of the  sheaf
$\calF$. Indeed, $\calF$ may have poor expansion even
when $X$ is an excellent high-dimensional expander (e.g., take all the restriction maps to be $0$).
 
\label{page:cb-exp-in-dim-neg-one}%
Considering the expansion of (augmented) sheaves also illuminates an important condition which was transparent in the
case of
$\calF^+_{\F_2}$: coboundary expansion in dimension $-1$. Suppose that $\calF$ is an augmented
sheaf such    
that there is $m\in\N$ with $\calF(v)=\F_2^m$ for all $v\in X(0)$, and put $\Sigma=\F_2^m$.
Then $d_{-1}:C^{-1}\to C^0$ is an $\F_2$-linear map
from $\calF(\emptyset)$ to $\Sigma^{X(0)}$.
By definition, the augmented sheaf $\calF$ is an $\veps$-coboundary expander in dimension $-1$
if and only if $d_{-1}$ is injective, and its image  in  $\Sigma^{X(0)}$
%image $d_{-1}(\calF(\emptyset))$  is 
is a code with relative
distance $\geq \veps$.

\paragraph*{$\mathbf{0}$-Cocycle Codes and ${\bf 2}$-Query LTCs.}
Suppose now that $\calF$ is a sheaf with $\calF(v)=\F_2^m$ for every vertex
$v\in X(0)$, where $m\in\N$ fixed, and put $\Sigma=\F_2^m$.
Then $C^0=C^0(X,\calF)= \Sigma^{X(0)}$, and   we may
regard $Z^0=Z^0(X,\calF)$ as a code inside  $\Sigma^{X(0)}$; we call
$Z^0$ a \emph{$0$-cocycle code}. There is a natural $2$-query tester for $Z^0$:
given $f\in C^0$, choose an edge $e\in X(1)$ uniformly at random and
accept $f$ if 
\[
\res^{\calF}_{e\from u}(f(u))=\res^{\calF}_{e\from v}(f(v)),
\]
where $u$ and $v$ are the vertices of $e$,
cf.\ \eqref{EQ:local-cocycle-cond}.
As in \S\ref{subsec:hdx},
$\calF$ is an $(\veps,\delta)$-coboundary expander in dimension $0$ if and only
if the code $Z^0\subseteq \Sigma^{X(0)}$ is  $\veps$-testable with respect to this tester, and has relative distance $\geq\delta$
(note that $B^0=0$).
The rate of $Z^0$ is $|X(0)|^{-1}\dim_{\F_2}\HH^0(X,\calF)$.
Note that we may
also  view $Z^0$ as code inside $(\F_2^m)^{X(0)}=\F_2^{X(0)\times\{1,\dots,m\}}$
--- the alphabet being $\F_2$ --- in which case, we get a
$2m$-query $\veps$-testable \emph{linear} code  with relative distance
$\geq \frac{\delta}{m}$; the rate remains the same.  

\medskip

The $2$-query LTCs that we will construct  arise as $0$-cocycle
codes of sheaves with good cosystolic exansion in dimension $0$.
%(The example in \S\ref{subsec:higher-cocycle-codes} is an exception, though.)

\paragraph*{Higher-Cocycle Codes and Quantum CSS Codes.}

We can similarly consider the space of $i$-coycles $Z^i=Z^i(X,\calF)$
as a  code inside $C^i=C^i(X,\calF)$ when $i>0$.
If $\calF(x)=\Sigma:=\F_2^m$ for every $x\in X(i)$,
then the alphabet can be taken to be $\Sigma$,
and this code has an   $(i+2)$-query tester.
Such codes have different potential applications depending on whether $B^i(X,\calF)= 0$
or $B^i(X,\calF)\neq 0$. (The situation $B^i=0$ with $i>0$ is impossible
if we only consider $\F_2$-valued cocycles as in \S\ref{subsec:hdx},
but is possible for general sheaves, e.g., if  
$\calF(x)=0$ for all $x\in X(i-1)$.)

If $B^i=0$, then, as in the case $i=0$, the code
$Z^i$ is  $\veps$-testable  with relative distance $\geq \delta$
if and only if $\calF$ is an $(\veps,\delta)$-cosystolic expander in dimension $i$;
its rate is $|X(i)|^{-1}\dim_{\F_2} \HH^i(X,\calF)$.
See \S\ref{subsec:higher-cocycle-codes} for an example of an infinite family
of good
$1$-cocycle codes. (We do not know if this is a family of LTCs.)

If, on the other hand,   $B^i\neq 0$, then by viewing $Z^i(X,\calF)$
as a \emph{linear} codes inside $C^i(X,\calF)$ with alphabet $\F_2$,
we can enrich  it into a \emph{quantum CSS codes} over the alphabet $\F_2$;
see \S\ref{subsec:css} for details.
The  rate of this quantum CSS code 
is again $|X(i)|^{-1}\dim_{\F_2} \HH^i(X,\calF)$,
and if 
$\calF$ is an $(\veps,\delta)$-coboundary expander in dimension $i$, 
then 
its $X$-side has relative distance $\geq \delta$  and is   $\veps$-testable
(up to  scaling of the constants).

%We can similarly consider the space of $i$-coycles $Z^i=Z^i(X,\calF)$
%as a linear code inside $C^i=C^i(X,\calF)$ when $i>0$.
%Such codes   can be enriched into \emph{quantum CSS codes} over the alphabet $\F_2$;
%see \S\ref{subsec:css} for details.
%The  rate of these quantum CSS codes
%is $|X(i)|^{-1}\dim_{\F_2} \HH^i(X,\calF)$,
%and if 
%$\calF$ is an $(\veps,\delta)$-coboundary expander in dimension $i$, 
%then 
%their $X$-side has relative distance $\geq \delta$  and is   $\veps$-testable
%(up to  scaling of the constants).

\paragraph*{Local Local-Testablity Implies Global Local-Testibility:
a Sheafy Version.}

Let $(X,\calF)$ be a sheaved simplicial complex.
Our first main result (Theorem~\ref{TH:cosystolic-exp-from-links};
see also 
Corollary~\ref{CR:cse-from-links-dim-2},
Remark~\ref{RM:main-ineqs-for-2dim-case-with-Oppenheim})
is a generalization of the local-to-global principle of Evra--Kaufman \cite{Evra_2016_cosystolic_expanders_arxiv_version}
recalled in \S\ref{subsec:hdx}. 
In more detail, given $i\in\{0,\dots,{\dim X-2}\}$,
we show  that if for every $z\in X-\{\emptyset\}$,
the augmented sheaf $\calF_z$ is a good coboundary expander in a range of dimensions,
and the underlying graph of $X_z$ is a sufficiently good expander (``local'' properties of $X$ and $\calF$),
then $\calF$ is an $(\veps,\delta)$-cosystolic expander in dimension $i$ (a ``global'' property of $\calF$)
with $\veps$ and $\delta$ depending on the suppressed expansion constants.
This also extends the main result of 
\cite{Kaufman_2018_cosystolic_expanders}  which, in our terminology,
addresses the special case of \emph{constant sheaves}. %(see Example~\ref{EX:sheaves-basic-examples}).

Our proof is more efficient than \cite{Evra_2016_cosystolic_expanders_arxiv_version}
and \cite{Kaufman_2018_cosystolic_expanders} in the sense that it
makes milder assumptions on the expansion of $X_z$ and $\calF_z$,
and at the same time produces larger
expansion constants $\veps$ and $\delta$. We also
show that the LTCs arising as $i$-cocycles codes
of sheaves to which our theorem applies   admit a linear-time decoding algorithm
able to correct a linear number of errors (up to a vector in $B^i(X,\calF)$ if $i>0$),
see Proposition~\ref{PR:decoding}. %and~\ref{PR:cbe-to-quantum-CSS}.

\paragraph*{A Side-Application: Sheafy Expander Codes.}

We return to discuss the sheaf-variation of expander codes defined on page~\pageref{intro-ex:sheaf-exp-codes},
focusing on its $0$-cocycle code.

Recall that $X$ is a $k$-regular graph, and we used codes $C_v\subseteq \F_2^{E(v)}$
to define a sheaf  $\calF$ on $X$. 
Suppose that all the $C_v$ have a common dimension $m$  and relative distance $\geq \veps$.
Then, writing $\Sigma=\F_2^m$, we have $C^0(X,\calF)=\Sigma^{X(0)}$.
Moreover, our   assumption on the distance of the $C_v$
says that, for every   $v\in X(0)$, the augmented sheaf $\calF_v$ (on the link $X_v$)
is an $\veps$-coboundary expander in dimension $-1$.
In other words, ``locally'', $\calF$ has good coboundary
expansion in dimension $-1$.

Since $X$ is merely $1$-dimensional (rather than $2$-dimensional),
this is not enough to apply our  
Theorem~\ref{TH:cosystolic-exp-from-links}
to assert that $\calF$ is a  good cosystolic expander in dimension $0$,
or equivalently, that $Z^0(X,\calF)\subseteq \Sigma^{X(0)}$ is an LTC with linear distance. 
Indeed, the code $Z^0(X,\calF)$ is usually \emph{not} testable if $m>\frac{k}{2}$, because
removing one of its defining constraints (i.e., removing an edge from $X$) will typically
enlarge $Z^0(X,\calF)$. Also, even if $X$ were the $1$-dimensional skeleton of a triangle complex
$Y$, it is usually not possible to extend $\calF$ in a non-redundant  way to $Y$. Indeed,
if $\calF$ could be extended to a sheaf on $Y$, then for any triangle $t=\{u,v,w\}\in Y(2)$, 
we would have $\res^{\calF}_{t\from \{u,v\}}\circ \res^{\calF}_{\{u,v\}\from \{u\}}=
\res^{\calF}_{t\from \{u\}}=\res^{\calF}_{t\from \{u,w\}}\circ \res^{\calF}_{\{u,w\}\from \{u\}}$.
A simple linear-algebra argument now shows that
if $C_u$ contains a word $f\in\F_2^{E(u)}$ with $f_{\{u,v\}}\neq f_{\{u,w\}}$,
then  $\res^{\calF}_{t\from \{u,v\}}$ and $\res^{\calF}_{t\from\{u,w\}}$ must be $0$.

Testability aside, if
the second eigenvalue of the adjacency matrix
of $X$ is $\lambda k$ ($\lambda\in [-1,1]$),  then we can still infer
that $Z^0(X,\calF)\subseteq \Sigma^{X(0)}$ has relative distance at least $ \veps-\lambda$, 
see \S\ref{subsec:0cocycle-graphs}.
If   instead we view $Z^0(X,\calF)$ as a \emph{linear} code inside $\F_2^{X(0)\times\{1,\dots,m\}}$,
then the relative distance is $\geq \frac{ \veps-\lambda }{m}$.
Since by dimension considerations, the  rate of $Z^0(X,\calF)  $ 
is at least $(1-\frac{k}{2m}) $ (with respect to either alphabet),
we conclude that the code   $Z^0(X,\calF)$ is good   if $\veps>\lambda$ and $m>\frac{k}{2}$.
These bounds are similar to the expander
codes of \cite{Sipser_1996_expander_codes} (see also \cite{Meshulam_2018_graph_codes_arxiv_version}). 
%\gap{[Best to double check that.]} 
Note, however, that $Z^0(X,\calF)$ is not a \emph{lifted
code}, and thus \emph{not} an expander code in the   sense of \cite{Sipser_1996_expander_codes}.

\subsection{Utilizing Coverings}
\label{subsec:coverings-intro}
Coverings of high dimensional expanders   play an important role in our framework for getting good $2$-query LTCs from sheaved high dimensional expanders. 
Broadly speaking, coverings   allow  us to produce many expanding sheaves from a single example,
and  in a different context,   provide an ``inflation'' effect  that reduces obstructions. 
The former will facilitate our framework while the latter 
would be useful to applying it. 
We now recall what are coverings, and explain why they are important in our framework.

Henceforth, all simplicial complexes are assumed to be connected.

\paragraph*{Coverings of Simplicial Complexes.}

Let $X$ and $Y$ be (connected) simplicial complexes. Recall that a simplicial map
$p:Y\to X$ is called a \emph{covering} map if for every nonempty
face $z\in Y $,
the restriction of $p$ to the link $Y_z$ defines an bijection between
$Y_z$ and the link $X_{p(z)}$. Equivalently, $p$ is a covering
map if it induces a covering map of topological spaces between
the topological realizations of $Y$ and $X$. 
In this case,   the connectivity of $X$ implies that the 
number of faces in $Y$ mapping to a nonempty face $x\in X$ is independent of $x$;
this common number is
called the \emph{degree} of $p$. We say that $p$ is a \emph{double covering},
or that $Y$ is a double covering of $X$ (via $p$),
if the degree of $p$ is $2$. In this case, $|Y(i)|=2|X(i)|$ for all $i\in\N\cup \{0\}$.

It is a standard fact from
algebraic topology that    there is a one-to-one correspondence
between the (connected) coverings of $X$ (considered up to isomorphism over $X$)
and subgroups of the fundamental group $\pi_1(X)$. This restricts to a bijection
between the   degree-$d$ coverings of $X$ and the index-$d$ subgroups of $\pi_1(X)$.
%\gap{[Maybe add an image.]} 

\paragraph*{Pulling Back a Sheaf Along a Covering.}

If $p:Y\to X$ is a covering map, and $\calF$ is a sheaf on $X$,
then we can   define a sheaf $p^*\calF$ on $Y$ by pulling back $\calF$ along $p$, i.e., by
setting
\[
p^*\calF(y)=\calF(p(y))\qquad\text{and}\qquad
\res^{p^*\calF}_{y'\from y} = \res^{\calF}_{p(y')\from p(y)}
\]
for all $y,y'\in Y$ with $y\subsetneq y'$. The sheaf $p^*\calF$ called the \emph{pullback}
of $\calF$ along $p:Y\to X$. 

Since $p$ is a covering map, it restricts to an isomorphism
$Y_z\to X_{p(z)}$ for every nonempty $z\in Y$. Under this isomorphism,
the restriction of $p^*\calF$ to $Y_z$, i.e.\ $(p^*\calF)_z$,
is just $\calF_z$. Thus, up to isomorphism, 
the sheaves $p^*\calF$ and $\calF$ have the same restrictions to proper links.

%\gap{[Tali, I do not think that we need to define pushforward yet.]}

\paragraph*{Local Local-Testability Lifts Along Coverings.}

Let $p:Y\to X$ be a covering map and let $\calF$ be a sheaf on $X$.
Recall that our Theorem~\ref{TH:cosystolic-exp-from-links}
says that if the pairs $(X_z,\calF_z)_{z\in X-\{\emptyset\}}$ satisfy some expansion conditions
(informally called ``local'' local-testability), then $(X,\calF)$ will be an
$(\veps,\delta)$-cosystolic expander in dimension $i$ (``global'' local-testability).
Since $p^*\calF$ and $\calF$ have the same restrictions to   proper links up to isomorphism, 
once the assumptions of Theorem~\ref{TH:cosystolic-exp-from-links} are satisfied for $\calF$,
they are also satisfied
for   $p^*\calF$, meaning that $p^*\calF$ is also an $(\veps,\delta)$-cosystolic expander in dimension $i$.

We apply this observation in the following context:
Let $m\in\N$, $\Sigma=\F_2^m$,
and 
suppose that $\calF(v)=\F_2^m=\Sigma$
for all $v\in X(0)$.
If   $(X,\calF)$
satisfies the conditions of Theorem~\ref{TH:cosystolic-exp-from-links} with $i=0$,
then for every covering $p:Y\to \calF$, we have that $p^*\calF(u)=\Sigma$ for all $u\in Y(0)$,
and $p^*\calF$ is an $(\veps,\delta)$-cosystolic expander in dimension $0$ with $\veps,\delta>0$
independent of $Y,p$. Consequently, for every
covering $p:Y\to X$, the code $Z^0(Y,p^*\calF)\subseteq \Sigma^{Y(0)}$
is $\veps$-testable and has relative distance $\geq \delta$.
Otherwise said, the family
of codes $\{Z^0(Y,p^*\calF)\subseteq \Sigma^{Y(0)}\}_{Y,p}$ with $p:Y\to X$
ranging over the coverings of $X$
is a   family of $2$-query LTCs with linear distance.

\paragraph*{Rate Conservation in Coverings.}

We continue to assume that $p:Y\to X$ is a covering map 
and $\calF$ is a sheaf on $X$ with $\calF(v)=\F_2^m=\Sigma$ for all $v\in X(0)$.
Similarly to the situation with testability and distance, we would like
to be able to guarantee that ``pullback code'' $Z^0(Y,p^*\calF)$ has roughly
the same   rate as $Z^0(X,\calF)$.

Suppose that $p:Y\to X$ is of degree $\ell$ and factors as a composition
of double coverings $Y=X_r\to X_{r-1}\to \dots\to X_0=X$ (thus $\ell=2^r$). 
In Theorem~\ref{TH:rate-conservation}, we show that in this special case, we  
have $\dim Z^0(Y,p^*\calF)=\Theta(\ell)$, i.e., the rate of $Z^0(Y,p^*\calF)$ is constant, provided
that
\[\dim\HH^1(X,\calF)<  \dim\HH^0(X,\calF).\]
We call this result \emph{rate conservation}.
In particular, if $\{X_r\}_{r\geq 0}$ is an \emph{infinite tower} of connected  double coverings
of $X_0=X$, i.e., each   $X_r$ is a double covering of $X_{r-1}$,
and if $\calF_r$ is the pullback of $\calF$ to $X_r$,
then the family
of codes $\{Z^0(X_r,\calF_r)\subseteq \Sigma^{X_r(0)}\}_{r\geq 0}$ has constant rate.

The  
integer $\dim\HH^0(X,\calF)-\dim\HH^1(X,\calF)$  can be considered as measuring the   obstruction  to rate conservation.
%with $\dim \HH^0(X,\calF) $ being the necessary threshold.
Indeed, we can apply rate conservation
precisely when it is positive, and the larger it   is, the larger the rate of $Z^0(Y,p^*\calF)$
will be for $p:Y\to X$ as above.
We will see   in \S\ref{subsec:finding-int-data-intro} that if
$\calF$ is  sheaf on $X$ such that   $\dim\HH^1(X,\calF)$
is significantly smaller than $m$ (recall that $\Sigma=\F_2^m=\calF(v)$ for $v\in X(0)$), then 
there is a way
to modify $\calF$ in order to (conjecturally)
decrease $\dim\HH^1(X,\calF)$ even further, thus achieving the threshold
for rate conservation. %(This may affect the local coboundary expansion, however.)

We also remark that,
as stated here, rate conservation is specific to double coverings, $\F_2$-sheaves
and $0$-cocycle codes.
If one wishes to replace $\F_2$ with another field of characteristic $p>0$,
then the requirement that each $X_r$ is a double covering of $X_{r-1}$
should be replaced by $X_r\to X_{r-1}$ being \emph{Galois covering} of degree $p$,
i.e., that $\pi_1(X_r)$ is a normal subgroup of index $p$ in $\pi_1(X_{r-1})$.
In order to apply rate conservation to $i$-cocycle  codes with $i>0$,
one needs to add the extra hypothesis $\HH^{i-1}(X,\calF)=0$.

\subsection{The Tower Paradigm: A Framework for Constructing Good $2$-Query LTCs from Expanding Sheaves }
\label{subsec:tower-paradigm}

We now put together the observations of \S\ref{subsec:coverings-intro}
to give a method --- the \emph{tower paradigm} --- for constructing
an infinite 
family of LTCs with linear distance and constant rate from auxiliary \emph{finite}  initial data.
This method   overcomes
the intrinsic barrier in constructing cocycle codes with linear distance and constant rate
noted in \S\ref{subsec:hdx}.

\paragraph*{The Initial Data.}

The initial data   consists of an integer $m\in\N$,
a $2$-dimensional simplicial complex $X$, and a sheaf $\calF$ on $X$
such that  $\calF(v) =\F_2^m$ for all $v\in X(0)$.
We write $\Sigma=\F_2^m$; this will be the alphabet of the $2$-query LTCs 
that will be constructed from these data.

\paragraph*{Requirements on The Initial Data.}

The initial data $(X,\calF)$   is required to satisfy the following three requirements:
\begin{enumerate}[label=(t\arabic*)]
	\item \label{item:tower-cond-intro}
	\label{item:tower-intro:first}
	There is a sequence of (connected) simplicial complexes $\{X_r\}_{r\geq 0}$
	such that   $X_0=X$ and   $X_r$ is a double covering of $X_{r-1}$ for all $r\in\N$.
	We call $\{X_r\}_{r\geq 0}$ a \emph{tower of double coverings} of $X$.
	\item \label{item:LTC-cond-into}
	For every nonempty $z\in X$,
	the sheaf $\calF_z$ (on the link $X_z$)
	is a   good coboundary expander 
	and the underlying graph of $X_z$ is a sufficiently
	good expander; see condition \ref{item:TH:tower-paradigm:LTC} of Theorem~\ref{TH:tower-paradigm}
	for a precise statement.
	Informally, this means
	that $X$ is a high-dimensional expander, and $\calF$ satisfies ``local'' local-testability.
	\item \label{item:rate-cond-intro}
	\label{item:tower-intro:last}
	$\dim\HH^0(X,\calF)>\dim \HH^1(X,\calF)$.
\end{enumerate}

Note that conditions \ref{item:LTC-cond-into} and 
\ref{item:rate-cond-intro} can be verified for a given $(X,\calF)$
by a finite computation.
Condition \ref{item:tower-cond-intro} does not
involve the sheaf $\calF$, 
and can be readily arranged by using
existing constructions of high-dimensional expanders, e.g., \cite{Lubotzky_2005_explicit_constructions_of_Ramanujan_complexes} or \cite{Kaufman_2018_local_spectral_high_dim_exps}.
%Note that in order to get an algorithm to construct the codes, one needs
%an algorithm to construct   the $X_r$.

\paragraph*{The Induced Family of Good ${\mathbf{2}}$-Query LTCs}

Using  the initial data $(X,\calF)$ and the tower of double coverings $\{X_r\}_{r\geq 0}$,
we 
define    an infinite family of codes
on the alphabet $\Sigma=\F_2^m$ 
as follows:
Denote by $\calF_r$ the pullback of $\calF$ along the covering map $X_r\to X_0=X$.
Then $Z_r:=Z^0(X_r,\calF_r)$ is a code inside $C^0(X_r,\calF_r)=\Sigma^{X_r(0)}$.
Writing $n_r=|X_r(0)|=2^r|X(0)|$, this defines a family
of codes 
\[\{Z_r\subseteq \Sigma^{n_r}\}_{r\in\N}\] 
with length tending to infinity.

\begin{thm}[Informal; see Theorem~\ref{TH:tower-paradigm}]
	If conditions \ref{item:tower-intro:first}--\ref{item:tower-intro:last} hold,
	then the  codes $\{{Z_r\subseteq \Sigma^{n_r}}\}_{r\in\N}$ together with their natural
	$2$-query testers form an infinite family of $2$-query LTCs with constant rate and linear distance.
	Moreover, they admit a linear-time decoding algorithm.
\end{thm}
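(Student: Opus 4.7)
The proof is to combine three ingredients developed earlier---the local-to-global principle for sheaves (Theorem~\ref{TH:cosystolic-exp-from-links}), the rate conservation theorem (Theorem~\ref{TH:rate-conservation}), and the good behavior of sheaves under pullback along coverings---applied along the tower $\{X_r\}_{r\geq 0}$ guaranteed by \ref{item:tower-cond-intro}. Let $p_r\colon X_r\to X$ denote the composite covering map (of degree $2^r$) and set $\calF_r=p_r^*\calF$, so that $Z_r=Z^0(X_r,\calF_r)\subseteq \Sigma^{X_r(0)}$ is a code of block length $n_r=|X_r(0)|=2^r|X(0)|$.

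First, I would verify that the ``local'' local-testability hypothesis \ref{item:LTC-cond-into} is inherited by each $(X_r,\calF_r)$. Because $p_r$ is a covering, it restricts, for each nonempty $w\in X_r$, to an isomorphism of links $(X_r)_w\xrightarrow{\sim} X_{p_r(w)}$, and the pullback construction identifies the augmented link-sheaf $(\calF_r)_w$ with $\calF_{p_r(w)}$. Consequently, the expansion of the underlying graph of $(X_r)_w$ and the coboundary expansion of $(\calF_r)_w$ in each relevant dimension are exactly those of $X_{p_r(w)}$ and $\calF_{p_r(w)}$, so they are bounded below by constants that depend only on the finite datum $(X,\calF)$ and not on $r$.

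Next, applying the sheafy local-to-global principle (Theorem~\ref{TH:cosystolic-exp-from-links}, together with the dimension-$0$ form noted in Corollary~\ref{CR:cse-from-links-dim-2}) to each $(X_r,\calF_r)$ yields constants $\veps,\delta>0$, independent of $r$, such that every $(X_r,\calF_r)$ is an $(\veps,\delta)$-cosystolic expander in dimension $0$. As explained in \S\ref{subsec:expanding-sheaves}, this is exactly the statement that the natural $2$-query tester of $Z_r\subseteq \Sigma^{X_r(0)}$ has soundness at least $\veps$ and that $Z_r$ has relative distance at least $\delta$. The linear-time decoding algorithm is then imported from Proposition~\ref{PR:decoding}.

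Finally, I would secure the constant rate via Theorem~\ref{TH:rate-conservation}. Since $X_r\to X$ factors as a tower of $r$ double coverings and condition \ref{item:rate-cond-intro} supplies the strict inequality $\dim\HH^1(X,\calF)<\dim\HH^0(X,\calF)$, the theorem gives $\dim_{\F_2} Z_r=\dim_{\F_2}\HH^0(X_r,\calF_r)=\Theta(2^r)$. Dividing by $n_r=2^r|X(0)|$, the rate of $Z_r$ is bounded below by a positive constant independent of $r$. The step I expect to require the most care is this uniformity across the tower: in both the local-to-global and the rate conservation steps, every constant must be traced back to the finite datum $(X,\calF)$. The first step guarantees this for Theorem~\ref{TH:cosystolic-exp-from-links}, while for Theorem~\ref{TH:rate-conservation} the key is that the hypothesis $\dim\HH^1<\dim\HH^0$ need only be checked on the base $(X,\calF)$, after which the theorem handles the entire tower in one stroke. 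Once these uniformities are in hand, combining the three bullet points---soundness $\geq\veps$, distance $\geq\delta$, and rate bounded away from $0$---gives the desired infinite family of good $2$-query LTCs.
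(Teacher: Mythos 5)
Your proposal is correct and follows essentially the same route as the paper's proof of Theorem~\ref{TH:tower-paradigm}: the link-isomorphism/pullback observation you make in the first step is exactly Remark~\ref{RM:comments-on-local-to-global}(iv), after which testability and distance come from Theorem~\ref{TH:cosystolic-exp-from-links} with Proposition~\ref{PR:decoding}, and constant rate from Theorem~\ref{TH:rate-conservation} applied once to the base pair $(X,\calF)$. No gaps.
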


As explained in \S\ref{subsec:coverings-intro}, condition \ref{item:LTC-cond-into} 
and our local-to-global principle
(Theorem~\ref{TH:cosystolic-exp-from-links}) imply that this is a family of 
$2$-query LTCs with linear distance,
and condition \ref{item:rate-cond-intro} 
allows us to apply \emph{rate conservation} (Theorem~\ref{TH:rate-conservation})
to conclude that the rate of the family is constant.

We remark that the soundness of the LTCs $\{{Z_r\subseteq \Sigma^{n_r}}\}_{r\in\N}$    depends
only on the expansion of $X$ and the   coboundary expansion of the restriction of $\calF$
to the proper links of $X$. It does not depend on the alphabet size $|\Sigma|=2^m$. 

%This is formalized in Theorem~\ref{TH:tower-paradigm}, which also asserts that the codes
%$\{Z_r\subseteq \Sigma^{n_r}\}_{r\in\N}$ admit a linear-time error correcting algorithm.

\subsection{Finding Initial Data for The Tower Paradigm}
\label{subsec:finding-int-data-intro}

It remains to find examples of initial data for the \emph{tower paradigm}
which satisfy all three requirements \ref{item:tower-intro:first}--\ref{item:tower-intro:last}.
While we   demonstrate that every two of these conditions can be met
(see \S\ref{subsec:three-cond-examples}), finding sheaved high-dimensional expanders
satisfying all three is  surprisingly difficult, and unfortunately remains open.
Instead, we construct candidates satisfying conditions \ref{item:tower-cond-intro}
and \ref{item:LTC-cond-into}, and conjecturally
also \ref{item:rate-cond-intro}.

More precisely, we introduce an iterative process which takes a sheaved high dimensional expander   
satisfying
\ref{item:tower-cond-intro}
and \ref{item:LTC-cond-into} and modifies its sheaf. 
We show that if the process ends 
quickly enough, then the resulting sheaf 
will satisfy
\ref{item:rate-cond-intro} as well.
We conjecture that the process will terminate quickly when performed on examples coming from number theory (Conjecture~\ref{CJ:process-for-buildings}),
hence our aforementioned candidates.
Moreover, we identify  conditions,
phrased by means of representations of the  fundamental group of the high-dimensional expander at hand,  
which imply that the process terminates after just $1$ step. What this means in practice 
is that if one could find an \emph{arithmetic group} 
with a finite-dimensional \emph{$\F_2$-representation}
%and with an $\F_2$-representation
satisfying certain conditions (see assumption (1) in Theorem~\ref{TH:main-candidates}
and the following comment), 
then they would give rise to initial data for the tower paradigm,
and thus to an infinite family of good $2$-query LTCs. There exist 
arbitrarily large 
\emph{finite} groups with  representations meeting these conditions.

%Apart from that,  we show in \S\ref{subsec:three-cond-examples} that every two    of the   requirements
%\ref{item:tower-intro:first}--\ref{item:tower-intro:last}
%listed in \S\ref{subsec:tower-paradigm} are   met for some $(X,\calF)$.

\medskip

We now explain in broad strokes how our candidates for initial data    for the tower paradigm are constructed.
An example of how the resulting family of codes may look like 
is given in \S\ref{subsec:explicit-example}.
 
\paragraph*{The Tower.}

In order to construct the tower $\{X_r\}_{r\geq 0}$,
we fix an \emph{affine building} $Y$ of dimension $d\geq 2$,
e.g., the affine building
of $\nSL{\Q_p}{d+1}$ (see \cite[\S6.9]{Abramenko_2008_Buildings}).
Informally, $Y$ is a highly-symmetric $d$-dimensional \emph{infinite} simplicial complex.
Each of the $X_r$ is obtained as a finite quotient $\Gamma_r\leftmod Y$, where $\Gamma_r$
is a group acting freely on $Y$.
By choosing the groups $\{\Gamma_r\}_{r\geq 0}$ to be a decreasing sequence $\Gamma_0\geq \Gamma_1\geq \Gamma_2\geq\dots$
such that   $[\Gamma_{r-1}:\Gamma_r]=2$ for all $r\in\N$,
the $X_r$ arrange naturally into an infinite tower of double coverings of $X=X_0$.
(The covering map $X_r=\Gamma_r\leftmod Y\to\Gamma_{r-1}\leftmod Y=X_{r-1}$ 
sends $\Gamma_r y$ to $\Gamma_{r-1} y$.)
See \S\ref{subsec:construction-of-quotients} for particular examples of $Y$, $\{\Gamma_r\}_{r\geq 0}$.
Algorithms for constructing certain   quotients 
$\Gamma_r\leftmod Y$ 
explicitly can be found in \cite{Lubotzky_2005_explicit_constructions_of_Ramanujan_complexes}, for instance.

%We choose  the tower $\{X_r\}_{r\geq 0}$ so that it consists of simplicial
%complexes covered by  a
%fixed $d$-dimensional \emph{affine Bruhat-Tits building} $Y$
%(e.g., the affine building
%of $\nSL{\Q_p}{d+1}$, see \cite[\S6.9]{Abramenko_2008_Buildings}), where $d\geq 2$.
%That is, each $X_r$ is a quotient of $Y$ by a  group $\Gamma_r$ acting 
%on $Y$. The groups $\Gamma_r$ form a decreasing sequence $\Gamma_0\geq \Gamma_1\geq \Gamma_2\geq\dots$
%such that   $[\Gamma_{r-1}:\Gamma_r]=2$ for all $r\in\N$.
%Thus, the quotient map  $X_r=\Gamma_r\leftmod Y\to\Gamma_{r-1}\leftmod Y=X_{r-1}$ is  a double covering.
%The precise building and groups which we use appear in  \S\ref{subsec:construction-of-quotients}.
%Algorithms for constructing \emph{some} of the  quotients $\Gamma_r\leftmod Y$ 
%explicitly can be found in \cite{Lubotzky_2005_explicit_constructions_of_Ramanujan_complexes}, for instance. 
%We establish the existence of the required groups $\Gamma_r$ in  \S\ref{subsec:construction-of-quotients}. 

Choosing   $X:=X_0$ to be a quotient  of an affine building $Y$ by a group $\Gamma_0$
also means that its proper links  
are \emph{spherical buildings}, which are known to be excellent expanders. This guarantees
the that the  expansion assumptions on the proper links $X_z$ 
mentioned in    \ref{item:LTC-cond-into} will hold automatically
as soon as $Y$ is \emph{thick} enough. (For example, the thickness of the affine building
of $\nSL{\Q_p}{d+1}$ is $p+1$.) It also has the advantage that $\pi_1(X)=\Gamma_0$ is an
\emph{arithmetic group}; a fact that will be put to use later on.

Assuming that $X=X_0$ and the tower $\{X_r\}_{r\geq 0}$ have been   chosen, we
set to look for a sheaf $\calF$ for which conditions \ref{item:LTC-cond-into}
and \ref{item:rate-cond-intro} hold. 
We do this in two stages. First, a certain \emph{locally constant sheaf}
${\calF}$ is chosen. Then, the sheaf ${\calF}$ is modified to produce a sheaf
$\quo{\calF}$ satisfying \ref{item:LTC-cond-into} and conjecturally \ref{item:rate-cond-intro}.

%We always assume that $\calF(v)=\Sigma:=\F_2^m$
%for all $v\in X(0)$, with $m$ to be chosen later.

\paragraph*{Locally Constant Sheaves.}

A sheaf $\calG$ on $X$ is called \emph{locally constant}
if for every $v\in X(0)$, the augmented sheaf  $\calG_v$ is (isomorphic to) a constant augmented sheaf 
on $X_v$. This is equivalent to all the restriction maps of $\calG$
being isomorphisms. Since $X$ is connected,
this means that there is  $m\in\N\cup\{0\}$, denoted $\dim\calG$ and called the dimension of $\calG$, 
such that $m=\dim\calG(x)$
for all $x\in X-\{\emptyset\}$.
Locally constant sheaves are abundant: every $n$-dimensional  
$\F_2$-representation of $\pi_1(X)$
gives rise to an $n$-dimensional locally constant sheaf on $X$, with the trivial representation corresponding
to the constant sheaf $\calF_{\F_2 }$.
%See  \S\ref{subsec:loc-const} and \gap{appendix C}
%for further details and examples. 

\paragraph*{Locally Constant Sheaves as Initial Data for The Tower Paradigm.}

We are interested in locally constant sheaves because condition
\ref{item:LTC-cond-into} is satisfied for any locally constant
sheaf ${\calF}$ on the $X$ we chose. Indeed,
if $z\in X-\{\emptyset\}$, then  
${\calF}_z$ is a constant sheaf on the spherical building $X_z$.
It was shown in \cite{Lubotzky_2016_expansion_of_buildings} and \cite{Kaufman_2018_cosystolic_expanders}
(see also \cite{First_2021_weighted_mixing_lemmas_preprint}) that such sheaves
are excellent coboundary exapnders in all dimensions (no matter how thick $Y$ is),
which means that \ref{item:LTC-cond-into} holds.

%We can now appeal to results from
%\cite{Lubotzky_2016_expansion_of_buildings}, \cite{Kaufman_2018_cosystolic_expanders}
%or \cite{First_2021_weighted_mixing_lemmas_preprint},
%which state that every constant sheaf
%
%of 
%is an excellent expander, and that every locally
%constant augmented sheaf $\calG$ on $X_z$ is a good coboundary
%expander in all dimensions, see \cite{Lubotzky_2016_expansion_of_buildings} for
%the case $\calG= \calF_{\F_2}^+$, and \cite{Kaufman_2018_cosystolic_expanders}
%or \cite{First_2021_weighted_mixing_lemmas_preprint} for the general case.

The reason why we do not apply the tower paradigm to   locally constant sheaves
is because it turns out that conditions \ref{item:tower-cond-intro}
(an infinite tower of double coverings)
and \ref{item:rate-cond-intro} (rate conservation) cannot hold simultaneously for such
sheaves (Proposition~\ref{PR:loc-constant-failure}). What we suggest to 
do instead is 
taking a  special  locally constant sheaf on $X$ and   modifying it slightly so that is satisfies   \ref{item:rate-cond-intro}   as well.

We explain the modification process and the choice of the special sheaf separately.

%The modification will be sufficiently small 
%so that condition \ref{item:LTC-cond-into} continues to hold 
%both conditions are
%(conjecturally) met.

\paragraph*{Modifying   Locally Constant Sheaves.}

Let $ {\calF}$ be a locally constant sheaf on $X$ of a large dimension $m$.
We think of $ \calF$ as varying with  $m$ as it goes to $\infty$, but ultimately,
both $ \calF$ and $m$ will be fixed and regarded as ``small''. 

We just observed that $ \calF$ satisfies \ref{item:LTC-cond-into}
but not \ref{item:tower-intro:last}. In \S\ref{subsec:quotient},
we present an iterative process that takes $ \calF$ as input and outputs
a modified sheaf $\quo{\calF}$ which satisfies \ref{item:tower-intro:last}.
If the iterative process terminates quickly, and if $h:=\dim\HH^1(X,\calF)-\dim\HH^0(X,\calF)+1$
is very small compared to $ \dim\calF$, then we can show that the output sheaf
$\quo{\calF}$ is ``very close'' to the original $\calF$, to the extent that it also satisfies
the local expansion condition \ref{item:LTC-cond-into}.
%(see 
%Corollary~\ref{CR:securing-expansion-for-quotients},
%which relies on
%Theorem~\ref{TH:sheaves-on-quo-of-aff-buildings-quotients}). 
If that is indeed the case, then $(X,\quo{\calF})$
can serve as initial data for the tower paradigm.

Note that $h$   quantifies how ``far'' we are from being able to apply rate conservation. 
Informally, the iterative process eliminates this obstruction when its size is negligible to $ \dim\calF$.

In more detail, $\quo{\calF}$ is constructed as the \emph{quotient} 
of $\calF$ by a ``tiny'' non-locally
constant subsheaf $\calC$,
chosen to artificially increase $\dim\HH^0(X,\calF/\calC)$
and decrease $\dim \HH^1(X,\calF/\calC)$.
%\footnote{
%	These is also a variant of the process where $\dim\HH^1(X,\calF/\calC)$ is artificially
%	decreased;
%	see \S\ref{subsec:quotient}. We restricted to one of the variants for simplicity.
%}
To construct $\calC$, we choose a ``tiny'' subspace $E\subseteq Z^1(X,\calF)$
and let $\calC$ be the smallest subsheaf of $\calF$ such that $E\subseteq C^1(X,\calC)$; 
see Construction~\ref{CN:modification-prototype} or \S\ref{subsec:quotient}.
The subsheaf $\calC$ has the feature that $\calC(v)$ is $0$ for every vertex $v\in X(0)$
while (typically) $\calC(x)\neq 0$ for faces $x$ of dimension $>0$. In particular, $\quo{\calF}(v)=\calF(v)/0\cong \F_2^m=:\Sigma$ for all $v\in X(0)$. 
We show in \S\ref{subsec:quotient} that elements
in $E\cap B^1(X,\calF)$ give rise to ``new'' classes in $\HH^0(X,\quo{\calF})$
while elements in $E$ which map to a nonzero class in $\HH^1(X,\calF)$
eliminate that class in $\HH^1(X,\quo{\calF})$. In total, we expect
to get
\[
\dim\HH^0(X,\quo{\calF})-\dim\HH^1(X,\quo{\calF})=\dim\HH^0(X,\calF)-
\dim\HH^1(X,\calF)+\dim E.
\]
%
%Choosing $E$ to be a subspace of $B^1(X,\calF)$
%creates artificial $0$-cocycles in $C^0(X,\quo{\calF})$. Indeed, if $f\in C^0(X,\calF)$
%satisfies $d_0f\in E$, then $d_0 f$ vanishes in $C^1(X,\quo{\calF})\cong C^1(X,\calF)/C^1(X,\calC)$, which means
%that $f\in  Z^0(X,\quo{\calF})$. Formally, we get 
%\[
%\dim\HH^0(X,\quo{\calF})=\dim\HH^0(X,\calF)+\dim E.
%\]
That is, by passing from $\calF$ to $\quo{\calF}$,
we increase $\dim\HH^0(X,-)-\dim\HH^1(X,-)$ by $\dim E$. If this 
prediction works,
then we could choose $E$ such that $\dim E=\dim\HH^1(X,\calF)-\dim\HH^0(X,\calF)+1$ 
and get
$\dim\HH^0(X,\quo{\calF})>\dim\HH^1(X,\quo{\calF})$, i.e.,
\ref{item:tower-intro:last} would hold for $\quo{\calF}$. 
However, passing from $\calF$ to $\quo{\calF}$ may result in ``new'' cohomology
classes in $\HH^1(X,\quo{\calF})$. We can eliminate these classes by enlarging $E$ further and repeat this process until there are no
more excess cohomology classes in $\HH^1(X,\quo{\calF})$; 
this is formalized in 
Construction~\ref{CN:modification-process}.
The resulting sheaf $\quo{\calF}$ always satisfies \ref{item:tower-intro:last}, although
not necessarily \ref{item:LTC-cond-into}.

However, we show that if    $\dim E\ll  \dim\calF $ when the process ends   
--- which is what we mean by saying that the process ends quickly ---,
or if we simply terminate the process when  $\dim E\ll  \dim\calF $, 
%and $E$ is ``in general position''
%inside $C^1(X,\calF)$, 
then, with high probability, $\quo{\calF}$ still satisfies the necessary
local expansion condition \ref{item:LTC-cond-into}; see 
Corollary~\ref{CR:securing-expansion-for-quotients} (which builds
on 
Theorem~\ref{TH:sheaves-on-quo-of-aff-buildings-quotients}).  

\paragraph*{Condition for The Modification Process to End Quickly.}

%We conducted a number of software experiments
%regarding the behavior of the modification process using $3$-dimensional
%tori and small Ramanujan complexes. These, as well as some theoretical evidence,
%suggest   following:

We simulated the iterative modification process for sheaves on 
$3$-dimensional tori, small $2$-dimensional $3$-thick Ramanujan complexes and other examples.\footnote{The {\tt Python} code of
the simulations was  written by the first named author
and 
is attached to the {\tt arXiv} version of this paper.}
Through  the simulations, we came out with formulas that 
predict
the growth of the subspace $E$ when more and more cohomology
classes are eliminated. This is formalized in Conjecture~\ref{CJ:cup-product-explains-everything}, which, loosely speaking, says that the growth of $E$
is governed by the \emph{cup product} bilinear map
$\cupp:\HH^1(X,\F_2)\times \HH^1(X,\calF)\to\HH^2(X,\calF)$
(see \S\ref{subsec:cup-prod}).
In particular, our analysis suggests that:

\begin{cnj}\label{CJ:into-easy}
	(Simplified; see Conjecture~\ref{CJ:dimension-of-E2})
	If $\calF$ is a sheaf on $X$ such that
	the   linear map
	$\alpha\otimes f\mapsto \alpha \cupp f:\HH^1(X,\F_2)\otimes_{\F_2} \HH^1(X,\calF)	
	\to\HH^2(X,\calF)$
	is injective and $\HH^0(X,\calF)\neq 0$,
	then, once applied to $\calF$, the modification process 
	stops after one step (i.e., there are no ``new'' cohomology
	classes in $\HH^1(X,\quo{\calF})$ in the above sense) with high probability. 
	More precisely, when the modification process ends,
	we have $\dim E=\dim\HH^1(X,\calF)-\dim\HH^0(X,\calF)+1$ 
\end{cnj}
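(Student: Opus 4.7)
The plan is to pass to cohomology in the short exact sequence of sheaves $0\to\calC\to\calF\to\quo{\calF}\to 0$ and control the ``new'' cohomology classes via the cup product. By construction $\calC(v)=0$ for every $v\in X(0)$, so $\HH^0(X,\calC)=0$ and $B^1(X,\calC)=0$; in particular $\HH^1(X,\calC)=Z^1(X,\calC)\supseteq E$. The long exact sequence in cohomology then yields
\begin{equation*}
\dim\HH^0(\quo{\calF})-\dim\HH^1(\quo{\calF})=\dim\HH^0(\calF)-\dim\HH^1(\calF)+\dim\HH^1(\calC)-\dim\ker\!\bigl(\HH^2(\calC)\to\HH^2(\calF)\bigr).
\end{equation*}
With $\dim E=\dim\HH^1(X,\calF)-\dim\HH^0(X,\calF)+1$, the desired conclusion reduces to the two equalities $\dim\HH^1(X,\calC)=\dim E$ and $\ker(\HH^2(X,\calC)\to\HH^2(X,\calF))=0$.

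For $\dim Z^1(X,\calC)=\dim E$, I would fix a basis $f_1,\dots,f_h$ of $E$ and write any $g\in Z^1(X,\calC)$ as $g(e)=\sum_i\lambda_i(e)f_i(e)$ for scalar functions $\lambda_i\colon X(1)\to\F_2$. The $\calF$-cocycle equations on each $2$-face impose linear relations on the $\lambda_i$ which, under an appropriate general-position hypothesis on $E$, force them to be constant, so that $g\in E$. Pinpointing this genericity and deducing it from the hypotheses (in particular from $\HH^0(X,\calF)\neq 0$, which provides flexibility in choosing $E$) is one technical step.

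For the vanishing of $\ker(\HH^2(X,\calC)\to\HH^2(X,\calF))$, the key structural observation is that the simplicial cup product with elements of $E$ automatically lands in $C^2(X,\calC)$: for $a\in Z^1(X,\F_2)$, $f\in E$ and a $2$-face $t=\{v_0,v_1,v_2\}$ one has $(a\cup f)(t)=a(\{v_0,v_1\})\,\res^{\calF}_{t\from\{v_1,v_2\}}(f(\{v_1,v_2\}))\in\calC(t)$ by minimality of $\calC$. This gives a natural factorization
\begin{equation*}
\HH^1(X,\F_2)\otimes\bigl(E/(E\cap B^1(X,\calF))\bigr)\xrightarrow{\bar\Phi}\HH^2(X,\calC)\longrightarrow\HH^2(X,\calF),
\end{equation*}
whose composite is the cup product pairing restricted to the image of $E$ in $\HH^1(X,\calF)$. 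The hypothesis of the conjecture forces this composite to be injective, so $\bar\Phi$ is injective. If one can further prove that $\bar\Phi$ is \emph{surjective}, then $\HH^2(X,\calC)\to\HH^2(X,\calF)$ is itself injective and the required vanishing follows.

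The main obstacle is the surjectivity of $\bar\Phi$: one must show that every class in $\HH^2(X,\calC)$ is cohomologous to a sum of cup products $a\cup f$ with $f\in E$. This appears to require unfolding $\calC$ as an extension (or, generically, direct sum) of elementary subsheaves generated by single $f_i$, perhaps via a small spectral sequence, and this is also where the full cup-product injectivity (rather than its mere non-vanishing on the image of $E$) would enter, in order to rule out spurious $2$-cocycles arising from the interaction between $E$ and $B^1(X,\calF)$.
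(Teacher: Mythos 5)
You are attempting to prove a statement that the paper itself leaves as a \emph{conjecture}: Conjecture~\ref{CJ:into-easy} (and its refinement, Conjecture~\ref{CJ:dimension-of-E2}) is supported in the paper only by computer simulations and the heuristic dimension counts of \S\ref{subsec:growth}; no proof is given. So the relevant question is whether your proposal closes the gap the authors could not. It does not.

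Your reduction via the long exact sequence is sound and is essentially Proposition~\ref{PR:effect-of-E}: granting $\HH^1(X,\calC_E)=Z^1(X,\calC_E)=E$ and $\ker\omega_E=0$, the formula for $h^0(\quo\calF)-h^1(\quo\calF)$ and the one-step termination both follow. Your observation that $\alpha\cupp f\in C^2(X,\calC_E)$ for $f\in E$, yielding a map $\bar\Phi$ whose composite with $\omega_E$ is the cup product, is also exactly the mechanism the paper uses (see the discussion surrounding \eqref{EQ:cup-product-effect-general} and Example~\ref{EX:analysis-of-dim-E2}). But the step you flag as ``the main obstacle'' --- that every class of $\ker\omega_E$ is accounted for by such cup products --- \emph{is} the content of Conjecture~\ref{CJ:cup-product-explains-everything}; you have reduced the conjecture to itself. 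Worse, the specific route you propose, surjectivity of $\bar\Phi$ onto all of $\HH^2(X,\calC_E)$, is far stronger than needed and is almost certainly false: the paper's dimension count gives $\dim\HH^2(X,\calC_E)\approx(2|X(2)|-|X(1)|)\dim E$ when $\dim X=2$, which vastly exceeds $\dim\HH^1(X,\F_2)\cdot\dim E$, the dimension of the source of $\bar\Phi$. What is actually needed is only the inclusion $\ker\omega_E\subseteq\im\bar\Phi$ (modulo the classes killed by $B^1(X,\F_2)$ and the left radical $L(\calF)$), and that is precisely what remains open. The first gap ($Z^1(X,\calC_E)=E$, a genericity statement about the random choice of $E$) is likewise only verified experimentally in the paper, and your proposal does not supply the missing argument; note also that your proof nowhere engages with the probabilistic nature of the claim, which enters exactly through these two genericity assertions.
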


This conjecture (more precisely, the finer Conjecture~\ref{CJ:dimension-of-E2})
is supported by all of our simulations. 
Since the universal covering of $X$ is contractible (it is an affine building), $\calF$ corresponds to a representation $\rho:
\Gamma_0=\pi_1(X)\to \nGL{\F_2}{m}$, and the assumption on $\calF$
is equivalent to saying that 
$\HH^1(\Gamma_0,\F_2)\otimes_{\F_2} \HH^1(\Gamma_0,\rho)	
\to\HH^2(\Gamma_0,\rho)$ is injective and $\rho$ has nontrivial invariant
vectors. We do not know if there is an arithmetic
group with an $\F_2$-representation satisfying
this condition, but there are arbitrarily large $2$-groups
for which this holds (see the {\tt MathOverflow} answer \cite{SashaP_2022_MO_answer}).

We also make a bolder conjecture which predicts that the modification
process ends quickly if the affine building
$Y$ which covers $X$ is sufficiently thick.

\begin{cnj}\label{CJ:into-hard}
	(Simplified; see Conjecture~\ref{CJ:process-for-buildings})
	There are $d,q\in \N$ ($d\geq 2$) and a function
	$f:\N\cup \{0\}\to \N$ such that if   $X$ is 
	covered by a $q$-thick affine building of dimension $d$
	and $\calF$ is a sheaf on $X$, then applying the modification
	process to $\calF$ results in a subspace $E$ such that $\dim E\leq f(\dim\HH^1(X,\calF))$
	with high probability.
\end{cnj}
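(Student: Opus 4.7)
The plan is to deduce Conjecture~\ref{CJ:into-hard} from the simpler Conjecture~\ref{CJ:into-easy} by exhibiting, for sufficiently thick affine buildings, a control on the cup-product map $\HH^1(X,\F_2)\otimes \HH^1(X,\calF)\to\HH^2(X,\calF)$ at every stage of the modification process. The strongest target is to arrange $\HH^1(X,\F_2)=0$ for the base complex, because then the domain of the cup product map is trivial, the injectivity hypothesis of Conjecture~\ref{CJ:into-easy} is automatic, and a single step of the process would force $\dim E \leq \dim\HH^1(X,\calF)+1$. So $f(n)=n+1$ would be the natural candidate, and the task becomes finding $d,q$ for which the relevant vanishing actually holds.

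First I would invoke the local-to-global machinery (Theorem~\ref{TH:cosystolic-exp-from-links}) applied to the \emph{constant} sheaf $(\F_2)_X$: when $Y$ is $q$-thick with $q$ large and $d\geq 2$, the links $X_z$ are spherical buildings, and the results of \cite{Lubotzky_2016_expansion_of_buildings}, \cite{Kaufman_2018_cosystolic_expanders} give coboundary expansion with constants depending only on $q,d$. Combined with Oppenheim's trickling-down (Remark~\ref{RM:main-ineqs-for-2dim-case-with-Oppenheim}), this yields $(\veps,\delta)$-cosystolic expansion of $X$ in dimension $1$ for $\F_2$-coefficients, which, after a mild thickness bound forcing $\delta$ to exceed the diameter-type obstruction, kills small $\HH^1(X,\F_2)$ classes. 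If one can push the parameters far enough to ensure $\HH^1(X,\F_2)=0$ outright, the whole conjecture reduces to Conjecture~\ref{CJ:into-easy}.

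The main obstacle is that, even granting the easy conjecture for the input sheaf $\calF$, after a single step of the process the modified sheaf $\quo{\calF}$ is no longer locally constant, and one must control the new cohomology in order to iterate. To handle this, I would analyze the short exact sequence
\[
0\to \calC \to \calF \to \quo{\calF}\to 0
\]
and the induced long exact sequence in cohomology, using that $\calC$ is supported on faces of dimension $\geq 1$ with total dimension controlled by $\dim E$. This yields bounds of the form $\dim\HH^1(X,\quo{\calF})\leq \dim\HH^1(X,\calF) + c(d)\cdot \dim E$ and a similar bound on $\HH^2$, and, crucially, a bound on the cup-product defect of $\quo{\calF}$ in terms of the defect of $\calF$ plus a term linear in $\dim E$. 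The hope is that the thickness of $Y$ makes these additive contributions negligible, keeping Conjecture~\ref{CJ:into-easy} applicable at each stage.

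Finally, I would set up an iteration: let $E_k$ be the subspace after $k$ steps; the recursion $\dim E_{k+1}\leq \dim\HH^1(X,\calF_k)-\dim\HH^0(X,\calF_k)+1$ together with the cohomology bounds above gives $\dim E_{k+1}\leq \dim E_k + c(d,q)\cdot (\text{error from non-injectivity of cup product at step } k)$. The genuinely hard step is proving that the cup-product errors summed over all iterations are bounded by a function of $\dim\HH^1(X,\calF)$ alone — uniformly in the dimension of the sheaf. Without a uniform cup-product vanishing statement for sheaves on thick quotients of affine buildings (a sheaf-theoretic analogue of Garland's theorem over $\F_2$, which is not currently available), one would have to fall back on heuristic or computer-verified bounds for small $d,q$ and accept $f$ depending on the chosen pair. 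Establishing such a uniform cup-product vanishing over $\F_2$ for sheaves on quotients of affine buildings appears to be the central open problem underlying this conjecture.
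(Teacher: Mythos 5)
The statement you are trying to prove is Conjecture~\ref{CJ:into-hard}; the paper does not prove it and offers only a heuristic (a dimension count suggesting the equations defining $V_r$ become overdetermined as $r$ grows when $X$ has many more $2$-faces than $1$-faces, see the discussion preceding Conjecture~\ref{CJ:process-for-buildings}), plus the explicit admission that there is no computational evidence. Your proposal is likewise not a proof, and you concede as much in your final paragraph, so at the level of honesty there is no issue. But the reduction you lead with has a concrete flaw worth flagging.

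First, the conjecture quantifies universally: one fixes $d,q$ and the claim must then hold for \emph{every} $X$ covered by a $q$-thick $d$-dimensional affine building and every sheaf $\calF$ on it. You therefore cannot ``arrange'' $\HH^1(X,\F_2)=0$; you would have to prove it for all such $X$, and this is false. Since $\HH^1(X,\F_2)\cong\Hom(\pi_1(X),\F_2)$ (Lemma~\ref{LM:fund-group-double-coverings}), it is nonzero precisely when $X$ admits a double covering --- which is exactly the situation the tower paradigm needs (condition \ref{item:TH:tower-paradigm:tower}), and which the paper arranges in Theorem~\ref{TH:good-quotient}(i). The complexes for which the conjecture actually matters are thus forced to have $\HH^1(X,\F_2)\neq 0$. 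Second, the mechanism you invoke to kill $\HH^1(X,\F_2)$ does not do what you claim: Theorem~\ref{TH:cosystolic-exp-from-links} yields \emph{cosystolic} expansion, i.e.\ a lower bound on the norm of representatives of nonzero classes in $\HH^1$, not the vanishing of $\HH^1$; coboundary expansion (which does entail $\HH^1=0$) is exactly what the local-to-global principle does \emph{not} deliver globally. So the first step of your reduction to Conjecture~\ref{CJ:into-easy} collapses, and with it the base case of your iteration. The remaining obstacles you identify --- controlling the cohomology and cup-product defect of the non-locally-constant quotient $\quo{\calF}$ across iterations, uniformly in $\dim\calF$ --- are real and are essentially why the statement remains a conjecture in the paper.
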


\paragraph*{Finding a Locally Constant Sheaf to Modify Using Coverings.}

If we take Conjectures~\ref{CJ:into-easy}
and~\ref{CJ:into-hard} for granted, all that remains in order to
find initial data for the tower paradigm is to find a sheaf
$\calF$ on $X$ such that $\dim\HH^1(X,\calF)\ll \dim\calF$.
(In order to use Conjecture~\ref{CJ:into-easy}, we also need
to require that the additional assumption of that conjecture holds for $\calF$.)
This sheaf would be modified into a quotient sheaf $\quo{\calF}$
satisfying \ref{item:LTC-cond-into} and~\ref{item:rate-cond-intro}.

We show in Theorem~\ref{TH:sheaves-with-small-coh} 
%and~\ref{TH:zero-zero-sheaves}
that there exist finite simplicial
complexes $X$ covered by arbitrarily thick affine buildings
such that $X$ admits   locally constant sheaves $\calF$
of arbitrarily large dimension which satisfy
$\dim\HH^1(X,\calF) =0$.\footnote{
	This forces $\dim\HH^1(X,\calF)=0$ 
	if $X$ has an infinite tower of double coverings
	(Proposition~\ref{PR:loc-constant-failure}).
}
In particular, the requirement  $\dim\HH^1(X,\calF)\ll \dim\calF$
can be met. Alternatively, we could start with any
locally constant sheaf $\calG$ on $X$, and replace
it by $\calG_s:=\calG \times \calF^s$ for some large $s\in\N$ 
in order to increase $\dim\calG_s$ without affecting
$\dim\HH^1(X,\calG_s)=\dim\HH^1(X,\calG)$.

%More generally, it is expected that if Serre's Conjecture
%on congruence subgroups (Conjecture~\ref{CJ:csp})
%holds, then
%It is also possible to choose $X$ to be a Ramanujan
%complex as in \cite{Lubotzky_2005_explicit_constructions_of_Ramanujan_complexes}
%and get $\dim\HH^1(X,\calF)=O(\log\dim\calF)$\gap{}.

%The idea behind the both constructions  is to  once more  utilize coverings;
%we restrict to the construction
%with $\dim\HH^1(X,\calF)=O(\log\dim\calF)$ as it is simpler.
%Let $p:X'\to X$ be a covering of degree $m$ (they are abundant)
%and take $\calF$
%to be the \emph{pushforward} of the the constant sheaf $\calF_{\F_2}$
%on $X'$ along $p$ (see \S\ref{subsec:pushforward}).
%%(or a more sophisticated construction, see the proof of Theorem~\ref{TH:sheaves-with-small-coh}).
%The sheaf $\calF$ is locally constant of dimension $m$
%and has the additional property
%that  $\HH^i(X, \calF )\cong \HH^i(X, \calF_{\F_2})=\HH^i(X',\F_2)$ 
%(Lemma~\ref{LM:Shapiro}).
%We  now put into use the fact that $\pi_1(X)$ is an \emph{arithmetic group}.
%Using   deep facts about such groups,
%we show in Theorem~\ref{TH:good-quotient}, that $X'$ can be chosen 
%to satisfy $\dim\HH^1(X',\F_2)=O(\log m)$ (or even   $\dim\HH^1(X',\F_2)=O(1)$
%in some cases). Consequently, we get
%$\dim\HH^1(X,\calF)=O(\log m)$.

The idea behind the   construction   is to  once more  utilize coverings.
Let $p:X'\to X$ be a covering of degree $m$, %(they are abundant)
and let $\calF$
be the \emph{pushforward} of the the constant sheaf $\calF_{\F_2}$
on $X'$ along $p$ (see \S\ref{subsec:pushforward}).
The sheaf $\calF$ is locally constant of dimension $m$
and has the additional property
that  $\HH^i(X, \calF )\cong \HH^i(X, \calF_{\F_2})=\HH^i(X',\F_2)$ 
(Lemma~\ref{LM:Shapiro}).
We  now put into use the fact that $\pi_1(X)$ is an \emph{arithmetic group}.
Using   deep facts about such groups,
we show in Theorem~\ref{TH:good-quotient}, that if the covering building $Y$
is carefully chosen, then $X'$ can be chosen 
to satisfy $\dim\HH^1(X',\F_2)=O(1)$ as $m$ grows. 
This is already enough if we want sheaves $\calF$
satisfying $\dim\HH^1(X,\calF)=O(1)$
as a function of $\dim\calF$,
and  a  
more sophisticated construction of this flavor achieves  $\dim\HH^1(X,\calF)=0$.
(Note that this holds despite the fact that $X$ has an infinite
tower of double coverings, which means in particular that
$\HH^1(X,\F_2)\neq 0$.)
More generally, it is expected that if Serre's Conjecture on 
the \emph{congruence subgroup property} (Conjecture~\ref{CJ:csp}) holds, then 
for every affine building $Y$ of dimension $\geq 2$, 
one  could choose
$X'$ with  $\dim\HH^1(X',\F_2)=O(\log m)$,
and thus get 
$\dim\HH^1(X,\calF)=O(\log \dim\calF)$.

\paragraph{Conclusion.}

We construct candidates for   initial data for the tower paradigm
as follows: We choose a  simplicial complex $X$ covered by a sufficiently thick
affine building of dimension $\geq 2$, and such that $X$ admits an infinite tower
of double coverings (condition~\ref{item:tower-cond-intro}).
Using other coverings of $X$, we find a locally constant sheaf $\calF$ 
such that $\dim\HH^1(X,\calF)\ll \dim\calF$;
the pair $(X,\calF)$ satisfies \ref{item:LTC-cond-into}.
We then apply an iterative process to modify $\calF$ into a quotient sheaf $\quo{\calF}=\calF/\calC$.
However, we terminate the process if $\calC$ becomes ``close'' to $\calF$
in dimension, in order to keep the validity
of \ref{item:LTC-cond-into} for $(X,\quo{\calF})$.
If the process terminated on its own, then
$(X,\quo{\calF})$ also satisfies \ref{item:rate-cond-intro}.
In this case, $(X,\quo{\calF})$ are   initial
data for the tower paradigm,
and therefore give rise to
an infinite family of good $2$-query   LTCs;
their common alphabet
is $\Sigma:=\F_2^m$ for   $m=\dim\calF$. (The soundness
of the testing is independent of $m$, however.)

Our Conjecture~\ref{CJ:into-hard} predicts that
the modification process will indeed terminate on its own.
Alternatively, our Conjecture~\ref{CJ:into-easy},
that is supported by computer simulations, says that
this will also be the case if
$\calF$ satisfies an additional property concerning the cup product.
See Theorem~\ref{TH:main-candidates} and Remark~\ref{RM:main-candidates}
for precise statements.

\subsection{Explicit 2-Query LTCs with Linear Distance and Conjectural Constant Rate}
\label{subsec:explicit-example}

We finish with giving an example of an infinite family of $2$-query LTCs
with linear distance and conjectural constant rate
that arises from our framework.  Sheaves  are not explicitly
mentioned, but are needed for the proofs.

\medskip

%We use the Ramanujan complexes
%constructed by Lubotzky, Samuels and Vishne in 
%\cite[\S9]{Lubotzky_2005_explicit_constructions_of_Ramanujan_complexes} as a black box,
%making reference only to the auxiliary finite field $\F_q$ used in {\it op.\ cit.}, which we assume
%to be of characteristic $2$.\footnote{
%	Alternatively, is possible   to use the non-Ramanujan complexes from Theorem~\ref{TH:good-quotient}
%	below. However, there is yet to be an  algorithm to construct them.
%}
%Fix $d\geq 3$.
%The construction in {\it op.\ it.}   gives an explicit infinite sequence of $d$-dimensional   simplicial complexes,
%each mapping into the former:
%\[
%\dots\to X_r\to \dots \to X_2\to X_1\to X_0,
%\]
%and which can be refined into a tower of \emph{double coverings} of $X_0$.

For the example, 
we use the Ramanujan complexes
constructed by Lubotzky, Samuels and Vishne in 
\cite[\S9]{Lubotzky_2005_explicit_constructions_of_Ramanujan_complexes} as a black box,
making reference only to the auxiliary finite field $\F_q$ used in {\it op.\ cit.}, which we assume
to be of characteristic $2$.
Alternatively, it is also possible
to use the simplicial complexes from Theorem~\ref{TH:good-quotient}
below; this has the advantage of not replying on Serre's Conjecture
on the congruence subgroup property, and the disadvantage of not having
an efficient algorithm to explicitly construct the complexes.
%\footnote{
%	Alternatively, is possible   to use the non-Ramanujan complexes from Theorem~\ref{TH:good-quotient}
%	below. However, there is yet to be an  algorithm to construct them.
%}

Fix $d\geq 3$.
The construction in {\it op.\ it.}   gives an explicit infinite sequence of $d$-dimensional   simplicial complexes,
each mapping into the former:
\[
\dots\to X_r\to \dots \to X_2\to X_1\to X_0,
\]
and which can be refined into a tower of \emph{double coverings} of $X_0$.

\paragraph*{Constructing The Codes From    Initial Data.}

Suppose that the following finite set of \emph{initial data} is provided (note that this data  
is fixed and does not grow with the parameter $r$):
\begin{enumerate}[label=(\arabic*)]
	\item $m\in \N$,
	\item a linear transformation $T_{e,u}:\F_2^m\to \F_2^m$ for every edge $e=\{u,v\}\in X_0(1)$,
	\item a subspace $C_e\subseteq \F_2^m$ for every edge $e\in X_0(1)$.
\end{enumerate} 
The integer $m$ is the same constant $m$ from \S\ref{subsec:finding-int-data-intro}.
While it is fixed, it will be convenient to think of it as growing.
%we sh
%While $m$ is ultimately fixed, it is instructive to think of it as growing, since
%the various obstructions (discussed in \S\ref{subsec:finding-int-data-intro}) will vanish
%almost surely as $m$ grows.

Once the    data (1)--(3) is provided, we construct an infinite family of codes
$\{C_r\subseteq \Sigma^{n_r}\}_{r\in\N}$ on the alphabet $\Sigma:=\F_2^m$
as follows. Write $n_r=|X_r(0)|$ and identify $\Sigma^{n_r}$ with $\Sigma^{X_r(0)}$.
We write the $v$-coordinate of $f\in \Sigma^{X_r(0)}$
as $f(v)$. Then $f\in C_r$ if  for every edge $e=\{u,v\}\in X_r(1)$
with image  $e_0=\{u_0,v_0\}$ in $X_0$, we have
\begin{equation}\label{EQ:two-query-test}
T_{e_0,u_0} (f(u)) + T_{e_0,v_0}(f(v))\in C_{e_0}.
\end{equation}
A $2$-query tester for $C_r$ is given by choosing $e\in X_r(0)$ uniformly at
random and accepting the given $f\in \Sigma^{n_r}$ if \eqref{EQ:two-query-test} holds.

\paragraph*{Construction of The Initial Data.}
 
Provided Conjecture~\ref{CJ:into-hard} holds,
a possible choice for the initial data 
of $m$, $\{T_{e,v}\}_{e,v}$ and $\{C_e\}_e$ can be   
as follows. One chooses 
%(once and for all) 
another simplicial complex $X'$ that
is a degree-$m$  covering of $X_0$, e.g., one of the $X_r$.
The size of $m$ needs to be sufficiently large relative to $X_0$, but   is otherwise fixed
(i.e.\ independent of $r$).
Given a face $x\in X_0-\{\emptyset\}$, we number the faces in $X'$ mapping onto $x$
as $\hat{x}_1,\dots,\hat{x}_m$. Thus, for every
edge $e =\{u,v\}\in X_0(1)$, there is a unique permutation $\sigma_{e,u}:\{1,\dots,m\}\to \{1,\dots,m\}$
such that $\hat{u}_i\in \hat{e}_{\sigma(i)}$ for all $i\in\{1,\dots,m\}$.
We take $T_{e,u}:\F_2^m\to \F_2^m$ to be the linear transformation
given by
\[
T_{e,u}(\alpha_1,\dots,\alpha_m)=(\alpha_{\sigma_{e,u}^{-1}(1)},\dots,\alpha_{\sigma_{e,u}^{-1}(m)}).
\]
%We construct the spaces $\{C_e\}_{e\in X_0(1)}$ from a single subspace $E\subseteq \F_2^{X'(1)}$
%by letting $C_e$ be the projection of $E$ 
%
To construct the spaces $\{C_e\}_{e\in X_0(1)}$, 
we view $\F_2^{X'(1)}$ as the space $C^1(X',\F_2)$ of $1$-cochains on $X'$
with $\F_2$-coefficients (see \S\ref{subsec:hdx}).
We then take
%Choose a complement $E\subseteq \F_2^{\hat{X}(1)}$ for the space of $1$-coboundaries $B^1(\hat{X},\F_2)$ inside
%the space of $1$-cocycles $Z^1(\hat{X},\F_2)$,  and take
\[
C_e=\{(h(\hat{e}_1),\dots,h(\hat{e}_m))\where h\in E\}\subseteq \F_2^m,
\]
where $E$ is a subspace of $C^1(X',\F_2)$   corresponding 
to the space with same name constructed in the iterative
process of \S\ref{subsec:finding-int-data-intro}
(or formally in Construction~\ref{CN:modification-process}).
We terminate the process when $\dim E$ becomes close
to $m$ to guarantee that $\dim E\ll m$. 
%
%
%
%
%
%For the sake of simplicity, we explain what
%how the space $E$ might
%
%For the sake of simplicity, we only explain what $E$ will be after
%one step of that process: let  $z_1,\dots,z_t\in Z^1(X',\F_2)$ be $1$-cocycles representing a basis of $\dim\HH^1(X',\F_2)$. Then after one step 
%
%If the iterative process would end after $1$ step (which will not be the case in our situation),
%then $E$ would be the span of 
%
%For the sake of simplicity, we illustrate
%how the space $E$ might look 

For the sake of simplicity, we only explain what
$E$ would be if the process were to end after $1$ step (which is not the case here,
see \S\ref{subsec:growth}).
Let $z_1,\dots,z_t\in Z^1(X',\F_2)$ be $1$-cocycles representing a basis of $ \HH^1(X',\F_2)$.
Then, after one step of the process, $E$ will be the $\F_2$-span
of $z_1,\dots,z_t$. The next steps of the process add more vectors to $E$.
We would like to choose $m$ be enough in advance so that
$\dim E\ll m$. It is expected that if Serre's Conjecture
on the congruence subgroup property (Conjecture~\ref{CJ:csp}) holds, then
$t=\dim\HH^1(X',\F_2)$   grows logarithmically in $m$.
(However,
choosing $X_0$ and $X'$ to be $X$ and one of the $X'_r$ from Theorem~\ref{TH:good-quotient}  
guarantees $t=O(1)$ as
$m$ grows.)

\paragraph*{Validity of The Construction.}

Provided that our assertion on  the logarithmic growth of $\dim\HH^1(X',\F_2)$
holds,  we have
the following:

\begin{thm}[{informal; see Theorem~\ref{TH:main-candidates}, Remark~\ref{RM:main-candidates}}]\label{TH:example-code}
	Under the previous assumptions, there is $q_0\in \N$ such that if $q\geq q_0$,
	then, with probability $1-o(1)$ as $m\to\infty$, we have the following:
	\begin{enumerate}[label=(\roman*)]
		\item The family $\{C_r\subseteq \Sigma^{n_r}\}_{r\in\N}$
	is a family of $2$-query LTCs with linear distance.
		The soundness of the testing does not depend on $m$.
		If Conjecture~\ref{CJ:into-hard} holds, then the codes in the family
		have constant rate.
		\item There is $\eta>0$
	such that each $C_r$ 
	admits a  linear-time decoding
	algorithm able to correct up to $\eta n_r$ errors.
	\end{enumerate}
	In particular, there is $m_0=m_0(q)$ such that for every $m\geq m_0$,
	there is a choice of $E$ for which both (i) and (ii) hold.
%	
%	If $q$ is larger than a certain $q_0\in \N$,
%	then   there is $m_0=m_0(q)$
%	such for every 
%	$m\geq m_0 $, then there is a choice of $E$ 
%	for  $\{C_r\subseteq \Sigma^{n_r}\}_{r\in\N}$
%	is a    family of LTCs with linear distance.
%	Moreover, there is $\eta>0$
%	such that each $C_r$ 
%	admits a  linear-time error correcting
%	algorithm able to correct up to $\eta n_r$ errors.
%	If $q$ is larger than a certain $q_0\in \N$,
%	then   there is $m_0=m_0(q)$
%	such for every 
%	$m\geq m_0 $, then there is a choice of $E$ 
%	for  $\{C_r\subseteq \Sigma^{n_r}\}_{r\in\N}$
%	is a    family of LTCs with linear distance.
%	Moreover, there is $\eta>0$
%	such that each $C_r$ 
%	admits a  linear-time error correcting
%	algorithm able to correct up to $\eta n_r$ errors.
\end{thm}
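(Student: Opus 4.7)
The plan is to recognize the family $\{C_r\subseteq \Sigma^{n_r}\}_{r\in\N}$ as the family of $0$-cocycle codes arising from the tower paradigm (Theorem~\ref{TH:tower-paradigm}) applied to a particular sheaved high-dimensional expander $(X_0,\quo{\calF})$, and then to verify its three hypotheses~\ref{item:tower-cond-intro}--\ref{item:rate-cond-intro}. First, I would assemble the sheaf on $X_0$ corresponding to the initial data: take $\calF$ to be the pushforward along $p\colon X'\to X_0$ of the constant sheaf $\calF_{\F_2}$ on $X'$, which is locally constant of dimension $m$ (\S\ref{subsec:pushforward}); the enumerations $\hat{x}_1,\dots,\hat{x}_m$ trivialize $\calF$ on every nonempty face, and under this trivialization the restriction map $\res^{\calF}_{e\from u}$ is exactly the permutation matrix $T_{e,u}$ determined by $\sigma_{e,u}$. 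Via Shapiro's lemma (Lemma~\ref{LM:Shapiro}), $\HH^i(X_0,\calF)\cong\HH^i(X',\F_2)$, so the logarithmic (or $O(1)$) bound on $\dim\HH^1(X',\F_2)$ provided by Theorem~\ref{TH:good-quotient} (unconditionally, if $X'$ is chosen from that theorem, or conditionally on Serre's congruence subgroup property) gives $\dim\HH^1(X_0,\calF)\ll m$.

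Next I would feed $\calF$ into the modification process of Construction~\ref{CN:modification-process}, which produces the subspace $E\subseteq C^1(X_0,\calF)$ (identified with $C^1(X',\F_2)$ by the pushforward) and the quotient sheaf $\quo{\calF}=\calF/\calC$, where $\calC$ is the smallest subsheaf containing $E$ in degree $1$. The key identification is that with $C_e$ defined by evaluating $E$ on the fiber $\{\hat{e}_1,\dots,\hat{e}_m\}$, the local constraint \eqref{EQ:two-query-test} on an edge $e\in X_r(1)$ lying over $e_0\in X_0(1)$ is precisely the condition that $d_0 f$ vanish in $(\quo{\calF})(e_0)=\calF(e_0)/\calC(e_0)$ after pullback along $X_r\to X_0$. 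Because $\calC$ vanishes on vertices, $\quo{\calF}(v)\cong \F_2^m=\Sigma$ for every $v\in X_0(0)$, and $C_r$ coincides with $Z^0(X_r,p_r^*\quo{\calF})\subseteq\Sigma^{X_r(0)}$ where $p_r\colon X_r\to X_0$ is the covering. The natural $2$-query tester for the cocycle code is exactly the tester defined by \eqref{EQ:two-query-test}.

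It remains to verify the three conditions of the tower paradigm for $(X_0,\quo{\calF})$. Condition~\ref{item:tower-cond-intro} is given by the LSV tower, refined to a tower of double coverings. For condition~\ref{item:LTC-cond-into}, proper links of $X_0$ are spherical buildings, so $\calF_z$ is a constant augmented sheaf on a thick spherical building and hence an excellent coboundary expander by \cite{Lubotzky_2016_expansion_of_buildings,Kaufman_2018_cosystolic_expanders}; since the modification only alters $\calF$ on faces of positive dimension and $\dim\calC$ is small, Corollary~\ref{CR:securing-expansion-for-quotients} (built on Theorem~\ref{TH:sheaves-on-quo-of-aff-buildings-quotients}) shows that for $q$ exceeding a threshold $q_0$ the sheaf $\quo{\calF}_z$ remains a sufficiently good coboundary expander for each nonempty $z$, with high probability over the choices made in the iterative process. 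For condition~\ref{item:rate-cond-intro}, the modification process is designed so that at termination $\dim\HH^0(X_0,\quo{\calF})>\dim\HH^1(X_0,\quo{\calF})$; invoking Conjecture~\ref{CJ:into-hard} (or the finer Conjecture~\ref{CJ:into-easy} via Conjecture~\ref{CJ:dimension-of-E2}) guarantees that the process terminates with $\dim E\leq f(\dim\HH^1(X_0,\calF))\ll m$, which is also what makes~\ref{item:LTC-cond-into} survive the quotienting.

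With \ref{item:tower-cond-intro}--\ref{item:rate-cond-intro} established, Theorem~\ref{TH:tower-paradigm} yields part~(i): the family $\{C_r\}$ is a family of $2$-query LTCs with linear distance and, conditionally on Conjecture~\ref{CJ:into-hard}, constant rate, and the soundness depends only on the local expansion constants of $(X_0,\quo{\calF}_z)$, hence not on $m$. Part~(ii), the linear-time decoder correcting $\Theta(n_r)$ errors, is the decoding statement of Proposition~\ref{PR:decoding} applied to the cosystolic expander $\quo{\calF}$ and its pullbacks. The last clause, the existence of a deterministic $m_0=m_0(q)$ and a choice of $E$ realizing (i) and (ii), follows because the probabilistic statement has probability $1-o(1)$, so for $m$ large enough at least one realization exists. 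I expect the main obstacle to be the simultaneous control required in condition~\ref{item:LTC-cond-into}: one must show that after the iterative modification the \emph{quotient} sheaves on the links retain coboundary expansion, which is why the process has to terminate with $\dim E\ll m$, and this is precisely where either Conjecture~\ref{CJ:into-hard} (ending on its own) or a careful forced early termination combined with Conjecture~\ref{CJ:into-easy} is needed.
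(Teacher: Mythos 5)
Your proposal is correct and follows essentially the same route as the paper: it identifies the codes $C_r$ as the $0$-cocycle codes of the pullbacks of $\quo{\calF}=\calF/\calC_E$, where $\calF$ is the pushforward of the constant sheaf along $X'\to X_0$ (so that the $T_{e,u}$ are its restriction maps and $C_e=\calC_E(e)$), and then verifies the three prerequisites of the tower paradigm exactly as in Theorem~\ref{TH:main-candidates} and Remark~\ref{RM:main-candidates} — Shapiro's lemma plus Theorem~\ref{TH:good-quotient} to control $h^1(\calF)$, Corollary~\ref{CR:securing-expansion-for-quotients} for the local expansion of the quotient, and the conjectures for rate conservation. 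The only cosmetic difference is that the paper's formal Theorem~\ref{TH:main-candidates} inflates the sheaf via products with the acyclic sheaf $\calG$ of Theorem~\ref{TH:sheaves-with-small-coh} rather than by increasing the covering degree directly, but this does not change the substance of the argument.
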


The dependence on the logarithmic growth of $\dim\HH^1(X',\F_2)$
can be avoided by replacing the Ramanujan complexes
of \cite{Lubotzky_2005_explicit_constructions_of_Ramanujan_complexes}
with the complexes from Theorem~\ref{TH:good-quotient} below,
but we still rely on Conjecture~\ref{CJ:into-hard}
for having constant rate.

%The theorem is proved in \gap{\S\ref{?}}.
%It is a special case of Theorem~\ref{TH:??}, which gives the most general form of our construction
%of an infinite family of 
%$2$-query LTCs with linear distance and conjectural constant rate.

\subsection{Organization of The Paper}
\label{subsec:paper-organization}

The paper is divided into three chapters and includes two appendices.

Chapter~\ref{chap:foundations} sets the foundations for the connection between
expanding sheaves and codes:
The preliminary Section~\ref{sec:preliminaries}
recalls necessary facts about simplicial complexes.
The subject matter of Section~\ref{sec:sheaves} is sheaves
on simplicial complexes, their cohomology, and associated tools
such as the pushforward and pullback constructions.
In Section~\ref{sec:coboundary}, we introduce and discuss %expanding sheaves, and in particular
coboundary and cosystolic expansion of sheaves.
Section~\ref{sec:locally-min} concerns with  the  notion
of a \emph{locally minimal cochain} and the  expansion 
of such cochains as a mean to get cosystolic expansion.
In Section~\ref{sec:ltc-and-css}, we explain how sheaves
give rise to  codes with a tester and quantum CSS codes, and formalize the connection
between the expansion of the sheaf and various properties of the code.

Chapter~\ref{chap:tower} presents the tower paradigm:
Section~\ref{sec:cse-from-links}
presents a \emph{local-to-global principle} which allows us to establish cosystolic expansion
of sheaves
from information about their restrictions to proper links. 
This principle is applied to some examples
of cocycle codes in Section~\ref{sec:quo-aff-buildings}.
In Section~\ref{sec:rate}, we prove our \emph{rate conservation} result. 
Section~\ref{sec:recap} puts the previous results together to give 
the \emph{tower paradigm}, a framework
for constructing 
an  infinite family of good $2$-query LTCs from a sheaved high-dimensional expander.

Finally, Chapter~\ref{chap:initial-data} concerns with 
constructing sheaved complexes which can serve as candidates for the initial data of the tower paradigm.
In Section~\ref{sec:modifying}, 
we introduce an iterative process which takes   special
locally constant sheaves as input and  produces the desired candidates.
Section~\ref{sec:congruence} concerns with constructing   simplicial complexes
covered by affine buildings with some special properties, e.g., an infinite tower
of double coverings. These examples are then used in Section~\ref{sec:candidates}
to construct the locally constant sheaves required for the iterative process,
as well as other examples of interest.

Appendix~\ref{sec:comparison} explains the connection between sheaves on simplicial complexes
as defined here and the familiar sheaves on topological spaces. 
Appendix~\ref{sec:derived} shows that the sheaf cohomology we define 
in this work by elementary means is  
actually a \emph{right derived functor} and thus (from a mathematical point of view) deserves the
name ``cohomology''.

\chapter{Foundations}
\label{chap:foundations}

\section{Preliminaries}
\label{sec:preliminaries}

If not indicated otherwise, throughout this work,  simplicial complexes are finite,
and vector spaces are finite dimensional.
We always let $X$ denote a simplicial complex and $\F$   a field.

\subsection{Simplicial Complexes}
\label{subsec:complexes}

As usual, a simiplicial complex $X$ with vertex set $V=V(X)$
is a nonempty set consisting of finite subsets of $V$ such that  
$\{v\}\in X$
for all $v\in V$ and every subset of a set in $X$ is also in $X$.
Elements of $X$ are the faces of $X$ and elements of $V(X)$
are the vertices of $X$. A face with $k+1$
vertices is said to be of dimension $k$, or a $k$-face. 
The set of $k$-faces of $X$ is denoted $X(k)$.
Faces of dimension $1$ are called edges, faces of dimension $2$
are called triangles, and so on.
Note, however, that a $0$-face and a vertex are not the thing --- the $0$-face corresponding
to a vertex $v\in V(X)$ is the singleton $\{v\}$.
The dimension of $X$, denoted $\dim X$, is the maximal $k\in \{-1,0,\infty\}\cup\N$
for which $X(k)\neq\emptyset$.

A graph is a $1$-dimensional simplicial complex. The underlying graph
of a simplicial complex $X$ is $X(\leq 1):=X(-1)\cup X(0)\cup X(1)$.

If $(Y,\leq )$ is a partially ordered set, we will say that $Y$
is a simplicial complex if there is an isomorphism of partially
ordered set $(Y,\leq)\cong (X,\subseteq)$ with $X$ a simplicial complex.
We   then ascribe all the notation involving $X$ to $Y$ via this isomorphism;
the choice of the isomorphism will always be  be inconsequential.

The topological realization of a simplicial complex $X$ is denoted $|X|$.
We ascribe topological properties of $|X|$ to $X$, e.g.,
$X$ is said to be connected if $|X|$ is connected.
This condition is equivalent to saying
that the underlying graph of $X$ is connected.

Given $A\subseteq X$ and $z\in X$, we
write 
\[ A_{\supseteq z}=\{x\in A\suchthat x\supseteq z\}
\qquad
\text{and}
\qquad 
A_{\subseteq z}=\{x\in A\suchthat x\subseteq z\}.\]
In particular, $X(k)_{\supseteq z}$ (resp.\ $X(k)_{\subseteq z}$)
is the set of $k$-faces containing (resp.\ contained in) $z$. We further
let 
\[
A_z =\{x-z\where x\in A_{\supseteq z}\}.
\]
The set $X_z$ is a simplicial complex known as the \emph{link} of $X$ at $z$.
Note that $X_\emptyset=X$; 
when $z\neq \emptyset$, we call $X_z$ a \emph{proper link} of $X$.
We say that $X$ is \emph{strongly connected} if all the links of $X$ are connected.

Given $0\leq i<j$, we define the $(i,j)$-degree of $X$ to be
\[
D_{i,j}(X)=\max\{\#X(j)_{\supseteq z}\where z\in X(i)\},
\]
i.e., the largest possible number of $j$-faces containing a fixed $i$-face.
The degree of $X$ is
$D(X)=D_{0,\dim X}(X)$.
If every face of $X$ is contained in a $d$-face,
then the degree of $X$ is related to the $(i,j)$-degree by
\begin{equation}\label{EQ:degree-ratio}
D_{i,j}(X)\leq \schoose{d+1}{j+1} D_{i,d}(X)\leq \schoose{d+1}{j+1} D (X).
\end{equation}

An ordered face in $X$ is a face $x\in X$
together with a total ordering of its vertices. Ordered
faces will be written as tuples of vertices, e.g.\ $x=(v_0,\dots,v_i)$, which
indicates that $v_0<\dots<v_i$. 
We let
$X_{\ord}$ denote the set of ordered faces in $X$, and $X_{\ord} (k)$
 the subset of ordered $k$-faces. 
If $x\in X_{\ord}(k)$, then we write $x_i$ for the ordered face
obtained from $x=(v_0,v_1,\dots,v_k)$ by removing the vertex $v_i$.
We shall freely regard ordered faces
as non-ordered faces by forgetting the ordering.
If $x=(u_0,\dots,u_i),y=(v_0,\dots,v_j)\in X_{\ord}$ are ordered faces such that
$x\cap y=\emptyset$ and 
$x\cup y\in X$ (here we regarded $x$, $y$ as non-ordered faces),
then the concatenation $xy$ denotes the ordered
face $(u_0,\dots,u_i,v_0,\dots,v_j)$.

\subsection{Weights}
\label{subsec:weights}

Recall that
a simplicial complex $X$ is called
\emph{pure of dimension $d$} ($d\geq 0$), or a \emph{$d$-complex} for short, if every face of $X$
is contained in a $d$-face.
In this case, following \cite{Lubotzky_2016_expansion_of_buildings}, 
\cite{Evra_2016_cosystolic_expanders_arxiv_version}, \cite{Kaufman_2018_cosystolic_expanders} and other sources, we  
define the   \emph{canonical weight} of a $k$-face $x\in X(k)$ to be
\[
w(x)=w_X(x)={\textstyle {d+1\choose k+1}}^{-1}|X(d)|^{-1} |X(d)_{\supseteq x}|.
\]
Given $A\subseteq X(k)$, 
we also write   $w(A)=\sum_{x\in A}w(x)$.
The weight $w(x)$ is the probability of obtaining $x$ by 
choosing a $d$-face $y \in X(d)$ uniformly at random and then   choosing
a $k$-face of $y$ uniformly at random. This readily implies that $w(X(k))=1$ for all $-1\leq k\leq d$,
and
%$w(\emptyset)=1$   
\begin{align}\label{EQ:weight-of-containing-cells}
w(X(\ell)_{\supseteq x})=\schoose{\ell+1}{k+1} w(x)
%w(x)&={\textstyle {\ell+1\choose k+1}}^{-1}\sum_{y\in X(\ell)_{\supseteq x}} w(y)
\end{align}
for all $-1\leq k\leq \ell\leq d$ and $x\in X(k)$.

\begin{example}\label{EX:regular-graph-weights}
	If $X$ is a $k$-regular graph with $n$ vertices, then $X$
	has $\frac{1}{2}nk$ edges and so the canonical weight function 
	of $X$ is given by
	\[
	w(x) = \left\{\begin{array}{cl}
	1 & x=\emptyset \\[2pt]
	\frac{1}{n} & x\in X(0) \\[2pt]
	\frac{2}{kn} & x\in X(1).
	\end{array}\right.
	\]
\end{example}

Suppose that $X$ is a $d$-complex
and let $z\in X(i)$. Then the link $X_z$
is a $(d-1-i)$-complex. It is straightforward to check
that  the canonical weight functions
of $X$ and   $X_z$ are related by   the formula
\begin{align}\label{EQ:weight-in-link}
w_{X }(x ) &=
\schoose{k+1}{i+1}  w_X(z)  w_{X_z}(x-z),
\end{align} 
which holds for all $x\in X(k)_{\supseteq z}$ and $k\geq i$. 
 
%\gap{Define morphisms of simplicial complexes. Covering maps.}

\subsection{Coverings}
\label{subsec:coverings}

Let $X$ and $Y$ be    simplicial complexes. A morphism of simplicial complexes
from $Y$ to $X$   is a function $f:V(Y)\to V(X)$ such that $f(y):=\{f(v)\where v\in y\}\in X$ for all 
$y\in Y$. The morphism $f:Y\to X$ is \emph{dimension-preserving} if $\dim f(y)=\dim y$
for all $y\in Y$, and a \emph{covering map} if
$f$ is onto and it induces  a bijection from 
$Y_{\supseteq y}$ to $X_{\supseteq f(y)}$ for all $y\in Y(0)$. 
The latter is equivalent to saying that the continuous map $|f|:|Y|\to |X|$
is a covering map of topological spaces.
Covering maps
are dimension-preserving.

If there exists a covering map $f:Y\to X$, we say that $Y$ covers $X$. In
this case, if $Y$ is a $d$-complex if and only if $X$. In addition, if
$Y$ is connected, then $Y$ is strongly connected if and only if $X$ is.

A covering $f:Y\to X$ is said to be of \emph{degree $e$}
if $|f^{-1}(v)|=e$ for every $e\in V(X)$; we then write $\deg f=e$
or $[Y:X]=e$ (suppressing $f$). In this case, for every non-empty
face of $X$, there are exactly $e$ faces in $Y$ which map to it under $f$.
If $X$ is connected and $f:Y\to X$ is a covering, then the size of $|f^{-1}(v)|$ is independent of $v$,
so every covering of a connected simplicial complex 
has a well-defined degree. A covering map of degree $2$ is called a 
\emph{double covering}.

\medskip

Let $G$ be a group. A \emph{$G$-Galois covering} of simplicial
complexes consists of a covering map $p:Y\to X$ and $G$-action $G\times Y\to Y$
such that 
\begin{enumerate}[label=(\arabic*)]
	\item for every $g\in G$, the map $v\mapsto gv:V(Y)\to V(Y)$  is an     automorphism of $Y$,
	\item $p(gy) =p(y)$ for all $y\in Y$, and
	\item for every $x\in X$, the action of $G$ on $Y$
	restricts to an action on $p^{-1}(x)$, and $G$ acts simply and transitively
	on $p^{-1}(x)$.
\end{enumerate} 
We will often simply say that $p:Y\to X$ is a $G$-Galois covering,
suppressing the $G$-action.

Condition (3) implies that  a $G$-Galois covering must be of degree $|G|$.
The converse is false, however --- $X$ may admit coverings of degree $|G|$ which cannot 
be realized a $G$-Galois coverings. In more detail, if both $X$ and $Y$
are connected and $p:Y\to X$ is a covering map, then, by fixing a base point
$y\in |Y|$, we may realize $\pi_1(X):=\pi_1(|X|,p(y))$  
as a subgroup of $\pi_1(Y)=\pi_1(|Y|,y)$. The covering $p:Y\to X$ can be realized
as a $G$-Galois covering 
if and only if $\pi_1(Y)$ is a normal subgroup of $\pi_1(X)$. In this case,
$G\cong \pi_1(X)/\pi_1(Y)$, and the evident map $G\to \Aut(Y/X):=\{f:Y\to Y\suchthat p\circ f=p\}$
is an isomorphism.

\begin{example}\label{EX:coverings-basic}
	(i) Let $C_2$ denote the cyclic group with two elements. Every
	double covering $p:Y\to X$ is $C_2$-Galois in an  unique way.
	Simply let the nontrivial element of $C_2$ act on $Y$ by sending $v\in V(Y)$
	to the over vertex of $Y$ mapping to $p(v)$.
	
	(ii) If $p:Y\to X$ is a covering map and $Y$ is contractible (and hence connected), then
	$|Y|$ must coincide with the universal covering of $|X|$.
	This means that $p:Y\to X$ is Galois with Galois group
	$\Aut(Y/X)\cong \pi_1(X)$.
	
	(iii) Let $G$ be any group, let $Y$ denote the disjoint
	union of $|G|$ copies of $X$ and give it the $G$-action permuting these copies.
	Let $p:Y\to X$ be the map which restricts to the identity
	on each copy of $X$. Then $p:Y\to X$ is a $G$-Galois covering
	called the \emph{trivial $G$-Galois covering} of $G$. Note that $Y$
	is not connected if $|G|>1$. Moreover, the evident map $G\to \Aut(Y/X)$
	is not an isomorphism if $|G|>2$.
\end{example}

\subsection{Skeleton and Spectral Expansion}
\label{subsec:skeleton}

%We finish with recalling the definition of skeleton expansion, which is a quantitative
%property of the simplicial complex $X$.

Let $X$ be a $d$-complex.
Given a set of $0$-faces $S\subseteq X(0)$, we write
$E(S)$ for the set of edges in $X$ having both of their $0$-faces in $S$.
Recall from \cite[Definition~2.5]{Kaufman_2018_cosystolic_expanders} that   $X$ is said to be an 
\emph{$\alpha$-skeleton expander} ($\alpha\in [0,\infty)$) 
if for every   $S\subseteq X(0)$, we have
\[
w(E(S))\leq w(S)^2+\alpha w(S),
\]
%\gap{[Should it be $w(E(S))\leq 2w(S)^2+\alpha w(S)$? --- No!]}
where $w$ is the canonical weight function of $X$ defined in \S\ref{subsec:weights}.
The complex $X$ is considered more skeleton expanding the smaller $\alpha$ is.

Let $C^0(X,\R)$ denote the $\R$-vector space
of functions $f:X(0)\to \R$.
Following 
\cite{Oppenheim_2015_vanishing_of_cohomology},
\cite[\S2A]{First_2021_weighted_mixing_lemmas_preprint}
and similar sources, we define the weighed adjacency operator of
$X$ to be the linear operator $\calA:C^0(X,\R)\to C^0(X,\R)$ 
defined by $(\calA f)(x)=\sum_{e\in X(1)_{\supseteq x}} \frac{w(e)}{2w(x)}f(e-x)$
($f\in C^0(X,\R)$, $x\in X(0)$).
For example, if $X$ is a $k$-regular graph, then $\calA$ is the usual
adjacency operator of $X$ scaled by a factor of $\frac{1}{k}$. 
Let $C^0_{\circ}(X,\R)$ denote the subspace of $C^0(X,\R)$ consisting of functions
$f$ with $\sum_{v\in X(0)} f(v)=0$. 
Given an interval $I\subseteq [-1,1]$, we say that the underlying 
weighted graph of 
$X$ is a \emph{spectral $I$-expander}
if the spectrum of $\calA:C^0_{\circ}(X,\R)\to C^0_{\circ}(X,\R)$
is contained in $I$.\footnote{
	Caution: The spectral expansion
	of the underlying weighted graph of $X$
	takes into account the canonical weight function of
	$X$ and therefore depends on the higher-dimensional faces of $X$.
}

It follows readily from the Weighted Expander Mixing Lemma
\cite[Theorem~3.3(ii)]{First_2021_weighted_mixing_lemmas_preprint} that
if the underlying weighted graph of $X$ is a $[-1,\lambda]$-spectral expander for some
$\lambda\in [0,1]$, then $X$ is a $\lambda$-skeleton expander. 

\medskip

Given $k\in\{-1,0,\dots,d-1\}$ and a $d$-complex $X$,
we   say that $X$ is a \emph{$k$-local $[-1,\lambda]$-spectral expander}
(resp.\ \emph{$k$-local $\alpha$-skeleton expander}) if,
for every $z\in X(k)$,
the underlying weighted graph of $X_z$ is a $[-1,\lambda]$-spectral expander
(resp.\ $X_z$ is an $\alpha$-skeleton expander).

\subsection{Buildings}
\label{subsec:buildings}

Buildings are possibly-infinite connected simplicial complexes which have 
certain remarkable  structural properties.
They will play a role in some of the examples we consider later on.
%, e.g., they
%are excellent skeleton expanders.
Contrary to our standing assumption
that simplicial complexes are finite, \emph{buildings can 
be infinite  if not otherwise stated}. 

We omit the techincal definition of a buidling, which can be found in \cite{Abramenko_2008_Buildings},
for instance, and satisfy with recalling here some facts about buildings needed for this work. 
We shall only consider buildings $Y$ admitting a \emph{strongly transitive action}
in the sense of \cite[\S6.1.1]{Abramenko_2008_Buildings},
and the word ``building'' will always mean a ``building admitting a strongly transitive action''. 
This means that there is a group
$G$ acting on $Y$ and satisfying the transitivity properities listed in {\it op.\ cit.}.

To every building $Y$ one can attach a Coxeter diagram $T=T(Y)$, called the type of $Y$, which is a
finite undirected graph whose edges
are given labels from the set $\{3,4,5,\dots\}\cup\{\infty\}$. The complex $Y$ is pure of dimension 
$|V(T)|-1$. In fact, there is a labeling $t:V(Y)\to V(T)$
such that every face in $Y$ consists of vertices with different labels.
Coxeter diagrams appearing on the list in
\cite[p.~50]{Abramenko_2008_Buildings}
are called \emph{spherical}, whereas the ones
described in \cite[Remark~10.33(b)]{Abramenko_2008_Buildings} 
are called \emph{affine}.
We call $Y$ spherical or affine if $T$ is spherical or affine, respectively.
If $Y$ is spherical, then $|Y|$ is homotopy equivalent
to a bouquet of   spheres of dimension $\dim Y$. 
If $Y$ is affine, then $Y$ is contractible.
Finite buildings are spherical.

It will be convenient  to treat any nonemtpy $0$-dimensional simplicial complex 
as a spherical building  of dimension $0$ with Coxeter diagram consisting of a single point.\footnote{
	The correct analogue of a $0$-dimensional building with
	a strongly transitive action a \emph{Moufang set}, but this will not be needed in this work.
}

Let $Y$ be a $d$-dimensional building.
If $z\in Y$ is a face of dimension $\leq d-1$, then the link $Y_z$ is also a building.
If $Y$ is spherical or affine and $z\neq \emptyset$, then $Y_z$ is spherical (in both cases).
The building $Y$ is called $q$-thick ($3\leq q\in\N$) if every $x\in Y(d-1)$ is contained in at least $q$ 
$d$-faces. 

\begin{example}\label{EX:An-building}
	Let $\F$ be a field and let $n\in\N$.
	Write $A_n(\F)$ for the incidence complex  of nontrivial subspaces of $\F^{n+1}$.
	That is,
	the 
	vertices of $A_n(\F )$ are the nonzero proper subspaces of $\F^{n+1}$
	and its faces are the sets of vertices which are totally ordered by inclusion.
	Then $A_n(\F)$ is an $(n-1)$-dimenional spherical building. Its type is $A_n$ --- the
	Coxeter diagram consisting of a single path with $n$ vertices and having all edges labeled $3$.
	If $|\F|\geq q$, then $A_n(\F)$ is $(q+1)$-thick.
	
	When $n=2$, the graph $A_2(\F)$ is nothing but the incidence graph
	of points and lines in the $2$-dimensional projective space over $\F$.
\end{example}

We refer the reader to \cite[\S6.9]{Abramenko_2008_Buildings} 
and \cite{Abramenko_2002_models_for_affine_buildings}
for the description of some affine buildings. 
More generally, Bruhat and Tits \cite{Bruhat_1972_groupes_reductifs_corps_local_I}
(see also \cite{Tits_1979_reductive_groups_over_local_fields}) showed that one can attach
to every almost-simple simply-connected algebraic group $\bfG$ over a local field
$F$ an affine building $Y$ equipped with a strongly transitive
action by the group $G=\bfG(F)$. For example, given a prime
number $p$, the group $G$ can be taken to be $\nSL{\Q_p}{n}$
(with $\bfG=\uSL_n$, $F=\Q_p$), in which case the 
corresponding affine building is the one described in \cite[\S6.9]{Abramenko_2008_Buildings}. 
It has type $\tilde{A}_{n-1}$ (a cycle graph on $n$ vertices with
all edges labeled $3$), dimension $n-1$
and it is $(p+1)$-thick. 
Moreover, it is locally finite, i.e., every nonempty face is contained in finitely many faces. 
%If there $X$ is a simplicial complex
%admitting a covering map $f:Y\to X$, then $|Y|$ must be the universal covering of $|X|$ (because

\medskip 

We will be particularly interested in finite spherical buildings,
and finite simplicial complexes $X$ admitting a covering map $f:Y\to X$
with $Y$ being an affine building. (In the latter case, $|Y|$ is the universal covering of $|X|$,
because $|Y|$ is contractible.)
Such complexes $X$ are good spectral expanders,
and by \S\ref{subsec:skeleton}, also good skeleton expanders. Formally:
%By \cite[Theorem~7.2]{First_2021_weighted_mixing_lemmas_preprint} 
%and the last paragraph of \S\ref{subsec:skeleton}, such complexes $X$ are good skeleton expanders.

\begin{thm}[{\cite[Theorem~7.2]{First_2021_weighted_mixing_lemmas_preprint}}]
\label{TH:skeleton-exp-building}
	Let $q\in\{3,4,5,\dots\}$ 
	and let $X$ be a (finite) simplicial complex such that one of the following holds:
	\begin{enumerate}[label=(\arabic*)]
		\item $X$ is a finite $q$-thick spherical building of dimension $d\geq 1$.
		\item There is a covering map $f:Y\to X$ such that $Y$ is a $q$-thick affine building
		of dimension $d\geq 2$.
	\end{enumerate}
	Let $L$ denote
	the set of edge	labels appearing in the Coxeter diagram of the building
	mentioned in (1) or (2), and let $m=\max (L\cup \{2\})$.\footnote{
		We   have $m\leq 8$ if (1) holds and $m\leq 6$ if (2) holds,
		see \cite[Chapter~9]{Abramenko_2008_Buildings}.
	}
	Write $d=\dim X$ and suppose that
	$q\geq d^2(m-2)$. Then the underlying graph of 
	$X$ is a $[-1,\alpha]$-spectral expander
	(and thus $X$ is an $\alpha$-skeleton expander) for
	\[\alpha = \frac{\sqrt{m-2}}{\sqrt{q}-(d-1)\sqrt{m-2}}.\]
\end{thm}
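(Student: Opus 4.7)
The plan is to establish the bound through a two-stage reduction: first handle rank-2 spherical buildings as a base case using explicit spectral information, then bootstrap up in dimension via Oppenheim's Trickling Down Theorem, and finally translate spectral expansion into the claimed skeleton expansion via the Weighted Expander Mixing Lemma already cited in \S\ref{subsec:skeleton}.

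\textbf{Step 1 (Base case: rank-2 spherical buildings).} The first thing I would do is nail down the spectrum of the underlying graph of a $q$-thick $1$-dimensional spherical building, i.e.\ a generalized $m$-gon with parameters $(s,t)$ satisfying $s,t \geq q-1$. By Feit--Higman we have $m\in\{2,3,4,6,8\}$, which matches the constraint $m\leq 8$ mentioned in the footnote. The nontrivial eigenvalues of the (suitably normalized) adjacency operator of a thick generalized $m$-gon are explicitly known, and a direct calculation shows that the largest nontrivial eigenvalue is bounded by $\sqrt{m-2}/\sqrt{q}$. This is exactly the value of $\alpha$ in the theorem when $d=1$.

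\textbf{Step 2 (Trickling down to higher-dimensional spherical buildings).} Next I would invoke Oppenheim's Trickling Down Theorem \cite{Oppenheim_2015_vanishing_of_cohomology}, which states: if $X$ is a strongly connected pure $d$-complex, all vertex links are connected, and the weighted underlying graph of each vertex link has second eigenvalue at most $\lambda$, then the weighted underlying graph of $X$ itself has second eigenvalue at most $\lambda/(1-(d-1)\lambda)$. For a $d$-dimensional $q$-thick spherical building, every proper link is again a spherical building which is also $q$-thick. Iterating Step~1 at vertex links (whose dimension is $d-1$) and plugging $\lambda = \sqrt{m-2}/\sqrt{q}$ into the trickling-down formula yields exactly
\[
\alpha \;=\; \frac{\sqrt{m-2}/\sqrt{q}}{1 - (d-1)\sqrt{m-2}/\sqrt{q}} \;=\; \frac{\sqrt{m-2}}{\sqrt{q}-(d-1)\sqrt{m-2}},
\]
and the hypothesis $q\geq d^2(m-2)$ is what keeps the denominator of the trickling-down formula positive (and actually bounded away from $0$) throughout the induction.

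\textbf{Step 3 (Quotients of affine buildings).} For case (2), the point is that a covering map $p:Y\to X$ restricts to an isomorphism on each proper link, so the proper links of $X$ are isomorphic to proper links of the affine building $Y$. Every proper link of an affine building is a spherical building with the same thickness and with Coxeter diagram obtained by deleting one vertex from the affine diagram; in particular its dimension is $d-1$ and the maximal edge label in its diagram is still bounded by $m$. Thus each vertex link of $X$ is a $q$-thick spherical building of dimension $d-1$ to which Step~2 applies, giving the second-eigenvalue bound $\sqrt{m-2}/(\sqrt{q}-(d-2)\sqrt{m-2})$ at each vertex link. One more application of Oppenheim's trickling-down theorem to $X$ itself --- valid since $X$, being covered by a contractible building, is strongly connected and all its vertex links are connected --- produces the same formula for $\alpha$. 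Finally, the spectral bound transfers to the skeleton-expansion bound via the Weighted Expander Mixing Lemma \cite[Theorem~3.3(ii)]{First_2021_weighted_mixing_lemmas_preprint}.

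\textbf{Main obstacle.} The delicate step is Step~1: obtaining a clean, uniform bound on the nontrivial spectrum of generalized $m$-gons in terms of $m$ and the thickness parameter, rather than in terms of the two orders $(s,t)$ separately. This requires the right normalization of the adjacency operator (using the canonical weights of \S\ref{subsec:weights}, which on a bipartite biregular graph differ from the usual normalization) and some care to express the worst-case eigenvalue as a function of $\min(s,t) \geq q-1$ only. Everything else is inductive bookkeeping with the trickling-down formula.
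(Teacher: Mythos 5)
Your proposal is sound, and it reconstructs what is almost certainly the argument of the cited source: the paper itself gives no proof of this theorem but imports it from \cite[Theorem~7.2]{First_2021_weighted_mixing_lemmas_preprint}, and the standard route there is exactly yours --- explicit Feit--Higman spectral data for thick generalized $m$-gons giving the base bound $\sqrt{m-2}/\sqrt{q}$, iterated application of Oppenheim's Trickling Down Theorem (which the present paper uses in the same $\lambda\mapsto\lambda/(1-\lambda)$ form elsewhere, e.g.\ in the proof of Theorem~\ref{TH:expansion-of-small-sets}), and the Weighted Expander Mixing Lemma to pass from spectral to skeleton expansion. Your identification of Step~1 (a uniform bound in terms of the thickness alone, under the canonical biregular normalization) as the only delicate point is accurate.
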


The Ramanujan complexes of \cite{Lubotzky_2005_Ramanujan_complexes_Ad} 
and \cite{Li_2004_ramanujan_hypergraphs} are
simplicial complexes covered by affine buildings of type $\tilde{A}_n$.
However, the 
spectral (resp.\ skeleton) expansion of their underlying weighted graph
is much better than the bound provided by Theorem~\ref{TH:skeleton-exp-building}. 
We demonstrate this in the $2$-dimensional case.

\begin{prp}\label{PR:skeleton-exp-Ramanujan}
	Let $Y$ be the affine building of $\nSL{F}{3}$, where $F$ is a local non-archimedean field,
	and let $q$ denote the number of elements in the residue field of $F$.
	(The thickness of $Y$ is $q+1$.)
	If $X$ is a simplicial complex covered
	by   $Y$ and moreover    a Ramanujan complex
	in the sense of \cite{Lubotzky_2005_Ramanujan_complexes_Ad} (see
	also \cite{Carwright_2003_Ramanujan_geometries_An}), then
	the underlying weighted 
	graph of $X$ is a $[-1,\frac{3q}{q^2+q+1}]$-spectral expander.
\end{prp}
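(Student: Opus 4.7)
The plan is to reduce the spectral claim to a bound on the combinatorial adjacency operator of $X$ via an explicit computation of the canonical weights, and then to bound that spectrum using the Ramanujan property together with the tripartite structure inherited from the $\tilde A_2$-building.

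Since $Y$ is the affine building of $\nSL{F}{3}$ of thickness $q+1$, the link of each vertex of $Y$ is the spherical building $A_2(\F_q)$. Hence every vertex of $Y$ is contained in $(q^2+q+1)(q+1)$ triangles and every edge in $q+1$ triangles, and the covering $p\colon Y\to X$ transfers these counts to $X$. Plugging them into the formulas of \S\ref{subsec:weights}, one finds that the canonical weight is uniform at each dimension: $w(v)=1/|X(0)|$ for all $v\in X(0)$ and $w(e)=1/|X(1)|$ for all $e\in X(1)$. A direct computation then gives
\[
\calA \;=\; \frac{1}{2(q^2+q+1)}\,A,
\]
where $A$ is the usual combinatorial adjacency operator of the $2(q^2+q+1)$-regular underlying graph of $X$. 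The claim reduces to showing $\lambda_{\max}(A|_{C_\circ^0(X,\R)}) \le 6q$.

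The $\tilde A_2$-building carries a canonical type function $t\colon V(Y)\to\Z/3\Z$, which descends along $p$ to a tripartition $V(X) = V_0 \sqcup V_1 \sqcup V_2$ such that every edge connects vertices of distinct types. Thus $A$ is an off-diagonal $3\times 3$ block matrix with blocks $A_{ij}\colon C^0(V_j,\R)\to C^0(V_i,\R)$, each having uniform row-sum $q^2+q+1$. Decompose $C_\circ^0(X,\R)=U\oplus W$ orthogonally, where $U$ is the $2$-dimensional subspace spanned by the differences $\mathbbm{1}_{V_i}-\mathbbm{1}_{V_j}$ and $W=\bigoplus_i C_\circ^0(V_i,\R)$. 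On $U$, the operator $A$ acts by the restriction of $(q^2+q+1)(J-I)$ to this subspace, contributing only the eigenvalue $-(q^2+q+1)$, which becomes $-\tfrac{1}{2}$ after normalization and so lies harmlessly inside the desired interval. An adjoint check using $A_{ij}^T\mathbbm{1}_{V_i}=(q^2+q+1)\mathbbm{1}_{V_j}$ shows that each $A_{ij}$ maps $C_\circ^0(V_j)$ into $C_\circ^0(V_i)$, so $W$ is also $A$-invariant.

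The heart of the argument is the bound $\|A|_W\|\le 6q$, for which I invoke the Ramanujan property of $X$. By definition (cf.~\cite{Lubotzky_2005_Ramanujan_complexes_Ad}), every non-trivial spherical automorphic representation of $\nPGL{F}{3}$ in the spectrum attached to $X$ is tempered, so its Satake parameters $\alpha_1,\alpha_2,\alpha_3$ (with $\alpha_1\alpha_2\alpha_3=1$) satisfy $|\alpha_i|=1$. Via the Satake isomorphism, this translates to a bound of $3q$ on the non-trivial singular values of each block $A_{ij}|_{C_\circ^0(V_j,\R)}$. The elementary estimate
\[
\|A|_W f\|^2 \;\le\; 2\sum_i\sum_{j\ne i}\|A_{ij}f_j\|^2 \;\le\; 4(3q)^2\|f\|^2
\]
then yields $\|A|_W\|\le 6q$. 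Combining with the $U$-contribution gives $\lambda_{\max}(A|_{C_\circ^0})\le 6q$, and normalizing by $2(q^2+q+1)$ produces $\lambda_{\max}(\calA|_{C_\circ^0})\le \tfrac{3q}{q^2+q+1}$, as required. The main obstacle is the last translation step, passing from the representation-theoretic tempered bound to the combinatorial bound of $3q$ on the blocks $A_{ij}$; this requires pinning down the normalizations in the Satake transform for $\nPGL{F}{3}$, but is classical once done.
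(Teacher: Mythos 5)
Your overall strategy is sound and lands on the same essential input as the paper, but the intermediate bookkeeping is genuinely different, and one step does not hold in the stated generality. The paper's proof is shorter: it identifies $\calA=\frac{1}{2(q^2+q+1)}(A_1+A_2)$, where $A_1,A_2$ are the two colored (Hecke) adjacency operators of \cite{Lubotzky_2005_Ramanujan_complexes_Ad} (which satisfy $A_2=A_1^*$ and commute, so $A_1$ is normal), and then reads off from the joint spectrum computed in \cite[Theorem~2.11]{Lubotzky_2005_Ramanujan_complexes_Ad} that the nontrivial eigenvalues of $A_1+A_2$ are of the form $q\sum_i(\alpha_i+\alpha_i^{-1})=2q\,\mathrm{Re}(\alpha_1+\alpha_2+\alpha_3)$ with $|\alpha_i|=1$, hence bounded by $6q$ in absolute value. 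Your weight computation and the reduction to $\|A|_{C^0_\circ}\|$ agree with this, and your final constant is the same; your block-singular-value estimate via Cauchy--Schwarz is a more roundabout way of exploiting the same temperedness bound, and the crucial translation you defer (``pinning down the Satake normalizations'') is precisely the content of the LSV theorem the paper cites, so that part is acceptable.

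The genuine gap is your assumption that the type function on $Y$ descends to a tripartition $V(X)=V_0\sqcup V_1\sqcup V_2$. A Ramanujan complex in the sense of \cite{Lubotzky_2005_Ramanujan_complexes_Ad} is a quotient $\Gamma\leftmod Y$ with $\Gamma$ a cocompact lattice in $\nPGL{F}{3}$ acting freely, and such $\Gamma$ need not lie in the type-preserving subgroup; only the \emph{relative} type of an edge (equivalently, the operators $A_1$ and $A_2$) is invariant under all of $\nPGL{F}{3}$ and hence descends to every quotient. For a non-type-preserving $\Gamma$ the sets $V_i$, the subspace $U$, and your block decomposition of $A$ simply do not exist, so your argument does not cover all complexes allowed by the statement. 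The fix is exactly to drop the absolute tripartition and argue with $A_1$ and $A_2=A_1^*$ directly: these always descend, $A=A_1+A_1^*$ with $A_1$ normal, and temperedness bounds the nontrivial eigenvalues of $A_1$ by $3q$ in modulus, giving $\|A\|\leq 6q$ on the orthogonal complement of the (at most three-dimensional) trivial part without any appeal to a global $3$-coloring.
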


\begin{proof}
	The links of $X$ are spherical buildings of the form $A_2(\F_q)$ (notation as in Example~\ref{EX:An-building}).
	This means that every vertex is contained in $2(q^2+q+1)$ edges and $(q+1)(q^2+q+1)$ triangles.
	Now, in the notation of \cite{Lubotzky_2005_Ramanujan_complexes_Ad}, the weighted adjacency
	operator of $X$ is $\frac{1}{2(q^2+q+1)}(A_1+A_2)$. When $X$ is Ramanujan,
	the joint spectrum of $(A_1,A_2)$ was computed in \cite[Theorem~2.11]{Lubotzky_2005_Ramanujan_complexes_Ad}.
	It follows from that computation that the underlying graph of $X$ is a $[-1,\frac{3q}{q^2+q+1}]$-spectral
	expander.
\end{proof}

\section{Sheaves }
\label{sec:sheaves}

In this section we introduce sheaves on simplicial complexes and various related
notions. Until the end of the section, 
\emph{simplicial complexes are allowed to be infinite},
and   $\F$ denotes a field.

\subsection{Sheaves on Simplical Complexes}
\label{subsec:sheaves}

Let $X$ be a simplicial complex.
A \emph{sheaf} $\calF$ on  
$X$ consists of 
\begin{enumerate}
	\item[(1)] an abelian group $\calF(x)$ for every $ x\in X-\{\emptyset\}$, and
	\item[(2)] a group homomorphism $\res_{y\from x}^{\calF}:\calF(x)\to \calF(y)$ for 
	all $\emptyset\neq x\subsetneq y\in X$,
\end{enumerate}
subject to the condition 
\begin{equation}\label{EQ:sheaf-cond} 
%\tag{$\ast$}
\res_{z\from y}^{\calF}\circ \res_{y\from x}^{\calF}=\res_{z\from x}^{\calF}
\end{equation}
for all $\emptyset\neq x\subsetneq y\subsetneq z\in X$.
We also say that $(X,\calF)$ is a sheaved  simplicial  complex.
Elements of $\calF(x)$ are called $x$-\emph{sections},
and
the homomorphisms $\res^{\calF}_{y\from x}$ are called \emph{restriction maps}.
If there is no risk of confusion, 
we will often abbreviate $\res_{y\from x}^{\calF}f$ ($f\in \calF(x)$)
to $\res_{y\from x} f$, $f|_{x\to y}$ or $f|_y$. Note that
condition \eqref{EQ:sheaf-cond} is vacuous if $\dim X\leq 1$.

An \emph{augmented sheaf} $\calF$ on $X$ is defined similarly, except
we also include the empty face $\emptyset$. That is, $\calF(\emptyset)$
and $\res_{y\from \emptyset}^{\calF}$ are defined, and \eqref{EQ:sheaf-cond} is required
to told with $x=\emptyset$ as well.
We may  regard any sheaf $\calF$ as an augmented sheaf by setting $\calF(\emptyset)=0$
and $\res_{y\from \emptyset}^{\calF}=0$ for all $y\in X-\{\emptyset\}$, 
so all the statements we prove for augmented sheaves
also apply to sheaves.

A \emph{sheaf of $\F$-vector spaces}, or an \emph{$\F$-sheaf} for short, on $X$
is a sheaf $\calF$ on $X$
such that  
$\calF(x)$ is an $\F$-vector space for all $x\in X-\{\emptyset\}$
and    the restriction maps
of $\calF$ are $\F$-linear.
One can define in the same manner  (augmented) sheaves of groups, rings, modules, sets, and so on.

\begin{example}\label{EX:sheaves-basic-examples}
	Let $X$ be a simplicial complex.

	(i) Given an abelian group $A$,
	we   define a sheaf $\calF_A$ on $X$
	by setting $\calF_A(x)=A$ for all $x\in X-\{\emptyset\}$
	and $\res_{y\from x}^{\calF_A}=\id_A$
	for all $\emptyset\neq x\subsetneq y\in X $.
	The   sheaf $\calF_A$ is  called the \emph{constant sheaf}
	associated to $A$.
	Abusing the  notation, we will usually denote $\calF_A$
	simply as $A$, or $A_X$.
	
	(ii) Continuing (i), one can also define  an augmented
	sheaf $\calF'_A$ on $X$
	by setting $\calF'_A(x)=A$ and $\res_{y\from x}^{\calF'_A}=\id_A$
	for all $x,y\in X $ with $x\subsetneq y$. We call
	$\calF'_A$ the \emph{constant augmented sheaf} on $X$
	and denote it by $\aug{A}$ when $X$ is clear from the context.
	
	(iii) Fix arbitrary abelian groups $(A_x)_{x\in X }$
	and set $\calF(x)=A_x$ and $\res^{\calF}_{y\from x}=0\in\Hom_{\Z}(A_x,A_y)$ for all $x,y$.
	Then $\calF$ is an augmented sheaf on $X$ (albeit,  not a very interesting one).
	If $A_\emptyset=0$, then we may regard $\calF$ as a sheaf.
	
	(iv) If $\calF$ and $\calG$ are   sheaves on $X$,
	then one can form the  product   sheaf
	$\calF \times \calG$ defined 
	by $(\calF\times \calG)(x)=\calF(x)\times \calG(x)$ and
	$\res_{y\from x}^{\calF\times \calG}=\res_{y\from x}^{\calF}\times \res_{y\from x}^{\calG}$.
	
	(v) Let $\calF$ be a sheaf on $X$. Suppose that we are given   subgroups
	$\calG(x)\subseteq \calF(x)$ for all $x\in X-\{\emptyset\}$
	such that $\res^{\calF}_{y\from x}(\calG(x))\subseteq \calG(y)$
	for all $\emptyset\neq x\subsetneq y\in X$. Then the collection $\{\calG(x)\}_{x\in X-\{\emptyset\}}$ 
	can be made into a sheaf $\calG$
	on $X$ by setting $\res^{\calG}_{y\from x}=\res^{\calF}_{y\from x}|_{\calG(x)}$.
	We call such $\calG$ a \emph{subsheaf} of $\calF$.  
	
	(vi) If $\calG$ is a subsheaf of $\calF$, then we   define the \emph{quotient sheaf}
	$\calF/\calG$ by setting $(\calF/\calG)(x)=\calF(x)/\calG(x)$ and  
	$\res^{\calF/\calG}_{y\from x}(f+\calG(x)) = (\res^{\calF}_{y\from x} f)+\calG(y)$
	for all $x\in X-\{\emptyset\}$ and $f\in \calF(x)$. 
	
	(vii) Let $\calF$ be an $\F$-sheaf on $X$
	and let $\bbK$ be a field extension of $\F$.
	The \emph{base change} of $\calF$ from $\F$ to $\K$
	is the $\K$-sheaf $\calF_\K$ on $X$ 
	determined by
	$\calF_{\K}(x)=\calF(x)\otimes_\F \K$ and $\res^{\calF_\K}_{y\from x}=\res^{\calF}_{y\from x}\otimes \id_{\K}$.
\end{example}

Examples (iii)-(vii) generalize verbatim to augmented sheaves.	

%More examples will be given in Section~\ref{sec:examples}.

\begin{example}\label{EX:sheaves-from-reps}
	Let $X$ be a connected simplicial complex  and let $\F$ be a field.
	Then every representation $\rho:\pi_1(X)\to \nGL{\F}{n}$
	gives rise to an $\F$-sheaf $\calF=\calF_\rho$.
	To define it, we must first introduce some general notation.	
	
	Write $\Gamma=\pi_1(X)$ and let $\pi:Y\to X$ be the universal covering of $X$.
	Then we can (non-canonically) 
	identify $\Gamma$ with the group of deck transformations of $\pi:Y\to X$
	(i.e., the group of automorphisms $g:Y\to Y$ satisfying $\pi\circ g=\pi$).
	Then $\Gamma$ acts freely on $Y$ via simplicial automorphisms, and for every non-empty
	$x\in X$, the preimage $\pi^{-1}(x)$ is an orbit under $\Gamma$.
	
	For every nonempty $x\in X$, choose some representative 
	\[\hat{x}\in \pi^{-1}(x);\]
	equivalently, $\{\hat{x}\where x\in X-\{\emptyset\}\}$
	is a set of representatives for $\Gamma\leftmod(Y-\{\emptyset\})$.
	Suppose  that $\emptyset\neq x\subsetneq x'\in X$.
	Then it may not be the case that $\hat{x}\subseteq \hat{x}'$.
	However, since $\pi:Y\to X$ is a covering, there is a unique $y \in Y$
	such that $\hat{x}\subseteq y$ and $\pi(y)=x'=\pi(\hat{x}')$.
	This means that there is a unique element $\gamma\in \Gamma$
	such that $\gamma y=\hat{x}'$. We denote this $\gamma$ by
	\[
	\gamma(x',x).
	\]
	It is routine to check that if $\emptyset\neq x\subsetneq x'\subsetneq x''\in X$,
	then we have
	\begin{equation}\label{EQ:gamma-compatibility}
	\gamma(x'',x')\gamma(x',x)=\gamma(x'',x).
	\end{equation}

	Now, given a representation $\rho:\Gamma=\pi_1(X)\to \End_{\F}(V)$,
	where $V$ is an $\F$-vector space, we may define an $\F$-sheaf $\calF=\calF_\rho$  on 
	$X$ by setting 
	\begin{itemize}
		\item $\calF(x)=V$ for all $\emptyset\neq x\in X$, and
		\item $\res^{\calF}_{x'\from x}=\rho(\gamma(x',x)):V\to V$ for all $\emptyset\neq x\subsetneq x'\in X$. 
	\end{itemize}
	It follows readily from \eqref{EQ:gamma-compatibility} that $\calF $ is a sheaf.
	While $\calF(x)=V$ for every $x\in X-\{\emptyset\}$, in general, $\calF$ is not
	the constant sheaf $V_X$ of Example~\ref{EX:sheaves-basic-examples}(i). (In fact, $\calF$ is \emph{isomorphic}
	to $V_X$ if and only if 
	$\rho$ 
	is the trivial representation of $\Gamma$ on $V$.)
\end{example}

If $\calF$ and $\calG$ are two   sheaves on $X$, then a \emph{morphism} $\vphi:\calF\to \calG$
consists of a collection     of
abelian group homomorphisms   $\{\vphi_x:\calF(x)\to \calG(x)\}_{x\in X-\{\emptyset\} }$
which are compatible with the restriction maps, namely,
\[
\vphi_y\circ \res^{\calF}_{y\from x}=\res^{\calG}_{y\from x}\circ \vphi_x 
\]
for all $ \emptyset\neq  x\subseteq y\in X$. 
%For example, $\id_{\calF}:=(\id_{\calF(x)})_{x\in X-\{\emptyset\}}$ is a morphism
%from $\calF$ to itself. 
The collection of all morphisms from $\calF$ to $\calG$ forms an abelian group
with addition given by $\vphi +\vphi' = (\vphi_x +\vphi'_x)_{x\in X-\{\emptyset\} }$
The composition of $\vphi$ with another morphism $\psi:\calG\to \calH$
is $\psi\circ \vphi := (\psi_x\circ \vphi_x)_{x\in X-\{\emptyset\} }$.
We call $\vphi$ an \emph{isomorphism} if each $\vphi_x$ is an isomorphism. If
there is an isomorphism $\vphi:\calF\to \calG$, we say that $\calF$
and $\calG$ are isomorphic and write $\calF\cong \calG$.

Given a morphism $\vphi:\calF\to \calG$ of sheaves on $X$, its kernel,   $\ker \vphi$,
is the subsheaf of $\calF$ determined by $(\ker\vphi)(x)=\ker(\vphi_x:\calF(x)\to \calG(x))$,
its image,   $\im \vphi$, is the subsheaf of $\calG$
determined by $(\im\vphi)(x)=\im(\vphi_x:\calF(x)\to \calG(x))$, and its cokernel, $\coker \vphi$,
is the sheaf $\calG/\im \vphi$.
We call $\vphi$ injective (resp.\ surjective) if $\ker \vphi$ is the zero subsheaf of $\calF$
(resp.\ $\im \vphi=\calG$). 

Morphisms of augmented sheaves, their kernels, images and cokernels are defined in
the same manner, by
including the empty face.

Morphisms of   sheaves of $\F$-vector spaces (resp.\ rings, groups, etc.)  are defined
similarly with the extra requirement that each $\vphi_x$ is   $\F$-linear (resp.\ a ring homomorphism,
a group homomorphism, etc.). The kernel, image  and cokernel  of 
a morphism of $\F$-sheaves are $\F$-sheaves as well.

\begin{remark}\label{RM:sheaf-category}
	The class of  sheaves (resp.\   $\F$-sheaves) 
	on $X$ together with the morphisms
	just defined is an \emph{abelian category}, denoted
	$\Sh (X)$.  Similarly, augmented sheaves on $X$ also form an abelian category.
\end{remark}

For the relation between the   sheaves defined here and the well-known
notion of a sheaf on a topological space, see Appendix~\ref{sec:comparison}.

\subsection{Sheaf Cohomology}
\label{subsec:sheaf-coh}

Sheaf cohomology generalizes ordinary cohomology of simplicial complexes with coefficients
in an abelian group. It is defined as follows.

%\medskip

Let $\calF$ be an augmented sheaf on a simplicial complex $X$.
Recall (\S\ref{subsec:complexes}) 
that $X_{\ord}$ denotes the set of ordered faces in $X$.
For every
$  k\in\N\cup\{-1,0\}$,
define 
\[\tilde{C}^k(X,\calF)=\prod_{ {x}\in X_{\ord}(k)}\calF(x)  \]
(we forget the ordering of $x$ in the expression ``$\calF(x)$'').
%Of course, $\tilde{C}^k(X,\calF)=0$
Given $f\in \tilde{C}^i(X,\calF)$ and $ {x}\in X_{\ord}(k)$,
we write the $ {x}$-coordinate of $f$ as $f( {x})\in\calF(x)$.
The group of \emph{$\calF$-valued $k$-cochains} is 
\[
C^k(X,\calF)=
\{f\in \tilde{C}^k(X,\calF)\suchthat
f(\pi  {x})=\mathrm{sgn}(\pi)  f( {x})
~\text{for all}~
\pi\in \Sigma_{\{0,\dots,k\}},  {x}\in X_{\ord}(k)\},
\]
where the the permutation group $\Sigma_{\{0,\dots,k\}}$
acts on $X_{\ord}(k)$ by permuting the vertex ordering of every
ordered $k$-face $x=(v_0,\dots,v_k)$.

The \emph{coboundary map}
$
d_k=d_k^{\calF}:C^k(X,\calF)\to C^{k+1}(X,\calF)
$ is defined
by
\[
(d_k f)( {y})=\sum_{i=0}^{k+1}(-1)^i\res_{y\from y_i}f( {y}_i),
\]
where the ordered face $ {y}_i$ is obtained from $ {y}=(v_0,\dots,v_{k+1})$ by removing $v_i$. 
%The map $d_k$ is the $k$-th \emph{coboundary map}. 
It is routine to check that $d_k f$ is  in $C^{k+1}(X,\calF)$ and
$
d_{k+1}\circ d_k=0
$. The latter is equivalent to saying that
\[
0\to C^{-1}(X,\calF)\xrightarrow{d_{-1}}
C^{0}(X,\calF)\xrightarrow{d_{0}}
C^{1}(X,\calF)\xrightarrow{d_{1}}
\cdots
\]
is a cochain complex. Note that
$C^{-1}(X,\calF)=0$ if $\calF$ is a sheaf.
The
\emph{$\calF$-valued $k$-cocycles} and \emph{$\calF$-valued
$k$-coboundaries} are
\[
Z^k(X,\calF)=\ker d_k\qquad\text{and}\qquad 
B^k(X,\calF)=\im d_{k-1},
\]
respectively, with the convention that $d_{-2}=0$.
The \emph{$k$-th cohomology group} of $\calF$ is
\[
\HH^k(X,\calF):=Z^k(X,\calF)/B^k(X,\calF).
\]
The cohomology class represented by $f\in Z^k(X,\calF)$
is denoted $[f]$ or $[f]_{\calF}$.
%We will sometimes abbreviate $X$ and $\calF$ from the notation and write $C^i$, $Z^i$ and so on.

If $\calF$ is a sheaf (i.e.\ $\calF(\emptyset)=0$),
then $B^0(X,\calF)$ is $0$ by definition,
and thus    $\HH^0(X,\calF)=Z^0(X,\calF)$.
The elements of $Z^0(X,\calF)$
consist of families
$(f(x))_{x\in X(0)}\in \prod_{x\in X(0)} \calF(x)$
such that $f(\{u\})|_{\{u,v\}}=f(\{v\})|_{\{u,v\}}$ for every edge $\{u,v\}\in X(1)$.
They are
called the \emph{global sections} of $\calF$.

\begin{example}
	(i) Let $\calF$ be the augmented sheaf constructed in Example~\ref{EX:sheaves-basic-examples}(iii).
	Then $\HH^k(X,\calF)=C^k(X,\calF)\cong \prod_{x\in X(k)} \calF(x)$, because
	$d_k=0$ for all $k$.
	
	(ii) If $\calF$
	and $\calG$ are sheaves on $X$,
	then we have a canonical isomorphism
	$C^k(X,\calF\times \calG)\xrightarrow{\sim} C^k(X,\calF)\times C^k(X,\calG)$,
	which restricts to isomorphisms
	$Z^k(X,\calF\times \calG)\xrightarrow{\sim} Z^k(X,\calF)\times Z^k(X,\calG)$
	and
	$B^k(X,\calF\times \calG)\xrightarrow{\sim} B^k(X,\calF)\times B^k(X,\calG)$;
	the details are left to the reader.
	Consequently, there is a canonical isomorphism
	$\HH^k(X,\calF\times \calG)\cong \HH^k(X,\calF)\times\HH^k(X,\calG)$. 
\end{example}

\begin{remark}\label{RM:ordered-cohomology}
	Fix a linear ordering $L$ on the vertices of $X$.
	Then $L$ induces an ordering on the vertices of every face $x\in X$; 
	we write $x_L$ to denote $x$ endowed with this ordering.
	We can now identify $C^k(X,\calF)$ with $C_L^k(X,\calF):=\prod_{x\in X}\calF(x)$
	by mapping $f\in C^k(X,\calF)$ to $(f(x_L))_{x\in X(k)}\in C^k_L(X,\calF)$. 
	It is straightforward to check that under this identification,
	the coboundary map $d_k$ corresponds to $d_{k,L}:C_L^k(X,\calF)\to C_L^{k+1}(X,\calF)$
	determined by
	\begin{equation}\label{EQ:d-k-L}
		(d_{k,L} f) (y)=\sum_{x\in X(k)_{\subseteq y}} [y:x]_L\res_{y\from x} f(x) 
	\end{equation}
	where $y\in X(k+1)$ and 
	$
	[y:x]_L:=(-1)^i$ for the unique $i\in\{0,\dots,k+1\}$ 
	such that $x_L$ is obtained from $y_L=(v_0,\dots,v_{k+1})$
	by removing $v_i$. 
	Consequently, the cohomology of 
	$\calF$ can be computed using the the cochain complex
	\[
	0\to 
	C^{-1}_L(X,\calF)\xrightarrow{d_{-1,L}} 	
	C^0_L(X,\calF)\xrightarrow{d_{0,L}} C^1_L(X,\calF)
	\xrightarrow{d_{1,L}} C^2_L(X,\calF) 
	\xrightarrow{d_{2,L}} \dots 
	\]
	Since for $x\in X(k)$, the factor $\calF(x)$ occurs once in $C^k_L(X,\calF)$
	and $(k+1)!$ times in $\tilde{C}^k(X,\calF)$, it is sometimes 
	convenient to use 
	$C^k_L(X,\calF)$ instead of $C^k(X,\calF)$. The disadvantage of 
	defining $\HH^k(X,\calF)$ using 
	the chain complex $C^\bullet_L(X,\calF)$ is the ostensible dependency
	on $L$.
	
	If $\calF$ is augmented $\F_2$-sheaf,
	then the factor $[y:x]$ in \eqref{EQ:d-k-L} has no effect, and can be removed.
	As a result, $d_{k,L}$ is independent of $L$, so
	the   isomorphism $C^k(X,\calF)\cong C^k_L(X,\calF)=\prod_{x\in X(k)}\calF(x)$
	is also independent of $L$.
\end{remark}

By comparing the description of $\HH^k(X,\calF)$ in 
Remark~\ref{RM:ordered-cohomology} and the definition of the singular
cohomology of $|X|$ with coefficients in an abelian group $A$,
we see that   the cohomology
of the constant (augmented) sheaf associated to  $A$
(Example~\ref{EX:sheaves-basic-examples}) 
is isomorphic to the (reduced) singular cohomology of $|X|$ with coefficents in $A$.
We record this observation in the following corollary.

\begin{cor}\label{CR:cohomology-comparison}
	Let $A$ be an abelian group regarded
	as a constant sheaf on $X$ (see Example~\ref{EX:sheaves-basic-examples}),
	and let $i\geq 0$.
	Then $\HH^i(X,A)\cong \HH^i(|X|,A)$, where   the right hand is the
	singular cohomology of $|X|$ with coefficients in $A$.
	Likewise, $\HH^i(X,\aug{A})\cong \tilde{\HH}^i(|X|,A)$, 
	where the right hand side
	denotes the reduced singular cohomology of $|X|$ with coefficients in $A$.
\end{cor}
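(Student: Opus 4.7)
The plan is to reduce the statement to the classical comparison between ordered-simplicial and singular cohomology, via the alternative description of $\HH^i(X,\calF)$ furnished by Remark~\ref{RM:ordered-cohomology}.

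First, fix a linear ordering $L$ on $V(X)$. For the constant sheaf $\calF=\calF_A$, every restriction map $\res^{\calF}_{y\from x}$ is $\id_A$, so formula \eqref{EQ:d-k-L} collapses to the usual ordered simplicial coboundary
\[
(d_{k,L}f)(y)=\sum_{x\in X(k)_{\subseteq y}}[y:x]_L\,f(x),
\]
acting on $C^k_L(X,\calF_A)=\prod_{x\in X(k)}A$. By Remark~\ref{RM:ordered-cohomology}, this cochain complex computes $\HH^\bullet(X,\calF_A)$. But this is exactly the \emph{ordered simplicial cochain complex} $C^\bullet_{\mathrm{ord}}(X;A)$ of $X$ with coefficients in $A$, as defined in any standard text (e.g.\ Hatcher, \S3.1).

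Next, I would invoke the classical theorem that for any simplicial complex $X$ and abelian group $A$, there is a natural isomorphism $\HH^i_{\mathrm{simp}}(X;A)\cong \HH^i_{\mathrm{sing}}(|X|;A)$ for all $i\geq 0$. The standard proof goes through the chain-level map $C^{\mathrm{sing}}_\bullet(|X|)\to C^{\mathrm{simp}}_\bullet(X)$ sending a singular simplex into $|X|$ to its unique affine linear approximation on each simplex, and uses acyclic models / the method of barycentric subdivision to show this is a chain homotopy equivalence; dualizing with $\Hom(-,A)$ yields the cohomological version. Composing with the identification of the previous paragraph gives $\HH^i(X,\calF_A)\cong \HH^i(|X|,A)$, as required.

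For the augmented version, observe that $C^{-1}_L(X,\aug{A})=A$ and $d_{-1,L}:A\to \prod_{v\in X(0)}A$ is the diagonal $a\mapsto(a)_{v\in X(0)}$. Thus the complex $C^\bullet_L(X,\aug{A})$ is precisely the \emph{augmented} ordered simplicial cochain complex, and its cohomology is by definition the reduced simplicial cohomology $\tilde{\HH}^\bullet_{\mathrm{simp}}(X;A)$. The classical simplicial-to-singular comparison respects augmentation, so this agrees with $\tilde{\HH}^\bullet_{\mathrm{sing}}(|X|;A)$, giving the second half of the corollary.

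The only genuine content here is the classical simplicial/singular comparison, which I would simply cite rather than reprove; so there is no real obstacle, just bookkeeping. The one point to be careful about is the naturality/well-definedness of the identification $C^k(X,\calF)\cong C^k_L(X,\calF)$ from Remark~\ref{RM:ordered-cohomology} — but the remark already notes that for $\F_2$-coefficients (or, more generally, once we pass to cohomology) the dependence on $L$ disappears, so the isomorphisms produced are canonical on the level of $\HH^i$.
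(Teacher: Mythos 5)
Your proposal is correct and follows essentially the same route as the paper: the paper's ``proof'' is just the sentence preceding the corollary, which compares the ordered cochain complex $C^\bullet_L(X,\calF_A)$ of Remark~\ref{RM:ordered-cohomology} with the (augmented) simplicial cochain complex and then invokes the standard simplicial--singular comparison. Your write-up merely spells out the bookkeeping the paper leaves implicit.
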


As usual, a short exact sequence of sheaves on $X$ is a diagram 
\[0\to \calF\xrightarrow{\vphi} \calG \xrightarrow{\psi} \calH\to 0\]
of sheaves on $X$ such that $\vphi$ is injective, $\psi$ is surjective, and $\im \vphi=\ker\psi$.
In this case, there is a long cohomology   exact sequence of abelian groups
\begin{align}\label{EQ:long-coh-exact-seq}
	0 \to & \HH^0(X,\calF)\xrightarrow{\vphi_*} \HH^0(X,\calG) \xrightarrow{\psi_*} \HH^0(X,\calH) 
	\xrightarrow{\delta_0} \\
	&\HH^1(X,\calF)\xrightarrow{\vphi_*} \HH^1(X,\calG) \xrightarrow{\psi_*} \HH^1(X,\calH) 
	\xrightarrow{\delta_1}\cdots.\nonumber
\end{align}
The map $\vphi_*:\HH^i(X,\calF)\to \HH^i(X,\calG)$ is defined
by sending the cohomology class represented
by $f\in Z^i(X,\calF)$ to the one represented by
$(\vphi_x(f(x))_{x\in X_{\ord}(i)}\in Z^i(X,\calG)$,
and $\psi_*$ is defined similarly.
% $\vphi_i (f+B^i(X,\calF))=(\vphi_x(f(x))_{x\in X_{\ord}(i)}+B^i(X,\calG)$
%and $\psi_*$ is defined similarly. 
The map $\delta_i:\HH^i(X,\calH)\to \HH^{i+1}(X,\calF)$ is defined as follows:
Given $\gamma\in \HH^i(X,\calH)$ represented by some $h\in Z^i(X,\calH)$,
the surjectivity of $\psi$ implies that there is $g\in C^i(X,\calG)$ such
that $h(x)=\psi_x( g(x))$ for all $x\in X_{\ord}(i)$. Using the exactness, one
can show that there exists a unique
$f\in Z^{i+1}(X,\calF)$ such that $\vphi_y f(y)= (d_i g)(y)$ for all $y\in X_{\ord}(i+1)$,
and we define $\delta_i\gamma:=[f]_\calF$.
The proof that $\delta_i$ is well-defined and \eqref{EQ:long-coh-exact-seq} is exact is standard
and left to the reader.

\begin{remark}
	As expected, the  functors $\{\HH^i(X,-)\}_{i\geq 0}$ are the right derived functors
	of the left-exact functor $\HH^0(X,-)$ from the category of sheaves on $X$ to abelian groups,
	see Appendix~\ref{sec:derived}.
\end{remark}

\begin{remark}
	Given a short exact sequence of augmented sheaves
	$0\to \calF\to  \calG\to \calH\to 0$ on $X$, one can define a  long exact sequence
	similar to \eqref{EQ:long-coh-exact-seq}, but starting at $\HH^{-1}(X,\calF)$
	instead of $\HH^0(X,\calF)$. We omit the details. 
\end{remark}

If $\calF$ is an $\F$-sheaf on $X$, then the cohomology
groups $\HH^i(X,\calF)$ are $\F$-vector spaces. When $\F$ is clear from the context,
we shall often write
\[
h^i(\calF)=h^i(X,\calF):=\dim_{\F}\HH^i(X,\calF).
\]

\begin{lem}\label{LM:field-ext}
	Let $\calF$ be an $\F$-sheaf on $X$ and let $\K$ be a field extension of $\F$.
	Then $\dim_{\F} \HH^i(X,\calF)=\dim_{\K}\HH^i(X,\calF_{\K})$ for all $i\in\N\cup\{0\}$
	(notation as in Example~\ref{EX:sheaves-basic-examples}(vii)).
\end{lem}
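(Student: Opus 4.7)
\medskip

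\noindent\emph{Proof plan.} The plan is to show that taking cohomology commutes with the base change from $\F$ to $\K$, at the level of cochain complexes. Concretely, I will establish a natural isomorphism of $\K$-vector spaces
\[
\HH^i(X,\calF_\K)\;\cong\;\HH^i(X,\calF)\otimes_{\F}\K,
\]
from which the desired dimension equality follows, since $\dim_\K(V\otimes_\F\K)=\dim_\F V$ for any $\F$-vector space $V$.

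First I would observe that for every $k$, the defining formula $\calF_\K(x)=\calF(x)\otimes_\F\K$ together with the antisymmetry condition yields a natural $\K$-linear map
\[
\Phi_k\colon C^k(X,\calF)\otimes_{\F}\K\;\longrightarrow\;C^k(X,\calF_\K),\qquad
(f\otimes c)\longmapsto \bigl(x\mapsto c\cdot(f(x)\otimes 1)\bigr),
\]
which intertwines $d_k^{\calF}\otimes\id_\K$ with $d_k^{\calF_\K}$ (this is immediate from the definition of the coboundary map, since the restriction maps of $\calF_\K$ are obtained from those of $\calF$ by tensoring with $\id_\K$). When the sets $X_{\ord}(k)$ are finite, $\Phi_k$ is an isomorphism because the finite product $\prod_{x\in X_{\ord}(k)}\calF(x)$ commutes with $-\otimes_\F\K$; this covers all cases where $X$ is finite, which is the main setting of interest. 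For the possibly infinite case, I would instead fix an $\F$-basis $B$ of $\K$ and argue directly: any $f\in C^k(X,\calF_\K)$ decomposes uniquely at each face as $f(x)=\sum_{b\in B}f_b(x)\otimes b$ with only finitely many nonzero terms per $x$, and the conditions $d_k f=0$ and $f\in\im d_{k-1}$ translate, by the uniqueness of tensor expansions, into the analogous conditions for each $f_b$; this suffices to identify $\HH^i(X,\calF_\K)$ with $\HH^i(X,\calF)\otimes_\F\K$ even when $\Phi_k$ itself fails to be surjective.

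Second, since $\K$ is free (hence flat) as an $\F$-module, the functor $-\otimes_{\F}\K$ is exact on $\F$-vector spaces, so it preserves kernels and images. Applying it term-by-term to the cochain complex $C^\bullet(X,\calF)$ and using the compatibility of $\Phi_\bullet$ (respectively the basis argument above) with the coboundary maps yields
\[
\HH^i(X,\calF_\K)\;\cong\;\HH^i\bigl(C^\bullet(X,\calF)\otimes_{\F}\K\bigr)\;\cong\;\HH^i(X,\calF)\otimes_{\F}\K,
\]
from which the dimension equality is immediate.

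The proof is essentially a routine base-change argument, and no step presents a genuine obstacle. The only mild subtlety, which I would handle as indicated above, is that the identification $C^k(X,\calF_\K)\cong C^k(X,\calF)\otimes_\F\K$ may fail at the cochain level when $X$ is infinite and $[\K:\F]$ is infinite, because tensor product does not commute with infinite direct products; passing to a basis of $\K/\F$ and working coordinate-wise circumvents this and recovers the isomorphism on cohomology.
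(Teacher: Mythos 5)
Your proposal is correct and is essentially the paper's own proof, which consists of the single observation that $C^\bullet(X,\calF_\K)$ is isomorphic to $C^\bullet(X,\calF)\otimes_\F\K$ as cochain complexes, whence the cohomology base-changes and the dimensions agree. The extra discussion of infinite complexes with $[\K:\F]$ infinite goes beyond what the paper addresses (and is the one place where your basis-wise argument is not fully airtight, since a cocycle on an infinite complex may involve infinitely many basis elements of $\K$ overall); in the finite setting where the lemma is applied, your map $\Phi_k$ is an isomorphism and the argument is complete.
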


\begin{proof}
	This follows by observing that the cochain complex
	$C^\bullet(X,\calF_\K)$ is isomorphic to 
	the cochain complex obtained by tensoring
	$C^\bullet(X,\calF)$ with $\K$.
\end{proof}

\subsection{Pushforward and Pullback}
\label{subsec:pushforward}

	Throughout, let $u:Y\to X$ denote a morphism of simplicial complexes (see~\S\ref{subsec:coverings}).
	Given a sheaf $\calG$ on $Y$, there is a natural way of ``pushing it'' along $u$ to a sheaf on $X$,
	and conversely, given a sheaf $\calF$ on $X$, there is a natural way of ``pulling it back'' along $u$
	to a sheaf on $Y$. We now explain these constructions.
	They will be extremely useful later on for producing new   examples of sheaved complexes
	from  old ones.
	
	\medskip

	Let $\calF$ be a sheaf on $X$. The \emph{pullback} or \emph{inverse image}
	of $\calF$ along $u:Y\to X$ is the sheaf $u^*\calF$ on $Y$ defined
	by
	\[
	u^*\calF(y)=\calF(u(y))
	\qquad 
	\text{and} 
	\qquad 
	\res^{u^*\calF}_{y'\from y} =
	\res^{\calF}_{u(y')\from u(y)} \]
	for all $\emptyset\neq y\subsetneq y'\in Y$, with the convention
	that $\res^{\calF}_{y\from y}=\id_{\calF(y)}$.
	
	\begin{example}
		If $A_X$ is the constant sheaf on $X$ associated to the abelian group $A$
		(Example~\ref{EX:sheaves-basic-examples}(i)), then $u^*A_X$ is the constant
		sheaf on $Y$ associated to $A$,
		that is, $u^*A_X=A_Y$.
	\end{example}
	
	Now let $\calG$ be a sheaf on $Y$ and suppose that
	$u:Y\to X$ is dimension preserving, i.e., $\dim y=\dim u(y)$ for all $y\in Y$.
	Given $x\in X$, we write $u^{-1}(x)$ for the set $\{y\in Y\suchthat u(y)=x\}$.
	Our assumption on $u$ implies that if $x'\in X$, $y'\in u^{-1}(x')$ and $x$ is a face of
	$x'$, then there exists a unique face $y$ of $y'$ such that $u(y)=x$; we denote this face
	$y$ by $y'(x)$.
	With this notation at hand, we
	define    \emph{pushforward} or
	\emph{direct image} of a sheaf $\calG$  along $u$ to be the sheaf $u_*\calG$
	on $X$ determined by
	\[
	(u_*\calG)(x)=\prod_{y\in u^{-1}( x )} \calG(y)
	\qquad
	\text{and}
	\qquad
	\res^{u_*\calG}_{x'\from x}((f_y)_{y\in u^{-1}(x)})=
	( \res^{ \calG}_{y'\from   y'(x)}(f_{y'(x)}))_{y'\in u^{-1}(x')}
	\]
	where $\emptyset\neq x\subsetneq x'\in X$ and $(f_y)_{y\in u^{-1}(x)}\in u_*\calG(x)=
	\prod_{y\in u^{-1}( x )}\calG(y)$.
	It routine to check that the sheaf condition \eqref{EQ:sheaf-cond} is satisfied for
	$u_*\calG$.

	One can also define the pushforward $u_*\calG$ without assuming that $u$ is dimension preserving.
	This construction is more involved and explained in Appendix~\ref{subsec:comparision-of-structure}; we will not make use of it 
	in this work.

\medskip

	The following lemma relates the cohomology
	of $\calG$ and $u_*\calG$. It can be regarded as a version of Shapiro's Lemma for sheaf cohomology.
	
	\begin{lem}\label{LM:Shapiro}
		Let $u:Y\to X$ be a dimension-preserving morphism of simplicial complexes
		and let $\calG$ be a sheaf on $Y$. Then, for all $i\geq 0$, there is an isomorphism
		$\HH^i(Y,\calG)\cong \HH^i(X,u_*\calG)$ which is natural in $\calG$.
	\end{lem}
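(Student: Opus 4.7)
The plan is to exhibit an explicit isomorphism of cochain complexes $\Phi^\bullet\colon C^\bullet(X,u_*\calG)\xrightarrow{\sim} C^\bullet(Y,\calG)$ and let it descend to cohomology. Since $u$ is dimension-preserving, for every ordered $i$-face $y=(v_0,\dots,v_i)\in Y_{\ord}(i)$ the vertices $u(v_0),\dots,u(v_i)$ are distinct and span an $i$-face of $X$; hence $u$ induces a map $u_{\ord}\colon Y_{\ord}(i)\to X_{\ord}(i)$ by $(v_0,\dots,v_i)\mapsto(u(v_0),\dots,u(v_i))$. Given $F\in C^i(X,u_*\calG)$, whose value at $x\in X_{\ord}(i)$ is a tuple $F(x)=(F(x)_w)_{w\in u^{-1}(x)}\in\prod_{w\in u^{-1}(x)}\calG(w)$, define
\[
(\Phi^i F)(y) := F(u_{\ord}(y))_{y} \in \calG(y),
\]
where on the right, $y$ is regarded as an unordered face in $u^{-1}(u_{\ord}(y))$.

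First I would verify the basic structural properties. Antisymmetry of $\Phi^i F$ follows from antisymmetry of $F$: permuting the vertex order of $y$ by $\pi\in\Sigma_{\{0,\dots,i\}}$ permutes the vertices of $u_{\ord}(y)$ by the same $\pi$, while the $y$-coordinate is unaffected, so $(\Phi^i F)(\pi y)=F(\pi\, u_{\ord}(y))_y=\mathrm{sgn}(\pi)(\Phi^i F)(y)$. To invert $\Phi^i$, given $G\in C^i(Y,\calG)$ and $x\in X_{\ord}(i)$, one orders each unordered face $w\in u^{-1}(x)$ by pulling back the ordering of $x$ along the bijection $w\to x$ (which exists since $u$ is dimension-preserving) to obtain $\tilde w\in Y_{\ord}(i)$, and then sets $\Psi^i(G)(x)_w := G(\tilde w)$; an analogous permutation argument shows $\Psi^i G\in C^i(X,u_*\calG)$ and that $\Psi^i$ and $\Phi^i$ are mutually inverse bijections.

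The main step --- the one I expect to be the principal obstacle, though it is a mechanical check --- is to show that $\Phi^\bullet$ commutes with the coboundaries. The key observation is that if $y\in Y_{\ord}(i+1)$ lies above $x:=u_{\ord}(y)$, then the face $y(x_j)$ in the definition of $\res^{u_*\calG}_{x\from x_j}$ (the unique face of $y$ mapping to $x_j$) is exactly $y_j$, because $u$ is injective on the vertices of $y$. Therefore the $y$-coordinate of $(d_i^{u_*\calG}F)(x)=\sum_j(-1)^j\,\res^{u_*\calG}_{x\from x_j}F(x_j)$ equals $\sum_j(-1)^j\,\res^{\calG}_{y\from y_j}F(x_j)_{y_j}$, and this is precisely $(d_i^{\calG}\Phi^i F)(y)$ by the definition of $\Phi^i$. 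This gives $\Phi^{i+1}\circ d_i^{u_*\calG}=d_i^{\calG}\circ\Phi^i$.

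Having established that $\Phi^\bullet$ is an isomorphism of cochain complexes, passing to cohomology yields the desired isomorphism $\HH^i(X,u_*\calG)\cong\HH^i(Y,\calG)$. Naturality in $\calG$ is immediate from the formulas: a morphism $\calG\to\calG'$ induces a morphism $u_*\calG\to u_*\calG'$ coordinate-wise, and both sides of $\Phi^i$ transform by simply applying the morphism on each stalk.
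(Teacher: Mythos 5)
Your proof is correct and takes essentially the same approach as the paper: the paper defines the isomorphism $t_{i,\calG}\colon C^i(Y,\calG)\to C^i(X,u_*\calG)$ as the restriction of the regrouping identification $\prod_{y\in Y_{\ord}(i)}\calG(y)\cong\prod_{x\in X_{\ord}(i)}\prod_{w\in u^{-1}(x)}\calG(w)$ and declares the verification routine. Your $\Phi^i$ is exactly the inverse of that identification, and your checks (antisymmetry, the observation $y(x_j)=y_j$ for the coboundary compatibility, naturality) correctly supply the details the paper omits.
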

	
	\begin{proof}
		It is enough to prove that the cochain complexes
		$C^\bullet(Y,\calG)$ and $C^\bullet(X,u_*\calG)$ are naturally isomorphic,
		i.e., that there is are isomorphisms
		$ t_{i,\calG}:C^i(Y,\calG)\to C^i(X,u_*\calG)$ such that
		$t_{i+1,\calG}\circ d_{i}^{\calG}=d_{i}^{u_*\calG}\circ t_{i,\calG}$
		and $t_{i,\calG'}\circ \vphi_*=\vphi_*\circ t_{i,\calG}$ for every
		morphism of sheaves on $Y$, $\vphi:\calG\to \calG'$.
		The desired isomorphism
		$t_{i,\calG}$ is the restriction
		of the identification
		$\tilde{C}^i(Y,\calG)=\prod_{y\in X_{\ord}(i)}\calG(y)
		\cong \prod_{x\in X_{\ord}(i)} \prod_{y\in u^{-1}(x)} \calG(y)
		=\prod_{x\in X_{\ord}(i)}u_*\calG(x)=\tilde{C}^i(X,u_*\calG)$
		to $C^i(Y,\calG)$. It is routine to check that it satisfies all the requirements.
	\end{proof}

\subsection{Restricting Sheaves to  The Links}
\label{subsec:sheaf-at-link}

Let $X$ be a simplicial complex and let $z\in X(i)$. 
Recall (\S\ref{subsec:complexes}) that $X_z$ denotes the link of $X$ at $z$.
Every  augmented  sheaf $\calF$ on $X$ restricts to an  augmented  sheaf
${\calF}_z$ on $X_z$ by setting
${\calF}_z(x)=\calF(x\cup z)$ and
$\res^{{\calF}_{z}}_{y\from x}=\res^{\calF}_{y\cup z\from x\cup z}$.
(This is how augmented sheaves arise naturally from sheaves!)

\begin{example}
	Let $A$ be an abelian group and let $\aug{A}$ denote
	the associated augmented sheaf on $X$ (Example~\ref{EX:sheaves-basic-examples}(ii)).
	Then $(\aug{A})_z$ is the augmented sheaf on $X_z$ associated to $A$.
\end{example}

Suppose now   that $z\in X_{\ord}(i)$, namely, we are also given
an ordering on the vertices of $z$.
With this extra data, it possible to take a cochain $f\in C^k(X,\calF)$
($i\leq k $)
and restrict it to a cochain  $f_z\in C^{k-i-1}(X_z,\calF_z)$ by
setting
\[
f_z(x)=f(xz)\qquad\forall x\in X_{z,\ord}(k-i-1).
\]
Conversely, given $g\in C^{k-i-1}(X,\calF_z)$,
there exists a unique  cochain $g^z\in C^k(X,\calF)$
such that
\[
g^z(xz)=g(x)\qquad \forall x\in X_{\ord}(k-i-1),
\]
and $g^z(y)=0$ for all $y\in X_{\ord}(k)$ with $z\nsubseteq y$.
Clearly, $(g^z)_z=g$.

\begin{lem}\label{LM:f-eta-basic-props}
In the previous setting, we have
$(d_{k-i-1}g)^z=d_{k }(g^z)$. In particular,
if $g$ is a cocycle  (resp.\ coboundary), then so is  $g^z$.
\end{lem}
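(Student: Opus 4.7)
The plan is to verify the identity $(d_{k-i-1}g)^z = d_k(g^z)$ by a direct case analysis on whether a given ordered face $y \in X_{\ord}(k+1)$ contains $z$. Once the equation is established, the ``in particular'' statement follows immediately from linearity of the operation $g \mapsto g^z$ (implicit from its defining formula): if $g = d_{k-i-2}h$ is a coboundary, then $g^z = (d_{k-i-2}h)^z = d_{k-1}(h^z)$ is a coboundary, and if $d_{k-i-1}g = 0$ then $d_k(g^z) = (d_{k-i-1}g)^z = 0$.

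The first case is $z \nsubseteq y$, where both sides vanish. The right hand side $(d_{k-i-1}g)^z(y)$ is $0$ by the very definition of the operation $(\cdot)^z$. For the left hand side, each face $y_j \subseteq y$ (as a set) satisfies $z \nsubseteq y_j$, hence $g^z(y_j) = 0$ for every $j \in \{0,\dots,k+1\}$, so $d_k(g^z)(y) = 0$ as well.

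The second, substantive case is $z \subseteq y$. Using the alternating property of cochains, I reduce to the situation $y = xz$ (concatenation of ordered faces) for a unique $x \in X_{z,\ord}(k-i)$: both $d_k(g^z)$ and $(d_{k-i-1}g)^z$ are honest cochains in $C^{k+1}(X,\calF)$, so if the identity holds on faces of the form $xz$ it transports to every permutation thereof. For $y = xz$ with $x = (a_0,\dots,a_{k-i})$ and $z = (b_0,\dots,b_i)$, the deleted faces split cleanly: for $j \in \{0,\dots,k-i\}$ one has $y_j = x_j z$, so $g^z(y_j) = g(x_j)$ and $\res^{\calF}_{y\from y_j} = \res^{\calF_z}_{x\from x_j}$ by the definition of $\calF_z$; for $j \in \{k-i+1,\dots,k+1\}$ the face $y_j$ omits a vertex of $z$, so $g^z(y_j) = 0$. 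Summing these contributions yields
\[
d_k(g^z)(xz) = \sum_{j=0}^{k-i}(-1)^j \res^{\calF_z}_{x\from x_j} g(x_j) = (d_{k-i-1}g)(x) = (d_{k-i-1}g)^z(xz),
\]
as required.

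The only mildly delicate point in executing this plan is bookkeeping for the vertex orderings: one must be careful to fix a convention in which the vertices of $z$ appear in the last $i+1$ slots of $y = xz$ so that the sign $(-1)^j$ from $d_k$ exactly matches the sign from $d_{k-i-1}$ for $j \leq k-i$, and then invoke alternation to extend from this canonical ordering to arbitrary $y \in X_{\ord}(k+1)$ containing $z$. There are also minor endpoint issues when $k - i - 1 \in \{-1,0\}$, but these are handled uniformly by the augmented-sheaf conventions set up in \S\ref{subsec:sheaves} and \S\ref{subsec:sheaf-at-link}.
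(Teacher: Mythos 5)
Your proposal is correct and follows essentially the same route as the paper: both vanish-check the case $z\nsubseteq y$, reduce by alternation to the canonical ordering $y=xz$, and observe that the terms of $d_k(g^z)(xz)$ with index $j>k-i$ die because the deleted vertex lies in $z$, leaving exactly the sum defining $(d_{k-i-1}g)(x)$. Your explicit treatment of the ``in particular'' clause (including the index shift for coboundaries) is a harmless elaboration of what the paper leaves implicit.
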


\begin{proof}
Let $x\in X_{\ord}(k+1)$. 
We need to show that $(d_{k-i-1}g)^z(x) =d_{k }(g^z)(x)$.
If $z\nsubseteq x$ as sets, then $(d_{k-i-1}g)^z(x)=0=d_{k }(g^z)(x)$,
so assume that $z\subseteq x$ as sets.
By reordering the vertices of $x$,
we may assume that $x=yz$ for some $y\in X_{\ord}(k-i)$.
Then $(d_{k-i-1}g)^z(x)=(d_{k-i-1}g)(y)=\sum_{j=0}^{k-i} (-1)^j\res_{x\from z\cup y_j} g(y_j)$.
On other hand, since $g^z(x_j)=0$  if $z\nsubseteq x_j$,
we have
$d_{k }(g^z)(x)=\sum_{j=0}^{k+1} (-1)^j \res_{x\from x_j}g^z(x_j)=
\sum_{j=0}^{k-i} (-1)^j\res_{x\from z\cup y_j} g(y_j)$,
so
$(d_{k-i-1}g)^z(x)=d_{k }(g^z)(x)$.
\end{proof}

Let $\calP$ be a property of sheaved simplicial complexes
(written ``$(X,\calF)$ is $\calP$'' when it holds), 
and let $(X,\calF)$ be a sheaved simplicial complex. We will
say that $(X,\calF)$ is
a \emph{$k$-local $\calP$}  if $(X_z,\calF_z)$
is $\calP$ for all 
$z\in X(k)$. If $\calP$ also makes reference to a particular
dimension $i$ (as in ``$X$ is $\calP$ in dimension $i$''),
we will say that $(X,\calF)$
is a \emph{$k$-local $\calP$ in dimension $i$} if 
$(X_z,\calF_z)$
is $\calP$ in dimension $i-k-1$ for all $z\in X(k)$.

\subsection{Locally Constant Sheaves}
\label{subsec:loc-const}

Let $X$ be a simplicial complex. A sheaf $\calF$ on $X$ is called
\emph{constant} if there is an abelian group $A$ such that $\calF$ is isomorphic
to the constant sheaf $A $ on $X$ (Example~\ref{EX:sheaves-basic-examples}(i)). Similarly, an augmented sheaf 
$\calF'$ on $X$ is called constant if $\calF'\cong \aug{A}$ for some abelian group $A$.
If $\calF$ (resp.\ $\calF'$) has the additional structure of an $\F$-sheaf,
we further require $A$ to be an $\F$-vector space and the isomorphism $\calF\to A$
(resp.\ $\calF'\to\aug{A}$) to be $\F$-linear.

A sheaf $\calF$ on $X$ is called \emph{locally constant} if $\calF_z$ is a constant
augmented sheaf on $X_z$ for every $z\in X-\{\emptyset\}$.
Every constant sheaf is locally constant, but the converse is false in general.
 
\begin{lem}\label{LM:loc-const-crit}
	A sheaf $\calF$ on a simplicial complex $X$ is locally constant if and only if
	all the restriction maps $\res^\calF_{y\from x}$ ($\emptyset\neq x\subsetneq y\in X$)
	are isomorphisms.
\end{lem}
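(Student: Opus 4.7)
The plan is to unwind the definitions: a locally constant sheaf is, at every nonempty face, modeled by an $\aug{A}$, whose defining feature is that all restriction maps are identities (hence isomorphisms). Conversely, once all restriction maps of $\calF$ are isomorphisms, we can produce the trivializing isomorphism at every link by ``transporting everything back to $\calF(z)$''.

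For the ``only if'' direction, I would take $\emptyset\neq x\subsetneq y\in X$ and consider the link $X_x$ together with the augmented sheaf $\calF_x$. By assumption $\calF_x\cong \aug{A}$ for some abelian group $A$, so every restriction map of $\calF_x$ is an isomorphism (since those of $\aug{A}$ are all identity maps). In particular, setting $w=y-x\in X_x$, the restriction map
\[
\res^{\calF_x}_{w\from\emptyset}:\calF_x(\emptyset)=\calF(x)\longrightarrow \calF_x(w)=\calF(y)
\]
is an isomorphism; by the very definition of $\calF_x$ this map equals $\res^{\calF}_{y\from x}$.

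For the ``if'' direction, fix $z\in X-\{\emptyset\}$ and put $A=\calF(z)=\calF_z(\emptyset)$. I would define a candidate isomorphism of augmented sheaves $\vphi:\calF_z\to\aug{A}$ on $X_z$ by setting, for each $w\in X_z$,
\[
\vphi_w \;=\; \bigl(\res^{\calF}_{w\cup z\,\from\,z}\bigr)^{-1}:\calF_z(w)=\calF(w\cup z)\longrightarrow \calF(z)=A,
\]
with the convention $\res^{\calF}_{z\from z}=\id_{\calF(z)}$ (so $\vphi_\emptyset=\id_A$). Each $\vphi_w$ is an isomorphism by hypothesis. It remains to verify compatibility with the restriction maps, namely that for $w_1\subsetneq w_2$ in $X_z$ (possibly $w_1=\emptyset$),
\[
\vphi_{w_2}\circ \res^{\calF_z}_{w_2\from w_1}=\vphi_{w_1},
\]
which, after inverting $\vphi_{w_2}$, amounts to
\[
\res^{\calF}_{w_2\cup z\,\from\,w_1\cup z}\circ \res^{\calF}_{w_1\cup z\,\from\,z}
\;=\;\res^{\calF}_{w_2\cup z\,\from\,z}.
\]
This is exactly the cocycle identity \eqref{EQ:sheaf-cond} applied to the chain $z\subseteq w_1\cup z\subsetneq w_2\cup z$ (the case $w_1=\emptyset$ is trivial by the identity convention).

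There is no real obstacle here; both directions are essentially bookkeeping once one writes down the candidate trivialization. The only point requiring mild care is handling the empty face of $X_z$ so that $\vphi$ is a morphism of \emph{augmented} sheaves, which is why the identity convention $\res^{\calF}_{z\from z}=\id$ is used.
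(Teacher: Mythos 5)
Your proof is correct and follows essentially the same route as the paper: the forward direction identifies $\res^{\calF}_{y\from x}$ with a restriction map of the constant augmented sheaf $\calF_x$, and the converse direction exhibits the trivialization $\calF_z\cong\aug{A}$ via the maps $\res^{\calF}_{w\cup z\from z}$ (you write the inverse direction, which is the same isomorphism). The only difference is that you spell out the compatibility check with the cocycle identity, which the paper leaves implicit.
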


\begin{proof}
	If $\emptyset\neq x\subsetneq y\in X$ and $\calF$ is locally constant, then
	$\res^\calF_{y\from x}$ is equal to $\res^{\calF_x}_{y-x\from\emptyset}$,
	which is an isomorphism because $\calF_x$ is constant.
	Conversely, if all the restriction maps of $\calF$ are isomorphisms and $z\in X-\{\emptyset\}$,
	take $A=\calF(z)$ and note that $(\res_{x\cup z\from z})_{x\in X_z}:\aug{A}\to \calF_z $ is
	an isomorphism of augmented sheaves.
\end{proof}

\begin{example}
	(i) Let $X$ be a cycle graph on $n$ vertices. Fix an edge $e\in X(1)$
	and a $0$-face $z\subseteq e$. Define an $\R$-sheaf $\calF$ on $X$
	by setting $\calF(x)=\R$ for every $x\in X-\{\emptyset\}$,
	$\res^{\calF}_{y\from x}=\id_\R$ for $(y,x)\neq (e,z)$ and
	$\res^{\calF}_{e\from z}=-\id_\R$.
	By Lemma~\ref{LM:loc-const-crit}, $\calF$ is a locally constant sheaf.
	However, $\calF$  is not constant.
	Indeed, one readily checks that $Z^0(X,\calF)=0$. However, if $\calF$ were constant,
	then it would be isomorphic to the constant sheaf $\R_X$, and $Z^0(X,\R)\cong \R$.
	
	(ii) 
	Generalizing (i), we can construct
	locally constant sheaves on any graph $X$.	
	Simply take an abelian group $A$, set $\calF(x)=A$ for all $x\in X-\{\emptyset\}$
	and choose each restriction map $\res^{\calF}_{y\from x}$ ($\emptyset\neq x\subsetneq y\in X(1)$)
	to be some automorphism of $A$. As in (i), sheaves obtained in this manner
	are often not constant.  
	
	(iii) If $u:Y\to X$ is a covering map and $\calF$ is a locally constant sheaf on $Y$,
	then the pushforward $u_*\calF$ is a locally constant sheaf on $X$. The sheaf $u_*\calF$ may be non-constant
	even when $\calF$ is.

	(iv) Suppose that $X$ is connected and let $\rho:\pi_1(X)\to \nGL{V}{\F}$
	be representation of $\pi_1(X)$ on an $\F$-vector space $V$.
	Then $\F$-sheaf $\calF_\rho$ constructed in Example~\ref{EX:sheaves-from-reps}
	is locally constant. Moreover, it can be shown that $\calF_\rho$ is constant if and only
	if $\rho$ is a trivial representation (i.e., $\rho(\gamma)=\id_V$
	for all $\gamma\in\pi_1(X)$).
\end{example}

\begin{remark}
	Locally constant $\R$-sheaves on graphs are 
	equivalent as a category to the local sysetms on graphs 
	introduced by Jordan and Livne \cite{Jordan_1997_Ramanujan_local_systems}.
\end{remark}

Let $\calF$ be a locally constant $\F$-sheaf on $X$. 
If $X$ is connected, then
Lemma~\ref{LM:loc-const-crit} implies
that all the vector spaces $\{\calF(x)\}_{x\in X-\{\emptyset\}}$
have the same dimension. When the latter holds, we denote this common dimension by 
\[\dim \calF \]
and call it the dimension of $\calF$.

\begin{lem}\label{LM:coh-loc-const}
	Let $X$ be a connected simplicial complex and let $\calF$ be a locally constant
	$\F$-sheaf on $X$. Then $\dim \HH^0(X,\calF)\leq \dim \calF$. 
	%Equality holds
	%if and only if $\calF$ is constant.
\end{lem}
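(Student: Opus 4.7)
The plan is to use the fact that, when $\calF$ is a sheaf (as opposed to an augmented sheaf), $\HH^0(X,\calF)=Z^0(X,\calF)$, whose elements are the global sections: tuples $f=(f(\{v\}))_{v\in X(0)}$ satisfying $\res^{\calF}_{e\from \{u\}}f(\{u\})=\res^{\calF}_{e\from \{v\}}f(\{v\})$ for every edge $e=\{u,v\}\in X(1)$. I will bound this space by showing that a global section is completely determined by its value at a single vertex.

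First I would fix a vertex $v_0\in X(0)$ and consider the $\F$-linear evaluation map
\[
\mathrm{ev}_{v_0}:Z^0(X,\calF)\to \calF(\{v_0\}),\qquad f\mapsto f(\{v_0\}).
\]
The main claim is that $\mathrm{ev}_{v_0}$ is injective; granting this, we get $\dim\HH^0(X,\calF)=\dim Z^0(X,\calF)\leq \dim\calF(\{v_0\})=\dim \calF$, where the last equality uses that $\calF$ is locally constant on the connected complex $X$ (so all stalks have the same dimension, namely $\dim \calF$).

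For injectivity, suppose $f\in Z^0(X,\calF)$ with $f(\{v_0\})=0$. Since $X$ is connected, its underlying graph is connected, so for any $u\in X(0)$ there is a sequence of vertices $v_0,v_1,\dots,v_n=u$ with $e_i:=\{v_{i-1},v_i\}\in X(1)$. By Lemma~\ref{LM:loc-const-crit}, each restriction map $\res^{\calF}_{e_i\from \{v_{i-1}\}}$ and $\res^{\calF}_{e_i\from \{v_i\}}$ is an isomorphism. The cocycle condition at $e_i$ reads
\[
\res^{\calF}_{e_i\from \{v_{i-1}\}}f(\{v_{i-1}\})=\res^{\calF}_{e_i\from \{v_i\}}f(\{v_i\}),
\]
so $f(\{v_i\})=(\res^{\calF}_{e_i\from \{v_i\}})^{-1}\circ\res^{\calF}_{e_i\from \{v_{i-1}\}}f(\{v_{i-1}\})$. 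Inducting on $i$, $f(\{v_0\})=0$ forces $f(\{v_i\})=0$ for all $i$, hence $f(\{u\})=0$; thus $f=0$.

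There is no real obstacle; the only subtle point is remembering that ``connected'' in the sense used here (via $|X|$) is equivalent to connectedness of the underlying graph, which is what allows the propagation along edge-paths. The same argument shows that $\mathrm{ev}_{v_0}$ is an isomorphism onto the subspace of $\calF(\{v_0\})$ fixed by the monodromy representation $\pi_1(X,v_0)\to\Aut(\calF(\{v_0\}))$ associated to $\calF$, which would give the slightly stronger statement $\dim\HH^0(X,\calF)=\dim \calF(\{v_0\})^{\pi_1(X)}$; but for the stated inequality the injectivity argument above suffices.
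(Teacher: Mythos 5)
Your proof is correct and follows essentially the same route as the paper's: both fix a base vertex, use connectivity of the underlying graph to propagate the value of a $0$-cocycle along edge-paths via the cocycle condition, and invoke Lemma~\ref{LM:loc-const-crit} to invert the restriction maps, concluding that a global section is determined by its value at one vertex. The remark about the monodromy invariants is a nice (correct) strengthening but not needed for the stated inequality.
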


\begin{proof}
	Fix some $0$-face $x_0\in X$.
	It is enough to show that any $0$-cocycle
	$f\in Z^0(X,\calF)$   is uniquely determined by $f(x_0)$.
	Indeed, if $y\in X(0)$ is another $0$-face, then there exists
	a sequence of $0$-faces $x_0,x_1,\dots,x_n=y$ in $X$
	such that $x_{i-1}\cup x_{i}\in X(1)$ for all $i$. Since $f\in Z^0(X,\calF)$,
	we have $\res_{x_i\cup x_{i-1}\from x_{i-1}}f(x_{i-1})=
	\res_{x_i\cup x_{i-1}\from x_i}f(x_{i })$ for all $i\in\{1,\dots,n\}$.
	The restriction maps are isomorphisms
	(Lemma~\ref{LM:loc-const-crit}),  so $f(y)$
	is uniquely determined by $f(x_0)$.  
\end{proof}

\begin{remark}
It is not difficult to see that a every 
$n$-dimensional locally constant  sheaf $\calF$
on a connected simplicial complex $X$ gives rise to a group homomorphism $\pi_1(X)\to \nGL{\F}{n}$. The converse
is also true: every
representation $\rho:\pi_1(X)\to \nGL{\F}{n}$ gives rise to an $n$-dimensional locally
constant $\F$-sheaf on $X$, and if the universal covering of $X$ is contractible,
then all locally constant sheaves are obtained in this manner, up to isomorphism. 
Moreover, in this case, $\HH^i(X,\calF)$ and the group
cohomology $\HH^i(\pi_1(X),\rho)$ are isomorphic. 
We omit the details as they will not be needed here.\gap{}
%For proofs and details, see
%Appendix~\ref{sec:loc-constant}.
\end{remark}

\subsection{The Cup Product}

\label{subsec:cup-prod}

The cup product is a well-known operation on cohomology groups in topology.
We now present the analogous notion for sheaves on simplicial complexes, which will be needed
only for Section~\ref{sec:modifying} below.
For the sake of simplicity, we shall restrict the discussion to 
the cup-product action of $C^i(X,\F) $ on $C^j(X,\calG)$ where $\calG$
is an $\F$-sheaf.

%We recall it now in the context of sheaves on simplicial complexes, focusing
%on the action of $\HH^*(X,\F)$ on $\HH^*(X,\hat\calF)$ for simplicity.

As before, $X$ is a simplicial complex. We fix, once and for all, a linear ordering
$L$ on $V(X)$
and use it to identify $C^j(X,\calG)$ with $\prod_{x\in X(j)}\calG(x)$
for any sheaf $\calG$ as in Remark~\ref{RM:ordered-cohomology}.
If $v_0,\dots,v_i$ are the vertices of $x\in X(i)$ and $v_0<\dots<v_i$, then
we shall denote the ordered face $x_L$ simply as $v_0v_1\cdots v_i$.
%Recall (Example~\ref{EX:sheaves-basic-examples}(i)) 
%that    $\F$ also denotes the constant $\F$-sheaf $\F_X$ on $X$.

Let $\calG$ be an $\F$-sheaf on $X$. For every $\alpha\in C^i(X,\F)$
and $g\in C^j(X,\calG)$, the \emph{cup product} 
of $\alpha$ and $g$  
is the element  $\alpha\cupp g \in C^{i+j}(X,\calG)$ defined by:
\[
(\alpha\cupp g)(v_0v_1\cdots v_{i+j})=\alpha(v_0\cdots v_i)g(v_i\cdots v_{i+j}).
\]
The properties of the cup product that we shall need are summarized in the following proposition.
%The following proposition whose proof is brought for the sake of completeness,
%summarizes some properties of the cup product.

\begin{prp}\label{PR:cup-product-basic-properties}
	Let $\calG$ be an $\F$-sheaf on $X$, and let $f,g\in C^j(X,\calG)$,
	$\alpha\in C^i(X,\F)$, $\beta\in C^k(X,\F)$. Then:
	\begin{enumerate}[label=(\roman*)]
		\item $\cupp:C^i(X,\F)\times C^j(X,\calG)\to C^{i+j}(X,\calG)$ is an $\F$-bilinear pairing.
		\item $d_{i+j}(\alpha\cupp f) = d_i\alpha\cupp f +(-1)^i\alpha\cupp d_j f$.
		\item $(\alpha\cupp \beta)\cupp f = \alpha\cupp (\beta\cupp f)$.
%		\item If $\alpha$ and $\beta$ are cocycles, then $\alpha\cupp\beta-(-1)^{kj}\beta\cupp \alpha\in
%		B^{j+k}(X,\calG)$.
	\end{enumerate}
	Moreover, if   $\calG'$ is another $\F$-sheaf on $X$ and   $\vphi:\calG\to \calG'$
	is a  morphism,
	then: %. Denote by $\vphi_*$ the induced map $C^r(X,\calG)\to C^r(X,\calG')$.
	%Then:
	\begin{enumerate}[resume, label=(\roman*)]
		\item[(v)] $\vphi_*(\alpha\cup g)=\alpha\cup \vphi_* g$,
		where $\vphi_*:C^r(X,\calG)\to C^r(X,\calG')$ is the map induced by $\vphi$
		(see \S\ref{subsec:sheaf-coh}).
	\end{enumerate}
\end{prp}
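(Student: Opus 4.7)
The plan is to verify each of (i)--(iii) and (v) by evaluating both sides on an arbitrary ordered face $v_0\cdots v_{i+j}$ (or $v_0\cdots v_{i+j+1}$ in part (ii)) under the identification of Remark~\ref{RM:ordered-cohomology}. The subtlety to carry silently throughout is that in the defining formula
\[
(\alpha\cupp g)(v_0\cdots v_{i+j})=\alpha(v_0\cdots v_i)\,g(v_i\cdots v_{i+j}),
\]
the factor $g(v_i\cdots v_{i+j})$ a priori lies in $\calG(v_i\cdots v_{i+j})$ and must be implicitly restricted along $\res^{\calG}_{v_0\cdots v_{i+j}\from v_i\cdots v_{i+j}}$ before being scaled by $\alpha(v_0\cdots v_i)\in\F$ inside $\calG(v_0\cdots v_{i+j})$. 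As the authors do, I will keep this restriction implicit in the notation.

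Part~(i) is immediate: for a fixed ordered face, the right-hand side is $\F$-linear separately in $\alpha(v_0\cdots v_i)\in\F$ and in $g(v_i\cdots v_{i+j})\in\calG(v_i\cdots v_{i+j})$. Part~(iii) is a one-line check on the ordered face $v_0\cdots v_{i+k+j}$: both sides unfold to $\alpha(v_0\cdots v_i)\,\beta(v_i\cdots v_{i+k})\,g(v_{i+k}\cdots v_{i+k+j})$, the two $\F$-scalars commuting freely, and the sheaf axiom~\eqref{EQ:sheaf-cond} collapsing the two implicit restrictions of $\calG$ into a single restriction from $v_{i+k}\cdots v_{i+k+j}$ up to $v_0\cdots v_{i+k+j}$. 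Part~(v) is obtained by applying $\vphi_{v_0\cdots v_{i+j}}$ to the defining formula, pulling the scalar $\alpha(v_0\cdots v_i)$ through by $\F$-linearity of $\vphi_{v_0\cdots v_{i+j}}$, and then invoking the compatibility of $\vphi$ with restriction maps to commute it past $\res^{\calG}_{v_0\cdots v_{i+j}\from v_i\cdots v_{i+j}}$; this is exactly the defining property of a morphism of $\F$-sheaves.

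The main work, and the step I expect to be the only genuine obstacle, is the signed Leibniz rule in part~(ii). Fix $y=v_0\cdots v_{i+j+1}$ and expand
\[
d_{i+j}(\alpha\cupp f)(y)=\sum_{k=0}^{i+j+1}(-1)^k \res_{y\from y_k}\bigl((\alpha\cupp f)(y_k)\bigr).
\]
I will split this sum at $k=i+1$. For $0\le k\le i$, deleting $v_k$ from $y$ leaves the first $i+1$ vertices equal to $v_0\cdots\widehat{v_k}\cdots v_{i+1}$, so $(\alpha\cupp f)(y_k)=\alpha(v_0\cdots\widehat{v_k}\cdots v_{i+1})\,f(v_{i+1}\cdots v_{i+j+1})$; this partial sum reassembles term-by-term into $((d_i\alpha)\cupp f)(y)$, except that the term of $d_i\alpha$ at index $i+1$ is missing. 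Symmetrically, for $i+1\le k\le i+j+1$, the first $i+1$ vertices of $y_k$ are unchanged and the last $j+1$ are $v_i\cdots\widehat{v_k}\cdots v_{i+j+1}$; reindexing $k'=k-i$ and using $(-1)^k=(-1)^i(-1)^{k'}$, this partial sum reassembles into $(-1)^i(\alpha\cupp d_j f)(y)$, except that the term of $d_j f$ at index $k'=0$ is missing. The two ``missing'' terms are both equal to $\alpha(v_0\cdots v_i)\,f(v_{i+1}\cdots v_{i+j+1})$ (one corresponds to deleting $v_{i+1}$ from the $\alpha$-side, the other to deleting $v_i$ from the $f$-side), and they appear with signs $(-1)^{i+1}$ and $(-1)^i$ respectively, which cancel. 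This sign bookkeeping, together with keeping the implicit restriction maps consistent on both halves of the split, is the heart of the argument; once done, identity~(ii) follows, the computation being formally the classical proof of the Leibniz rule for the cup product, adapted to the sheaf-valued setting.
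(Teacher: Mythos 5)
Your proof is correct and follows essentially the same route as the paper's: parts (i), (iii), (v) by direct evaluation on an ordered face (the paper dismisses these as straightforward), and for the Leibniz rule (ii) the identical splitting of the coboundary sum at $k=i+1$, with the two missing terms $(-1)^{i+1}\alpha(v_0\cdots v_i)f(v_{i+1}\cdots v_{i+j+1})$ and $(-1)^{i}\alpha(v_0\cdots v_i)f(v_{i+1}\cdots v_{i+j+1})$ cancelling — exactly the add-and-subtract step in the paper's displayed computation. Your explicit tracking of the implicit restriction maps is a nice touch but does not change the substance.
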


\begin{proof}
	Everything is straightforward except for (ii). 
	To prove (ii), 
	suppose that $x=v_0\cdots v_{i+j+1}$ is an ordered $(i+j+1)$-face.
	Then 
	\begin{align*}
		d_{i+j}(&\alpha\cupp f)(x)  =
		\sum_{t=0}^{i+j+1} (-1)^t(\alpha\cupp f)(v_0\cdots\hat{v}_t\cdots v_{i+j+1}) \\
		&= \sum_{t=0}^i(-1)^t\alpha(v_0\cdots\hat{v}_t\cdots v_{i+1})f(v_{i+1}\cdots v_{i+j+1})
		+ (-1)^{i+1}\alpha(v_0\cdots v_i\hat{v}_{i+1})f(v_{i+1}\cdots v_{i+j+1}) \\
		&\phantom{=} + (-1)^i\alpha(v_0\cdots v_i )f(\hat{v}_i v_{i+1}\cdots v_{i+j+1})
		+ \sum_{t={i+1}}^{i+j+1}(-1)^t\alpha(v_0\cdots v_i) f(v_i\cdots\hat{v}_t\cdots v_{i+j+1}) \\
		&=d_i\alpha(v_0\cdots v_{i+1})f(v_{i+1}\cdots v_{i+j+1}) 
		+ (-1)^i \alpha(v_0\cdots v_i) d_jf(v_i\cdots v_{i+j+1})\\  
		&=((d_i\alpha)\cupp f + (-1)^i\alpha\cupp (d_j f))(x).
	\end{align*}
	(Here, $\hat{v}_t$ means that  we omit   $v_t$.)
	As this holds for all $x$, (ii) follows.
\end{proof}

Proposition~\ref{PR:cup-product-basic-properties}(ii)
	implies readily that the bilinear pairing 
	\[[\alpha]\cup[g]\mapsto [\alpha\cup g]:\HH^i(X,\F)
	\times \HH^j(X,\calG)\to \HH^{i+j}(X,\calG),\] 
	also called the cup product,
	is well defined. It can further be shown
	that this pairing is independent of the ordering on $V(X)$.

%	With $\calG$ as before,
%	let $C^*(X,\calG)=\bigoplus_{i\geq 0}C^i(X,\calG)$
%	and let $d:C^*(X,\calG)\to C^*(X,\calG)$ denote the map $d(\sum_{i\geq 0}g_i)=\sum_{i\geq 0}d_ig_i$.
%	Proposition~\ref{PR:cup-product-basic-properties}
%	implies that the cup-product extends  to a bilinear
%	pairing $\cupp:C^*(X,\F)\times C^*(X,\calG)\to C^*(X,\calG)$.
%	When $\calG=\F$, this pairing makes $C^*(X,\F)$ into a ($\Z$-graded) $\F$-algebra, and
%	$C^*(X,\calG)$ into a (graded) left $C^*(X,\F)$-module.
%	(In fact, $C^*(X,\F)$ is a \emph{differential graded algebra} and 
%	$C^*(X,\calG)$ is a \emph{differential graded module} over $C^*(X,\calG)$.)
%	
%	Put $\HH^*(X,\calG)=\bigoplus_{i\geq0}\HH^i(X,\calG)$.
%	Proposition~\ref{PR:cup-product-basic-properties}(ii)
%	implies readily that the product rule 
%	\[[\alpha]\cup[g]\mapsto [\alpha\cup g]:\HH^*(X,\F)
%	\times \HH^*(X,\calG)\to \HH^*(X,\calG)\] 
%	is well defined. It
%	makes $\HH^*(X,\F)$ into an $\F$-algebra, called the cohomology ring of $X$ with coefficients
%	in $\F$, and $\HH^*(X,\calG)$ into a left $\HH^*(X,\calG)$-module.
%	It can be shown that $\HH^*(X,\F)$ is \emph{graded commutative},
%	i.e., that $[\alpha]\cup[\beta] = (-1)^{ij}[\beta\cup \alpha]$ for all
%	$[\alpha]\in\HH^i(X,\F)$, $[\beta]\in\HH^j(X,\F)$.

\section{Coboundary and Cosystolic Expansion}
\label{sec:coboundary}

In this section we introduce expanding
sheaves. In fact,
we shall consider two types of expansion --- \emph{coboundary expansion}
and \emph{cosystolic expansion} --- and both make use of
an auxiliary \emph{norm} on the sheaf.

\subsection{Norms on Abelian Groups}
\label{subsec:norms}

Recall that a \emph{seminorm} on an abelian group $A$
is a function $\|\cdot\|:A\to \R$
such that $\|a\|\geq 0$, $\|a\|=\|-a\|$ and $\|a+b\|\leq \|a\|+\|b\|$ for all $a,b\in A$.
If $\|a\|=0$ implies $a=0$, we say that $\|\cdot\|$ is \emph{norm} on $A$.
%and $\|a\|=0$ if and only if $a=0$.
%We also say that $(A,\|\cdot\|)$ is a normed group.
In this case, $(x,y)\mapsto \|x-y\|:A\times A\to \R$
is a  translation-invariant metric on $A$.

A seminorm $\|\cdot\|:A\to \R$ is bounded if $\sup\{\|a\|\where a\in A\}<\infty$.
\emph{All norms and seminorms in this work are assumed to be bounded.}

If $\|\cdot\|_A$ is a seminorm (resp.\ norm)
on $A$ and $B$ is a   subgroup of $A$, then the restriction of   $\|\cdot\|_A$
to $B$ is   a seminorm (resp.\ norm) on $B$. The map $\|\cdot\|_{A/B}:A/B\to \R$
defined by
$\|a+B\|_{A/B}=\inf_{b\in B}\|a+b\|_A$ is a seminorm on $A/B$, called
the quotient seminorm.  If $\|\cdot\|_A$ is a norm and $B$ is finite,
then $\|\cdot\|_{A/B}$ is a norm.

\begin{example}\label{EX:norm-on-ab-grps}
	(i) The \emph{discrete norm} on an abelian group $A$ maps all nonzero elements
	of $A$ to $1$ and the zero element to $0$.
	
	(ii) The \emph{Hamming norm} on $\F^n$ sends $v\in \F^n$ to the number
	of its non-zero coordinates.
	More generally, if $V$ is a finite dimensional $\F$-vector space
	with a finite basis $B$,
	then the   Hamming norm on    $V$ relative to   $B$   sends
	the vector
	$v=\sum_{b\in B}\alpha_b b$  to the number of 
	nonzero $\alpha_b$-s. 
\end{example}

\subsection{Normed Sheaves}
\label{subsec:normed-sheaves}

Let $\calF$ be an augmented sheaf on a simplicial complex $X$.
A norm on $\calF$ is a collection $\|\cdot\|=\{\|\cdot\|_x\}_{x\in X}$
of norms $\|\cdot\|_x:\calF(x)\to \R$. We also say
that $(\calF,\|\cdot\|)$ is a normed augmented sheaf.
In this case, the \emph{mass} of $x\in X$ (relative to $\calF$ and $\|\cdot\|$) is defined
as $m(x)=\sup\{\|f\|_x \where f\in \calF(x)\}$; it is finite by our standing assumption
that all norms are bounded. 
For $A\subseteq X$, we write $m(A)=\sum_{x\in A}m(x)$ 
and let $m(i)=m(X(i))$.

In this work, we will be concerned with the following   examples of normed augmented 
sheaves.

\begin{example}[Weighted support norm]\label{EX:weighted-support}
Suppose that $X$ is a $d$-complex (i.e., pure of dimension $d$) and let $w$ denote
the canonical weight function on $X$ (see \S\ref{subsec:weights}).
Let $\calF$ be an augmented  sheaf on $X$. The \emph{weighted support norm} 
of $\calF$ is the norm
$\|\cdot\|_{\mathrm{ws}}=\{\|\cdot\|_{\mathrm{ws},x}\}_{x\in X}$,
where $\|\cdot\|_{\mathrm{ws},x}:\calF(x)\to \R$
is defined by $\|f\|_{\mathrm{ws},x}=w(x)$ if $f\neq 0$ and $\|f\|_{\mathrm{ws},x}=0$ otherwise.
Provided that $\calF(x)\neq 0$, the mass of $x$ is just $w(x)$.
Consequently, if $\calF(x)\neq 0$ for all $x\in X(i)$, then $m(i)=w(X(i))=1$.
\end{example}

The weighted support norm will be the default norm on every
sheaf we consider, and will be denoted simply
as $\ws{\cdot}$ when there is no risk of confusion.
The following norms, however, are more useful for 
coding theory applications of sheaves.

\begin{example}[Normalized Hamming norm]\label{EX:non-weighted-support}
The \emph{normalized Hamming norm} on an augmented sheaf $\calF$
is the norm $\nws{\cdot}=\{\|\cdot\|_{\Ham,x}\}_{x\in X}$,
where $\|\cdot\|_{\Ham,x}:\calF(x)\to \R$
is
defined by $\|f\|_{\Ham,x}=\frac{1}{|X(\dim x)|}$ if $f\neq 0$ and $\|f\|_{\Ham,x}=0$ otherwise.
This is the norm used in Section~\ref{sec:overview}.
If $\calF(x)\neq 0$ for all $x\in X(i)$, then $m(i)= 1$.
\end{example}

\begin{example}[Hamming norm relative to a basis]\label{EX:Hamming-norm}
	Suppose that $\calF$ is an augmented $\F$-sheaf, i.e., an augmented 
	sheaf  of  $\F$-vector spaces.
	A \emph{basis} for $\calF$ is a collection $B=\{B(x)\}_{x\in X}$
	such that $B(x)$ is an $\F$-basis of $\calF(x)$  for all $x\in X$.
	Let $\|\cdot\|_{B,x}$ denote the Hamming norm on $\calF(x)$ relative
	to the basis $B(x)$ (Example~\ref{EX:norm-on-ab-grps}(ii)).
	Then $\|\cdot\|_B :=\{\|\cdot\|_{B,x}\}_{x\in X}$ is a norm on $\calF$
	called the \emph{Hamming norm} relative to the basis 
	$B=\{B(x)\}_{x\in X}$.
	Writing $m=m_B$ for the corresponding mass function,
	we have $m (x)=\dim \calF(x)$ and $m (i)=\sum_{x\in X(i)}\dim\calF(x)=\dim C^i(X,\calF)$.
\end{example}

Every norm $\|\cdot\|$ on $\calF$
induces a norm on $C^k(X,\calF)$, also denoted $\|\cdot\|$,   given by
\[
\|f\|=\|f\|_{C^k}=\sum_{x\in X(k)} \|f(x)\|_x,
\]
where in the expression $f(x)$, we regard $x$ as an ordered cell by arbitrarily ordering its vertices.
%It is easy to check that this is a norm in the sense of \S\ref{subsec:norms}.
The mass of all $i$-faces, $m(i)$, is nothing but 
$\sup\{\|f\|\where f\in C^i(X,\calF)\}$.
For example, if $\|\cdot\|$ is the weighted support norm,
then we   have $\|f\|\leq 1$ for all $f\in C^i(X,\calF)$.

\begin{example}\label{EX:cohomology-norms}
	(i) Let $X$ be $d$-complex with weight function $w=w_X$, 
	let $\calF$ an augmented sheaf on $X$
	and let $f\in C^i(X,\calF)$.
	Then, relative to the weighted support norm 
	$\|\cdot\|_{\mathrm{ws}}$ (Example~\ref{EX:weighted-support}),
	we have
	\[\|f\|_{\mathrm{ws}} =w(\supp f),\]
	where $\supp f:=\{x\in X(k)\suchthat f(x)\neq 0\}$. (This explains the name ``weighted support''.)
	By contrast, with respect to the normalized Hamming norm $\nws{\cdot}$ (Example~\ref{EX:non-weighted-support}),
	we have 
	\[\|f\|_{\Ham }=\frac{|\supp f|}{|X(k)|} . \]	
	Suppose further that there is an abelian group $\Sigma$
	such that $\calF(x)\cong\Sigma$ for all $x\in X(i)$.
	Let us fix a linear ordering on $V(X)$
	and use it identify  $C^i(X,\calF)$ with $\prod_{x\in X(i)}\calF(x)\cong
	\Sigma^{X(i)}$ as in Remark~\ref{RM:ordered-cohomology}.
	Then 
	the norm $\nws{\cdot}:C^i(X,\calF)\to \R$
	coincides with the normalized Hamming norm on $\Sigma^{X(i)}$.
	
	(ii) 
	Let $\calF$ be an augmented $\F$-sheaf 
	on $X$
	with a basis $B$, and let $\|\cdot\|_{B}$ 
	denote the associated Hamming norm % $B=\{B(x)\}_{x\in X}$
	(Example~\ref{EX:Hamming-norm}).
	Again, fix a linear ordering on $V(X)$ and use it to identify
	$C^k(X,\calF)$ with $\prod_{x\in X(k)}\calF(x)$  as  in Remark~\ref{RM:ordered-cohomology}.
	Then, under this identification, $\|\cdot\|_{B }$ is the  Hamming norm
	of $\prod_{x\in X(k)}\calF(x)$ relative to the basis $\bigsqcup_{x\in X} B(x)$.
\end{example}

The norms $\ws{\cdot}=\|\cdot\|_{\mathrm{ws}}$, $\nws{\cdot}$, $\|\cdot\|_B$
of Examples~\ref{EX:weighted-support}--\ref{EX:Hamming-norm} 
are proportional under
mild assumptions on $X$ and $\calF$.

\begin{prp}\label{PR:Hamming-and-support-ratio}
	Let $X$ be a $d$-complex, let $k\in\{-1,0,\dots,d\}$, and
	let $\calF$ be an augmented sheaf on $X$.
	Put $Q=D_{k,d}(X)$ (see \S\ref{subsec:complexes}).
	%$\max\{\# X(d)_{\supseteq x}\where x\in X(k)\}$.
	Then, 
	\begin{enumerate}[label=(\roman*)]
		\item  
	$  \schoose{d+1}{k+1}\frac{|X(d)|}{ |X(k)|}Q^{-1}\ws{f}\leq 
	\nws{f} \leq 
	\schoose{d+1}{k+1}\frac{|X(d)|}{ |X(k)|} \ws{f}$ for all $f\in C^k(X,\calF)$.
	Furthermore, 
	$\schoose{d+1}{k+1}^{-1}\leq \frac{|X(d)|}{ |X(k)|}\leq Q$.
	\end{enumerate}
	If $\calF$ is an augmented $\F$-sheaf, $B$ is a basis of $\calF$,
	and $N=\max\{\dim\calF(x)\where x\in X(k)\}$,
	then we moreover have
	\begin{enumerate}[label=(\roman*), resume]
		\item $\schoose{d+1}{k+1}|X(d)|Q^{-1}\ws{f}\leq 
	\|f\|_{B}\leq 
	\schoose{d+1}{k+1}|X(d)|N\ws{f}$ for all $f\in C^k(X,\calF)$.
	\end{enumerate}
\end{prp}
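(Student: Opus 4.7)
\medskip

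\noindent\textbf{Proof plan.} The plan is to reduce everything to an estimate of the cardinality $|\supp f|$ in terms of $\ws{f}$, and then observe that the three norms differ only in how they weigh or count the support. By the definition of the canonical weight function recalled in \S\ref{subsec:weights} together with Example~\ref{EX:cohomology-norms}(i), we have
\[
\ws{f}=w(\supp f)=\schoose{d+1}{k+1}^{-1}|X(d)|^{-1}\sum_{x\in \supp f} |X(d)_{\supseteq x}|.
\]
For every $x\in X(k)$ we have $1\le |X(d)_{\supseteq x}|\le Q$: the upper bound is the very definition of $Q=D_{k,d}(X)$, while the lower bound follows from the assumption that $X$ is pure of dimension $d$ (so every $k$-face is contained in at least one $d$-face). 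Substituting these inequalities into the formula above gives
\[
\schoose{d+1}{k+1}^{-1}|X(d)|^{-1}|\supp f|\;\le\;\ws{f}\;\le\;\schoose{d+1}{k+1}^{-1}|X(d)|^{-1}Q\,|\supp f|.
\]

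Dividing through by $|X(k)|$ and using $\nws{f}=|\supp f|/|X(k)|$ (Example~\ref{EX:cohomology-norms}(i)) immediately yields the two inequalities of~(i). The sandwich estimate $\schoose{d+1}{k+1}^{-1}\le |X(d)|/|X(k)|\le Q$ follows from the same $1\le|X(d)_{\supseteq x}|\le Q$ bounds applied to the double-counting identity
\[
\schoose{d+1}{k+1}|X(d)|=\sum_{x\in X(k)}|X(d)_{\supseteq x}|,
\]
obtained by counting pairs $(x,y)\in X(k)\times X(d)$ with $x\subseteq y$ in two ways. (The edge case $k=-1$ is handled directly, as $X(-1)=\{\emptyset\}$ and $|X(d)_{\supseteq \emptyset}|=|X(d)|=Q$.)

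For part~(ii) I would use that, in the Hamming norm relative to the basis $B$, a face $x\in\supp f$ contributes an integer $\|f(x)\|_{B,x}\in\{1,2,\ldots,\dim\calF(x)\}$, so
\[
|\supp f|\;\le\;\|f\|_{B}\;=\;\sum_{x\in\supp f}\|f(x)\|_{B,x}\;\le\;N\,|\supp f|.
\]
Combining these with the two-sided bound on $|\supp f|$ obtained above yields~(ii) after rearrangement. The computation is elementary, and I do not anticipate any genuine obstacle; the only mildly delicate point is bookkeeping of the factor $\schoose{d+1}{k+1}$, which must be tracked consistently through the definition of $w(x)$ and the double-counting identity, and the handling of the $k=-1$ boundary case.
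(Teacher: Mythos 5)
Your proof is correct and follows essentially the same route as the paper's: both arguments reduce to the bound $1\le|X(d)_{\supseteq x}|\le Q$ for $x\in X(k)$, equivalently $\schoose{d+1}{k+1}^{-1}|X(d)|^{-1}\le w(x)\le \schoose{d+1}{k+1}^{-1}|X(d)|^{-1}Q$, together with the trivial bounds $1\le\|f(x)\|_{B,x}\le N$ for part (ii). The only cosmetic difference is that you aggregate over $\supp f$ while the paper verifies the inequality face by face before summing.
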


\begin{proof}

	(i) The inequality $\schoose{d+1}{k+1}^{-1}\leq \frac{|X(d)|}{ |X(k)|}\leq Q$
	follows readily from the fact that every $d$-face
	contains exactly $\schoose{d+1}{k+1}$ $k$-faces and every
	$k$-face is contained in at most $Q$ $d$-faces.
	
	Let $x\in X(k)$. It is enough to show
	that for all $g\in \calF(x)$,
	we have 
	$ \schoose{d+1}{k+1}\frac{|X(d)|}{Q|X(k)|}\|g\|_{\mathrm{ws},x}\leq 
	\|g\|_{\Ham,x}\leq 
	\schoose{d+1}{k+1}\frac{|X(d)|}{ |X(k)|}\|g\|_{\mathrm{ws},x}$.
	This is clear if $g=0$, so assume $g\neq 0$.
	Then $\|g\|_{\mathrm{ws},x}=w(x)$ whereas
	$\|g\|_{\Ham,x}=\frac{1}{|X(k)|}$.
	The definition of $w(x)$ in \S\ref{subsec:weights} 
	%and our assumption on $\dim \calF(x)$
	implies that  
	$\schoose{d+1}{k+1}^{-1}|X(d)|^{-1} \leq w(x)
	\leq \schoose{d+1}{k+1}^{-1}|X(d)|^{-1} Q$.
	Thus,
	$ \schoose{d+1}{k+1}\frac{|X(d)|}{Q|X(k)|} w(x)
	\leq \frac{1}{|X(k)|}
	= \schoose{d+1}{k+1}\frac{|X(d)|}{ |X(k)|}\cdot \schoose{d+1}{k+1}^{-1}|X(d)|^{-1}\leq
	\schoose{d+1}{k+1}\frac{|X(d)|}{ |X(k)|} w(x)$, which is what we want.

	(ii) 
	As in (i),
	it is enough to show that for all $x\in X(k)$
	and $g\in \calF(x)-\{0\}$,
	we have $\schoose{d+1}{k+1}|X(d)|Q^{-1}w(x)\leq 
	\|g\|_{B(x)}\leq 
	\schoose{d+1}{k+1}|X(d)|Nw(x)$.
	We observed   that  
	$\schoose{d+1}{k+1}^{-1}|X(d)|^{-1} \leq w(x)
	\leq \schoose{d+1}{k+1}^{-1}|X(d)|^{-1} Q$.
	Since $1\leq \dim \calF(x)\leq N$,
	it follows  that
	$\schoose{d+1}{k+1}|X(d)|Q^{-1}w(x)\leq 1 
	\leq \|g\|_{B(x)}\leq N\leq 
	\schoose{d+1}{k+1}|X(d)|N w(x)$, as required.
\end{proof}

\subsection{Coboudary and Cosystolic Expansion}
\label{subsec:coboundary-exp}

Let $ (\calF,\|\cdot\|) $ be a normed augmented  sheaf on
a simplicial complex $X$ and let $m$
be its  mass function.
Let $k \in\N\cup\{0,-1\}$.
The norm $\|\cdot\|_{C^k}$
induces seminorms on 
$C^k(X,\calF)/B^k(X,\calF)$ and $C^k(X,\calF)/Z^k(X,\calF)$,
which we denote by $\|\cdot\|_{C^k/B^k}$ and $\|\cdot\|_{C^k/Z^k}$, respectively.
The subscripts will be dropped when there is no risk of confusion.

\begin{dfn}
Let $\veps,\delta\in [0,\infty)$. 
%We endow $\calF$ with the weighted support norm
%(Example~\ref{EX:weighted-support}).
%
We say that $(X,\calF,\|\cdot\|)$   is an \emph{$\veps$-coboundary
expander} in dimension $k$
if 
\begin{enumerate}
	\item[(B)] $\|d_kf\|_{C^{k+1} } m(k)\geq \veps \|f+B^k(X,\calF))\|_{C^k/B^k } m(k+1)$ 
	for all $f\in C^k(X,\calF)$.
\end{enumerate}
%The largest $\veps$ for which (B) holds is the \emph{coboundary expansion} of
%$(X,\calF,\|\cdot\|)$ in dimension $k$.
We say that $(X,\calF,\|\cdot\|)$ is an \emph{$(\veps,\delta)$-cosystolic
expander} in dimension $k $ if
\begin{enumerate}
	\item[(C1)] $\|d_kf\|_{C^{k+1}} m(k)\geq \veps \|f+Z^k(X,\calF)\|_{C^k/Z^k} m(k+1)$ 
	for all $f\in C^k(X,\calF)$, and
	\item[(C2)] $\|f\|_{C^k}\geq \delta   m(k)$ for all $f\in Z^k(X,\calF)-B^k(X,\calF)$.
\end{enumerate}
\end{dfn}

When $X$ is a $d$-complex,
we say that the pair $(X,\calF)$ is an $\veps$-coboundary expander,
resp.\ $(\veps,\delta)$-cosystolic expander, in dimension $i$ if
this holds for $(X,\calF,\ws{\cdot})$ with $\ws{\cdot}$ being the  
weighted support norm of $(X,\calF)$ (Example~\ref{EX:weighted-support}).
Thus, $(X,\aug{(\F_2)})$ is an $\veps$-coboundary expander in dimension $i$
if and only if $X$ is an $\veps$-coboundary expander in dimension $i$
in the sense of Lubotzky, Meshulam and Mozes
\cite[Definition~1.1]{Lubotzky_2016_expansion_of_buildings}.

\begin{remark}\label{RM:properties-of-cbe}
The following properties of coboundary and cosystolic expansion are important to note:

(i) The triple $(X,\calF,\|\cdot\|)$ is an  $\veps$-coboundary
expander in dimension $k$ if and only if
it is an $(\veps,\delta)$-cosystolic  expander in dimension $k$ 
and $\HH^k(X,\calF)=0$.

(ii) Scaling the norms $\{\|\cdot\|_{x}\}_{x\in X(k)}$
by the same constant $c\in\R_+$ does not affect the coboundary
and cosystolic expansion in dimension $k$, and likewise for the for the norms
$\{{\|\cdot\|}_{x}\}_{x\in X(k+1)}$.
More generally, let $\|\cdot\|'$ be another norm on $\calF$,
and suppose that there are constants $u_k,v_k,u_{k+1},v_{k+1}\in \R_+$
such that $u_i\|f\|_x   \leq \| f\|'_x   \leq v_i\| f\|_x   $
for all $i\in\{k,k+1\}$, $x\in X(i)$ and $f\in \calF(x)$.
If $(X,\calF,\|\cdot\|)$ is an $(\veps,\delta)$-cosystolic expander (resp.\
$\veps$-coboundary expander) in dimension $k$,
then $(X,\calF,{\|\cdot\|}')$ is a $(\frac{u_k u_{k+1}}{v_k v_{k+1}}\veps,
\frac{u_k}{v_k}\delta)$-cosystolic expander (resp.\
$\frac{u_k u_{k+1}}{v_k v_{k+1}}\veps$-coboundary expander)
in dimension $k$.

(iii) If $\calF$ vanishes on $X(k)$ or on $X(k+1)$, equivalently
if $m(k)=0$ or $m(k+1)=0$,
then conditions (B) and (C1) hold with any $\veps\in \R_+$.
\end{remark}

\begin{remark}
	The normalization by $m(k)$ and $m(k+1)$ in (B), (C1) and (C2) is 
	made in order
	to keep $\veps$ and $\delta$ around the interval $[0,1]$.
	However, it is possible for $\veps$ to exceed $1$.
	Indeed, writing $\Delta_n$ for the $n$-dimensional simplex,
	it is easy to check that the coboundary expansion of $(\Delta_n,\aug{(\F_2)},\|\cdot\|_{\mathrm{ws}})$ 
	in dimension $0$ is $\frac{n+2-(n\bmod 2)}{n }$.\footnote{
		As for higher dimensions
		$k\in\{1,\dots,n-1\}$,
		Gromov \cite{Gromov_2010_expanders_and_top_II} and Meshulam--Wallach
		\cite{Meshulam_2009_homological_connectivity}
		showed that  
		$(\Delta_n,\aug{(\F_2)},\|\cdot\|_{\mathrm{ws}})$ is an
		$\frac{n+1}{n-k}$-coboundary expander in dimension $k$.
	}
	In contrast,
	if   $(X,\calF,\|\cdot\|)$ is an $(\veps,\delta)$-cosystolic expander
	in dimesion $k$ and $Z^k(X,\calF)\neq B^k(X,\calF)$, then $\delta$
	cannot exceed $1$. 
	
	If we use the weighted support norm, then the coboundary expansion  in dimension $k$
	cannot exceed $k+2$ by  the following lemma.
\end{remark}

\begin{lem}\label{LM:max-cbe}
	Let $(X,\calF)$ be a sheaved $d$-complex, let $k\in \{-1,\dots,d-1\}$
	and suppose that $\calF(x)\neq 0$ for all $x\in X(k+1)$. 
	Then the coboundary expansion of $(X,\calF)$ in dimension $k$ is at most $k+2$.
\end{lem}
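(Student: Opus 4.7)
The plan is to prove the stronger \emph{universal} inequality
\[
\|d_k f\|_{\mathrm{ws}}\,m(k) \;\le\; (k+2)\,\|f + B^k(X,\calF)\|_{C^k/B^k}\,m(k+1)
\]
for every $f\in C^k(X,\calF)$. Once this is in place, condition~(B) must fail for every $\veps > k+2$ as soon as some $f\notin B^k(X,\calF)$ exists, which forces the supremum of valid $\veps$ — i.e.\ the coboundary expansion — to be at most $k+2$. The argument exhibits no extremal cochain; the inequality is valid for \emph{every} $f$.

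The first step would be a simple support-counting argument showing that $\|d_k f\|_{\mathrm{ws}} \le (k+2)\,\|f\|_{\mathrm{ws}}$. If $(d_k f)(y)\ne 0$ for some $y\in X(k+1)$, then by the defining formula for the coboundary at least one $k$-subface $x\subset y$ must have $f(x)\ne 0$. Consequently $\supp(d_k f) \subseteq \bigcup_{x\in\supp f} X(k+1)_{\supseteq x}$, and a union bound combined with the weight identity $w(X(k+1)_{\supseteq x}) = \binom{k+2}{k+1}\,w(x) = (k+2)\,w(x)$ from \eqref{EQ:weight-of-containing-cells} yields
\[
\|d_k f\|_{\mathrm{ws}} = w(\supp d_k f) \le \sum_{x\in \supp f} w\bigl(X(k+1)_{\supseteq x}\bigr) = (k+2)\,\|f\|_{\mathrm{ws}}.
\]

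Next I would lift this to the quotient norm. Since $d_k$ vanishes on $B^k(X,\calF)$, we have $d_k f = d_k(f+b)$ for every $b\in B^k(X,\calF)$; applying the previous step to $f+b$ and taking the infimum over $b$ gives $\|d_k f\|_{\mathrm{ws}} \le (k+2)\,\|f + B^k(X,\calF)\|_{C^k/B^k}$. The hypothesis $\calF(y)\ne 0$ for every $y\in X(k+1)$ forces $m(k+1) = w(X(k+1))=1$, while $m(k)\le w(X(k))=1$ holds in every case, so multiplying through the ratio $m(k)/m(k+1)\le 1$ yields the announced universal inequality.

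I do not anticipate any real obstacle: the whole proof reduces to the weight identity \eqref{EQ:weight-of-containing-cells} and a careful reading of how the support of $d_k f$ is controlled by the support of $f$. The only minor subtlety is the degenerate case $B^k(X,\calF) = C^k(X,\calF)$, in which there is no cochain with positive quotient norm and the coboundary expansion is vacuously $+\infty$; this case is either excluded implicitly (the hypothesis is that $(X,\calF)$ genuinely admits non-trivial $k$-cochains modulo $B^k$) or handled by a standing convention.
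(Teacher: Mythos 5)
Your proof is correct and follows essentially the same route as the paper: both rest on the support inclusion $\supp(d_kf)\subseteq\bigcup_{x\in\supp f}X(k+1)_{\supseteq x}$ together with the weight identity \eqref{EQ:weight-of-containing-cells} and the observations $m(k+1)=1$, $m(k)\leq 1$. The only difference is that you make explicit the passage to the quotient norm (via $d_k(f+b)=d_kf$) and the degenerate case $C^k=B^k$, both of which the paper leaves implicit in the phrase ``it is enough to show.''
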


\begin{proof}
	It is enough to show that $\ws{d_k f}m(k)\leq (k+2)\ws{f}m(k+1)$ for all $f\in C^k(X,\calF)$.
	Our assumptions imply that $m(k+1)=1$ and $m(k)\leq 1$. Using 
	this and \eqref{EQ:weight-of-containing-cells}, we get
	\begin{align*}
	\ws{d_k f}m(k)& \leq w(\supp(d_kf))\leq
	w(\bigcup_{x\in \supp f} X(k+1)_{\supseteq x})\\
	& \leq \sum_{x\in \supp f}w(X(k+1)_{\supseteq x})
	=(k+2)\sum_{x\in \supp f}w(x)=(k+2)\ws{f} m(k+1).
	\qedhere
	\end{align*}
\end{proof}

The meaning of begin an $\veps$-coboundary expander in $-1$ has
been worked out in the Overview section, page~\pageref{page:cb-exp-in-dim-neg-one}.
We recommend to recall it at this point.

\subsection{Some Examples of Coboundary Exapnders}
\label{subsec:cb-examples}

Only a few concrete examples of good coboundary expanders
in dimension $>0$ are known; 
see \cite{First_2021_weighted_mixing_lemmas_preprint} for  survey.
We now 
recall some of these examples which will be needed in this work.

In contrast, examples of  infinite families of cosystolic expanders 
of the form $(X,\aug{A},\|\cdot\|_{\mathrm{ws}})$ ($A$ is an abelian group) with $D(X)$ 
uniformly bounded 
appear
in \cite{Kaufman_2016_isoperimetic_inequalities} ($\dim X=2$, $A=\F_2$), \cite{Evra_2016_cosystolic_expanders_arxiv_version}
($A=\F_2$) and \cite{Kaufman_2018_cosystolic_expanders}. We shall give more examples in 
Sections~\ref{sec:cse-from-links} and~\ref{sec:quo-aff-buildings}.

\medskip

%Let $X$ be a $d$-complex ($d\geq 1$).
We begin with noting that if the underlying  weighted  graph of 
a $d$-complex $X$ is a good spectral expander
in the sense of \S\ref{subsec:skeleton},
then $(X,\aug{A})$ is a good coboundary expander in dimension $0$
(w.r.t.\ to $\|\cdot\|_{\mathrm{ws}}$)  for any abelian group $A$.

\begin{thm}[{\cite[Corollary~5.3]{First_2021_weighted_mixing_lemmas_preprint}}]
	\label{TH:cbe-of-spectral-exps}
	Suppose that $X$ is $d$-complex ($d\geq 1$)
	whose underlying weighted graph is a $[-1,\lambda]$-expander
	(in the sense of \S\ref{subsec:skeleton})
	for some $\lambda\in  [-1,1]$.
	Then $(X,\aug{A})$ is a $(1-\lambda)$-coboundary expander 
	in dimension $0$ for every
	abelian group $A$.
\end{thm}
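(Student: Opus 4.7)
The plan is to reduce the statement to an edge-boundary estimate for level sets of a $0$-cochain, and then to apply the Weighted Expander Mixing Lemma (i.e.\ a direct spectral computation using the hypothesis that $\calA$ has spectrum in $[-1,\lambda]$ on $C^0_\circ(X,\R)$). Since $\calF=\aug{A}$ is the constant augmented sheaf, the relevant cochain groups are $C^{-1}=A$, $C^0=A^{X(0)}$, $C^1\cong \prod_{e\in X(1)} A$ with $(d_{-1}a)(v)=a$ and $(d_0 f)(\{u,v\})=f(u)-f(v)$; moreover $B^0=d_{-1}(A)=\{\text{constants}\}$. If $A=0$ the claim is vacuous, so I assume $A\neq 0$, in which case the weighted support norm satisfies $m(0)=m(1)=1$ and
$$\ws{d_0 f}=w(\{e=\{u,v\}\suchthat f(u)\neq f(v)\}),\qquad \|f+B^0\|_{C^0/B^0}=\inf_{a\in A}w(\{v\suchthat f(v)\neq a\}).$$

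First I would index the level sets $S_a:=f^{-1}(a)\subseteq X(0)$ for $a\in A$, so that $\{S_a\}_{a\in A}$ is a partition of $X(0)$. Setting $p_a:=w(S_a)$, one checks that $\sum_a p_a=1$, that $\inf_a w(X(0)\setminus S_a)=1-\max_a p_a$, and that $\ws{d_0 f}$ equals the total weight of ``cross'' edges, i.e.\ $1-\sum_{a}w(E(S_a,S_a))$, where $E(S,S)$ denotes edges with both endpoints in $S$. So the required inequality becomes
$$1-\sum_{a} w(E(S_a,S_a))\geq (1-\lambda)\bigl(1-\max_a p_a\bigr).$$

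The key estimate is the single-set bound: for every $S\subseteq X(0)$,
$$w(E(S,S))\leq p^2+\lambda\,p(1-p),\qquad p:=w(S).$$
To obtain this, I would decompose $\mathbbm{1}_S=p\cdot\mathbbm{1}+g_0$ with $g_0\in C^0_\circ(X,\R)$, note that $\|g_0\|_\mu^2=p(1-p)$ (where $\mu(v)=w(v)$), and compute
$$w(E(S,S))=\langle \calA\mathbbm{1}_S,\mathbbm{1}_S\rangle_\mu=p^2+\langle \calA g_0,g_0\rangle_\mu,$$
using that $\calA\mathbbm{1}=\mathbbm{1}$ and $\mathbbm{1}\perp g_0$. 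The spectral hypothesis then gives $\langle \calA g_0,g_0\rangle_\mu\leq \lambda\|g_0\|_\mu^2=\lambda p(1-p)$, which is exactly the Weighted Expander Mixing Lemma of \cite{First_2021_weighted_mixing_lemmas_preprint} that the paper already invokes in \S\ref{subsec:skeleton}.

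Summing the single-set bound over $a\in A$ and using $\sum_a p_a=1$ yields
$$\sum_a w(E(S_a,S_a))\leq \sum_a p_a^2+\lambda\sum_a p_a(1-p_a)=\sum_a p_a^2+\lambda\bigl(1-\sum_a p_a^2\bigr),$$
so
$$\ws{d_0 f}=1-\sum_a w(E(S_a,S_a))\geq (1-\lambda)\bigl(1-\textstyle\sum_a p_a^2\bigr).$$
Finally I would close the gap with the elementary inequality $\sum_a p_a^2\leq \max_a p_a\cdot\sum_a p_a=\max_a p_a$, which gives $(1-\lambda)(1-\sum_a p_a^2)\geq (1-\lambda)(1-\max_a p_a)=(1-\lambda)\|f+B^0\|_{C^0/B^0}$, as required. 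The only mild obstacle is bookkeeping the identification between $\langle \calA\mathbbm{1}_S,\mathbbm{1}_S\rangle_\mu$ and $w(E(S,S))$ with the normalizations used for the weighted adjacency operator and the canonical weight, but this is essentially a one-line verification once the measure $\mu$ is fixed.
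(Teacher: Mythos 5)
Your proof is correct. Note, however, that the paper does not prove this statement at all: it is quoted verbatim as Corollary~5.3 of the external reference \cite{First_2021_weighted_mixing_lemmas_preprint}, so there is no internal argument to compare against. What you have written is a clean, self-contained derivation of that cited result by the standard route: level sets of the $0$-cochain, the Rayleigh-quotient bound $w(E(S))\le p^2+\lambda p(1-p)$ obtained from the decomposition $\mathbbm{1}_S=p\mathbbm{1}+g_0$, summation over level sets, and the elementary inequality $\sum_a p_a^2\le\max_a p_a$. All the bookkeeping checks out: $\calA\mathbbm{1}=\mathbbm{1}$ and $\langle\calA\mathbbm{1}_S,\mathbbm{1}_S\rangle_\mu=w(E(S))$ follow from \eqref{EQ:weight-of-containing-cells}, $m(0)=m(1)=1$ for $A\neq 0$, and $\|f+B^0\|_{C^0/B^0}=1-\max_a p_a$ since $B^0$ consists of the constant functions. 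The only point worth flagging is that you take $g_0$ to lie in the subspace where the spectral hypothesis applies because $\langle g_0,\mathbbm{1}\rangle_\mu=0$, i.e.\ you read $C^0_\circ(X,\R)$ as the $w$-weighted orthogonal complement of the constants; the paper's literal definition in \S\ref{subsec:skeleton} uses the unweighted sum, under which $\calA$ need not even preserve $C^0_\circ$, so your (weighted) reading is the only coherent one and matches the cited source — but you should state it explicitly rather than rely on the paper's wording.
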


Next, we recall that  finite 
buildings (see~\S\ref{subsec:buildings}) have
large (i.e.\ bounded away from $0$) coboundary expansion 
once endowed with  certain sheaves. The following theorem
summarizes results from \cite{Lubotzky_2016_expansion_of_buildings},
\cite{Kaufman_2018_cosystolic_expanders} and \cite{First_2021_weighted_mixing_lemmas_preprint}.

\begin{thm}\label{TH:cbe-for-buildings}
	Let $d\in \N\cup\{0\}$ and $q\in\N$. Let $X$ be finite $d$-dimensional $q$-thick
	finite building with Coxeter
	diagram $T$ and let $A$ be an abelian group. Let $L$
	denote the set of edge labels occurring in $T$ and put $m=\max(\{2\}\cup T)$.\footnote{
		If $q\geq 3$, then $m\leq 8$; see \cite[Chapter~9]{Abramenko_2008_Buildings}.	
	}
	\begin{enumerate}[label=(\roman*)]
		\item There exists   $\veps>0 $, depending only on $d$,
		such that $(X,\aug{A} )$ is an $\veps$-coboundary expander in dimensions
		$ -1,0,\dots,d-1 $.
		
		\item $(X,\aug{A} )$ is a $(1-\frac{\sqrt{m-2}}{\sqrt{q}-(d-1)\sqrt{m-2}})$-coboundary
		expander in dimension $0$ if $q>(d-1)^2(m-2)$, and a $1$-coboundary expander in dimension $-1$
		in general.
	\end{enumerate}
\end{thm}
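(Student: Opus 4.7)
The plan is to deduce the two parts from earlier results in the excerpt and in the cited literature, together with a direct computation for the trivial dimension $-1$ case.

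For the dimension $-1$ claim of (ii), I would compute directly. Since $\aug{A}(\emptyset)=A$ and every restriction map $\res^{\aug{A}}_{\{v\}\from\emptyset}$ equals $\id_A$, for any nonzero $a\in C^{-1}(X,\aug{A})\cong A$ the coboundary $d_{-1}a\in C^0(X,\aug{A})$ is the cochain with value $a$ at every vertex. With the weighted support norm, $\|a\|_{C^{-1}}=w(\emptyset)=1$ and $\|d_{-1}a\|_{C^0}=w(X(0))=1$, while $m(-1)=m(0)=1$. Hence condition (B) of coboundary expansion at level $-1$ holds with $\veps=1$, and since $B^{-1}(X,\aug{A})=0$ there is nothing further to check. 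This is independent of $q$, $d$, and the Coxeter type.

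For the dimension $0$ claim of (ii), I would apply Theorem~\ref{TH:skeleton-exp-building} case~(1) to the finite $q$-thick spherical building $X$: provided $q$ is large enough so that $\sqrt{q}-(d-1)\sqrt{m-2}>0$, the underlying weighted graph of $X$ is a $[-1,\alpha]$-spectral expander with
\[
\alpha=\frac{\sqrt{m-2}}{\sqrt{q}-(d-1)\sqrt{m-2}}.
\]
Plugging this into Theorem~\ref{TH:cbe-of-spectral-exps} immediately yields that $(X,\aug{A})$ is a $(1-\alpha)$-coboundary expander in dimension $0$ for every abelian group $A$. (A minor bookkeeping step is to reconcile the thickness hypothesis in the statement with the one in Theorem~\ref{TH:skeleton-exp-building}.)

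For part (i), I would invoke the structural coboundary expansion theorem for spherical buildings: Lubotzky, Meshulam and Mozes~\cite{Lubotzky_2016_expansion_of_buildings} prove a uniform lower bound (depending only on $d$) on the coboundary expansion of a $q$-thick spherical $d$-building with $\F_2$ coefficients in every dimension $k\in\{0,\dots,d-1\}$; Kaufman and Mass~\cite{Kaufman_2018_cosystolic_expanders} extend this to arbitrary abelian coefficient groups $A$. The argument proceeds by induction on $d$ via a local-to-global principle applied to the proper links of $X$, which are again spherical buildings of smaller rank; uniformity in the Coxeter type is automatic since for each dimension $d$ there are only finitely many spherical diagrams. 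Sharpened quantitative constants are obtained via the weighted expander-mixing techniques of~\cite{First_2021_weighted_mixing_lemmas_preprint}. The dimension $-1$ case is again the direct computation above.

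The main obstacle is the uniformity in $q$ asserted in part~(i): Theorem~\ref{TH:cbe-of-spectral-exps} combined with Theorem~\ref{TH:skeleton-exp-building} only handles dimension $0$, and only once $q$ is large enough for $\alpha<1$. For higher dimensions $k\geq 1$, and for small thicknesses, one genuinely needs the inductive argument through links from~\cite{Lubotzky_2016_expansion_of_buildings,Kaufman_2018_cosystolic_expanders}; the delicate part is to propagate a lower bound on coboundary expansion from the links to the building itself without losing the $q$-independence of the constant.
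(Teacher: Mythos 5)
Your proposal follows essentially the same route as the paper: dimension $-1$ (and $\dim X=0$) by direct computation, part (i) by citing \cite{Lubotzky_2016_expansion_of_buildings} for $A=\F_2$ and \cite{Kaufman_2018_cosystolic_expanders} for general $A$, and part (ii) via the spectral/mixing machinery of \cite{First_2021_weighted_mixing_lemmas_preprint}. The one wrinkle is the ``bookkeeping step'' you flag yourself: deriving the dimension-$0$ statement by chaining Theorem~\ref{TH:skeleton-exp-building} (which requires $q\geq d^2(m-2)$) with Theorem~\ref{TH:cbe-of-spectral-exps} does not cover the full range $q>(d-1)^2(m-2)$ asserted in (ii); the paper sidesteps this by citing \cite[Corollary~7.4]{First_2021_weighted_mixing_lemmas_preprint} directly, which gives the coboundary-expansion bound under the weaker thickness hypothesis.
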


\begin{proof}
	The assertions about coboundary expansion in dimension $-1$  and
	the case where $\dim X=0$ are straightforward.
	As for the rest,
	(i) is proved
	in   \cite{Lubotzky_2016_expansion_of_buildings}
	for $A=\F_2$ and in
	\cite{Kaufman_2018_cosystolic_expanders}, 
	for general $A$,
	and  (ii) is \cite[Corollary~7.4]{First_2021_weighted_mixing_lemmas_preprint}.
\end{proof}

\begin{thm}[{\cite[Corollary~7.6]{First_2021_weighted_mixing_lemmas_preprint}}]
	\label{TH:cbe-buildings-quotient-sheaves}
	Let $d,q,X,m,A$ be as in Theorem~\ref{TH:cbe-for-buildings}
	and assume that $q>(d-1)^2(m-2)$.
	Let $\{A_x\}_{x\in X(0)}$ be subgroups of $A$
	such that for every subset $S\subseteq X(0)$ with $|S|\leq \ceil{\frac{2}{3}|X(0)|}$,
	the summation map $\bigoplus_{x\in S}A_x\to A$ is injective.
	Define a subsheaf $\calC$ of $\aug{A}$ by 
	%$\calC(y)=\sum_{x\in X(0)_{\subseteq y}}A_x$.
	$\calC(y)=\sum_{v\in y}A_{\{v\}}$.
	Then $(X,\aug{A}/\calC,{\|\cdot\|}_{\supp})$ is
	a $\veps$-coboudary expander in dimension $0$ for
	\[
	%\textstyle{
	\veps = \frac{2d}{5d+2}-\frac{(4d^3+4d)\sqrt{m-2}}{(5d+2)(\sqrt{q}-(d-1)\sqrt{m-2})}
	-\frac{14d^2+4d}{(5d+2)(q+d-1)}=\frac{2d}{5d+2}-O_{d,m}(q^{-1/2}).
	%}
	\]
\end{thm}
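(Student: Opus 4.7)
The plan is to bootstrap the coboundary expansion of the constant augmented sheaf $\aug{A}$ on $X$ provided by Theorem~\ref{TH:cbe-for-buildings}(ii) to the quotient $\aug{A}/\calC$, losing a factor that depends on the expansion of the underlying graph of $X$ and on the injectivity hypothesis for $\{A_x\}_{x \in X(0)}$. Throughout I will use that the injectivity assumption forces $A_u \cap A_v = 0$ for $u \neq v$ (take $S = \{u,v\}$), so for every $a \in A - \{0\}$ the set $\{v \in X(0) : a \in A_v\}$ has at most one element; this ``local disjointness'' of the $A_v$ will control the loss.

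First I would pass from a cochain $f \in C^0(X,\aug{A}/\calC)$ to a carefully chosen lift $\tilde f \in C^0(X,\aug{A})$ by picking representatives $\tilde f(v) \in A$ of $f(v) \in A/A_v$. The freedom in the lift is exactly the action of $\prod_v A_v$ on $C^0(X,\aug{A})$. For an edge $e = \{u,v\}$, the quotient coboundary $(d_0 f)(e)$ vanishes iff $\tilde f(v) - \tilde f(u) \in A_u + A_v$, and in that case the local disjointness lets me find $a_u \in A_u$, $a_v \in A_v$ with $\tilde f(v) - \tilde f(u) = a_v - a_u$ uniquely; modifying $\tilde f$ by these $a_v, a_u$ kills the contribution of $e$ to $\ws{d_0 \tilde f}$. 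Globally, this defines a greedy optimization over the freedom $\prod_v A_v$. The key point is that the injectivity condition on subsets of size $\leq \lceil \frac{2}{3}|X(0)| \rceil$ forces consistency of these local corrections on the sub-graph of ``good'' edges whenever this sub-graph spans $\geq \frac{2}{3}$ of the vertices; this is where the constant $\frac{2d}{5d+2}$ will eventually surface, reflecting the threshold for when the lift optimization succeeds globally versus when one must discard a small vertex set.

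Next I would apply Theorem~\ref{TH:cbe-for-buildings}(ii) to the optimized lift $\tilde f$, which gives
\[
\ws{d_0 \tilde f} \;\geq\; \Big(1 - \tfrac{\sqrt{m-2}}{\sqrt{q} - (d-1)\sqrt{m-2}}\Big)\,\ws{\tilde f + B^0(X,\aug{A})}.
\]
On the coboundary side, $\pi_*(\tilde f + B^0(X,\aug{A})) \subseteq f + B^0(X,\aug{A}/\calC)$, so after comparing the two quotient seminorms I obtain a lower bound on $\ws{f + B^0(X,\aug{A}/\calC)}$. On the cocycle side, the greedy optimization of the previous step guarantees that each edge in $\supp(d_0 \tilde f) - \supp(d_0 f)$ can be charged either to a vertex in the small ``bad'' set or to a higher-order correction; using \eqref{EQ:weight-of-containing-cells} together with the skeleton/spectral expansion estimate of Theorem~\ref{TH:skeleton-exp-building} (which bounds $|E(S)|$ by $w(S)^2 + \alpha w(S)$ with $\alpha = \frac{\sqrt{m-2}}{\sqrt q - (d-1)\sqrt{m-2}}$) converts this charging into an inequality of the form $\ws{d_0\tilde f} \leq C_{d,q,m}\,\ws{d_0 f}$, producing the explicit $\veps$ stated in the theorem. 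The third error term $\tfrac{14d^2 + 4d}{(5d+2)(q+d-1)}$ would arise by controlling links: using that a $q$-thick building has every $(d-1)$-face contained in $\geq q$ $d$-faces, which enters via the normalization between $w_X$ and $w_{X_z}$ for links of codimension one.

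The main obstacle is the second step: producing an \emph{optimized} lift $\tilde f$ whose ``excess'' coboundary support is $O(\ws{d_0 f})$ rather than the naive $O(1)$. One must argue that the greedy procedure on the sub-graph of good edges terminates with only a $\Theta(\ws{d_0 f})$ fraction of vertices left inconsistent, and then bound the error by $\ws{d_0 f}$ using skeleton expansion. The injectivity condition on subsets of density $\leq \frac{2}{3}$ is exactly tuned so that this argument closes: without it, there would be a nontrivial ``kernel cocycle'' in $\calC$ that the lift optimization could not resolve. Once this is carried out, the rest is bookkeeping to combine the three error contributions into the stated formula for $\veps$.
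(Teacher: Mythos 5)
First, a point of reference: the paper does not prove this theorem at all --- it is imported verbatim as \cite[Corollary~7.6]{First_2021_weighted_mixing_lemmas_preprint}, so there is no internal proof to compare against. Judged on its own merits, your sketch has the right overall skeleton (lift $f$ to $\tilde f\in C^0(X,\aug{A})$, optimize the lift over the freedom $\prod_v A_v$, apply Theorem~\ref{TH:cbe-for-buildings}(ii) to $\tilde f$, and note that $\ws{\tilde f+B^0(X,\aug{A})}\geq\ws{f+B^0(X,\aug{A}/\calC)}$ since the quotient map can only shrink supports), but the central step is not just unproved --- the mechanism you propose for it is wrong.

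The crux is producing a lift with $\ws{d_0\tilde f}\leq C\,\ws{d_0 f}$. For a ``good'' edge $e=\{u,v\}$ (one with $(d_0f)(e)=0$) the unique decomposition $\tilde f(v)-\tilde f(u)=a_v^{(e)}-a_u^{(e)}$ with $a_v^{(e)}\in A_v$, $a_u^{(e)}\in A_u$ does exist, but the correction $b_v\in A_v$ at a vertex $v$ kills the edge $e$ only if $b_v=-a_v^{(e)}$ \emph{and} $b_u=-a_u^{(e)}$; hence a single choice of $b_v$ can only handle the good edges at $v$ whose local data $a_v^{(e)}$ all agree. You attribute this agreement to the injectivity hypothesis plus the good subgraph spanning $\geq\frac{2}{3}$ of the vertices, but that is false: on a tree of good edges (which certainly can span all vertices) the values $a_u^{(e)}$, $a_u^{(e')}$ for two tree-edges $e,e'$ at $u$ are completely unconstrained, so no choice of $(b_v)_v\in\prod_v A_v$ makes $d_0\tilde f$ vanish there. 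The agreement $a_v^{(e)}=a_v^{(e')}$ is forced only when $e$ and $e'$ lie in a common good \emph{triangle} (summing the three edge relations and using injectivity of $A_u\oplus A_v\oplus A_w\to A$), and one then needs connectivity and expansion of the links $X_v$ to propagate this agreement and to bound, in weight, the good edges left inconsistent in terms of $\ws{d_0f}$. Your sketch never invokes $2$-faces or link expansion in the lift-optimization step, which is exactly where the thickness $q$ and the threshold $\frac{2}{3}$ must enter; relatedly, the constant $\frac{2d}{5d+2}$ and the two error terms are asserted to ``surface'' from the bookkeeping without any counting that could produce them. As written, the argument does not close.
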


The theorems we just recalled
concern with coboundary expansion with respect
to the weighted support norm (Example~\ref{EX:weighted-support}),
but they  can be adapted to the Hamming norms 
of Examples~\ref{EX:non-weighted-support}
and~\ref{EX:Hamming-norm} by means of Proposition~\ref{PR:Hamming-and-support-ratio} and Remark~\ref{RM:properties-of-cbe}(ii).

\section{Locally Minimal Cochains}
\label{sec:locally-min}

Locally minimal  cochains were
introduced in 
\cite{Kaufman_2016_isoperimetic_inequalities}
and
\cite{Evra_2016_cosystolic_expanders_arxiv_version}
for the augmented sheaf $\aug{(\F_2)}$,
and in
\cite{Kaufman_2018_cosystolic_expanders}
for   general constant augmented sheaves 
as a mean to establish cosystolic expansion. 
In this section, we 
extend this notion to all sheaves, and explain how to derive lower bounds  on
the
cosystolic expansion of a   sheaved $d$-complex $(X,\calF)$
from lower bounds on the expansion of locally minimal cochains.

We work exclusively with the weighted support norm
(Example~\ref{EX:weighted-support}), which we denote
by  $\ws{\cdot}$. 

\subsection{Minimal and Locally Minimal Cochains}
\label{subsec:loc-min}

Let $(X,\calF)$ be a sheaved $d$-complex and let $k\in\{0,\dots,d\}$.
A cochain $f\in C^k(X,\calF)$ is called \emph{minimal}
if $\ws{f}_{C^k}=\|f+B^i(X,\calF)\|_{C^k/B^k}$ (see~\S\ref{subsec:coboundary-exp}).
Given $z\in X$ of dimension $i\in\{{0,\dots,k-1}\}$, we say that
$f$ is 
\emph{locally minimal at $z$} if for every $g\in C^{k-i-2}(X_z,\calF_z)$,
we have $\|f+d_{k-1}(g^z)\|\geq \|f\|$,
where   $g^z$ is defined as in \S\ref{subsec:sheaf-at-link} 
and  the vertices of $z$ are given some   ordering (the ordering has no effect
as $g$ can vary).
We say that $f$ is \emph{locally minimal} 
%at a set of faces
%$E\subseteq X(0)\cup\dots\cup X(k-1)$ if $f$ is locally minimal at every
%$z\in E$. We call $f$  \emph{locally minmal} 
if it is locally
minimal at every $z\in X(0)\cup\dots\cup X(k-1)$.

Clearly, every  minimal cochain is locally minimal.
Also, vacuously, all $0$-cochains are   locally minimal.

\begin{prp}\label{PR:locally-minimal-props}
	Let $(X,\calF)$ be a sheaved $d$-complex, let $k\in \{0,\dots,d\}$
	and
	let $f\in C^k(X,\calF)$. Then:
	\begin{enumerate}[label=(\roman*)]
		\item   $f$  is locally
		minimal at   
		$z\in X_{\ord}(i)$ ($0\leq i<k$)
		if and only if $f_z\in C^{k-i-1}(X_z,\calF_z)$
		is minimal.
		
		\item If $f$ is locally
		minimal at some $z\in X(0)\cup\dots\cup X(k-1)$,
		then $f$ is locally minimal at every $w\in X(0)\cup\dots\cup X(k-1)$
		containing $z$. In particular, $f$ is locally minimal if and only if it is locally minimal at
		every $0$-face of $X$.
	\end{enumerate}	 
\end{prp}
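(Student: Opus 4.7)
The plan for (i) is to compare the change in weighted support norm on $X$ to that on the link $X_z$. Fix $g \in C^{k-i-2}(X_z, \calF_z)$. By Lemma~\ref{LM:f-eta-basic-props} applied with $k-1$ in place of $k$, we have $d_{k-1}(g^z) = (d_{k-i-2}g)^z$, so $d_{k-1}(g^z)$ vanishes on every $k$-face of $X$ not containing $z$, and its link restriction is $(d_{k-1}(g^z))_z = d_{k-i-2}g$. In particular $f$ and $f + d_{k-1}(g^z)$ agree on every $k$-face outside of $X(k)_{\supseteq z}$, so their weighted-support-norm difference is accumulated entirely from faces containing $z$. Using \eqref{EQ:weight-in-link} to rewrite the weight $w_X(x)$ of each $x \in X(k)_{\supseteq z}$ as $\binom{k+1}{i+1}\,w_X(z)\,w_{X_z}(x-z)$, one obtains
\[
\ws{f + d_{k-1}(g^z)} - \ws{f} \;=\; \binom{k+1}{i+1}\, w_X(z)\,\bigl(\ws{f_z + d_{k-i-2}g} - \ws{f_z}\bigr),
\]
where the norms on the right are taken in $C^{k-i-1}(X_z,\calF_z)$. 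Since $X$ is pure of dimension $d$, every face is contained in a $d$-face, so $w_X(z) > 0$. Ranging $g$ over $C^{k-i-2}(X_z,\calF_z)$ therefore gives the equivalence in (i).

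For (ii), the plan is more direct and does not require invoking (i). Given $z \in X(i)$, $w \in X(j)$ with $z \subseteq w$, there are canonical identifications $X_w = (X_z)_{w-z}$ and $\calF_w = (\calF_z)_{w-z}$, and a face-by-face check shows that for every $g \in C^{k-j-2}(X_w,\calF_w)$ the extensions satisfy $(g^{w-z})^z = g^w$ in $C^{k-1}(X,\calF)$. Hence every coboundary $d_{k-1}(g^w)$ used to test local minimality at $w$ is of the form $d_{k-1}((g^{w-z})^z)$ with $g^{w-z} \in C^{k-i-2}(X_z,\calF_z)$, so local minimality at $z$ forces local minimality at $w$. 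The final clause of the proposition then follows at once: if $f$ is locally minimal at every $0$-face, then every $z \in X(i)$ with $0 \leq i < k$ contains some vertex $v$, and local minimality at $\{v\}$ upgrades to local minimality at $z$ via what was just shown.

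I do not foresee any real obstacle. The argument rests on two compatibilities already recorded in the paper: the weight-ratio identity \eqref{EQ:weight-in-link} and the restriction/coboundary compatibility of Lemma~\ref{LM:f-eta-basic-props}.
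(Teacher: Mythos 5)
Your part (i) is essentially the paper's own argument: the paper proves the same norm decomposition
\[
\|f-d_{k-1}(g^z)\|=\ws{f-(f_z)^z}+\schoose{k+1}{i+1}w_X(z)\,\ws{f_z-d_{k-i-2}g}_{X_z},
\]
using \eqref{EQ:weight-in-link} exactly as you do, and the equivalence follows because $w_X(z)>0$ (a point you rightly make explicit and the paper leaves implicit). For part (ii) you take a genuinely different, and somewhat more direct, route. The paper deduces (ii) from (i) twice over: local minimality at $z$ gives minimality of $f_z$ by (i), minimality implies local minimality of $f_z$ at $u=w-z$ in the link $X_z$, and a second application of (i) together with the identity $(f_z)_u=f_w$ yields minimality of $f_w$, hence local minimality of $f$ at $w$. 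You instead work entirely with the extension maps: the compatibility $(g^{w-z})^z=g^w$ shows that every perturbation $d_{k-1}(g^w)$ tested at $w$ already occurs among the perturbations $d_{k-1}(h^z)$ tested at $z$, so the inequality defining local minimality at $w$ is a special case of the one at $z$. The two arguments rest on dual formulations of the same transitivity of link restriction/extension; yours avoids routing through (i) and through the notion of minimality in the link, at the cost of the (routine, but worth recording) ordering-compatible verification of $(g^{w-z})^z=g^w$. Both are correct, and your concluding reduction to $0$-faces matches the paper's.
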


\begin{proof}
(i) 
Let $g\in C^{k-i-2}(X_z,\calF_z)$.
The equivalence follows readily once noting that
\[
\|f-d_{k-1}(g^z)\|=\ws{f-(f_z)^z)}+\ws{(f_z)^z-d_{k-1}(g^z)}
=\ws{f-(f_z)^z)}+\schoose{k+1}{i+1}w_X(z)\ws{f_z-d_{k-i-2}g}_{X_z},
\]
where here, $\ws{\cdot}_{X_z}$ is the weighted support norm
of $\calF_z$ and the second equality follows from
\eqref{EQ:weight-in-link}

(ii)  
Choose orderings on $z$ and $w$
such that $w= uz$ for some $u\in X_{\ord}(j-i-1)$.
Then, regarding $X_w$ as the link of $u$ in $X_z$,
we have $(f_z)_u= f_w$. By (i), $f_z$ is minimal, hence
locally minimal at $u$. Applying (i) again, we see
that $f_w=(f_z)_u$ is minimal, so $f$ is locally minimal at $w$.
\end{proof}

Given a minimal cochain, we can produce more minimal cochains
by annihilating some of its entries.

\begin{lem}\label{LM:restrict-min-coch}
	Let $(X,\calF)$ be a sheaved $d$-complex, let $k\in\{0,\dots,d\}$ and
	let $f,g\in C^k(X,\calF)$.
	Assume $f$ is minimal. If $\supp g\subset \supp f$ and $g(x)=f(x)$
	for all $x\in X_{\ord}(k)$ with $x\in \supp g$, then
	$g$ is minimal.
\end{lem}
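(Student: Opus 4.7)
The plan is to show that minimality transfers from $f$ to $g$ by a face-by-face bookkeeping argument with the weighted support norm. Recall that $\ws{h}_{C^k}=w(\supp h)$ for $h\in C^k(X,\calF)$ (see Example~\ref{EX:cohomology-norms}(i)), and that $f$ being minimal is equivalent to
\[
\ws{f+d_{k-1}h}\geq \ws{f}\qquad\text{for every }h\in C^{k-1}(X,\calF),
\]
by the definition of the quotient (semi)norm on $C^k/B^k$. It therefore suffices to prove the single inequality
\begin{equation}\label{EQ:plan-key}
\ws{f+d_{k-1}h}-\ws{f}\ \leq\ \ws{g+d_{k-1}h}-\ws{g}\qquad\forall\,h\in C^{k-1}(X,\calF),
\end{equation}
since if $g$ were not minimal, some $h$ would make the right-hand side negative, and then $\ws{f+d_{k-1}h}<\ws{f}$ would contradict minimality of $f$.

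To establish \eqref{EQ:plan-key}, I would write $S=\supp f$, $T=\supp g$, and partition $X(k)=T\sqcup(S\setminus T)\sqcup(X(k)\setminus S)$, using the hypothesis $T\subseteq S$. Set $e=d_{k-1}h\in B^k(X,\calF)$. For each of the three pieces, I would compare the contribution to $\ws{f+e}-\ws{f}$ with the contribution to $\ws{g+e}-\ws{g}$:
\begin{itemize}
\item On $T$: $f=g$ pointwise by hypothesis, hence $f+e=g+e$ on $T$; the $f$- and $g$-contributions are identical.
\item On $X(k)\setminus S$: both $f$ and $g$ vanish, so $f+e=e=g+e$ there, and again the contributions are equal.
\item On $S\setminus T$: $f$ is nonzero and $g$ is zero, so the $f$-contribution to the left-hand side of \eqref{EQ:plan-key} equals $w(\supp(f+e)\cap(S\setminus T))-w(S\setminus T)\leq 0$, whereas the $g$-contribution to the right-hand side equals $w(\supp(e)\cap(S\setminus T))\geq 0$.
\end{itemize}
Summing the three pieces yields \eqref{EQ:plan-key}, and minimality of $g$ follows.

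There is no real obstacle here; the only thing to keep straight is that on $S\setminus T$ the modification by $e$ can only shrink $\supp f$ (decreasing the $f$-weight), while it can only grow $\supp g$ from empty (increasing the $g$-weight), so the inequality on that piece is automatic. The observations on $T$ and on $X(k)\setminus S$ are literal equalities from the pointwise hypothesis $g(x)=f(x)$ on $\supp g$, so no further structure of $\calF$ or of $d_{k-1}$ is used.
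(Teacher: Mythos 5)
Your proof is correct. It takes a somewhat different route from the paper's: the paper argues abstractly with the norm, first observing that the hypotheses give the identity $\ws{g}=\ws{f}-\ws{f-g}$ (since $\supp g$ and $\supp(f-g)=\supp f\setminus\supp g$ partition $\supp f$), then chaining the minimality of $f$ with the triangle inequality $\ws{f-b}\leq\ws{f-g}+\ws{g-b}$ to get $\ws{g}\leq\ws{g-b}$ for every $b\in B^k(X,\calF)$ in two lines. You instead prove the stronger facewise comparison $\ws{f+e}-\ws{f}\leq\ws{g+e}-\ws{g}$ for \emph{every} $e\in C^k(X,\calF)$ (not just coboundaries) by splitting $X(k)$ into $\supp g$, $\supp f\setminus\supp g$, and the complement of $\supp f$; the only non-trivial piece is $\supp f\setminus\supp g$, where the $f$-side can only decrease and the $g$-side can only increase, exactly as you say. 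The two arguments exploit the same underlying fact about the weighted support norm on $\supp f\setminus\supp g$; yours makes the mechanism explicit at the level of individual faces, while the paper's packages it into one identity plus the triangle inequality and is marginally shorter. Both are complete.
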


\begin{proof}
	Let $b\in B^k(X,\calF)$. We need
	to prove that $\|g\|\leq \|g-b\| $.
	Our assumptions on $g$ imply that $\|g\|=\|f\|-\|f-g\|$.
	As $f$ is minimal,
	$\|f\|-\|f-g\|\leq \|f-b\|-\|f-g\|\leq \|g-b\|$,
	hence the lemma.
\end{proof}

%We could attempt to make a given cochain $f$ into
%a locally minimal cochain by 
%The following lemma

The following lemma
shows that, under mild assumptions on $X$,
every
$(k+1)$-cochain $f$ is equivalent modulo $B^k(X,\calF)$
%cohomologous
to a locally minimal cochain $f':=f-d_k g$,
and moreover, that the $k$-cochain $g$
used for ``correcting'' $f$ can chosen  
so that its norm   is proportional to that of $f$.
%This result is specific to the weighted support norm.

\begin{lem}\label{LM:locally-min-approximation}
	Let $(X,\calF)$ be a sheaved $d$-complex
	of degree $Q=D(X)$ (see \S\ref{subsec:complexes}).
	Let $k\in\{-1,\dots,d-1\}$ and $f\in C^{k+1}(X,\calF)$.
	Then there exists $g\in C^{k }(X,\calF)$ such that:
	\begin{enumerate}
		\item[(i)] $f-d_{k }g$ is locally minimal,
		\item[(ii)] $\ws{g}\leq \frac{(k+1)Q}{d+1}{d+1 \choose k+2}\ws{f}$,
		\item[(iii)] $\ws{f-d_{k }g}\leq \ws{f}$.
	\end{enumerate}
\end{lem}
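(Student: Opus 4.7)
The plan is a greedy iterative descent on $0$-faces. Set $f_0 := f$ and $g_0 := 0$. As long as $f_i := f - d_k g_i$ is not locally minimal, Proposition~\ref{PR:locally-minimal-props}(ii) supplies a $0$-face $v_i \in X(0)$ at which local minimality fails (it suffices to test at $0$-faces, since local minimality at a $0$-face implies it at every larger face containing that vertex). Then Proposition~\ref{PR:locally-minimal-props}(i) tells me $(f_i)_{v_i} \in C^k(X_{v_i},\calF_{v_i})$ is not minimal, so I pick $h_i \in C^{k-1}(X_{v_i},\calF_{v_i})$ for which $(f_i)_{v_i} - d_{k-1}h_i$ is actually minimal in $C^k(X_{v_i},\calF_{v_i})$, and update $g_{i+1} := g_i + h_i^{v_i}$, $f_{i+1} := f_i - d_k h_i^{v_i}$.

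Termination and items (i), (iii) come from the identity in the proof of Proposition~\ref{PR:locally-minimal-props}(i), which applied to the cochain $f_i$ of dimension $k+1$ and the $0$-face $v_i$ reads
\[
\ws{f_i} - \ws{f_{i+1}} \;=\; (k+2)\,w_X(v_i)\,\bigl(\ws{(f_i)_{v_i}}_{X_{v_i}} - \ws{(f_i)_{v_i} - d_{k-1}h_i}_{X_{v_i}}\bigr)\;>\;0.
\]
Since $\ws{f_i} = w_X(\supp f_i)$ only takes values in the finite set $\{w_X(A) : A \subseteq X(k+1)\}$, the procedure halts at some $g := g_N$ for which $f - d_k g = f_N$ is locally minimal (part (i)) and satisfies $\ws{f - d_k g} = \ws{f_N} \le \ws{f_0} = \ws{f}$ (part (iii)).

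For the norm bound (ii), the companion identity (from \eqref{EQ:weight-in-link}) is $\ws{h_i^{v_i}} = (k+1)\,w_X(v_i)\,\ws{h_i}_{X_{v_i}}$, so
\[
\ws{g_N} \;\le\; \sum_{i<N}\ws{h_i^{v_i}} \;=\; (k+1)\sum_{i<N} w_X(v_i)\,\ws{h_i}_{X_{v_i}}.
\]
I would refine the greedy step by further requiring $\supp h_i \subseteq \{y - v_i : y \in \supp f_i,\ v_i\in y\}\subseteq X_{v_i}(k-1)$; this restriction is harmless because adding a coboundary supported away from $\supp(f_i)_{v_i}$ cannot decrease the norm, so minimality can still be attained. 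With this restriction, every $k$-face appearing in some $\supp h_i^{v_i}$ is a $k$-subface of a $(k+1)$-face that lies in $\supp f_i$ for some $i$.

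The hard part will be bookkeeping the ``spread'' of $\supp f_i$ across iterations: new $(k+1)$-faces may enter $\supp f_{i+1}$ through cancellations with $d_k h_i^{v_i}$, and one must inductively trace each such newcomer back to the original $\supp f$ to control the total weight of $k$-faces ever used. Once this is done, the stated bound drops out from combining $w_X(x) \le Q\binom{d+1}{k+1}^{-1}|X(d)|^{-1}$ for $x\in X(k)$ (from the definition of the canonical weight together with $|X(d)_{\supseteq x}|\le Q$) with $w_X(y) \ge \binom{d+1}{k+2}^{-1}|X(d)|^{-1}$ for $y\in X(k+1)$, accounting for the $k+2$ subfaces of each $y\in\supp f$, and simplifying via $\binom{d+1}{k+2}/\binom{d+1}{k+1} = (d-k)/(k+2)$.
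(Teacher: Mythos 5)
Your iterative setup (greedy descent, correcting at $0$-faces where local minimality fails) matches the paper's exactly, and your observation that parts (i) and (iii) follow from the strict decrease of the weighted support is correct. The gap is in part (ii), and it is a genuine one: you explicitly defer ``bookkeeping the spread of $\supp f_i$'' and tracing newcomer faces back to $\supp f$, but that line of attack is both unnecessary and, as sketched, not on solid footing. For one thing there is a dimension mismatch in your constraint: $h_i \in C^{k-1}(X_{v_i},\calF_{v_i})$, so $\supp h_i \subseteq X_{v_i}(k-1)$, whereas $\{y - v_i : y \in \supp f_i,\ v_i \in y\} \subseteq X_{v_i}(k)$, so the inclusion you write cannot hold; and even with the indices fixed, controlling $\supp h_i$ in terms of $\supp((f_i)_{v_i})$ via minimality and then propagating that through the iterations is exactly the kind of uncontrolled cascade you would want to avoid.

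The paper's argument sidesteps all of this. Two uniform bounds suffice. First, the weighted support norm on $C^{k+1}(X,\calF)$ is quantized: every weight $w(y)$ with $y \in X(k+1)$ is an integer multiple of ${d+1 \choose k+2}^{-1}|X(d)|^{-1}$, so each iteration decreases $\ws{f_n}$ by at least that amount, giving
\[
r \le {d+1 \choose k+2}|X(d)|\,\ws{f}.
\]
You noticed the quantization but used it only for termination; it also bounds the \emph{number} of steps. Second, each correction $h_i^{v_i}$ is supported on $k$-faces containing the single vertex $v_i$, so, using \eqref{EQ:weight-of-containing-cells} and $w(v_i) \le {d+1\choose 1}^{-1}|X(d)|^{-1}Q$,
\[
\ws{h_i^{v_i}} \le w\bigl(X(k)_{\supseteq v_i}\bigr) = (k+1)\,w(v_i) \le \frac{(k+1)Q}{(d+1)|X(d)|},
\]
a bound completely independent of what $h_i$ is. Multiplying the two gives (ii). The point you missed is that you never need to relate $\supp h_i$ to $\supp f$; the per-iteration cost is controlled purely by the star of a vertex, and the iteration count is controlled purely by the quantization of the norm.
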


\begin{proof}
We define   sequences $f_0,\dots,f_r\in C^{k+1}(X,\calF)$
and $g_0, \dots,g_r\in C^{k}(X,\calF)$ by induction as follows:
Take $f_0=f$ and $g_0=0$.
Assume that $f_n$ and $g_n$ have been defined.
If $f_n$ is locally minimal, we stop and let $r=n$.
Otherwise, $k\geq 0$ and by
Proposition~\ref{PR:locally-minimal-props}(ii), there exist 
$x\in X(0)$ and $g\in C^{k}(X_x,\calF_x)$
such that $\|f_n-d_{k}(g^x)\|<\|f_n\|$. Take
$f_{n+1}=f_n-d_{k-1}(g^x)$ and $g_{n+1}=g^x$.

We claim that $g:=g_0+\dots+g_r$ satisfies the requirements.
Indeed, by construction, $f-d_{k}g=f_r$ is locally minimal
and satisfies $\|f-d_{k}g\|=\|f_r\|<\dots<\|f_0\|=\|f\|$.
Furthermore, since the norm of any $(k+1)$-cochain
in $C^{k+1}(X,\calF)$ is an integral multiple of ${{d+1}\choose {k+2}}^{-1}|X(d)|^{-1}$
(see Example~\ref{EX:weighted-support}),
we have $r\leq {{d+1}\choose{k+2}}|X(d)|\cdot\|f\|$.
On the other hand, each $g_n$ is supported on the $k$-faces containing a particular  vertex  
$v\in X(0)$
and therefore satisfies 
$\|g_n\|\leq w(X(k)_{\supseteq v})= \schoose{k+1}{1}w(v)
\leq \schoose{k+1}{1} |X(d)|^{-1}{{d+1}\choose 1}^{-1} Q=|X(d)|^{-1}\frac{(k+1)Q}{d+1}$
(the first  equality is \eqref{EQ:weight-of-containing-cells}).
Consequently,
$\|g\|\leq r|X(d)|^{-1}\frac{(k+1)Q}{d+1}\leq  \frac{(k+1)Q}{d+1} {{d+1}\choose{k+2}} \|f\|$.
\end{proof}

\subsection{Expansion of Small Locally Minimal Cochains}
\label{subsec:expansion-loc-min}

We continue to assume that $(X,\calF)$ is a sheaved $d$-complex.
Let $k\in\{0,\dots,d-1\}$
and  $\alpha,\beta\in\R_+$.
We say that $(X,\calF)$   \emph{$\beta$-expands $\alpha$-small locally minimal $k$-cochains}
if for every locally minimal $f\in C^k(X,\calF)$ such that $\ws{f}< \alpha$,
we have $\ws{d_k f}\geq \beta \ws{f}$.
We say that $(X,\calF)$    $\beta$-expands $\alpha$-small locally minimal $k$-cocycles 
if this condition holds   for all locally minimal $f\in Z^k(X,\calF)$.

\begin{prp}\label{PR:small-set-to-cse}
	Let $(X,\calF)$ be a sheaved $d$-complex of
	degree $Q=D(X)$ (see \S\ref{subsec:complexes}), 
	let $k\in\{0,\dots,d-2\}$,
	and let $\alpha,\alpha',\beta,\beta'\in\R_+$.
	Suppose that 
	\begin{enumerate}[label=(\arabic*)]
		\item $\calF(x)\neq 0$ for all $x\in X(k)\cup X(k+1)\cup X(k+2)$,
		\item $(X,\calF)$   
	$\beta$-expands $\alpha$-small locally minimal $k$-cocycles, and
		\item $(X,\calF)$   
	 	$\beta'$-expands $\alpha'$-small locally minimal $(k+1)$-cocycles.
	\end{enumerate}
	Then $(X,\calF)$ is a $(\min\{\alpha',\frac{d+1}{(k+1)Q}\schoose{d+1}{k+2}^{-1}\},\alpha)$-cosystolic expander in dimension $k$.
	If only (1) and (2) are assumed, then  
	$\ws{f}\geq \alpha$ for every $f\in Z^k(X,\calF)-B^k(X,\calF)$.
\end{prp}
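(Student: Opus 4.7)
The plan is to prove the two statements in reverse order of strength: under (1) and (2) alone I will establish condition (C2) of the cosystolic expander definition, which is equivalent to the last sentence of the proposition, and then, adding (3), I will establish (C1). Two simplifications apply throughout. Hypothesis (1) together with Example~\ref{EX:weighted-support} gives $m(k)=m(k+1)=1$, so the mass factors in (C1) and (C2) disappear. Also, the abbreviation ``$\beta$-expands $\alpha$-small locally minimal $k$-cocycles'' reduces, since $d_k f=0$ for cocycles, to the assertion that every nonzero locally minimal $k$-cocycle has weighted support norm at least $\alpha$ (the parameter $\beta>0$ plays no role); similarly for (3) with $\alpha'$.

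For (C2), let $f\in Z^k(X,\calF)-B^k(X,\calF)$. I apply Lemma~\ref{LM:locally-min-approximation} with its index equal to $k-1$ to produce $g\in C^{k-1}(X,\calF)$ such that $f':=f-d_{k-1}g$ is locally minimal and $\ws{f'}\leq\ws{f}$. Since $d_{k-1}g\in B^k\subseteq Z^k$ and $f\notin B^k$, the cochain $f'$ is a nonzero locally minimal element of $Z^k$, so hypothesis (2) forces $\ws{f}\geq\ws{f'}\geq\alpha$, which is (C2) with $\delta=\alpha$ and also gives the weaker last statement of the proposition.

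For (C1), given arbitrary $f\in C^k(X,\calF)$, set $h:=d_k f\in C^{k+1}(X,\calF)$ and apply Lemma~\ref{LM:locally-min-approximation} to $h$, obtaining $g\in C^k(X,\calF)$ with $h-d_k g$ locally minimal, $\ws{g}\leq\frac{(k+1)Q}{d+1}\schoose{d+1}{k+2}\ws{h}$, and $\ws{h-d_k g}\leq\ws{h}$. The key observation is that $h-d_k g=d_k(f-g)\in B^{k+1}\subseteq Z^{k+1}$, so this is a locally minimal $(k+1)$-cocycle. I then split into two cases. If $h-d_k g=0$, then $f-g\in Z^k$, so $\ws{f+Z^k}\leq\ws{g}\leq\frac{(k+1)Q}{d+1}\schoose{d+1}{k+2}\ws{d_k f}$, which inverts to $\ws{d_k f}\geq\frac{d+1}{(k+1)Q}\schoose{d+1}{k+2}^{-1}\ws{f+Z^k}$. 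Otherwise $h-d_k g\neq 0$, and hypothesis (3) gives $\ws{d_k f}\geq\ws{h-d_k g}\geq\alpha'$; since $\ws{f+Z^k}\leq\ws{f}\leq m(k)=1$, this implies $\ws{d_k f}\geq\alpha'\ws{f+Z^k}$. Taking the minimum of the two ratios is exactly the claimed expansion constant $\veps=\min\{\alpha',\frac{d+1}{(k+1)Q}\schoose{d+1}{k+2}^{-1}\}$.

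No genuine obstacle arises; the argument is essentially bookkeeping once Lemma~\ref{LM:locally-min-approximation} is in hand. The conceptual heart, and the step most in need of care, is the dichotomy in (C1): Lemma~\ref{LM:locally-min-approximation} allows us to replace $d_k f$ by the locally minimal cocycle $d_k(f-g)$ with $\ws{g}$ controlled by $\ws{d_k f}$, and this cocycle either vanishes (giving control of $\ws{f+Z^k}$ through $\ws{g}$) or is nonzero (giving direct control of $\ws{d_k f}$ through hypothesis (3)).
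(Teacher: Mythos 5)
Your proof is correct and follows essentially the same route as the paper's: both parts hinge on Lemma~\ref{LM:locally-min-approximation}, with the (C1) dichotomy (the locally minimal cocycle $d_k(f-g)$ is either zero, bounding $\ws{f+Z^k}$ by $\ws{g}$, or nonzero, forcing $\ws{d_kf}\geq\alpha'$) being the contrapositive phrasing of the paper's case split on whether $\ws{d_kf}<\alpha'$. The only cosmetic difference is in (C2), where you invoke the lemma to produce a locally minimal representative of $f+B^k$ while the paper simply picks a minimal (hence locally minimal) one; both yield the same bound.
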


\begin{proof}
	We need to verify conditions (C1) and (C2) of \S\ref{subsec:coboundary-exp}
	for $\delta=\alpha$ and
	$\veps=\min\{\alpha',\frac{d+1}{(k+1)Q}\schoose{d+1}{k+2}^{-1}\}$. 
	Note that $m(k)=m(k+1)=m(k+2)=1$ by condition (1).
	
	We begin with (C2). Suppose that $f\in Z^k(X,\calF)-B^k(X,\calF)$.
	We need to show that $\ws{f+B^k(X,\calF)}_{C^k/B^k}\geq \alpha$.
	Choose a minimal $f'\in f+B^k(X,\calF)$. Then $f'$ is locally minimal
	and nonzero.
	If $\ws{f'}<\alpha$, then by (2), we would have $0=\ws{d_k f'}\geq \beta \ws{f'}>0$,
	a contradiction. Thus, $\ws{f+B^k(X,\calF)}_{C^k/B^k}=\ws{f'}\geq \alpha$.
	
	We turn to (C1). Let $f\in C^k(X,\calF)$.
	If $\ws{d_0f}\geq \alpha'$, then $\ws{d_0 f}\geq \alpha'\ws{f}\geq \veps \ws{f}$,
	as required. Otherwise, $\ws{d_0 f}<\alpha'$. We apply Lemma~\ref{LM:locally-min-approximation}
	to $d_0f$ to get $g\in C^k(X,\calF)$ such that $d_kf-d_k g$ is a locally minimal
	$(k+1)$-cochain, 
	$\ws{g}\leq \frac{(k+1)Q}{d+1}\schoose{d+1}{k+2}\ws{d_k f}$ and
	$\ws{d_kf-d_kg}\leq \ws{d_kf}<\alpha'$.
	The latter and   (3) imply that $0=\ws{d_{k+1}(d_kf-d_kg)}
	\geq \beta'\ws{d_k f-d_kg}$, so $d_k (f-g)=0$,
	or rather, $g\in f+Z^k(X,\calF)$.
	This means that
	$\ws{f+Z^k(X,\calF)}_{C^k/Z^k}\leq \ws{g}\leq  \frac{(k+1)Q}{d+1}\schoose{d+1}{k+2}\ws{d_k f}$,
	and
	by rearranging, we get 
	\[\ws{d_kf}\geq  
	\frac{d+1}{(k+1)Q}\schoose{d+1}{k+2}^{-1}\ws{f+Z^k(X,\calF)}_{C^k/Z^k}\geq \veps \ws{f+Z^k(X,\calF)}_{C^k/Z^k},\] 
	which is what we want.
\end{proof}

	We will give a  criterion for sufficiently small locally minimal
	cochains to expand 
	in Section~\ref{sec:cse-from-links}. 

\section{Locally Testable Codes and Quantum CSS Codes Arising from Sheaves}
\label{sec:ltc-and-css}

We now explain how sheaved complexes which are good cosystolic expanders
give rise to locally testable codes and quantum CSS codes. 
We further show that
if the sheaved complex in question expands small locally minimal cochains,
then
there is an efficient decoding algorithm.

\subsection{Conventions}

As usual, a code of length $n$
on a finite alphabet $\Sigma$  consists of a pair
$C=(C,\Sigma^n)$ such that $C$ is  a subset of $\Sigma^n$;
we often simply say that $C$ is a code inside $\Sigma^n$.
Given $f,g\in\Sigma^n$, the Hamming distance and the noramlized
Hamming distance of $f$ from $g$ are 
\[
D_{\Ham}(f,g)=\#\{i\in\{1,\dots,n\}\suchthat f_i\neq g_i\}
\qquad\text{and}\qquad
d_{\Ham}(f,g)=\frac{1}{n}D_{\Ham}(f,g),
\]
respectively.
The \emph{distance} of $C$  is $ \Delta(C):=\max\{D_{\Ham}(f,g)\where f,g\in C,\, f\neq g\}$
and the \emph{relative distance} of $C$ is $\delta(C):=\frac{1}{n}\Delta(C)=
\max\{d_{\Ham}(f,g)\where f,g\in C,\, f\neq g\}$.
The \emph{message length} of
$C$ is   $\log_{|\Sigma|}|C|$ and its \emph{rate}
is the message length divided by $n$, i.e., $\log_{|\Sigma^n|}|C|$.

If $\Sigma$ is a finite field $\F$ (resp.\ an abelian group), then the code
$C$ is said to be linear (resp.\ abelian) if $C$ is an $\F$-subspace (resp.\ subgroup) 
of $\Sigma^n$. In this case, $\delta(C)=\min\{\|f\|_{\Ham}\where f\in C-\{0\}\}$,
where $\|f\|_{\Ham}=d_{\Ham}(f,0)$ is normalized Hamming norm of $f\in\Sigma^n$. 
In the linear case, the message length of $C$ is $\dim_{\F} C$.

A family of codes $\{(C_i,\Sigma^{n_i})\}_{i\in I}$ is said to be \emph{good}
there are $\rho,\delta\in (0,1]$ such that
each $C_i$ has rate $\geq \rho$ and relative distance   $\geq \delta$.
When the latter holds, we also say that the codes  $\{(C_i,\Sigma^{n_i})\}_{i\in I}$ have
linear distance (indeed, $\Delta(C_i)\geq \delta n_i$ for all $i\in I$).

Let $(C,\Sigma^n)$ be a code
and let $\eta\in [0,1]$.
A  \emph{decoding algorithm} for  
$(C,\Sigma^n)$     able to to correct up to $\eta n$
errors, or an $\eta$-fraction of errors, is an algorithm
which takes a word $f\in \Sigma^n$ with $d_{\Ham}(f,C)< \eta$
and outputs some $g\in C$ with $d_{\Ham}(f,g)<\eta$.
If $\eta \leq  \frac{1}{2}\delta(C)$, then $g$ is uniquely determined
by $f$.

\subsection{Cocycle Codes}
\label{subsec:cocycle-codes}

We use the following general notation  throughout the rest 
of this section:
\begin{itemize}
	\item $X$ is a simplicial complex of dimension $d$,
	\item $\F$ is a finite field with $q$ elements,
	\item $\calF$ is an $\F$-sheaf on $X$,
	\item $B$ is an $\F$-basis of $\calF$, i.e.,
	a collection $B=\{B(x)\}_{x\in X-\{\emptyset\}}$
	such that $B(x)$ is an $\F$-basis of $\calF(x)$.
	\item $\ws{\cdot}$, $\nws{\cdot}$
	and $\|\cdot\|_B $ denote the weighted support norm,
	the normalized Hamming norm, and the (non-normalized)
	Hamming norm of $\calF$ w.r.t.\ $B$, respectively 
	(see Examples~\ref{EX:weighted-support},
	\ref{EX:non-weighted-support}, \ref{EX:Hamming-norm}).
\end{itemize}
We associate the following parameters to with the above data:
\begin{itemize}
	\item $Q=D(X)$ and $P=D_{k,d}(X)$ (see \S\ref{subsec:complexes}),
	\item $M_k=M_k(\calF)=\max\{\dim \calF(x)\where x\in X(k)\}$,
	and $M=M(\calF)=\max\{M_0,\dots,M_d\}$.
\end{itemize}
We fix a linear ordering   on $V(X)$ and use it to identify
$C^k:=C^k(X,\calF)$ with $\prod_{x\in X(k)} \calF(x)$; see Remark~\ref{RM:ordered-cohomology}.
We abbreviate $ Z^k(X,\calF)$ to $Z^k$ and $ B^k(X,\calF)$ to $B^k$.

\medskip

Let $k\in\{0,\dots,d\}$. 
We use the data of $X,\calF,B,k$ to construct a linear code as follows:
The ambient space of the code will be $C^k=\prod_{x\in X(k)} \calF(x)$,
which identify with $\F^{\dim C^k}$ using the basis
$\bigsqcup_{x\in X(k)} B(x)$,
and the set of code words will be $Z^k$.
Thus, $(Z^k,C^k\cong \F^{\dim C^k})$ is a linear code with alphabet
$\F$.
% to identify
%$C^k =\prod_{x\in X(k)} \calF(x)$
%with $\F^{\dim C^k}$. This allows us to view 

\begin{dfn}
	The linear code $(Z^k(X,\calF),C^k(X,\calF)\cong \F^{\dim C^k})$
	is  the  \emph{linear $k$-cocycle code} of $(X,\calF,B)$.
\end{dfn}

If there exists   $m\in \N$
such that $\dim \calF(x)=m$
for all $x\in X(k)$, then we may   identify
$\calF(x)$ with $\Sigma:=\F^m$ for every $x\in X(k)$, so that
$C^k=\Sigma^{X(k)}$.
This allows us to view $Z^k$ as an abelian code
inside $\Sigma^{X(k)}$ (rather than $\F^{\dim C^k}$), 
the alphabet being $\Sigma$.

\begin{dfn}
	The abelian code $(Z^k(X,\calF),C^k(X,\calF)\cong \Sigma^{X(k)})$
	is  the \emph{$k$-cocycle code} of $(X,\calF )$.
\end{dfn}

Henceforth, whenever we refer to the $k$-cocycle code of $(X,\calF)$,
we tacitly assume that there exists $m\in\N$ such that $\dim\calF(x)=m$
for all $x\in X(k)$.

The rate of the $k$-cocycle code
$(Z^k, C^k)$ is $\dim Z^k/\dim C^k$;
this is independent of whether we view $(Z^k, C^k)$ 
as a linear code with alphabet $\F$, or an abelian code with alphabet
$\Sigma$. Since $B^k$ typically contains $k$-cochains
with small support, the distance of $(Z^k,C^k)$ is  poor unless $B^k=0$,
e.g., if $k=0$, or $\calF(x)=0$ for all $x\in X(k-1)$.  
%When $B^k=0$, then rate also equals $\dim\HH^k(X,\calF)/\dim C^k$.

\subsection{Locally Testable Codes}
\label{subsec:ltc}

Let $C=(C,\Sigma^n)$ be a code on  a finite alphabet $\Sigma$.
Recall that  a  randomized algorithm $\Phi$ which takes a word $f\in \Sigma^n$
and decides whether to accept or reject $f$
is called a \emph{$t$-query $\mu$-tester} ($t\in\N$, $\mu\in \R_+$) if $\Phi$
queries
up to $t$ letters from from $f$,
accepts all words $f\in C$,
and the  probability of rejecting a word $g\in \Sigma^n-C$ is at least
$\mu d_{\Ham}(g,C)$. 
In this case, we call  
$(C,\Sigma^n,\Phi)$  a \emph{$t$-query $\mu$-testable} code.
We also say that $(C,\Sigma^n,\Phi)$ is a \emph{code with a tester}
if we wish to make no reference to $t$ and $\mu$.

A family of codes with testers $(C_i,\Sigma^{n_i},\Phi_i)_{i\in I}$
is a family of \emph{locally testable codes} (LTCs) if there are $t\in\N$ 
and $\mu\in \R_+$ such that each $(C_i,\Sigma^{n_i},\Phi_i)$ is a $t$-query $\mu$-testable code.

\begin{remark}
The quality of a tester $\Phi$ for a code $(C,\Sigma^n)$
can also be measured by means of \emph{soundness} and \emph{tolerance}.
Recall that $\Phi$, or the triple
$(C,\Sigma^n,\Phi)$, is said to have \emph{$c$-soundness} at least $\veps$ ($c\in [0,\frac{1}{2}]$,
$\veps\in[0,1]$) if
$\Phi$ rejects an $f\in\Sigma^n$ 
satisfying $d_\Ham(f,C)>  c\operatorname{dist}(C)$
with probability at least $\veps$. 
It has \emph{$c$-tolerance} at least $\veps$ if $\Phi$ accepts an $f\in\Sigma^n$ 
satisfying $d_\Ham(f,C)\leq  c\operatorname{dist}(C)$
with probability 
at least $\veps$.
Thus, a $t$-query $\mu$-testable
code of relative distance $\delta$  has $c$-soundness $\geq c\delta\mu$
and $0$-tolerance $1$ (i.e., the tester is perfect).
\end{remark}

Keeping the notation of \S\ref{subsec:cocycle-codes},
we fix  $k\in\{0,\dots,d-1\}$,
and  assume throughout  that there exists $m\in\N$
such that $\dim\calF(x)=m$ for all $x\in X(k)$.
We further let $\Sigma=\F^m$.

The   $k$-cocycle code of $(X,\calF)$,
namely  $(Z^k,C^k\cong\Sigma^{X(k)}) $,  
admits a natural   $(k+2)$-query tester $\Phi=\Phi(X,\calF,k)$:
given $f\in C^k$, choose $y\in X(k+1)$ uniformly at random\footnote{
	For the applications considered in this
	paper, it is better to choose $y\in X(k+1)$ according to the distribution 
	induced by the canonical weight function of $X$ (\S\ref{subsec:weights}).
	Using this distribution improves
	the testability   by a factor of $Q=D(X)$ in Proposition~\ref{PR:cbe-vs-ltc-weighted-supp}, 
	but makes statements more cumbersome elsewhere,
	so we stick to the uniform distribution.
} and accept $f$ if $\sum_{x\in X(k)_{\subseteq y}} \res_{y\from x}f(x)=0$.

\begin{dfn}
The triple  $(Z^k,C^k,\Phi )$ is
called  the \emph{$k$-cocycle code-with-tester}
of $(X,\calF )$. %, and $\Phi$ is its \emph{natural tester}.
\end{dfn}

When there is no risk of confusion,
we will refer to $(Z^k,C^k,\Phi)$ 
simply as the $k$-cocycle code $(X,\calF)$.

\begin{remark}
The tester $\Phi$ can also be
viewed as a tester for the linear $k$-cocycle code
of $(X,\calF,B)$. From this point of view,  $\Phi$
queries $(k+2)M_k$ letters.
One can use this observation to adapt
the   following discussion  to linear $k$-cocycle codes.
\end{remark}

The distance and the testability of $(Z^k,C^k,\Phi)$ are tightly related
to the coboundary expansion of $(X,\calF,\nws{\cdot} )$, where
$\nws{\cdot}$ is the normalized Hamming norm of $\calF$ (Example~\ref{EX:non-weighted-support}). 
This is expressed
in the following proposition, which is immediate   from the definitions, see \S\ref{subsec:normed-sheaves}--\ref{subsec:coboundary-exp}.

\begin{prp}\label{PR:cbe-vs-ltc}
	With notation as in \S\ref{subsec:cocycle-codes}, suppose that $\dim\calF(x)=m$ for all $x\in X(k)$. 
	Let $(Z^k,C^k,\Phi)$
	be the $k$-cocycle code on the alphabet $\Sigma=\F^m$  associated to $(X,\calF)$,
	and let $\veps,\delta\in \R_+$. Suppose that
	$B^k=0$ (e.g., if $k=0$) and $\calF(x)\neq 0$ for all $x\in X(k)\cup X(k+1)$.
%	$\calF(x)=0$ for all $x\in X(k-1)$,
%	or more generally, that  $B^k=0$ (this always holds for $k=0$).
	Then the following conditions are equivalent:
	\begin{enumerate}[label=(\alph*)]
		\item $(X,\calF,\nws{\cdot})$ is an $(\veps,\delta)$-coboundary expander in dimension $k$;
		\item $(Z^k,C^k,\Phi)$ is a $(k+2)$-query $\veps$-testable code with relative distance $\geq \delta$.
	\end{enumerate}
\end{prp}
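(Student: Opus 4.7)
The assertion is essentially a bookkeeping exercise: one translates the tester's acceptance rule and the distance of $Z^k$ into the quantitative conditions (B), (C1), (C2) of \S\ref{subsec:coboundary-exp}, evaluated with respect to the normalized Hamming norm $\nws{\cdot}$. My plan is to do this translation in three steps.

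First, I will unwind the tester $\Phi$. By construction, on input $f \in C^k$, the algorithm picks $y \in X(k+1)$ uniformly at random and accepts iff $\sum_{x \in X(k)_{\subseteq y}} \res_{y \from x} f(x) = 0$. Comparing with \eqref{EQ:local-cocycle-cond} (or with $d_{k,L}$ from Remark~\ref{RM:ordered-cohomology}, using that $\calF$ is an $\F$-sheaf and the factors $[y:x]_L$ become units that do not affect vanishing), this expression vanishes precisely when $(d_k f)(y) = 0$. Hence the rejection probability of $\Phi$ on $f$ equals $|\{y \in X(k+1) \suchthat (d_k f)(y) \neq 0\}|/|X(k+1)| = \nws{d_k f}_{C^{k+1}}$.

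Second, I will normalize the constants. The hypothesis $\calF(x) \neq 0$ for all $x \in X(k) \cup X(k+1)$ forces $m(k) = m(k+1) = 1$ for the normalized Hamming norm (Example~\ref{EX:non-weighted-support}). Under the identification $C^k \cong \Sigma^{X(k)}$ of Example~\ref{EX:cohomology-norms}(i), $\nws{\cdot}_{C^k}$ is literally the normalized Hamming distance on $\Sigma^{X(k)}$, so $d_\Ham(f, Z^k) = \|f + Z^k\|_{C^k/Z^k}$. Moreover, since $B^k = 0$, we have $\|f + B^k\|_{C^k/B^k} = \nws{f}_{C^k}$ and the relative distance of the code $Z^k \subseteq \Sigma^{X(k)}$ equals $\min\{\nws{f}_{C^k} \suchthat f \in Z^k - \{0\}\} = \min\{\nws{f}_{C^k} \suchthat f \in Z^k - B^k\}$.

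Third, I will read off the equivalence. The code $(Z^k, C^k, \Phi)$ is $\veps$-testable iff the rejection probability dominates $\veps \cdot d_\Ham(f, Z^k)$ for every $f$, which by the previous two steps is exactly (C1) with norm $\nws{\cdot}$ and $m(k) = m(k+1) = 1$, i.e., the coboundary-expansion inequality (B) phrased modulo $Z^k$; since $B^k = 0$, the quotients $C^k/B^k$ and $C^k/Z^k$ agree up to the passage from $Z^k$ back to $0$, and one checks (using $B^k = 0$) that (B) and (C1) are the same inequality. Similarly, the relative-distance condition $\delta(Z^k) \geq \delta$ is (C2) for $\nws{\cdot}$. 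Thus (a) and (b) are logically the same pair of inequalities.

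\textbf{Main obstacle.} There is no genuine obstacle; the only point that requires care is the reconciliation of the author's ``$(\veps,\delta)$-coboundary expander'' phrasing in (a) with the definition given in \S\ref{subsec:coboundary-exp} (which only formally defines $\veps$-coboundary and $(\veps,\delta)$-cosystolic expanders). Under the standing hypothesis $B^k = 0$, the two notions coincide, so the translation is clean and the proof reduces to matching notation.
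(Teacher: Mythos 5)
Your proposal is correct in substance and follows the only possible route: the paper offers no proof at all beyond declaring the proposition ``immediate from the definitions'' (\S\ref{subsec:normed-sheaves}--\S\ref{subsec:coboundary-exp}), and your three steps are exactly that unwinding. Two small points deserve correction, though neither breaks the argument. First, your parenthetical claim that ``(B) and (C1) are the same inequality'' when $B^k=0$ is false: with $B^k=0$, condition (B) reads $\nws{d_kf}\geq\veps\,\nws{f}$, which fails for any nonzero $f\in Z^k$ and hence forces $Z^k=0$, whereas (C1) reads $\nws{d_kf}\geq\veps\,\|f+Z^k\|_{C^k/Z^k}$ and is the condition you actually match with testability. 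The phrase ``$(\veps,\delta)$-coboundary expander'' in (a) is the paper's own terminological slip for the cosystolic conditions (C1)$+$(C2) (cf.\ Remark~\ref{RM:properties-of-cbe}(i)); your concrete identifications --- testability $\Leftrightarrow$ (C1), relative distance $\Leftrightarrow$ (C2) --- are the correct reading, so you should simply delete the reconciliation via (B). Second, your justification that the signs $[y:x]_L$ ``are units that do not affect vanishing'' is not a valid argument for the vanishing of a \emph{sum} (it would only apply termwise); the tester as literally written in \S\ref{subsec:ltc} omits the signs and so agrees with ``$(d_kf)(y)=0$'' only in characteristic $2$ or after reinserting $[y:x]_L$. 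This is an imprecision inherited from the paper, but it should be acknowledged as such rather than argued away.
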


We can also relate the distance and   testability
of $(Z^k,C^k,\Phi)$ to the coboundary expansion of $(X,\calF)$   relative to the weighted
support norm $\|\cdot\|$. 
We loose a factor of $P=D_{k,d}(X)$ in the process.

\begin{prp}\label{PR:cbe-vs-ltc-weighted-supp}
	Keep the   assumptions of Proposition~\ref{PR:cbe-vs-ltc}
	and suppose further that $X$ is a $d$-complex.
	If
	$(X,\calF)$ is an $(\veps,\delta)$-coboundary expander in dimension $k$,
	then $(Z^k,C^k,\Phi)$ a $(k+1)$-query $\frac{\veps}{P^2}$-testable code
	with  relative distance $\geq \frac{\delta}{P }$.
	Conversely, if  $(Z^k,C^k,\Phi)$ a $(k+1)$-query $\veps$-testable code
	with   relative distance $\delta$, then $(X,\calF)$
	is an $(\frac{\veps}{P^2},\frac{\delta}{P})$-coboundary expander.
\end{prp}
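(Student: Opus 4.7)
The plan is to bridge Proposition~\ref{PR:cbe-vs-ltc}, which gives an exact correspondence between coboundary expansion in the normalized Hamming norm and the testability/distance of the cocycle code, with a comparison between the two natural norms on $\calF$. Specifically, the weighted support norm $\ws{\cdot}$ and the normalized Hamming norm $\nws{\cdot}$ differ only by a bounded multiplicative factor on each $\calF(x)$, and this difference can be tracked dimension-by-dimension using Remark~\ref{RM:properties-of-cbe}(ii).

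First I would compute, cell-by-cell, the ratio $\nws{f}/\ws{f}$ for a nonzero $f \in \calF(x)$ with $x \in X(i)$. Unwinding the definitions of the two norms and of the canonical weight $w(x)$ (see \S\ref{subsec:weights}), one gets
\[
\schoose{d+1}{i+1}\frac{|X(d)|}{|X(i)|\cdot D_{i,d}(X)} \;\leq\; \frac{\|f\|_{\Ham,x}}{\|f\|_{\mathrm{ws},x}} \;\leq\; \schoose{d+1}{i+1}\frac{|X(d)|}{|X(i)|},
\]
i.e.\ the ratio $v_i/u_i$ in the notation of Remark~\ref{RM:properties-of-cbe}(ii) equals $D_{i,d}(X)$. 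This is essentially the content of Proposition~\ref{PR:Hamming-and-support-ratio}(i). The only additional observation I need is that $D_{k+1,d}(X)\leq D_{k,d}(X)=P$: if $y\in X(k+1)$ and $x\subseteq y$ is any $k$-face, then $X(d)_{\supseteq y}\subseteq X(d)_{\supseteq x}$, so $|X(d)_{\supseteq y}|\leq P$. Hence in both relevant dimensions the distortion ratio is at most $P$.

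Next, I would feed this into Remark~\ref{RM:properties-of-cbe}(ii) in both directions. Assume $(X,\calF,\ws{\cdot})$ is an $(\veps,\delta)$-coboundary expander in dimension $k$. The remark then yields that $(X,\calF,\nws{\cdot})$ is a $\bigl(\tfrac{u_ku_{k+1}}{v_kv_{k+1}}\veps,\tfrac{u_k}{v_k}\delta\bigr)$-coboundary expander in dimension $k$, and the previous step gives $v_i/u_i=D_{i,d}(X)\leq P$ for $i\in\{k,k+1\}$, so this is at least a $\bigl(\tfrac{\veps}{P^2},\tfrac{\delta}{P}\bigr)$-coboundary expander. Proposition~\ref{PR:cbe-vs-ltc} (whose hypotheses $B^k=0$ and $\calF(x)\neq 0$ on $X(k)\cup X(k+1)$ are inherited) then converts this into the statement that $(Z^k,C^k,\Phi)$ is $(k+2)$-query $\tfrac{\veps}{P^2}$-testable with relative distance $\geq\tfrac{\delta}{P}$.

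For the converse, I would argue symmetrically. By Proposition~\ref{PR:cbe-vs-ltc}, an $\veps$-testable code with relative distance $\delta$ is the same as $(X,\calF,\nws{\cdot})$ being an $(\veps,\delta)$-coboundary expander in dimension $k$. Reversing the roles of the two norms, the reciprocal bounds give distortion ratio still bounded by $P$ in each of the dimensions $k$ and $k+1$; a second application of Remark~\ref{RM:properties-of-cbe}(ii) then downgrades to an $\bigl(\tfrac{\veps}{P^2},\tfrac{\delta}{P}\bigr)$-coboundary expander for $(X,\calF,\ws{\cdot})$. There is no real obstacle here beyond bookkeeping; the only point worth verifying carefully is the inequality $D_{k+1,d}(X)\leq D_{k,d}(X)$, which is what justifies replacing the product $D_{k,d}(X)\cdot D_{k+1,d}(X)$ by the uniform $P^2$ appearing in the statement.
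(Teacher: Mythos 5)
Your proposal is correct and follows essentially the same route as the paper, whose proof is exactly the one-line combination of Proposition~\ref{PR:Hamming-and-support-ratio}(i), Remark~\ref{RM:properties-of-cbe}(ii) and Proposition~\ref{PR:cbe-vs-ltc}; your only addition is to make explicit the (correct) observation $D_{k+1,d}(X)\leq D_{k,d}(X)=P$, which the paper leaves implicit when replacing $D_{k,d}(X)\cdot D_{k+1,d}(X)$ by $P^2$. No gaps.
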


\begin{proof}
	This follows from Remark~\ref{RM:properties-of-cbe}(ii),
	Proposition~\ref{PR:Hamming-and-support-ratio}(i) and
	Proposition~\ref{PR:cbe-vs-ltc}.
\end{proof}

We have seen in Proposition~\ref{PR:small-set-to-cse} that if $(X,\calF)$ expands
small locally minimal $k$-cocycles and $(k+1)$-cocycles, then $(X,\calF)$ is a good
coboundary expander in dimension $k$,  and so the associated
$k$-cocycle code   $(Z^k,C^k,\Phi)$ is  an LTC. 
We now show that these  stronger assumptions also
guarantee that $(Z^k,C^k,\Phi)$ has a linear-time decoding algorithm.
The weaker assumption that  small locally minimal $k$-cocycles
expand is enough to bound the distance of $(Z^k,C^k,\Phi)$ from below.

\begin{prp}\label{PR:decoding}
	With notation as in~\S\ref{subsec:cocycle-codes},
	suppose that $X$ is a $d$-complex, $\dim \calF(x)=m$ for all $x\in X(k)$
	and $\calF(x)\neq 0$ for all $x\in X(k)\cup X(k+1)\cup X(k+2)$.
	Let $(Z^k,C^k,\Phi)$ be the $k$-cocycle code of $(X,\calF)$,
	put $n=|X(k)|$ (the length of the code)
	and let $\beta,\beta',\gamma,\gamma'\in \R_+$.
	\begin{enumerate}[label=(\roman*)]
		\item If $B^k=0$ and $(X,\calF)$ $\beta$-expands
		$\gamma$-small locally minimal $k$-cocycles,
		then $\delta(Z^k)\geq \frac{\gamma}{P}$.
		\item If, in addition, $(X,\calF)$ $\beta'$-expands
		$\gamma'$-small locally minimal $(k+1)$-cocycles,
		then $(Z^k,C^k,\Phi)$ 
		is $\frac{1}{P^2}\min\{ \frac{d+1}{(k+1)Q}\schoose{d+1}{k+2}^{-1}   ,\gamma'\}$-testable		
		and has a decoding algorithm
		able to correct up to
		$\frac{1}{(k+2)P }
	\min\{(\frac{(k+1)Q}{d+1}\schoose{d+1}{k+2} +1)^{-1}\gamma,\gamma'\}$-fraction
		of errors
		in
		$O(2^{{d+1 \choose k+2}Q} Q^4 M_{k+1}^2 m \cdot n )=O_{M,Q,d,k}(n)$ operations.
	\end{enumerate}
\end{prp}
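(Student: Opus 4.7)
My plan is to split the proposition into three pieces---the distance bound~(i), the testability half of~(ii), and the decoder---and derive each from Propositions~\ref{PR:small-set-to-cse} and~\ref{PR:cbe-vs-ltc-weighted-supp}, the norm-comparison Proposition~\ref{PR:Hamming-and-support-ratio}, and the local-correction procedure of Lemma~\ref{LM:locally-min-approximation}.

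For~(i) I would invoke the last sentence of Proposition~\ref{PR:small-set-to-cse} (which needs only hypotheses~(1) and~(2)) to get $\ws{f}\geq\gamma$ for every $f\in Z^k-B^k$; since $B^k=0$ this lower-bounds the weighted support of every nonzero cocycle, and the first half of Proposition~\ref{PR:Hamming-and-support-ratio}(i), combined with the inequality $\schoose{d+1}{k+1}|X(d)|/|X(k)|\geq 1$ from its second half, upgrades this to $\nws{f}\geq \gamma/P$, proving $\delta(Z^k)\geq \gamma/P$. For the testability half of~(ii), assumptions~(1)--(3) let me apply Proposition~\ref{PR:small-set-to-cse} in full to conclude that $(X,\calF,\ws{\cdot})$ is a $(\veps,\gamma)$-cosystolic expander in dimension $k$ with $\veps=\min\{\gamma',\tfrac{d+1}{(k+1)Q}\schoose{d+1}{k+2}^{-1}\}$, and Proposition~\ref{PR:cbe-vs-ltc-weighted-supp} transfers these parameters to the code-with-tester, yielding exactly the claimed $\veps/P^2$-testability.

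The decoder is the main content. Set $A=\tfrac{(k+1)Q}{d+1}\schoose{d+1}{k+2}$ and $\eta_0=\tfrac{1}{(k+2)P}\min\{(A+1)^{-1}\gamma,\gamma'\}$. Given $f\in C^k$ with $d_{\Ham}(f,Z^k)<\eta_0$, write $f=c^*+e$ with $c^*\in Z^k$ and $\nws{e}<\eta_0$, compute the syndrome $s:=d_kf=d_ke\in C^{k+1}$, and feed it into Lemma~\ref{LM:locally-min-approximation} (with the lemma's index equal to our~$k$) to obtain $g\in C^k$ satisfying $\ws{g}\leq A\ws{s}$, $\ws{s-d_kg}\leq \ws{s}$, and $d_k(f-g)=s-d_kg$ locally minimal; the algorithm outputs $c:=f-g$. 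Correctness rests on two short estimates. First, Proposition~\ref{PR:Hamming-and-support-ratio}(i) gives $\ws{e}\leq P\nws{e}<P\eta_0\leq \gamma'/(k+2)$, and $\ws{d_ke}\leq(k+2)\ws{e}$ follows from $w(X(k+1)_{\supseteq x})=(k+2)w(x)$ in \eqref{EQ:weight-of-containing-cells}; combining these gives $\ws{d_k(f-g)}<\gamma'$. Since $d_k(f-g)$ is automatically a $(k+1)$-cocycle (as $d_{k+1}d_k=0$) and is locally minimal and $\gamma'$-small, hypothesis~(3) forces it to vanish, so $c\in Z^k$. Second, the triangle inequality for weighted support gives $\ws{c-c^*}=\ws{e-g}\leq (1+A(k+2))\ws{e}<\gamma$ by the choice of $\eta_0$ (after the arithmetic simplification $1+A(k+2)<(k+2)(A+1)$), and since~(i) shows every nonzero element of $Z^k$ has weighted support $\geq\gamma$, we must have $c=c^*$; hence $d_{\Ham}(f,c)=\nws{e}<\eta_0$.

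The running-time estimate is the step I expect to be the main obstacle, not conceptually but in pinning down a concrete implementation of the per-vertex search inside Lemma~\ref{LM:locally-min-approximation} that matches the stated exponent. Following the iterative proof of that lemma, the number of outer iterations is bounded by a constant multiple of $|X(k+2)|\cdot\ws{s}=O(n)$. Each iteration sweeps the vertices of $X$ and at each vertex $v$ searches for a cochain on the link $X_v$ whose application strictly decreases the current weighted support. Because $|X_v(k)|\leq\schoose{d+1}{k+2}Q$ by \eqref{EQ:degree-ratio}, one enumerates the $2^{\schoose{d+1}{k+2}Q}$ support patterns on $X_v(k)$; for each pattern the search reduces to solving a linear system of size $O(QM_{k+1})$ over $\F$, at cost $O(Q^3M_{k+1}^2 m)$. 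Aggregating yields the advertised $O(2^{\schoose{d+1}{k+2}Q}Q^4 M_{k+1}^2 m\cdot n)=O_{M,Q,d,k}(n)$ total, completing the proof.
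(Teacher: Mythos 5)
Your proposal is correct and follows essentially the same route as the paper: part (i) via the last clause of Proposition~\ref{PR:small-set-to-cse} plus the norm comparison in Proposition~\ref{PR:Hamming-and-support-ratio}(i), the testability via Propositions~\ref{PR:small-set-to-cse} and~\ref{PR:cbe-vs-ltc-weighted-supp}, and the decoder by applying Lemma~\ref{LM:locally-min-approximation} to the syndrome $d_kf$, using $\gamma'$-expansion of locally minimal $(k+1)$-cocycles to force $d_k(f-g)=0$ and the $\gamma$-smallness bound to pin down the cocycle. The only quibble is in the runtime bookkeeping: the correct accounting is $O(Qn)$ total vertex-processings (each costing $O(2^{\binom{d+1}{k+2}Q}Q^3M_{k+1}^2m)$), not ``$O(n)$ outer iterations each sweeping all vertices,'' though you land on the same final bound.
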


Here and elsewhere in this work, we assume that operations
in $\F$ are performed in time $O(1)$.
Elements of $\calF(x)$ are represented as vectors in $\F^{B(x)}$ via the basis $B(x)$,
and the restriction maps   $\res^{\calF}_{y\from  x}:\calF(x)\to \calF(y)$ are represented by 
the  corresponding $B(y)\times B(x)$-indexed matrix.
%, denoted simply as $[\calF]_{y\from x}$. 
The complexity of evaluating 
$\res^{\calF}_{y\from x}$ therefore depends on the number of nonzero entries in this
matrix, which is $O(M_{\dim x}M_{\dim y})$. Recall that our assumptions imply   $M_k=m$.

\begin{proof}
	(i) 
	Let $f\in Z^k-\{0\}=Z^k-B^k$.
	By Proposition~\ref{PR:small-set-to-cse},
	$\ws{f}\geq \gamma$, and by  
	Proposition~\ref{PR:Hamming-and-support-ratio}(i),
	this means that $\nws{f}\geq \schoose{d+1}{k+1}\frac{|X(d)|}{|X(k)|}P^{-1}\gamma
	\geq \frac{\gamma}{P}$.

	(ii)
	By Proposition~\ref{PR:small-set-to-cse}(i),
	$(X,\calF)$ is a $(\min\{ \frac{d+1}{(k+1)Q}\schoose{d+1}{k+2}^{-1}   ,\gamma'\},
	\gamma)$-cosystolic expander. The assertion about the testability
	is therefore a consequence of Proposition~\ref{PR:cbe-vs-ltc-weighted-supp}.
	It remains to show that $(Z^k,C^k )$ has a decoding algorithm
	as claimed.
	 
%	The assertion about the relative distance of $(Z^k,C^k,\Phi)$
%	follows from Proposition~\ref{PR:cbe-vs-ltc}.

	Write $\eta:=\frac{1}{(k+2)P }
	\min\{(\frac{(k+1)Q}{d+1}\schoose{d+1}{k+2} +1)^{-1}\gamma,\gamma'\}$.
	Let $f\in C^k$ and assume that there is 
	$f_0\in Z^k$ with $\nws{f-f_0}<\eta  $.
	We claim that the output of the algorithm in Figure~\ref{FG:decoding-algo}
	is $f_0$. 
	
\begin{figure}[h]
\caption{Decoding Algorithm}
{\tt
\begin{enumerate}
\item $f'\leftarrow f$ 
\item  $L \leftarrow$ empty queue
\item  $B \leftarrow $ boolean array indexed
by $X(0)$ 
\item For each $z\in X(0)$: 
	\begin{enumerate}
	\item[(4a)] $L$.push($z$)
	\item[(4b)] $B[z]\leftarrow$ True // $z$ is in $L$ 
	\end{enumerate}
\item While $L$ is not empty:
	\begin{enumerate}
	\item[(5a)] $z \leftarrow L$.pop(), order the vertices of $z$ arbitrarily
	\item[(5b)] $B[z]\leftarrow$ False // $z$ is not in $L$ 
	\item[(5c)] 
	%$h\leftarrow \operatorname{argmin}\{\|(d_k f')_z-d_{k-1} h\|: h\in C^{k-1}(X_z,\calF_z)\}$ 
		Search for  $h\in C^{k-1}(X_z,\calF_z)$ with $\ws{(d_k f')_z-d_{k-1} h}<\ws{(d_k f')_z}$; set $h=0$ if there is no such $h$.
	\item[(5d)] If $h\neq 0$:
		\begin{enumerate}
		\item $f' \leftarrow f' - h^z$.
		\item For every  $z'\in X(0)$ adjacent to $z$ with $B[z']=\mathtt{False}$: 
			\begin{enumerate}
			\item $L.$push($z'$) 
			\item $B[z']\leftarrow$ True // $z'$ is in $L$ 
			\end{enumerate}
		\end{enumerate}
	\end{enumerate}
\item Return $f'$.
\end{enumerate}
}
\label{FG:decoding-algo}
\end{figure}

	To see this, observe that by Proposition~\ref{PR:Hamming-and-support-ratio}(i),
	$\ws{f-f_0}<\schoose{d+1}{k+1}^{-1}P\frac{|X(k)|}{|X(d)|}\eta
	\leq P\eta$.
	Since $f_0\in Z^k$, this means that
	\begin{align*}
		\ws{d_kf}&=\ws{d_k(f-f_0)}
		\leq \sum_{x\in \supp (f-f_0)}w(X(k+1)_{\supseteq x})
		=\sum_{x\in \supp(f-f_0)} (k+2)w(x)\\
		&=(k+2)\ws{f-f_0}<(k+2) P \eta\leq \gamma',
	\end{align*}
	where in the second equality we used \eqref{EQ:weight-of-containing-cells}.
	By applying Lemma~\ref{LM:locally-min-approximation} to $d_kf$,
	we get  $g\in C^k$ such that $d_k f-d_kg$ is locally minimal,
	$\ws{g}\leq \frac{(k+1)Q}{d+1}\schoose{d+1}{k+2}\ws{d_k f}<
	\frac{(k+1)Q}{d+1}\schoose{d+1}{k+2} (k+2)P \eta$ 
	and $\|d_kf -d_kg\|\leq \|d_k f\|<\gamma'$.
	Moreover, by comparing the proof of Lemma~\ref{LM:locally-min-approximation}
	with the algorithm in Figure~\ref{FG:decoding-algo}, we see that the output $f'$
	is in fact $f-g$ (for a suitable choice of vertices in the proof).
	Since $d_kf-d_kg$ is $\gamma'$-small and locally
	minimal, we have $0=\|d_k(d_kf-d_kg)\|\geq \beta' \|d_kf-d_kg\|$, so $d_kf-d_kg=0$,
	and it follows that
	$f'=f-g\in Z^k$.
	Now, 
	\[\ws{f'-f_0}\leq \ws{f-f'}+\ws{f-f_0}=\ws{g}+\ws{f-f_0}<
	(k+2)P \eta\cdot(\frac{(k+1)Q}{d+1}\schoose{d+1}{k+2} +1)\leq \gamma .\]
	Since   $f'-f_0\in Z^k$ (because $f',f_0\in Z_k$)
	and $(X,\calF)$ $\beta$-expands
	$\gamma$-small cochains, 
	Proposition~\ref{PR:small-set-to-cse} tells us that $f'-f_0\in B^k=0$,
	so $f'=f_0$,
	as required.

	We proceed with analyzing the time 
	complexity of the algorithm in Figure~\ref{FG:decoding-algo}. 
	The proof of Lemma~\ref{LM:locally-min-approximation}
tells us that the loop
{\tt (5)} cannot be executed more than ${d+1 \choose k+2} |X(d)|=O(|X(k)|P)=O(Q n)$
times (see~\eqref{EQ:degree-ratio}).  
In order to perform the instruction {\tt (5c)}, we have to enumerate on  $O(q^{Qm})$ possible
$h$-s. For each $h$, the 
computation of $\ws{(d_k f')_z-d_{k-1}h}$ takes $O(Q m M_{k+1})$ operations,
so naively, {\tt (5c)} requires $O(q^{Q m}Qm M_{k+1})$ operations. However, it is better
to enumerate on subsets $E\subseteq X_z(k)$ and look for some $h$ 
such that $(d_k f')_z-d_{k-1}h$ vanishes on $E$  by solving the linear system
of equations $\{(d_{k-1}h)(x)=(d_k f')(x\cup z)\}_{x\in E}$.
This allows us to perform {\tt (5c)} by solving at most $2^{|X_z(k)|}\leq 2^{D_{1,k+1}(X)}
\leq 2^{{d+1 \choose k+2}Q}$ 
systems of at
most $M_{k+1}|X_z(k)|=O(M_{k+1}Q)$  linear equations in at most
$m|X_z(k-1)|=O(mQ)$ variables, which amounts to $O(2^{{d+1 \choose k+2}Q} Q^3 M_{k+1}^2 m)$ operations.
The remaining actions inside the loop {\tt (5)} are negligible by comparison, so
the instructions {\tt (5a)}--{\tt (5d)}
require   $O(2^{{d+1 \choose k+2}Q} Q^3 M_{k+1}^2 m )$ operations. 
The total time complexity of the algorithm is therefore
$O(2^{{d+1 \choose k+2}Q} Q^4 M_{k+1}^2 m \cdot n )$.
\end{proof}

\subsection{Quantum CSS codes}
\label{subsec:css}

We proceed with explaining how sheaved complexes
give rise to    quantum CSS codes.
We refer the reader to \cite[\S1, \S2.2, \S2.3, Lemma~13]{Leverrier_2021_LT_quantum_codes_preprint}
for a survey of these codes and their  significance to quantum computing. 

For every $m\in\N$, we endow $\F^m$ with the standard symmetric bilinear form
$\Trings{f,g}=\sum_{i=1}^m f_ig_i$, where $f_i$ is the $i$-th coordinate of $f\in \F^m$.
Given  $A\subseteq \F^m$, we write $A^\perp=\{f\in \F^m\suchthat \Trings{f,A}=0\}$.

For the purposes of this work it is convenient
to define a \emph{quantum CSS code} as a quintet
$C=(C_X,C_Z,\F^n,\Phi_X,\Phi_Z)$ such that $C_X$
and $C_Z$ are subspaces of $\F^n$,   $\Phi_X$ is a set of vectors generating $C_X^\perp$,
$\Phi_Z$ is a set of vectors generating $C_Z^\perp$, and $C_X^\perp\subseteq C_Z$ (equivalently,
$C_Z^\perp\subseteq C_X$).
In particular, $C_X$ and $C_Z$ are linear codes inside $\F^n$,
the alphabet being $\F$.
The rate of $C$ is $\frac{1}{n}(\dim C_X-\dim C_Z^\perp)=\frac{1}{n}(\dim C_Z-\dim C_X^\perp)$
and its relative distance is $\min\{d_X,d_{Z}\}$, where $d_X=\min \{\|w\|_{\Ham}\where w\in C_X-C_Z^\perp\}$
and $d_{Z}=\min \{\|w\|_{\Ham}\where w\in C_Z-C_X^\perp\}$;
we call $d_X$ and $d_Z$ the relative $X$- and $Z$-distance, respectively.
(The   distance and message length of $C$ are 
obtained from the relative distance and rate by multiplying by $n$, respectively.)

Given $\eta\in [0,1]$, a  decoding algorithm for the $X$-side of $C=(C_X,C_Z,\F^n,\Phi_X,\Phi_Z)$ 
able to correct up to an $\eta$-fraction
of errors is a
decoding for   $(C_X,\F^n)$ able to correct
to up to $\eta$-fraction of errors. Note that if $f\in \F^n$
satisfies $d_\Ham(f,C_X)<\eta$,
then there could be numerous $x\in C_X$ with $d_{\Ham}(f,x)<\eta$,
because   $C_Z^\perp$ may (and often does) contain short vectors. 
However, if $2\eta $ is smaller than the relative $X$-distance,
then the coset $x+C_Z^\perp$ is uniquely determined.

We use the generating set  $\Phi_X$ to define a natural tester for the linear code
$C_X\subseteq \F^n$: given $f\in C_X$, choose $\phi\in\Phi_X$ uniformly at random, and
accept $f$ if $\Trings{f,\phi}=0$. Abusing the notation, we denote this tester
by $\Phi_X$. Likewise, we use the set $\Phi_Z$ to define a tester for $C_Z\subseteq \F^n$.
Let $q\in\N$ and $\mu\in \R_+$.
We say that   $C=(C_X,C_Z,\F^n,\Phi_X,\Phi_Z)$
is a \emph{$q$-query  $\mu$-testable} quantum CSS code, if this holds
for both linear codes  $(C_X,\F^n,\Phi_X)$ and $(C_Z,\F^n,\Phi_Z)$.
If this holds only for $(C_X,\F^n,\Phi_X)$, we say that $C$
is \emph{one-sided} $q$-query $\mu$-testable quantum CSS code.
% resp.\ one-sided $\mu$-locally testable.

%Given $q\in \N$ and $\mu\in \R_+$, we  call $(C_X,C_Z,\F^n,\Phi_X,\Phi_Z)$ 
%a $q$-LDPC quantum code

\medskip

Let $X,\calF,B,d$ be as in \S\ref{subsec:cocycle-codes}, and let
$k\in\{1,\dots,d-1\}$.
Write $n=\dim C^k$ and identify $C^k=C^k(X,\calF)$ with $\F^{n}$
via the basis $\bigsqcup_{x\in X(k)}B(x)$.
Then $Z^k=Z^k(X,\calF)$ is a linear code inside $\F^n$.
We enrich $Z^k$ into a quantum CSS code as follows.

Put $C_X=Z^k$.
%Write $C_X=Z^k$ and $\Phi_X=\Phi$. We enrich $(C_X,C^k,\Phi_X)$ into a 
%quantum  CSS code as follows:
For every $x\in X-\{\emptyset\}$, we 
identify $\calF(x)$ with $\F^{B(x)}$ via the basis $B(x)$. Under this
identification, the standard bilinear form on $\F^{B(x)}$
corresponds to a nondegnerate   bilinear form on $\calF(x)\times\calF(x)\to\F$, denoted $\Trings{\cdot,\cdot}_x$.
The standard bilinear form on $C^k=\F^n$ can now be written
as $\Trings{f,g}=\sum_{x\in X(k)}\Trings{f(x),g(x)}_x$ for $f,g\in C^k=\prod_{x\in X(k)}\calF(x)$.
For every $\emptyset\neq x\subsetneq y\in X$, define $\res'_{x\from y}:\calF(y)\to \calF(x)$
to be the dual of $\res_{y\from x}:\calF(x)\to\calF(y)$ relative to the bilinear pairings
on $\calF(x)$ and $\calF(y)$, that is,
$\res'_{y\from x}$ is determined by the condition
$\Trings{\res'_{y\from x}f,g}_y=\Trings{f,\res_{x\from y} g}_x$
for all $f\in \calF(x)$, $g\in \calF(y)$. 
For $i\in\{1,\dots,d\}$, the $i$-th \emph{boundary} map $\partial_i:C^i\to C^{i-1}$
is defined by
\[
(\partial_i f)(y)=\sum_{x\in X(i)_{\supseteq y}}[x:y]_L\res'_{y\from x} f,
\]
for all $f\in C^k$ and $y\in X(i-1)$;
the coefficient $[x:y]_L$ is as in Remark~\ref{RM:ordered-cohomology}.\footnote{
	If one does not wish to choose a linear ordering on $V(X)$ 
	and identify $C^i(X,\calF)$ with $\prod_{x\in X(i)}\calF(x)$ as in Remark~\ref{RM:ordered-cohomology},
	then the formula is given by
	$(\partial_i f)(y)=\sum_{z\in X(i)_y}\res'_{y\from x}f(yz)$, where $y\in X_{\ord}(i-1)$.
} One readily checks
that $\partial_i$ is the dual of $d_{i-1}:C^{i-1}\to C^i$ relative to the bilinear pairings on these
vector spaces, i.e.,
\begin{equation}\label{EQ:boundary-is-dual}
\trings{\partial_i f,g}=\trings{f,d_{i-1} g}
\end{equation} 
for all $f\in C^i$, $g\in C^{i-1}$.
Since $d_{i} d_{i-1}=0$, we have $\partial_{i-1}\partial_i=0$, with the convention that $\partial_0=0$.
As expected, the \emph{$k$-cycles} and \emph{$k$-boundaries} with coefficients in $\calF$
are defined to be the subspaces of $C^i$ given by 
\[
Z_i=Z_i(X,\calF;B)=\ker d_{i}\qquad\text{and}\qquad B_i=B_i(X,\calF;B)=\im d_{i-1},
\]
respectively. Set $C_Z=Z_k$. 

We now define subsets $\Phi_X\subseteq (Z^k)^\perp$ and $\Phi_Z\subseteq (Z_k)^\perp$
as follows. For every $y\in X(k+1)$ and $b\in B(y)$, define $\phi_{y,b}\in C^k$
to be the unique vector for which $\Trings{f,\phi_{y,b}}=\Trings{d_k f(y),b} $ for all
$f\in C^k$,
and let $\Phi_X$ be the set of all $\phi_{y,b}$. Similarly, for every
$z\in X(k-1)$ and $b\in B(z)$, define $\phi'_{z,b}\in C^k$
to be the unique vector for which $\Trings{f,\phi'_{z,b}}=\Trings{\partial_k f(z),b} $
for all $f\in C^k$, and let $\Phi_Z$ be the set of all $\phi'_{z,b}$. It is clear that $\Phi_X$ generates
$(Z^k)^\perp$ and $\Phi_Z$ generates $(Z_k)^\perp$.
The following lemma says that $C_X^\perp\subseteq C_Z$, and thus 
$C:=(C_X,C_Z,C^k,\Phi_X,\Phi_Z)$ is a quantum CSS code.

\begin{lem}\label{LM:orth-relations}
	In the previous notation, $(Z^k)^\perp=B_k$ and $(Z_k)^\perp =B^k$.
\end{lem}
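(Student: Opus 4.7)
The plan is to derive both identities as instances of the standard linear-algebra duality between kernel and image for adjoint linear maps between finite-dimensional vector spaces equipped with nondegenerate bilinear forms. The setup in the excerpt is tailor-made for this: each $C^i=\prod_{x\in X(i)}\calF(x)$ carries the nondegenerate symmetric bilinear form $\langle f,g\rangle=\sum_{x\in X(i)}\langle f(x),g(x)\rangle_x$, and the key relation \eqref{EQ:boundary-is-dual}, $\langle \partial_i f,g\rangle=\langle f,d_{i-1}g\rangle$, says precisely that $\partial_i$ is the adjoint of $d_{i-1}$ with respect to these forms. (I will read the paper's $B_k$ as $\im\partial_{k+1}$; otherwise the statement of the lemma is vacuous, so this appears to be a typo for the intended ``$k$-boundary'' object built from the boundary operator just introduced.)

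First I would record the linear-algebra fact I need. If $T\colon V\to W$ is a linear map between finite-dimensional vector spaces with nondegenerate bilinear pairings and $T^\ast\colon W\to V$ satisfies $\langle Tv,w\rangle_W=\langle v,T^\ast w\rangle_V$, then $(\ker T)^\perp=\im T^\ast$. The inclusion $\im T^\ast\subseteq (\ker T)^\perp$ is a one-line computation: for $u\in\ker T$ and $w\in W$, $\langle T^\ast w,u\rangle_V=\langle w,Tu\rangle_W=0$. The reverse inclusion follows by dimension count, using nondegeneracy of the pairings: $\dim(\ker T)^\perp=\dim V-\dim\ker T=\rank T$, and $\rank T=\rank T^\ast$ because the pairings identify $W$ with $V^\ast$ (and vice versa) in a way that intertwines $T$ with $(T^\ast)^\ast$.

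Next, I apply this twice. Taking $T=d_k\colon C^k\to C^{k+1}$ with adjoint $T^\ast=\partial_{k+1}\colon C^{k+1}\to C^k$ (from \eqref{EQ:boundary-is-dual} with $i=k+1$) gives $(Z^k)^\perp=(\ker d_k)^\perp=\im\partial_{k+1}=B_k$. Taking $T=\partial_k\colon C^k\to C^{k-1}$ with adjoint $T^\ast=d_{k-1}\colon C^{k-1}\to C^k$ (from \eqref{EQ:boundary-is-dual} with $i=k$, read in the other direction) gives $(Z_k)^\perp=(\ker\partial_k)^\perp=\im d_{k-1}=B^k$.

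There is no genuine obstacle here, since finite-dimensionality and nondegeneracy of the pairings $\langle\cdot,\cdot\rangle_x$ are all built into the hypotheses. The only thing worth double-checking is sign conventions in the coboundary formula of Remark~\ref{RM:ordered-cohomology} against the sign $[x{:}y]_L$ used to define $\partial_i$; both use the same incidence coefficient, so \eqref{EQ:boundary-is-dual} indeed holds on the nose, and no correction is needed.
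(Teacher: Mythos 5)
Your proof is correct and follows essentially the same route as the paper's: both rest on the adjointness relation \eqref{EQ:boundary-is-dual} together with nondegeneracy and finite-dimensionality of the paired cochain spaces, and your reading of $B_k$ as $\im\partial_{k+1}$ (correcting the evident typo in the displayed definition of $Z_i,B_i$) matches the paper's intent. The only cosmetic difference is in the last step: the paper computes $B_k^\perp=Z^k$ and $(B^k)^\perp=Z_k$ directly via an ``if and only if'' chain and then takes perpendiculars, whereas you prove one inclusion and close the gap with the rank count $\rank T=\rank T^{\ast}$.
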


\begin{proof}
	It is enough to prove that $B_k^\perp=Z^k$ and $(B^k)^\perp=Z_k$.
	We have $f\in B_k^\perp$ if and only if $\Trings{f,\partial_{k+1}g}=0$
	for all $g\in C^{k+1}$, or equivalently $\Trings{d_kf,g}=0$ for all $g\in C^{k+1}$.
	Since the bilinear form on $C^{k+1}$ is nondegenerate, the latter is equivalent to $d_kf=0$.
	This proves that $B_k^\perp=Z^k$. The equality $(B^k)^\perp=Z_k$ is shown similarly.
\end{proof}

\begin{dfn}
	We call $(C_X=Z^k,C_Z=Z_k,C^k,\Phi_X,\Phi_Z)$ defined above the \emph{$k$-cocycle quantum CSS code} 
	associated to $(X,\calF,B)$.
\end{dfn}

At this point, it is convenient to introduce an analogue
of cosystolic expansion which uses   boundary maps  instead of   coboundary maps.
Let $ \veps,\delta\in\R_+$ and let $\|\cdot\|$ be a norm on $\calF$
with mass function $m$ (see~\S\ref{subsec:normed-sheaves}).
We say that $(X,\calF,\|\cdot\|)$
is an  \emph{$(\veps,\delta)$-systolic expander}, if
\begin{enumerate}[label=(S\arabic*)]
	\item  $\|\partial_kf\|  m(k)\geq \veps \|f+Z_k(X,\calF)\|_{ C^k/Z_k} m(k-1)$ 
	for all $f\in C^k(X,\calF)$, and
	\item  $\|f\| \geq \delta   m(k)$ for all $f\in Z_k(X,\calF)-B_k(X,\calF)$.
\end{enumerate}

The following proposition relates the cosystolic and systolic expansion
of $(X,\calF,\|\cdot\|_B)$ to the code-theoretic properties
of the $k$-cocycle quantum CSS code associated to $(X,\calF,B)$. 
Adapting the result to use the weight-support norm instead of $\|\cdot\|_B$
can be done using Proposition~\ref{PR:Hamming-and-support-ratio}(ii)
and Remark~\ref{RM:properties-of-cbe}(ii),
and is left to the reader.

\begin{prp}\label{PR:cbe-to-quantum-CSS}
	Let $X,\calF,B,d,Q,P $ be as in \S\ref{subsec:cocycle-codes},
	let $k\in\{1,\dots,d-1\}$ and let
	$C:=(C_X,C_Z,C^k,\Phi_X,\Phi_Z)$ be the $k$-cocycle quantum CSS code associated to $(X,\calF,B)$.
	Then:
	\begin{enumerate}[label=(\roman*)]
		\item $C$ is a  quantum  CSS code, its rate is $\frac{1}{n}\dim\HH^k(X,\calF)$,
		and the testers $\Phi_X$ and $\Phi_Z$
		query  
		$(k+2)M_k$ and $ D_{k-1,k}(X)(d+1-k)M_k$  letters, respectively.
		\item    $(X,\calF,\|\cdot\|_B)$ is an $(\veps,\delta)$-cosystolic
		expander and an $(\veps,\delta)$-systolic expander in dimension $k$
		if and only if
		$C$ has relative distance $\geq \delta$ and is $\veps$-testable.
		\item  $(X,\calF,\|\cdot\|_B)$ is an $(\veps,\delta)$-cosystolic
		expander in dimension $k$ if and only if $C$ has relative $X$-distance $\geq \delta$
		and is one-sided $\veps$-locally testable.
		\item Suppose
		that $\calF(x)\neq 0$ for all $x\in X(k)\cup X(k+1)\cup X(k+2)$
		and  there are $\beta,\beta',\gamma,\gamma'\in\R_+$ 
		such that $(X,\calF)$ $\beta$-expands $\gamma$-small 
		$k$-cocycles and $\beta'$-expands $\gamma'$-small
		$(k+1)$-cocycles. 
		Then  
		the $X$-side of $C$ admits an error correcting algorithm able
		to correct up to $\frac{1}{(k+2)P M_k}
	\min\{(\frac{(k+1)Q }{d+1}\schoose{d+1}{k+2} +1)^{-1}\gamma,\gamma'\}$-fraction
		of errors
		in $O_{M,Q,d,k}(n)$ operations.
	\end{enumerate}
\end{prp}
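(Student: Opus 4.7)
The plan is to verify each of (i)--(iv) separately, largely reducing to the machinery of \S\ref{subsec:coboundary-exp}--\ref{subsec:ltc} together with the duality relation \eqref{EQ:boundary-is-dual} and Lemma~\ref{LM:orth-relations}.

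For (i), I would observe that Lemma~\ref{LM:orth-relations} gives $C_X^\perp = (Z^k)^\perp = B_k$, and $B_k \subseteq Z_k = C_Z$ follows from $\partial_k\partial_{k+1}=0$ (which is in turn the dual of $d_{k+1}d_k=0$ via \eqref{EQ:boundary-is-dual}), so $C$ is a genuine quantum CSS code. The rate reads $\frac{1}{n}(\dim C_X - \dim C_Z^\perp) = \frac{1}{n}(\dim Z^k - \dim B^k) = \frac{1}{n}\dim\HH^k(X,\calF)$, again by Lemma~\ref{LM:orth-relations}. The query counts follow by inspecting the supports: a generator $\phi_{y,b}\in\Phi_X$ touches only coordinates of $f(x)$ for $x\in X(k)_{\subseteq y}$, giving $(k+2)$ faces each contributing at most $M_k$ letters, and an analogous inspection of $\phi'_{z,b}\in\Phi_Z$ via $X(k)_{\supseteq z}$ yields the stated $D_{k-1,k}(X)(d+1-k)M_k$ bound (the factor $d+1-k$ being absorbed into counting $k$-faces containing a $(k-1)$-face through containing $d$-faces).

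For (ii) and (iii), the key identity is
\[
\Pr_{(y,b)\sim\Phi_X}\bigl[\trings{f,\phi_{y,b}}\neq 0\bigr]
=\frac{\|d_k f\|_B}{m_B(k+1)},
\]
obtained by noting that $\trings{f,\phi_{y,b}}=(d_k f(y))_b$ in the basis $B(y)$, so that summing indicators over $(y,b)$ yields exactly $\|d_k f\|_B$. Since $d_{\Ham}(f,C_X)=\|f+Z^k\|_B/m_B(k)$ under the basis identification $C^k\cong\F^n$, the $\veps$-testability of $\Phi_X$ is precisely condition (C1) of \S\ref{subsec:coboundary-exp} with norm $\|\cdot\|_B$, while the $X$-distance bound $d_X\geq\delta$ is precisely (C2). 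A parallel computation with $\partial_k$ in place of $d_k$ --- using \eqref{EQ:boundary-is-dual} and the analogous systolic conditions (S1), (S2) --- yields the corresponding statement for the $Z$-side. Combining both sides proves (ii); taking only the $X$-side proves (iii).

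For (iv), the plan is to re-use verbatim the decoding algorithm of Figure~\ref{FG:decoding-algo} from the proof of Proposition~\ref{PR:decoding}(ii): given $f\in C^k$ close to $C_X=Z^k$, the algorithm outputs some $f'\in Z^k$, and the bound $\ws{f-f'}=\ws{g}\leq \frac{(k+1)Q}{d+1}\schoose{d+1}{k+2}\ws{d_kf}$ from that proof guarantees $f'$ lies near $f$. The novelty compared to Proposition~\ref{PR:decoding}(ii) is that here one does not need $f'=f_0$; membership of $f'$ in the coset $f_0+B^k=f_0+C_Z^\perp$ is already sufficient for quantum CSS decoding, so the standing assumption $B^k=0$ of Proposition~\ref{PR:decoding} can be dropped. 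Converting the $\ws{\cdot}$-bound on $f-f'$ to the $\F$-alphabet Hamming norm on $C^k\cong\F^n$ via Proposition~\ref{PR:Hamming-and-support-ratio}(ii) introduces the extra factor of $M_k$ visible in the stated radius $\frac{1}{(k+2)PM_k}\min\{\cdots\}$, and the runtime analysis carries over unchanged.

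The main obstacle will be the careful norm bookkeeping in part (iv): Proposition~\ref{PR:decoding}(ii) is phrased for abelian codes over the alphabet $\Sigma=\F^m$ under the uniform assumption $\dim\calF(x)=m$ on $X(k)$, and its error radius is stated in $\nws{\cdot}$, whereas here $\dim\calF(x)$ is allowed to vary on $X(k)$ and the relevant norm is the $\F$-alphabet Hamming norm on $C^k\cong\F^n$. Tracking how the intermediate inequalities in the proof of Proposition~\ref{PR:decoding}(ii) degrade when replacing $\nws{\cdot}$ by $\frac{1}{n}\|\cdot\|_B$ --- a discrepancy controlled by $M_k$ via Proposition~\ref{PR:Hamming-and-support-ratio}(ii) --- is the source of the additional $\frac{1}{M_k}$ in the error-correcting fraction, and is the only non-routine step of the proof.
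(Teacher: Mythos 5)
Your proposal is correct and follows essentially the same route as the paper: part (i) via support inspection and Lemma~\ref{LM:orth-relations}, parts (ii)--(iii) by unwinding the definitions of (C1)/(C2) and their systolic analogues, and part (iv) by rerunning the proof of Proposition~\ref{PR:decoding} with exactly the two modifications the paper makes — substituting Proposition~\ref{PR:Hamming-and-support-ratio}(ii) for (i) to absorb the extra $M_k$, and relaxing $f'=f_0$ to $f'-f_0\in B^k\subseteq C_Z^\perp$.
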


\begin{proof}
	(i) 
	It is straightforward to see
	that every $\phi_{y,b}\in \Phi_X$ is supported on at most $(k+2)M_k$ coordinates.
	Let $z\in X(k-1)$. Then $z$ is contained in at most $D_{k-1,k}(X)$ $d$-faces.
	This means
	that every   $\phi'_{z,b}\in \Phi_Z$ is supported at most $D_{k-1,k}(X)M_k$ coordinates.
	The assertion about the rate follows from Lemma~\ref{LM:orth-relations}.

	(ii) and (iii) are immediate from the definitions.
	
	(iv) is shown as in Proposition~\ref{PR:decoding} with two differences.
	First,  we use part (ii) of
	Proposition~\ref{PR:Hamming-and-support-ratio}  instead of part (i).
	In particular, 
	writing $\eta:=	\frac{1}{(k+2)P M_k}
	\min\{(\frac{(k+1)Q }{d+1}\schoose{d+1}{k+2} +1)^{-1}\gamma,\gamma'\}$,
	the assumption $\|f-f_0\|_B\leq \eta n\leq \eta M_k |X(k)| $ gives
	$\|f-f_0\|\leq \schoose{d+1}{k+1}^{-1}\frac{|X(k)|}{|X(d)|}PM_k\eta\leq PM_k \eta$,
	and   extra $M_k$ is carried throughout the computations.
	Second, 
	instead of asserting that 
	$f'=f_0$, we only conclude that $f_0-f'\in B^k\subseteq (Z_k)^\perp=C_Z^\perp$.
\end{proof}

The subject matter of Section~\ref{sec:cse-from-links} is the construction  
of  cosystolic expanders, which in turn give rise to LTCs.
Unfortunately, we do not know of analogous results for \emph{systolic expansion}, so
our results   only give rise to quantum CSS codes whose $X$-side is an LTC with linear distance,
and {\it a priori} no information on the $Z$-side.

\chapter{The Tower Paradigm}
\label{chap:tower}

	In this chapter we provide a method for constructing good infinite
	families of LTCs from a single sheaved complex $(X,\calF)$, called
	the \emph{tower paradigm}. 
	Broadly, the idea is to assume that $X$ admits an infinite tower of 
	double coverings
	$\dots\to X_2\to X_1\to X_0=X$, and take the cocycle codes associated to the
	sheaved complexes $\{(X_r,u_r^*\calF)\}_{r=0}^{\infty}$, where $u_r:X_r\to X$ is the composition
	$X_r\to X_{r-1}\to\dots\to X_0=X$. We show that if $(X,\calF)$ satisfies
	a list of conditions, then this family of codes is a good family of LTCs.
	Specifically, the main result of 
	Section~\ref{sec:cse-from-links} --- a local-to-global principle for cosystolic expansion
	of sheaves --- provides the conditions on $(X,\calF)$ that would secure linear distance
	and   testability for the codes, and the main result of Section~\ref{sec:rate}
	--- rate conservation --- gives    conditions on $(X,\calF)$ 
	that are sufficient 
	for the rate of the codes to be constant. We pack these results  together in Section~\ref{sec:recap}
	to give the tower paradigm. The intermediate Section~\ref{sec:quo-aff-buildings}
	gives   examples of cocycle codes to which the local-to-global principle
	of Section~\ref{sec:cse-from-links} can be applied.

\section{A Local-to-Global Principle for Cosystolic Expansion}
\label{sec:cse-from-links}

Let $(X,\calF)$ be a sheaved connected $d$-complex.
The purpose of this section is to prove the following theorem,
which provides \emph{local}   condition on $(X,\calF)$ 
guaranteeing that $(X,\calF)$ is a good cosystolic expander in dimension $k$
(see \S\ref{subsec:coboundary-exp}),
and moreover, expands small locally minimal $k$-cochains and $(k+1)$-cochains
(see~\S\ref{subsec:expansion-loc-min}).
%The locality of the conditions means that they involve only the links $(X_v,\calF_v)$
%with $v\in X(0)$.
By saying that the conditions are local we mean that 
they involve only the links $(X_z,\calF_z)$
with $z\in X-\{\emptyset\}$.

Following the convention set in \S\ref{subsec:sheaf-at-link}, 
we say that $(X,\calF)$ is an \emph{$i$-local $\veps$-coboundary expander} in dimension $k$
($-1\leq i\leq k$) if $(X_z,\calF_z)$ is an $\veps$-coboundary expander   in dimension
$k-i-1$ for all $z\in X(i)$.
Recall also (\S\ref{subsec:skeleton}) that $X$ is said to be an $i$-local $[-1,\lambda]$-spectral
expander ($-1\leq i\leq d-2$)  if the underlying weighted graph of $X_z$ is a $[-1,\lambda]$-spectral
expander for all $z\in X(i)$. Likewise for $\alpha$-skeleton expansion.

\begin{thm}\label{TH:cosystolic-exp-from-links}
	Let  
	$k\in\N\cup \{0\}$,
$Q\in \N$, $\veps_0 ,\dots,\veps_k ,\veps'_0 ,\dots,\veps'_{k+1}$ and $\lambda \in \R_+$.
Put
\begin{align*}
\veps &=\min\left\{\frac{(k+2)\veps_i}{k+1-i}\bigg| i\in\{0,\dots,k\}\right\}
\quad\text{and}\quad
\veps'  =\min\left\{\frac{(k+3)\veps'_i}{k+2-i}\bigg| i\in\{0,\dots,k+1\}\right\},
\end{align*}
and suppose that
\[
\lambda \leq 
{\textstyle\frac{1 }{d}}\min\{({\textstyle\frac{\veps }{(k+1)^2 2^{2k+6}}})^{2^{k }},
(\textstyle{\frac{\veps'}{(k+2)^22^{2k+8}})^{2^{k+1}}},1\}
\]
Let $(X,\calF)$ be a  sheaved  \emph{strongly connected} $d$-complex with $d\geq k+2$  such that:
	\begin{enumerate}
	\item[(1)]  $(X ,\calF )$ is an $i$-local $\veps_{i}$-coboundary
	expander in dimension $k$ for all $i\in\{0,\dots,k\}$. %$k-\dim z-1$ and 
	\item[(2)]  $(X,\calF)$ is an $i$-local $\veps'_{i}$-coboundary
	expander in dimension  $k+1$ for 
	all $i\in\{0,\dots,k+1\}$.
	%$k-\dim z$
	\item[(3)] 
	$X$ is a $(d-2)$-local $[-1,\lambda]$-spectral expander.
	\item[(4)] $D(X)\leq Q$, i.e., every vertex of $X$ is belongs to  at most $Q$ $d$-faces,  
	\item[(5)] $\calF(x)\neq 0$ for all $x\in X(k)\cup X(k+1)\cup X(k+2)$.
	\end{enumerate}
	Then:
	\begin{enumerate}[label=(\roman*)]
		\item
		$(X,\calF)$ $\frac{\veps }{2}$-expands $(\frac{\veps }{(k+1)^22^{2k+6}})^{2^{k+1}-1}$-small locally minimal 
		$k$-cochains.
		\item  
		$(X,\calF)$ $\frac{\veps'}{2}$-expands $(\frac{\veps'}{(k+2)^22^{2k+8}})^{2^{k+2}-1}$-small locally minimal 
		$(k+1)$-cochains.
		\item  $(X,\calF )$ is a $(\min\{(\frac{\veps'}{(k+2)^22^{2k+8}})^{2^{k+2}-1},
	\frac{d+1}{(k+1)Q}\schoose{d+1}{k+2}^{-1}\} ,(\frac{\veps}{(k+1)^2 2^{2k+6}})^{2^{k+1}-1} )$-cosystolic expander in dimension $k$.
	\end{enumerate}	 
\end{thm}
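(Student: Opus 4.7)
The plan is to prove (i) and (ii) by induction on $k$, and then derive (iii) by a direct application of Proposition~\ref{PR:small-set-to-cse}: having (i) and (ii) at one's disposal supplies exactly the hypotheses needed to conclude that $(X,\calF)$ is an $(\veps'',\delta'')$-cosystolic expander in dimension $k$ with $\veps''$ and $\delta''$ the constants stated in (iii). Part (iii) is thus purely a wrap-up, and the entire mathematical content lies in the expansion of small locally minimal cochains.

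For the base case $k=0$, every $0$-cochain is vacuously locally minimal, so (i) is equivalent to saying that $(X,\calF)$ is an $\veps_0$-coboundary expander in dimension $0$ for small cochains, which follows from hypothesis (1) with $i=0$ and the assumption that $X_v$ is a spectral expander via Theorem~\ref{TH:cbe-of-spectral-exps}-type reasoning; the stronger claim (ii) for $k=0$ is the same statement with $k$ replaced by $1$. For the inductive step, suppose the result is known up to dimension $k-1$, and let $f\in C^k(X,\calF)$ be a locally minimal cochain with $\ws{f}<\alpha$, where $\alpha = (\frac{\veps}{(k+1)^2 2^{2k+6}})^{2^{k+1}-1}$. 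By Proposition~\ref{PR:locally-minimal-props}(i), for every vertex $v\in X(0)$ the restriction $f_v\in C^{k-1}(X_v,\calF_v)$ is minimal, hence locally minimal in the $(d-1)$-complex $X_v$, and the relation \eqref{EQ:weight-in-link} gives the clean ``averaging'' identities $\sum_v w_X(v)\ws{f_v}_{X_v}=\ws{f}$ and $\sum_v w_X(v)\ws{(d_k f)_v}_{X_v}=\ws{d_k f}$.

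The core of the argument is a dichotomy. Partition $X(0)$ into a \emph{heavy} set $H=\{v:\ws{f_v}_{X_v}\geq \alpha_{k-1}\}$ and a \emph{light} set $L=X(0)\setminus H$, where $\alpha_{k-1}$ is the corresponding threshold from the inductive hypothesis in dimension $k-1$. For $v\in L$, the inductive hypothesis applied to $f_v$ in the link $X_v$ yields $\ws{(d_k f)_v}_{X_v}=\ws{d_{k-1}f_v}_{X_v}\geq \tfrac12\veps^{(k-1)}\ws{f_v}_{X_v}$, where $\veps^{(k-1)}$ is the corresponding expansion constant one dimension down. For $v\in H$, minimality of $f_v$ combined with hypothesis (1) at level $i=0$ directly gives $\ws{(d_kf)_v}_{X_v}\geq \veps_0\ws{f_v}_{X_v}$. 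Summing the two bounds with weights $w_X(v)$ produces a lower bound on $\ws{d_k f}$ of the form $c\cdot\ws{f} - (\text{error})$, where the error counts $k$-faces $x$ having two distinct vertices for which the local expansion cannot be combined efficiently, i.e., roughly $w(E(H))$ multiplied by a bounded factor. The skeleton expansion guaranteed by hypothesis~(3) (in its $(d-2)$-local form, propagated to $0$-local skeleton expansion through Oppenheim-type trickling or direct induction on links) bounds $w(E(H))\leq w(H)^2+\lambda w(H)$. Since $f$ is small and each heavy vertex contributes at least $\alpha_{k-1}$ to $\ws{f}$, the set $H$ has weight $w(H)\leq \ws{f}/\alpha_{k-1}$, which is tiny; combined with the choice of $\lambda$ (smaller than an appropriate power of $\veps$), the error is absorbed into half of the main term, yielding $\ws{d_k f}\geq \frac{\veps}{2}\ws{f}$. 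This proves (i); (ii) is proved by the identical argument applied to $(k+1)$-cochains with the primed constants $\veps'_i$ and the corresponding $\alpha$.

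The main obstacle will be the bookkeeping of the parameter cascade: each inductive step roughly squares the threshold $\alpha$ (because the heavy set must be much smaller than $\alpha^2/\lambda$ for the skeleton-expansion error to be dominated) and loses a factor proportional to $(k+1)^2 2^{2k+6}$ in $\veps$, which is exactly why the theorem's threshold is of the form $(\veps/(k+1)^2 2^{2k+6})^{2^{k+1}-1}$ and why the hypothesis on $\lambda$ is doubly exponential in $k$. Verifying that the constants chosen in the statement indeed close the induction --- in particular checking the clean identity $\sum_v w_X(v)\ws{f_v}_{X_v}=\ws{f}$ used above, proving that small locally minimal cochains have a small heavy set, and confirming that the spectral bound $\lambda$ imposed is strong enough to absorb the error at every level --- will be the main work; by contrast the reduction of (iii) to (i) and (ii) via Proposition~\ref{PR:small-set-to-cse} is immediate, and the sheaf-theoretic aspects add no difficulty beyond replacing $\F_2$-sums with applications of the restriction maps inside \eqref{EQ:local-cocycle-cond}, since all norms in use are the weighted support norms that ignore the algebraic structure of the stalks.
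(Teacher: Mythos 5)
Your reduction of (iii) to (i) and (ii) via Proposition~\ref{PR:small-set-to-cse} is exactly what the paper does, and your observation that (ii) is (i) with $k$ replaced by $k+1$ is also how the paper organizes things (both follow from a single statement, Theorem~\ref{TH:expansion-of-small-sets}). But the mechanism you propose for the core expansion claim --- induction on $k$ by restricting to vertex links with a heavy/light dichotomy on $X(0)$ --- has a genuine gap. The restriction of the coboundary to a link is \emph{not} the coboundary of the restriction: for $y=xv$ one has $(d_kf)(xv)=\sum_{i}(-1)^i\res f(x_iv)+(-1)^{k+1}\res_{xv\from x}f(x)$, i.e.\ $(d_kf)_v=d_{k-1}(f_v)\pm(\text{the values of $f$ on $k$-faces of $y$ not containing $v$})$. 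So the inductive hypothesis applied to $f_v$ inside $X_v$ (for light $v$), and likewise the minimality of $f_v$ plus $0$-local coboundary expansion (for heavy $v$), bound $\ws{d_{k-1}f_v}_{X_v}$, not $\ws{(d_kf)_v}_{X_v}$; a face $y$ can satisfy $d_{k-1}f_v(y-v)\neq 0$ while $(d_kf)(y)=0$ because the extra term cancels it. Your error term $w(E(H))$ does not account for this discrepancy, which is governed by the interaction of $\supp f$ with faces \emph{outside} the star of $v$, not by edges inside a heavy vertex set.

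The paper's proof avoids this by never inducting on $k$. It builds the full hierarchy of heavy faces $A_k(f,\vec h)\supseteq$-induced $A_{k-1},\dots,A_{-1}$ at \emph{every} dimension, defines bad pairs and bad $(k+1)$-faces $\Upsilon(f,\vec h)$ at every level (controlled by $i$-local skeleton expansion for all $i\in\{-1,\dots,k-1\}$ via Lemma~\ref{LM:bad-faces-bound}, not just skeleton expansion of $X$), and sums the local coboundary-expansion inequalities over \emph{terminal} heavy faces of all dimensions $0,\dots,k$. The point of the terminal-face machinery (Lemmas~\ref{LM:unique-terminal} and~\ref{LM:terminal-ineq}) is precisely to repair the non-localization of $d_k$: for a non-bad $(k+1)$-face $y$ descending to a terminal face $z$, \emph{all} $k$-faces of $y$ in $\supp f$ descend to $z$, so the truncated cochain $g=f|_{L(z)}$ satisfies $(d_kg)(y)=(d_kf)(y)$, and the sets $L'(z)$ are disjoint off $\Upsilon$, so the summation in Lemma~\ref{LM:locally-min-norm-bound} has no double counting. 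A single-level vertex dichotomy cannot reproduce this: bad configurations (two heavy $i$-faces meeting in a non-heavy $(i-1)$-face) occur at intermediate dimensions, which is exactly why the thresholds $h_{-1},\dots,h_{k-1}$ cascade and the final smallness parameter is $(\veps/(k+1)^22^{2k+6})^{2^{k+1}-1}$. Your intuition about where the doubly-exponential constants come from is right, but the inductive scheme you propose does not implement it correctly; to repair it you would essentially have to rebuild the multi-level heavy/bad/terminal bookkeeping of \S\ref{subsec:heavy}.
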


This theorem is an immediate consequence of the following theorem and Proposition~\ref{PR:small-set-to-cse}.

\begin{thm}\label{TH:expansion-of-small-sets}
	Let $k\in\N\cup\{0\}$, let  $\veps_0,\dots,\veps_k\in \R_+$,
	and set
	\[
	\veps:=\min\left\{\frac{(k+2)\veps_i}{k+1-i}\bigg| i\in\{0,\dots,k\}\right\}
	\qquad
	\text{and}
	\qquad
	\lambda=\frac{1}{d}\min\left\{\left(\frac{\veps}{(k+1)^2 2^{2k+6}}\right)^{2^{k}},1\right\}.
	\]
	Let $(X,\calF)$ be a sheaved \emph{strongly connected} $d$-complex
	with $d\geq k+1$ such that:
	\begin{enumerate}[label=(\arabic*)]
		\item 
		$(X,\calF)$ is an $i$-local $\veps_i$-coboundary expander
		in dimension $k$ for all $i\in \{0,\dots,k\}$.		
	\item  
	$X$ is a $(d-2)$-local $[-1,\lambda]$-spectral expander.
	\item $\calF(x)\neq 0$ for all $x\in X(k+1)$.
	\end{enumerate}
	Then $(X,\calF)$ $\frac{\veps}{2}$-expands $(\frac{\veps}{(k+1)^2 2^{2k+6}})^{2^{k+1}-1}$-small
	$k$-cochains.
\end{thm}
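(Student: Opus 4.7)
I would proceed by induction on $k\ge 0$, with the outer induction hypothesis being Theorem~\ref{TH:expansion-of-small-sets} itself (so the induction produces the doubling $2^{k+1}-1$ in the exponent of the smallness threshold). Throughout, $f\in C^k(X,\calF)$ will denote the locally minimal $k$-cochain under consideration, with $\ws{f}<\alpha_k:=(\frac{\veps}{(k+1)^2 2^{2k+6}})^{2^{k+1}-1}$, and the task is to bound $\ws{d_k f}\ge \frac{\veps}{2}\ws{f}$.

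\emph{Base case $k=0$.} Every $0$-cochain is vacuously locally minimal, and the inequality $\veps=2\veps_0\ge \ws{d_0 f}/\ws{f}$ should follow from two ingredients. First, hypothesis (1) with $i=0$ says that for each $v\in X(0)$ the augmented sheaf $\calF_v$ on $X_v$ is an $\veps_0$-coboundary expander in dimension $-1$, which, by unwinding the definition, says that the local restriction map $\res_{\bullet\from v}:\calF(v)\to\prod_{e\ni v}\calF(e)$ is ``far from trivial''. Second, hypothesis (2) gives $\lambda$-spectral expansion of the underlying graph. Combining these via a weighted expander mixing argument (in the style of \cite{First_2021_weighted_mixing_lemmas_preprint}) should yield the desired bound, with $\lambda$ chosen small enough to absorb the spectral error term; no smallness on $\ws{f}$ is needed in this case.

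\emph{Inductive step $k-1\Rightarrow k$.} The core construction is the passage to links. For each $v\in X(0)$ set $f_v\in C^{k-1}(X_v,\calF_v)$ as in~\S\ref{subsec:sheaf-at-link}; by Proposition~\ref{PR:locally-minimal-props}(i) local minimality of $f$ at $v$ forces $f_v$ to be minimal in $X_v$, and a second application of that proposition plus the identity $(f_v)_{w-v}=f_w$ shows that $f_v$ is in fact \emph{locally minimal} in $X_v$. The key weight-change identity~\eqref{EQ:weight-in-link} gives
\[
\sum_{v\in X(0)} w_X(v)\,\ws{f_v}_{X_v}=\ws{f}_X \quad\text{and}\quad \sum_{v\in X(0)} w_X(v)\,\ws{d_{k-1}^{X_v}f_v}_{X_v}=(k+2)\,\ws{d_k f}_X,
\]
after observing that $(d_k f)_v=d_{k-1}^{X_v}f_v$ (a straightforward  check analogous to Lemma~\ref{LM:f-eta-basic-props}). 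The sheaved link $(X_v,\calF_v)$ is a $(d-1)$-complex satisfying the hypotheses of the theorem for parameter $k-1$, with constants $\veps_1,\ldots,\veps_k$ (indices shifted) and the same $\lambda$; the induction threshold for links is therefore $\alpha_{k-1}=(\frac{\veps}{(k+1)^22^{2k+6}})^{2^{k}-1}=\sqrt{\alpha_k}$ (up to the constants in the theorem statement). Call $v$ ``light'' if $\ws{f_v}_{X_v}<\alpha_{k-1}$ and ``heavy'' otherwise; by induction each light vertex satisfies $\ws{d_{k-1}^{X_v}f_v}\ge \frac{\veps}{2}\ws{f_v}$.

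\emph{Controlling the heavy vertices.} The main obstacle, as anticipated, is bounding the weighted mass of the heavy set $H=\{v:\ws{f_v}_{X_v}\ge \alpha_{k-1}\}$. The identity above gives $w_X(H)\cdot\alpha_{k-1}\le \ws{f}< \alpha_k=\alpha_{k-1}^{2}$, so $w_X(H)< \alpha_{k-1}$, i.e.\ the heavy set is itself very small. I would then use hypothesis (3) --- spectral expansion at codimension-$2$ links --- via a repeated ``passage to the link'' / skeleton-expander mixing argument (in the spirit of the proof of \cite[Theorem~1.1]{Kaufman_2018_cosystolic_expanders} and its generalization in \cite{Evra_2016_cosystolic_expanders_arxiv_version}) to show that the $(k+1)$-faces $y$ which meet $H$ contribute a vanishing fraction of $\sum_v w_X(v)\ws{d^{X_v}f_v}_{X_v}$, controlled by a polynomial in $\lambda^{1/2^k}$. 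Concretely, the containment lemma from \cite{Evra_2016_cosystolic_expanders_arxiv_version} (iterated $k$ times, once per codimension) converts smallness of $\ws{f}$ into smallness of the measure of $(k+1)$-faces fully supported on $H$, against a cost of $\lambda^{1/2^k}$ per iteration; this is precisely why the hypothesis on $\lambda$ has a $2^k$-th root in it.

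\emph{Combining.} Summing the local expansion inequalities over light $v$, using the averaging identity to rewrite the sum as $(k+2)\ws{d_k f}$, and absorbing the heavy contribution into a $\lambda$-controlled error term yields $(k+2)\ws{d_k f}\ge \frac{\veps}{2}(k+2)\ws{f}-(\text{spectral error})$. The choice $\lambda\le \frac{1}{d}(\frac{\veps}{(k+1)^22^{2k+6}})^{2^k}$ is exactly what is needed to make the spectral error at most $\frac{\veps}{4}\ws{f}$, finishing the induction. The bookkeeping of constants --- in particular justifying the precise exponents $2^{k+1}-1$ and the prefactor $(k+1)^22^{2k+6}$ --- will be the most tedious part, but the conceptual content is the light/heavy dichotomy plus iterated skeleton expansion.
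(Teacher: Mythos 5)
Your proposal takes a genuinely different route from the paper. The paper's proof (via Theorem~\ref{TH:expansion-of-small-sets-finer}) is a direct, non-inductive argument built on the multi-level heavy-face hierarchy $A_k(f,\vec h)\supseteq A_{k-1}(f,\vec h)\supseteq\dots\supseteq A_{-1}(f,\vec h)$ of \S\ref{subsec:heavy}, where an $i$-face is heavy when a large \emph{fraction} of the $(i+1)$-faces containing it are themselves heavy; one then sums local coboundary-expansion inequalities over \emph{terminal} heavy faces (Lemmas~\ref{LM:terminal-ineq} and~\ref{LM:locally-min-norm-bound}) and absorbs the error from bad $(k+1)$-faces via skeleton expansion (Lemma~\ref{LM:bad-faces-bound}). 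Your proposal instead inducts on $k$ by passing to vertex links and splitting vertices into light and heavy according to the absolute mass $\ws{f_v}_{X_v}$. These are not reformulations of each other.

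There is, however, a genuine gap in your control of the heavy set. The correct averaging identities are $\sum_v w_X(v)\ws{f_v}_{X_v}=\ws f$ and $\sum_v w_X(v)\ws{(d_kf)_v}_{X_v}=\ws{d_kf}$ (no $(k+2)$ factor, contrary to your second display). Granting the inductive hypothesis on light vertices, one gets
\[
\ws{d_kf}\ \ge\ \tfrac{\veps'}{2}\Bigl(\ws f - \sum_{v\in H} w_X(v)\ws{f_v}_{X_v}\Bigr),
\]
so you need the heavy contribution $\sum_{v\in H}w_X(v)\ws{f_v}_{X_v}=\sum_{x\in\supp f}\tfrac{|x\cap H|}{k+1}w_X(x)$ to be a strictly bounded fraction of $\ws f$. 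Your bound $w(H)<\ws f/\alpha_{k-1}$ does not deliver this: even if skeleton expansion forces $k$-faces \emph{fully} inside $H$ to carry negligible weight, a face with $k$ of its $k+1$ vertices in $H$ contributes $\tfrac{k}{k+1}$, and nothing excludes this from covering most of $\supp f$. In the worst case the light vertices see only $\tfrac{1}{k+1}\ws f$, losing a multiplicative factor $\approx k+2$ against the claim. Moreover, applying the skeleton-expansion bound $w(E(H))\le w(H)^2+\alpha w(H)$ usefully requires $w(H)\lesssim\alpha$, but your threshold only yields $w(H)<\ws f/\alpha_{k-1}$, which for $\ws f$ near the permitted ceiling $\alpha_k\approx\alpha_{k-1}^2$ is of order $\alpha_{k-1}\gg\alpha\approx\alpha_{k-1}^{1+1/2^k}$, so the quadratic term is \emph{not} negligible.

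The deeper point is that your notion of heaviness — absolute size of $\ws{f_v}$ — does not cascade, while the paper's does. The paper's $A_0$ consists of vertices for which a large fraction of incident edges are in $A_1$, and so on; this is exactly what makes Lemma~\ref{LM:bound-heavy-set} (giving $w(A_i)\le h_i^{-1}\cdots h_{k-1}^{-1}\ws f$, proportional to $\ws f$) possible, and what lets the terminal-face mechanism (Lemmas~\ref{LM:unique-terminal}, \ref{LM:terminal-ineq}) exploit local minimality of $f$ at faces of \emph{every} dimension $0,\dots,k-1$, not just at vertices. Your vertex-only dichotomy never invokes the $i$-local coboundary expansion hypotheses for $i\ge 1$, which is precisely the information needed to handle mass of $\supp f$ concentrated around a heavy $i$-face with $i\ge 1$. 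As a final aside, your remark that the base case $k=0$ requires no smallness is not correct: Corollary~\ref{CR:expansion-of-small-zero-ch} exhibits the expansion coefficient $2(\veps-\alpha-h)$ for $h$-small $0$-cochains, which vanishes as $h\to\veps-\alpha$; the theorem's threshold for $k=0$ is a strictly positive $\veps/64$, not trivial.
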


Before setting to prove  Theorem~\ref{TH:expansion-of-small-sets}, a few remarks are in order.

\begin{remark}\label{RM:comments-on-local-to-global}
(i) Theorems~\ref{TH:expansion-of-small-sets} and~\ref{TH:cosystolic-exp-from-links}
were proved for the constant   sheaf $\F_2$
(Example~\ref{EX:sheaves-basic-examples}(ii)) 
in \cite{Evra_2016_cosystolic_expanders_arxiv_version}, with  different constants,
and this was extended to all constant   sheaves in \cite{Kaufman_2018_cosystolic_expanders}.
As in these sources, the proof of Theorem~\ref{TH:expansion-of-small-sets} uses
machinery of \emph{heavy faces}\footnote{
	Called \emph{fat faces}
	in \cite{Evra_2016_cosystolic_expanders_arxiv_version}
	and \cite{Kaufman_2018_cosystolic_expanders}.}. 
	%in proving Theorem~\ref{TH:expansion-of-small-sets}.
However, we use a different summation argument that makes
milder assumptions and gives explicit
and asymptotically better expansion constants.

(ii) By Lemma~\ref{LM:max-cbe}, the  value of $\veps$ (resp.\ $\veps'$)
in   Theorems~\ref{TH:cosystolic-exp-from-links}
and~\ref{TH:expansion-of-small-sets} cannot exceed $k+2$ (resp.\ $k+3$).
We do not know if there exist examples approaching this upper bound.

(iii) In Theorem~\ref{TH:expansion-of-small-sets},
condition (2)  
%(resp.\ condition (3) in Theorem~\ref{TH:cosystolic-exp-from-links})
and the assumption that $X$ is connected 
can be   replaced with the milder assumption
that $X$ is an $i$-local $\alpha_i$-skeleton expander
for for all $i\in\{-1,\dots,k-1\}$ and $\alpha_i\leq \Theta_k(\veps^{2^{k-1-i}})$.
Condition (3) of Theorem~\ref{TH:cosystolic-exp-from-links} can be similarly relaxed.
See Theorem~\ref{TH:expansion-of-small-sets-finer} and the following corollaries 
for the precise requirements
on the $\alpha_i$, and Corollary~\ref{CR:cse-from-links-dim-2} for a version
of Theorem~\ref{TH:cosystolic-exp-from-links} where the upper bounds on the $\alpha_i$
are optimized for $k=0$.
Our   methods cannot increase the  
order magnitude of the required $i$-local skeleton expansion 
of $X$ (as a function of $\veps_0,\dots,\veps_k$) 
beyond
$\Theta_k(\veps^{2^{k-1-i}})$; see Remark~\ref{RM:constant-analysis-general}.

(iv) 
Let $(X,\calF)$ be a sheaved strongly connected $d$-complex and let $p:Y\to X$ be a covering map
such that $Y$ is connected (and hence strongly connected).
Recall (\S\ref{subsec:pushforward}) that $p^*\calF$ denotes the pullback of $\calF$ to $Y$.
Then, for every $y\in Y-\{\emptyset\}$, the map $p$ restricts
to an isomorphism $Y_y\cong X_{f(y)}$, and under this isomorphism 
we have $(p^*\calF)_y = \calF_{f(y)}$.
This means that the assumptions of 
Theorem~\ref{TH:cosystolic-exp-from-links}
(resp.\ Theorem~\ref{TH:expansion-of-small-sets})
hold for $(X,\calF)$ if and only if they hold for $(Y,p^*\calF)$.
\end{remark}

The proof of Theorem~\ref{TH:expansion-of-small-sets} will be given 
in \S\ref{subsec:proof-small-set-exp}, after some 
after some preliminary results have been established in \S\ref{subsec:heavy}.
Examples of sheaved complexes satisfying the assumptions of 
%Theorem~\ref{TH:expansion-of-small-sets} and 
Theorem~\ref{TH:cosystolic-exp-from-links} are given 
in Section~\ref{sec:quo-aff-buildings}, \gap{}and further examples will be given in Section~\ref{sec:candidates}.

\subsection{Heavy Faces}
\label{subsec:heavy}

Fix a sheaved $d$-complex $(X,\calF)$.
Unless indicated otherwise,
$k\in\{0,\dots,d\}$, 
$f\in C^k(X,\calF)$ and $\vec{h}=(h_{-1},h_0,\dots,h_{k-1})\in (0,1]^{\{-1,\dots,k-1\}}$. 
Recall from \S\ref{subsec:weights}
that $w=w_X:X\to \R_+$ denotes the canonical weight function of $X$.

Generalizing \cite[\S3.3]{Kaufman_2018_cosystolic_expanders} 
and \cite[\S3.2]{Evra_2016_cosystolic_expanders_arxiv_version}, we define 
for every $i\in\{ -1,\dots, k\}$  a set $A_k(f,\vec{h})\subseteq X(k)$ as follows:
Set $A_k(f,\vec{h})=\supp f$.
Assuming $A_i(f,\vec{h})$ was defined,
let $A_{i-1}(f,\vec{h})$ consist of the faces
$x\in X(i-1)$
such that 
\[w(  A_i(f,\vec{h})_{\supseteq x})\geq  
h_{i-1} w(X(i)_{\supseteq x}).\]
In other words, $x\in A_{i-1}(f,\vec{h})$ if at least $h_{i-1}$-fraction of the $i$-faces containing $x$
(counted by weight)
are in $A_i(f,\vec{h})$.
Elements of $A_i(f,\vec{h})$ are called \emph{$(f,\vec{h})$-heavy} $i$-faces, or  just \emph{heavy} 
$i$-faces for short.

\begin{lem}\label{LM:convenient-lem}
Let $X$ be a $d$-complex,
let $-1\leq i\leq k\leq d$ and let $A\subseteq X(k)$. Then
$
\sum_{z\in X(i)}w(A_{\supseteq z})={\textstyle{ {k+1} \choose {i+1}}} w(A)
$.
\end{lem}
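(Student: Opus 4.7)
The plan is to prove this by a standard double-counting argument, switching the order of summation. First I would rewrite the left-hand side by expanding $w(A_{\supseteq z}) = \sum_{x \in A,\, x \supseteq z} w(x)$, which gives a double sum over pairs $(z,x)$ with $z \in X(i)$, $x \in A$ and $z \subseteq x$.

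Next I would swap the order of summation to obtain
\[
\sum_{z\in X(i)} w(A_{\supseteq z}) = \sum_{x \in A} w(x) \cdot \#\{z \in X(i) : z \subseteq x\}.
\]
The key combinatorial observation is that since $x \in X(k)$ has $k+1$ vertices, every $(i+1)$-element subset of its vertex set forms a face of $X$ (as $X$ is closed under subsets), so $\#X(i)_{\subseteq x} = \binom{k+1}{i+1}$. This handles the case $i \geq 0$; for $i = -1$, the set $X(-1) = \{\emptyset\}$ and $A_{\supseteq \emptyset} = A$, so the formula reduces to $w(A) = \binom{k+1}{0} w(A)$, consistent.

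Substituting this back yields
\[
\sum_{z\in X(i)} w(A_{\supseteq z}) = \binom{k+1}{i+1} \sum_{x \in A} w(x) = \binom{k+1}{i+1} w(A),
\]
which is the desired identity. There is no genuine obstacle here; the only thing to be careful about is treating the $i = -1$ edge case consistently with the convention that $X$ has a single empty face.
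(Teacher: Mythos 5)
Your proof is correct and takes essentially the same approach as the paper's: both expand $w(A_{\supseteq z})$ as a double sum over pairs $(z,x)$ with $z\subseteq x$, swap the order of summation, and use the fact that a $k$-face contains exactly $\binom{k+1}{i+1}$ $i$-faces. The explicit check of the $i=-1$ edge case is a nice touch but not strictly necessary, since the binomial count already gives $\binom{k+1}{0}=1$ there.
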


\begin{proof} We have
\begin{align*}
\sum_{z\in X(i)}w(A_{\supseteq z})  &=
\sum_{z\in X(i)}\sum_{x\in A:z\subseteq x} w(x)=
\sum_{x\in A}\sum_{z\in X(i):z\subseteq x} w(x)
=
\sum_{x\in A} {\textstyle{ {k+1} \choose {i+1}}}w(x)=
{\textstyle{ {k+1} \choose {i+1}}} w(A).\qedhere
\end{align*}
\end{proof}

\begin{lem}\label{LM:bound-heavy-set}
Let $f\in C^k(X,\calF)$, $\vec{h}\in (0,1]^{\{-1,0,\dots,k-1\}}$
and $i\in\{-1,0,\dots,k\}$.
Then $w(A_i(f,\vec{h}))\leq (\prod_{i\leq j<k}h_j^{-1})\ws{f}$.
\end{lem}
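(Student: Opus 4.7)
The plan is a straightforward downward induction on $i$, starting from $i=k$ and decreasing. The base case $i=k$ is immediate: by definition $A_k(f,\vec h)=\supp f$, so $w(A_k(f,\vec h))=w(\supp f)=\ws{f}$ (as recorded in Example~\ref{EX:cohomology-norms}(i)), matching the right-hand side with the empty product.

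For the inductive step, I would assume the bound holds at level $i+1$ and deduce it at level $i$. The definition of heavy faces says that $x\in A_i(f,\vec h)$ precisely when
\[
w(A_{i+1}(f,\vec h)_{\supseteq x})\;\geq\; h_i\,w(X(i+1)_{\supseteq x}).
\]
Using identity~\eqref{EQ:weight-of-containing-cells}, the right-hand side equals $h_i(i+2)w(x)$, so for every $x\in A_i(f,\vec h)$ I get the pointwise estimate $w(x)\le \frac{1}{h_i(i+2)}w(A_{i+1}(f,\vec h)_{\supseteq x})$.

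Summing this inequality over $x\in A_i(f,\vec h)$ and then extending the sum to all $x\in X(i)$ (this only makes the right-hand side larger), I apply Lemma~\ref{LM:convenient-lem} to $A_{i+1}(f,\vec h)\subseteq X(i+1)$ to obtain
\[
\sum_{x\in X(i)} w(A_{i+1}(f,\vec h)_{\supseteq x})=\binom{i+2}{i+1} w(A_{i+1}(f,\vec h))=(i+2)\,w(A_{i+1}(f,\vec h)).
\]
The factors of $(i+2)$ cancel, giving $w(A_i(f,\vec h))\le h_i^{-1}w(A_{i+1}(f,\vec h))$, and combining with the inductive hypothesis yields exactly the required bound $w(A_i(f,\vec h))\le\bigl(\prod_{i\le j<k}h_j^{-1}\bigr)\ws{f}$.

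There is no real obstacle here; this is a routine bookkeeping calculation, and the only thing to be careful about is the telescoping of the factors $h_j^{-1}$ and the cancellation of the combinatorial weight $(i+2)$ coming from \eqref{EQ:weight-of-containing-cells} against the same factor produced by Lemma~\ref{LM:convenient-lem}.
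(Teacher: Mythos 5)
Your proof is correct and follows the same route as the paper: the base case $i=k$ is immediate, and the inductive step combines the heaviness definition with identity~\eqref{EQ:weight-of-containing-cells} and Lemma~\ref{LM:convenient-lem} to get $w(A_i)\le h_i^{-1}w(A_{i+1})$, then telescopes. The paper presents the same single-step bound and says "iterating" rather than phrasing it as a formal induction, but the substance is identical.
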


\begin{proof}
	This is clear if $i=k$, so assume $i<k$. In this case, using 
	\eqref{EQ:weight-of-containing-cells}, the definition of heaviness,
	and
	Lemma~\ref{LM:convenient-lem}, we see that
	\begin{align*}
	w(A_i(f, \vec{h}))
	& = \sum_{x\in A_i(f,\vec{h})} w(x)
	=\sum_{x\in A_i(f,\vec{h})} (i+2)^{-1}w(X(i+1)_{\supseteq x})\\
	&\leq 
	h_i^{-1} {(i+2)}^{-1} \sum_{x\in A_i(f,\vec{h})} 
	w(A_{i+1}(f,\vec{h})_{\supseteq x})
	\leq h_i^{-1}w(A_{i+1}(f,\vec{h})).
	\end{align*}
	Iterating, we find that
	\[
	w(A_i(f,\vec{h}))\leq h_i^{-1}h_{i+1}^{-1}\cdots h_{k-1}^{-1}  w(A_k(f,h))=
	(\prod_{i\leq j<k}h_j^{-1})\ws{f}. \qedhere
	\]
\end{proof}

It can happen that the intersection of two $(f,\vec{h})$-heavy $i$-faces ($0\leq i\leq k$)
is an $(i-1)$-face  which is  not  $(f,\vec{h})$-heavy. We call such pairs \emph{$(f,\vec{h})$-bad}, or just \emph{bad}.
Provided $k<\dim X$, we also say that a $(k+1)$-face is $(f,\vec{h} )$-bad if it contains a bad pair
of faces. %of dimension $\geq j$.   
The set of $(f,\vec{h} )$-bad $(k+1)$-faces
is denoted $\Upsilon (f,h)$.

\begin{lem}\label{LM:bad-faces-bound}
Let $f\in C^k(X,\calF)$
with $k\in\{0,\dots,d-1\}$, 
and let $\vec{h},\vec{\alpha}  \in (0,1]^{\{-1, \dots,k-1\}}$.
Suppose  that 
$X$ is an $i$-local $\alpha_i$-skeleton expander for all $i\in\{-1,\dots,k-1\}$ (in particular,
$X=X_{\emptyset}$ is an $\alpha_{-1}$-skeleton expander).
Then
\[
w(\Upsilon (f,\vec{h}))\leq  \sum_{i=0}^{k}   {\textstyle {{k+2}\choose {i+2}}  }(i+1)  (\alpha_{i-1}+h_{i-1}) h_i^{-1}\cdots h_{k-1}^{-1}\ws{f}.
\]
\end{lem}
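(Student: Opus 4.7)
The plan is to bound $w(\Upsilon(f,\vec{h}))$ by decomposing $\Upsilon(f,\vec{h})$ according to the dimension of the intersection of the bad pair it witnesses, and then, for each such dimension, working in the relevant link to apply skeleton expansion.

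First I would write $\Upsilon(f,\vec{h})=\bigcup_{i=0}^{k}\Upsilon_i$, where $\Upsilon_i$ is the set of $(k+1)$-faces $y$ containing a bad pair $\{x_1,x_2\}$ of $i$-faces (so $x_1\cap x_2=:z\in X(i-1)-A_{i-1}(f,\vec{h})$). Further decompose $\Upsilon_i\subseteq \bigcup_{z\in X(i-1)-A_{i-1}} B_i(z)$, where $B_i(z)$ is the set of $(k+1)$-faces containing a bad pair with intersection exactly $z$. Translating to the link: fix such a $z$ and let $S_z=\{x-z\suchthat x\in A_i(f,\vec{h})_{\supseteq z}\}\subseteq X_z(0)$. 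By \eqref{EQ:weight-in-link} applied to $i$-faces above $z$, one has $w_{X_z}(S_z)=\frac{w_X(A_i(f,\vec{h})_{\supseteq z})}{(i+1)w_X(z)}$, and since $z\notin A_{i-1}$, the right-hand side is strictly less than $h_{i-1}$. A $(k+1)$-face $y\in B_i(z)$ corresponds in $X_z$ to a $(k+1-i)$-face $y-z$ containing an edge both of whose endpoints lie in $S_z$; equivalently, $y-z$ contains an edge of $X_z$ inside $E_{X_z}(S_z)$.

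Next I would apply the $(i-1)$-local $\alpha_{i-1}$-skeleton expansion hypothesis to the link $X_z$ (which has dimension $d-i$), giving
\[
w_{X_z}(E_{X_z}(S_z))\le w_{X_z}(S_z)^2+\alpha_{i-1}\,w_{X_z}(S_z)\le (h_{i-1}+\alpha_{i-1})\,w_{X_z}(S_z).
\]
I then translate the set of $(k+1-i)$-faces in $X_z$ containing an edge of $E_{X_z}(S_z)$ back to $X$ via \eqref{EQ:weight-in-link} and \eqref{EQ:weight-of-containing-cells}. The cleanest route is to sum over $(i+1)$-faces $x\supseteq z$ whose ``link-edge'' $x-z$ lies in $E_{X_z}(S_z)$, bounding
\[
w_X(B_i(z))\le \sum_{x\in P_i(z)} w_X\bigl(X(k+1)_{\supseteq x}\bigr),
\]
where $P_i(z)$ corresponds to $E_{X_z}(S_z)$ in the link, and then using \eqref{EQ:weight-of-containing-cells} and \eqref{EQ:weight-in-link} to rewrite the summands in terms of $w_X(z)\,w_{X_z}(E_{X_z}(S_z))$.

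Finally, I would sum over $z\in X(i-1)-A_{i-1}$ and invoke Lemma~\ref{LM:convenient-lem} to rewrite $\sum_{z\in X(i-1)} w_X(A_i(f,\vec{h})_{\supseteq z})=(i+1)\,w_X(A_i(f,\vec{h}))$, and then Lemma~\ref{LM:bound-heavy-set} to bound $w_X(A_i(f,\vec{h}))\le h_i^{-1}\cdots h_{k-1}^{-1}\ws{f}$. Summing the resulting estimates over $i\in\{0,\dots,k\}$ produces a bound of the claimed form
\[
w(\Upsilon(f,\vec{h}))\le \sum_{i=0}^{k} c_{k,i}\,(\alpha_{i-1}+h_{i-1})\,h_i^{-1}\cdots h_{k-1}^{-1}\ws{f},
\]
for explicit constants $c_{k,i}$. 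The main obstacle I foresee is purely combinatorial: producing exactly the constant $\binom{k+2}{i+2}(i+1)$ rather than the naive $\binom{k+2}{i}\binom{k+2-i}{2}$ that comes out of the union bound through edges of $X_z$. Matching the claimed constant requires either a sharper accounting --- counting each bad $(k+1)$-face once, e.g.\ through its containment of a distinguished $(i+1)$-face of $P_i(z)$ --- or rearranging the binomial identities to absorb the overcounting coming from the fact that a single $(k+1-i)$-face in $X_z$ may contain many edges of $E_{X_z}(S_z)$. This is the only nontrivial bookkeeping step; everything else is a direct application of the weight formulas and the skeleton-expansion hypothesis.
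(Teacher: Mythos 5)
Your proposal is correct and follows essentially the same route as the paper: fix the dimension $i$ of the bad pair and its intersection $z\in X(i-1)-A_{i-1}(f,\vec h)$, pass to the link $X_z$, apply $\alpha_{i-1}$-skeleton expansion to the set of heavy $i$-faces over $z$ (whose link-weight is $<h_{i-1}$ because $z$ is not heavy), and then union-bound $\Upsilon$ by the $(k+1)$-faces containing a bad $(i+1)$-face before summing via Lemma~\ref{LM:convenient-lem} and Lemma~\ref{LM:bound-heavy-set}. The bookkeeping obstacle you foresee is not actually there: the union bound runs over bad $(i+1)$-faces $e$ (your $P_i(z)$, the paper's $B(z)$) rather than over bad pairs inside each $(k+1)$-face, so \eqref{EQ:weight-of-containing-cells} gives the factor $\binom{k+2}{i+2}$ directly, Lemma~\ref{LM:convenient-lem} with $A=A_i(f,\vec h)\subseteq X(i)$ and $z\in X(i-1)$ gives the factor $\binom{i+1}{i}=i+1$, and any overcounting of a $(k+1)$-face that contains several bad $(i+1)$-faces only weakens the upper bound.
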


\begin{proof}
	Fix $i\in\{0,\dots,k\}$ and $z\in X(i-1)$. We call an $(i+1)$-face $e$
	$z$-bad if $e\supseteq z$ and the two $i$-faces lying between $z$ and $e$
	form a bad pair; denote by $B(z)$ the set of $z$-bad faces.
	Let $e\in B(z)$ and let    $x,y$ be the $i$-faces between $z$
	and $e$. Then $e-z$ is an edge connecting the $0$-faces $x-z$ and $y-z$ in the link $X_z$.
	Since both $x-z,y-z\in A_i(f,h)_z$ (because $x$ and $y$ are heavy),  
	our assumption that $X_z$ is an $\alpha_{i-1}$-skeleton expander
	implies that
	\[
	w_{ z}(B(z)_z)\leq  
	(w_z(A_i(f,h)_z)+\alpha_{i-1})w_z(A_i(f,h)_z),
	\]	
	where $w_z=w_{X_z}$.
	Since $z$ is not heavy ($x,y$ is a bad pair),
	$w(A_i(f,h)_{\supseteq z})\leq h_{i-1} w(X(i)_{\supseteq z})$,
	which means that 
	%Scaling both sides using \eqref{EQ:weight-in-link} gives
	$w_z(A_i(f,h)_z)\leq h_{i-1}w_z(X(i)_z)=h_{i-1}$, by \eqref{EQ:weight-in-link}.
	Thus,
	\[
	w_z(B(z)_z)\leq  (\alpha_{i-1}+h_{i-1})w_{ z}(A_i(f,h)_z).
	\]
	Scaling both sides using \eqref{EQ:weight-in-link},
	we get
	\[
	w(B(z))\leq (\alpha_{i-1}+h_{i-1})w (A_i(f,h)_{\supseteq z} ).
	%\leq
	%2h^{2^{k-i}} w (A_i(f,h)_{\supseteq z} ).
	\]
	
	Now, since every face in $\Upsilon(f,\vec{h})$ contains a face in $B(z)$
	for some $z$, we have
\begin{align*}
	w(&\Upsilon (f,\vec{h})) 
	 \leq  \sum_{i=0}^{k}\sum_{z\in X(i-1)}\sum_{e\in B(z)} w(X(k+1)_{\supseteq e}).
\end{align*}
	Using \eqref{EQ:weight-of-containing-cells}, Lemma~\ref{LM:convenient-lem} and Lemma~\ref{LM:bound-heavy-set}, the right hand side evaluates to
	\begin{align*}
	\sum_{i=j}^{k}\sum_{z\in X(i-1)}\sum_{e\in B(z)} &{\textstyle {{k+2}\choose {i+2}}}w(e) 
	 = \sum_{i=0}^{k}\sum_{z\in X(i-1)} {\textstyle {{k+2}\choose {i+2}}} w(B(z))\\
	&\leq 
	\sum_{i=0}^{k}
	\sum_{z\in X(i-1)} {\textstyle {{k+2}\choose {i+2}}} (\alpha_{i-1}+h_{i-1}) w (A_i(f,h)_{\supseteq z} )
	\\
	& =\sum_{i=0}^{k}     {\textstyle {{k+2}\choose {i+2}}  } (i+1) (\alpha_{i-1}+h_{i-1}) w (A_i(f,h) ) \\
	&\leq  \sum_{i=0}^{k}   {\textstyle {{k+2}\choose {i+2}}  }(i+1)  (\alpha_{i-1}+h_{i-1}) h_i^{-1}\cdots h_{k-1}^{-1}\ws{f}. \qedhere
	\end{align*}
\end{proof}

We continue to assume that  $f\in C^k(X,\calF)$ and $\vec{h}\in (0,1]^{\{-1,\dots,k-1\}}$.
Given two heavy faces $x,y$ with $x\subseteq y$, we say that $y$ \emph{$(f,\vec{h})$-descends}
to $x$, or $x$ \emph{$(f,\vec{h})$-descends} from $y$,
if there exists a sequence
$x=x_i\subseteq x_{i+1}\subseteq\dots\subseteq x_{\ell} =y$ with $x_j\in A_j(f,\vec{h})$
for all $j\in\{i,i+1,\dots,\ell\}$. We will simply say that $y$ descends to $x$ if there is no
risk of confusion.
We say that a $(k+1)$-face $y$   descends to a 
heavy face $x$ if $y$ contains a heavy $k$-face descending to $x$.
A  face will be called \emph{$(f,\vec{h})$-terminal},
or just terminal, if it is heavy and does not descend to any of its proper faces. 
It is clear that every heavy face descends to some terminal face. Beware that a terminal face 
may contain another terminal face.

\begin{lem}\label{LM:unique-terminal}
	Let $f\in C^k(X,\calF)$ and $\vec{h}\in (0,1]^{\{-1,\dots,k-1\}}$.
	Let $y\in X(k+1)-\Upsilon(f,\vec{h})$ and let
	$D(y)$ denote the set of heavy faces which $(f,\vec{h})$-descend from $y$.
	If $D(y)\neq\emptyset$, then there exists exactly one terminal face $z$
	descending from $y$.
	Moreover, every face in $D(y)$ descends to $z$. 
\end{lem}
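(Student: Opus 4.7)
The plan is to proceed by induction on $k=\dim y-1$, proving a uniform generalization that also handles heavy $j$-faces $y'$ (with descent in the original heavy-chain sense of \S\ref{subsec:heavy}): namely, that whenever $y'$ is not bad in the appropriate sense and its descent set is non-empty, it contains a unique terminal to which every element descends. The base case $k=0$ is a direct case-check on an edge $y$: the non-bad condition at $i=0$ forces that if both vertices of $y$ are heavy then $\emptyset$ must be heavy, and a unique terminal emerges in every situation; the heavy-$0$-face and empty-face versions of the generalized statement are equally immediate.

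For the inductive step on the $(k+1)$-face statement, the key decomposition is
\[
D(y)=\bigcup_{x_k\in H_k(y)}D^*(x_k),
\]
where $H_k(y):=D(y)\cap X(k)$ and $D^*(x_k)$ denotes the set of heavy faces to which $x_k$ descends via a heavy chain. Since $D(y)\neq\emptyset$ forces $H_k(y)\neq\emptyset$, this union is non-trivial. Each $x_k\in H_k(y)$ is a heavy $k$-face that inherits non-badness from $y$ (any bad pair inside $x_k$ would also be a bad pair inside $y$), so the induction hypothesis applied to $x_k$ yields a unique terminal $z_{x_k}\in D^*(x_k)$ to which every element of $D^*(x_k)$ descends.

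The crucial step is to show $z_{x_k}$ does not depend on the choice of $x_k$. Given two distinct $x_k,x_k'\in H_k(y)$, their intersection $w=x_k\cap x_k'$ is a $(k-1)$-face of $y$ (distinct $k$-faces of a $(k+1)$-face share exactly $k$ vertices), and since $y\notin\Upsilon(f,\vec h)$ the bad-pair condition at $i=k$ forces $w$ to be heavy. The trivial chain $w\subseteq x_k$ places $w$ inside $D^*(x_k)$, so $w$ descends to $z_{x_k}$; symmetrically to $z_{x_k'}$. Applying the induction hypothesis to the heavy $(k-1)$-face $w$ (non-bad by inheritance), $D^*(w)$ has a unique terminal $\tilde z$. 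Both $z_{x_k}$ and $z_{x_k'}$ lie in $D^*(w)$ and remain terminal there --- ``terminal'' is an intrinsic property of a face, since descent to a proper heavy face is a local property of chains --- so by uniqueness $z_{x_k}=\tilde z=z_{x_k'}$. Setting $z:=z_{x_k}$ then gives the unique terminal in $D(y)$, because every element of $D(y)$ sits in some $D^*(x_k)$ and hence descends to $z$.

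The main obstacle I anticipate is the bookkeeping required to formulate the inductive hypothesis uniformly across $(k+1)$-faces and heavy $j$-faces of lower dimension, and to verify that the non-bad property propagates to every subface encountered (which it does, since heavy $i$-faces of a subface of $y$ are also heavy $i$-faces of $y$, so pairwise intersections and their heaviness are preserved). Edge cases such as $|H_k(y)|=1$, where there is nothing to compare, and $H_{j-1}(x_k)=\emptyset$, where $x_k$ is itself the terminal, are absorbed by the induction.
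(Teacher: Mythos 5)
Your argument is correct, but it is structured quite differently from the paper's. You prove a strengthened statement (unique terminal plus universal descent for every non-bad heavy subface) by strong induction on the dimension of the face, and you glue the terminals of the codimension-one heavy subfaces together using the heaviness of their pairwise intersections, which is exactly where $y\notin\Upsilon(f,\vec h)$ enters. The paper instead avoids any induction on dimension or strengthened hypothesis: it fixes one terminal $z$ and an arbitrary $z'\in D(y)$, takes the two witnessing chains $z=x_r\subseteq\dots\subseteq x_k\subseteq y$ and $z'=x'_s\subseteq\dots\subseteq x'_k\subseteq y$, and shows by a double downward induction on $(i,j)$ that every intersection $x_i\cap x'_j$ is heavy (again using $y\notin\Upsilon$ to rule out bad pairs at each step); the sequence $z\cap x'_{k+1}\supseteq\dots\supseteq z\cap z'$ of heavy faces then shows $z$ descends to $z\cap z'$, terminality forces $z\subseteq z'$, and symmetrically $z'$ descends to $z$. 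The paper's route is shorter and works directly with the two chains at hand; yours isolates the combinatorial content into a cleaner local-gluing principle at the cost of formulating and carrying a generalized inductive hypothesis (and your phrasing ``induction on $k=\dim y-1$'' should really be ``strong induction on the dimension of the face,'' since $k$ is fixed by $f$). Both hinge on the same key fact, that two heavy codimension-one subfaces of a non-bad face meet in a heavy face, so I would count this as a genuinely different but valid proof.
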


\begin{proof}
	Since $D(y)\neq\emptyset$, the face $y$ must descend to some terminal face $z$.
	In  order to prove the lemma, it is enough to show that
	every $z'\in D(y)$  descends to $z$.

	By definition, there
	are   sequences  $z=x_r\subseteq \dots \subseteq x_k \subseteq y$
	and $z'=x'_s\subseteq \dots\subseteq x'_k\subseteq y$ such
	that $x_j$ and $x'_j$ are in $A_j(f,\vec{h})$ for all $j$. 
	Set $x_{k+1}=x'_{k+1}=y$.
	We claim that $x_i\cap x'_j$ is heavy for all $i\in\{r,\dots,k,k+1\}$
	and $j\in\{s,\dots,k,k+1\}$. We show this   by decreasing induction
	on $i$ and $j$. The claim is clear if $i=k+1$ or $j=k+1$, so assume
	$i,j\leq k$.
	By the induction hypothesis,   $x_{i+1}\cap x'_j$
	and $x_i\cap x'_{j+1}$ are both heavy.
	If $x_{i+1}\cap x'_j=x_i\cap x'_j$, or $x_i\cap x'_{j+1}=x_i\cap x'_j$,
	then $x_i\cap x'_j$ is also heavy.
	Otherwise, the dimension of both $x_{i+1}\cap x'_j$
	and $x_i\cap x'_{j+1}$ is $\dim (x_i\cap x'_j)+1$,
	and $(x_{i+1}\cap x'_j)\cap (x_i\cap x'_{j+1})=x_i\cap x'_j$.
	Since $y\notin \Upsilon(f,\vec{h})$,
	the face  $x_i\cap x'_j$ must be    heavy as well, hence our claim.
	
	To finish, 
	consider the sequence of heavy faces $z=z\cap x'_{k+1}\supseteq z\cap x'_k\supseteq \dots
	\supseteq z\cap x'_s=z\cap z'$.
	The difference between
	the dimensions of every two consecutive faces in this sequence is either $0$ or $1$,
	so $z$ descends to $z\cap z'$. Since $z$ is terminal, we must have $z=z\cap z'$,
	or rather, $z\subseteq z'$. By a similar argument, $z'$ descends to $z'\cap z=z$,
	which is what we want.
\end{proof}

\begin{lem}\label{LM:terminal-ineq}
	Let $f\in C^k(X,\calF)$ with $k\in\{0,\dots,d-1\}$, and let $\vec{h}\in (0,1]^{\{-1,\dots,k-1\}}$.
	Let $z$ be an $(f,\vec{h})$-terminal   face,   and let     
	\begin{align*}
	L(z)&= \{x\in X(k)\suchthat\text{$x$ $(f,\vec{h})$-descends to $z$}\},\\
	L'(z)&= \{y\in X(k+1)\suchthat\text{$y$ $(f,\vec{h})$-descends to $z$}\}.
	\end{align*}
	Suppose
	that $f$ is locally minimal at $z$ (see \S\ref{subsec:loc-min}),   $(X_z,\calF_z)$ is an $\veps$-coboundary
	expander in dimension $k-\dim z-1$ and $\calF(y)\neq 0$ for all $y\in X(k+1)_{\supseteq z}$.
	Then
	\[
	\frac{ (k+2)\veps}{k+1-\dim z}w(L(z))\leq 
	w([\supp (d_0 f)\cup \Upsilon(f,\vec{h})]\cap L'(z))
%	w( \supp (d_0 f)-\Upsilon(f,\vec{h})]\cap L(z))+
%	w(\Upsilon(f,\vec{h})\cap L'(z)).
	\]
\end{lem}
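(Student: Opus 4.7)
The plan is to apply coboundary expansion of $(X_z,\calF_z)$ to a carefully truncated version of the restriction $f_z$. Setting $i:=\dim z$, the restriction $f_z\in C^{k-i-1}(X_z,\calF_z)$ is minimal by Proposition~\ref{PR:locally-minimal-props}(i), since $f$ is locally minimal at $z$. I will then introduce $\bar f_z\in C^{k-i-1}(X_z,\calF_z)$ defined by $\bar f_z(x')=f(x'\cup z)$ if $x'\cup z\in L(z)$, and $\bar f_z(x')=0$ otherwise. Since $\supp\bar f_z\subseteq \supp f_z$ and they agree on $\supp\bar f_z$, Lemma~\ref{LM:restrict-min-coch} shows $\bar f_z$ is still minimal. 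Observe also that $(\bar f_z)^z$ coincides with the cochain $\tilde f\in C^k(X,\calF)$ given by $\tilde f(x)=f(x)$ for $x\in L(z)$ and $\tilde f(x)=0$ otherwise; this identification will let me move between $X_z$ and $X$ via Lemma~\ref{LM:f-eta-basic-props}.

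Next I would apply the $\veps$-coboundary expansion of $(X_z,\calF_z)$ in dimension $k-i-1$ to the minimal cochain $\bar f_z$. The hypothesis $\calF(y)\neq 0$ for $y\in X(k+1)_{\supseteq z}$ forces $m_{X_z}(k-i)=1$, and $m_{X_z}(k-i-1)\leq 1$ holds trivially, so the expansion yields $\ws{d_{k-i-1}\bar f_z}_{X_z}\geq \veps\,\ws{\bar f_z}_{X_z}$. Translating both norms back to $X$-weights via \eqref{EQ:weight-in-link} gives $\ws{\bar f_z}_{X_z}=\schoose{k+1}{i+1}^{-1}w_X(z)^{-1}w_X(L(z))$, and an analogous formula on the coboundary side with $\schoose{k+2}{i+1}^{-1}$ in place of $\schoose{k+1}{i+1}^{-1}$. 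Using $\schoose{k+2}{i+1}/\schoose{k+1}{i+1}=(k+2)/(k+1-i)$, the desired inequality follows once one establishes the support containment
\[
\supp d_{k-i-1}\bar f_z \;\subseteq\; \{\,y-z : y\in [\supp(d_k f)\cup \Upsilon(f,\vec h)]\cap L'(z)\,\}.
\]

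Proving this containment is the main obstacle. Let $y'\in\supp d_{k-i-1}\bar f_z$ and set $y:=y'\cup z$; picking $x'\subseteq y'$ with $\bar f_z(x')\neq 0$ shows $x:=x'\cup z\in L(z)$ with $x\subseteq y$, hence $y\in L'(z)$. Suppose now for contradiction that $y\notin \Upsilon(f,\vec h)\cup \supp d_k f$. By Lemma~\ref{LM:f-eta-basic-props}, $(d_{k-i-1}\bar f_z)(y')\neq 0$ translates to $(d_k\tilde f)(y)\neq 0$. The crucial observation is that $y\in L'(z)$ ensures $D(y)\neq\emptyset$ in the sense of Lemma~\ref{LM:unique-terminal}; together with $y\notin\Upsilon(f,\vec h)$, that lemma forces every heavy face descending from $y$ to descend to the unique terminal face, which here must be $z$. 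In particular, every $x\in X(k)_{\subseteq y}$ with $f(x)\neq 0$ lies in $L(z)$, so $\tilde f$ and $f$ agree on all $x\in X(k)_{\subseteq y}$, yielding $(d_k\tilde f)(y)=(d_k f)(y)=0$, contradicting our assumption.
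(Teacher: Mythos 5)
Your proposal is correct and follows essentially the same route as the paper's proof: truncate $f$ to the faces descending to $z$, use local minimality plus Lemma~\ref{LM:restrict-min-coch} to get a minimal cochain in the link, apply coboundary expansion of $(X_z,\calF_z)$, convert norms via \eqref{EQ:weight-in-link}, and establish the support containment through Lemma~\ref{LM:unique-terminal}. The only (cosmetic) difference is that you phrase the support containment downstairs in $X_z$ while the paper pulls $d_{k-i-1}(g_z)$ back to $X$ via $(\cdot)^z$ before comparing supports; the computations are the same.
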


\begin{proof}
	Write $i=\dim z$ and fix some ordering on the vertices of $z$. Define $g\in C^k(X,\calF)$ by
	\[
g(x)=\left\{\begin{array}{ll}
f(x) & x\in L(z) \\
0 & \text{otherwise},
\end{array}\right. 
\]
where  $x\in X(k)_{\ord}$. Then $(g_z)^z=g$ (notation as in \S\ref{subsec:sheaf-at-link}). 
By Proposition~\ref{PR:locally-minimal-props}(i),
$f_z\in C^{k-i-1}(X_z,\calF_z)$ is minimal, and by Lemma~\ref{LM:restrict-min-coch},
so is $g_z$. 
Write $\|\cdot\|_{z}$ for the weighted support norm on $\calF_z$ and $m_z$
for its associated mass function. Since $\calF(y)\neq 0$ for all $y\in X(k+1)_{\supseteq z}$,
we have $m_z(k-i)=1$.
Our assumption that $(X_z,\calF_z)$ is an $\veps$-coboundary expander in dimension $k-i-1$,
therefore implies that
\[
\|d_{k-i-1} g_z\|_z\geq \|d_{k-i-1} g_z\|_{z}m_z(k-i-1)\geq \veps\|g_z\|_{z}m_z(k-i)=\veps\|g_z\|_z
.\]
By Lemma~\ref{LM:f-eta-basic-props},
$(d_{k-1-1} (g_z))^z=d_k ((g_z)^z)=d_k g$.
Using this and~\eqref{EQ:weight-in-link}, we find that
\begin{align}\label{EQ:g-norm-bound}
\ws{d_k g}&\geq \schoose{k+2}{i+1} \schoose{k+1}{i+1}^{-1}\veps \ws{g}=
\frac{ (k+2)\veps}{k+1-i}w(L(z)).
\end{align}

Let $y\in \supp( d_k g)$. By the definition of $g$, the face $y$ descends
to $z$, that is, $y\in L'(z)$.
If $y\notin \Upsilon(f,\vec{h})$,
then by Lemma~\ref{LM:unique-terminal}, every face descended from
$y$ also descends to $z$. In particular, every $x\in (\supp f)\cap X(k)_{\subseteq y}$
descends to $z$, and thus belongs to $L(z)=\supp g$. It follows
that $(d_kf)(y)=(d_kg)(y)\neq 0$, so $y\in \supp (d_k f)$.
This shows that
\[
\supp (d_k g)\subseteq [\supp (d_k f)\cup \Upsilon (f,\vec{h})]\cap L'(z).
\]
Combining this with \eqref{EQ:g-norm-bound} gives the lemma.
\end{proof}

\begin{notation}\label{NT:Upsilon-crit-value}
	We call a collection of subsets $E\subseteq P(\{1,\dots,n\})$
	an $n$-\emph{vine} if:
	\begin{enumerate}[label=(\arabic*)]
		\item $\{1,\dots,n\}\in E$,
		\item Every $s\in E$ admits a sequence 
		$s=s_i\subseteq s_{i+1}\subseteq\dots\subseteq s_{n}=\{1,\dots,n\}$
		such that $s_j\in E$ and $|s_j|=j$ for all $j$.
	\end{enumerate}
	We say that $s\in E$ is terminal if no maximal subset of $s$ is in $E$.
	(It is possible for non-maximal subsets of $s$ to be in $E$.)
	Denote by $T(E)$ the terminal subsets in $E$.
	Finally, set
	\[
	U(n)=\max\{\#T(E)\where \text{$E $ is an $n$-vine}\}.
	\]
	Direct computation shows that $U(1)=1$, $U(2)=2$ and $U(3)=3$.
	In general, we have $ \schoose{n}{\floor{n/2}}\leq U(n)\leq 2^{n }-1$.\footnote{
		The number $U(n)$ is larger than $\schoose{n}{\floor{n/2}}$ for large $n$.
		Indeed, assuming $n=4k$, consider the $n$-vine $E=\{s\subseteq\{1,\dots,n\}\suchthat
		|s|\geq 2k\}\cup\{s\subseteq \{1,\dots,2k\}\suchthat |s|\geq k\}$.
		It routine to check that $T(E)=\schoose{n}{2k}-(2k)^2-1+\schoose{2k}{k}$,
		which is larger than $\schoose{n}{\floor{n/2}}$ as soon as $n\geq 16$.
	}
\end{notation}

\begin{lem}\label{LM:locally-min-norm-bound}
	Let $f\in C^k(X,\calF)$ be locally minimal (see~\S\ref{subsec:loc-min}), 
	let $\vec{h}\in (0,1]^{\{-1,\dots,k-1\}}$
	and let $\veps_{0},\dots,\veps_{k }\in\R_+$.
	Suppose that $\calF(x)\neq 0$ for all $x\in X(k+1)$,
	and that 
	$(X,\calF)$ is an $i$-local $\veps_i$-coboundary expander in dimension
	$k$ for every $i\in\{0,\dots,k\}$.	
	If the empty face of $X$ is not $(f,\vec{h})$-heavy, then
	\[
	\min\left\{\frac{(k+2)\veps_i}{k+1-i}\,\bigg|\, i\in\{0,\dots,k\}\right\}\ws{f}
	\leq \ws{d_0 f}+U(k+2)w(\Upsilon(f,\vec{h})).
	\]
\end{lem}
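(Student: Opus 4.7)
My plan is to sum the inequality of Lemma~\ref{LM:terminal-ineq} over all $(f,\vec{h})$-terminal heavy faces of $X$, bound the left-hand side below by $\veps\cdot\ws{f}$ where $\veps:=\min_{i}\frac{(k+2)\veps_i}{k+1-i}$, and bound the right-hand side above by $\ws{d_k f}+U(k+2)\,w(\Upsilon(f,\vec{h}))$ using Lemma~\ref{LM:unique-terminal} for non-bad $(k+1)$-faces together with a vine-counting argument for bad ones.

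First, because the empty face is not $(f,\vec{h})$-heavy by assumption, every $(f,\vec{h})$-terminal heavy face has dimension in $\{0,1,\dots,k\}$, and every $x\in\supp f=A_k(f,\vec{h})$ descends to some terminal heavy face via a maximal descending chain. Letting $T$ denote the set of terminal heavy faces, we therefore have $\supp f\subseteq\bigcup_{z\in T} L(z)$, so $\sum_{z\in T} w(L(z))\geq\ws{f}$. For each $z\in T$, the hypotheses of Lemma~\ref{LM:terminal-ineq} hold: $f$ is locally minimal at $z$ by assumption, $(X_z,\calF_z)$ is an $\veps_{\dim z}$-coboundary expander in dimension $k-\dim z-1$ by hypothesis~(1), and $\calF(y)\neq 0$ for all $y\in X(k+1)_{\supseteq z}$ since $\calF(y)\neq 0$ for every $y\in X(k+1)$. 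Thus
\[
\tfrac{(k+2)\veps_{\dim z}}{k+1-\dim z}\,w(L(z))\;\leq\; w\bigl([\supp(d_k f)\cup\Upsilon(f,\vec{h})]\cap L'(z)\bigr).
\]
Summing over $z\in T$ and using the definition of $\veps$, the left-hand side is at least $\veps\,\ws{f}$.

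For the right-hand side, the goal is to bound, for each $y\in X(k+1)$, the number of terminal $z\in T$ for which $y\in L'(z)$. If $y\notin\Upsilon(f,\vec{h})$, Lemma~\ref{LM:unique-terminal} gives at most one such $z$. If $y\in\Upsilon(f,\vec{h})$, the key observation is that $D(y)\cup\{y\}$, viewed as a family of subsets of the $(k+2)$-element vertex set of $y$, is a $(k+2)$-vine in the sense of Notation~\ref{NT:Upsilon-crit-value}: axiom~(1) is trivial, and axiom~(2) is witnessed by the descending chain of heavy faces appearing in the definition of descent. Moreover, the heavy-terminal sub-faces of $y$ are precisely the non-$y$ vine-terminal elements of $D(y)\cup\{y\}$, hence number at most $U(k+2)$. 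Splitting the right-hand side according to whether $y$ is bad yields
\[
\sum_{z\in T} w\bigl([\supp(d_k f)\cup\Upsilon(f,\vec{h})]\cap L'(z)\bigr)\;\leq\; w(\supp(d_k f))+U(k+2)\,w(\Upsilon(f,\vec{h}))
\]
which equals $\ws{d_k f}+U(k+2)\,w(\Upsilon(f,\vec{h}))$. Combining this with the lower bound $\veps\,\ws{f}$ on the same sum gives the lemma.

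The main obstacle is the vine-counting step: verifying that $D(y)\cup\{y\}$ is a $(k+2)$-vine and that its vine-terminal elements apart from $y$ itself coincide with the heavy-terminal faces descending from $y$. Both claims follow from the definitions once one translates between face-inclusion chains and the subset axioms of a vine, so this is mainly careful bookkeeping. The boundary case $\dim z=k$ in Lemma~\ref{LM:terminal-ineq} (where local minimality is vacuous) is also handled implicitly in that lemma via the coboundary expansion of $(X_z,\calF_z)$ in dimension $-1$; the remaining arithmetic is routine.
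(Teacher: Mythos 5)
Your proposal is correct and follows essentially the same route as the paper's proof: sum Lemma~\ref{LM:terminal-ineq} over the terminal faces (which are nonempty since the empty face is not heavy), bound the left side below by $\veps\ws{f}$ because every face of $\supp f$ descends to a terminal face, and bound the right side by splitting $(k+1)$-faces into non-bad ones (unique terminal face, via Lemma~\ref{LM:unique-terminal}) and bad ones (at most $U(k+2)$ terminal faces, via the vine structure of the descent poset). Your extra bookkeeping identifying the $(f,\vec{h})$-terminal faces in $D(y)$ with the vine-terminal elements of $D(y)\cup\{y\}$ is a correct elaboration of a step the paper leaves implicit.
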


\begin{proof}
	Denote by $T$ the set of  $(f,\vec{h})$-terminal faces.
	Given $z\in T$, define $L(z)$ and $L'(z)$
	as in Lemma~\ref{LM:terminal-ineq}. 
	We abbreviate 	$\Upsilon(f,\vec{h})$ to $\Upsilon$.
	
	Let $z\in T$.
	By assumption,
	$z\neq \emptyset$, so   $f$ is locally minimal at $z$ and
	$(X_z,\calF_z)$ is an $\veps_{\dim z}$-coboundary expander in dimension $k-\dim z-1$. 
	Lemma~\ref{LM:terminal-ineq} now tells us that 	
	\begin{align*}
		\frac{ (k+2)\veps}{k+1-\dim z}w(L(z))
		& \leq  
	w([\supp (d_0 f)\cup \Upsilon ]\cap L'(z))\\
		&= w([\supp(d_0 f)-\Upsilon]\cap L'(z))+
		w(\Upsilon\cap L'(z)).
	\end{align*}
	Summing over all $z\in T$, we get
	\begin{align}\label{EQ:sum-over-terminal-faces}
		\sum_{z\in T} \frac{ (k+2)\veps}{k+1-\dim z}w(L(z))
		\leq 
		\sum_{z\in T}
		w([\supp (d_0 f)-\Upsilon ]\cap L'(z))+
		\sum_{z\in T}
		w(\Upsilon\cap L'(z)).
	\end{align}
	
	Since every face in $\supp f$ descends to some terminal face,
	the left hand side of \eqref{EQ:sum-over-terminal-faces}
	is at least
	\[
	\min\left\{\frac{(k+2)\veps_i}{k+1-i}\,\bigg|\, i\in\{0,\dots,k\}\right\}\|f\|.
	\]
	As for the right hand side of \eqref{EQ:sum-over-terminal-faces},
	by Lemma~\ref{LM:unique-terminal}, every face in $y\in \supp(d_0 f)-\Upsilon$
	descends to a unique terminal face. Thus,
	\[
	\sum_{z\in T}
		w([\supp (d_0 f)-\Upsilon ]\cap L'(z))= w(\supp d_0 f-\Upsilon)\leq \|d_0 f\|.
	\]
	If $y\in \Upsilon$, then upon identifying
	$y$ with $\{1,\dots,k+2\}$, the set of   faces to which
	$y$ descends is a $(k+2)$-vine in the sense Notation~\ref{NT:Upsilon-crit-value}.
	Thus, the number of terminal faces to which $y$ descends is at most $U(k+2)$,
	meaning that
	\[
	\sum_{z\in T}
		w(\Upsilon\cap L'(z))\leq U(k+2)w(\Upsilon).
	\]
	Plugging these observations into \eqref{EQ:sum-over-terminal-faces}
	gives the lemma.
\end{proof}

\subsection{Proof of Theorem~\ref{TH:expansion-of-small-sets}}
\label{subsec:proof-small-set-exp}

We will deduce Theorem~\ref{TH:expansion-of-small-sets}
from the following more general theorem.

\begin{thm}\label{TH:expansion-of-small-sets-finer}
	Let   $k\in\N\cup\{0\}$,
	$\alpha_0,\dots,\alpha_{k-1},\veps_0,\dots,\veps_{k}\in \R_+$,
	and put
	\[
	\veps:=\min\left\{\frac{(k+2)\veps_i}{k+1-i}\,\bigg|\, i\in\{0,\dots,k\}\right\}.
	\]
	Suppose that   there are $h_{-1},\dots,h_{k-1}\in (0,1]$ such that:
	\begin{equation}\label{EQ:base-ineq}
	U(k+2)
	\sum_{i=0}^{k}   {\textstyle {{k+2}\choose {i+2}}  }(i+1) \frac{\alpha_{i-1}+h_{i-1}}{h_i\cdots h_{k-1}}
	<\veps,
	\end{equation}
	where $U(k+2)$ is as in Notation~\ref{NT:Upsilon-crit-value}.
	Then there exist  $\beta,\gamma\in\R_+$ such that the following hold:
	Let $(X,\calF)$ be a sheaved $d$-complex, where $d\geq k+1$. Assume that
	\begin{enumerate}
	\item[(1)] $(X,\calF)$ is an $i$-local $\veps_{i}$-coboundary 	expander
	in dimension $k$ for all $i\in\{0,\dots,k\}$.
	\item[(2)] 
	$X$ is an $i$-local $\alpha_i$-skeleton expander for all $i\in\{-1,\dots,k-1\}$.
%	$X_z$ is an $\alpha_{\dim z}$-skeleton expander
%	for all $z\in X(-1)\cup\dots \cup X(k-1)$,
	\item[(3)] $\calF(x)\neq 0$ for all $x\in X(k+1)$.
	\end{enumerate}
	Then $(X,\calF)$ $\beta$-expands $\gamma$-small
	locally minimal $k$-cochains.
	In  fact, one can take $\gamma=h_{-1}\cdots h_{k-1}$ and 
	$\beta$ to be the difference between the right hand side
	and the left hand side of \eqref{EQ:base-ineq}.
\end{thm}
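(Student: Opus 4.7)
The plan is to assemble the three technical lemmas from \S\ref{subsec:heavy} --- Lemma~\ref{LM:bound-heavy-set}, Lemma~\ref{LM:bad-faces-bound}, and Lemma~\ref{LM:locally-min-norm-bound} --- into a single chain of inequalities, with the smallness threshold $\gamma = h_{-1}h_0\cdots h_{k-1}$ engineered precisely so that a $\gamma$-small cochain cannot render the empty face $(f,\vec{h})$-heavy, which is the trigger needed to invoke Lemma~\ref{LM:locally-min-norm-bound}.

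More concretely, I would fix some $\vec{h} = (h_{-1},\dots,h_{k-1}) \in (0,1]^{\{-1,\dots,k-1\}}$ realizing the hypothesis \eqref{EQ:base-ineq}, set $\gamma := h_{-1}h_0\cdots h_{k-1}$, and let $\beta$ be the (strictly positive) difference between the right-hand and left-hand sides of \eqref{EQ:base-ineq}. Given a locally minimal $f \in C^k(X,\calF)$ with $\ws{f} < \gamma$, the argument proceeds in three steps. First, applying Lemma~\ref{LM:bound-heavy-set} with $i = -1$ yields
\[
w(A_{-1}(f,\vec{h})) \;\leq\; (h_{-1}h_0\cdots h_{k-1})^{-1} \ws{f} \;<\; 1 \;=\; w(\emptyset),
\]
so $\emptyset \notin A_{-1}(f,\vec{h})$, i.e.\ the empty face is not heavy. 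Second, since $f$ is locally minimal, $\calF$ is nonvanishing on $X(k+1)$ by hypothesis~(3), and $(X,\calF)$ is $i$-locally $\veps_i$-coboundary expanding in dimension $k$ by hypothesis~(1), Lemma~\ref{LM:locally-min-norm-bound} is applicable and gives
\[
\veps\, \ws{f} \;\leq\; \ws{d_k f} + U(k+2)\,w(\Upsilon(f,\vec{h})).
\]
Third, hypothesis~(2) on local skeleton expansion feeds Lemma~\ref{LM:bad-faces-bound}, producing
\[
w(\Upsilon(f,\vec{h})) \;\leq\; \sum_{i=0}^{k}\binom{k+2}{i+2}(i+1)\,(\alpha_{i-1}+h_{i-1})\,h_i^{-1}\cdots h_{k-1}^{-1}\,\ws{f}.
\]
Substituting the third inequality into the second and rearranging isolates $\ws{d_k f} \geq \beta \ws{f}$, with $\beta$ exactly the slack in \eqref{EQ:base-ineq}.

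There is no serious obstacle remaining in this finer theorem: all the combinatorial content is packaged into the three lemmas of \S\ref{subsec:heavy}, whose proofs constitute the bulk of the technical work. The only delicate point is the bookkeeping --- ensuring the $U(k+2)$ factor that enters from counting terminal descendants of bad $(k+1)$-faces in the proof of Lemma~\ref{LM:locally-min-norm-bound} exactly matches the $U(k+2)$ appearing in \eqref{EQ:base-ineq}, so that the hypothesis is sharp enough to guarantee a positive $\beta$. The remaining work in the paper (deducing Theorem~\ref{TH:expansion-of-small-sets} and then Theorem~\ref{TH:cosystolic-exp-from-links}) will then be a matter of choosing $\vec{h}$ cleverly --- presumably $h_i$ geometrically decreasing in $i$ --- so as to simultaneously satisfy \eqref{EQ:base-ineq} and yield the explicit constants $(\veps/(k+1)^22^{2k+6})^{2^{k+1}-1}$ advertised in Theorem~\ref{TH:expansion-of-small-sets}, and to convert the local $[-1,\lambda]$-spectral expansion hypothesis into the required $\alpha_i$-skeleton expansion bounds via the Weighted Expander Mixing Lemma.
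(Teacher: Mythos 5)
Your proposal is correct and follows exactly the same route as the paper's proof: the empty face is shown not to be $(f,\vec{h})$-heavy via Lemma~\ref{LM:bound-heavy-set}, then Lemma~\ref{LM:locally-min-norm-bound} and Lemma~\ref{LM:bad-faces-bound} are chained and the inequality is rearranged to isolate $\ws{d_k f}\geq\beta\ws{f}$, with $\beta$ and $\gamma$ chosen as you describe. Your closing remarks about the subsequent choice of geometrically decreasing $h_i$ also match how the paper deduces Theorem~\ref{TH:expansion-of-small-sets}.
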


\begin{proof}
	Put $\vec{h}=(h_{-1},\dots,h_{k-1})$ and define $\beta$ 
	and $\gamma$ as in the theorem.
	Let $f\in C^k(X,\calF)$ be a locally minimal $k$-cochain such that
	$\ws{f}<\gamma$. We need to prove that $\|d_0f\|\geq \beta\|f\|$.
	
	We first claim that the empty face is not $(f,\vec{h})$-heavy.
	Indeed, by Lemma~\ref{LM:bound-heavy-set},
	$w(A_{-1}(f,\vec{h}))\leq (h_{-1}h_0\cdots h_{k-1})^{-1}\|f\|<
	(h_{-1}h_0\cdots h_{k-1})^{-1}\gamma=1$.
	Since the empty face has weight $1$, this means that $A_{-1}(f,\vec{h})=\emptyset$,
	so the empty face is not heavy.
	
	We may now apply Lemma~\ref{LM:locally-min-norm-bound}, which tells us that
	\[
	\veps\|f\|\leq \|d_0f\|+U(k+2)w(\Upsilon(f,\vec{h})).
	\]
	By Lemma~\ref{LM:bad-faces-bound}, this means that
	\[
	\veps\|f\|\leq \|d_0 f\|+U(k+2)\sum_{i=0}^{k}   {\textstyle {{k+2}\choose {i+2}}  }(i+1) \frac{\alpha_{i-1}+h_{i-1}}{h_i\cdots h_{k-1}}\|f\|,
	\]
	and by rearranging, we get
	$
	\beta \|f\|\leq \|d_0 f\| 
	$.
\end{proof}

\begin{proof}[Proof of Theorem~\ref{TH:expansion-of-small-sets}]
	Assumption (2) and Oppenheim's Trickling Down Theorem
	\cite[Theorem~1.4]{Oppenheim_2015_vanishing_of_cohomology} imply that
	for every $z\in X$ of dimension $i\in\{-1,\dots,d-2\}$, the weighted
	underlying graph of $X_z$
	is a $[-1,\frac{\lambda}{1-(d-2-i)\lambda}]$-spectral expander, and thus $X_z$
	is a $\frac{\lambda}{1-(d-2-i)\lambda}$-skeleton expander (see \S\ref{subsec:skeleton}).
	By the assumptions on $\lambda$, we have
	\[
	\frac{\lambda}{1-(d-2-i)\lambda}\leq \frac{\lambda}{1-(d-2+1)\frac{1}{d}}=
	d\lambda \leq  \left(\frac{\veps}{ (k+1)^2 2^{2k+6}}\right)^{2^{k }}.
	\]
	Setting $\alpha_i:=(\frac{\veps}{ (k+1)^2 2^{2k+6}})^{2^{k-1-i}}$ for $i\in\{-1,\dots,k-1\}$,
	we conclude that $X $ is an $i$-local $\alpha_i$-skeleton expander
	for all $i\in\{-1,\dots,k-1\}$.

	We now apply Theorem~\ref{TH:expansion-of-small-sets}
	with  
	$h_i= \alpha_i$. %=(\frac{\veps}{ (k+1)^2 2^{2k+6}})^{2^{k-1-i}}$ ($i\in\{-1,\dots,k-1\}$).
	To see that the inequality \eqref{EQ:base-ineq} holds, note
	that for all $i\in\{0,\dots,k\}$, we have 
	\[\frac{\alpha_{i-1}+h_{i-1}}{h_i\cdots h_{k-1}}
	=2\left(\frac{\veps}{ (k+1)^22^{2k+6}}\right)^{2^{k-i}-2^{k-1-i}-\dots-2^0}=\frac{ \veps}{ (k+1)^22^{2k+5}}.
	\]
	Thus,
	\begin{align*}
	&U(k+2)
	\sum_{i=0}^{k}   {\textstyle {{k+2}\choose {i+2}}  }(i+1) \frac{\alpha_{i-1}+h_{i-1}}{h_i\cdots h_{k-1}}
	\leq  
	2^{k+2}
	\sum_{i=0}^{k}2^{k+2}(k+1)\frac{  \veps}{ (k+1)^22^{2k+5}}=\frac{\veps}{2}<\veps.
	\end{align*}
	It also follows that the constants
	$\beta$ and $\gamma$ of Theorem~\ref{TH:expansion-of-small-sets} satisfy $\beta\geq \veps- \frac{\veps}{2}=\frac{\veps}{2}$
	and   $\gamma=h_{-1} h_0 \cdots h_{k-1} =
	(\frac{\veps}{(k+1)^22^{2k+6}})^{2^k+2^{k-1}+\dots+2^0}=(\frac{\veps}{ (k+1)^22^{2k+6}})^{2^{k+1}-1}$.
	We conclude
	that $(X,\calF)$ $\frac{\veps}{2}$-expands
	$(\frac{\veps}{ (k+1)^22^{2k+6}})^{2^{k+1}-1}$-small $k$-cochains.
\end{proof}

In the remainder of this subsection, we analyze
the solubility of the inequality \eqref{EQ:base-ineq} in the cases $k=0$ and $k=1$,
deriving specialized versions of 
Theorem~\ref{TH:expansion-of-small-sets-finer}.
We also address the asymptotic behavior of the
general case. To begin, we make the following remark.

\begin{remark}
	When solving \eqref{EQ:base-ineq}, we may assume that 
	$h_{-1},\dots,h_{k-1}$ live in $\R_+$, rather than $(0,1]$,
	because  
	\eqref{EQ:base-ineq} and
	assumption (1) of Theorem~\ref{TH:expansion-of-small-sets-finer} force  $h_{-1},\dots,h_{k-1}\leq 1$.
	This can be seen by decreasing induction on $i$. 
	For $i=k-1$, the inequality \eqref{EQ:base-ineq} 
	and Remark~\ref{RM:comments-on-local-to-global}(ii)	
	imply that $2(k+1)h_{k-1}\leq U(k+2)\schoose{k+2}{k+2}(k+1)h_{k-1}<\veps\leq k+2$,
	so $h_{k-1}\leq 1$. Assuming $h_{i+1},\dots,h_{k-1}\leq 1$ for $-1\leq i<k-1$,
	the the same reasoning shows that
	$(k+2)h_{i } \leq \schoose{k+2}{i+3}h_{i }<\veps\leq k+2$, so $h_{i }\leq 1$.
	(In fact, the assumption that $h_i\leq 1$ for all $i$ was never used in
	the proof of Theorem~\ref{TH:expansion-of-small-sets-finer}.)
\end{remark}

\begin{cor} 
	\label{CR:expansion-of-small-zero-ch}
	Let $(X,\calF)$ be a sheaved $d$-complex ($d\geq 1$),   let $\alpha ,\veps \in\R_+$ 
	be numbers such that $\alpha <\veps $, and let $h\in [0,\veps -\alpha ]$.
	Suppose that (1) $(X_v,\calF_v)$
	is an $\veps $-coboundary expander in dimension $-1$ for every   $v\in X(0)$,
	(2) $X$ is an $\alpha $-skeleton expander, and (3) $\calF(e)\neq 0$
	for all   $e\in X(1)$. Then   $(X,\calF)$ $2(\veps -\alpha -h )$-expands
	$h $-small $0$-cochains.
\end{cor}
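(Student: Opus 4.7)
\medskip
\noindent\textbf{Proof proposal.}
The plan is to specialize Theorem~\ref{TH:expansion-of-small-sets-finer} to $k=0$ and read off the constants. First I would note that the edge cases $h=0$ and $h=\veps-\alpha$ are trivial: when $h=0$ there are no nonzero $h$-small cochains, and when $h=\veps-\alpha$ the expansion constant $2(\veps-\alpha-h)$ equals $0$, making the assertion vacuous. So I may assume $0<h<\veps-\alpha$.

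Next I would set the parameters of Theorem~\ref{TH:expansion-of-small-sets-finer} with $k=0$ as follows: $\veps_0=\veps$, $\alpha_{-1}=\alpha$, and $h_{-1}=h$. Under our hypothesis (1), $(X_v,\calF_v)$ is an $\veps_0$-coboundary expander in dimension $k-0-1=-1$ for every $v\in X(0)$, which is exactly condition~(1) of that theorem. Hypothesis (2) gives that $X=X_{\emptyset}$ is an $(-1)$-local $\alpha_{-1}$-skeleton expander, matching condition~(2) (for $k=0$ no higher-dimensional skeleton expansion is needed). Hypothesis (3) matches condition~(3). Note also that Proposition~\ref{PR:locally-minimal-props} (or directly the definition in \S\ref{subsec:loc-min}) renders local minimality vacuous for $0$-cochains, so the conclusion of Theorem~\ref{TH:expansion-of-small-sets-finer} applies to all $0$-cochains.

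The key computation is then to evaluate the left-hand side of \eqref{EQ:base-ineq} for $k=0$. The only term in the sum corresponds to $i=0$, and the product $h_i\cdots h_{k-1}=h_0\cdots h_{-1}$ is empty, hence equals $1$. Combined with $\binom{k+2}{i+2}=\binom{2}{2}=1$ and $U(k+2)=U(2)=2$, the inequality \eqref{EQ:base-ineq} reduces to
\[
2(\alpha_{-1}+h_{-1})=2(\alpha+h)<2\veps,
\]
which holds because $h<\veps-\alpha$ by assumption. The theorem now yields expansion of $\gamma$-small locally minimal $0$-cochains with
\[
\gamma=h_{-1}=h
\qquad\text{and}\qquad
\beta=2\veps-2(\alpha+h)=2(\veps-\alpha-h),
\]
which is exactly the corollary.

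I do not expect any substantive obstacle here: the content is entirely in Theorem~\ref{TH:expansion-of-small-sets-finer}, and the corollary is just the degenerate case $k=0$ where the inductive heavy-face analysis collapses to a single step and \eqref{EQ:base-ineq} becomes a one-variable linear inequality. The only minor point to verify carefully is that the conventions on empty products and on $\alpha_{-1}$ (the skeleton expansion constant of $X$ itself) are consistent with the general statement, which is immediate from \S\ref{subsec:skeleton}.
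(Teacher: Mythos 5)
Your proposal is correct and follows exactly the paper's own route: specialize Theorem~\ref{TH:expansion-of-small-sets-finer} to $k=0$ with $\veps_0=\veps$, $\alpha_{-1}=\alpha$, $h_{-1}=h$, observe that \eqref{EQ:base-ineq} collapses to $2(\alpha+h)<2\veps$, and read off $\gamma=h$, $\beta=2(\veps-\alpha-h)$, noting that every $0$-cochain is locally minimal. Your explicit handling of the boundary cases $h=0$ and $h=\veps-\alpha$ is a small but welcome addition that the paper leaves implicit.
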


\begin{proof}
	When $k=0$, the inequality
	\eqref{EQ:base-ineq} becomes
	$2(\alpha_{-1}+h_{-1})<2\veps_0$. Setting $h_{-1}=h$, $\veps_0=\veps$
	and $\alpha_{-1}=\alpha$,
	the statement follows from Theorem~\ref{TH:expansion-of-small-sets-finer}.
	(Note that every $0$-cochain is locally minimal.)
\end{proof}

\begin{cor} 
	\label{CR:expansion-of-small-one-ch}
	Let  $\alpha_{-1},\alpha_0,\veps_0,\veps_1\in\R_+$ 
	be numbers such that 
	\[
	\alpha_0<\min\{\frac{\veps_0}{4},\frac{\veps_1}{2}\}
	\qquad
	\text{and}
	\qquad
	\alpha_{-1}<\frac{1}{6}(\min\{\frac{\veps_0}{4},\frac{\veps_1}{2}\}-\alpha_0)^2.
	\]
	Then there exist $\beta,\gamma\in \R_+$ such that the following hold:
	Let $(X,\calF)$ be a sheaved $d$-complex ($d\geq 2$)
	such that   (1) $(X,\calF)$ is an $i$-local $\veps_i$-coboundary expander
	in dimension $1$ for $i\in\{0,1\}$,
	(2) $X$ is an $i$-local $\alpha_{i}$-skeleton expander for $i\in\{-1,0\}$, and (3) $\calF(x)\neq 0$
	for all $x\in  X(2)$. Then 
	$(X,\calF)$ $\gamma$-expands
	$\beta $-small locally minimal $1$-cochains.
\end{cor}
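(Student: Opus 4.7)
The plan is to deduce Corollary~\ref{CR:expansion-of-small-one-ch} directly from Theorem~\ref{TH:expansion-of-small-sets-finer} by specializing to $k=1$ and checking that the explicit bounds imposed on $\alpha_{-1},\alpha_0$ are precisely what is needed to solve the inequality \eqref{EQ:base-ineq} for some choice of $h_{-1},h_0$.

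First, I would compute the ingredients of \eqref{EQ:base-ineq} for $k=1$. One has $U(3)=3$ by direct enumeration (as recorded in Notation~\ref{NT:Upsilon-crit-value}), and $\binom{3}{2}=3$, $\binom{3}{3}=1$. Setting $M:=\min\{\frac{3\veps_0}{2},3\veps_1\}$, the quantity $\veps$ appearing in Theorem~\ref{TH:expansion-of-small-sets-finer} equals $M$, and the inequality \eqref{EQ:base-ineq} unwinds to
\[
3\!\left[\,3\,\frac{\alpha_{-1}+h_{-1}}{h_0}+2(\alpha_0+h_0)\,\right]<M,
\]
or equivalently, after dividing by $6$,
\[
\frac{3}{2}\,\frac{\alpha_{-1}+h_{-1}}{h_0}+(\alpha_0+h_0)<\tfrac{M}{6}=\min\Bigl\{\tfrac{\veps_0}{4},\tfrac{\veps_1}{2}\Bigr\}=:M'.
\]

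Second, I would solve this inequality under the corollary's hypotheses. Since $\alpha_0<M'$, set $h_0:=\tfrac{1}{2}(M'-\alpha_0)>0$, which gives $\alpha_0+h_0=\tfrac{1}{2}(M'+\alpha_0)$ and leaves room $M'-(\alpha_0+h_0)=h_0$ for the remaining term. So the inequality reduces to $\tfrac{3}{2}(\alpha_{-1}+h_{-1})<h_0^2=\tfrac{1}{4}(M'-\alpha_0)^2$, i.e.\ $\alpha_{-1}+h_{-1}<\tfrac{1}{6}(M'-\alpha_0)^2$. The hypothesis $\alpha_{-1}<\tfrac{1}{6}(M'-\alpha_0)^2$ then lets us pick any $h_{-1}$ in the open interval $\bigl(0,\,\tfrac{1}{6}(M'-\alpha_0)^2-\alpha_{-1}\bigr)$, and with these choices \eqref{EQ:base-ineq} holds strictly.

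Third, having verified \eqref{EQ:base-ineq}, Theorem~\ref{TH:expansion-of-small-sets-finer} applies verbatim to any $(X,\calF)$ satisfying conditions (1)--(3), and produces the constants explicitly: one may take $\gamma:=h_{-1}h_0$ and $\beta$ equal to the difference between the right and left hand sides of \eqref{EQ:base-ineq}. Both are strictly positive by construction and depend only on $\alpha_{-1},\alpha_0,\veps_0,\veps_1$ (through the choices of $h_{-1},h_0$), not on $X$ or $\calF$, which is exactly the content of the corollary.

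There is no real obstacle here beyond careful bookkeeping: the corollary is essentially a direct specialization, and the only mildly delicate point is the choice of $h_0$ as the midpoint of $(\alpha_0,M')$, which is what makes the two summands in the reduced inequality balance so that the quadratic bound $\tfrac{1}{6}(M'-\alpha_0)^2$ on $\alpha_{-1}$ comes out clean. One could optimize the constants by choosing $h_0$ differently, but the stated bound is already of the right order of magnitude, which matches the general pattern predicted by Remark~\ref{RM:comments-on-local-to-global}(iii) that the required $i$-local skeleton expansion should scale like $\veps^{2^{k-1-i}}$, i.e.\ linearly in $\veps$ at level $k-1$ and quadratically at level $k-2$.
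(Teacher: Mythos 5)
Your proof is correct and follows essentially the same route as the paper's: both specialize the inequality \eqref{EQ:base-ineq} of Theorem~\ref{TH:expansion-of-small-sets-finer} to $k=1$ (where $U(3)=3$, $\binom{3}{2}=3$, $\binom{3}{3}=1$, and $\veps=\min\{\tfrac32\veps_0,3\veps_1\}$) and show it is solvable for $h_{-1},h_0\in\R_+$ precisely under the corollary's hypotheses. The paper compresses the computation to the remark ``treating this as a quadratic inequality in $h_0$,'' whereas you carry it out explicitly, choosing $h_0=\tfrac12(M'-\alpha_0)$ (the maximizer of $h_0(M'-\alpha_0-h_0)$) so the quadratic bound on $\alpha_{-1}$ comes out sharp; this is exactly what the paper's ``if and only if'' claim requires.
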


\begin{proof} 
	When $k=1$, the inequality \eqref{EQ:base-ineq} becomes
	\[
	9\cdot \frac{\alpha_{-1} +h_{-1}}{h_0}+6(\alpha_0+h_0)<
	\min\{\frac{3}{2}\veps_0,3\veps_1\}. 
	\]	
	%where $\veps=\min\{\frac{3}{2}\veps_0,3\veps_1\}$.
	By treating this as a quadratic inequality in $h_0$, one finds
	that it is solvable for   $h_0,h_{-1}\in\R_+$ if and only if the inequalities
	in the corollary are satisfied. The corollary is therefore
	a special case of Theorem~\ref{TH:expansion-of-small-sets-finer}.
\end{proof}

\begin{cor}\label{CR:cse-from-links-dim-2}
	Let $\veps_0,\veps'_0,\veps'_1\in\R_+$
	and $\alpha_{-1},\alpha_0\in[0,1]$ be numbers such that
	\[
	\alpha_0<\min\{\frac{\veps'_0}{4},\frac{\veps'_1}{2}\}
	\qquad
	\text{and}
	\qquad
	\alpha_{-1}<\min\{\veps_0,\frac{1}{6}(\min\{\frac{\veps'_0}{4},\frac{\veps'_1}{2}\}-\alpha_0)^2\},
	\] 
	and let $Q,d\in \N$ be integers with $d\geq 2$.
	Then there exist $ \beta,\beta',\gamma,\gamma' \in\R_+$, depending on
	$\veps_0,\veps'_0,\veps'_1,\alpha_{-1},\alpha_0$,
	and $\delta,\veps\in\R_+$,
	depending on $\veps_0,\veps'_0,\veps'_1,\alpha_{-1},\alpha_0,d,Q$,
	such that the following hold:
	If $(X,\calF)$ is a sheaved $d$-complex such that 
	\begin{enumerate}[label=(\arabic*)]
		\item 
		$(X,\calF)$ is a  $0$-local $\veps_0$-coboundary expander
		in dimension $0$.
		\item
		$(X,\calF)$ is a $i$-local $\veps'_i$-coboundary expander
		in dimension $1$ for $i\in\{0,1\}$. 
		\item $X_v$ is an $\alpha_0$-skeleton expander for all $v\in X(0)$ and
			$X$ is an $\alpha_{-1}$-skeleton expander,
		\item $D(X)\leq Q$, i.e., every vertex of $X$ belongs to at most $Q$ $d$-faces, and
		\item $\calF(x)\neq 0$ for all $x\in X(0)\cup X(1)\cup X(2)$,
	\end{enumerate}
	then $(X,\calF)$ is an $(\veps,\delta)$-cosystolic expander in dimension $0$,
	$\beta$-expands $\gamma$-small $0$-cochains 
	and $\beta'$-expands $\gamma'$-small locally minimal $1$-cochains.
\end{cor}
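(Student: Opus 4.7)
The plan is to assemble the corollary from three previously established results: Corollary~\ref{CR:expansion-of-small-zero-ch} for the expansion of small $0$-cochains, Corollary~\ref{CR:expansion-of-small-one-ch} for the expansion of small locally minimal $1$-cochains, and finally Proposition~\ref{PR:small-set-to-cse} (with $k=0$) to convert these two local expansion statements into the desired cosystolic expansion in dimension $0$. Since the two ``expansion of small cochains'' corollaries are stated in precisely the form we need and the hypotheses of the present corollary were  designed to match, this  is essentially a matter of unpacking definitions and verifying that the numerical bounds line up.

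First, I would apply Corollary~\ref{CR:expansion-of-small-zero-ch} to $(X,\calF)$. The $0$-local coboundary expansion hypothesis (1) of the present corollary says exactly that each $(X_v,\calF_v)$ is an $\veps_0$-coboundary expander in dimension $-1$; hypothesis (3) gives $\alpha_{-1}$-skeleton expansion of $X$; and hypothesis (5) gives $\calF(e)\neq 0$ for all $e\in X(1)$. The numerical condition $\alpha_{-1}<\veps_0$ is one of the assumed inequalities. Picking, say, $h=\tfrac{1}{2}(\veps_0-\alpha_{-1})$ yields an explicit pair $(\beta,\gamma)\in\R_+^2$ (depending only on $\veps_0,\alpha_{-1}$) such that $(X,\calF)$ $\beta$-expands $\gamma$-small $0$-cochains.

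Next, I would apply Corollary~\ref{CR:expansion-of-small-one-ch} to $(X,\calF)$, with the roles of $(\veps_0,\veps_1)$ there played by $(\veps'_0,\veps'_1)$ here. Hypothesis (2) supplies the required $i$-local $\veps'_i$-coboundary expansion in dimension $1$ for $i\in\{0,1\}$; hypothesis (3) supplies the required $i$-local $\alpha_i$-skeleton expansion for $i\in\{-1,0\}$; and hypothesis (5) handles the nonvanishing on $X(2)$. The two numerical assumptions of the present corollary are precisely those required by Corollary~\ref{CR:expansion-of-small-one-ch}, so it yields $\beta',\gamma'\in\R_+$, depending only on $\veps'_0,\veps'_1,\alpha_{-1},\alpha_0$, such that $(X,\calF)$ $\beta'$-expands $\gamma'$-small locally minimal $1$-cochains.

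Finally, to obtain cosystolic expansion in dimension $0$, I would feed the two conclusions above into Proposition~\ref{PR:small-set-to-cse} with $k=0$. Condition (1) of that proposition is hypothesis (5) here; condition (2) follows from the first step (every $0$-cocycle is a $0$-cochain, hence in particular a locally minimal $0$-cocycle, so  $\beta$-expansion of $\gamma$-small $0$-cochains implies $\beta$-expansion of $\gamma$-small locally minimal $0$-cocycles); and condition (3) is the output of the second step. Proposition~\ref{PR:small-set-to-cse} then gives the $(\veps,\delta)$-cosystolic expansion with $\delta=\gamma$ and $\veps=\min\{\gamma',\,\tfrac{d+1}{Q}\schoose{d+1}{2}^{-1}\}$, which depends on $d$ and $Q$ precisely as claimed. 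There is no real obstacle here; the only care needed is to track which constants depend on $d,Q$ (namely $\veps$) and which do not (namely $\beta,\beta',\gamma,\gamma'$), so that the dependency statement at the beginning of the corollary holds as written.
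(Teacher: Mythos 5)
Your proposal is correct and takes essentially the same route as the paper: the paper's proof likewise applies Corollary~\ref{CR:expansion-of-small-zero-ch} and Corollary~\ref{CR:expansion-of-small-one-ch} to produce the two expansion constants, then invokes Proposition~\ref{PR:small-set-to-cse} to obtain the cosystolic expansion. The one small observation worth making explicit — which you did — is that all $0$-cochains are locally minimal, so expansion of small $0$-cochains subsumes expansion of small locally minimal $0$-cocycles as required by Proposition~\ref{PR:small-set-to-cse}.
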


\begin{proof}
	By Corollaries~\ref{CR:expansion-of-small-zero-ch}
	and~\ref{CR:expansion-of-small-one-ch}, there
	are $\beta',\beta,\gamma',\gamma'\in\R_+$,
	depending  on $\veps_0,\veps'_0,\veps'_1,\alpha_{-1},\alpha_0$,
	such that $(X,\calF)$
	$\beta'$-expands $\gamma'$-small $0$-cochains and $\beta$-expands
	$\gamma$-small locally minimal $1$-cochains.
	The existence of $\gamma$ and $\delta$ is now a consequence of 
	Proposition~\ref{PR:small-set-to-cse}.
\end{proof}

\begin{remark}\label{RM:main-ineqs-for-2dim-case-with-Oppenheim}
	As in the proof of Theorem~\ref{TH:cosystolic-exp-from-links},
	we can use Oppenheim's Trickling Down Theorem 
	\cite[Theorem~1.4]{Oppenheim_2015_vanishing_of_cohomology}	
	to replace condition (3)
	of Corollary~\ref{CR:cse-from-links-dim-2} with
	\begin{enumerate}
		\item[(3$'$)] $X$ is  connected
		and, for all $v\in X(0)$, the underlying weighted graph
		of $X_v$ is a  $[-1,\lambda]$-spectral expander,
	\end{enumerate}
	where $\lambda\in\R_+$ is required to satisfy the inequalities
	\[
	\lambda<\min\{\frac{\veps'_0}{4},\frac{\veps'_1}{2}\}
	\qquad
	\text{and}
	\qquad
	\frac{\lambda}{1-\lambda}<
	\min\{\veps_0,\frac{1}{6}(\min\{\frac{\veps'_0}{4},\frac{\veps'_1}{2}\}-\lambda)^2\}.
	\]
\end{remark}

\begin{remark}\label{RM:constant-analysis-general}
	We use the notation of Theorem~\ref{TH:expansion-of-small-sets-finer}.
	It was demonstrated in the proof of Theorem~\ref{TH:expansion-of-small-sets}
	that the inequality \eqref{EQ:base-ineq} is solvable when
	\[
	\alpha_i\leq \left(\frac{\veps}{(k+1)^22^{2k+6}}\right)^{2^{k-1-i}}
	\]
	for all $i\in\{-1,\dots,k-1\}$.
	The order of magnitude of this upper bound 
	on the $i$-local skeleton expansion of $X$
	(as a function $\veps_0,\dots,\veps_{k}$)
	cannot be increased with our present methods. More precisely,
	if \eqref{EQ:base-ineq} is satisfied, then 
	\[\alpha_i<\left(\frac{\veps}{U(k+2)}\right)^{2^{k-1-i}}\] 
	for all $i\in\{-1,\dots,k-1\}$, so we must have $\alpha_i=O(\veps^{2^{k-1-i}})$
	in order to apply Theorem~\ref{TH:expansion-of-small-sets-finer}.
	To see this, note that if \eqref{EQ:base-ineq} holds
	for some $h_{-1},\dots,h_{k-1}\in (0,1]$, then for all $i\in\{0,\dots,k\}$, we have
	\[
	\veps>U(k+2)
	\sum_{i=0}^{k}   {\textstyle {{k+2}\choose {i+2}}  }(i+1) \frac{\alpha_{i-1}+h_{i-1}}{h_i\cdots h_{k-1}}
	\geq U(k+2)\cdot \frac{\alpha_{i-1}+h_{i-1}}{h_i\cdots h_{k-1}}.
	\]
	As a result,
	\begin{align*}
	\max\{\alpha_{k-1},h_{k-1}\} & <  U(k+2)^{-1}\veps  , \\
	\max\{\alpha_{k-2},h_{k-2}\} & <   U(k+2)^{-1} h_{k-1} \veps , \\
	& \vdots \\
	\max\{\alpha_{-1},h_{-1}\} & < U(k+2)^{-1} h_{-1}h_0\cdots h_{k-1} \veps 	
	\end{align*}
	These inequalities imply readily that  
	$h_i< (\frac{\veps }{U(k+2)})^{2^{k-1-i}}$ for all $i$.
	Plugging this in the right hand side of the inequalities gives
	$\alpha_i <(\frac{\veps }{U(k+2)})^{2^{k-1-i}}$.
	It also follows that $\gamma=h_{-1}\cdots h_{k-1}$
	(the   smallness of locally minimal $k$-cochains which are guaranteed
	to $\beta$-expand)
	is smaller than $(\frac{\veps }{U(k+2)})^{2^{k+1}-1}$.
\end{remark}

\section{Examples of Cocycle Codes}
\label{sec:quo-aff-buildings}

In this section, we give   examples of sheaved $d$-complexes
to which   Theorems~\ref{TH:cosystolic-exp-from-links}
and~\ref{TH:expansion-of-small-sets} can be applied, and analyze
the properties of the associated cocycle codes.

Some of the examples make use of simplicial complexes
covered affine buildings, recalled in \S\ref{subsec:buildings}.

\subsection{$0$-Cocycle Codes of Sheaves on Graphs}
\label{subsec:0cocycle-graphs}

We begin by revisiting an example from the Overview section.
Fix some $m,k\in \N$ with $\frac{k}{2}<m\leq k$,
let $X$ be a $k$-regular graph,   and let $\F$ be
a finite field. Given $v\in X(0)$, write $E(v)$ for $X(1)_{\supseteq v}$
and choose    
an injective $\F$-linear map $T_v:\F^m\to \F^{E(v)}\cong \F^k$.
We think of $C_v:=\im T_v$ as a code inside $\F^{E(v)}$ with alphabet $\F$
and denote its relative distance by $\delta(C_v)$.
In \S\ref{subsec:intro-sheaves},
we defined  a sheaf $\calF$ on $X$ by setting $\calF(v)=\F^m$ and $\calF(e)=\F$
for all $v\in X(0)$, $e\in X(1)$, and
$\res^{\calF}_{e\from v}=\mathrm{Proj}_e \circ T_v$ ---
where $\mathrm{Proj}_e:\F^{E(v)}\to \F$ is projection onto the $e$-component ---
whenever $v\subseteq e$.
Putting $\Sigma:=\F^m$, we   form the $0$-cocycle
code $Z^0(X,\calF)$ inside $C^0(X,\calF)=\Sigma^{X(0)}$
as in \S\ref{subsec:ltc}.

\begin{prp}
	With notation as above, suppose that $X$ is an $\alpha$-skeleton expander
	($\alpha\in\R_+$) and $\veps:=\min\{\delta(C_v)\where v\in X(0)\}>\alpha$.
	Then the $0$-cocycle code $Z^0(X,\calF)\subseteq \Sigma^{X(0)}$ has 
	rate $\geq 1-\frac{k}{2m}$ and
	relative distance $\geq \veps-\alpha$. 
\end{prp}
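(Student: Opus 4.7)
The plan is to prove the rate bound and distance bound independently; both are essentially straightforward once the correct dictionary between the sheaf data and classical expander codes is set up.

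For the rate, I would simply do a dimension count. The code $Z^0(X,\calF)$ is the kernel of $d_0 \colon C^0(X,\calF)\to C^1(X,\calF)$. Since $\calF(v)=\F^m$ and $\calF(e)=\F$, we have $\dim_\F C^0(X,\calF)=m|X(0)|$ and $\dim_\F C^1(X,\calF)=|X(1)|=\frac{k}{2}|X(0)|$ by $k$-regularity. Hence $\dim_\F Z^0(X,\calF)\geq m|X(0)|-\tfrac{k}{2}|X(0)|$, and dividing by the ambient $\F$-dimension $m|X(0)|$ gives rate at least $1-\tfrac{k}{2m}$ (the rate is the same whether we view the code over $\Sigma=\F^m$ or over $\F$).

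For the distance, let $f\in Z^0(X,\calF)\setminus\{0\}$ and let $A=\{v\in X(0):f(v)\neq 0\}$ be its support. For every vertex $v$ set $c_v:=T_v(f(v))\in C_v\subseteq \F^{E(v)}$. Unwinding the cocycle condition through $\res^{\calF}_{e\from v}=\mathrm{Proj}_e\circ T_v$ gives $(c_u)_e=(c_v)_e$ for every edge $e=\{u,v\}$. This yields the key observation: if $v\notin A$ then $c_v=0$, so for any edge $e=\{u,v\}$ with $u\in A$ we must have $(c_u)_e=0$. Thus, for $u\in A$, the support of $c_u$ in $\F^{E(u)}$ is contained in the set of edges joining $u$ to another vertex of $A$, which has cardinality equal to the number of neighbors of $u$ in $A$. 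Since $T_u$ is injective, $c_u\neq 0$ for $u\in A$, so $|\supp c_u|\geq \delta(C_u)\cdot k\geq \veps k$.

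Summing over $u\in A$ and using double counting on edges with both endpoints in $A$, I get $\veps k|A|\leq \sum_{u\in A}|\supp c_u|\leq 2|E(A)|$. Now apply the $\alpha$-skeleton expansion hypothesis to $A$, using the canonical weights of a $k$-regular graph from Example~\ref{EX:regular-graph-weights}: $w(E(A))=\tfrac{2|E(A)|}{k|X(0)|}$ and $w(A)=\tfrac{|A|}{|X(0)|}$, so the defining inequality becomes $2|E(A)|\leq \tfrac{k|A|^2}{|X(0)|}+\alpha k|A|$. Combining with $\veps k|A|\leq 2|E(A)|$ and dividing by $k|A|$ (which is positive since $f\neq 0$) yields $\veps\leq \tfrac{|A|}{|X(0)|}+\alpha$, i.e., the normalized Hamming weight of $f$ is at least $\veps-\alpha$. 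Neither step presents a real obstacle: the whole proof is a sheafy reformulation of the Sipser--Spielman/Meshulam argument, with the only mildly delicate point being the bookkeeping that identifies the support of $c_u$ with edges into $A$.
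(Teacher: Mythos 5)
Your proof is correct. The rate bound is identical to the paper's (a straight dimension count), but for the distance bound you take a different route from the paper. The paper invokes its general machinery --- Corollary~\ref{CR:expansion-of-small-zero-ch} (expansion of small locally minimal $0$-cochains, proved via the heavy-faces apparatus) followed by Proposition~\ref{PR:decoding}(i) --- together with the observation from \S\ref{subsec:expanding-sheaves} that each $(X_v,\calF_v)$ is a $\delta(C_v)$-coboundary expander in dimension $-1$. You instead give a direct, self-contained Sipser--Spielman/Meshulam-style argument: unwind the cocycle condition to get agreement of the local codewords $c_u=T_u(f(u))$ across edges, lower-bound $|\supp c_u|$ by $\veps k$ via the local distance, double-count edges inside the support $A$, and feed the result into the skeleton-expansion inequality. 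Both arguments hinge on the same inequality $w(E(A))\leq w(A)^2+\alpha w(A)$, so they are ``morally'' the same, but yours is more elementary and transparent for this $1$-dimensional case, whereas the paper's shows the example is a degenerate instance of the $k=0$ case of its general local-to-global machinery. A small bonus of your direct route: Proposition~\ref{PR:decoding}(i) as stated returns $\delta(Z^0)\geq\gamma/P$ with $P=k$, and one must notice that the slack in the intermediate Hamming-versus-weighted-support comparison (Proposition~\ref{PR:Hamming-and-support-ratio}(i)) is actually tight for a $k$-regular graph to recover the full $\veps-\alpha$; your computation avoids that bookkeeping altogether.

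One minor caveat worth flagging: when you write $(c_u)_e=(c_v)_e$, you are implicitly using the signed cocycle condition $\res_{e\from v}f(v)-\res_{e\from u}f(u)=0$ from the alternating cochain model (\S\ref{subsec:sheaf-coh}), so over a field of characteristic $\neq 2$ the equality of coordinates is indeed literal (not just up to sign). This is fine, but worth stating explicitly since only the \emph{supports} of the $c_v$ ultimately matter for your counting argument, so even an unsigned version of the observation would suffice.
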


\begin{proof}
We observed in \S\ref{subsec:expanding-sheaves}
that $(X_v,\calF_v)$ is a $\delta(C_v)$-coboundary expander
in dimension $-1$. The claim about the relative distance is
therefore a consequence of Corollary~\ref{CR:expansion-of-small-zero-ch}
and Proposition~\ref{PR:decoding}(i).
Dimension count implies that $\dim_{\F} Z^0(X,\calF)\geq m|X(0)|-|X(1)|=
|X(0)|(m-\frac{k}{2})$, hence the lower bound on the rate.
\end{proof}

\subsection{Cocycle Codes of Sheaves on Complexes Covered by Affine Buildings}

In the following examples we put into use the fact that 
constant sheaves on finite spherical
buildings are good coboundary expanders.

\begin{thm}\label{TH:sheaves-on-quo-of-aff-buildings}
	For every $d\in \N-\{1\} $,
	there exists  $q  \in\N$ for which the following hold:
	Let $Y$ be a $q$-thick affine building,  let $k\in \{0,\dots,d-2\}$,
	let $X$ be a (finite) simplicial complex   covered by $Y$,
	and let $\calF$ be a nonzero locally constant sheaf on $X$.
	Then:
	\begin{enumerate}[label=(\roman*)]
		\item There
		are
		$\veps_0 ,\dots,\veps_k ,\veps'_0 ,\dots,\veps'_{k+1},\lambda \in \R_+$,
		depending only on $k$ and $d$,  
		and $Q\in \N$, 
		depending only on $Y$,
		such that
		the assumptions   of Theorem~\ref{TH:cosystolic-exp-from-links}
		hold for $(X,\calF)$. 
		\item 
		There are 	$\beta,\beta',\gamma,\gamma'\in\R_+$,
		depending  only on $k$ and $d$,
		and $\delta,\veps\in \R_+$, depending only on $Y$,
		such that	
		$(X,\calF)$ 
		$\beta$-expands $\gamma$-small locally minimal $k$-cochains,
		$\beta'$-expands $\gamma'$-small locally minimal $(k+1)$-cochains		
		and is an $(\veps,\delta)$-coboundary expander in dimension $k$.
	\end{enumerate}
\end{thm}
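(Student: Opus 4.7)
The plan is to verify the five hypotheses of Theorem~\ref{TH:cosystolic-exp-from-links} for $(X,\calF)$, after which both (i) and (ii) follow directly. The key structural fact is that, via the covering map $p:Y\to X$, every proper link $X_z$ (with $\emptyset\neq z\in X$) is isomorphic to the proper link $Y_{\hat z}$ at any lift $\hat z$ of $z$. Since $Y$ is affine and $q$-thick, $Y_{\hat z}$ is a $q$-thick \emph{spherical} building of dimension $d-1-\dim z$.

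For conditions (1), (2), (5): because $\calF$ is nonzero and locally constant on the connected complex $X$, Lemma~\ref{LM:loc-const-crit} implies that all restriction maps of $\calF$ are isomorphisms, and in particular every stalk is nonzero, giving condition~(5). For each nonempty $z$, the restriction maps of the augmented sheaf $\calF_z$---including those out of $\emptyset$---are restrictions of maps of $\calF$, hence isomorphisms; identifying all stalks with $\calF(z)$ via these isomorphisms gives $\calF_z\cong \aug{\calF(z)}$ as augmented sheaves. Applying Theorem~\ref{TH:cbe-for-buildings}(i) to the constant augmented sheaf $\aug{\calF(z)}$ on the spherical building $X_z$ then yields a constant $\veps(d')>0$, depending only on $d'=\dim X_z$, such that $(X_z,\calF_z)$ is an $\veps(d')$-coboundary expander in every dimension from $-1$ to $d'-1$. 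Setting $\veps_i:=\veps(d-1-i)$ for $i\in\{0,\dots,k\}$ and $\veps'_i:=\veps(d-1-i)$ for $i\in\{0,\dots,k+1\}$ then secures conditions~(1) and~(2) with constants depending only on $k$ and $d$. Condition~(4) is the degree bound $Q:=D(X)<\infty$, which depends only on $Y$ since $X$ is a finite quotient of the locally finite affine building $Y$.

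The remaining hypothesis is (3), the $(d-2)$-local $[-1,\lambda]$-spectral expansion. For $z\in X(d-2)$, the link $X_z$ is a $1$-dimensional $q$-thick spherical building, so Theorem~\ref{TH:skeleton-exp-building}(1) (applied with $d=1$) gives that its underlying weighted graph is a $[-1,\sqrt{m-2}/\sqrt{q}]$-spectral expander for some $m\leq 8$. The main technical point---and the only quantitative ingredient---is the uniform choice of the thickness threshold $q$: the tolerance on $\lambda$ demanded by Theorem~\ref{TH:cosystolic-exp-from-links} is a function of $\veps_0,\dots,\veps_k,\veps'_0,\dots,\veps'_{k+1}$ and $d$, and therefore depends only on $k$ and $d$; since $k$ ranges over the finite set $\{0,\dots,d-2\}$, we may choose a single $q=q(d)\in\N$ making $\sqrt{m-2}/\sqrt{q}$ satisfy the strictest such tolerance for every admissible $k$ and every $q$-thick $Y$. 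With this choice, all five hypotheses of Theorem~\ref{TH:cosystolic-exp-from-links} are met, proving~(i); part~(ii) is then the direct output of Theorem~\ref{TH:cosystolic-exp-from-links}(i)--(iii), with $\beta,\gamma,\beta',\gamma'$ depending only on the $\veps_i,\veps'_i$ (hence only on $k$ and $d$), and $\veps,\delta$ inheriting the additional dependence on $Q=D(Y)$.
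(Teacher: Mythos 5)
Your proposal is correct and follows essentially the same route as the paper's proof: apply Theorem~\ref{TH:cbe-for-buildings}(i) to the constant augmented sheaf on each proper link (which is a spherical building) to get the $\veps_i,\veps'_i$ for conditions (1) and (2), note that $D(X)=D(Y)$ is finite for (4), use local constancy and nonzeroness for (5), and use Theorem~\ref{TH:skeleton-exp-building} on the one-dimensional links to secure (3) once $q$ is chosen large enough relative to the $\lambda$-threshold of Theorem~\ref{TH:cosystolic-exp-from-links}. You are slightly more careful than the paper's write-up in one respect: you explicitly take the maximum of the required thickness over $k\in\{0,\dots,d-2\}$, which is needed since $q$ is quantified before $k$ in the statement; the paper implicitly does this but does not spell it out.
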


\begin{proof}
	Part (ii) follows from (i) and Theorem~\ref{TH:cosystolic-exp-from-links}.
	We turn to prove (i).

	We claim that for every $i\in\{0,\dots,k\}$,
	there is $\veps_i>0$,
	depending  only on $d$,
	such that $(X,\calF)$
	is an $i$-local $\veps_i$-coboundary expander in dimension $k$.
	Indeed, let $z\in X(i)$.
	Since $\calF$ is locally constant,
	$\calF_z$ is a constant sheaf on  $X_z$.	
	The link $X_z$ is isomorphic to a proper link of $Y$, so it is a spherical
	building of dimension $d-i-1$.
	Thus, by Theorem~\ref{TH:cbe-for-buildings}(i),
	there exists   $\veps_i>0$ (depending only on $\dim X_z$) such
	that $(X_z,\calF_z)$ is an $\veps_i$-coboundary expander in dimensions $k-i-1$.
	
	A similar argument shows that there are 
	$\veps'_0,\dots,\veps'_{k+1}\in\R_+$,
	depending only on $d$,
	such that  $(X,\calF)$
	is an $i$-local $\veps'_i$-coboundary expander in dimension $k+1$
	for all $i\in\{0,\dots,k+1\}$.
	
	Take $\lambda$ to be the maximal number
	for which the inequality in Theorem~\ref{TH:cosystolic-exp-from-links}
	holds. Let $Q=D(Y)$; it is finite
	because $Y$ admits a strongly transitive action
	(see~\S\ref{subsec:buildings}).
	Finally, set $q=\ceil{\frac{16}{\lambda^2}}$.
	
	We claim that 
	assumptions (1)--(5) of Theorem~\ref{TH:cosystolic-exp-from-links}
	hold for $(X,\calF)$
	with the parameters we have chosen, provided that  $Y$ is $q$-thick.
	Indeed, assumptions (1) and (2) are immediate.
	Assumption (4) holds because $D(X)= D(Y)$ (since $Y$ covers $X$),
	and  
	(5) holds because $\calF$ is locally constant and nonzero.
	To see that   (3) holds,
	let $z\in X(d-2)$. Then $X_z$ is isomorphic to a $1$-dimensional
	link of $Y$  and  is therefore a spherical building of dimension $1$. 
	By Theorem~\ref{TH:skeleton-exp-building}(i),
	$X_z$ is a $[-1,\frac{4}{\sqrt{q}}]$-spectral expander,
	and  $\frac{4}{\sqrt{q}}\leq \lambda$
	by our choice of $q$.
\end{proof}

\begin{cor}\label{CR:building-quotient-loc-const-codes}
	For every $d\in \N-\{1\} $,
	there exists  $q  \in\N$ for which the following hold:
	Let $Y$ be a $q$-thick affine building,  let $k\in \{0,\dots,d-2\}$,
	let $X$ be a (finite) simplicial complex   covered by $Y$,
	let $\F$ be a finite field,
	let $\calF$ be a nonzero locally constant $\F$-sheaf  of dimension $m$ on $X$
	and let $B$ be an $\F$-basis of $\calF$ (see Example~\ref{EX:Hamming-norm}).
	\begin{enumerate}[label=(\roman*)]
		\item 
			If $k=0$,
			then there are  
			 $\delta,\veps,\eta \in\R_+$, depending only on $Y$,
			such that  the $0$-cocycle code $(Z^0,C^0,\Phi)$ associated to $(X,\calF)$
			(see \S\ref{subsec:ltc}; the alphabet is $\F^{m}$ and the length is $|X(0)|$),
			is $2$-query $\veps$-locally testable of relative distance $\geq \delta$.
			Furthermore, it admits a decoding algorithm able to correct
			an $\eta$-fraction of errors  in $O_{|\F|,
			\dim\calF}(|X(0)|)$
			operations.
		\item 
			If $k>0$, then there are $\delta ,\veps,\eta \in\R_+$ and $r\in \N$,
			depending only on $Y$, $k$ and $\dim\calF$,
			such that  the $X$-side
			of the $k$-cocycle quantum CSS  code $C:=(Z^k,Z_k,C^k,\Phi_X,\Phi_Z)$
			associated to $(X,\calF,B)$ (see \S\ref{subsec:css}; the alphabet is $\F$)
			has relative distance $\geq \delta$, is $r$-query $\veps$-testable,
			and  admits a decoding algorithm able to correct an $\eta $-fraction
			of errors in $O_{|\F|,\dim\calF}(\dim C^k)$
			operations.
	\end{enumerate}
\end{cor}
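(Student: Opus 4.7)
The plan is to simply feed the output of Theorem~\ref{TH:sheaves-on-quo-of-aff-buildings} into the code-theoretic machinery developed in Section~\ref{sec:ltc-and-css}. Take $q\in\N$ to be the constant provided by Theorem~\ref{TH:sheaves-on-quo-of-aff-buildings} for the given $d$, and suppose $Y$ is $q$-thick. That theorem gives constants $\beta,\beta',\gamma,\gamma'\in\R_+$ depending only on $k$ and $d$, and $\veps_{\mathrm{cb}},\delta_{\mathrm{cb}}\in\R_+$ depending only on $Y$, such that $(X,\calF)$ $\beta$-expands $\gamma$-small locally minimal $k$-cochains, $\beta'$-expands $\gamma'$-small locally minimal $(k+1)$-cochains, and is an $(\veps_{\mathrm{cb}},\delta_{\mathrm{cb}})$-coboundary expander in dimension $k$. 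Moreover, since $\calF$ is locally constant and nonzero, $\calF(x)\neq 0$ for all $x\in X$, so hypothesis~(5) of Theorem~\ref{TH:cosystolic-exp-from-links} is satisfied in all dimensions that matter. The parameters $Q:=D(X)=D(Y)$, $P:=D_{k,d}(X)$ and $M:=\dim\calF$ depend only on $Y$ (and on $k$ and $\dim\calF$ in the second case).

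For~(i), we have $k=0$, so $B^0(X,\calF)=0$ and the $0$-cocycle code is a genuine code with a natural $2$-query tester $\Phi$. Applying Proposition~\ref{PR:cbe-vs-ltc-weighted-supp} to the coboundary expansion from Theorem~\ref{TH:sheaves-on-quo-of-aff-buildings}(ii), we get $2$-query $\tfrac{\veps_{\mathrm{cb}}}{P^2}$-testability and relative distance at least $\tfrac{\delta_{\mathrm{cb}}}{P}$; both $P$ and the expansion constants depend only on $Y$, so the required $\veps,\delta$ exist. For the decoding algorithm, apply Proposition~\ref{PR:decoding}(ii) with the constants $\beta,\beta',\gamma,\gamma'$ from above: this produces the constant $\eta\in\R_+$ and a decoder running in $O(2^{\binom{d+1}{k+2}Q}Q^4 M_{k+1}^2 m\cdot |X(0)|)$ operations, which is $O_{|\F|,\dim\calF}(|X(0)|)$ since $Q$, $d$, $k$ and $m$ are all bounded in terms of the stated parameters.

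For~(ii), we have $k>0$ and typically $B^k(X,\calF)\neq 0$, so the cocycle code need not have good distance. Instead, we build the $k$-cocycle quantum CSS code $C=(Z^k,Z_k,C^k,\Phi_X,\Phi_Z)$ of \S\ref{subsec:css} associated to $(X,\calF,B)$. By Proposition~\ref{PR:cbe-to-quantum-CSS}(i), the $X$-side tester $\Phi_X$ queries $r:=(k+2)M_k$ letters of the alphabet $\F$, where $r$ depends only on $k$ and $\dim\calF$. Part~(iii) of the same proposition then translates the $(\veps_{\mathrm{cb}},\delta_{\mathrm{cb}})$-cosystolic expansion of $(X,\calF,\ws{\cdot})$ into the desired relative $X$-distance $\geq\delta$ and one-sided $\veps$-testability, once we convert from the weighted support norm to the Hamming norm $\|\cdot\|_B$ via Proposition~\ref{PR:Hamming-and-support-ratio}(ii) and Remark~\ref{RM:properties-of-cbe}(ii); the resulting constants $\veps,\delta$ depend only on $Y$, $k$ and $\dim\calF$. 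The decoding algorithm is supplied by Proposition~\ref{PR:cbe-to-quantum-CSS}(iv), applied with the expansion-of-small-cochain constants $\beta,\beta',\gamma,\gamma'$; its complexity is $O_{M,Q,d,k}(\dim C^k)=O_{|\F|,\dim\calF}(\dim C^k)$.

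No step is genuinely hard once Theorem~\ref{TH:sheaves-on-quo-of-aff-buildings} is in hand; the only bookkeeping to be careful about is tracking which constants depend on which parameters, in particular making sure that the quantities $P$, $Q$, $M_k$, and the exponential factor $2^{\binom{d+1}{k+2}Q}$ that appear in the decoder's complexity are all absorbed into an $O_{|\F|,\dim\calF}(\cdot)$ bound (using that $d$, $k$ and the thickness of $Y$ are fixed for a given $Y$).
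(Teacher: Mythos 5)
Your proposal is correct and follows exactly the paper's route: the paper's own proof is a one-line citation of Theorem~\ref{TH:sheaves-on-quo-of-aff-buildings}(ii) together with Propositions~\ref{PR:decoding} and~\ref{PR:cbe-to-quantum-CSS}, plus the same norm conversion via Proposition~\ref{PR:Hamming-and-support-ratio}(ii) and Remark~\ref{RM:properties-of-cbe}(ii). Your write-up just spells out the bookkeeping of which constants depend on which parameters in more detail than the paper does.
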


\begin{proof}
	This follows from Theorem~\ref{TH:sheaves-on-quo-of-aff-buildings}(ii)
	together with Propositions~\ref{PR:decoding}
	and~\ref{PR:cbe-to-quantum-CSS}. 
	Use Proposition~\ref{PR:Hamming-and-support-ratio}(ii) and Remark~\ref{RM:properties-of-cbe}(ii) in order
	to replace the weighted support norm   $\|\cdot\|_{\mathrm{ws}}$ with $\|\cdot\|_B$.
\end{proof}

For every $q,d\in \N$, there are 
$q$-thick $d$-dimensional affine buildings $Y$ which cover arbitrarily large finite 
$d$-complexes $X$ (see \S\ref{subsec:quotients-of-buildings}, for instance).
Each of these quotients $X$ admits an $m$-dimensional locally
constant $\F$-sheaf $\calF$, e.g., the constant sheaf $\F^m$ on $X$. 
Choosing $q$ large enough in advance and fixing   $Y$ and $m$, Corollary~\ref{CR:building-quotient-loc-const-codes}(i)
says that the $0$-cocycle codes of the form $(Z^0(X,\calF),C^0(X,\calF)\cong (\F^m)^{X(0)},\Phi)$
are an infinite family of   $2$-query LTCs with linear distance on the alphabet $\Sigma=\F^m$.
Unfortunately, the rate of these codes is very poor --- at most $\frac{1}{|X(0)|}$ ---,
because  $\dim_\F \HH^0(X,\calF)\leq \dim \calF=m$ by Lemma~\ref{LM:coh-loc-const}.

If, instead of considering $0$-cocycle codes, 
we fix $k\in\{1,\dots,d-2\}$ and $m:=\dim \calF$, and  look at the $k$-cocycle quantum CSS codes associated 
to  $(X,\calF,B)$, with $B$ being some $\F$-basis of $\calF$, 
then, by Corollary~\ref{CR:building-quotient-loc-const-codes}(ii), 
we get an infinite family of quantum CSS codes    whose $X$-side
is locally testable and has linear distance. The rate of these quantum CSS codes
is $\frac{1}{|X(k)|}\dim_{\F}\HH^k(X,\calF)$.
Very little is known about $\dim_\F \HH^k(X,\calF)$,
but experts expect that it is polylogarithmic in $|X(k)|$ and linear in 
the fixed parameter $m=\dim \calF$.

\medskip

Returning to the case of $0$-cocycle codes, 
as demonstrated in \S\ref{subsec:0cocycle-graphs}, it is possible to obtain larger
rates by considering sheaves that are not locally constant. 
We now give such an example.

\begin{construction}\label{CN:modification-prototype}
	Let  $X$ be a $d$-complex ($d\geq 1$),
	let $  \calF$ be a locally constant   sheaf on $X$,
	and let $E\subseteq C^1(X, \calF)$ be an abelian subgroup.
	For every edge $e\in X_{\ord}(1)$,
	let $E(e)$ be the image of $E$ under the projection from $ \prod_{x\in X_{\ord}(1)} \calF(x)$
	to $\calF(e)$. The abelian group $E(e)\subseteq \calF(e)$ is independent of the ordering on $e$,
	so it makes sense to discuss $E(e)$ for unordered edges $e\in X(1)$. We  define a subsheaf
	$\calC_E$ of $ \calF$ by letting
	\[
	\calC_E(x)=\sum_{e\in X(1)_{\subseteq x}}\res_{x\from e}E(e).
	\]
	for all $x\in X$.
\end{construction} 

Note that $\calC_E(v)=0$ for all $v\in X(0)$, because $v$ contains no edges.
The subsheaf $\calC_E$ can be characterized as 
the smallest subsheaf of $\calF$ for which $E\subseteq C^1(X,\calC)$.

We will be interested in the quotient sheaf $\quo{\calF}:=\calF/\calC_E$
when $\calF$ is a locally constant $\F$-sheaf of dimension $m$
and $E$ is an $\F$-subspace of $C^1(X,\calF)$.
In this case, $\calC_E$ and $\quo{\calF}$ are   $\F$-sheaves.
For every $v\in X(0)$, we have
$\quo{\calF}(v)=\calF(v)/0\cong \F^m$, so we may
consider $Z^0(X,\calF)$ as a code inside $C^0(X,\calF)=(\F^m)^{X(0)}$,
the    alphabet being $ \F^m$. 
As we now show, when $X$ is covered by a sufficiently thick affine building, 
and $E$ is small and in general position, the $0$-cocycle
code of $(X,\quo{\calF})$ is  locally testable and has linear distance.
The rate of this code depends  on 
the choice of $E$ and will be studied in Chapter~\ref{chap:initial-data}.

\begin{thm}\label{TH:sheaves-on-quo-of-aff-buildings-quotients}
	Let $d\in \N-\{1\}$.
	There exists $q\in \N$
	such that, for every $Y$ and $X$ as in
	Theorem~\ref{TH:sheaves-on-quo-of-aff-buildings}
	(resp.\ Corollary~\ref{CR:building-quotient-loc-const-codes}),
	the conclusions  of
	Theorem~\ref{TH:sheaves-on-quo-of-aff-buildings} 
	(resp.\ Corollary~\ref{CR:building-quotient-loc-const-codes}(i))
	continue  to hold  with $k=0$ (but with possibly
	different expansion constants) if the sheaf  $\calF$
	is a replaced by any sheaf of the form $\quo{\calF}=\calF/\calC_E$,
	where $\calC_E$ is as in 
	Construction~\ref{CN:modification-prototype},
	and 	the subgroup (resp.\ $\F$-subspace)
	$E\subseteq C^1(X,\calF)$ satisfies the following conditions:
	\begin{enumerate}[label=(a\arabic*)]
		\item \label{item:TH:F-E:linear-disjointness}
		For every $v\in X(0)$, the map $\sum_{e} \res_{e\from v}^{-1}:\bigoplus_e E(e)\to
		{\calF}(v)$, with $e$ ranging over $X(1)_{\supseteq v}$,
		is injective.
		\item \label{item:TH:F-E:linear-depedence}
		For every triangle $t\in X(2)$ with edges $e,e',e''$, we have $E(e)|_t\subseteq E(e')|_t+E(e'')|_t$.
	\end{enumerate}
\end{thm}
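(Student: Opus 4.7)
The plan is to verify the hypotheses of Corollary~\ref{CR:cse-from-links-dim-2} (equivalently, Remark~\ref{RM:main-ineqs-for-2dim-case-with-Oppenheim}) for the sheaved complex $(X,\quo{\calF})$; the conclusions of Theorem~\ref{TH:sheaves-on-quo-of-aff-buildings} then follow, and those of Corollary~\ref{CR:building-quotient-loc-const-codes}(i) follow by Propositions~\ref{PR:decoding} and~\ref{PR:cbe-vs-ltc}. The skeleton-expansion hypothesis (3) and the degree hypothesis (4) hold for $(X,\quo{\calF})$ because they depend only on $X$, which is a quotient of the affine building $Y$; by Theorem~\ref{TH:skeleton-exp-building}(2) both $X$ and its proper links are arbitrarily good spectral expanders once $q$ is large enough. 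The nonvanishing hypothesis (5) boils down to checking that $\calC_E(y)$ is a proper subspace of $\calF(y)$ for every face $y$ of dimension $\leq 2$, which is forced by condition \ref{item:TH:F-E:linear-disjointness} once $q$ exceeds $\dim\calF$.

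The main new ingredient is a structural description of $(\calC_E)_z$ for $z\in X(0)\cup X(1)$. For a vertex $v$, use that $\calF$ is locally constant to identify $\calF_v$ with the constant augmented sheaf $\aug{A}$ on the spherical building $X_v$, where $A=\calF(v)$, via the restriction maps. We claim that under this identification, $(\calC_E)_v$ corresponds to the subsheaf $\calC(y)=\sum_{u\in y}A_u$, where
\[
A_u := \bigl(\res^{\calF}_{\{u,v\}\from v}\bigr)^{-1}\!\bigl(E(\{u,v\})\bigr)\subseteq A.
\]
The inclusion $\calC(y)\subseteq(\calC_E)_v(y)$ is immediate; the reverse inclusion is proved by induction on $|y|$, using \ref{item:TH:F-E:linear-depedence} on each triangle $\{u,u',v\}$ with $u,u'\in y$ to absorb the contribution of the edge $\{u,u'\}\not\ni v$ into $A_u+A_{u'}$.

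With this presentation, hypothesis (1) of Corollary~\ref{CR:cse-from-links-dim-2} (that $(X_v,\quo{\calF}_v)$ be an $\veps_0$-coboundary expander in dimension $-1$) is immediate: the linear disjointness \ref{item:TH:F-E:linear-disjointness} forces $A_u\cap A_{u'}=0$ for $u\neq u'$, so the map $d_{-1}\colon A\to\prod_u A/A_u$ is injective and any nonzero element lies in at most one $A_u$, giving its image relative distance $\geq 1-1/|X_v(0)|$, which exceeds any desired $\veps_0$ when $q$ is large. The $i=0$ half of hypothesis (2) (that $(X_v,\quo{\calF}_v)$ be an $\veps'_0$-coboundary expander in dimension $0$) is the genuine content: it follows by applying Theorem~\ref{TH:cbe-buildings-quotient-sheaves} to $(X_v,\aug{A}/\calC)$, since \ref{item:TH:F-E:linear-disjointness} implies the linear disjointness condition in that theorem for \emph{every} subset of vertices, in particular for subsets of size $\leq\lceil\frac{2}{3}|X_v(0)|\rceil$.

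The remaining and hardest piece is the $i=1$ half of hypothesis (2): coboundary expansion of $(X_z,\quo{\calF}_z)$ in dimension $0$ for each edge $z=\{v_0,v_1\}$. Here $(\calC_E)_z$ has nontrivial value $E(z)$ at the empty face of $X_z$, and its values at higher faces receive contributions both from edges through $v_0$ and from edges through $v_1$. The plan is to show, again by induction on the dimension of faces and now using \ref{item:TH:F-E:linear-depedence} on triangles of the form $\{v_0,w,w'\}$, $\{v_1,w,w'\}$ and $\{v_0,v_1,w\}$ to absorb edges not meeting $z$, that after quotienting the fiber $B=\calF(z)$ by $E(z)$ and identifying $\calF_z$ with the constant augmented sheaf $\aug{B/E(z)}$ on $X_z$, the sheaf $\quo{\calF}_z$ acquires the presentation $\aug{B/E(z)}/\calC'$ with $\calC'(y)=\sum_{w\in y}B_w$ for explicit subgroups $B_w\subseteq B/E(z)$. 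Theorem~\ref{TH:cbe-buildings-quotient-sheaves} then yields the required expansion. The main obstacle will be verifying the linear-disjointness hypothesis for the $\{B_w\}_w$: this has to be extracted from \ref{item:TH:F-E:linear-disjointness} applied at both $v_0$ and $v_1$, and combining the two statements --- which live in $\calF(v_0)$ and $\calF(v_1)$ respectively but must be matched through the identification with $B/E(z)$ --- requires careful bookkeeping and a further enlargement of $q$ to absorb any constant loss. Once this is done, Corollary~\ref{CR:cse-from-links-dim-2} delivers the cosystolic expansion and small-locally-minimal-cochain expansion for $(X,\quo{\calF})$ in dimension $0$, and the coding-theoretic statements follow as noted above.
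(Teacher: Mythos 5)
Your treatment of the vertex links is essentially the paper's: the structural description of $(\calC_E)_v$ as $\sum_{u\in y}A_u$ with $A_u=\res_{\{u,v\}\from v}^{-1}(E(\{u,v\}))$, proved by induction using \ref{item:TH:F-E:linear-depedence} to absorb edges missing $v$, is exactly Lemma~\ref{LM:quotient-sheaf-identification}, and the appeal to Theorem~\ref{TH:cbe-buildings-quotient-sheaves} for the dimension-$0$ expansion of $(X_v,\quo{\calF}_v)$ and the direct dimension-$(-1)$ argument at vertices both match the paper.

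The gap is in your ``hardest piece,'' the edge links. First, you have mis-indexed the required statement: by the convention of \S\ref{subsec:sheaf-at-link}, being a $1$-local $\veps'_1$-coboundary expander in dimension $1$ means that for each edge $z$ the pair $(X_z,\quo{\calF}_z)$ is an $\veps'_1$-coboundary expander in dimension $1-1-1=-1$, not in dimension $0$. Consequently Theorem~\ref{TH:cbe-buildings-quotient-sheaves}, which produces dimension-$0$ expansion, is aimed at the wrong target. Second, even setting that aside, the presentation $\quo{\calF}_z\cong \aug{B/E(z)}/\calC'$ with $\calC'(y)=\sum_{w\in y}B_w$ runs into exactly the obstacle you flag and do not resolve: condition \ref{item:TH:F-E:linear-disjointness} controls linear disjointness only among the edges through a \emph{single} vertex, so after transporting the subgroups coming from edges through $v_0$ and those through $v_1$ into the common fiber $\calF(z)/E(z)$ there is no reason for the resulting family $\{B_w\}$ to be linearly disjoint, and no enlargement of $q$ will repair this. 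What is actually needed is much simpler. For $0\neq f\in\quo{\calF}_z(\emptyset)=\calF(z)/E(z)$ and a triangle $t\supseteq z$, condition \ref{item:TH:F-E:linear-depedence} gives $\calC_E(t)=E(z)|_t+E(v\cup(t-z))|_t$ for a fixed vertex $v$ of $z$, so $f|_t=0$ iff $f\in(E(z)+E(v\cup(t-z)))/E(z)$; condition \ref{item:TH:F-E:linear-disjointness} at $v$ forces these subspaces for distinct $t$ to meet only in $E(z)$, hence $f|_t=0$ for at most one $t\supseteq z$. Thus $\supp(d_{-1}f)$ omits at most one vertex of $X_z$, and the dimension-$(-1)$ coboundary expansion is at least $1-\max_u w_{X_z}(u)\geq 1-\frac{1}{q+d-1}$, which is all that Corollary~\ref{CR:cse-from-links-dim-2} requires.
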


\begin{example}\label{EX:cocycle-E-is-okay}
	Condition (a2) of Theorem~\ref{TH:sheaves-on-quo-of-aff-buildings}
	holds if $E\subseteq Z^1(X,\calF)$.
	Indeed, let $t\in X_{\ord}(2)$ and let
	$e ,e' ,e'' $ denote  the ordered edges, obtained
	by removing the $0$-th, $1$-st and $2$-nd vertex of $t$, respectively.
	Then for every $f\in E(e)$, there is 
	$\hat{f}\in E$ such that $\hat{f}(e)=f$. Since $E\subseteq Z^1(X,\calF)$,
	we have $d_1\hat f(t)=0$,
	which means that $f|_t= \hat f(e)|_t = \hat{f}(e')|_t-\hat{f}(e'')|_t\in E(e')|_t+E(e'')|_t$.
	This shows that
	$E(e)|_t\subseteq E(e')|_t+E(e'')|_t$.
	
	Condition (a1) typically holds if $\dim E\cdot D_{0,1}(X)\leq \dim \calF$ is $E$
	is chosen uniformly at random. We make this precise in Proposition~\ref{PR:securing-expansion-for-quotients}(ii) below.
\end{example}

We first prove the following lemma:

\begin{lem}\label{LM:quotient-sheaf-identification}
	Let $X$, $ \calF$, $E$ and $\calC:=\calC_E$
	be as in Construction~\ref{CN:modification-prototype}
	and assume that conditions (a1) and (a2) of
	Theorem~\ref{TH:sheaves-on-quo-of-aff-buildings-quotients} hold.
	Let $v\in X(0)$ and write $A= \calF(v)$.
	For every $u\in X(1)_v$, put	
	$A_u=\res^{-1}_{u\cup v\from v}(E(u\cup v))\subseteq A$,
	and for every $x\in X_v$, define $\calC'(x)=\sum_{u\in X_v(0)_{\subseteq x}} A_u$.
	Then:
	\begin{enumerate}[label=(\roman*)]
		\item $\calC'$ is a subsheaf of the augmented sheaf $\aug{A}$ on $X_v$,
		and the summation map $\bigoplus_{u\in X_v(0)} A_u\to A$ is injective.
		\item $(\calF/\calC )_v\cong \aug{A}/\calC'$ as sheaves on $X_v$.
	\end{enumerate}	 
\end{lem}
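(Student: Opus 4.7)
\medskip
\noindent\textbf{Proof plan.} The key observation to exploit is that since $\calF$ is locally constant, every restriction map $\res^{\calF}_{y\from v}\colon A=\calF(v)\to \calF(y)$ (for $y\supseteq v$) is an isomorphism by Lemma~\ref{LM:loc-const-crit}. Thus the entire proof consists of transporting $\calC(y)\subseteq \calF(y)$ back to a subgroup of $A$ via $\res^{\calF}_{y\from v}$ and checking that this subgroup is precisely $\calC'(x)$, where $y=x\cup v$. Once this is done, (ii) will follow by assembling the identifications at each face into a morphism of sheaves; compatibility with restrictions will be essentially automatic from the sheaf axiom for $\calF$.

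For part (i), the subsheaf assertion is purely set-theoretic: if $x\subseteq x'$ in $X_v$, then $X_v(0)_{\subseteq x}\subseteq X_v(0)_{\subseteq x'}$, so $\calC'(x)\subseteq \calC'(x')$, and since every restriction map of $\aug{A}$ is $\id_A$, this is exactly the compatibility needed. The injectivity of $\bigoplus_{u\in X_v(0)} A_u\to A$ is a direct translation of assumption \ref{item:TH:F-E:linear-disjointness}: for each edge $e=u\cup v\in X(1)_{\supseteq v}$, the restriction $\res^{\calF}_{e\from v}$ is an isomorphism, so it restricts to a bijection $A_u\xrightarrow{\sim} E(e)$, and composing with these bijections identifies the summation map $\bigoplus_u A_u\to A$ with the map of~\ref{item:TH:F-E:linear-disjointness}.

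For part (ii), let $x\in X_v$ and set $y=x\cup v\in X$. Since $\res^{\calF}_{y\from v}\colon A\to \calF(y)$ is an isomorphism taking $\calC'(x)$ into $\calC(y)$ (directly from the definitions), it descends to a surjection $\vphi_x\colon A/\calC'(x)\to \calF(y)/\calC(y)=(\calF/\calC)_v(x)$. The collection $\{\vphi_x\}_{x\in X_v}$ assembles into a morphism of augmented sheaves because, for $x\subseteq x'$ in $X_v$, the sheaf axiom gives $\res^{\calF}_{x'\cup v\from v}=\res^{\calF}_{x'\cup v\from x\cup v}\circ \res^{\calF}_{x\cup v\from v}$, so the relevant square commutes before passing to quotients, hence afterwards as well.

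The main obstacle, and the one place where \ref{item:TH:F-E:linear-depedence} is used, is injectivity of $\vphi_x$, equivalently the identity $\calC(y)=\res^{\calF}_{y\from v}(\calC'(x))$. The inclusion $\supseteq$ is immediate. For $\subseteq$, split the edges $e\subseteq y$ into \emph{type A} (those containing $v$, so $e=u\cup v$ with $u\in X_v(0)_{\subseteq x}$) and \emph{type B} (those with $e\subseteq x$, so $v\notin e$). Type A contributions give exactly what we want: $\res^{\calF}_{y\from e}(E(e))=\res^{\calF}_{y\from v}(A_u)$ using the definition of $A_u$ and the sheaf axiom. For a type B edge $e\subseteq x$, the face $t:=e\cup v\subseteq y$ is a triangle of $X$, and its three edges are $e$ together with two type A edges $e',e''$. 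Assumption \ref{item:TH:F-E:linear-depedence} yields $E(e)|_t\subseteq E(e')|_t+E(e'')|_t$; applying $\res^{\calF}_{y\from t}$ shows that $\res^{\calF}_{y\from e}(E(e))\subseteq \res^{\calF}_{y\from e'}(E(e'))+\res^{\calF}_{y\from e''}(E(e''))$, so type B contributions are absorbed into the type A ones. Hence $\calC(y)=\sum_{u\in X_v(0)_{\subseteq x}}\res^{\calF}_{y\from v}(A_u)=\res^{\calF}_{y\from v}(\calC'(x))$, and since $\res^{\calF}_{y\from v}$ is an isomorphism, $\vphi_x$ is injective. This completes the construction of the desired isomorphism $\aug{A}/\calC'\xrightarrow{\sim}(\calF/\calC)_v$.
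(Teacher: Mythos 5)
Your proof is correct and follows essentially the same route as the paper's: define $\vphi_x$ via the (invertible) restriction $\res_{x\cup v\from v}$, observe the inclusion $\res_{x\cup v\from v}(\calC'(x))\subseteq\calC(x\cup v)$ for well-definedness, and reduce injectivity to the reverse inclusion, handled by splitting the edges of $x\cup v$ according to whether they contain $v$ and absorbing the others via \ref{item:TH:F-E:linear-depedence} through the triangle $e\cup v$. The paper's argument is the same in all essentials, so there is nothing to add.
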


\begin{proof}
	(i) That $\calC'$ is a subsheaf of $\aug{A}$ is straightforward,
	and the injectivity of
	$\bigoplus_{u\in X_v(0)} A_u\to A$ is a direct consequence of    (a1).

	(ii) Write $\quo\calF =\calF/\calC$. Then $\quo{\calF}_v=\calF_v/\calC_v$.
	For every $x\in X_v$,
	we have 
	\begin{align*}
	\res_{x\cup v\from v}(\calC'(x))
	&=
	\sum_{y\in X_v(0)_{\subseteq x}} \res_{x\cup v\from v}(A_y)
	=
	\sum_{y\in X_v(0)_{\subseteq x}}\res_{x\cup v\from v}\res^{-1}_{y\cup 	v\from v}(E(y\cup v))
	\\	
	&=
	\sum_{y\in X_v(0)_{\subseteq x}}\res_{x\cup v\from y\cup v}(E(y\cup v))
	=
	\sum_{e\in X(1): v\subseteq e\subseteq x\cup v }\res_{x\cup v\from e}(E(e))
	\\
	&
	\subseteq \calC(x\cup v)=\calC_v(x).
	\end{align*}
	This allows us to define $\vphi_x: \aug{A}(x)/\calC'(x)\to \calF'_v(x)=\calF_v(x)/\calC_v(x)$ 
	by $\vphi_x(f+\calC'(x))=\res_{x\cup v\from v}(f)+\calC_v(x)$ for all $f\in A$.
	It is routine to check that 
	$\vphi:=(\vphi_x)_{x\in X_v}:\aug{A}/\calC'\to \calF'_v$ is a morphism of sheaves.
	It remains to prove that each $\vphi_x$ is bijective, or
	equivalently,
	that $\res_{x\cup v\from v}(\calC'(x))=\calC_v(x)$.
	We already observed that the left hand side is contained in the right hand side.
	Proving the reverse inclusion amounts to showing that
	for every $x\in X_{\supseteq v}$ and $e\in X(1)_{\subseteq x}$,
	we have
	$\res_{x\from e} E(e) \subseteq \sum_{y\in X(1):v\subseteq y\subseteq x}\res_{x\from y}E(y)$.
	
	Fix   $x\in X_{\supseteq v}$, $e\in X(1)_{\subseteq x}$ and $f\in \res_{x\from e}E(e)$.
	Then there is $g\in E(e)$ such that $f=\res_{x\from e}(g)$.
	If $v\subseteq e$, then $f\in\sum_{y\in X(1):v\subseteq y\subseteq x}\res_{x\from y}E(y)$.
	Otherwise, $t:=e\cup v\in X(2)$. Let $e'$ and $e''$ be the edges of $t$ different from $e$. 
	Then $v\subseteq e'$ and $v\subseteq e''$.
	By (a2), there are $f'\in E(e')$ and $f''\in E(e'')$
	such that $g|_t=f'|_t+f''|_t$.
	This means that $f=g|_x=f'|_x+f''|_x\in \sum_{y\in X(1):v\subseteq y\subseteq x}\res_{x\from y}E(y)$,
	which is what we want.
\end{proof}

\begin{proof}[Proof of Theorem~\ref{TH:sheaves-on-quo-of-aff-buildings-quotients}]
	Write $\calC=\calC_E$. The argument is similar to the proof of Theorem~\ref{TH:sheaves-on-quo-of-aff-buildings}.
	
	We first show that if $q$ is sufficiently
	large, then there exists $\veps'_0>0$, not depending on $q$,
	such that $(X,\quo\calF)$ is a $0$-local $\veps'_0$-coboundary expander in dimension $1$.
	Let $v\in X(0)$.
	Then $X_v$ is a $q$-thick spherical building of dimension $d-1$,
	and 
	Lemma~\ref{LM:quotient-sheaf-identification}  and conditions (a1), (a2)  
	say  that $\quo{\calF}_v$ is isomorphic to a sheaf
	as in Theorem~\ref{TH:cbe-buildings-quotient-sheaves}.
	Thus, $(X_v,\quo{\calF}_v)$ is a $\veps' $-coboundary
	expander in dimension $0$
	for $\veps' =\frac{2(d-1)}{5(d-1)+2}-O_d(\frac{1}{\sqrt{q}})$.
	Taking $q$ large enough in advance, we get that 
	$(X_v,\quo{\calF}_v)$ is a 
	$\frac{1}{4}$-coboundary expander in dimension $0$, so $\veps'_0=\frac{1}{4}$
	suffices.

	Next, we claim that 
	$(X,\quo\calF)$ is a $1$-local $\veps'_1$-coboundary expander in dimension $1$
	for $\veps'_1=\frac{1}{2}$.
	Let $e\in X_{\ord}(1)$ and let $f\in \quo{\calF}_e(\emptyset)=\quo{\calF}(e)= \calF(e)/E(e)$;
	we shall freely regard $f$ as a member of $C^{-1}(X_e,\quo{\calF}_e)$.
	Fix some $0$-face $v$ of $e$.
	Then, for every $t\in X(2)_{\supseteq e}$, we have $f|_t = f+\calC(t)$.
	Thanks to (a2), we have $\calC(t)=E(e)|_t+E(v\cup(t-e))|_t$.
	Thus, by condition (a1) and the assumption
	that $\calF$ is locally constant, $f|_t=0$ if and only if $f\in (E(e)+E(v\cup (t-e)))/E(e)$.
	Condition (a1) also means that $(E(e)+E(v\cup (t-e)))\cap (E(e)+E(v\cup (t'-e)))=E(e)$
	for every $t'\in X(2)_{\supseteq e}$ different from $t$, so we can have $f|_t=0$
	for at most one $t\in X(2)_{\supseteq e}$.
	This means that $\supp (d_{-1} f )\supseteq X_e-\{t-e\}$
	for some $t\in X(2)_{\supseteq e}$, so $(X_e,\calF_e)$
	is a $(1-\xi)$-coboundary expander for
	$\xi =\max\{w_{X_e}(u)\where u\in X_e(0)\}$.
	Writing $z=e-v$, equation~\eqref{EQ:weight-in-link}
	gives $w_{X_v}(u\cup z)=2 w_{X_v}(z)w_{X_e}(u)$
	for every $u\in X_e$.
	Thus, $\xi=\max\{\frac{1}{2}w_{X_v}(u\cup z)w_{X_v}(z)^{-1}\where u\in X_e(0)\}$.
	Since $X_v$ is a $q$-thick spherical building of dimension $d-1$, 
	\cite[Lemma~7.5]{First_2021_weighted_mixing_lemmas_preprint}
	says that $\xi\leq \frac{1}{2}\cdot\frac{2}{q+d-1}=\frac{1}{q+d-1}$.
	This means that 
	$(X,\quo\calF)$ is a $1$-local $(1-\frac{1}{q+d-1})$-coboundary expander in dimension $1$,
	and the claim follows since $d\geq 2$ and $q\geq 1$.
	
	As similar argument shows that if $q$ is large enough,
	then there is    $\veps_0>0$, not depending on $q$,
	such that $(X,\quo{\calF})$ is a $0$-local $\veps_0$-coboundary expander
	in dimension $0$. 
	Briefly, one similarly finds that this holds for $\veps_0=1-\zeta$
	with $\zeta = \max\{w_{X_v}(z)\where v\in X(0), z\in X_v(0)\}$,
	and by \cite[Lemma~7.5]{First_2021_weighted_mixing_lemmas_preprint},
	$\zeta\leq \frac{2}{q+d-1}$, because $X$ is $q$-thick.
	
	Now that $\veps'_0,\veps'_1,\veps_0$ have been determined,
	define $\lambda$ to be the largest real number for 
	which the inequality of Theorem~\ref{TH:cosystolic-exp-from-links} holds,
	and proceed as in the proof of Theorem~\ref{TH:sheaves-on-quo-of-aff-buildings}.	
\end{proof}

\begin{remark}\label{RM:necessary-thickness}
	In Theorems~\ref{TH:sheaves-on-quo-of-aff-buildings}
	and~\ref{TH:sheaves-on-quo-of-aff-buildings-quotients},
	the lower bound on the thickness of the building $Y$ can be lowered
	by using part (ii)
	of Theorem~\ref{TH:cbe-for-buildings}  instead
	of part (i) whenever possible, and by using Corollary~\ref{CR:cse-from-links-dim-2}
	instead of Theorem~\ref{TH:cosystolic-exp-from-links} when $k=0$.
	If $X$ is moreover assumed to be a $2$-dimensional Ramanujan complex 
	(in the 
	sense of \cite{Carwright_2003_Ramanujan_geometries_An},
	\cite{Lubotzky_2005_explicit_constructions_of_Ramanujan_complexes}),
	then the lower bound on the required thickness can be further lowered 
	by using  Proposition~\ref{PR:skeleton-exp-Ramanujan}
	instead of Theorem~\ref{TH:skeleton-exp-building}.
\end{remark}

\subsection{Good $1$-Cocycle Codes}
\label{subsec:higher-cocycle-codes}

We finish this section by giving examples of   $1$-cocycle codes with linear distance and constant
rate.
We do not know if these codes are locally testable relative to their natural $3$-tester.

\begin{construction}\label{CN:one-cocycle-code}
	Let $X$ be a connected $d$-complex ($d\geq 2$) and assume that there are
	$Q\in \N$, $\lambda\in [-1,1]$ and $\kappa\in [1,2]$ such that 
	\begin{enumerate}[label=(\arabic*)]
		\item every $0$-face of $X$
	is contained in exactly $Q$ edges,  
		\item the weighted graph underlying $X_v$ is a $[-1,\lambda]$-spectral
		expander for every $v\in X(0)$,
		\item $\kappa^{-1} w(e)\leq w(e')\leq \kappa w(e)$ for every $e,e'\in X(1)$ sharing a vertex.
	\end{enumerate}
	Fix integers $0< r < m $ such that $\frac{\kappa}{Q}<\frac{r}{m}<\frac{2}{Q}$.
	%and put
	%\[\gamma=\frac{m}{rQ}.\]
	Let   $\F$ be a finite field and let $\calF$ be a locally constant $\F$-sheaf of dimension $m$.
	Suppose that, for every edge $e\in X(1)$, we are given an $(m-r)$-dimensional subspace
	$U(e)\subseteq \calF(e)$, and that these subspaces are in general position in the following sense:
	\begin{enumerate}[resume, label=(\arabic*)]
		\item \label{item:U-general-pos}
		For every $v\in X(0)$ and any $S\subseteq X_{\supseteq v}$ with $|S|r\geq m$,
		we have 
		$ \bigcap_{e\in S} \res^{-1}_{e\from v}U(e) =0$.
	\end{enumerate}
	Define a subsheaf $\calG$ of $\calF$   by setting 
	\begin{itemize}
		\item $\calG(v)=0$ for every $v\in X(0)$,
		\item $\calG(e)=U(e)$ for every $e\in X(1)$,
		\item $\calG(x)=\calF(x)$ for every $x\in X$ of dimension $>1$.
	\end{itemize}
	Since $\dim_{\F}\calG(e)=m-r$ for all $e\in X(1)$,
	we may form the $1$-cocycle code of $(X,\calG)$ as in \S\ref{subsec:ltc};
	its alphabet is $\Sigma:=\F^{m-r}$.
\end{construction}

\begin{remark}\label{RM:one-cocycle-code-U-cond}
	(i) If the subspaces $U(e)$ are chosen uniformly at random, then
	the probability that condition \ref{item:U-general-pos} will hold for  particular $v$
	and $S$ is at least $\prod_{i=1}^m(1-|\F|^{-i})>1-2|\F|^{-1}$. In particular, the probability
	that this holds for all $e$ and $S$ is at last $1-|X(0)|2^{Q+1}|\F|^{-1}$.
	This means  that we can find subspaces $\{U(e)\}_{e\in X(1)}$ as in (3)
	if $|\F|>|X(0)|2^Q$. (This bound can be  improved to $|\F|>\mathrm{poly}(Q)$ with a little
	more work.)
	
	(ii) Conditions (1)--(4) of Construction~\ref{CN:one-cocycle-code} are local  in the sense that they involve
	only the proper links of $X$.  
	As a result, if
	$p:Y\to X$ is a covering and   $\calF$ and $\{U(e)\}_{e\in X(1)}$
	are as in Construction~\ref{CN:one-cocycle-code}, then 
	(1)--(4) hold for $Y$, the sheaf  	
	$p^*\calF$ (see~\S\ref{subsec:pushforward})
	and the subspaces $\{U(e')\}_{e'\in Y(1)}$ defined by
	$U(e')=U(p(e))$.
 \end{remark}

\begin{thm}\label{TH:one-cocycle-code}
	With notation as in Construction~\ref{CN:one-cocycle-code}, suppose
	that
	\[
	\lambda\leq \min\{
	\frac{1}{6}(\frac{1}{4}(1-\lambda)(1-\frac{\kappa/Q}{r/m})-\lambda)^2(1-\lambda),
	\frac{1}{4}(1-\lambda)(1-\frac{\kappa/Q}{r/m})\}.
	\]
	Then there exists $\delta>0$, depending  on $Q,\lambda,r,m,\kappa$ and $D(X)$,
	such that the  $1$-cocycle code associated to $(X,\calG)$  
	has relative distance $\geq \delta$.
	The rate of this code is  $\geq  \frac{2}{Q}-\frac{r}{m}-|X(1)|^{-1}$.
\end{thm}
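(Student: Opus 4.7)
The plan is to derive the distance bound from the dimension-$1$ local-to-global cosystolic expansion principle (Corollary~\ref{CR:expansion-of-small-one-ch}), and to bound the rate via dimension counting in the long exact sequence of sheaf cohomology. Since $\calG$ vanishes on $X(0)$, we have $C^0(X,\calG)=0$ and hence $B^1(X,\calG)=0$, so the relative distance of the $1$-cocycle code equals $\inf\{\nws{f}:f\in Z^1(X,\calG)\setminus\{0\}\}$ and the rate equals $\dim_{\F} Z^1(X,\calG)/((m-r)|X(1)|)$.

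I first verify the hypotheses of Corollary~\ref{CR:expansion-of-small-one-ch} for $(X,\calG)$. The $1$-local coboundary expansion in dimension $-1$ is trivially $\veps_1=1$: because $\calF$ is locally constant, each $\res^{\calF}_{t\from e}|_{U(e)}$ is injective, so any nonzero $f\in U(e)=\calG_e(\emptyset)$ satisfies $(d_{-1}f)(t-e)\neq 0$ on every triangle $t\supseteq e$. Condition~(2) supplies skeleton expansion $\alpha_0=\lambda$ at every vertex link, and Oppenheim's trickling-down theorem~\cite[Theorem~1.4]{Oppenheim_2015_vanishing_of_cohomology} upgrades this to $\alpha_{-1}\leq \lambda/(1-\lambda)$ globally. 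With $\veps_1=1$ and $\veps_0:=(1-\lambda)(1-\tfrac{\kappa/Q}{r/m})$ (established below), the numeric hypothesis on $\lambda$ in the statement is precisely what is needed so that $\alpha_0<\veps_0/4=\min\{\veps_0/4,\veps_1/2\}$ and $\alpha_{-1}<\tfrac{1}{6}(\veps_0/4-\alpha_0)^2$. The corollary then yields $\beta,\gamma>0$ for which $(X,\calG)$ $\gamma$-expands $\beta$-small locally minimal $1$-cochains, and Proposition~\ref{PR:small-set-to-cse} (final assertion) combined with $B^1=0$ forces $\ws{f}\geq\beta$ for every nonzero $f\in Z^1(X,\calG)$; Proposition~\ref{PR:Hamming-and-support-ratio}(i) converts this into a lower bound on $\nws{f}$ depending on $Q,\lambda,r,m,\kappa,D(X)$, giving the required $\delta$.

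The principal obstacle is establishing the $0$-local dimension-$0$ coboundary expansion $\veps_0\geq(1-\lambda)(1-\tfrac{\kappa/Q}{r/m})$ for $(X_v,\calG_v)$. Using the locally constant structure of $\calF$, fix compatible isomorphisms $\calF(x)\cong \F^m$ for all $x\supseteq v$; under these, every $\res^{\calF}_{y\from x}$ becomes $\id_{\F^m}$ and $U(e)$ is identified with a codimension-$r$ subspace $V_e\subset\F^m$ for each edge $e\ni v$. Identifying $X_v(0)$ with $X(1)_{\supseteq v}$, a $0$-cochain becomes $e\mapsto f(e)\in V_e$, and $(d_0 f)$ vanishes on the link-edge corresponding to a triangle $\{v,u_1,u_2\}$ exactly when $f(\{v,u_1\})=f(\{v,u_2\})$. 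Partition $\supp f$ into constant classes $A_c:=\{e\in\supp f:f(e)=c\}$ indexed by $c\in\F^m\setminus\{0\}$; since the common value $c$ lies in $\bigcap_{e\in A_c}V_e$, condition~(4) forces $|A_c|<m/r$, and combined with condition~(3) this gives $w_{X_v}(A_c)\leq (\kappa/Q)/(r/m)$. A direct case analysis on link-edges yields the identity
\[
\ws{d_0 f}\;=\;\tfrac{1}{2}\Bigl(\textstyle\sum_c e(A_c,X_v(0)\setminus A_c)+e(\supp f,X_v(0)\setminus\supp f)\Bigr)\;\geq\;\tfrac{1}{2}\sum_c e(A_c,X_v(0)\setminus A_c),
\]
and applying the weighted expander-mixing estimate $e(A_c,X_v(0)\setminus A_c)\geq 2w(A_c)(1-w(A_c))(1-\lambda)$ (a consequence of the $[-1,\lambda]$-spectral expansion of $X_v$, cf.~\cite[Theorem~3.3]{First_2021_weighted_mixing_lemmas_preprint}) and summing yields $\ws{d_0 f}\geq(1-\lambda)(1-\tfrac{\kappa/Q}{r/m})\ws{f}$, as desired. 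This is a Sipser--Spielman style expander-code argument adapted to the sheaf-theoretic setting, and I expect it to be the most technical step of the proof.

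For the rate, consider the short exact sequence $0\to\calG\to\calF\to\calH\to 0$ with $\calH:=\calF/\calG$; then $\calH(v)\cong\F^m$, $\calH(e)\cong \F^r$, and $\calH(x)=0$ for $\dim x\geq 2$. The initial portion of the long exact sequence~\eqref{EQ:long-coh-exact-seq} reads
\[
0\to \HH^0(X,\calG)\to \HH^0(X,\calF)\to \HH^0(X,\calH)\to \HH^1(X,\calG)\to\cdots,
\]
and since $C^0(X,\calG)=0$ we have $\HH^0(X,\calG)=0$, yielding $\dim \HH^1(X,\calG)\geq\dim \HH^0(X,\calH)-\dim \HH^0(X,\calF)$. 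The vanishing of $\calH$ in dimensions $\geq 2$ forces $\HH^0(X,\calH)=\ker d_0^{\calH}$, of dimension at least $\dim C^0(X,\calH)-\dim C^1(X,\calH)=m|X(0)|-r|X(1)|$, while Lemma~\ref{LM:coh-loc-const} gives $\dim \HH^0(X,\calF)\leq m$. Using $B^1(X,\calG)=0$ and the identity $|X(0)|=2|X(1)|/Q$ coming from condition~(1), one obtains $\mathrm{rate}\geq (m|X(0)|-r|X(1)|-m)/((m-r)|X(1)|)=(2m/Q-r)/(m-r)-m/((m-r)|X(1)|)$, and the elementary inequality $(2m/Q-r)/(m-r)\geq 2/Q-r/m$ (using $r/m<2/Q$) combined with an error comparison shows that this majorizes $2/Q-r/m-1/|X(1)|$, as claimed.
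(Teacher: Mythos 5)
Your proof is correct and shares the paper's overall skeleton (verify the hypotheses of Corollary~\ref{CR:expansion-of-small-one-ch} with $\veps_1=1$ at edge links and $\veps_0=(1-\lambda)(1-\frac{\kappa/Q}{r/m})$ at vertex links, use Oppenheim to get global skeleton expansion, then invoke the small-locally-minimal-cocycle machinery together with $B^1(X,\calG)=0$), but the two substantive sub-arguments are carried out differently. For the $0$-local coboundary expansion at a vertex link $X_v$, the paper does not decompose $\supp f$ into level sets; instead it first applies Theorem~\ref{TH:cbe-of-spectral-exps} to the ambient constant sheaf $\calF_v\cong\aug{(\F^m)}$ to produce a constant $0$-cochain $g\equiv g_0$ with $\ws{f-g}\le(1-\lambda)^{-1}\ws{d_0f}$, observes that the set $S$ where $f=g_0$ then has weight exceeding $\frac{\kappa/Q}{r/m}$, hence cardinality exceeding $m/r$ by condition~(3), and concludes from condition~(4) that $g_0\in\bigcap_{u\in S}U(v\cup u)=0$, whence $\ws{d_0f}\ge(1-\lambda)\ws{f}\ge\veps_0\ws{f}$. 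Your level-set partition plus the weighted Alon--Chung cut bound $w(E(A_c,A_c^c))\ge 2w(A_c)(1-w(A_c))(1-\lambda)$ arrives at the same constant; the only thing to flag is that this specific cut inequality is not stated in the paper (which only records the one-sided skeleton bound $w(E(S))\le w(S)^2+\lambda w(S)$), so you are leaning on the external weighted mixing lemma a bit more directly than the paper does --- the paper's correction-by-a-constant argument uses only Theorem~\ref{TH:cbe-of-spectral-exps} as stated. For the rate, the paper bounds $\dim Z^1(X,\calG)$ from below by $\dim\bigl(C^1(X,\calG)\cap B^1(X,\calF)\bigr)$ via a codimension count in $C^1(X,\calF)$, whereas you run the long exact sequence of $0\to\calG\to\calF\to\calF/\calG\to 0$; these are essentially the same computation (the image of the connecting map $\HH^0(X,\calF/\calG)\to\HH^1(X,\calG)$ is exactly that intersection), with your version being more conceptual and the paper's more elementary. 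Minor points: both your argument and the paper's leave the strict-versus-nonstrict inequality mismatch between the theorem's hypothesis on $\lambda$ and the corollary's requirement unaddressed, and your final "error comparison" for the rate is cleanest if phrased as rate $\ge\frac{m}{m-r}\bigl(\frac{2}{Q}-\frac{r}{m}-\frac{1}{|X(1)|}\bigr)\ge\frac{2}{Q}-\frac{r}{m}-\frac{1}{|X(1)|}$, the last step being trivial when the parenthesized quantity is nonnegative and vacuous otherwise.
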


\begin{proof}
	Note that $B^1(X,\calG)=0$. 
	Thus, by Proposition~\ref{PR:decoding}(i), in order to prove the lower
	bound on the relative distance, it is enough to show that $(X,\calG)$
	$\beta$-expands $\gamma$-small locally minimal $1$-cochains for  some $\beta,\gamma\in\R_+$
	depending only on $Q,\lambda,r,m,\kappa$.
	To that end, we apply Corollary~\ref{CR:expansion-of-small-one-ch} 	
	to $(X,\calG)$.
	
	The fact that $\calG$ is a subsheaf of the locally constant sheaf $\calF$
	implies that all of the restriction maps $\res^{\calG}_{y\from x}$ are injective,
	so $(X,\calG)$ is a $1$-local $1$-coboundary expander in dimension $1$.
	
	We claim that $(X,\calG)$ is a $0$-local $\veps_0$-coboundary expander in dimension $1$
	for $\veps_0=(1-\lambda)(1-\frac{\kappa/Q}{r/m})$.
	To see this, fix $v\in X(0)$. Then $\calG_v$ is a subsheaf of $\calF_v$. 
	Since $\calF$ is locally constant of dimension $m$, we have $\calF_v\cong \aug{(\F^m)}$,
	so we may assume that $\calF_v=\aug{(\F^m)}$. 
	In particular, we view $U(v\cup u)$ as a subspace of $\F^m$
	for every $u\in X_v$. Let $f\in C^0(X_v,\calG_v)$. We need
	to show that $\|d_0 f\|\geq \veps_0\|f+B^0(X_v,\calG_v)\|=\veps_0\|f\|$.
	If $\|d_0 f\|\geq \veps_0$, then this is clear, so assume $\|d_0 f\|<\veps_0$.
	Assumption (2) and Theorem~\ref{TH:cbe-of-spectral-exps} imply
	that $(X_v,\calF_v)$ is a $(1-\lambda)$-coboundary expander in dimension $0$,
	so there is $g\in B^0(X_v,\calF_v)=B^0(X_v,\aug{(\F^m)})$ such that $\|f-g\|\leq  (1-\lambda)^{-1}\|d_0 f\|<
	\veps_0(1-\lambda)^{-1} $.
	There is $g_0\in \F^m$ such that $g(u)=g_0$ for all $u\in X_v(0)$.
	Put $S=\{u\in X_v(0)\suchthat f(u)=g_0\}$. Then $w_{X_v}(S)> 1-(1-\lambda)^{-1}\veps_0 =
	\frac{\kappa/Q}{r/m}$.
	Now, assumption (3)  implies that $|S|> \frac{m}{r}$,
	so by (4), we have $\bigcap_{u\in S}U(v\cup u)=0$.
	Since $g_0\in  \bigcap_{u\in S}U(v\cup u)$, we must have $g=0$,
	and $\|f\|=\|f-g\|\leq  (1-\lambda)^{-1}\|d_0 f\| $.
	This means that $\veps_0\|f\|\leq (1-\lambda)\|f\|\leq \|d_0 f\|$, as required.
	
	Next, assumption (2) implies that $X$ is a $1$-local  $\lambda$-skeleton expander
	(see~\S\ref{subsec:skeleton}). Combining (2) with Oppenheim's Trickling Down Theorem
	\cite[Theorem~1.4]{Oppenheim_2015_vanishing_of_cohomology},
	we see that the underlying weighted
	graph of $X$ is a $[-1,\frac{\lambda}{1-\lambda}]$-spectral expander,
	so $X$ is a $\frac{\lambda}{1-\lambda}$-skeleton expander.
	
	Plugging everything into Corollary~\ref{CR:expansion-of-small-one-ch} now gives
	the existence of $\gamma$ and $\beta$; the inequalities in the corollary
	hold by our assumptions on $\lambda$.
	
	To finish, we show that the rate of $Z^0(X,\calG)$ is at least
	$ \frac{2}{Q}-\frac{r}{m}-|X(1)|^{-1}$. This is equivalent to 
	showing that $\dim_\F Z^0(X,\calG)\geq |X(1)|m\cdot(\frac{2}{Q}-\frac{r}{m}-\frac{1}{|X(1)|})$.
	Observe that 
	\[
	\dim B^1(X,\calF)=\dim C^0(X,\calF)-\dim Z^0(X,\calF)\geq |X(0)|m-m=|X(1)|m\cdot\frac{2}{Q}-m,
	\]
	where the inequality follows from Lemma~\ref{LM:coh-loc-const} and the last
	equality follows from assumption (1). On the other hand,
	$\dim C^1(X,\calG)=|X(1)|(m-r)=|X(1)|m\cdot (1-\frac{r}{m})$.
	Since $Z^1(X,\calG)$ contains $C^1(X,\calG)\cap B^1(X,\calF)$
	and the intersection takes place in the ambient space $C^1(X,\calF)$, of dimension
	$|X(1)|m$, it follows
	that
	\[
	\dim Z^1(X,\calG)\geq |X(1)|m\cdot((\frac{2}{Q}-\frac{1}{|X(1)|})+ (1-\frac{r}{m})-1)
	=|X(1)|m\cdot(\frac{2}{Q}-\frac{r}{m}-\frac{1}{|X(1)|}).\qedhere
	\]
\end{proof}

\begin{remark}
	The pair $(X,\calG)$ does not satisfy the assumptions of Theorem~\ref{TH:cosystolic-exp-from-links}
	with $k=1$, so we cannot assert that  $(X,\calG)$ is an $(\veps,\delta)$-cosystolic expander
	in dimension $1$ for some $\veps,\delta>0$. Indeed, $(X,\calG)$ is \emph{not} a
	$1$-local $\veps$-coboundary expander
	in dimension $2$ for every $\veps>0$, because $\HH^0(X_e,\calG_e)\cong \calF(e)/U(e)\neq 0$ for all $e\in X(1)$.
	Consequently, we cannot assert that the $1$-cocycle code of $(X,\calG)$ (with its natural $3$-tester)
	is $\mu$-testable for $\mu>0$ independent of $(X,\calG)$. We do not know if such codes are good LTCs
	in general.
%	
%	In fact, $(Z^0(X,\calG),C^0(X,\calG),\Phi)$   often has poor testability.
%	To see this, fix a linear ordering on $V(X)$
%	and identify $C^1(X,\calF)$ with $\prod_{e\in X(1)}\calF(e)$
%	(see Remark~\ref{RM:ordered-cohomology}). 
%	Choose some $e_0\in X(1)$ and a subspace $V(e_0)\subseteq \calF(e_0)$
%	such that $\calF(e_0)=U(e_0)\oplus V(e_0)$.
%	Define $W$ to be the subspace of $f\in C^1(X,\calF)$ such that $f(e)\in U(e)$
%	for all $e\in X(1)-\{e_0\}$ and $f(e_0)\in V(e)$. Then $\dim W=\dim C^1(X,\calG)-2(m-r)$,
%	and an argument similar to the  last paragraph of Theorem~\ref{TH:one-cocycle-code}
%	shows that $W\cap B^1(X,\calF)$ contains some nonzero $1$-cochain $f$ when $|X(1)|\gg m$. 
%	Take some nonzero $g_0\in U(e_0)$ and define $g\in C^1(X,\calG)$ by setting
%	$g(e)=f(e)$ for all $e\in X(1)-\{e_0\}$ and $g(e_0)=f(e_0)$. 
%	Then $g$ is a word in $C^1(X,\calG)$,
%	typically large support, such that $\|d_0g\|=\|d_0(g-f)\|\leq 2\|g-f\|=2w(e_0)$
%	(Lemma~\ref{LM:max-cbe})
%	is very small.  
\end{remark}

We finish with explaining how to get an infinite family of good $1$-cocycle codes using 
Theorem~\ref{TH:one-cocycle-code}.

\begin{example}
	Let $Y$ be the affine building of $\nSL{F}{3}$, where $F$ is a local field
	with residue field of   $q$ elements; it is a
	$2$-dimensional building  of type $\tilde{A}_2$. It is well-known that one can find
	a sequence of simplicial complexes $\{X_s\}_{s\in \N\cup \{0\}}$ covered by $Y$
	and such $|X_s|$ tends to $\infty$ with $s$ and such that each $X_s$ covers $X_0$; see \cite{Lubotzky_2005_explicit_constructions_of_Ramanujan_complexes}
	for explicit constructions, or \S\ref{subsec:quotients-of-buildings} below.
	For every $v\in Y$, the link $Y_v$ is isomorphic to the spherical building $A_2(\F_q)$
	of Example~\ref{EX:An-building}. This implies readily that,
	for every $s\in\N\cup\{0\}$,
	every $0$-face of $X_s$ is contained in exactly $ 2(q^2+q+1)$ edges,
	every edge of $X_s$ is contained in exactly $q+1$ triangles,
	and $(X_s)_v$ is the incidence graph of the projective plane over $\F_q$ for every $v\in X_s(0)$.
	Thus, conditions (1)--(3) of Construction~\ref{CN:one-cocycle-code}
	hold for $X=X_s$ with $Q= 2(q^2+q+1)$, $\lambda=\frac{\sqrt{q}}{q+1}$
	and $\kappa=1$.
	Let $m=\floor{\frac{3}{4}Q}$ and $r=1$, and let $\calF_s$ denote the constant sheaf $\F^m$
	on $X_s$.
	By Remark~\ref{RM:one-cocycle-code-U-cond}(i), for a sufficiently
	large finite field $\F$, there is a choice of subspaces $\{U(e)\}_{e\in X_0(1)}$
	for which condition (4) holds with $(X,\calF)=(X_0,\calF_0)$. Part (ii)
	of that remark then implies that a choice of $U(e)$-s satisfying (4) exists
	for every $(X_s,\calF_s)$; let $\calG_s$ denote the sheaf from Construction~\ref{CN:one-cocycle-code}
	constructed using this data.
	Note that $\lambda$ tends to $0$ as $q$ tends to $\infty$.
	Thus, if $q$ is sufficiently large, then
	Theorem~\ref{TH:one-cocycle-code} says that the family of $1$-cocycle
	codes $\{Z^1(X,\calG_r)\subseteq C^1(X,\calG_r)\}_{r\in \N}$ on the alphabet $\Sigma:=\F^{m-1}$ 
	has linear distance
	and constant rate.
\end{example}

\section{Rate Conservation}
\label{sec:rate}

Throughout this section, $\F$ denotes a finite field of characteristic $p>0$.
Let $X$ be a $d$-complex, let $k\in\{0,\dots,d-1\}$, let $\calF$ be an $\F$-sheaf on $X$,
and let $B$ be a $\F$-basis of $\calF$
(see Example~\ref{EX:Hamming-norm}). 
Recall from Section~\ref{sec:ltc-and-css} that if $B^k(X,\calF)=0$  and $\dim \calF(x)=m$ for all $x\in X(k)$,
then $(X,\calF)$ gives rise to a $k$-cocycle code $Z^k(X,\calF)\subseteq C^k(X,\calF)\cong \Sigma^{X(k)}$
with alphabet $\Sigma=\F^m$.
If $k>0$ and $B^k(X,\calF)\neq 0$, then $(X,\calF,B)$
gives rise to $k$-cocycle quantum CSS code with alphabet $\F$.
In either case, the rate of the code is the ratio $\dim\HH^k(X,\calF)/\dim C^k(X,\calF)$.

Let $u:Y\to X$ be a covering.  Then the pullback sheaf $u^*\calF$ similarly gives
rise to a $k$-cocycle code-with-tester or a $k$-cocycle quantum CSS code. 
(Note that $B^k(X,\calF)=0$ implies $B^k(Y,u^*\calF)=0$.) 
In general, there is no
relation between the rates of the $k$-cocycle codes associated to $(X,\calF)$ and $(Y,u^*\calF)$.
However, in this section, we will show that under some assumptions on $u$ and $(X,\calF)$, we can guarantee
that the rate  $\dim\HH^k(Y,u^*\calF)/\dim C^k(Y,u^*\calF)$ is bounded from below by a constant
depending only on $X$ and $\calF$. This principle, formalized as Theorem~\ref{TH:rate-conservation},
will be called   \emph{rate conservation} in the sequel.

\medskip

We begin with two  lemmas.
The cyclic group of order $n$ is denoted $C_n$.
We will make extensive use of $C_n$-Galois coverings in the sense
\S\ref{subsec:coverings}. For example,
every double covering is a $C_2$-Galois covering and vice versa (Example~\ref{EX:coverings-basic}(i)).
Recall (\S\ref{subsec:sheaf-coh}) that if $\calF$ is an $\F$-sheaf on   $X$,
then $h^k(\calF)$  or $h^k(X,\calF)$, denotes $\dim_{\F}\HH^k(X,\calF)$.

\begin{lem}\label{LM:filtration}
	Let $p$ be a prime number, let $u:Y\to X$ be a $C_p$-Galois covering of simplicial complexes,
	let $\F$ be  a field of characteristic $p$,
	and let  $\calF$ be an $\F$-sheaf.
	Then there exists a sequence of subsheaves
	\[
	0=\calF_0\subseteq \calF_1\subseteq \dots\subseteq \calF_p=u_*u^*\calF
	\]
	such that $\calF_i/\calF_{i-1}\cong \calF$ for all $i\in\{1,\dots,p\}$.
\end{lem}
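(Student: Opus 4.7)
\medskip\noindent\textit{Proof plan.}
The strategy is to equip $u_*u^*\calF$ with a natural structure of a sheaf of $\F[C_p]$-modules and then exploit the fact that in characteristic $p$ the group algebra $\F[C_p]$ is a local ring with a nilpotent maximal ideal of length exactly $p$. Writing $\sigma$ for a generator of $C_p$ and $T=\sigma-1$, the Frobenius identity $(\sigma-1)^p=\sigma^p-1=0$ shows that $\F[C_p]\cong \F[T]/(T^p)$, giving a canonical filtration $0=T^p\F[C_p]\subsetneq T^{p-1}\F[C_p]\subsetneq\dots\subsetneq T\F[C_p]\subsetneq \F[C_p]$ all of whose successive quotients are one-dimensional and canonically isomorphic (via multiplication by $T^j$ out of $\F[C_p]/T\F[C_p]$) to the augmentation quotient $\F[C_p]/(\sigma-1)\cong \F$.

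First, I would endow $u_*u^*\calF$ with a $C_p$-action. Since $u$ is dimension-preserving (it is a covering), the pushforward of \S\ref{subsec:pushforward} applies and $(u_*u^*\calF)(x)=\prod_{y\in u^{-1}(x)}\calF(x)$; the group $C_p$ acts by permuting factors according to its action on $u^{-1}(x)$, and I would verify that this commutes with the restriction maps by unwinding the formula for $\res^{u_*u^*\calF}_{x'\from x}$ and using that $\sigma$ is a deck transformation, so that $(\sigma^{-1}y')(x)=\sigma^{-1}(y'(x))$ for every $y'\in u^{-1}(x')$ and every face $x$ of $u(y')$. This makes $u_*u^*\calF$ a sheaf of $\F[C_p]$-modules in which all restriction maps are $\F[C_p]$-linear.

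Next, I would define
\[
\calF_i \;:=\; T^{\,p-i}\cdot u_*u^*\calF\qquad(i=0,1,\dots,p),
\]
which is a well-defined subsheaf precisely because the restriction maps are $\F[C_p]$-linear. This directly gives the filtration $0=\calF_0\subseteq\calF_1\subseteq\dots\subseteq \calF_p=u_*u^*\calF$. To identify the subquotients, I would choose, for each nonempty $x\in X$, a point $y_0(x)\in u^{-1}(x)$ to trivialize $u^{-1}(x)$ as a $C_p$-torsor; this gives a (non-canonical) isomorphism $(u_*u^*\calF)(x)\cong \calF(x)\otimes_\F\F[C_p]$ of $\F[C_p]$-modules. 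Under any such trivialization, the map $f\mapsto T^{\,p-i}f$ from $(u_*u^*\calF)(x)$ to $\calF_i(x)/\calF_{i-1}(x)$ factors through $\calF(x)\otimes\bigl(\F[C_p]/T\F[C_p]\bigr)\cong\calF(x)$, and its induced map is an isomorphism because $T^{\,p-i}\F[C_p]/T^{\,p-i+1}\F[C_p]$ is $1$-dimensional and generated by the class of $T^{\,p-i}$.

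Finally, to promote these stalkwise isomorphisms to an isomorphism of sheaves $\calF_i/\calF_{i-1}\cong\calF$, I would check that the resulting identification is independent of the choice of $y_0(x)$, which reduces to the fact that the augmentation $\F[C_p]\to\F$ (sending every group element to $1$) is canonically $C_p$-invariant; together with $\F[C_p]$-linearity of the restriction maps, this makes the stalkwise isomorphisms assemble into a morphism of sheaves. I anticipate the only mildly delicate step is the naturality check in this last paragraph: one must verify that although each trivialization $(u_*u^*\calF)(x)\cong\calF(x)\otimes\F[C_p]$ depends on $y_0(x)$, the composite isomorphism $\calF_i(x)/\calF_{i-1}(x)\cong\calF(x)$ does not, and that these compatible stalkwise isomorphisms commute with restriction maps inherited from $\calF$.
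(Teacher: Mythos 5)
Your proposal is correct and follows essentially the same route as the paper's proof: the paper likewise identifies $u_*u^*\calF(x)$ with $\F C_p\otimes_\F\calF(x)$ via a choice of lifts, filters by the powers $I^{p-n}$ of the augmentation ideal $I=(g-1)\F C_p$ (using $(g-1)^p=0$ in characteristic $p$), and identifies the subquotients with $\calF$ using that $C_p$ acts trivially on $I^n/I^{n+1}$. Your formulation of the filtration intrinsically via the $\F[C_p]$-module structure before trivializing is a minor cosmetic difference, not a different argument.
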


The proof of the lemma is shorter and more elementary when $p=2$.
To help the reader, we decided to address this special case   
before proving the lemma in general.

\begin{proof}[Proof when $p=2$.]
	Put $\calF_2=u_*u^*\calF$ and let $\calF_0$ be the zero subsheaf of $\calF_2$.
	For every $x\in X-\{\emptyset\}$,
	we have $\calF_2(x)=u_*u^*\calF(x)=\prod_{y\in u^{-1}(x)} u^*\calF(y)=\prod_{y\in u^{-1}(x)}\calF(x)\cong \calF(x)\times \calF(x)$.
	Fix  an isomorphism $\calF_2(x)\cong \calF(x)\times \calF(x)$ for every $x$. 
	If $y\in X_{\supseteq x}-\{x\}$,
	then the restriction map $\res^{\calF_2}_{y\from x}:\calF(x)\times \calF(x)\to \calF(y)\times \calF(y)$
	is either $\res^{\calF}_{y\from x}\times  \res^{\calF}_{y\from x}$,
	or $\res^{\calF}_{y\from x}\times  \res^{\calF}_{y\from x}$ followed by swapping the two copies of $\calF(y)$.
	This observation allows us to define a subsheaf $\calF_1$  of $\calF_2$   by setting
	$\calF_1(x)=\{(f,f)\where f\in \calF(x)\}$ for all $x\in X-\{\emptyset\}$.
	
	For every $x\in X-\{\emptyset\}$, 
	define  $\vphi_x:\calF(x)\to \calF_1(x)$
	and $\psi_x:\calF_2(x)/\calF_1(x)\to \calF(x)$ by
	$\vphi_x(f)=(f,f)$ and $\psi_x((f,g)+\calF_1(x))=f-g$.
	Using the assumption $\Char \F=2$, it is straightforward to check that $\vphi=(\vphi_x)_{x\in X-\{\emptyset\}}$
	and $\psi = (\psi_x)_{x\in X-\{\emptyset\}}$ determine isomorphisms
	of sheaves $\vphi:\calF\to \calF_1=\calF_1/\calF_0$
	and $\psi:\calF_2/\calF_1\to\calF$, hence the lemma.
\end{proof}

\begin{proof}[Proof for general $p$.]
	Let $g$ denote a generator of the group $C_p$ and let $\F C_p=\F[g\where g^p=1]$
	denote the group algebra of $C_p$. Let $I$ be the augmentation ideal of $\F C_p$,
	i.e., $I=(g-1)\F C_p$. 
	Since $\Char \F=p$, we have $I^p=0$ (because $(g-1)^p=g^p-1^p=0$).
	In fact, one readily checks that 
	$\dim_{\F} I^n=p-n$ for all $n\in\{0,\dots,p\}$. The 
	quotient $\F C_p$-module $I^n/I^{n+1}$
	is spanned as an $\F$-vector space by $(g-1)^n$,
	and $g$ acts trivially
	on $I^n/I^{n+1}$ because, for every $a\in I^n$, we have $ga-a=(g-1)a\in (g-1)I^n= I^{n+1}$.
	In what follows, all tensor products are over $\F$.

	For every $x\in X-\{\emptyset\}$, choose some $\hat{x}\in Y$ with $u(\hat{x})=x$.
	Since $u:Y\to X$ is a $C_p$-Galois covering, $u^{-1}(x)=\{\tau \hat{x}\where \tau\in C_p\}$.
	As a result, for every $y\in X_{\supseteq x}$, there is a unique element $c_{y,x}\in C_p$
	such that $c_{y,x}\hat{x}\subseteq \hat{y}$. 
	This also allows us to identify $u_*u^*\calF(x)=\prod_{x'\in u^{-1}(x)}\calF(x)$ with
	$\F C_p\otimes_\F \calF(x) $ via sending 
	$(f_{x'})_{x'\in u^{-1}(x)}$
	to $\sum_{\tau \in C_p} \tau\otimes f_{\tau \hat{x}}$.
 	The restriction map $\res^{u_*u^*\calF}_{y\from x}:\F C_p \otimes \calF(x) \to \F C_p \otimes \calF(y) $
	is then given by $a\otimes f\mapsto a c_{y,x}^{-1} \otimes (\res^{\calF}_{y\from x}f)$.
	
	For $n\in\{0,\dots,p\}$, let $\calF_n$ denote the subsheaf of $u_* u^*\calF$
	determined by $\calF_n(x)=I^{p-n}\otimes \calF(x)$.
	Fix $n\in\{1,\dots,p\}$.
	For every $x\in X-\{\emptyset\}$,
	define  $\vphi_x:\calF(x)\to \calF_n(x)/\calF_{n-1}(x)\cong (I^{p-n}/I^{p-n+1})\otimes \calF(x)$
	by $\vphi_x(f)=((g-1)^{p-n}+I^{p-n+1})\otimes f$. This is an $\F$-vector space isomorphism.
	Moreover, 
	since $g$, and thus all elements of $C_p$, act  trivially on $I^{p-n}/I^{p-n+1}$,
	we have $\vphi_y\circ \res^{\calF}_{y\from x}=\res^{\calF_n/\calF_{n-1}}_{y\from x} \circ \vphi_x$
	whenever $\emptyset\neq x\subsetneq y\in X$.
	This means that $\vphi=(\vphi_x)_{x\in X-\{\emptyset\}}:\calF\to \calF_n/\calF_{n-1}$ is a
	an $\F$-sheaf isomorphism, and the lemma follows.
\end{proof}

\begin{lem}\label{LM:zero-coh-trick}
	Let $p$ be a prime number, let $u:Y\to X$ be a $C_p$-Galois covering of simplicial complexes,
	let $\F$ be a field of characteristic $p$,
	let $\calF$ be an $\F $-sheaf, and let $k\in\N\cup\{0\}$.
	Suppose that $\HH^{k-1}(X,\calF)=0$ (this always holds for $k=0$). Then
	\begin{enumerate}[label=(\roman*)]
		\item  $\HH^{k-1}(Y,u^*\calF)=0$, and
		\item
		$h^{k}(Y,u^*\calF)-h^{k+1}(Y,u^*\calF) \geq p (h^k(X, \calF)-
		h^{k+1}(X,\calF))$.
	\end{enumerate}
\end{lem}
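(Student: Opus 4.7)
\medskip

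\noindent\textbf{Proof plan.} The plan is to reduce both statements to assertions about the pushforward $u_*u^*\calF$ on $X$ by invoking Shapiro's Lemma (Lemma~\ref{LM:Shapiro}), which applies because a covering map is dimension-preserving: this gives $\HH^j(Y,u^*\calF)\cong \HH^j(X,u_*u^*\calF)$ for every $j\geq 0$. The point of doing this is that $u_*u^*\calF$ is the top of the filtration
\[
0=\calF_0\subseteq \calF_1\subseteq\cdots\subseteq \calF_p=u_*u^*\calF
\]
provided by Lemma~\ref{LM:filtration}, and all the subquotients $\calF_i/\calF_{i-1}$ are isomorphic to $\calF$. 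Thus, information about the cohomology of $\calF$ will propagate up the filtration through the long exact sequences of the short exact sequences $0\to\calF_{i-1}\to \calF_i\to\calF\to 0$.

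For part (i), I would induct on $i$ to show $\HH^{k-1}(X,\calF_i)=0$ for all $i\in\{0,\dots,p\}$. The base case $\calF_0=0$ is trivial; for the inductive step, the portion
\[
\HH^{k-1}(X,\calF_{i-1})\longrightarrow \HH^{k-1}(X,\calF_i)\longrightarrow \HH^{k-1}(X,\calF)
\]
of the long exact sequence has both outer groups zero (the left by induction, the right by hypothesis), forcing the middle one to vanish. Taking $i=p$ and applying Shapiro's Lemma yields $\HH^{k-1}(Y,u^*\calF)=0$.

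For part (ii), set $\chi(\calG):=h^k(X,\calG)-h^{k+1}(X,\calG)$ for any $\F$-sheaf $\calG$ on $X$. The key step is to show
\[
\chi(\calF_i)\geq \chi(\calF_{i-1})+\chi(\calF)
\]
for every $i\in\{1,\dots,p\}$. Using part (i) together with the hypothesis $\HH^{k-1}(X,\calF)=0$, the six-term portion of the long exact sequence
\[
0\to \HH^k(\calF_{i-1})\to \HH^k(\calF_i)\to \HH^k(\calF)\to \HH^{k+1}(\calF_{i-1})\to \HH^{k+1}(\calF_i)\to \HH^{k+1}(\calF)
\]
is exact at its left end. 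Letting $K$ denote the kernel of the rightmost map, the resulting exact sequence $0\to\HH^k(\calF_{i-1})\to\cdots\to K\to 0$ gives the alternating-sum identity
\[
h^k(\calF_{i-1})-h^k(\calF_i)+h^k(\calF)-h^{k+1}(\calF_{i-1})+h^{k+1}(\calF_i)-\dim K=0,
\]
and since $\dim K\leq h^{k+1}(\calF)$, rearranging yields the desired inequality. Iterating gives $\chi(\calF_p)\geq p\cdot\chi(\calF)$, and Shapiro's Lemma translates this into the statement of (ii).

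The computations are routine once the filtration is in hand; the only mild subtlety is being careful about which vanishing statements are needed to truncate the long exact sequence so that the alternating-sum argument yields an inequality in the correct direction. No serious obstacle is anticipated.
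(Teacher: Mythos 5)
Your proposal is correct and follows essentially the same route as the paper: the filtration $0=\calF_0\subseteq\cdots\subseteq\calF_p=u_*u^*\calF$ from Lemma~\ref{LM:filtration}, the inductive vanishing of $\HH^{k-1}(X,\calF_i)$, the truncated long exact sequence with an alternating-sum count, and Shapiro's Lemma to transfer the result back to $Y$ (the paper truncates with the cokernel of $\HH^{k+1}(X,\calF_i)\to\HH^{k+1}(X,\calF)$ rather than its image, which is equivalent). The only nit is that your $K$, as used in the alternating-sum identity and in the bound $\dim K\leq h^{k+1}(\calF)$, is the \emph{image} of the rightmost map, not its kernel.
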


\begin{proof}
	Let $\{\calF_i\}_{i=0}^p$ be the sequence
	of sheaves from Lemma~\ref{LM:filtration},
	and put 
	$N_i=\dim \HH^{k}(X,\calF_i)-\dim \HH^{k+1}(X,\calF_i)$.
	We will show by increasing induction on $i\in\{1,\dots,p\}$
	that $\HH^{k-1}(X,\calF_i)=0$ and $N_i\geq iN_1$.
	Provided this holds, taking $i=p$ and applying Lemma~\ref{LM:Shapiro}
	to $\calF_p=u_*u^*\calF$ gives (i) and (ii).	
	
	The case $i=1$ follows from the assumptions of the lemma,
	because  $\calF_1\cong \calF$. 
	
	Suppose that $i>1$ and we have shown that $\HH^{k-1}(X,\calF_{i-1})=0$
	and $N_{i-1}\geq (i-1)N_1$.
	The inclusion $\calF_{i-1}\subseteq \calF_i$
	gives rise to a short exact sequence
	$0\to \calF_{i-1}\to \calF_i\to\calF_i/\calF_{i-1}\to 0$,
	and thus to a long exact sequence of cohomology
	groups (see \S\ref{subsec:sheaf-coh}):
	\begin{align*}
		\cdots\to &\HH^{k-1}(X,\calF_{i-1})\to \HH^{k-1}(X,\calF_{i})\to \HH^{k-1}(X,\calF) \\
		\to & \HH^{k }(X,\calF_{i-1})\to \HH^{k }(X,\calF_i)\to \HH^{k }(X,\calF)\\
		\to & \HH^{k+1}(X,\calF_{i-1})\to \HH^{k+1}(X,\calF_i)\to \HH^{k+1}(X,\calF)\to \cdots
	\end{align*}
	Here, we substituted $\calF_i/\calF_{i-1}$ with the isomorphic sheaf
	$\calF$.
	Since both $\HH^{k-1}(X,\calF_{i-1})$
	and $\HH^{k-1}(X,\calF)$ are $0$, so is $\HH^{k-1}(X,\calF_i)$.
	Write $V= \coker (\HH^{k+1}(X,\calF_i)\to \HH^{k+1}(X,\calF))$.
	Then we have a $7$-term exact sequence
	\begin{align*}
	0&\to \HH^{k }(X,\calF_{i-1})\to \HH^{k }(X,\calF_i)\to \HH^{k }(X,\calF)\\
	&\to \HH^{k+1}(X,\calF_{i-1})\to \HH^{k+1}(X,\calF_i)\to \HH^{k+1}(X,\calF)\to 
	V\to 0
	\end{align*}
	This
	means that
	\[
	h^k(\calF_{i-1})-h^k(\calF_i)+h^k(\calF)-h^{k+1}(\calF_{i-1})+
	h^{k+1}(\calF_i)-h^{k+1}(\calF)+\dim V=0,
	\]
	and by rearranging, we get
	\[
	N_i=N_{i-1}+N_1+\dim V\geq (i-1)N_1 + N_1 =iN_i. \qedhere
	\]
\end{proof}

\begin{thm}[Rate Conservation]\label{TH:rate-conservation}
	Let $p$ be a prime number and let $\F$ be a   field
	of characteristic $p$.
	Let $X$ be a simplicial complex of dimension $d$, let $k\in\{0,\dots,d\}$
	and let $\calF$ be an $\F$-sheaf on $X$
	such that
	\[
	h^{k-1}(X,\calF)=0
	\qquad\text{and}\qquad
	h^{k }(X,\calF)> h^{k+1}(X,\calF).
	\]
	Put 
	\[\rho = (h^{k }(X,\calF)-h^{k+1}(X,\calF))/
	\dim C^k(X,\calF).
	\]
	Let $u:Y\to X$ be a covering map of degree $p^r$
	such that $u$ factors as 
	$Y=X_r\to X_{r-1}\to \dots\to X_0=X$
	and each map $X_i\to X_{i-1}$ is a $C_p$-Galois covering.
	Then:
	\begin{enumerate}[label=(\roman*)]
		\item $h^k(Y,u^*\calF)-h^{k+1}(Y,u^*\calF)\geq \rho \dim C^k(Y,u^*\calF)$.
		\item The rate of the $k$-cocycle code
		of $(Y,u^*\calF)$ (resp.\ the   $k$-cocycle quantum CSS
		code associated to $(Y,u^*\calF)$ and some basis of $u^*\calF$)
		is   at least $\rho$.
	\end{enumerate}
\end{thm}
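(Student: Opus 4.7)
The plan is to reduce to the one-step case already handled by Lemma~\ref{LM:zero-coh-trick}, by iterating along the factorization $Y=X_r\to X_{r-1}\to\dots\to X_0=X$. Write $u_i:X_i\to X_{i-1}$ for the $i$-th $C_p$-Galois covering and let $v_i:X_i\to X$ denote the composition $u_i\circ u_{i-1}\circ\dots\circ u_1$, with $v_0=\id_X$. Put $\calF_i=v_i^*\calF$, so that $\calF_0=\calF$ and $\calF_r=u^*\calF$; note that $\calF_i=u_i^*\calF_{i-1}$ because pullback along a composition of covering maps agrees with the iterated pullback.

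First I would prove, by induction on $i\in\{0,1,\dots,r\}$, the two statements
\[
h^{k-1}(X_i,\calF_i)=0
\qquad\text{and}\qquad
h^{k}(X_i,\calF_i)-h^{k+1}(X_i,\calF_i)\geq p^{i}\bigl(h^k(X,\calF)-h^{k+1}(X,\calF)\bigr).
\]
The base case $i=0$ is exactly the hypothesis. For the inductive step, the inductive hypothesis furnishes $h^{k-1}(X_{i-1},\calF_{i-1})=0$, so Lemma~\ref{LM:zero-coh-trick} applied to the $C_p$-Galois covering $u_i:X_i\to X_{i-1}$ and the sheaf $\calF_{i-1}$ gives $h^{k-1}(X_i,\calF_i)=0$ together with
\[
h^{k}(X_i,\calF_i)-h^{k+1}(X_i,\calF_i)\geq p\bigl(h^{k}(X_{i-1},\calF_{i-1})-h^{k+1}(X_{i-1},\calF_{i-1})\bigr),
\]
and the inductive hypothesis finishes the step. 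Setting $i=r$ yields $h^{k}(Y,u^*\calF)-h^{k+1}(Y,u^*\calF)\geq p^{r}(h^k(X,\calF)-h^{k+1}(X,\calF))$.

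Next I would compute $\dim C^k(Y,u^*\calF)$. Since $u$ has degree $p^r$, each $k$-face of $X$ is covered by exactly $p^r$ $k$-faces of $Y$, and $u^*\calF(y)=\calF(u(y))$, so
\[
\dim C^k(Y,u^*\calF)=\sum_{y\in Y(k)}\dim\calF(u(y))=p^r\sum_{x\in X(k)}\dim\calF(x)=p^r\dim C^k(X,\calF).
\]
Dividing the cohomological inequality by this quantity gives (i):
\[
\frac{h^{k}(Y,u^*\calF)-h^{k+1}(Y,u^*\calF)}{\dim C^k(Y,u^*\calF)}\geq \frac{p^{r}\bigl(h^{k}(X,\calF)-h^{k+1}(X,\calF)\bigr)}{p^{r}\dim C^k(X,\calF)}=\rho.
\]

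For (ii), the rate of either code built from $(Y,u^*\calF)$ equals $h^k(Y,u^*\calF)/\dim C^k(Y,u^*\calF)$ (for the plain $k$-cocycle code this uses $B^k(Y,u^*\calF)=0$, which follows from $h^{k-1}(Y,u^*\calF)=0$ when $k=0$, and for the quantum CSS code this is the definition in \S\ref{subsec:css}). Since $h^{k+1}(Y,u^*\calF)\geq 0$, part (i) gives $h^k(Y,u^*\calF)\geq \rho\dim C^k(Y,u^*\calF)$, hence the rate is at least $\rho$. There is no real obstacle here beyond verifying that the pullback along a composition behaves as expected and that Lemma~\ref{LM:zero-coh-trick} can be fed its own conclusion as the next hypothesis; both are bookkeeping.
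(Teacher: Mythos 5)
Your proposal is correct and follows essentially the same route as the paper: iterate Lemma~\ref{LM:zero-coh-trick} along the tower of $C_p$-Galois coverings to get $h^k-h^{k+1}\geq p^r(h^k(X,\calF)-h^{k+1}(X,\calF))$, combine with $\dim C^k(Y,u^*\calF)=p^r\dim C^k(X,\calF)$ for (i), and use $\dim Z^k\geq h^k\geq h^k-h^{k+1}$ for (ii). The only nitpick is that for $k>0$ the rate of the plain $k$-cocycle code is $\dim Z^k/\dim C^k$, which need not \emph{equal} $h^k/\dim C^k$ unless $B^k=0$, but the inequality $\dim Z^k\geq h^k$ is all that is needed, so nothing is lost.
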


\begin{proof}
	Let $u_i$ denote the composition $X_i\to \dots\to X_0=X$.
	Applying Lemma~\ref{LM:zero-coh-trick} 
	to the covering $X_i\to X_{i-1}$ and the sheaf $u_{i-1}^*\calF$
	with $i$ ranging from $1$ to $r$ shows that
	$h^k(Y,u^*\calF)-h^{k+1}(Y,u^*\calF)
	\geq p^r (h^k(X,\calF)-h^{k+1}(X,\calF))\geq p^r \rho  \dim C^k(X,\calF)$.
	Since $\dim C^k(Y,u^*\calF)=p^r \dim C^k(X,\calF)$,
	this means that $h^k(Y,u^*\calF)-h^{k+1}(Y,u^*\calF)\geq \rho \dim C^k(Y,u^*\calF)$. 
	This proves (i), and (ii) follows because
	$\dim Z^k(Y,u^*\calF)\geq 
	h^k( u^*\calF)\geq h^k(u^*\calF)-h^{k+1}(u^*\calF)$.
\end{proof}

\section{What Is Required  to Construct an Infinite Family of LTCs?}
\label{sec:recap}

We now put together the local-to-global principle
for cosystolic expansion (Theorem~\ref{TH:cosystolic-exp-from-links})
and the Rate Conservation Theorem (Theorem~\ref{TH:rate-conservation}) 
to give a recipe for constructing infinite families of low-query LTCs
with linear distance and constant rate. This is the \emph{tower paradigm} outlined
in Section~\ref{sec:overview}. %and have good rate.

\begin{thm}[Tower Paradigm]\label{TH:tower-paradigm}
	Let $p$ be a prime number and let $\F$ be a finite field of
	characteristic $p$.	
	Let $X $ be a  strongly connected  $d$-complex,
	let $k\in\{0,\dots,d-2\}$ and let $\calF$ be an $\F$-sheaf on $X$.
	Suppose that there is $m\in\N$ such that $\calF(x)=\F^m$ 
	for all $x\in X(k)$, that $B^{k-1}(X,\calF)=0$ (e.g., if $k=0$),
	that $\calF(x)\neq 0$ for all $x\in   X(k+1)\cup X(k+2)$, and the following  
	conditions are met:
	\begin{enumerate}[label=(t\arabic*)]
		\item \label{item:TH:tower-paradigm:tower}
			$X$ admits an infinite tower of connected  $C_p$-Galois coverings $\cdots \to X_2\to X_1\to X_0=X$ (i.e.,   $X_r\to X_{r-1}$ is a $C_p$-Galois covering
			for all $r\in\N$).
		\item \label{item:TH:tower-paradigm:LTC}
			There exist numbers $\veps_0,\dots,\veps_k,\veps'_0,\dots,\veps'_{k+1},\lambda\in \R_+$
			satisfying the inequality 
			of Theorem~\ref{TH:cosystolic-exp-from-links} and such that
			\begin{enumerate}[label=(t2-\alph*)]
				\item $(X,\calF)$ is an $i$-local $\veps_i$-coboundary
				expander in dimension $k$ for all $i\in\{0,\dots,k\}$,
				\item $(X,\calF)$ is an $i$-local $\veps'_i$-coboundary
				expander in dimension $k+1$ for all $i\in\{0,\dots,k+1\}$, and
				\item $X$ is a $(d-2)$-local $[-1,\lambda]$-spectral expander.
			\end{enumerate}
		\item \label{item:TH:tower-paradigm:rate}
			$\dim \HH^0(X,\calF)>\dim \HH^1(X,\calF)$.
	\end{enumerate}
	Write $u_r$ for the composition $X_r\to X_{r-1}\to \dots\to X_0=X$,
	put $\calF_r=u_r^*\calF$ and $\Sigma=\F^m$, and let 
	\[(Z^k(X_r,\calF_r), C^k(X_r,\calF_r)=\Sigma^{X_r(k)},\Phi_r)\] denote the $k$-cocycle
	code of $(X_r,\calF_r)$ with its natural tester
	(\S\ref{subsec:ltc}). 
	Then   $\{(Z^k(X_r,\calF_r),  \Sigma^{X_r(k)},\Phi_r) \}_{r\geq 0}$
	is a family of $(k+2)$-query LTCs with linear distance and constant rate.
	Moreover, there is $\eta>0$
	such that every code  in the family admits   a linear-time decoding
	algorithm able to correct  up to an $\eta $-fraction of errors.

	If only conditions \ref{item:TH:tower-paradigm:tower} and \ref{item:TH:tower-paradigm:LTC}
	are met, then $\{(Z^k(X_r,\calF_r),  \Sigma^{X_r(k)},\Phi_r) \}_{r\geq 0}$ is a family
	of LTCs with linear distance and the codes admit a decoding algorithm as above.
	If only conditions  \ref{item:TH:tower-paradigm:tower} and \ref{item:TH:tower-paradigm:rate}
	are met, then the codes $\{(Z^k(X_r,\calF_r),  \Sigma^{X_r(k)}) \}_{r\geq 0}$ 
	have constant rate.
\end{thm}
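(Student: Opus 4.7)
The plan is to show that all three assertions follow by applying the two main tools built up in the chapter---Theorem~\ref{TH:cosystolic-exp-from-links} (giving testability and linear distance) and Theorem~\ref{TH:rate-conservation} (giving constant rate)---to each layer $(X_r,\calF_r)$ of the tower, with constants that do not depend on $r$. Write $u_r:X_r\to X$ for the composed covering, so that $\calF_r=u_r^*\calF$.

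First, I would observe that every local hypothesis of Theorem~\ref{TH:cosystolic-exp-from-links} passes from $(X,\calF)$ to $(X_r,\calF_r)$ with the \emph{same} constants. Indeed, as recorded in Remark~\ref{RM:comments-on-local-to-global}(iv), a covering $p:Y\to X$ restricts to an isomorphism $Y_y\cong X_{p(y)}$ for every nonempty $y\in Y$, under which $(p^*\calF)_y=\calF_{p(y)}$. Iterating along $u_r$, conditions \ref{item:TH:tower-paradigm:LTC}(t2-a), (t2-b), (t2-c) transfer, as do the degree bound $D(X_r)=D(X)=:Q$, strong connectivity (every link of $X_r$ is a link of $X$, and $X_r$ is connected by \ref{item:TH:tower-paradigm:tower}), and the non-vanishing of $\calF_r(x)$ for $x\in X_r(k)\cup X_r(k+1)\cup X_r(k+2)$. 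Hence Theorem~\ref{TH:cosystolic-exp-from-links} yields constants $\veps',\delta',\beta,\beta',\gamma,\gamma'>0$, depending only on $(X,\calF)$, such that each $(X_r,\calF_r)$ is an $(\veps',\delta')$-cosystolic expander in dimension $k$ that $\beta$-expands $\gamma$-small locally minimal $k$-cochains and $\beta'$-expands $\gamma'$-small locally minimal $(k+1)$-cochains.

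Second, to convert this into the code-theoretic statement, I would invoke Proposition~\ref{PR:cbe-vs-ltc-weighted-supp} (yielding the $(k+2)$-query $\mu$-tester with soundness depending only on $\veps',\delta',Q,d$) together with Proposition~\ref{PR:decoding} (yielding a uniform lower bound on the relative distance and a linear-time decoding algorithm correcting a uniform $\eta$-fraction of errors). Since the per-vertex dimension $m$ and the parameter $M$ are fixed and $D(X_r)=D(X)$, every constant appearing in these propositions depends only on $(X,\calF)$. Note that $B^{k-1}(X,\calF)=0$ pulls back to $B^{k-1}(X_r,\calF_r)=0$, which for $k=0$ is automatic and in general suffices to run Proposition~\ref{PR:decoding}. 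This gives the LTC family with linear distance together with the linear-time decoder.

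Third, I would use Theorem~\ref{TH:rate-conservation} with $k=0$---the hypothesis $\HH^{-1}(X,\calF)=0$ holds trivially for a sheaf, and \ref{item:TH:tower-paradigm:rate} provides $h^0(X,\calF)>h^1(X,\calF)$---applied to the $C_p$-Galois tower $X_r\to\cdots\to X_0=X$ to obtain
\[
h^0(X_r,\calF_r)-h^1(X_r,\calF_r)\;\geq\;\rho\cdot\dim C^0(X_r,\calF_r),
\]
where $\rho:=(h^0(X,\calF)-h^1(X,\calF))/\dim C^0(X,\calF)>0$ depends only on $(X,\calF)$. Since $\dim Z^0(X_r,\calF_r)\geq h^0(X_r,\calF_r)\geq h^0(X_r,\calF_r)-h^1(X_r,\calF_r)$, the rate of the $r$-th code is at least $\rho$ uniformly in $r$. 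The final two assertions of the theorem, which drop either \ref{item:TH:tower-paradigm:rate} or \ref{item:TH:tower-paradigm:LTC}, follow by using only the corresponding subset of these steps. The main thing to watch is that each constant emerging from the local-to-global principle and from rate conservation truly does not grow with $r$; this is exactly where the covering perspective pays off, since everything relevant (links, per-vertex dimensions, restriction maps, degrees) is preserved under pullback along $u_r$.
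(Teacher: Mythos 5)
Your proposal is correct and follows essentially the same route as the paper's proof: transfer the local hypotheses to each $(X_r,\calF_r)$ via Remark~\ref{RM:comments-on-local-to-global}(iv), apply Theorem~\ref{TH:cosystolic-exp-from-links} and Proposition~\ref{PR:decoding} for uniform testability, distance and decoding, and apply Theorem~\ref{TH:rate-conservation} with condition \ref{item:TH:tower-paradigm:rate} for the constant rate. Your version is merely more explicit about the uniformity of the constants in $r$, which the paper leaves implicit.
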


By default, we will assume that $k=0$ when talking about the tower paradigm.
In this special case, Theorem~\ref{TH:tower-paradigm}
gives a recipe for getting an infinite family of $2$-query
LTCs with linear distance and constant rate.

\begin{proof}
	Condition \ref{item:TH:tower-paradigm:rate} and
	Theorem~\ref{TH:rate-conservation} imply
	that the rate of the codes
	$\{(Z^k(X_r,\calF_r),  \Sigma^{X_r(k)}) \}_{r\geq 0}$ is
	bounded from below by some $\rho>0$.
	The remaining assertions    follow from 
	Proposition~\ref{PR:decoding}
	and Theorem~\ref{TH:cosystolic-exp-from-links},
	provided   that   conditions (1)--(5) of
	Theorem~\ref{TH:cosystolic-exp-from-links} hold
	for every $(X_r,\calF_r)$ with $\veps_0,\dots,\veps_k,\veps'_0,\dots,\veps'_{k+1},\lambda$ as in \ref{item:TH:tower-paradigm:LTC}
	and $Q=D(X)$.
	These conditions hold for $(X,\calF)$ by our assumptions,
	so they also hold for $(X_r,\calF_r)$
	by Remark~\ref{RM:comments-on-local-to-global}(iv). 
\end{proof}

\begin{remark}\label{RM:alternative-three-conds}
	In Theorem~\ref{TH:tower-paradigm}, it is possible
	to replace  \ref{item:TH:tower-paradigm:LTC}
	and the connectivity assumption in~\ref{item:TH:tower-paradigm:tower}
	with   milder assumptions  
	by using Theorem~\ref{TH:expansion-of-small-sets-finer}
	and its corollaries instead of Theorem~\ref{TH:cosystolic-exp-from-links}
	in the proof. Specifically, in the case $k=0$, if
	we use Corollary~\ref{CR:cse-from-links-dim-2} instead of Theorem~\ref{TH:cosystolic-exp-from-links}, then we can replace \ref{item:TH:tower-paradigm:tower}
	and~\ref{item:TH:tower-paradigm:LTC} with the following:
	There are $\alpha_{-1},\alpha_0,\veps_0,\veps'_0,\veps'_1\in \R_+$
	satisfying the inequalities Corollary~\ref{CR:cse-from-links-dim-2} such that
	\begin{enumerate}[label=(t\arabic*$'$)]
		\item  
			$X$ admits an infinite tower of $C_p$-Galois coverings $\cdots \to X_2\to X_1\to X_0=X$,
			and each $X_r$ is an $\alpha_{-1}$-skeleton expander.
		\item  
			$(X,\calF)$ is a $0$-local $\veps_0$-coboundary
			expander in dimension $0$, an $i$-local $\veps'_i$-coboundary
			expander in dimension $1$ for $i\in\{0,1\}$
			and a $0$-local $\alpha_0$-skeleton expander.
	\end{enumerate}
\end{remark}

Thanks to Theorem~\ref{TH:tower-paradigm}, the problem
of constructing an infinite family of good $2$-query LTCs reduces
to the following question:

\begin{que}\label{QE:initial-data-for-tower-paradigm}
	Is there a sheaved $d$-complex $(X,\calF)$  
	satisfying conditions \ref{item:TH:tower-paradigm:tower}--\ref{item:TH:tower-paradigm:rate}
	of Theorem~\ref{TH:tower-paradigm} with $k=0$?
\end{que}

Note that we only need a  \emph{single} pair $(X,\calF)$ satisfying
\ref{item:TH:tower-paradigm:tower}--\ref{item:TH:tower-paradigm:rate} with $k=0$;
we will refer to such pairs as \emph{initial data} for the tower paradigm.
Note also that once a candidate $(X,\calF)$ is presented,
conditions \ref{item:TH:tower-paradigm:LTC} and
\ref{item:TH:tower-paradigm:rate} can be checked by computation, and only condition
\ref{item:TH:tower-paradigm:tower} needs a theoretical proof.

Finding  initial data for the tower paradigm
is the subject matter of Chapter~\ref{chap:initial-data},
where we reduce the problem to an experiment-supported
conjecture and the existence of certain arithmetic groups.
We also show in  \S\ref{subsec:three-cond-examples}
that   any two of the conditions
\ref{item:TH:tower-paradigm:tower}--\ref{item:TH:tower-paradigm:rate} are
fulfilled for some pair  $(X,\calF)$.
Alas, 
Question~\ref{QE:initial-data-for-tower-paradigm} 
remains open.

\medskip

We finish this section by explaining why
simplicial complexes with a locally constant sheaf
(see \S\ref{subsec:loc-const})
cannot serve as initial data for the tower paradigm.
Note first that if $\calF$ is a nonzero locally constant sheaf on $X$, then
the assumption $B^k(X,\calF)=0$
of Theorem~\ref{TH:tower-paradigm} is satisfied only if $k=0$.
The following proposition says that in this case,  conditions \ref{item:TH:tower-paradigm:tower} and 
\ref{item:TH:tower-paradigm:rate} cannot hold simultaneously.

\begin{prp}\label{PR:loc-constant-failure}
	Let $X$ be a $d$-complex, let $\F$ be a field
	of characteristic $p>0$,
	and let $\calF$ be a locally
	constant $\F$-sheaf on $X$. Suppose
	that $X$ admits an infinite tower
	of connected $C_p$-Galois coverings $\cdots\to X_2\to X_1\to X_0=X$.
	Then $\dim \HH^0(X,\calF)\leq \dim \HH^1(X,\calF)$.
\end{prp}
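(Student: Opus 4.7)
The plan is to compare two estimates on $\dim \HH^0(X_r, u_r^*\calF)$ as $r$ grows, where $u_r \colon X_r \to X$ denotes the composition $X_r \to X_{r-1}\to\cdots\to X_0 = X$. The iterated tower will force the cohomology to grow exponentially with $r$, while local constancy will force a uniform bound; these are incompatible unless $h^0(X,\calF) \leq h^1(X,\calF)$.

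First I would observe that for each $r$, the pullback $u_r^*\calF$ is again a locally constant $\F$-sheaf. Indeed, by Lemma~\ref{LM:loc-const-crit} all restriction maps of $\calF$ are isomorphisms, and by the definition of pullback in \S\ref{subsec:pushforward}, each restriction map $\res^{u_r^*\calF}_{y'\from y}$ equals $\res^{\calF}_{u_r(y')\from u_r(y)}$, hence is an isomorphism. Moreover $\dim u_r^*\calF(y) = \dim \calF(u_r(y)) = \dim\calF$, so $u_r^*\calF$ has the same dimension as $\calF$. Since each $X_r$ is connected by hypothesis, Lemma~\ref{LM:coh-loc-const} applies and yields the uniform bound
\[
h^0(X_r, u_r^*\calF) \leq \dim \calF \qquad\text{for every } r \geq 0.
\]

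Next, I would iterate Lemma~\ref{LM:zero-coh-trick} along the tower with $k = 0$. Since $\calF$ is a sheaf (not augmented), $\HH^{-1}(X,\calF) = 0$ automatically, so the hypothesis of the lemma is satisfied at the bottom of the tower. Part (i) of the lemma then propagates $\HH^{-1}(X_r, u_r^*\calF) = 0$ up the tower, ensuring the hypothesis remains valid at every stage; applying part (ii) to each $C_p$-Galois covering $X_r \to X_{r-1}$ and composing yields
\[
h^0(X_r, u_r^*\calF) - h^1(X_r, u_r^*\calF) \geq p^r \bigl( h^0(X,\calF) - h^1(X,\calF) \bigr).
\]

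Finally, suppose for contradiction that $h^0(X,\calF) > h^1(X,\calF)$, so the right-hand side is at least $p^r$. Combined with $h^1 \geq 0$, this gives $h^0(X_r, u_r^*\calF) \geq p^r$, which grows without bound as $r \to \infty$. This contradicts the uniform bound $h^0(X_r, u_r^*\calF) \leq \dim\calF$ obtained above. Therefore $h^0(X,\calF) \leq h^1(X,\calF)$. The only subtle point is verifying that the hypotheses of Lemma~\ref{LM:zero-coh-trick} can be iterated along the tower, but this is immediate from part (i) of that lemma together with the fact that the pullback of $u_r^*\calF$ along $X_{r+1}\to X_r$ is $u_{r+1}^*\calF$.
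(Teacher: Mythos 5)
Your proof is correct and follows essentially the same strategy as the paper's: a uniform bound $h^0(X_r,u_r^*\calF)\leq\dim\calF$ from connectivity and local constancy (Lemma~\ref{LM:coh-loc-const}), set against exponential growth of $h^0-h^1$ along the tower. The only difference is cosmetic: the paper invokes Theorem~\ref{TH:rate-conservation} as a black box for the growth estimate, whereas you inline the underlying iteration of Lemma~\ref{LM:zero-coh-trick}, which is exactly how that theorem is proved.
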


\begin{proof}
	Write $u_r$ for the map $X_r\to\dots\to X_1\to X$ and put $\calF_r=u_r^*\calF$.
	Then $\calF_r$ is locally constant of dimension $\dim\calF$.
	Since $X_n$ is connected, Lemma~\ref{LM:coh-loc-const}
	tells us that $h^0(X_r,\calF_r)\leq \dim\calF_r=\dim\calF$.
	Now, if it were the case that $h^0(X,\calF)> h^1(X,\calF)$,
	then Theorem~\ref{TH:rate-conservation}
	would imply that $ h^0(X_r,\calF_r)$ tends to $\infty$
	as $r\to\infty$, which contradicts our previous conclusion
	that $h^0(X_r,\calF_r)\leq \dim\calF$ for all $r$.
	Thus, we must have $h^0(X,\calF)\leq h^1(X,\calF)$.
\end{proof}

\begin{remark}
	There is an analogue of  Theorem~\ref{TH:tower-paradigm} 
	for quantum CSS codes. 
	That is, we can impose conditions similar
	to \ref{item:TH:tower-paradigm:tower}--\ref{item:TH:tower-paradigm:rate} 
	on a sheaved $d$-complex
	$(X,\calF)$ that would give rise to an infinite family of 
	one-sided locally
	testable quantum CSS
	codes that have constant rate, linear $X$-distance, and 
	whose  
	$X$-side has
	a linear-time decoding algorithm able to correct a constant-fraction of errors.
	Simply assume $k>0$, drop the assumption
	$B^k(X,\calF)=0$, and replace the use of Proposition~\ref{PR:decoding}
	with Proposition~\ref{PR:cbe-to-quantum-CSS}.
	
	Unlike the case of LTCs, it is seemingly 
	possible for $d$-complexes with locally constant sheaves
	to satisfy the required conditions.
\end{remark}

\chapter{Toward Initial Data for The Tower Paradigm}
\label{chap:initial-data}

Having the tower paradigm (Theorem~\ref{TH:tower-paradigm} with $k=0$)
at our disposal to produce good $2$-query LTCs, we now set to look
for sheaved complexes $(X,\calF)$ satisfying   its three prerequisites  
\ref{item:TH:tower-paradigm:tower}--\ref{item:TH:tower-paradigm:rate} (with $k=0$). 
We only need
one such pair.
The purpose of this chapter is to construct sheaved complexes which
satisfy 
\ref{item:TH:tower-paradigm:tower} (existence of an infinite tower),
\ref{item:TH:tower-paradigm:LTC} (local expansion conditions) 
and conjecturally also 
\ref{item:TH:tower-paradigm:rate} (rate conservation).
%we will restrict to the case $p=2$ throughout for simplicity.
While \ref{item:TH:tower-paradigm:rate} could be checked by computation,
such a computation is beyond
the reach of present computers because of the sheer size of $X$ and $\calF$.

In more detail, our approach to constructing initial data for the tower paradigm  
starts 
with a $d$-complex $X$ and a locally constant $\F$-sheaf $\calF$ on $X$ of a  
large (but ultimately fixed) dimension.\footnote{
	Recall that the tower paradigm \emph{cannot} work for locally constant sheaves;
	see Proposition~\ref{PR:loc-constant-failure}.
} As in \S\ref{subsec:sheaf-coh}, we abbreviate
\[
h^i(\calF)=\dim  \HH^i(X,\calF). 
\]
In Section~\ref{sec:modifying}, we present an iterative process taking $\calF$
and producing a subsheaf $\calC$ of $\calF$
such that  
$\quo{\calF}:= \calF/\calC$ is a (non-locally constant) sheaf with
$h^0(\quo{\calF})>h^1(\quo{\calF})  $, that is,
it satisfies the requirement \ref{item:TH:tower-paradigm:rate}.
We show that if the resulting
subsheaf $\calC$ of $\calF$ --- which
grows with each iteration of the process --- 
is ``small'' with respect to $\calF$,
then $(X,\quo{\calF})$ will satisfy the local   expansion conditions
in \ref{item:TH:tower-paradigm:LTC}  when $(X,\calF)$ satisfies them.
(We could also terminate the process while $\calC$ is still ``small''
to secure \ref{item:TH:tower-paradigm:LTC}, and hope that it suffices
to get \ref{item:TH:tower-paradigm:rate}.)
Choosing $X$ in advance so that it has an infinite tower of connected  double coverings
would   secure
\ref{item:TH:tower-paradigm:tower}.

Broadly speaking, we expect the process to converge quickly enough
when $h^1(\calF)$ is small with respect to $\dim \calF$.
We conjecture
that this is indeed the case when 
$X$ is covered by a sufficiently thick affine building (\gap{}Conjecture~\ref{CJ:process-for-buildings}).
We show in Theorems~\ref{TH:sheaves-with-small-coh}
and~\ref{TH:sheaves-with-small-coh-Ram} that there are simplicial complexes
covered by affine buildings which admit $\F_2$-sheaves $\calF$
of arbitrarily large dimension such that $h^1(\calF)=o(\dim \calF)$ (even
$h^1(\calF)=O(1)$ or $h^1(\calF)=0$, in some cases).
Thus, if our conjecture  holds for just one such $X$, 
then there is an $\F_2$-sheaf $\calF$ on $X$ such that
if we feed it into our process to produce $\quo{\calF}=\calF/\calC$, 
then   $(X,\quo{\calF})$ satisfies the requirements
\ref{item:tower-intro:first}--\ref{item:tower-intro:last} of
the tower paradigm.
In particular, $(X,\quo{\calF})$ gives rise to an infinite
family of  $2$-query LTCs with constant rate and linear distance.

We also give strong evidence that if
the kernel of the 
cup product $\cupp:\HH^1(X,\F)\otimes_{\F} \HH^1(X,\calF)\to\HH^2(X,\calF)$
(see \S\ref{subsec:cup-prod}) is of dimension smaller than $h^0(\calF)$,
then the iterative process stops quickly enough (after one step, in fact).
Assuming this, we show  that if a sheaf satisfying the said condition exists
over a simplicial complex $X$   covered by a sufficiently   
thick affine building, then there is a locally constant sheaf $\calF'$
on $\calF$ such that $(X,\quo{\calF'})$ satisfies the requirements
of the tower paradigm.

The iterative process is presented and discussed in Section~\ref{sec:modifying}. 
In Section~\ref{sec:candidates}, we show that
there exist simplicial complexes $X$ covered by arbitrarily
thick affine buildings that admit (1) an infinite tower
of double covering, and (2) $\F_2$-sheaves $\calF$
of arbitrarily large dimension such that $h^1(\calF)=o(\dim\calF)$.
We also demonstrate that any pair of   the conditions 
\ref{item:tower-intro:first}--\ref{item:tower-intro:last} 
of the tower paradigm are
satisfied by some sheaved complex.
The intermediate Section~\ref{sec:congruence} establishes the existence of another tower of coverings
(not the tower   required for the tower power paradigm)
with some special properties that  is needed
for the construction of the desired sheaves in Section~\ref{sec:candidates}.
This makes use of deep results about arithmetic groups.

\section{Modifying Sheaves to Get Rate Conservation}
\label{sec:modifying}

Throughout this section, $\F$ is a field (of any characteristic) and
$X$ is a   $d$-complex with $d\geq 2$. We fix a linear ordering  on the vertices
of $X$ and use it to identify $C^i(X,\calF)$ with $\prod_{x\in X(i)}\calF(x)$   
for every sheaf $\calF$ on $X$ and $i\in\{0,\dots,d\}$, see Remark~\ref{RM:ordered-cohomology}.
We let $ {\calF}$ denote a locally constant $\F$-sheaf of dimension $m$.
% and 
%think of $\calF$ as varying with $m$ as $m\to \infty$. 

\subsection{An Iterative Modification Process}
\label{subsec:quotient}

%We will make use of Construction~\ref{CN:modification-prototype}.
Recall from Construction~\ref{CN:modification-prototype} 
that if $E$ is an $\F$-subspace of $C^1(X,{\calF})$,
then we can form a subsheaf $\calC_E$  of ${\calF}$ by setting:
\[
\calC_E(x) = \sum_{e\in X(1)_{\subseteq x}} \res_{x\from e}\mathrm{Proj}_e(E)
\]
for all $x\in X$, where $\mathrm{Proj}_e:C^1(X, {\calF})\to  {\calF}(e)$
is the projection $f\mapsto f(e ):C^1(X,\calF)\to \calF(e)$. %($L$ is a fixed linear ordering on $V(X)$). 
Otherwise stated, $\calC_E$ is the smallest
subsheaf of ${\calF}$ such that $E\subseteq C^1(X,\calC_E)$.
%, and thus deserves
%to be called the subsheaf of $\hat{\calF}$ generated by $E$.
We will be interested in quotients sheaves of the form 
\[\calF_E:={\calF}/\calC_E.\]
Since $\dim\calC_E(v)=0$ for all $v\in X(0)$,
we have $\calF_E(v)=\calF(v)$ for all $v\in X(0)$.
However, if $E\neq 0$,
then $\calC_E\neq 0$, and as a result, $\calF_E$ is not locally
constant.
When $\dim E\ll \dim\calF$, the sheaf $\calF_E$ may be regarded
as being ``close'' to $\calF$ because
$
\dim \calC(x)\leq \schoose{i}{2} \dim E \ll \dim\calF
$
for every $i$-face $x\in X$.

With the tower paradigm in mind, our purpose will be to find a (typically
small) subspace $E$ of $C^1(X,\calF)$
such that $(X,\calF_E)$ satisfies conditions \ref{item:TH:tower-paradigm:LTC}
and \ref{item:TH:tower-paradigm:rate} of Theorem~\ref{TH:tower-paradigm}
(always with $k=0$).
We first focus on \ref{item:TH:tower-paradigm:rate}, which says that
$
h^1(\calF_E) < h^0(\calF_E)$.
%Recall that this allows us to apply
%rate conservation (Theorem~\ref{TH:rate-conservation}) 
%to $(X,\calF_E)$.
%
%
%

The effect of $E$ on $\dim\HH^i(X,\calF_E)$ 
for $i=0,1$ is (crudely) described  in
the following proposition.

\begin{prp}\label{PR:effect-of-E}
	In the previous notation, let $B_E= E\cap B^1(X,\calF )$, $Z_E=E\cap Z^1(X,\calF )$
	and let $H_E\cong Z_E/B_E$ be the image of $Z_E$ under the quotient map
	$Z^1(X, \calF)\to \HH^1(X, \calF)$.
	Denote the natural map $\HH^2(X,\calC_E)\to \HH^2(X,{\calF})$ by $\omega_E$. 
	Then:
	\begin{enumerate}[label=(\roman*)]
		\item $h^0(\calF_E)\geq h^0({\calF})+\dim B_E$.
		\item $h^1( \calF_E)\leq h^1( {\calF})-\dim H_E+\dim \im(\omega_E)$.
		%Moreover,
		%$H_E$ is contained the kernel of the natural map $\HH^1(X,{\calF}) \to\HH^1(X,\calF_E)$.
	\end{enumerate}
\end{prp}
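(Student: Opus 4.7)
Both inequalities should fall out of the long exact cohomology sequence
\[
0\to\HH^0(X,\calC_E)\to\HH^0(X,\calF)\to\HH^0(X,\calF_E)\xrightarrow{\delta_0}\HH^1(X,\calC_E)\xrightarrow{\iota}\HH^1(X,\calF)\to\HH^1(X,\calF_E)\xrightarrow{\delta_1}\HH^2(X,\calC_E)\xrightarrow{\omega_E}\HH^2(X,\calF)
\]
attached to the short exact sequence $0\to\calC_E\to\calF\to\calF_E\to 0$ (see \S\ref{subsec:sheaf-coh}). Before invoking it, I would exploit the special shape of $\calC_E$: by construction $\calC_E(v)=0$ for every $v\in X(0)$, so $C^0(X,\calC_E)=0$. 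This immediately gives $\HH^0(X,\calC_E)=0$ and $B^1(X,\calC_E)=0$, whence $\HH^1(X,\calC_E)=Z^1(X,\calC_E)$. Moreover, $E\subseteq C^1(X,\calC_E)$ by the very definition of $\calC_E$, so
\[
Z_E=E\cap Z^1(X,\calF)=E\cap Z^1(X,\calC_E)\subseteq Z^1(X,\calC_E)=\HH^1(X,\calC_E),
\]
and under $\iota$ the subspace $Z_E$ maps onto $H_E$ by definition. In particular, $H_E\subseteq \im(\iota)$, which is the key containment powering (ii).

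For (i), the initial segment of the long exact sequence reduces to
\[
0\to \HH^0(X,\calF)\to \HH^0(X,\calF_E)\xrightarrow{\delta_0}\HH^1(X,\calC_E)\xrightarrow{\iota}\HH^1(X,\calF),
\]
so $h^0(\calF_E)=h^0(\calF)+\dim\ker(\iota)$. It then suffices to exhibit $B_E$ as a subspace of $\ker(\iota)$. But $B_E\subseteq Z_E\subseteq \HH^1(X,\calC_E)$, and any $\omega\in B_E$ is, by definition, a $1$-coboundary in $\calF$; hence its image under $\iota$ (namely the class $[\omega]\in\HH^1(X,\calF)$) is zero. Thus $\dim\ker(\iota)\geq \dim B_E$, and (i) follows.

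For (ii), exactness at $\HH^1(X,\calF)$ and $\HH^1(X,\calF_E)$ gives a short exact sequence
\[
0\to \HH^1(X,\calF)/\im(\iota)\to \HH^1(X,\calF_E)\to \ker(\omega_E)\to 0,
\]
whence
\[
h^1(\calF_E)=h^1(\calF)-\dim\im(\iota)+\dim\ker(\omega_E).
\]
Combining with $\dim\im(\iota)\geq \dim H_E$ (established above) yields the desired upper bound on $h^1(\calF_E)$, after rewriting $\dim\ker(\omega_E)=h^2(\calC_E)-\dim\im(\omega_E)$ (or directly, depending on the exact form of the inequality, which I would pin down once $h^2(\calC_E)$ is controlled).

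\textbf{Where the work lies.} The arguments above are essentially mechanical once one has both (a) the vanishing $C^0(X,\calC_E)=0$ and (b) the containment $Z_E\subseteq \HH^1(X,\calC_E)$; both are direct consequences of the construction of $\calC_E$ as the \emph{smallest} subsheaf of $\calF$ containing $E$ in its $1$-cochains. The only subtle point I anticipate is bookkeeping the relationship between $\dim\ker(\omega_E)$, $\dim\im(\omega_E)$, and $h^2(\calC_E)$ so that the conclusion matches the stated form; this is a routine rank-nullity check and not a genuine obstacle.
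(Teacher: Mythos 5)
Your argument follows exactly the same route as the paper's proof: the long exact cohomology sequence of $0\to\calC_E\to\calF\to\calF_E\to 0$, the vanishing $C^0(X,\calC_E)=0$ (hence $\HH^0(X,\calC_E)=0$ and $\HH^1(X,\calC_E)=Z^1(X,\calC_E)$), and the containments $B_E\subseteq\ker\iota$ and $H_E\subseteq\im\iota$; the paper then simply says the result "follows readily from the exactness." So the substance of your proof is correct and complete.

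The one place you wobble is precisely where the statement itself is off. Your exact-sequence computation gives $h^1(\calF_E)=h^1(\calF)-\dim\im(\iota)+\dim\ker(\omega_E)$, hence $h^1(\calF_E)\leq h^1(\calF)-\dim H_E+\dim\ker(\omega_E)$, with $\ker(\omega_E)$ and not $\im(\omega_E)$. These cannot be reconciled by rank--nullity: if, say, $\omega_E=0$ but $\HH^2(X,\calC_E)\neq 0$, the bound with $\dim\im(\omega_E)$ is simply false. The term $\dim\im(\omega_E)$ in the displayed statement is a typo for $\dim\ker(\omega_E)$: the surrounding discussion speaks throughout of eliminating $\ker\omega_E$ as the obstruction, and the application in Proposition~\ref{PR:process-stops} explicitly invokes part (ii) in the form $h^1(\calF_E)\leq h^1(\calF)-\dim V_2+\dim\ker\omega_r$. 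So drop the closing hedge about rewriting $\dim\ker(\omega_E)=h^2(\calC_E)-\dim\im(\omega_E)$ — it cannot close the (nonexistent) gap — and state the conclusion with $\dim\ker(\omega_E)$; your derivation then proves the intended result.
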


\begin{proof}
Recall from \S\ref{subsec:sheaf-coh}
that the short exact sequence $0\to\calC_E\to  \calF\to \calF_E\to 0$
gives rise to a long cohomology exact sequence:
\begin{align*}
	0 = & \HH^0(X,\calC_E)\to \HH^0(X,\calF)\to \HH^0(X,\calF_E) \to\\
	&\HH^1(X,\calC_E)\xrightarrow{\alpha} \HH^1(X, \calF)\to \HH^1(X,\calF_E)\to \\
	&\HH^2(X,\calC_E)\xrightarrow{\omega} \HH^2(X,\calF)
\end{align*}
Since $C^0(X,\calC_E)=0$, we have $\HH^0(X,\calC_E)=0$
and $\HH^1(X,\calC_E)=Z^1(X,\calC_E)$. Noting that $Z_E\subseteq Z^1(X,\calC_E)$
and $B_E= Z_E\cap B^1(X, \calF)\subseteq Z^1(X,\calC_E)\cap B^1(X, {\calF})$, 
we see that $B_E\subseteq \ker\alpha$
and $H_E\subseteq \im \alpha$. The proposition
now follows readily from the exactness.
\end{proof}

If we (incorrectly)  ignore the   factor $\dim \im(\omega_E)$ in
Proposition~\ref{PR:effect-of-E}
(say, if $\omega_E=0$), 
then
the affect of replacing $E$ with $E+\F f$ for some $f\in C^1(X, {\calF})-E$
can be summarized as follows: 
\begin{itemize}
	\item if $f\in B^1(X, {\calF})$,
then adding $f$ to $E$ increases $  h^0( \calF_E)$
by $1$, 
	\item if $f\in Z^1(X, {\calF})-B^1(X, {\calF})$,
then adding $f$ to $E$ decreases $  h^1( \calF_E)$ by $1$ and leaves
$h_0(\calF_E)$ unchanged, and
	\item   if 
$f\in C^1(X,\calF)-Z^1(X,\calF)$, then adding $f$ to $E$ \emph{seemingly} has no affect
on $h^0( \calF_E)$ and $h^1(\calF_E)$. 
\end{itemize}
Recall that our goal is to choose $E$ so that $h^0( \calF_E)>h^1( \calF_E)$.
Taking these thumb rules as facts, we could attempt to achieve this 
by simply taking $E$ to be a subspace of $Z^1(X,\calF)$ of dimension $>h^1(\calF)-h^0(\calF)$.
Indeed, we can decompose $E$ as $E_1\oplus E_2$ with $E_1\subseteq B^1(X,\calF)$
and $E_2\cap B^1(X,\calF)=0$. We expect to have $h^0(\calF_E)\geq h^0(\calF)+\dim E_1$
and $h^1(\calF)\leq h^1(\calF_E)-\dim E_2$, which together gives
$h^0(\calF_E)-h^1(\calF_E)>0$.

Unfortunately,   
$\ker(\omega_E:\HH^2(X,\calC_E)\to \HH^2(X,\calF))$ 
may be nonzero, and its effect on $h^1(\calF_E)$ should  be taken into account.
However, if   $\ker\omega_E\neq 0$, then we can attempt
to eliminate $\ker \omega_E$  by enlarging $E$ with more $1$-cochains.
Specifically, choose a subspace 
$K\subseteq C^2(X,\calC_E)$
mapping bijectively onto $\ker\omega_E\subseteq \HH^2(X,\calC_E)$ 
and   subspace  $E'\subseteq C^1(X, \calF)$ such that $d_1$
restricts to a bijection $E'\to K$ (it exists because
$K\subseteq B^2(X, \calF)$), and replace $E_0:=E$
with $E_1:=E+E'$. 
%By construction, we have $E'\cap Z^1(X, \calF)=E'\cap\ker d_1=0$,
%and thus replacing $E $ by $  E+E'$ should, heuristically, not affect $h^i( \calF_E)$ for $i=0,1$.
We show in Proposition~\ref{PR:securing-expansion-for-quotients}(i) below
that replacing $E$ by $E+E'$ does not affect $Z_E=E\cap Z^1(X,\calF)$
and $B_E=E\cap B^1(X,\calF)$.
At the same time, replacing $E$ by $E+E'$  trivializes the   
cohomology classes in  $\ker(\omega_{E_0}:\HH^2(X,\calC_{E_0})\to \HH^2(X,\calF)) $,
because 
the natural map
$\HH^2(X,\calC_{E_0})\to \HH^2(X,\calC_{E_1})$ 
vanishes on on $\ker \omega_{E_0}$.
The replacement of $E$ by $E+E'$ may result in new cohomology
classes in $\ker\omega_E$, so we can repeat this process until $\ker \omega_E =0$.
%(Note, however, that there is no guarantee that this process will stop  
%before we get $\calC_E(x)=\calF(x)$ for all $x\in X-X(0)-X(-1)$.)
We therefore arrive at the following iterative process:

\begin{construction}\label{CN:modification-process}
Let $\calF$ be a locally constant $\F$-sheaf on a $d$-complex $X$ such that $h^1(\calF)\geq h^0(\calF)$.
\begin{enumerate}[label=(\arabic*)]
	\item Set $E_0$ to be the zero subspace of $C^0(X,\calF)$. 
	\item Let $E'_0$ be a subspace of $Z^1(X,\calF)$ of dimension $h^1(\calF)-h^0(\calF)+1$.\footnote{
		It is also possible to take a   subspace of larger dimension.	
	}
	\item Set $r=1$ and $E_1=E'_0$
	\item While $\dim E'_{r-1}>0$:
	%$\ker(\omega_{E_r}:\HH^2(X,\calC_{E_r})\to \HH^2(X, \calF))\neq 0$:
	\begin{enumerate}[label=(\alph*)]
		\item Choose
		a subspace  $E'_r\subseteq C^1(X, \calF)$ such that $d_1(E'_r)\subseteq Z^2(X,\calC_{E_r})$
		and the composition $ E'_r\xrightarrow{d_1} Z^2(X,\calC_{E_r})\to\HH^2(X,\calC_{E_r})$
		maps $E'_r$ bijectively onto  $\ker(\omega_{E_r}:\HH^2(X,\calC_{E_r})\to \HH^2(X, \calF))$.
		\item Set $E_{r+1}=E_r+E'_r$  and increase $r$ by $1$.
	\end{enumerate}
	\item Set $E=E_r$.
\end{enumerate}
We say that the iterative process stops or converges after $n$ steps if $E=E_{n }$.
If $\F$ is finite, and
if not indicated otherwise,   we assume that the spaces
$E'_0,E'_1,\dots$ defined in (2) and (a)  are chosen uniformly
at random among all eligible subspaces of $C^1(X,\calF)$.
\end{construction}

%Here, $\omega_r$ denotes the natural map $\HH^2(X,\calC_{E_r})\to \HH^2(X, \calF)$.
In what follows, we abbreviate $\omega_{E_r}$ to $\omega_r$
and  $\calC_{E_r}$ to $\calC_r$,
so that
\[
0=E_0\subseteq E_1\subseteq E_2\subseteq\dots\subseteq C^1(X,\calF)
\qquad\text{and}\qquad
\calC_0\subseteq \calC_1\subseteq\calC_2\subseteq \dots
\subseteq \calF.\]
Observe that $\dim E'_r$ is determined   by $E_r$. 
We shall see in the  the following proposition
that  $E'_r\cap E_r=0$,
and hence
$\dim E_{r+1}=\dim E_r+\dim E'_r$. Thus, $\dim E_{r+1}$
determined by $E_r$. However, the choice of $E'_r$ may  affect
$\dim E_s$ for $s>r+1$.
 
\medskip 
 
Part (ii) of the following proposition says that 
$\calF_E$   satisfies condition \ref{item:TH:tower-paradigm:rate}
when   $E$ is the subspace constructed in the iterative process
of Construction~\ref{CN:modification-process}.

\begin{prp}
	\label{PR:process-stops}
	With notation as above:
	\begin{enumerate}[label=(\roman*)]
		\item $E_r\cap E'_r=0$, $E_r\cap Z^1(X,\calF) =E_1\cap Z^1(X,\calF)$
		and $E_r\cap B^1(X,\calF) =E_1\cap B^1(X,\calF)$
		for every $r\in \N $.
		\item The process in Construction~\ref{CN:modification-process} stops, and the resulting subspace
	$E$ satisfies
	$h^0(\calF_E)>h^1(\calF_E)$.
	\end{enumerate}
\end{prp}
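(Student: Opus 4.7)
The proof will proceed in three stages: first establish (i) by induction, then deduce termination of the process, and finally derive the cohomology inequality in (ii) from the long exact sequence attached to $0\to\calC_E\to\calF\to\calF_E\to 0$.

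For (i), the key observation is that by the minimality in the definition of $\calC_{E_r}$ one has $E_r\subseteq C^1(X,\calC_{E_r})$, so $d_1 E_r\subseteq B^2(X,\calC_{E_r})$. Combined with the defining property of $E'_r$ --- namely that the composition $E'_r\xrightarrow{d_1} Z^2(X,\calC_{E_r})\to \HH^2(X,\calC_{E_r})$ is injective --- this immediately gives $E_r\cap E'_r=0$: an element of the intersection has $d_1$-image in $B^2(X,\calC_{E_r})$, hence represents $0$ in $\HH^2(X,\calC_{E_r})$, forcing it to vanish. The same mechanism shows that if $f=g+h\in E_{r+1}\cap Z^1(X,\calF)$ with $g\in E_r$ and $h\in E'_r$, then $d_1 h=-d_1 g\in B^2(X,\calC_{E_r})$, so $h=0$ and $f\in E_r\cap Z^1(X,\calF)$. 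Inducting on $r$ yields the claim for $Z^1$, and the claim for $B^1$ follows since $B^1\subseteq Z^1$.

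Termination is then immediate: (i) gives $\dim E_{r+1}=\dim E_r+\dim E'_r$, so $\dim E_r$ strictly increases as long as $E'_{r-1}\neq 0$, and since $E_r$ lies in the finite-dimensional space $C^1(X,\calF)$, the process must halt. When it halts at $E=E_n$, the exit condition $E'_{n-1}=0$ together with the fact that $E'_{n-1}$ was constructed to biject onto $\ker\omega_{E_{n-1}}$ (note that $n\geq 2$ because $\dim E'_0=h^1(\calF)-h^0(\calF)+1\geq 1$) and the identity $E_n=E_{n-1}$ yield $\ker(\omega_E:\HH^2(X,\calC_E)\to \HH^2(X,\calF))=0$. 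For the cohomology inequality I will appeal to the long exact sequence \eqref{EQ:long-coh-exact-seq}: since $C^0(X,\calC_E)=0$, both $\HH^0(X,\calC_E)$ and $B^1(X,\calC_E)$ vanish, and the latter gives the identification $\HH^1(X,\calC_E)=Z^1(X,\calC_E)=C^1(X,\calC_E)\cap Z^1(X,\calF)$. Taking the alternating sum of dimensions along the exact segment $0\to \HH^0(X,\calF)\to \HH^0(X,\calF_E)\to \HH^1(X,\calC_E)\to \HH^1(X,\calF)\to \HH^1(X,\calF_E)\to \ker\omega_E\to 0$ yields
\[
h^0(\calF_E)-h^1(\calF_E)=h^0(\calF)-h^1(\calF)+h^1(\calC_E)-\dim\ker\omega_E.
\]
At termination $\ker\omega_E=0$, and since $E'_0\subseteq Z^1(X,\calF)\cap C^1(X,\calC_E)=\HH^1(X,\calC_E)$, we have $h^1(\calC_E)\geq \dim E'_0=h^1(\calF)-h^0(\calF)+1$. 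Substituting into the identity above gives $h^0(\calF_E)-h^1(\calF_E)\geq 1$, completing (ii).

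I expect the main technical difficulty to lie in the bookkeeping for part (i): one must simultaneously track the growth of the subsheaves $\calC_{E_r}$ (both $C^1(X,\calC_{E_r})$ and $B^2(X,\calC_{E_r})$ enlarge with $r$) and the containment $E_r\subseteq C^1(X,\calC_{E_r})$, and feed these into the injectivity of the composition $E'_r\to \HH^2(X,\calC_{E_r})$. Once (i) is in hand, the remaining steps are routine applications of the long exact sequence together with the identification $\HH^1(X,\calC_E)=C^1(X,\calC_E)\cap Z^1(X,\calF)$.
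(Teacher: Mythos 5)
Your proof is correct, and parts (i) and the termination argument in (ii) are identical in substance to the paper's. The derivation of $h^0(\calF_E)>h^1(\calF_E)$ takes a slightly different route: the paper applies its Proposition~\ref{PR:effect-of-E} (which bounds $h^0(\calF_E)$ and $h^1(\calF_E)$ separately by tracking the subspaces $B_E$, $H_E$ and $\ker\omega_E$) after decomposing $E'_0=V_1\oplus V_2$ with $V_1=E'_0\cap B^1(X,\calF)$, whereas you bypass that proposition and extract the exact alternating-sum identity
\[
h^0(\calF_E)-h^1(\calF_E)=h^0(\calF)-h^1(\calF)+h^1(\calC_E)-\dim\ker\omega_E
\]
directly from the seven-term exact sequence, then use $E'_0\subseteq Z^1(X,\calC_E)=\HH^1(X,\calC_E)$ to get $h^1(\calC_E)\geq\dim E'_0$. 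Both arguments rest on the same long exact sequence for $0\to\calC_E\to\calF\to\calF_E\to 0$, but your version is a bit cleaner: it produces an equality rather than a chain of two one-sided bounds, avoids the $V_1\oplus V_2$ bookkeeping, and in passing makes visible exactly where the slack enters (namely the excess $h^1(\calC_E)-\dim E'_0\geq 0$). One small infelicity in your write-up of the termination step: the phrase ``the exit condition $E'_{n-1}=0$ \dots and the identity $E_n=E_{n-1}$ yield $\ker\omega_E=0$'' is stated in a way that conflates the iteration index with the step count, but the intended argument (the loop exits when $\dim E'_{r-1}=0$, which forces $\ker\omega_{E_{r-1}}=0$ and $E_r=E_{r-1}$, hence $\ker\omega_E=0$) is sound, and your parenthetical observation that the loop runs at least once because $\dim E'_0=h^1(\calF)-h^0(\calF)+1\geq 1$ is correct and needed.
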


\begin{proof}
	(i) By construction, $E'_r\cap d_1^{-1}(B^2(X,\calC_r))=0$. 
	Since $d_1$ maps  $E_r\subseteq C^1(X,\calC_r)$
	into $B^2(X,\calC_r)$, this means that   
	$E'_r\cap E_r=0$. 
	
	Next, if $r>1$ and $f\in E_r\cap Z^1(X,\calF)=(E_{r-1}+E'_{r-1})\cap Z^1(X,\calF)$,
	then we can write $f=g+g'$ with $g\in E_{r-1}$ and $g'\in E'_{r-1}$.
	Since $d_1f=0$, we have $d_1g'=-d_1g\in B^2(X,\calC_{r-1})$,
	so $g'\in d_1^{-1}(B_2(X,\calC_{r-1}))\cap E'_{r-1}=0$. This means that $g'=0$,
	hence $f=g\in E_{r-1}\cap Z^1(X,\calF)$.
	By induction on $r$, it follows that $E_r\cap Z^1(X,\calF) =E_1\cap Z^1(X,\calF)$.
	Since $B^1(X,\calF)\subseteq Z^1(X,\calF)$,
	this means that $E_r\cap B^1(X,\calF) =E_1\cap B^1(X,\calF)$.

	(ii) Since $\dim E_r$ is bounded from above by $\dim C^1(X,\calF)$,
	in order to prove that the process stops, it
	is enough to show that   $\dim E_{r+1}>\dim E_r$ for all $r\geq 0$
	such that $\ker \omega_r \neq 0$.
	By (i), $\dim E_{r+1}=\dim E'_r+\dim E_r= \dim E_r+\dim \ker\omega_r$, hence our claim.
	
	Suppose now that the process stopped after $r$ steps.
	Then $E=E_r$ and $\ker\omega_r = \ker \omega_E= 0$.
	Write $V_1=E'_0\cap B^1(X,\calF)$ and
	choose a subspace $V_2$ such that $E'_0=V_1\oplus V_2$.
	By (i),  $\dim (E\cap B^1(X,\calF)) = \dim (E_1\cap B^1(X,\calF))=\dim V_1$
	and $\dim(E  \cap Z^1(X,\calF))= \dim(E_1\cap Z^1(X,\calF))=\dim V_2$.
	Now, applying Proposition~\ref{PR:effect-of-E} to $E=E_r$,
	we get 
	$h^0( \calF_E)-h^1( \calF_E)\geq  
	(h^0( \calF)+\dim V_1) - (h^1( \calF)- \dim V_2 +
	\dim\ker\omega_r)=\dim E'_0-(h^1(\calF)-h^0(\calF))>0$.
\end{proof}

Now that we know that iterative process of Construction~\ref{CN:modification-process}
outputs a subspace $E\subseteq C^1(X,\calF)$ such that $\calF_E$ satisfies
condition \ref{item:TH:tower-paradigm:rate} of Theorem~\ref{TH:tower-paradigm},
we turn to check whether $\calF_E$ also satisfies the local expansion conditions
in \ref{item:TH:tower-paradigm:LTC}.
This is {\it a priori} not true in general.
Indeed,   the process might stop only when 
$\calC_E(x)=\calF(x)$ for all $x\in X-X(0)-X(-1)$,
in which case $\calF_E$ will be isomorphic to the sheaf obtained from $\calF$
by setting $\calF_E(x)=\calF(x)\cong\F^m$ if $x\in X(0)$ and $\calF_E(x)=0$ otherwise.
The pair  $(X_v, (\calF_E)_v)$   is a poor coboundary expander for every $v\in X(0)$, so condition
\ref{item:TH:tower-paradigm:LTC} of the tower paradigm  
will not hold for $(X,\calF_E)$.
Nevertheless, we will now show that if $\dim E\ll \dim\calF$
and $X$ is covered by a sufficiently thick affine building, then,
with high probability, $\calF_E$ satisfies \ref{item:TH:tower-paradigm:LTC}.

To that end,
we would like to apply Theorem~\ref{TH:sheaves-on-quo-of-aff-buildings-quotients}.
Recall that in order to use this theorem, $E$ must satisfy the following to conditions:
\begin{enumerate}[label=(a\arabic*)]
%	\item For every $e\in E$, the projection $f\mapsto f(e):E\to \calF(e)$ is injective;
%	denote its image by $C_e$.
	\item \label{item:F-E:linear-disjointness}
	For every $v\in X(0)$, the map $\sum_{e} \res_{e\from v}^{-1}:\bigoplus_e \calC_E(e)\to
	{\calF}(v)$, with $e$ ranging over $X(1)_{\supseteq v}$,
	is injective.
	\item \label{item:F-E:linear-depedence}
	For every $t\in X(2)$ with edges $e,e',e''$, we have $\calC_E(e)|_t\subseteq \calC_E(e')|_t+\calC_E(e'')|_t$.
\end{enumerate}
We would therefore like to choose the spaces   $E_0,E_1,\dots$ of Construction~\ref{CN:modification-process}
such that they all satisfy (a1) and (a2).

\begin{prp}\label{PR:securing-expansion-for-quotients}
	With notation as in Construction~\ref{CN:modification-process},
	suppose that $\F$ is a finite field and
	let $Q=D_{0,1}(X)=\max\{\#X(1)_{\supseteq v}\where v\in X(0)\}$. Then:
	\begin{enumerate}[label=(\roman*)]
		\item For all $r\in\N\cup\{0\}$, condition \ref{item:F-E:linear-depedence} holds for   $E_r$.

		\item For every $r\in \N\cup\{0\}$, 
		if condition \ref{item:F-E:linear-disjointness} 
		holds  for $E_r$ and 
		\[\dim E_{r+1}
		%=\dim E_r + \dim\ker\omega_r 
		\leq \frac{\dim \calF - \log_{|\F|}|X(0)|}{Q}, \]
		then $E_{r+1}$
		can be chosen to satisfies
		\ref{item:F-E:linear-disjointness}.
		%In this case, 		
		%$\dim E_{r+1}$ 
		%is the left hand side of the inequality. 
		More  precisely,
		if $E'_r$ is chosen uniformly at random,
		then \ref{item:F-E:linear-disjointness}
		is satisfied
		with probability $>1-|X(0)||\F|^{Q \dim E_{r+1} -\dim \calF}$.

		%\item Moreover, if condition \ref{item:F-E:linear-disjointness}
		%holds for $E_r$, then $E_r\cap Z^1(X,\calF)=E_0\cap Z^1(X,\calF)$
		%and $B_r\cap B^1(X,\calF)=E_0\cap B^1(X,\calF)$.
	\end{enumerate}
	The non-probabilistic 
	assertions of (ii) also hold if $\F$ is infinite upon replacing $\log_{|\F|}|X(0)|$ with $0$.
\end{prp}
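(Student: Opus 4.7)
The overall plan is to address (i) by induction on $r$, and (ii) by a union-bound probabilistic argument over vertices; the infinite-field case then drops out by a standard generic-point observation once the probabilistic count is in hand.

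For (i) I would argue by induction on $r$. Unwinding Construction~\ref{CN:modification-prototype}, for any triangle $t \in X(2)$ with edges $e_0,e_1,e_2$ and any subspace $E \subseteq C^1(X,\calF)$, one has $\calC_E(t) = E(e_0)|_t + E(e_1)|_t + E(e_2)|_t$, so condition \ref{item:F-E:linear-depedence} at $t$ is the inclusion $E(e_0)|_t \subseteq E(e_1)|_t + E(e_2)|_t$. The base case $r=1$ follows from $E_1 = E'_0 \subseteq Z^1(X,\calF)$ exactly as in Example~\ref{EX:cocycle-E-is-okay}: if $f \in E_1$, then $(d_1 f)(t)=0$ rewrites $\res_{t\from e_0} f(e_0)$ as a combination of the other two terms. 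For the inductive step, write $f = h + g \in E_{r+1}$ with $h \in E_r$ and $g \in E'_r$. The inductive hypothesis handles $h$. For $g$, by construction $d_1 g \in Z^2(X,\calC_r) \subseteq C^2(X,\calC_r)$, so $(d_1 g)(t) \in \calC_r(t)$; but the inductive hypothesis applied to $E_r$ gives $\calC_r(t) = E_r(e_1)|_t + E_r(e_2)|_t$, whence $(d_1 g)(t) \in E_{r+1}(e_1)|_t + E_{r+1}(e_2)|_t$. Rearranging $(d_1 g)(t) = \sum_i (-1)^i \res_{t\from e_i} g(e_i)$ then puts $\res_{t\from e_0} g(e_0)$ into $E_{r+1}(e_1)|_t + E_{r+1}(e_2)|_t$, completing the step.

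For (ii), since $\calF$ is locally constant, fixing $v \in X(0)$ and the isomorphisms $\res_{e\from v}:\calF(v) \to \calF(e)$ lets us identify each $\mathrm{Proj}_e(E_{r+1})$ with a subspace $W_e \subseteq \calF(v)$ of dimension $\leq \dim E_{r+1}$, and condition \ref{item:F-E:linear-disjointness} at $v$ says that the $Q_v \leq Q$ subspaces $\{W_e\}_{e \supseteq v}$ are in direct sum. I would bound, for fixed $v$, the probability that a uniformly random $E'_r$ (of the prescribed dimension and satisfying the $d_1$-constraint) produces a nontrivial dependence among the $W_e$. Concretely, since $E_r$ already satisfies \ref{item:F-E:linear-disjointness}, any failure at $v$ for $E_{r+1}$ corresponds to a nonzero element in the kernel of the $\F$-linear map $\bigoplus_{e \supseteq v} W_e \to \calF(v)$, a source of dimension at most $Q \dim E_{r+1}$ and a target of dimension $\dim \calF$. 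A standard counting argument (enumerate nonzero elements of the source and bound, for each, the fraction of linear extensions sending it into a fixed codimension-$\dim\calF$ subspace) yields a per-vertex failure probability of at most $|\F|^{Q\dim E_{r+1} - \dim\calF}$. Taking a union bound over the $|X(0)|$ vertices gives the stated estimate; the hypothesis then makes this bound strictly less than $1$, so a good $E'_r$ exists.

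The main obstacle is the probabilistic bound at a single vertex: the constraint $d_1 E'_r \subseteq Z^2(X,\calC_r)$ (together with the bijection requirement onto $\ker\omega_r$) restricts $E'_r$ to an affine-linear family rather than an arbitrary Grassmannian, so one must check that the relevant failure event still has the expected codimension in this family. This should follow from the fact that the constraints are imposed coordinate-wise at $2$-faces while \ref{item:F-E:linear-disjointness} is a condition at $0$-faces through the isomorphisms $\res_{e \from v}$, making the two essentially independent, but verifying this requires a careful parameterisation. For the infinite-field case, the same codimension computation identifies the set of ``bad'' $E'_r$ with a proper Zariski-closed subvariety of the parameter space as soon as $Q\dim E_{r+1} \leq \dim\calF$, and hence its complement is nonempty, giving the non-probabilistic statement without the $\log_{|\F|}|X(0)|$ correction.
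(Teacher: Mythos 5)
Your part (i) is essentially the paper's proof: the same induction on $r$, with the base case from Example~\ref{EX:cocycle-E-is-okay} and the inductive step using $d_1(E'_{r-1})\subseteq C^2(X,\calC_{r-1})$ together with the inductive hypothesis to push $\calC_{r-1}(t)$ into the two edges containing the vertex $v$. No issues there.

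For part (ii), however, the step you yourself flag as ``the main obstacle'' is a genuine gap, and it is exactly the point where the paper does real work. Your union bound over vertices and the per-vertex codimension count are fine \emph{once one knows} that the values of the new generators at the edges around a fixed vertex $v$ are uniformly distributed, i.e., that the tuple $\{(\res_{e\from v})^{-1}(h_i(e))\}_{e\in X(1)_{\supseteq v}}$ is uniform in $\calF(v)^{X(1)_{\supseteq v}}$ as $h_i$ ranges over its coset. Saying the cocycle-type constraints live on $2$-faces while \ref{item:F-E:linear-disjointness} lives at $0$-faces does not by itself establish independence: the constraint $d_1 E'_r\subseteq Z^2(X,\calC_{E_r})$ with prescribed image in $\ker\omega_{E_r}$ confines each generator to an affine coset $h'_i+Z^1(X,\calF)$, and cocycles are globally constrained objects, so uniformity of their restrictions to the star of $v$ is not automatic. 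The paper closes this by proving that the evaluation map $T:Z^1(X,\calF)\to\calF(v)^{X(1)_{\supseteq v}}$, $T(f)=(\res_{e\from v}^{-1}f(e))_{e\supseteq v}$, is \emph{surjective}: given any target tuple $(f_e)_e$ one writes down an explicit $0$-cochain $g$ supported on the neighbours of $v$ with $T(d_0 g)=(f_e)_e$, so already the coboundaries surject onto the local data. This surjectivity is the missing lemma in your argument; with it, your counting (which matches the paper's bound $1-|X(0)|\,|\F|^{Q\dim E_{r+1}-\dim\calF}$) and the Zariski-closedness argument for infinite $\F$ both go through, but without it the per-vertex failure probability is not justified.
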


\begin{proof}
	(i) We use induction on $r$.
	The case $r=0$ is clear, and the case $r=1$
	follows from Example~\ref{EX:cocycle-E-is-okay}.
	
	Suppose that $r>1$ and  we proved that \ref{item:F-E:linear-depedence}
	holds for $E_{r-1}$. 
	Let $t\in X(2)$ be a triangle with edges $e,e',e''$,
	and let $f_e\in \calC_r(e)$. 
	We need to show that $f_e|_t\in \res_{t\from e'}\calC_r(e')+
	\res_{t\from e''}\calC_r(e'')$.
	There is $f\in E_r$ such that $f_e=f(e)$ (recall that we have fixed
	a linear ordering on $X(0)$ and used it to identify
	$C^i(X,\calF)$ with $\prod_{x\in X(i)}\calF(x)$).	
	By construction, $E_r=E_{r-1}+E'_{r-1}$
	with $d_1(E'_{r-1})\subseteq C^1(X,\calC_{r-1})$, so
	$d_1 f\in C^1(X,\calC_{r-1})$.
	Thus, for some choice of signs, $f(e)|_t \in \pm f(e')|_t\pm 	f(e'')|_t+\calC_{r-1}(t)$.
	By the definition $\calC_{r-1}$
	and the induction hypothesis, $\calC_{r-1}(t)=
	\calC_{r-1}(e)|_t + \calC_{r-1}(e')|_t + \calC_{r-1}(e'')|_t \subseteq 
	\calC_{r-1}(e')|_t + \calC_{r-1}(e'')|_t$.
	It follows that $f(e)|_t\in \pm f(e')|_t\pm 	f(e'')|_t+
	\calC_{r-1}(e')|_t + \calC_{r-1}(e'')|_t
	\subseteq \calC_{r }(e')|_t + \calC_{r }(e'')|_t$, which is what we want.
	
	(ii) Suppose first that $r>0$.
	Fix a subspace $E' \subseteq C^1(X,\calF)$   such that
	$d_1E' \subseteq Z^2(X,\calC_{r  })$
	and the composition $E' \xrightarrow{d_1}  Z^2(X,\calC_{r })\to \HH^2(X,\calC_{r })$
	is injective with image $\ker\omega_r$ and
	let $h'_1,\dots,h'_t$ be a basis of $E' $
	(so $t=\dim\ker\omega_r$).
	In order to choose $E'_r$   uniformly at random in Construction~\ref{CN:modification-process},
	we can
	choose   $h_i\in h'_i+Z^1(X, \calF)$   uniformly at random
	for each $i\in\{1,\dots,t\}$ and take $E'_r=\F h_1+\dots+\F  h_t$.
	
	Fix some $v\in X(0)$ and $i\in \{1,\dots,t\}$.
	Abbreviate $N(v)=X(1)_{\supseteq v}$.
	We claim that the collection $\{(\res_{e\from v})^{-1}( h_i(e))\}_{e\in N(v)}$
	distributes uniformly in $ \calF(v)^{N(v)}$. 
	To see this, write $z_i = h_i-h'_i$. It
	is enough to show that $\{(\res_{e\from v})^{-1}( z_i(e))\}_{e\in N(v)}$
	distributes uniformly in $ \calF(v)^{N(v)}$,
	which, in turn, will follow if we show that
	the linear transformation $T:Z^1(X, \calF)\to \calF(v)^{N(v)}$ 
	given by $T(f)=(\res_{e\from v}^{-1} f(e))_{e\in N(v)}$ is onto. 
	Given $(f_e)_{e\in N(v)}\in  \calF(v)^{N(v)}$, define $g\in C^0(X,\calF)$
	by
	\[
	g(x) = \left\{\begin{array}{ll}
	-[x\cup v:x]\res_{x\cup v\from x}^{-1}\res_{x\cup v\from v}  f_{x\cup v}  & x\in X(1)_v\\
	0 & x\notin X(1)_v,
	\end{array}\right.
	\]
	where $[x\cup v:x]$ is $1$ if $x<v$ relative to the ordering on $V(X)$, and $-1$ otherwise.
	It is straightforward to check that $T(d_0 g)=(f_e)_{e\in N(v)}$, so $T$ is onto, and our claim follows.

	Since $h_1,\dots,h_t$
	are chosen independently, the previous paragraph implies that, for every $v\in X(0)$, 
	the collection
	$\{(\res_{y\from v})^{-1}( h_i(e))\}_{e\in N(v),i\in\{1,\dots,t\}}$
	distributes uniformly in $\calF(v)^{E(v)\times\{1,\dots,t\}}$.
	Let $R(v)=\sum_{e\in N(v)}\res_{e\from v}^{-1} \calC_{r }(e)$;
	it is a subspace of $\calF(v)$.
	Condition  \ref{item:F-E:linear-disjointness} for $E_r$ implies
	that $R(v)=\bigoplus_{e\in N(v)}\res_{e\from v}^{-1} \calC_{r }(e)$.
	This means that
	\[
	\dim R(v)=\sum_{e\in N(v)}\dim \calC_{r }(e)\leq \sum_{e\in N(v)}\dim E_r\leq Q\dim E_r.
	\]
	Now, setting $m=\dim \calF$, the probability that 
	$\{(\res_{e\from v})^{-1}( h_i(e))\}_{e\in N(v),i\in\{1,\dots,t\}}$
	are linearly independent and span a subspace of $\calF(v)$ meeting $R(v)$
	only at $0$ is  
	$\prod_{j=0}^{tQ -1}(1-|\F|^{j+\dim R(v)-m})> 1-|\F|^{Qt +Q\dim E_r-m}=
	1-|\F|^{Q\dim_{E_{r+1}}-m}$.
	Consequently, the
	probability 
	that for every $v\in X(0)$,
	the collection $\{(\res_{e\from v})^{-1}( h_i(e))\}_{e\in N(v) ,i\in\{1,\dots,t\}}$
	is linearly independent in $\calF(v)$ and its span meets $R(v)$ only at $0$ 
	is greater than
	$1-|X(0)||\F|^{Q\dim E_{r+1} -m}$. This number is non-negative by our assumption on $\dim E_{r+1}$,
	so we   may choose $h_1,\dots,h_t$
	to satisfy the last condition, which is easily seen to imply
	that \ref{item:F-E:linear-disjointness} 
	holds for $E_{r+1}$.
	
	The same argument we used for $r>0$ also works for   
	$r=0$.
	The only difference is that one starts with some subspace $E'$
	of $Z^1(X,\calF)$ of the same dimension as that of $E'_0$.

	Finally, when $\F$ is infinite, an adaptation of the argument
	shows that if we
	write $h_i=z_i+h'_i$ with $z_i\in Z^1(X,\calF)$,  
	then condition \ref{item:F-E:linear-disjointness} 
	is met if $z_1,\dots,z_t$
	are chosen outside of the zero locus of some nonzero multivariate polynomial
	on $Z^1(X,\calF)^t\cong \F^N$. Such a choice is possible because $\F$ is infinite.
\end{proof}

\begin{cor}\label{CR:securing-expansion-for-quotients}
	Fix $d\in\N-\{1\}$ and let $q=q(d)$
	be as in Theorem~\ref{TH:sheaves-on-quo-of-aff-buildings-quotients}.
	Let $X$ be a $d$-complex covered by a $q$-thick affine building,
	let $\F$ be a finite field, and let $\calF$ be a locally
	constant $\F$-sheaf on $X$ with $h^1(\calF)\geq h^0(\calF)$.
	Apply the iterative process
	of  Construction~\ref{CN:modification-process} to $(X,\calF)$ and
	suppose that it stopped after $n$ steps.
	Let $Q=D_{0,1}(X)$ and let
	$r$ denote the maximal member of $\{0,1,\dots,n\}$ such
	that 
	\[\dim E_r\leq Q^{-1}(\dim\calF-\log_{|\F|}\dim\calF -\log_{|\F|}|X(0)|).\]
	Then, with probability greater than $1-\frac{r}{\dim\calF}
	\geq 1-\frac{\dim E_r}{\dim\calF}\geq 1-\frac{1}{Q}$,
	the sheaf 
	$\calF_{E_r}$ satisfies condition \ref{item:TH:tower-paradigm:LTC}
	of the tower paradigm (i.e., Theorem~\ref{TH:tower-paradigm} with $k=0$).
\end{cor}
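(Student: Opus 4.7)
The plan is to combine Proposition~\ref{PR:securing-expansion-for-quotients} --- applied inductively across the first $r$ steps of the process --- with Theorem~\ref{TH:sheaves-on-quo-of-aff-buildings-quotients}, which is the mechanism that translates conditions \ref{item:F-E:linear-disjointness} and \ref{item:F-E:linear-depedence} for $E_r$ into the local coboundary/spectral expansion statements packaged in \ref{item:TH:tower-paradigm:LTC}. Since Proposition~\ref{PR:securing-expansion-for-quotients}(i) already guarantees \ref{item:F-E:linear-depedence} for every $E_s$ for free, the whole probabilistic content of the corollary is the claim that \ref{item:F-E:linear-disjointness} can be maintained along the first $r$ iterations.

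Write $m = \dim \calF$ and $\theta = Q^{-1}(m - \log_{|\F|} m - \log_{|\F|} |X(0)|)$, so that by definition of $r$ we have $\dim E_s \leq \dim E_r \leq \theta$ for all $s \leq r$. Let $A_s$ be the event that \ref{item:F-E:linear-disjointness} holds for $E_s$; the event $A_0$ holds trivially since $E_0 = 0$. Conditional on $A_s$ and on the event that the dimension threshold is still in force at the next step, Proposition~\ref{PR:securing-expansion-for-quotients}(ii) applies to the (uniformly random) choice of $E'_s$ and gives
\[
\Pr[\lnot A_{s+1} \mid A_s] \leq |X(0)| \cdot |\F|^{Q \dim E_{s+1} - m} \leq |X(0)| \cdot |\F|^{Q\theta - m} = \frac{1}{m}.
\]
A union bound across the $r$ transitions $E_0 \to E_1 \to \cdots \to E_r$ yields $\Pr[A_r] \geq 1 - r/m$. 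Next, Proposition~\ref{PR:process-stops}(i) together with $\dim E'_0 = h^1(\calF) - h^0(\calF) + 1 \geq 1$ and $\dim E'_s = \dim \ker \omega_s \geq 1$ for $0 < s < n$ tells us that each iteration strictly increases dimension, so $r \leq \dim E_r$. The chain of inequalities $1 - r/m \geq 1 - \dim E_r/m \geq 1 - 1/Q$ then follows, using $\dim E_r \leq \theta \leq m/Q$.

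Finally, on the event $A_r$ both \ref{item:F-E:linear-disjointness} and \ref{item:F-E:linear-depedence} hold for the subspace $E_r$, so Theorem~\ref{TH:sheaves-on-quo-of-aff-buildings-quotients} applies (with the quotient sheaf $\calF_{E_r}$ and $k = 0$) and transfers the conclusions of Theorem~\ref{TH:sheaves-on-quo-of-aff-buildings}(i) to $(X, \calF_{E_r})$, producing constants $\veps_0, \veps'_0, \veps'_1, \lambda$ satisfying the inequality of Theorem~\ref{TH:cosystolic-exp-from-links} and the three local expansion statements (t2-a)--(t2-c) --- which is exactly \ref{item:TH:tower-paradigm:LTC}. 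The main point of delicacy is the fact that $n$, $r$ and the intermediate $\dim E_s$ are themselves random variables, but this is absorbed by the union bound: the per-step failure bound $1/m$ is uniform on the event $s \leq r$ (which is exactly where the dimension constraint needed in Proposition~\ref{PR:securing-expansion-for-quotients}(ii) is guaranteed by the definition of $r$), so no delicate measurability issues arise.
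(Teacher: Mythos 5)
Your proof is correct and follows essentially the same route as the paper's: an inductive application of Proposition~\ref{PR:securing-expansion-for-quotients}(ii) with the per-step failure probability bounded by $\frac{1}{\dim\calF}$ via the defining inequality for $r$, the bound $r\leq\dim E_r$ from Proposition~\ref{PR:process-stops}(i), part (i) of that proposition for condition \ref{item:F-E:linear-depedence}, and then Theorem~\ref{TH:sheaves-on-quo-of-aff-buildings-quotients}. Your explicit remark on the randomness of $r$ and the intermediate dimensions is a reasonable extra care point that the paper glosses over, but it does not change the argument.
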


\begin{proof}
	Proposition~\ref{PR:securing-expansion-for-quotients}(ii)
	and our choice of $r$ imply that for all $0\leq s\leq r-1$,
	the probability that $E_{s+1}$ satisfies \ref{item:F-E:linear-disjointness}
	provided that $E_s$ satisfies it is greater than 
	$1-|X(0)||\F|^{Q\dim E_{s+1}-\dim \calF}\geq 1-|X(0)||\F|^{Q\dim E_r-\dim\calF}$.
	Our assumption on $\dim E_r$
	says that the latter quantity is at least
	$1-|X(0)||\F|^{-\log_{|\F|}\dim\calF -\log_{|\F|}|X(0)|}=
	1-\frac{1}{\dim\calF}$.
	Since $E_0$ satisfies \ref{item:F-E:linear-disjointness},
	this means that 
	the probability that $E_r$ satisfies \ref{item:F-E:linear-disjointness} is greater
	than $1-\frac{r}{\dim\calF}$. (We have $1-\frac{r}{\dim\calF}\geq
	1-\frac{\dim E_r}{\dim\calF}$ because $\dim E_r>\dim E_{r-1}
	>\dots>\dim E_0=0$, by Proposition~\ref{PR:process-stops}(i).)
	Proposition~\ref{PR:securing-expansion-for-quotients}(i) also
	tells us that $E_r$ satisfies \ref{item:F-E:linear-depedence}.
	Applying 
	Theorem~\ref{TH:sheaves-on-quo-of-aff-buildings-quotients}
	to the sheaf $\calF$ and the space $E_r$ completes the proof. 
\end{proof}

We conclude from Proposition~\ref{PR:process-stops}(ii)
and Corollary~\ref{CR:securing-expansion-for-quotients}
that if $X$ is covered
by a sufficiently thick affine building, and if 
$\calF$ is a locally constant $\F$-sheaf for which 
the iterative process of Construction~\ref{CN:modification-process}
outputs a subspace $E\subseteq Z^1(X,\calF)$
with $\dim E\ll \dim\calF$ with high probability,
then $(X,\calF_E)$ satisfies conditions \ref{item:TH:tower-paradigm:LTC}
and~\ref{item:TH:tower-paradigm:rate} with high probability.
Since $\dim E_1=h^1(\calF)-h^0(\calF)+1$,
in order to have $\dim E\ll \dim\calF$, we should start
with a sheaf $\calF$ such that $h^1(\calF)\ll \dim \calF$;
we will show that such exist in \S\ref{sec:candidates}.
Taking this for granted, we turn to discuss the growth of $\dim E_r$
in the iterative process.

%The convergence of the iterative process is the subject matter of the next
%subsection. 

\subsection{The Effect of The Cup Product on The Modification Process}
\label{subsec:growth}

As in \S\ref{subsec:quotient}, let $\calF$ be a locally
constant $\F$-sheaf on $X$ such that $h^1(\calF)\geq h^0(\calF)$.
We assume that $\F$ is finite, and apply the notation introduced in Construction~\ref{CN:modification-process}.

We conducted a number of computer simulations\footnote{
	The {\tt Python} code of the simulations was written
	by the first named author and is attached to the {\tt arXiv} version of the paper.
} where we applied Construction~\ref{CN:modification-process}
to a variety of complexes\footnote{
	So far, we checked   
different triangulations of a $3$-dimensional torus  and a $3$-thick
$2$-dimensional Ramanujan complex with $273$ vertices. The latter is
a quotient of the explicit example in \cite[\S10]{Lubotzky_2005_explicit_constructions_of_Ramanujan_complexes} by a Borel subgroup of $\nGL{\F_{16}}{3}$. 
} and sheaves. These simulations suggest that the typical behavior of the iterative process
can be predicted by means of the cup product (see \S\ref{subsec:cup-prod}), as we now explain.

Let $r\in \N$ and suppose that $E_r=E_{r-1}\oplus E'_{r-1}$ of 
Construction~\ref{CN:modification-process} has just been defined.
We can attempt to construct elements in the kernel of $\omega_r:\HH^2(X,\calC_r)\to \HH^2(X,\calF)$
as follows.
Let $f_1,\dots,f_t$ be an $\F$-basis for $E_{r-1}$ and let $f'_1,\dots,f'_s$ be
an $\F$-basis for $E'_{r-1}$.
Suppose that there are 
$\alpha_1,\dots,\alpha_t,\alpha'_1,\dots,\alpha'_s\in C^1(X,\F)$
such that
\begin{align}\label{EQ:cup-product-effect-general}
g:=\sum_{i=1}^t\alpha_i\cupp f_i + \sum_{j=1}^s \alpha'_j\cupp f'_j\in B^2(X,\calF).
\end{align}
Then $g\in B^2(X,\calF)\cap C^2(X,\calC_r)\subseteq Z^2(X,\calC_r)$,
which means that the cohomology class $[g]_{\calC_r}$ is in $\ker  \omega_r$.
Denote by $V_r$ the subspace of $C^1(X,\F)^{t+s}$ consisting of tuples
$(\alpha_1,\dots,\alpha_t,\alpha'_t,\dots,\alpha'_s)$ for which
\eqref{EQ:cup-product-effect-general} holds.

In general, not all elements of $V_r$ give rise to a nonzero class
in $\ker\omega_r$. This happens in particular when $\alpha'_1,\dots,\alpha'_s\in B^1(X,\F)$.
Indeed, in this case, there are $\beta'_1,\dots,\beta'_s\in C^0(X,\F)$
such that $\alpha'_j=d_0\beta'_j$ for all $j\in\{1,\dots,s\}$. By Proposition~\ref{PR:cup-product-basic-properties},
this means that
\begin{align*}
g &=
\sum_{i=1}^t\alpha_i\cupp f_i + \sum_{j=1}^s d_0\beta'_j\cupp f'_j  
= \sum_{i=1}^t\alpha_i \cupp f_i-\sum_{j=1}^s\beta'_j\cupp d_1f'_j +\sum_{j=1}^s d_1(\beta'_j\cupp f'_j).
\end{align*}
If $r=1$, then $t=0$ and $d_1 f'_j=0$ for all $j$,
so $g= \sum_{j=1}^s d_1(\beta'_j\cupp f'_j)\in B^2(X,\calC_1)$
and $[g]_{\calC_1}=0$. 
If $r>1$, then by the construction of $E'_r$, we have $d_1f'_j\in C^2(X,\calC_{r-1})$, so
$\tilde g:=g-\sum_{j=1}^s d_1(\beta'_j\cupp f'_j)\in C^2(X,\calC_{r-1})$.
Since $[\tilde{g}]_{\calF}=[g]_{\calF}=0$, we have $[\tilde{g}]_{\calF}\in\ker\omega_{r-1}$,
so,  by the construction of $E'_{r-1}$, there
is $h\in E'_{r-1}$ such that $d_1h=\tilde{g}$. Consequently,
$g=d_1h+ \sum_{j=1}^s d_1(\beta'_j\cupp f'_j)\in B^2(X,\calC_r)$,
and $[g]_{\calC_r}=0$.

Similarly, if $\alpha'_1,\dots,\alpha'_s$ are in the left radical
of the pairing $\cupp :C^1(X,\F)\times C^1(X,\calF)\to C^2(X,\calF)$,
denoted $L(\calF)$, then $[g]_{\calC_r}=0$.

Let $U_r$ denote the sum of $L(\calF)$
and the subspace of $V_r$ consisting of 
tuples
$(\alpha_1,\dots,\alpha_t,\alpha'_1,\dots,\alpha'_s)\in V_r$
with $\alpha'_1,\dots,\alpha'_s\in B^1(X,\F)$.
Our simulations suggest that if $\dim\calF$ is big
enough with respect to $\dim E_r$, then with high probability, 
\begin{enumerate}[label=(\roman*)]
\item all elements
in $\ker\omega_r$ are obtained from elements of $V_r$
as in \eqref{EQ:cup-product-effect-general}, and 
\item the elements of $V_r$
which give rise to the zero element in $\ker\omega_r$
are precisely the subspace $U_r$.
\end{enumerate}
Consequently, $\dim E'_r=\dim V_r-\dim U_r$.
Informally, this means that if $\dim E_r\ll \dim \calF$, then 
almost surely, relations coming from
the cup product  are the only explanation to elements in $\ker\omega_r$.

We now analyse heuristically  how big should $\dim\calF$
be with respect to $\dim E_r$ in order to make the above estimations
valid. Classes in $\ker\omega_r$ which are not explained
by the cup product may occur if $\dim \HH^2(X,\calC_r)>\dim\HH^2(X,\calF)$,
so we need to require that $\dim \HH^2(X,\calC_r)\leq \dim\HH^2(X,\calF)$.
The iterative process does not use information from faces
of dimension $\geq 3$, so we may assume that $\dim X=2$.
Now,
since \ref{item:F-E:linear-depedence}
holds for $E_r$ (Proposition~\ref{PR:securing-expansion-for-quotients}(i)),
we typically have $\dim\calC_{E_r}(x)=2\dim E_r$ for $x\in X(2)$,
so we expect that $\dim C^2(X,\calC_r)=2|X(2)|\dim E_r$.
The kernel of $d_1:C^1(X,\calC_r)\to C^2(X,\calC_r)$
contains $E_r\cap Z^1(X,\calF)=E_1$ (Proposition~\ref{PR:process-stops}(i)),
so if $\dim X=2$, then we have $\dim \HH^2(X,\calC_r)\geq 
\dim C^2(X,\calC_r)-\dim C^1(X,\calC_r)+\dim E_1=
(2|X(2)|-|X(1)|)\dim E_r+(h^1(\calF)-h^0(\calF)+1)$.
Our simulations suggest that equality   holds with high probability.
A similar computation shows that when $\dim X=2$,
we have $\dim\HH^2(X,\calF)\geq (|X(2)|-|X(1)|+|X(0)|-1)\dim\calF$.
The requirement 
$\dim \HH^2(X,\calC_r)\leq \dim\HH^2(X,\calF)$
is therefore likely to hold if
\begin{equation}\label{EQ:Er-dimension}
\dim E_{r }\leq \frac{(|X(2)|-|X(1)|+|X(0)|-1)\dim\calF - (h^1(\calF)-h^0(\calF)+1)}{
2|X(2)|-|X(1)|.}
\end{equation} 
The right hand side is roughly $\frac{1}{2}\dim\calF$ if $|X(2)|$
is large w.r.t.\ $|X(1)|$ and $|X(0)|$.

%%%%%%%%%%%%

We summarize our observations  in the following conjecture,
in which we let $|\F|$ or $\dim\calF$  grow. It is supported
by all of our simulations.

\begin{cnj}\label{CJ:cup-product-explains-everything}
	With notation as in Construction~\ref{CN:modification-process},
	suppose that $E_r$ has just been constructed, and thus $\dim E'_{r}=
	\dim\ker\omega_r$
	is  determined. 
	Define $V_r$ and $U_r$ as above. Then:
	\begin{enumerate}[label=(\roman*)]
		\item If \eqref{EQ:Er-dimension} holds, then 
		$\dim E'_r=\dim V_r-\dim U_r$ with probability $1-o(1)$
		as a function of $|\F|$.
		\item If \eqref{EQ:Er-dimension} holds 
		and $M$ is the difference between the right hand
		side and the left hand side of \eqref{EQ:Er-dimension},
		then $\dim E'_r=\dim V_r-\dim U_r$
		with probability $1-o(1)$
		as a function of $M$.
	\end{enumerate}	  
	In particular, if $V_r=U_r$, then the iterative process will stop
	at the $r$-th step with probability $1-o(1)$ (in the sense of (i) or (ii)).
\end{cnj}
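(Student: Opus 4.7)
The plan is to reduce Conjecture~\ref{CJ:cup-product-explains-everything} to the surjectivity of a natural cup-product map $\Psi\colon V_r/U_r \to \ker\omega_r$. By the construction of $E'_r$ in step~(a) of Construction~\ref{CN:modification-process}, one has $\dim E'_r = \dim \ker\omega_r$, and the discussion preceding the conjecture shows that the assignment $(\alpha_1,\dots,\alpha_t,\alpha'_1,\dots,\alpha'_s) \mapsto [\sum_i \alpha_i\cupp f_i + \sum_j \alpha'_j\cupp f'_j]_{\calC_r}$ is a well-defined linear map $V_r \to \ker\omega_r$ whose kernel contains $U_r$, yielding an injection $\Psi\colon V_r/U_r \hookrightarrow \ker\omega_r$. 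Thus the conjecture amounts to showing that $\Psi$ is surjective and that its kernel is exactly $U_r$, each with probability $1-o(1)$ in the relevant parameter.

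For surjectivity I would use the long exact sequence attached to $0 \to \calC_r \to \calF \to \calF/\calC_r \to 0$, which identifies $\ker\omega_r$ with the cokernel of $\pi_r\colon \HH^1(X,\calF) \to \HH^1(X,\calF/\calC_r)$ via the connecting homomorphism $\delta_r$. Any class in this cokernel is represented by some $g \in Z^1(X,\calF/\calC_r)$; lifting to $\tilde g \in C^1(X,\calF)$, the cocycle condition forces $(d_1\tilde g)(t) \in \calC_r(t) = \sum_{e\subseteq t} \res_{t\from e}(\mathrm{Proj}_e E_r)$ on every triangle $t$. Choosing bases $f_1,\dots,f_t$ of $E_{r-1}$ and $f'_1,\dots,f'_s$ of $E'_{r-1}$ and unwinding this triangle-local membership with the sign convention of~\eqref{EQ:d-k-L}, one obtains an identity $d_1\tilde g = \sum_i \alpha_i\cupp f_i + \sum_j \alpha'_j\cupp f'_j$ in $C^2(X,\calF)$ for suitable $\alpha_i,\alpha'_j \in C^1(X,\F)$. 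The corresponding tuple lies in $V_r$ and maps onto $\delta_r[g]$, giving the surjection $V_r \twoheadrightarrow \ker\omega_r$.

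The probabilistic heart of the argument is showing that the kernel of this surjection is exactly $U_r$, with no ``accidental'' relations. My plan is to proceed inductively along the filtration $\calC_1 \subsetneq \calC_2 \subsetneq \cdots \subsetneq \calC_r$: assuming the conjecture has held up to stage $r-1$, an Euler-characteristic bound of the form $\dim\HH^2(X,\calC_r) \leq \dim C^2(X,\calC_r) - \dim C^1(X,\calC_r) + \dim(E_r \cap Z^1(X,\calF))$, combined with~\eqref{EQ:Er-dimension}, would cap $\dim\ker\omega_r$ by the expected value $\dim V_r - \dim U_r$. A Schwartz--Zippel-type estimate then shows that the random choice of $E'_{r-1}$ in Construction~\ref{CN:modification-process} places $\calC_r$ in generic position with probability $1 - O(|\F|^{-1})$ (or with the analogous dependence on the slack $M$), and a union bound over the finitely many iterations controls the overall failure probability.

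The main obstacle, in my view, is the generic-rank analysis for $d_1\colon C^1(X,\calC_r) \to C^2(X,\calC_r)$. Since the subspaces $\calC_r(x) \subseteq \calF(x)$ are neither independent across faces nor stable under the restriction maps of a locally constant sheaf, establishing that $d_1$ attains its maximal possible rank outside a proper subvariety of the parameter space requires a transversality statement for the cup-product pairing $C^1(X,\F) \otimes E_r \to C^2(X,\calF)$ restricted to the random subspace $E'_{r-1}$. Verifying this transversality uniformly across the inductive stages, while keeping the codimension of the bad locus proportional to the slack in~\eqref{EQ:Er-dimension}, is where I expect the principal technical difficulty to lie.
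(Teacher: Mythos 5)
This statement is a \emph{conjecture} in the paper: the authors support it only by computer simulations and the heuristic discussion preceding it in \S\ref{subsec:growth}, and they offer no proof. So there is no proof of record to compare your argument against; what you have written is an attempt to settle an open problem, and it does not succeed.

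The concrete gap is in your surjectivity step. From $(d_1\tilde g)(t)\in\calC_r(t)=\sum_{e\subseteq t}\res_{t\from e}(E_r(e))$ on every triangle you cannot conclude a global identity $d_1\tilde g=\sum_i\alpha_i\cupp f_i+\sum_j\alpha'_j\cupp f'_j$. First, the membership is triangle-local: the coefficients you extract on each $t$ need not assemble into global cochains $\alpha_i\in C^1(X,\F)$. Second, and more fatally, the cup product evaluated on an ordered triangle $t=v_0v_1v_2$ gives $(\alpha\cupp f)(t)=\alpha(v_0v_1)\,\res_{t\from v_1v_2}f(v_1v_2)$, so any sum of cup products lands in $\res_{t\from v_1v_2}(E_r(v_1v_2))$ alone, whereas a general element of $\calC_r(t)$ involves all three edges of $t$. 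If surjectivity of $V_r\to\ker\omega_r$ held deterministically as you claim, the conjecture would not need to be probabilistic; the authors' own heuristic asserts it only ``with high probability'' when $\dim\calF$ is large relative to $\dim E_r$, precisely because unexplained classes in $\ker\omega_r$ can occur.

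The remaining half --- that the kernel of $V_r\to\ker\omega_r$ is exactly $U_r$ generically --- is the probabilistic heart of the conjecture, and your proposal reduces it to an unproved ``transversality statement'' for the cup-product pairing restricted to the random subspace $E'_{r-1}$. The Euler-characteristic bound you invoke is the same heuristic inequality the authors use to motivate \eqref{EQ:Er-dimension}; it is stated in the paper as an expectation confirmed by simulation, not as a theorem. Your closing paragraph concedes that the generic-rank analysis of $d_1$ on $C^1(X,\calC_r)$ is not carried out. As it stands the proposal is a plan that restates the conjecture's difficulties rather than a proof.
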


Informally, the conjecture means that  the ``most likely''
value  of $\dim E_r$ can be predicted purely by means of the cup product action
of $C^i(X,\F)$ on $C^j(X,\calF)$ for $i,j\in\{0,1\}$.
(We moreover expect that it is  determined by the homotopy type
of the differential graded module $C^*(X,\calF)$
over the  differential graded algebra $C^*(X,\F)$.)

\begin{example}\label{EX:analysis-of-dim-E2}
	Let us use Conjecture~\ref{CJ:cup-product-explains-everything} to
	predict what $\dim E_2$ will typically be.
	Recall that $E_1=E_0\oplus E'_0$ with $E_0=0$
	and $E'_0$  a subspace of $Z^1(X,\calF)$
	of dimension $s:=h^1(\calF)-h^0(\calF)-1$. (In fact, the analysis that
	we carry applies to any small subspace of $Z^1(X,\calF)$.)
	We assume that \eqref{EQ:Er-dimension}
	holds for $E_1$, or equivalently,
	that $h^1(\calF)-h^0(\calF)+1\leq \frac{
	|X(2)|-|X(1)|+|X(0)|-1}{
	2|X(2)|-|X(1)|+1}\dim\calF$.
	
	Let $f'_1,\dots,f'_s$ be an $\F$-basis for $E'_0$. Then $V_1$
	is the space of $(\alpha'_1,\dots,\alpha'_s)\subseteq C^1(X,\F)^s$
	such that
	\begin{align}\label{EQ:condition-for-E1}
	\sum_{j=1}^s\alpha'_j\cupp f'_j\in B^2(X,\calF),
	\end{align}
	and $U_1=V_1\cap (B^1(X,\F)+L(\calF))^s$. 
	%(In fact,
	%$U_1=B^1(X,\F)^s$ by Proposition~\ref{PR:cup-product-basic-properties}(ii).)
	Since the $f'_1,\dots,f'_s$
	live in $Z^1(X,\calF)$, and since the cup product of cocycles is a cocycle,
	it is reasonable to expect that, modulo $L(\calF)$, \eqref{EQ:condition-for-E1}
	will hold only if $\alpha'_1,\dots,\alpha'_s\in Z^1(X,\F)$; this heuristic
	is confirmed by our simulations.
%	\footnote{
%		More precisely, for every $\alpha'_1,\dots,\alpha'_s$
%		satisfying \eqref{EQ:condition-for-E1},
%		we can find $r_1,\dots,r_s$	in the left
%		radical of $\cupp :C^1(X,\F)\times C^1(X,\calF)\to C^2(X,\calF)$,
%		such that $\alpha'_1-r_1,\dots,\alpha'_s-r_s\in Z^1(X,\calF)$;
%		we suppress this technicality for the sake of simplicity.
%	} 
	Replacing $V_1$ with $V_1\cap Z^1(X,\calF)^s$
	and $U_1$ with $B^1(X,\calF)^s$,
	so that $\alpha'_1,\dots,\alpha'_s\in Z^1(X,\F)$,
	condition \eqref{EQ:condition-for-E1} is equivalent to having
	\[
	\sum_{j=1}^s[\alpha'_j]\cupp [f'_j]_{\calF}=0
	\]
	in $\HH^2(X,\calF)$. Since $\dim V_1-\dim U_1$ equals
	the image of $V_1$ in $\HH^1(X,\F)^s$, it follows that
	\[
	\dim V_1-\dim U_1 =
	\ker ([\alpha]\otimes f\mapsto [\alpha\cupp f]:
	\HH^1(X,\F)\otimes_{\F} E'_0\to \HH^2(X,\calF)).
	\]
	This leads to   Conjecture~\ref{CJ:dimension-of-E2} below, which is again supported
	by our simulations.
	
	The conjectural formula for $\dim E'_1=\dim V_1-\dim U_1$ 
	also demonstrates how the choice of $E_1=E'_0$ might affect $\dim E_2$.
	For example, if $E'_0$ is taken to be a subspace of $B^1(X,\calF)$,
	then $[f]_{\calF}=0$ for every $f\in E'_0$, and we find that, heuristically,
	\[
	\dim E'_1=\dim V_1-\dim U_1=s\cdot h^1(X,\F).
	\]
	On the other hand, if $\dim E'_0$ is chosen such that $E'_0\oplus B^1(X,\calF)=Z^1(X,\calF)$,
	i.e., we are trying to eliminate \emph{all} the cohomology classes in $\HH^1(X,\calF)$
	by passing to $\calF_{E_1}$, then the map $E'_0\to\HH^1(X,\calF)$ is a bijection,
	and we get
	\[
	\dim E'_1=\ker ([\alpha]\otimes [f]\mapsto [\alpha\cupp f]:
	\HH^1(X,\F)\otimes_{\F} \HH^1(X,\calF)\to \HH^2(X,\calF)).
	\]
\end{example}

\begin{cnj}\label{CJ:dimension-of-E2}
	With notation as in Construction~\ref{CN:modification-process},
	suppose that $M:
	=\frac{
	|X(2)|-|X(1)|+|X(0)|-1}{
	2|X(2)|-|X(1)|+1}\dim\calF- (h^1(\calF)-h^0(\calF)+1)\geq 0$.
	Then 
	\[
	\dim E'_1 = \ker ([\alpha]\otimes f\mapsto [\alpha\cupp f]:
	\HH^1(X,\F)\otimes_{\F} E'_0\to \HH^2(X,\calF))
	\]
	with probability $1-o(1)$ as a function of $M$
	(resp.\ $|\F|$).
\end{cnj}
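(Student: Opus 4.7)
The plan is to derive Conjecture~\ref{CJ:dimension-of-E2} by combining the $r=1$ instance of Conjecture~\ref{CJ:cup-product-explains-everything} with the linear-algebra computation sketched in Example~\ref{EX:analysis-of-dim-E2}. To set up the reduction, note that since $E_0=0$ and $E'_0\subseteq Z^1(X,\calF)$, the subsheaf $\calC_{E_1}$ vanishes on $X(0)$, and hence $\ker\omega_1$ coincides with the image in $\HH^2(X,\calC_{E_1})$ of $B^2(X,\calF)\cap C^2(X,\calC_{E_1})$. Fix an $\F$-basis $f'_1,\dots,f'_s$ of $E'_0$, where $s=h^1(\calF)-h^0(\calF)+1$.

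First, I would construct the ``obvious'' lower bound on $\dim\ker\omega_1$. Define $\Phi:C^1(X,\F)^s\to C^2(X,\calF)$ by $\Phi(\alpha'_1,\dots,\alpha'_s)=\sum_j \alpha'_j\cupp f'_j$. By Proposition~\ref{PR:cup-product-basic-properties}(ii) together with $d_1 f'_j=0$, the image of $\Phi$ lies in $Z^2(X,\calF)\cap B^2(X,\calF)$, and since each $\alpha'_j\cupp f'_j$ factors through $\calC_{E_1}$ by construction of the latter, in fact $\im\Phi\subseteq Z^2(X,\calC_{E_1})\cap B^2(X,\calF)$. This gives a canonical linear map $\bar\Phi:C^1(X,\F)^s\to\ker\omega_1$. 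The rigorous version of the analysis in Example~\ref{EX:analysis-of-dim-E2}, using only Proposition~\ref{PR:cup-product-basic-properties} and functoriality of the long exact cohomology sequence of $0\to\calC_{E_1}\to\calF\to\calF_{E_1}\to 0$, then identifies the image of $\bar\Phi$ with the image of the cup-product pairing $\HH^1(X,\F)\otimes_\F E'_0\to\HH^2(X,\calF)$, so its kernel has dimension equal to $\dim\ker(\cupp)$ appearing on the right-hand side of the conjectured identity. This yields the inequality $\dim E'_1\geq\dim\ker(\cupp)$ unconditionally.

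The hard direction is the reverse inequality: showing that with probability $1-o(1)$ the map $\bar\Phi$ is \emph{surjective} onto $\ker\omega_1$, i.e.\ that cup products with the $f'_j$ account for \emph{every} class in $\ker\omega_1$. A natural approach is to bound $\dim\HH^2(X,\calC_{E_1})$ directly: the exactness of $C^\bullet(X,\calC_{E_1})$ together with $\calC_{E_1}|_{X(0)}=0$ and $E_1\subseteq\ker(d_1|_{C^1(X,\calC_{E_1})})$ yields the \emph{a priori} lower bound $\dim\HH^2(X,\calC_{E_1})\geq(2|X(2)|-|X(1)|)\dim E_1+s$. The goal of Step~2 is to show that, under the hypothesis $M\geq 0$, equality holds with the required probability. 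Once equality holds, a diagram chase in the long exact sequence for $0\to\calC_{E_1}\to\calF\to\calF_{E_1}\to 0$ forces every class in $\ker\omega_1$ to come from $\bar\Phi$, completing the argument.

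The main obstacle is precisely this equality. Unlike Proposition~\ref{PR:securing-expansion-for-quotients}(ii), which controls a pointwise linear-disjointness condition at each vertex by an elementary random linear-algebra count, here the invariant to be bounded is \emph{global} and depends \emph{quadratically} on $E'_0$ through the cup product, so the direct approach does not apply. The cleanest route I envision is to construct an auxiliary ``Koszul-type'' complex $\calK^\bullet$ from the cup-product data on $X$ and a morphism $\calK^\bullet\to C^\bullet(X,\calC_{E_1})$ that captures the expected contributions to $\HH^{\leq 2}$, and then to prove by a generic-position / Tor-vanishing argument in the Grassmannian of admissible subspaces $E'_0\subseteq Z^1(X,\calF)$ that this morphism is a quasi-isomorphism through degree $2$ with probability $1-o(M^{-c})$ (respectively $1-o(|\F|^{-c})$). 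The numerical hypothesis $M\geq 0$ should appear here as the precise condition ensuring that the expected generic rank is attained.
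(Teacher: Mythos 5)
This statement is a \emph{conjecture} in the paper: the authors do not prove it, and explicitly say it is supported only by computer simulations and by the heuristic computation in Example~\ref{EX:analysis-of-dim-E2}. Your proposal does not close that gap. Your ``Step~2'' --- the surjectivity of $\bar\Phi$ onto $\ker\omega_1$, equivalently that every class in $\ker\omega_1$ is explained by cup products with the $f'_j$ --- is exactly claim (i) of the paper's heuristic in \S\ref{subsec:growth}, and you only ``envision'' an argument for it (an unspecified Koszul-type complex and a generic-position/Tor-vanishing argument over a Grassmannian). Nothing in the paper, and nothing concrete in your write-up, establishes that the a priori lower bound $\dim\HH^2(X,\calC_{E_1})\geq(2|X(2)|-|X(1)|)\dim E_1+s$ is attained with probability $1-o(1)$, nor that equality would force surjectivity of $\bar\Phi$ by a diagram chase; the hypothesis $M\geq 0$ is a dimension-count heuristic in the paper, not a verified sufficient condition.

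There is also a flaw in your ``unconditional'' Step~1. You have a map $\bar\Phi$ from tuples $(\alpha'_1,\dots,\alpha'_s)$ satisfying \eqref{EQ:cup-product-effect-general} into $\ker\omega_1$, and the paper rigorously shows its kernel \emph{contains} $U_1$ (tuples with all $\alpha'_j\in B^1(X,\F)+L(\calF)$). That containment gives only the upper bound $\dim\im\bar\Phi\leq\dim V_1-\dim U_1$; to conclude $\dim E'_1=\dim\ker\omega_1\geq\dim\im\bar\Phi=\dim V_1-\dim U_1=\dim\ker(\cupp)$ you need the reverse containment $\ker\bar\Phi\subseteq U_1$, which is precisely claim (ii) of the paper's heuristic (and also relies on the unproven expectation that $V_1\subseteq Z^1(X,\F)^s+L(\calF)^s$, which the paper only says is ``reasonable to expect'' and ``confirmed by our simulations''). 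So neither inequality in your proposal is actually established; the proposal repackages the conjecture's supporting heuristic rather than proving the statement.
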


It possible to continue
the analysis of 
Example~\ref{EX:analysis-of-dim-E2}
in order to predict  the dimension of $\dim E_r$ for larger  values
of $r$. 
This is manageable for $r=3$,
or if one assumes that $h^1(X,\calF)=0$,
but
the general case    becomes intractable   very quickly. We omit the details.

If $X$ has significantly more $2$-cells
than $1$-cells, then dimension considerations suggest that 
the equations defining $V_r$
will be less and less likely to have nontrivial solutions as $r$ grows. 
Thus, it may be the case
that the iterative process of Construction~\ref{CN:modification-process}
stops after a fixed number of steps if $X$ is covered by a sufficiently thick affine building. We 
pose it as a conjecture, although we have
no computational evidence.

\begin{cnj}\label{CJ:process-for-buildings}
	There $q,d\in \N$  ($d\geq 2$) 
	and a function $f:\N \cup\{0\}\to \N$
	such that if $X$ is covered by a $q$-thick $d$-dimensional
	affine building, then, when the iterative
	process of Construction~\ref{CN:modification-process} stops, 
	we have $\dim E\leq f(h^1(\calF))$ with probability $1-o(1)$
	as $|\F|\to\infty$ or $\dim \calF\to\infty$.
\end{cnj}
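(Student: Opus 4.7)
The plan is to proceed in three stages, anchored by Conjecture~\ref{CJ:cup-product-explains-everything} which, once established in the building setting, reduces the analysis of $\dim E_r$ to the explicit cup-product spaces $V_r$ and $U_r$. I would first attempt to upgrade the cup-product conjecture to a theorem for thick affine building quotients, then bound $\dim(V_r/U_r)$ at each step in terms of cohomological data depending only on $X$ and $h^1(\calF)$, and finally show the iteration terminates in boundedly many steps.

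For stage one, the heuristic dimension count in \eqref{EQ:Er-dimension} suggests that when $\dim \calC_r(x) \ll \dim \calF$ for every $x$, the map $\omega_r \colon \HH^2(X,\calC_r) \to \HH^2(X,\calF)$ should be injective modulo classes visibly produced by cup products with $E_r$. I would prove this by a perturbative extension of Theorem~\ref{TH:sheaves-on-quo-of-aff-buildings-quotients}: so long as $\calC_r$ is much smaller than $\calF$, the quotient $\calF/\calC_r$ retains enough local coboundary expansion to control $\HH^2$, while the ``extra'' cohomology arising from the inclusion $\calC_r\hookrightarrow \calF$ is captured by an explicit Koszul-type resolution whose only surviving cohomology is the expected cup-product contribution. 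The probabilistic statement would then follow from general-position arguments analogous to Proposition~\ref{PR:securing-expansion-for-quotients}, once $|\F|$ or $\dim\calF$ is large enough to make the dimension-counting inequality strict.

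For stage two, by Example~\ref{EX:analysis-of-dim-E2} one has $\dim E'_1 \leq \dim E'_0 \cdot h^1(X,\F)$ via the cup product $\HH^1(X,\F)\otimes E'_0\to\HH^2(X,\calF)$. Proceeding inductively, $E'_{r+1}$ should be controlled by iterated Massey-like products of $\HH^1(X,\F)$ acting on $E_r$, so that $\dim E_{r+1}$ is at most $\dim E_r$ times an $r$-dependent polynomial in $h^i(X,\F)$ for $i \in \{1,2\}$. The crucial building-theoretic input is that, for $X$ covered by a sufficiently thick affine building, the dimensions $h^i(X,\F)$ for $i \geq 1$ are bounded purely in terms of the combinatorics of $X$ and are \emph{independent} of $\calF$ — this is the analogue of Garland-type vanishing and should follow from spectral bounds on the links via Theorem~\ref{TH:skeleton-exp-building}.

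The main obstacle is proving termination in \emph{boundedly many} steps, uniformly in $\dim\calF$. The per-step bounds in stage two compound multiplicatively, so without a genuine termination mechanism the iteration could, in principle, escape any fixed function of $h^1(\calF)$. To close this gap one needs a monovariant — for example, the dimension of an obstruction space sitting in some spectral-sequence term attached to the filtration $\calC_0 \subseteq \calC_1 \subseteq \cdots \subseteq \calF$, or the length of a residual Massey-product tower — that strictly decreases at each nontrivial iteration and is initially bounded by a function of $h^1(\calF)$ alone. Exhibiting such an invariant, and showing that sufficient thickness of the covering building forces it to vanish after a bounded number of steps, is the crux of the conjecture and where I expect the principal difficulty to lie.
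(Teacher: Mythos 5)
The statement you are trying to prove is posed in the paper as an open conjecture, not a theorem: the authors explicitly write that they ``pose it as a conjecture, although we have no computational evidence,'' so there is no proof in the paper to compare your argument against. Your proposal is a research programme rather than a proof, and by your own admission it does not close: the entire burden of the conjecture is the uniform termination bound, and your stage three says only that one ``needs a monovariant'' without exhibiting one. Until such an invariant is produced and shown to be controlled by $h^1(\calF)$ alone, nothing has been proved.

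Beyond that structural gap, two specific steps would fail as written. First, stage one asks to ``upgrade'' Conjecture~\ref{CJ:cup-product-explains-everything} to a theorem, but that statement is itself only supported by the paper's simulations; a perturbative extension of Theorem~\ref{TH:sheaves-on-quo-of-aff-buildings-quotients} controls coboundary expansion in dimensions $0$ and $1$, not the structure of $\ker\omega_r$ inside $\HH^2(X,\calC_r)$, and no Koszul-type resolution computing that kernel is constructed anywhere. Second, even granting stage one, your stage-two bound $\dim E'_{r+1}\lesssim \dim E_r\cdot h^1(X,\F)$ compounds multiplicatively in $r$ and involves $h^1(X,\F)$, which depends on $X$ (and is forced to be nonzero whenever $X$ has an infinite tower of double coverings, cf.\ Proposition~\ref{PR:loc-constant-failure}); the conjecture demands a bound $f(h^1(\calF))$ with $f$ depending only on $q$ and $d$, uniform over all admissible $X$ and all $\calF$. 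So even a bounded number of steps would not by itself yield the claimed bound without additionally controlling $h^1(X,\F)$ across the whole class of quotients, which your appeal to Garland-type vanishing does not do for $i=1$ coefficients in $\F$ of characteristic dividing the relevant torsion.
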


We will see in the sequel how to find infinite families of sheaves on $d$-complexes
covered by $q$-thick buildings such that $h^1(\calF)=O(1)$ as a function of $\dim \calF$,
so any function $f:\N\cup\{0\}\to \N$ will do.

\subsection{Candidates for Infinite Families of Good $2$-Query LTCs}
\label{subsec:candidates}

We conclude this section with showing how a positive
answer to the conjectures
raised in \S\ref{subsec:growth} would lead to the existence
of an infinite family of good $2$-query LTCs.
To that end, we use the following theorem, that will be
proved in Section~\ref{sec:candidates}.

\begin{thm}\label{TH:sheaves-with-small-coh}
	For every $q,d\in \N$ with $d\geq 2$,
	there exists a $d$-complex $X$ that is covered by a $q$-thick affine building
	and a nonzero locally constant $\F_2$-sheaf   $\calG$
	on $X$ such that $h^0(\calG)=h^1(\calG)=0$.
	Moreover $X$ admits an infinite
	tower of double coverings $\dots\to X_2\to X_1\to X_0\to X$.
\end{thm}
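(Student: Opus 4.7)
The plan is to combine three ingredients: quotients of an affine building by an arithmetic lattice, the pushforward construction with Shapiro's Lemma, and cohomology vanishing for congruence coverings (the content of Theorem~\ref{TH:good-quotient}, to be established in Section~\ref{sec:congruence}).

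First I would fix a $q$-thick affine building $Y$ of dimension $d$ (e.g.\ a Bruhat--Tits building of $\mathrm{SL}_{d+1}$ over a suitable non-archimedean local field) and choose an arithmetic lattice $\Gamma_0 \leq \mathrm{Aut}(Y)$ acting freely on $Y$, so that $X := \Gamma_0 \backslash Y$ is a finite $d$-complex covered by $Y$ with $\pi_1(X) \cong \Gamma_0$. The infinite tower of double coverings $\cdots \to X_2 \to X_1 \to X_0 = X$ is produced from a descending sequence of finite-index subgroups $\Gamma_0 \geq \Gamma_1 \geq \Gamma_2 \geq \cdots$ with $[\Gamma_{r-1} : \Gamma_r] = 2$; such a chain exists because arithmetic groups are residually $2$-finite, so one can iteratively locate index-$2$ subgroups using $\F_2$-linear quotients of congruence reductions.

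Next I construct the sheaf. Let $\Gamma' \leq \Gamma_0$ be a finite-index subgroup of index $m \geq 2$, set $X' := \Gamma' \backslash Y$ with covering map $p : X' \to X$, and let $\calF := p_* \F_2$ be the pushforward, which is locally constant of dimension $m$. By Shapiro's Lemma (Lemma~\ref{LM:Shapiro}), $\HH^i(X, \calF) \cong \HH^i(X', \F_2)$ naturally. The diagonal map $\F_2 \to \calF$, sending $1$ at each face $x$ to the constant tuple in $\calF(x) = \F_2^{p^{-1}(x)}$, is a morphism of sheaves, and its cokernel $\calG$ remains locally constant of dimension $m-1$ because the restriction maps of $\calF$ permute coordinates and thus preserve the diagonal. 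The long exact cohomology sequence of
\[0 \to \F_2 \to \calF \to \calG \to 0,\]
combined with the fact that $\HH^0(X, \F_2) \to \HH^0(X, \calF) \cong \HH^0(X', \F_2)$ is the identification $\F_2 = \F_2$, yields
\[h^0(\calG) = \dim \ker p_1^*, \qquad h^1(\calG) = \dim \mathrm{coker}(p_1^*) + \dim \ker p_2^*,\]
where $p_i^* : \HH^i(X, \F_2) \to \HH^i(X', \F_2)$ denotes pullback. In particular $h^0(\calG) = h^1(\calG) = 0$ precisely when $p_1^*$ is an isomorphism and $p_2^*$ is injective.

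The hard part is to produce $\Gamma'$ arithmetically so that $p_1^*$ is bijective and $p_2^*$ is injective while retaining the tower from the first step. This is exactly the content of Theorem~\ref{TH:good-quotient} and relies on vanishing phenomena for the cohomology of arithmetic congruence subgroups in the style of Matsushima, Garland and Borel--Wallach; one must also arrange that the arithmetic lattice $\Gamma_0$ built in the first step admits such congruence coverings for the prescribed thickness $q$ and dimension $d$. Granting Theorem~\ref{TH:good-quotient}, the resulting sheaf $\calG$ is nonzero, locally constant of dimension $m - 1 \geq 1$, and satisfies $h^0(\calG) = h^1(\calG) = 0$, which together with the tower completes the proof.
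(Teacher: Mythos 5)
Your overall architecture — quotient of an affine building, pushforward of the constant sheaf along a covering $p:X'\to X$, Shapiro's Lemma, and reduction to the $\F_2$-cohomology of congruence covers — is the same as the paper's. But there is a genuine gap in how you close the argument. Your long-exact-sequence bookkeeping leaves you needing $p_1^*:\HH^1(X,\F_2)\to\HH^1(X',\F_2)$ bijective \emph{and} $p_2^*:\HH^2(X,\F_2)\to\HH^2(X',\F_2)$ injective, and you assert that "this is exactly the content of Theorem~\ref{TH:good-quotient}." It is not: that theorem supplies only a tower of coverings of \emph{odd} degree with $\dim\HH^1(X'_r,\F_2)$ uniformly bounded. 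Nothing in the paper (or in the cited vanishing results) controls $\HH^2$ of these covers, and the paper explicitly treats $\dim\HH^k$ for $k\ge 1$ as unknown in general. The missing idea is precisely the \emph{oddness} of the degree $m$: since $\Char\F_2=2\nmid m$, the sequence $0\to\F_2\to p_*\F_2\to\calG\to0$ splits (via $\alpha\mapsto(m^{-1}\alpha,\dots,m^{-1}\alpha)$, equivalently a transfer argument), so $h^i(\calG)=h^i(X',\F_2)-h^i(X,\F_2)$ with no $\ker/\coker$ terms and no reference to $\HH^2$ at all. The paper's Lemma~\ref{LM:ker-of-counit} is exactly this splitting, applied to the kernel of the summation map (isomorphic to your cokernel of the diagonal once the sequence splits). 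Without oddness your sequence does not split in characteristic $2$ and the $\HH^2$ term genuinely obstructs the computation.

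The second missing step is how one gets $h^1(X',\F_2)=h^1(X,\F_2)$. Even granting the splitting, the uniform bound $\dim\HH^1(X'_r,\F_2)\le C$ does not make any single covering map induce an isomorphism on $\HH^1$; one must apply pigeonhole along the infinite tower to find two levels $t>s$ with $\dim\HH^1(X'_t,\F_2)=\dim\HH^1(X'_s,\F_2)$ and take $u:X'_t\to X'_s$ to be the covering between \emph{those} levels. This elementary step is the whole point of demanding a bounded tower rather than a single cover. A smaller but real issue: your claim that an infinite chain of index-$2$ subgroups always exists "because arithmetic groups are residually $2$-finite" is false in the needed generality — the paper exhibits quotients of affine buildings with $\HH^1(X,\F_2)=0$, hence with no double coverings whatsoever (Example~\ref{EX:LTC-and-rate}) — and arranging the double-covering tower is part of what Theorem~\ref{TH:good-quotient}(i) establishes via congruence subgroups at a place of residue characteristic $2$.
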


Based on this, we show:

\begin{thm}\label{TH:main-candidates}
	Let $d\in\N-\{1\}$, let $q=q(d)$
	be as in Theorem~\ref{TH:sheaves-on-quo-of-aff-buildings-quotients},
	and let $X$ be a $d$-complex covered
	by a $q$-thick affine building as in Theorem~\ref{TH:sheaves-with-small-coh}.
	Let $\F$ be a finite field
	of characteristic $2$ and suppose that one of the following holds:
	\begin{enumerate}[label=(\arabic*)]
		\item Conjecture~\ref{CJ:dimension-of-E2} holds for $X$, and 
		there is a nonzero locally
		constant $\F$-sheaf
		$\calF_0$ on $X$
		and a subspace $E\subseteq \HH^1(X,\calF_0)$ 
		of dimension $  h^1(\calF_0)-h^0(\calF_0)+1$
		such that
		$\cupp:\HH^1(X,\F)\otimes_{\F}
		E\to \HH^2(X,\calF_0)$ is injective.
		\item Conjecture~\ref{CJ:process-for-buildings} is true 
		for  $X$.
	\end{enumerate}		
	Then there exists 
	an $\F$-sheaf $\calF$ on $X$ such 
	that, if we apply the iterative process
	of Construction~\ref{CN:modification-process}
	to $\calF$, then the pair $(X,\calF_E)$
	satisfies
	conditions 
	\ref{item:TH:tower-paradigm:tower},
	\ref{item:TH:tower-paradigm:LTC}
	and \ref{item:TH:tower-paradigm:rate}
	of the tower paradigm
	(Theorem~\ref{TH:tower-paradigm} with $k=0$) with probability $>0$.
	Consequently, there exists initial data for the tower
	paradigm, and as a result,
	an infinite family of 
	$2$-query LTCs with linear distance and constant rate.
\end{thm}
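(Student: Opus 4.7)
The plan is to construct the required $\F$-sheaf $\calF$ by inflating an auxiliary locally constant sheaf supplied by Theorem~\ref{TH:sheaves-with-small-coh}. I fix $X$ as in that theorem: a $d$-complex covered by a $q$-thick affine building, equipped with an infinite tower of double coverings (so condition \ref{item:TH:tower-paradigm:tower} holds automatically) and admitting a nonzero locally constant $\F_2$-sheaf $\calG$ with $h^0(\calG)=h^1(\calG)=0$. By Lemma~\ref{LM:field-ext} the base change of $\calG$ to $\F$ still satisfies $h^0=h^1=0$, so I may regard $\calG$ as an $\F$-sheaf on $X$. The remaining two tower-paradigm conditions \ref{item:TH:tower-paradigm:LTC} and \ref{item:TH:tower-paradigm:rate} will be secured by running the modification process of Construction~\ref{CN:modification-process} on a sheaf built from $\calG$ and (in case (1)) from $\calF_0$.

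In case (1), I take $\calF:=\calF_0\times \calG^{s}$ for a parameter $s\in\N$ chosen large later. The product decomposition of the cochain complex gives $h^0(\calF)=h^0(\calF_0)$, $h^1(\calF)=h^1(\calF_0)$, and realizes $\HH^2(X,\calF_0)$ as a direct summand of $\HH^2(X,\calF)$; in particular $\dim E_1=h^1(\calF)-h^0(\calF)+1$ is independent of $s$ while $\dim\calF$ grows linearly in $s$. I seed the process with a lift $E'_0\subseteq Z^1(X,\calF_0)\subseteq Z^1(X,\calF)$ of the given $E\subseteq\HH^1(X,\calF_0)$. Because $E'_0$ lies in the $\calF_0$-summand, the cup-product map $\HH^1(X,\F)\otimes_{\F}E'_0\to\HH^2(X,\calF)$ factors through the injective map $\HH^1(X,\F)\otimes_{\F}E\to\HH^2(X,\calF_0)$ of the hypothesis and is itself injective. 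For $s$ large the parameter $M$ of Conjecture~\ref{CJ:dimension-of-E2} is positive, so that conjecture forces $\dim E'_1=0$ with probability $1-o(1)$, and the process halts at $r=1$ with $\dim E=\dim E_1$ bounded independently of $s$. In case (2), I take instead $\calF:=\calG^{s}$; then $h^0(\calF)=h^1(\calF)=0$ and Conjecture~\ref{CJ:process-for-buildings} applied to $X$ yields $\dim E\leq f(0)$, again a constant independent of $s$, with probability $1-o(1)$.

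In both cases $\dim E = O(1)$ while $\dim\calF=\Theta(s)$, so for $s$ large enough the hypothesis $\dim E_r\leq Q^{-1}(\dim\calF-\log_{|\F|}\dim\calF-\log_{|\F|}|X(0)|)$ of Corollary~\ref{CR:securing-expansion-for-quotients} is met at the termination index $r$ of the process, where $Q=D_{0,1}(X)$. That corollary certifies condition \ref{item:TH:tower-paradigm:LTC} for $(X,\calF_E)$ with probability at least $1-1/Q$, while Proposition~\ref{PR:process-stops}(ii) gives condition \ref{item:TH:tower-paradigm:rate} unconditionally. A union bound over the failure probabilities of the invoked conjecture and of the linear-disjointness condition \ref{item:F-E:linear-disjointness} at each iteration shows that the three tower-paradigm prerequisites hold simultaneously with strictly positive probability, so Theorem~\ref{TH:tower-paradigm} furnishes the promised infinite family of good $2$-query LTCs.

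The main obstacle is enforcing $\dim E\ll \dim\calF$: without such a bound, Corollary~\ref{CR:securing-expansion-for-quotients} is vacuous and there is no control on the local coboundary expansion of $\calF_E$. The inflation trick $\calG\leadsto \calG^{s}$ is what reconciles the two demands, because each conjecture delivers an $s$-independent bound on $\dim E$ (by converting an algebraic injectivity statement into termination after one step in case (1), or by hypothesis in case (2)), while the summand $\calG^{s}$ contributes arbitrary dimension yet no cohomology in degrees $0$ or $1$. Every other step in the argument is either a direct invocation of a previously-established result or a standard union bound.
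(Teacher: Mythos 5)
Your proposal is correct and follows essentially the same route as the paper: base-change $\calG$ from Theorem~\ref{TH:sheaves-with-small-coh}, inflate via $\calF_0\times\calG^s$ (resp.\ $\calG^s$) to keep $\dim E$ constant while $\dim\calF\to\infty$, invoke the relevant conjecture to halt the process, and finish with Corollary~\ref{CR:securing-expansion-for-quotients} and Proposition~\ref{PR:process-stops}(ii). The only (minor) deviation is that you seed the process with a deterministic lift of $E$ into $Z^1(X,\calF_0)\subseteq Z^1(X,\calF)$, whereas the paper keeps $E'_0$ uniformly random as in Construction~\ref{CN:modification-process} and argues that its image in $\HH^1(X,\calF_s)$ equals $V_s$ with probability bounded below by some $p>0$ — a step you should retain if you read Conjecture~\ref{CJ:dimension-of-E2} as quantifying over the random choice of $E'_0$.
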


Note that $  h^1(\calF_0)-h^0(\calF_0)+1\geq 0$ by Proposition~\ref{PR:loc-constant-failure}.

Writing $\Gamma=\pi_1(X)$, the existence of $\calF_0$ in (1) is equivalent
to the existence of a nonzero representation  $\rho:\Gamma\to \nGL{\F_2}{m}$
and a subspace $E\subseteq \HH^1(\Gamma,\rho)$
of dimension $\HH^1(\Gamma,\rho)-\HH^0(\Gamma,\rho)+1$
such that  $
\cupp:\HH^1(\Gamma,\F_2)\otimes E\to\HH^2(\Gamma,\rho) $
is injective.
There  are representations $\rho$ of arbitrarily
large \emph{finite}
groups having this property, see the  {\tt MathOverflow} answer \cite{SashaP_2022_MO_answer}.

\begin{proof}
	Let $\calG$ be the sheaf promised
	by Theorem~\ref{TH:sheaves-with-small-coh}.
	We replace $\calG$ with its base-change from $\F_2$ to $\F$
	to assume that $\calG$ is an $\F$-sheaf, see Lemma~\ref{LM:field-ext}.
	
	Suppose that (1) holds. 
	For every $s\in \N$, put $\calF_s=\calF_0\times\calG^s$.
	By
	our assumptions on $\calG$, the natural map $\calF_0\cong \calF_0\times 0\to \calF_0\times\calG^s=\calF_s$
	induces maps
	$\HH^i(X,\calF_0)\to\HH^i(X,\calF_s)$
	which are bijective for 
	$i\in\{0,1\}$ and injections for
	$i\geq 2$. The map 
	$\HH^i(X,\calF_0)\to\HH^i(X,\calF_s)$ is compatible with
	the cup product, so, 
	writing $V_s$ for the image of $E$ in $\HH^1(X,\calF_s)$,
	the map
	$\cupp :\HH^1(X,\F)\otimes_{\F} V_s\to \HH^2(X,\calF_s)$ is injective.
	
	Let $E_1$ be a subspace of $Z^1(X,\calF_s)$ of dimension
	$h^1(\calF_s)-h^0(\calF_s)+1=\dim V_s$, chosen uniformly
	at random. As $s$ grows, the probability that
	the image of $E$ in $\HH^1(X,\calF_s)$ is $V_s$
	approaches some $p>0$.
	Consequently, the probability that 
	$\cupp:\HH^1(X,\F)\otimes_\F E_1\to \HH^2(X,\calF_s)$
	is injective is bounded from below by some $p'>0$.
	It now follows from Conjecture~\ref{CJ:dimension-of-E2}
	that for all $s$ large enough, the iterative process
	of Construction~\ref{CN:modification-process}
	stops for $\calF_s$ after $1$ step with probability $p'>0$.
	When this happens, the  output of the process is the subspace
	$E_1$, so $\dim E=h^1(\calF_0)-h^0(\calF_0)+1$
	is independent of $s$. Thus, 
	by Corollary~\ref{CR:securing-expansion-for-quotients},
	for all $s$ large enough, 
	$(X,(\calF_s)_E)$ satisfies \ref{item:TH:tower-paradigm:LTC}
	with probability $p''>0$.
	By Proposition~\ref{PR:process-stops}(ii),
	$(X,(\calF_s)_E)$ satisfies \ref{item:TH:tower-paradigm:rate},
	and   \ref{item:TH:tower-paradigm:tower} holds by 
	the choice of  $X$ in Theorem~\ref{TH:sheaves-with-small-coh}.
	To conclude, we can take $\calF=\calF_s$ for any $s$ large enough.

	The case where (2) holds is handled similarly but with the following differences:
	One can start
	with any locally constant sheaf $\calF_0$
	on $X$, e.g., the zero sheaf,
	and one uses 
	Conjecture~\ref{CJ:process-for-buildings} to bound
	$\dim E$ from above by $f(h^1(\calF_0))$.
\end{proof}

\begin{remark}\label{RM:main-candidates}
	Suppose that in Theorem~\ref{TH:main-candidates}   we take
	$E$ be $E_r$ of Corollary~\ref{CR:securing-expansion-for-quotients}
	instead of   the output of Construction~\ref{CN:modification-process}
	(i.e., we terminate the iterative process of the construction when $\dim E_r$
	is small w.r.t.\ to $\dim \calF$).
	The same argument as in the proof of the theorem then
	shows that for all $s$ large enough there exists
	a subspace
	$E\subseteq Z^1(X,\calF_s)$
	such that 
	$(X,(\calF_s)_E)$ satisfies
	\ref{item:TH:tower-paradigm:tower}
	and~\ref{item:TH:tower-paradigm:LTC}  
	unconditionally, and also  
	\ref{item:TH:tower-paradigm:rate}
	provided that Conjecture~\ref{CJ:process-for-buildings} holds.
\end{remark}

\section{Arithmetic Groups and Simplicial Complexes Covered by Affine Buildings}
\label{sec:congruence}

The purpose of this section is to prove the following theorem, which will be used in
the next section to prove Theorem~\ref{TH:sheaves-with-small-coh} and for a few other purposes.

\begin{thm}\label{TH:good-quotient}
	Let $q,d\in \N$ and assume that $d\geq 3$.
	There exist  a (finite) 
	simplicial complex $X$ covered by   a $q$-think $d$-dimensional affine building,
	a tower of (proper) connected coverings
	$\cdots \to X'_2\to X'_1\to X'_0=X$  and
	a constant  $C \in\R_+$  such that the following hold:
	\begin{enumerate}[label=(\roman*)]
		\item Every connected covering of $X$ admits an infinite tower of connected double coverings.
		\item  $[X'_r:X]$ is odd
		and $\dim \HH^1(X'_r,\F_2)\leq C$ for all $r\in \N\cup\{0\}$.
	\end{enumerate}
	Here, $[X'_r:X]$ denotes the degree of $X'_r\to X$.
%	Moreover, at least one of the following hold:
%	If Serre's Conjecture
%	about the congruence subgroup property (Conjecture~\ref{CJ:csp} below)
%	holds, and $d+1$ is prime or even, then one can replace (ii) with:
%	\begin{enumerate}
%		\item[(ii$\,'$)] 
%		$X'_r$ is a Ramanujan complex in the sense of \cite{Lubotzky_2005_explicit_constructions_of_Ramanujan_complexes},
%		and
%		$\dim \HH^1(X'_r,\F_2)\leq C\log [X'_r:X]+C$ for all $r\in \N\cup\{0\}$.
%		%Moreover, there is an algorithm to construct  	 $X'_r$.		
%	\end{enumerate}
\end{thm}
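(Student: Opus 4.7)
The plan is to realize $X$ as a quotient $\Lambda_0\leftmod Y$ of an affine building $Y$ by a cocompact arithmetic lattice $\Lambda_0$ coming from a simply connected, absolutely almost simple algebraic group $\bfG$ over a suitably chosen global field $k$. Concretely, take $\bfG=\uSL_{1,A}$ for a central simple $k$-algebra $A$ of degree $d+1$ that splits at a distinguished non-archimedean place $v_\infty$ of $k$ and ramifies at enough other places to force cocompactness of the $\{v_\infty\}$-arithmetic lattice; the residue-field size at $v_\infty$ controls the thickness, and $\bfG(k_{v_\infty})\cong \nSL{k_{v_\infty}}{d+1}$ acts strongly transitively on the affine building $Y$ of type $\tilde{A}_d$. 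Choose $\Lambda_0\leq \bfG(\calO_{k,\{v_\infty\}})$ to be a torsion-free finite-index subgroup (by Selberg's lemma), so that $X:=\Lambda_0\leftmod Y$ is a finite simplicial complex covered by $Y$. Since $d\geq 3$, the $k_{v_\infty}$-rank of $\bfG$ is $\geq 2$, so $\Lambda_0$ has Kazhdan's property (T) and satisfies the congruence subgroup property (CSP) by Prasad--Rapinchuk.

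For (ii), I would fix a prime ideal $\fraka$ of $\calO_{k,\{v_\infty\}}$ of odd residue characteristic $p$, coprime to $v_\infty$, and set $\Lambda_r$ to be the principal $\fraka^r$-congruence subgroup of $\Lambda_0$, with $X'_r:=\Lambda_r\leftmod Y$. Then $[X'_r:X]=[\Lambda_0:\Lambda_r]$ divides $|\bfG(\calO_{k,\{v_\infty\}}/\fraka^r)|$, a power of $p$, and hence is odd. Since $Y$ is contractible, $\HH^1(X'_r,\F_2)\cong \HH^1(\Lambda_r,\F_2)$. Property (T) forces $\Lambda_r^{ab}$ to be finite, so it suffices to bound the $2$-primary part uniformly. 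By CSP, $\hat\Lambda_r$ is identified with an open subgroup of the restricted product $\prod_{w\ne v_\infty}\bfG(\calO_{k_w})$; the factor at $\fraka$ becomes a deep pro-$p$ group as $r$ grows, contributing no $2$-torsion to the abelianization, while the factors at other places are independent of $r$. This yields a uniform bound $\dim_{\F_2}\HH^1(X'_r,\F_2)\leq C$.

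For (i), I would additionally arrange that $\bfG$ is isotropic (and noncompact) at some non-archimedean place $w_0$ of residue characteristic $2$; by the Brauer--Hasse--Noether theorem, the local invariants of $A$ can be prescribed at $\{v_\infty, w_0, \ldots\}$ subject only to summing to zero, so one may simultaneously arrange that $A$ splits at both $v_\infty$ and $w_0$ while ramifying at enough additional places to retain cocompactness. Given any finite-index $\Lambda\leq\Lambda_0$, strong approximation (Platonov--Kneser--Margulis) implies that the closure of $\Lambda$ in $\bfG(k_{w_0})$ is open, and therefore contains some principal $w_0$-adic congruence subgroup, which is a uniform $2$-adic analytic pro-$2$ group of positive dimension. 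Such groups admit arbitrarily large finite $2$-group quotients, and inside each such finite $2$-group quotient one can produce a composition series whose successive quotients are $\F_2$; pulling back yields the desired infinite tower of connected double coverings of $\Lambda\leftmod Y$.

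The main obstacle is the simultaneous verification of all five arithmetic prerequisites---cocompactness, thickness $\geq q$, $k_{v_\infty}$-rank $\geq 2$, CSP, and an isotropic $2$-adic place $w_0$---for a single $(k,A,v_\infty)$ with $d\geq 3$ and arbitrary prescribed $q$. Using a global function field for $k$ provides full flexibility to independently prescribe residue characteristics at $v_\infty$ (for thickness) and at $w_0$ (equal to $2$); cocompactness is then arranged by ramifying $A$ at two further places, and higher-rank CSP for inner forms of $\uSL_{d+1}$ with $d+1\geq 4$ is known unconditionally by Prasad--Rapinchuk. A secondary technical point is verifying that the congruence tower $\{\Lambda_r\}$ stabilizes (in the sense used above) after finitely many steps---this follows from the topological finite generation of the prime-to-$\fraka$ part of $\hat\Lambda_0$, which is itself a consequence of CSP.
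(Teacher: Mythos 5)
Your overall strategy is the right one and matches the paper's in outline (arithmetic cocompact lattice acting on a thick $\tilde A_d$-building; congruence subgroups at a prime of odd residue characteristic to get the odd-index tower in (ii); pro-$2$ congruence quotients at a place of residue characteristic $2$, via strong approximation, to get the double-covering towers in (i); CSP plus the adelic description of the profinite completion to bound $\dim\HH^1(\cdot,\F_2)$ uniformly). However, the specific arithmetic input you choose does not exist, for two independent reasons.

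First, the congruence subgroup property is \emph{not} known for anisotropic inner forms of type $A_n$, i.e.\ for $\uSL_{1,A}$ with $A$ a division algebra --- and $A$ must be a division algebra for your lattice to be cocompact. Centrality of the congruence kernel for $\mathrm{SL}_1(D)$ is a famous open case (what Prasad--Rapinchuk prove unconditionally for inner type $A_n$ is the Margulis--Platonov conjecture, not CSP); this is exactly why the paper's Theorem~\ref{TH:csp} lists isotropic groups, the classical anisotropic types $B_n,C_n,D_n,E_7,E_8,F_4,G_2$, and anisotropic \emph{outer} forms $\uSU(f)$ with $\dim f\geq 4$, but excludes anisotropic inner type $A_n$. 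Without CSP your entire bound in (ii) collapses. The paper avoids this by taking $\bfG=\uSU_{d+1}(\Q[i]/\Q)$, an outer form which becomes $\uSL_{d+1}(\Q_p)$ after localizing at a prime $p\equiv 1\bmod 4$; the hypothesis $d\geq 3$ is there precisely so that the hermitian form has dimension $\geq 4$ and CSP applies.

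Second, your choice of a global function field cannot reconcile (i) with (ii): all places of a function field have the same residue characteristic, so you cannot simultaneously have a place $w_0$ of residue characteristic $2$ (needed so that deep congruence quotients are $2$-groups, producing the infinite towers of double coverings) and a place $\fraka$ of odd residue characteristic (needed so that $[X'_r:X]$ is odd and so that the $\fraka$-congruence tower contributes no new $\Hom(\cdot,\F_2)$). In odd characteristic there are essentially no double coverings; in characteristic $2$ every congruence tower has $2$-power index. Retreating to $K=\Q$ does not save the inner form either: $\mathrm{SL}_1(D)$ of degree $d+1\geq 4$ is never $\R$-anisotropic, so the $\{p,\infty\}$-arithmetic subgroup is not discrete-and-cocompact in $\bfG(\Q_p)$ alone. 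The paper's resolution is again the unitary group over $\Q$: compactness of $\bfG(\R)=\mathrm{U}(d+1)\cap\uSL$ gives cocompactness, the prime $2$ (built into the ideal $2^{m_0}3\calO$) feeds Corollary~\ref{CR:finite-quotients}(ii) for part (i), and the prime $3$ gives the odd-index tower $X'_r=\calG(\calO;2^{m_0}3^{r+1}\calO)\leftmod Y$ for part (ii), with Theorem~\ref{TH:no-of-subgroups} showing the only $r$-independent contribution to $\Hom(\cdot,\F_2)$ comes from the place $2$.
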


The constructions we provide are particular simplicial complexes, which are described
in \S\ref{subsec:construction-of-quotients}. 
The theorem is also true for $d=2$, but we omit most of the details;
see Remark~\ref{RM:good-quotient-in-dim-2}.
%The Ramanujan complexes in (ii$'$)
%finite coverings  of
%those
%constructed by Lubotzky, Samuels and Vishne in \cite{Lubotzky_2005_explicit_constructions_of_Ramanujan_complexes};
%see Remark~\ref{?}.
%In particular, there is an algorithm to construct them.

The proof of Theorem~\ref{TH:good-quotient} will make extensive use    of
arithmetic groups --- particularly congruence subgroups --- and their actions on affine buildings.
While we   briefly recall the   definitions and facts that we need in \S\ref{subsec:arith-grps-prelim},  
some knowledge of algebraic number theory is nevertheless assumed.
Familiarity with linear  algebraic groups
and affine group schemes is also recommended. 
We refer the reader to \cite{Platonov_1994_algebraic_groups_and_number_theory} for
further  details and an extensive discussion of these subjects.
A gentle introduction to group schemes is  
\cite[Chapter~1]{Waterhouse_1979_intro_affine_group_schemes}.

Readers who wish to skip the proof of Theorem~\ref{TH:good-quotient} should proceed
to Section~\ref{sec:candidates}.

\subsection{Preliminaries}
\label{subsec:arith-grps-prelim}

We begin with recalling necessary facts about algebraic groups,  setting
notation along the way.

\medskip

Let $R$ be any commutative ring,
let $\Comm{R}$ denote the category of commutative $R$-algebras
and let $\catGrp$ denote the category of groups. By a \emph{group scheme}\footnote{
	In this paper, all group schemes are affine and of finite presentation.
} over $R$
we mean a functor $G$ from $\Comm{R}$ to $\catGrp$ for which
there is a set of multivariate polynomials $f_1,\dots,f_t\in R[x_1,x_2,\dots,x_n]$ 
such that, for every $S\in\Comm{R}$,
the set $G(S)$ is in a \emph{natural} bijection with    the solutions of the equations
$f_1=\dots=f_t=0$ in $S^n$. The actual polynomials $f_1,\dots,f_t$
will rarely matter, and we would only care that   they exist. 
When $R$ is a field, group schemes over $R$ are also called (linear)
\emph{algebraic groups} over $R$.

We will only need the following  examples of group schemes.

\begin{example}\label{EX:group-schemes}
	(i) The functor $S\mapsto \nSL{S}{m}:\Comm{R}\to \catGrp$
is a group scheme, denoted $\uSL_{m}(R)$. (Formally,
$(\uSL_m(R))(S)=\nSL{S}{m}$ for all $S\in \Comm{R}$.) 
	Indeed, the functoriality
is clear, and $\nSL{S}{n}$ can be naturally identified with the zeroes
of the polynomial $\det(x_{ij})-1$ in $m^2$ indeterminates. 

	(ii) The functor $\nGm{R} :\Comm{R}\to \catGrp$ sending an $R$-algebra
	$S$ to its group of invertible elements $\units{S}$ is a group scheme.
	To see this, note that the map $s\mapsto (s,s^{-1}):\units{S}\to S^2$
	identifies $\units{S}$ with the solutions of the equation $x_1x_2-1=0$
	in $S^2$ for every $S\in\Comm{R}$.

%	(iii) Let $R$ be an integrally closed domain, e.g.\ $\Z$ or $\F_p[x]$,
%	and let $K$ be the fraction field of $R$.
%	Let $A$ be an $R$-algebra that such that $A\cong R^n$ as $R$-modules,
%	and such that $E:=A\otimes_R K$ is a central simple $K$-algebra, e.g., if
%	$E\cong \nMat{K}{n}$. We let $A_S$ denote $A\otimes_R S$ for all $S\in \Comm{R}$.
%	It is well-known 
%	\cite[\S9a]{Reiner_2003_maximal_orders_reprint}	
%	that $E$ admits a unique multiplicative
%	map $\Nrd_{E/K}:E\to K$ which coincides with the determinant 
%	on $E\otimes_K \overline{K}\cong \nMat{\overline{K}}{n}
%	$;
%	it is called the \emph{reduced norm} of $E$. Moreover,   $\Nrd_{E/K}$
%	restricts to a map $N$ from $A$ to $R$ \cite[Theorem~10.1]{Reiner_2003_maximal_orders_reprint}.
%	When regarded as map $R^n\cong A\to R$, the reduced norm is in fact a homogeneous multivariate
%	polynomial, which induces a multiplicative map $N_S:A_S\cong S^n\to S$ for all $S\in \Comm{R}$.
%	Define $\nSL{A_S}{1}=\{a\in A_S\where N_S(a)=1\}$. Then the assignment
%	$S\mapsto \nSL{A_S}{1}$ naturally extends to a functor $\uSL_{1 }(A):\Comm{R}\to \catGrp$,
%	which is a group scheme over $R$. Note that when $A=\nMat{R}{m}$, $\uSL_{1 }(A)$ is just
%	$\uSL_{m }(R)$ defined in (i).
	
	(iii) Given a commutative ring $R$,   put $R[i]=R[i\where i^2=-1]$. Elements
	of $R[i]$ are formal sums $\alpha+\beta i $ with $\alpha,\beta\in R$, and the product
	in $R[i]$ is determined by the rule $i^2=-1$.
	Write $\sigma_R:R[i]\to R[i]$ for the automorphism 
	sending $\alpha+\beta i$ to $\alpha-\beta i$. For example, if $R=\R$,
	then $R[i]$ is just $\C$ and $\sigma_\R$ is complex conjugation. 
	Given a matrix $a=(\alpha_{ij})_{i,j}\in \nMat{R[i]}{n}$, we write $a^*$
	for the matrix $(\sigma_R(\alpha_{ji}))_{i,j}$.	
	
	Define  $\nSU{R[i]/R}{m}=\{a\in\nMat{R[i]}{m}\suchthat a^*a=1~\text{and}~ \det(a)=1_{R[i]}\}$;
	it is a subgroup of $\nGL{R[i]}{m}$. 
	The functor $\uSU_{m}(R[i]/R):\Comm{R}\to \catGrp$ sending a  commutative $R$-algebra
	$S$ to $\nSU{S[i]/S}{m}$ is a group scheme.
	Indeed, we can   identify $\nMat{S[i]}{m}$ with $S^{2m^2}$ by sending
	a matrix $a = (\alpha_{j\ell}+i\beta_{j\ell})_{j,\ell}$ to the vector
	$(\alpha_{11},\alpha_{12},\dots,\alpha_{mm},\beta_{11},\beta_{12},\dots,\beta_{mm})\in S^{2m^2}$.
	The condition  $\det(a)=1_{S[i]}$ can now be rewritten as \emph{two} polynomial equations 
	with coefficients coming from $R$ (or even $\Z$), and the condition $a^*a=1$ can be rewritten
	as $2m^2$ polynomial equations.
	
	(iv) We can generalize (iii) by   fixing   $r_0,r_1\in R$
	and replacing $R[i]$ with $\hat{R}:= R[x\where x^2=r_1x+r_0]$;
	the $R$-automorphism 	
	$\sigma_R:\hat R\to \hat R$ then sends $x$ to $r_1-x$.
	Moreover, instead of considering matrices $a\in \nMat{\hat{S}}{m}$
	with $a^*a=1$, we could fix a   matrix $M\in \nGL{\hat{R}}{m}$
	with $M^*=M$ and consider the group of matrices $a\in \nMat{S'}{m}$
	satisfying $a^* Ma=M$ and $\det(a)=1$. This group is denoted $\mathrm{SU}(f_S)$,
	where $f_S:{\hat S}^m\times {\hat S}^m\to {\hat S}$
	is the $\sigma_S$-hermitian form corresponding to $M$, i.e., $f_S(x,y)=x^* M y$
	for $x,y\in {\hat S}^n$ (regarded as column vectors).
	The functor $S\mapsto \mathrm{SU}(f_S):\Comm{R}\to \catGrp$
	is a group scheme denoted $\uSU(f)$. 
\end{example}

Suppose that $G$ and $H$ are group schemes over $R$.
A morphism from $G$ to $H$  is a natural
transformation $f$ from $G$ to $H$. In particular, the data
of $f$ consists of  
a group homomorphism $f_S:G(S)\to H(S)$ for every $S\in \Comm{R}$.
We say that $f$ is a monomorphism if $f_S:G(S)\to H(S)$
is one-to-one for all $S\in\Comm{R}$.

If $R'\in \Comm{R}$, then 
every $R'$ may be regarded as an $R$-algebra.
This defines a functor $\Comm{R'}\to \Comm{R}$,
and its composition with  $G:\Comm{R}\to\catGrp$
is a group scheme over $R'$,  denoted $G_{R'}$. (The polynomial
equations defining $G_{R'}$ are the same as those defining $G$, but we think of
the coefficients as living in $R'$ instead of $R$.)

Let  $I$ be an ideal of $R$ (written $I\idealof R$). Then
$R/I$ is an $R$-algebra, and thus the quotient map $q:R\to R/I$ gives
rise to a group homomorphism $Gq:G(R)\to G(R/I)$. We define
\[
G(R;I)=\ker (G(R)\to G(R/I))
\]
and call $G(R;I)$ a \emph{principal congruence subgroup}
of $G$ (or $G(R)$, if $G$ is clear from the context). 
A subgroup of $G(R)$ containing a principal congruence subgroup  is called a \emph{congruence subgroup}
of $G$ (or $G(R)$).\footnote{
	This definition of congruence subgroups is different from the one used
	in \cite{Platonov_1994_algebraic_groups_and_number_theory}.
	The definitions are nevertheless 
	equivalent by \cite{First_2021_congruence_subgroups_via_group_schemes_preprint}.
}
%Note that if $J\ideal R$, then $G(R;I)\cap G(R;J)=G(R;I\cap J)$.

\begin{example}
	Taking $R=\Z$, $I=\ell\Z$ and $G=\uSL_{n}(\Z)$, the group $\nSL{\Z;I}{m}:=G(\Z;I)$
	is just the group of $m\times m$ integral matrices which have determinant $1$
	and are congruent to the identity matrix modulo $\ell$.
\end{example}

The group scheme $G$ is called \emph{absolutely almost simple (and) simply connected}
if there is a faithfully flat commutative
$R$-algebra $R'$ such that, up to isomorphism,
$G_{R'}$ is in the list of \emph{split} absolutely almost simple
simply connected   group schemes  over $R'$ (also 
called the absolutely almost simple simply connected \emph{Chevalley groups} over $R'$).
% Conrad, Theorem 6.1.16
%When $R$ is a field $K$, this condition holds if and only if it holds for $R'=\overline{K}$
%(the algebraic closure of $K$).
When $R$ is a domain, this list  consists
of   $4$ infinite families and $5$ exceptional groups,
denoted $A_m$ ($m\geq 1$), $B_m$ ($m\geq 2$), $C_m$ ($m\geq 3$), $D_m$ ($m\geq 4$), $E_6$, $E_7$, $E_8$, $F_4$, $G_2$. 
The description of these
group schemes will not matter to us except for the fact that $A_m$ is the group
scheme $\uSL_{m+1}(R')$.\footnote{
	The groups schemes
	$B_m$, $C_m$, $D_m$ are also not
	difficult to describe and are $ \mathbf{Spin}_{2m+1 }(R')$, $ \uSp_{2m }(R')$ and  $ \mathbf{Spin}_{2m }(R')$, respectively. 
}
The \emph{type} of 
$G$ is the symbol ($A_m$, $B_m$, $C_m$, $D_m$, $E_6$, $E_7$, $E_8$, $F_4$ or $G_2$)
used to denote $G_{R'}$. For example,
$G$ is absolutely almost simple   simply connected  of  
type $A_m$ if $G_{R'}\cong \uSL_{m+1}(R')$ for some faithfully
flat $R'\in \Comm{R}$.
When $R$ is a field, the   absolutely almost simple simply connected group
schemes of a given type further break into  two kinds: \emph{inner} and \emph{outer}.

Suppose that $R$ is a field $K$ and $G$ is absolutely almost simple  simply connected.
The group scheme $G$ is called \emph{isotropic} if there is a monomorphism
from $\nGm{K}$ to $G$, and \emph{anisotropic} otherwise. The largest
$r\in \N\cup\{0\}$ for which there is a monomorphism $f:(\nGm{K})^r\to G$
is called the \emph{rank} of $G$, and denoted $\rank  G$.
More generally, given a field extension $L$ of $K$, we say that $G$ is $L$-anisotropic
(resp.\ $L$-isotropic) if $G_L$ is anisotropic (resp.\ isotropic),
and define the $L$-rank of $G$ as $\rank_LG:=\rank (G_L)$.

\begin{example}\label{EX:simply-conn-groups}
	(i) For all $m>1$, the group $\uSL_{m }(R)$ is absolutely almost simple  simply connected 
	of type $A_{m-1}$.
	If $R$ is a field $K$, then $\uSL_{m}(K)$ is isotropic of rank $m-1$. A monomorphism
	$f:(\nGm{K})^{m-1}\to \uSL_{m}(K)$ is given by $f_S(s_1,\dots,s_{m-1})=
	\mathrm{diag}(s_1,\dots,s_{m-1},s_1^{-1}\cdots s_{m-1}^{-1})$ for all $S\in \Comm{K}$.

%	(ii) 
%	Let $K$ be a field and let $E$ be a central simple $K$-algebra of dimension $m^2$.
%	Then $\uSL_{1}(E)$ (see Example~\ref{EX:group-schemes}(iii)) is absolutely almost simple  simply connected 
%	of  type $A_{m-1}$. Indeed $E\otimes_K \overline{K}\cong \nMat{\overline{K}}{m}$
%	as $\overline{K}$-algebras, so 
%	$\uSL_{1}(E)_{\overline{K}}\cong \uSL_{1}(\nMat{\overline{K}}{m})=\uSL_{m}(\overline{K})$.
%	The group scheme $\uSL_{1}(E)$ is anisotropic if and only if $E$ is a \emph{division algebra}.
%	It always inner.
	
	(ii) 
	Suppose that $2\in\units{R}$.
	The group scheme $\uSU_m(R[i]/R)$ of Example~\ref{EX:group-schemes}(iv) 
	is 
	absolutely almost simple  simply connected of  type $A_{m-1}$.
	To see this, suppose first that $R$  
	contains an element $\veps\in R$ with $\veps^2=-1$.
	Using this element, one can define an isomorphism of $R$-algebras
	$\alpha+\beta i\mapsto (\alpha+\veps\beta,\alpha-\veps\beta):R[i]\to R\times R$ 
	(this is bijective because $2\in\units{R}$). 	
	Under this isomrophism,
	$\sigma_R$ corresponds to $\sigma'_R:R\times R\to R\times R$ given by $\sigma'_R(x,y)=(y,x)$.
	Now, a routine computation shows that   the induced 
	$R$-algebra isomorphism $\nMat{R[i]}{m}\to \nMat{R\times R}{m}\cong \nMat{R}{m}\times \nMat{R}{m}$
	maps $\nSU{R[i]/R}{m}$ to the pairs of matrices $(a,b)\in \nMat{R}{m}\times \nMat{R}{m}$
	with $\det(a)=\det(b)=1_R$ and $ab=1$, namely, onto $\{(a,a^{-1})\where a\in \nSL{R}{m}\}$.
	Since a similar computation applies over any commutative $R$-algebra,
	we have constructed an isomorphism from $G:=\uSU_m(R[i]/R)$ to $\uSL_{m}(R)$.
	If $R$ does not contain a root of $-1$, then we can simply adjoin one, setting $R'=R[i]$,
	and get that $G_{R'}=\uSU_m(R'[i]/R')\cong \uSL_{m}(R')$.	
	
	When $R$ is a field, the algebraic group
	$\uSU_m(R[i]/R)$ is inner if $R$ contains a square root of $-1$ and outer otherwise.

	(iii) The group scheme $\uSU(f)$ of Example~\ref{EX:group-schemes}(v) 
	is 
	absolutely almost simple  simply connected of type $A_{m-1}$ 
	if $r_1^2+4r_0 \in\units{R}$.  
	Assuming this and that   $R$ is a field $K$, it is inner if and only if $x^2-r_1x-r_0$
	has a root in $K$.
\end{example}

In the remainder of this section we    will use the following general notation:

\begin{notation}\label{NT:general-num-thy-notation}
\phantom{something}
\begin{itemize}
	\item $K$ is a global field, e.g.,\ $\Q$ or $\F_p(t)$.
	\item $\calV$ is the set of places of $K$ and $\calV_{\infty}$ is the subset of 
	archimedean places.
	\item $K_\rho$ is the completion of $K$ at $\rho\in\calV$. 
\end{itemize}
If $\rho\in\calV$ is a non-archimedean place, we also use $\rho$ to denote the
corresponding additive valuation $\rho:K_\rho \to \Z\cup\{\infty\}$
and set 
\begin{itemize}
	\item $\calO_\rho=\{x\in K_\nu\suchthat \nu(x)\geq 0\}$,
	\item $\frakm_\rho =\{x\in K_\nu\suchthat \nu(x)> 0\}$ (the maximal ideal of $\calO_\rho$),
	\item $k(\rho)=\calO_\rho/\frakm_\rho$ (the residue field at $\rho$),
	\item $P_\rho=\calO\cap \frakm_\rho=\{x\in\calO\suchthat \rho(x)>0\}$ (the prime ideal of $\calO$
	corresponding to $\rho$).
\end{itemize}	
We further fix the following data:
\begin{itemize}
	\item $S$ is a nonempty subset of $\calV$ containing $\calV_\infty$.
	\item $\calO=\calO^S$ is the ring of $S$-integers in $K$, namely,
	$\{x\in K\suchthat \text{$\rho(x)\geq 0$ for all $\rho\in \calV-S$}\} 
	$.
	The fraction field of $\calO$ is $K$.
	\item $\nu$ is a fixed non-archimedean place in $S$.
	%\item $F=K_\nu$.
	\item $\bfG$ is a simply connected absolutely almost simple algebraic group over $K$.
	\item $\calG$ is a group scheme over $\calO$ such that $\calG_K=\bfG$.
	\item $G=\bfG(K_\nu)=\calG(K_\nu)$. 
	\item $Y$ is the affine building attached to $\bfG_{K_\nu}$.
	Its dimension is $\rank_{K_\nu} \bfG$ \cite{Tits_1979_reductive_groups_over_local_fields}.
\end{itemize}
Given an ideal $I\idealof \calO$ and $\rho\in \calV-S$, we let 
\begin{itemize}
	\item $I_\rho = I\cdot\calO_\rho$.
	\item $\rho(I)=\min\{\nu(x)\where x\in I\}$; if $I\neq 0$,
	then
	this is also the unique   $n\in\N\cup \{0\}$ such that $I_\rho=\frakm_\rho^n$.
\end{itemize}
\end{notation}

For every $\rho\in \calV$, the group $\bfG(K_\rho)$
inherits a topology from $K_\rho$, and if $\rho\notin S$, then
$\calG(\calO_\rho)$ is a compact   open
subgroup of $\bfG(K_\rho)$.\footnote{
	Indeed,  $\bfG(K_\rho)$ (resp.\ $\calG(\calO_\rho)$) may be understood as the solution  set of some polynomial
	equations $f_1=\dots=f_r=0$ in $K_\rho^n$ (resp.\ $\calO_\rho^n$). 
	We give $\bfG(K_\rho)$ (resp.\ $\calG(\calO_\rho)$) the topology induced from $K_\rho^n$ ($\calO_\rho^n$).
	This is independent of how $\bfG$ (resp.\ $\calG$) 
	is realized as the solutions of polynomial equations.
} 
A theorem of Bruhat, Tits and Rousseau (see \cite{Prasad_1982_elementary_proof_BTR}, for instance)
states that $\bfG$ is $K_\rho$-anisotropic if and only if $\bfG(K_\rho)$ is compact.
We shall also need the following facts.

\begin{prp}\label{PR:density-in-arith-grp}
	With notation as in Notation~\ref{NT:general-num-thy-notation},
	if there is $\theta\in S$ such that $\bfG$ is $K_\theta$-isotropic, 
	then   $\calG(\calO;I)$ is dense 
	$\prod_{\rho\in\calV-S} \calG(\calO_\rho;I_\rho)$.	
	%in $ \calG(\calO_\rho;I \calO_\rho)$
	%for all $\rho\in \calV-S$.
\end{prp}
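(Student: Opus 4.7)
The plan is to derive the proposition from the classical strong approximation theorem for simply connected absolutely almost simple algebraic groups (due to Kneser, Platonov and Margulis), which is applicable precisely under the hypothesis that $\bfG$ is $K_\theta$-isotropic for some $\theta \in S$. In the form I need, this theorem asserts that the diagonal embedding $\calG(\calO) \hookrightarrow \prod_{\rho \in \calV - S} \calG(\calO_\rho)$ has dense image. (One usually states this as density of $\bfG(K)$ in the restricted product $\prod'_{\rho \notin S}\bfG(K_\rho)$ with respect to the compact open subgroups $\calG(\calO_\rho)$; intersecting with $\prod_{\rho \notin S}\calG(\calO_\rho)$ yields the form above.)

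Granted this, my main step will be to recognize $\calG(\calO;I)$ as the intersection of $\calG(\calO)$ with the open subgroup
\[
U_I := \prod_{\rho \in \calV - S} \calG(\calO_\rho;I_\rho)
\]
of $\prod_{\rho \in \calV - S}\calG(\calO_\rho)$. The subgroup $U_I$ is genuinely open: for $\rho \in \calV - S$ with $P_\rho \nmid I$ one has $I_\rho = \calO_\rho$ and hence $\calG(\calO_\rho;I_\rho) = \calG(\calO_\rho)$, so only finitely many factors differ from the full $\calG(\calO_\rho)$; for each such $\rho$ the quotient $\calO_\rho/I_\rho$ is finite and $\calG(\calO_\rho;I_\rho)$ is the kernel of the continuous reduction map $\calG(\calO_\rho) \to \calG(\calO_\rho/I_\rho)$, which is open.

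To identify $\calG(\calO;I)$ with $\calG(\calO) \cap U_I$, I would invoke the Chinese remainder decomposition $\calO/I \cong \prod_{\rho : P_\rho \mid I} \calO_\rho/I_\rho$ and the fact that the functor $\calG$ commutes with finite products of rings, so that the reduction map $\calG(\calO) \to \calG(\calO/I)$ factors through $\prod_{\rho \notin S}\calG(\calO_\rho) \to \prod_{\rho \notin S}\calG(\calO_\rho/I_\rho)$, whose kernel is $U_I$.

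Finally, I would close with the purely topological observation: if $H$ is a dense subgroup of a topological group $\Pi$ and $U$ is an open subgroup of $\Pi$, then $H \cap U$ is dense in $U$ (indeed, any nonempty open subset $V$ of $U$ is open in $\Pi$, hence meets $H$). Applying this with $\Pi = \prod_{\rho \in \calV-S}\calG(\calO_\rho)$, $H$ the (dense) image of $\calG(\calO)$, and $U = U_I$ yields the density of $\calG(\calO;I)$ in $U_I$. The only genuine input is strong approximation; everything else is bookkeeping, so the main obstacle is simply being precise about the topological group structures and the identification of $\calG(\calO;I)$ inside the product.
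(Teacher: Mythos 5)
Your proposal is correct and follows essentially the same route as the paper's proof: both invoke strong approximation for $\bfG(K)$, observe that the product $\prod_{\rho\notin S}\calG(\calO_\rho;I_\rho)$ is open, intersect, and identify the intersection as $\calG(\calO;I)$. The only cosmetic difference is that you first restrict $\bfG(K)$ to $\calG(\calO)$ (an intermediate application of the dense-subgroup-meets-open-subgroup lemma) and then to $U_I$, whereas the paper intersects $\bfG(K)$ with $U_I$ in one step inside $\bfG(\bbA^S)$.
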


\begin{proof}
	%Let $\bbA$ denote the ad\'el\`e ring of $K$, let 
	Let $\bbA^S=\prod'_{\rho\in \calV-S}K_\rho$ denote the ad\'el\`es away from $S$.
	% and let $\bbA_S=\prod_{\rho\in S}K_\rho$,
	%so that $\bbA=\bbA^S\times \bbA_S$.
	We embed $K$ diagonally in   $\bbA^S$.
%
%	To prove the statement, it is enough to show that $xU\cap \calG(\calO)\neq \emptyset$
%	for every  $x\in\calG(\calO_\rho;I_\rho)$ and every open neighborhood $U$   
%	of $1$ in $\calG(\calO_\rho;I_\rho)$.
%	View $\bbA^S$ as $\bbA^{S-\{\rho\}}\times K_\rho$,
%	so that $\bfG(\bbA^S)=\bfG(\bbA^{S-\{\rho\}})\times\bfG(K_\rho)$.
	By the Strong Approximation Theorem (\cite{Prasad_1977_strong_approximation},
	\cite{Margulis_1977_strong_apporximation}), $\bfG(K)$ is dense $\bfG(\bbA^S)$.
	Since $U:=\prod_{\rho\in\calV-S} \calG(\calO_\rho;I_\rho)$ is open in $\bfG(\bbA^S)$,
	the set   $\bfG(K)\cap U$ is dense in $U$.
	As $K\cap \prod_{\rho\in\calV-S} I_\rho=I$ inside $\bbA^S$, 
	we have $\bfG(K)\cap U=\calG(\calO;I)$,
	and the proposition follows. 
%	This means that $\calG(\calO)$
%	is dense in $\prod_{\rho\in\calV-S} \calG(\calO_\rho;I_\rho)$ and the proposition
%	follows.
%
%	To prove the statement, it is enough to show that $xU\cap \calG(\calO)\neq \emptyset$
%	for every  $x\in\calG(\calO_\rho;I_\rho)$ and every open neighborhood $U$   
%	of $1$ in $\calG(\calO_\rho;I_\rho)$.
%	View $\bbA^S$ as $\bbA^{S-\{\rho\}}\times K_\rho$,
%	so that $\bfG(\bbA^S)=\bfG(\bbA^{S-\{\rho\}})\times\bfG(K_\rho)$.
%	By the Strong Approximation Theorem (\cite{Prasad_1977_strong_approximation},
%	\cite{Margulis_1977_strong_apporximation}), $\bfG(K)$ is dense $\bfG(\bbA^S)$,
%	hence
%	$\bfG(\bbA^S)=\bfG(K)[(\prod_{\eta\in \calV-S-\{\rho\}}\calG(\calO_\eta;I_\eta))
%	\times U^{-1}]$.
%	Thus, $ (1_{\calG(\bbA^{S\cup\{\rho\}})},x)\in \bbG(\bbA^S)$ can be written
%	as a product $g\cdot(h,u^{-1})$ with $g\in \bfG(K)$, $h\in \prod_{\eta\in \calV-S-\{\rho\}}\calG(\calO_\eta;I_\eta)$ and $u\in U$. Clearly, $h=g^{-1}$ and $x=gu^{-1}$.
%	The former implies that $g\in \bfG(K)\cap \prod_{\eta\in \calV-S-\{\rho\}}\calG(\calO_\eta;I_\eta)$,
%	where the intersection is taken in
%	$\bfG(\bbA^{S-\{\rho\}})$, and the latter implies $g=xu \in \calG(\calO_\rho;I_\rho)$.
%	Since $I =K\cap (\prod_{\rho\in\calV-S}I_\rho)$ inside $\bbA^S$,
%	this means that 
%	$g\in \calG(\calO)$. It follows that
%	$g=xu \in xU\cap \calG(\calO)$, which is what we want.
\end{proof}

\begin{prp}\label{PR:elementary-p-group}
	With notation as in Notation~\ref{NT:general-num-thy-notation}, there is $D\in\N$
	such that the following hold: Let
	$R$ be a commutative $\calO$-algebra with  trivial $\calO$-torsion 
	and let $I\subseteq J$ be ideals of $R$.
	Then $\calG(R;I)/\calG(R;IJ)$ is isomorphic to a subgroup of
	the additive group $(I/IJ)^D$. 
\end{prp}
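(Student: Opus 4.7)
The approach is to exploit the Hopf-algebra structure on the coordinate ring $\calO[\calG]$. Since $\calG$ is affine of finite type over $\calO$, the ring $\calO[\calG]$ is a finitely generated $\calO$-algebra, and the identity element corresponds to an augmentation $\epsilon\colon \calO[\calG]\to\calO$ whose kernel $\frakm$ is an ideal satisfying $\calO[\calG]=\calO\oplus\frakm$ as $\calO$-modules. Identifying $\calG(R)$ with the set of $\calO$-algebra maps $\calO[\calG]\to R$, the subgroup $\calG(R;I)$ consists of those $\phi$ with $\phi(\frakm)\subseteq I$.

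The key computation concerns the comultiplication $\mu^*\colon\calO[\calG]\to\calO[\calG]\otimes_{\calO}\calO[\calG]$. Because $e$ is a two-sided identity, $(\id\otimes\epsilon)\circ\mu^*=\id=(\epsilon\otimes\id)\circ\mu^*$, and using the splitting $\calO[\calG]=\calO\oplus\frakm$ this forces
\[
\mu^*(f)=f\otimes 1+1\otimes f+\Delta(f)\qquad\text{with }\Delta(f)\in\frakm\otimes_{\calO}\frakm
\]
for every $f\in\frakm$. Given $\phi,\psi\in\calG(R;I)$, writing $\Delta(f)=\sum_i g_i\otimes h_i$ with $g_i,h_i\in\frakm$, the group law yields
\[
(\phi\cdot\psi)(f)=\phi(f)+\psi(f)+\sum_i\phi(g_i)\psi(h_i).
\]
Since $\phi(g_i),\psi(h_i)\in I$, the final sum lies in $I^2$, and the hypothesis $I\subseteq J$ yields $I^2\subseteq IJ$; hence $(\phi\cdot\psi)(f)\equiv\phi(f)+\psi(f)\pmod{IJ}$ for all $f\in\frakm$.

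This congruence lets me define an assignment
\[
\Phi\colon\calG(R;I)\longrightarrow \Hom_{\calO}(\frakm/\frakm^2,\,I/IJ),\qquad \phi\longmapsto\bigl(f+\frakm^2\mapsto\phi(f)+IJ\bigr),
\]
which is well defined because $\phi(\frakm^2)\subseteq I^2\subseteq IJ$ and takes values in $I/IJ$ because $\phi(\frakm)\subseteq I$, and which by the computation above is a group homomorphism into the additive group on the right. Its kernel is precisely $\{\phi\in\calG(R;I):\phi(\frakm)\subseteq IJ\}=\calG(R;IJ)$, so $\Phi$ descends to an injection $\calG(R;I)/\calG(R;IJ)\hookrightarrow\Hom_{\calO}(\frakm/\frakm^2,I/IJ)$. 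To finish, I will observe that since $\calO[\calG]$ is of finite type over $\calO$, taking a finite set of $\calO$-algebra generators and translating each by its augmentation value produces elements $y_1,\dots,y_D\in\frakm$ that generate $\frakm$ as an ideal, and therefore generate $\frakm/\frakm^2$ as an $\calO$-module. Evaluation at $y_1,\dots,y_D$ embeds $\Hom_{\calO}(\frakm/\frakm^2,I/IJ)$ into $(I/IJ)^D$ as an abstract additive group, and the integer $D$ depends only on $\calG$.

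The whole argument is essentially formal, so there is no serious technical obstacle; the main conceptual point is that expanding $\mu^*$ around the identity modulo the square of the augmentation ideal linearizes the multiplication on congruence subgroups, producing the additive approximation in a single universal step rather than via BCH-type manipulations. I also note that the torsion-freeness hypothesis on $R$ plays no apparent role in this argument and is presumably included for compatibility with neighbouring statements in the paper.
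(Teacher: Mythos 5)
Your proof is correct, and it takes a genuinely different route from the paper's. The paper invokes an external result (a monomorphism $\bfG\to \uSL_m(K)$ compatible with congruence subgroups, i.e.\ $f(\calG(R;N))=f(\bfG(L))\cap \nSL{R;N}{m}$ for all $N\idealof R$) to reduce to the matrix case, where the map $x\mapsto x-1 \bmod{\nMat{IJ}{m}}$ does the linearization and one takes $D=m^2$. You instead linearize intrinsically: the identity axioms force $\mu^*(f)=f\otimes 1+1\otimes f+\Delta(f)$ with $\Delta(f)\in\frakm\otimes_\calO\frakm$ (valid because $\frakm$ is an $\calO$-module direct summand of $\calO[\calG]$, so the four-term decomposition of the tensor square is genuinely direct), whence $(\phi\cdot\psi)(f)\equiv\phi(f)+\psi(f)\pmod{I^2}$ and $I^2\subseteq IJ$. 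The resulting embedding into $\Hom_\calO(\frakm/\frakm^2,I/IJ)\hookrightarrow (I/IJ)^D$, with $D$ the number of $\calO$-algebra generators of $\calO[\calG]$, is complete and all the verifications (well-definedness on $\frakm/\frakm^2$, kernel equal to $\calG(R;IJ)$, injectivity of evaluation at the $y_i$) check out. Both arguments rest on the same underlying idea — expanding the group law to first order modulo $IJ$ — but yours is self-contained where the paper's relies on a nontrivial citation, and you are right that the torsion-freeness of $R$ is not needed in your version: the paper uses it only to embed $R$ into $R\otimes_\calO K$ so that the cited embedding theorem applies. The trade-off is that the paper's $D=m^2$ is tied to a concrete linear representation, which is occasionally convenient elsewhere, whereas your $D$ is a presentation-dependent count of generators; for the purposes of this proposition either constant is equally good.
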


\begin{proof}
	We view both $R$ and $K$ as a subrings of $L:=R\otimes_\calO K$.
	By  \cite{First_2021_congruence_subgroups_via_group_schemes_preprint},
	there is a monomorphism of algebraic groups $f:\bfG\to \uSL_{m}(K)$
	such that $f(\calG(R;N))=f(\bfG(L))\cap \nSL{R;N}{m}$
	for all $N\idealof R$. Thus, $f$ induces
	a  one-to-one group homomorphism $\quo{f}:\calG(R;I)/\calG(R;IJ)\to 
	\nSL{R;I}{m}/ \nSL{R;IJ}{m}$.
	Since $I\subseteq J$, we can define a map  
	$g:\nSL{R;I}{m}/ \nSL{R;IJ}{m}\to \nMat{I/IJ}{m}$ by
	sending a matrix $x\in 1+\nMat{I }{m} $ to the image of $x-1$ in 
	$\nMat{I/IJ}{m}$. It is straightforward to check that
	$g$ is a one-to-one group homomorphism (use the fact that $I^2\subseteq IJ$).
	Taking $D=m^2$,
	the lemma follows.
\end{proof}

\subsection{Finite Quotients of Buildings}
\label{subsec:quotients-of-buildings}

Keeping Notation~\ref{NT:general-num-thy-notation},
recall that $G=\bfG(K_\nu)$ acts on $Y$ via simplicial automorphisms. 
In particular, 
for any $I\idealof \calO$, the
principal congruence subgroup $\Gamma=\calG(\calO;I)$ also acts on $Y$.
In this subsection we will be concerned with determining when is $\Gamma\leftmod Y$
a \emph{finite} simplicial complex covered by $Y$, and, provided this is so, when does it admit
an infinite tower of connected double coverings.

Beware that in general $\Gamma\leftmod Y$ is only a partially ordered set  
relative to the face-inclusion ordering it inherits from   $Y$. When $\Gamma\leftmod Y$
is isomorphic to a simplicial complex as a partially ordered set, 
we will say that $\Gamma\leftmod Y$
is a simplicial complex and treat it as one for all purposes. However, even when
$\Gamma\leftmod Y$ is a simplicial complex, the quotient
map $Y\to \Gamma\leftmod Y$ may not be a covering map.

\begin{prp}\label{PR:finite-quotients}
	With notation as in Notation~\ref{NT:general-num-thy-notation}, 
	suppose that $ {\bfG}$ is $K$-anisotropic,  $K_\rho$-anisotropic for every
	$\rho\in S-\{\nu\}$, and $K_\nu$-isotropic. Then:
	\begin{enumerate}[label=(\roman*)]
		\item $ \calG(\calO)$ is a discrete subgroup of $ G  $
		and $\calG(\calO)\leftmod G$ is compact.
		\item There is a finite subset $U\subseteq \calG(\calO)-\{1_G\}$
		such that if $I\idealof \calO$, $\calG(\calO;I)\cap U=\emptyset$
		and $\Gamma$ is a finite-index subgroup of $\calG(\calO;I)$, then
		$\Gamma$ acts freely on $Y$, the quotient $\Gamma\leftmod Y$
		is a finite simplicial complex,
		$Y\to \Gamma\leftmod Y$
		is a covering map, and $\pi_1(\Gamma\leftmod Y)\cong \Gamma$.
	\end{enumerate}
\end{prp}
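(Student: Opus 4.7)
\smallskip

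\noindent\textbf{Proof proposal.}
The plan for (i) is to realize $\calG(\calO)$ as an arithmetic lattice and then discard the anisotropic factors. First I would embed $\calG(\calO)$ diagonally in $H:=\prod_{\rho\in S}\bfG(K_\rho)$. The classical reduction theory of Borel--Harish-Chandra (in characteristic zero) and its extension by Harder (in positive characteristic) says that, since $\bfG$ is $K$-anisotropic, $\calG(\calO)$ is a discrete cocompact subgroup of $H$. By the theorem of Bruhat--Tits--Rousseau (see \S\ref{subsec:arith-grps-prelim}), the hypothesis that $\bfG$ is $K_\rho$-anisotropic for every $\rho\in S-\{\nu\}$ means that the factor $\bfG(K_\rho)$ of $H$ is compact for each such $\rho$. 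Writing $H=G\times H_0$ with $H_0=\prod_{\rho\in S-\{\nu\}}\bfG(K_\rho)$ compact, the projection $H\to G$ is a proper map; I would then check that $\calG(\calO)$ remains discrete under this projection (its intersection with $\{1\}\times H_0$ is trivial since $H_0$ is compact and $\calG(\calO)$ is discrete in $H$) and cocompact (image of a cocompact subgroup under a surjective continuous map between locally compact groups).

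For (ii), the key point is that a discrete subgroup of $G$ acts on $Y$ with finite face-stabilizers and with only finitely many orbits on faces of a given dimension. More precisely, the stabilizer $\Stab_G(y)$ of any nonempty face $y\in Y$ is a compact open subgroup of $G$, so its intersection with the discrete subgroup $\calG(\calO)$ is finite, and by (i) together with the compactness of $\Stab_G(y)$ there are only finitely many $\calG(\calO)$-orbits of faces in each dimension. I would pick orbit representatives $y_1,\dots,y_n\in Y-\{\emptyset\}$, set
\[
U:=\bigcup_{i=1}^n\Stab_{\calG(\calO)}(y_i)\setminus\{1_G\},
\]
a finite set, and verify the claimed implication. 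If $\calG(\calO;I)\cap U=\emptyset$ and $\Gamma\leq \calG(\calO;I)$ has finite index, then for any face $y'=gy_i$ with $g\in\calG(\calO)$, the normality of $\calG(\calO;I)$ in $\calG(\calO)$ gives
\[
\Gamma\cap\Stab_G(y')\subseteq \calG(\calO;I)\cap g\Stab_{\calG(\calO)}(y_i)g^{-1}
= g\bigl(\calG(\calO;I)\cap\Stab_{\calG(\calO)}(y_i)\bigr)g^{-1}=\{1\},
\]
so $\Gamma$ has trivial setwise stabilizer on every face; in particular $\Gamma$ acts freely on $Y$ and the quotient $\Gamma\leftmod Y$ inherits the structure of a simplicial complex along with a covering map $Y\to\Gamma\leftmod Y$.

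Finally, finiteness of $\Gamma\leftmod Y$ follows from (i): since $\calG(\calO)\leftmod G$ is compact and the stabilizer of each face in $G$ is open, $\calG(\calO)$ has finitely many orbits on each $X(k)$, and since $[\calG(\calO):\Gamma]<\infty$ the same holds for $\Gamma$. The fundamental group computation follows from the standard theory of covering spaces once one knows that $|Y|$ is the universal cover: $Y$ is an affine building, hence contractible, so $|Y|\to|\Gamma\leftmod Y|$ is the universal covering and the deck transformation group is $\Gamma$, giving $\pi_1(\Gamma\leftmod Y)\cong\Gamma$. The main obstacle is part (i), which rests on the rather deep reduction theory for arithmetic groups in both the number field and function field settings, but aside from citing this input the remaining arguments are elementary manipulations with discreteness, compactness and orbit-representative bookkeeping.
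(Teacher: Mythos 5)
Your part (i) is essentially the paper's argument: cocompactness comes from the $K$-anisotropy of $\bfG$ via reduction theory (the paper routes this through compactness of $\bfG(K)\leftmod\bfG(\bbA)$ plus strong approximation rather than quoting the $S$-arithmetic Godement criterion directly, but the underlying input is the same), and one then discards the compact factors $\bfG(K_\rho)$, $\rho\in S-\{\nu\}$, using Bruhat--Tits--Rousseau. One small quibble: triviality (in fact finiteness) of $\calG(\calO)\cap(\{1\}\times H_0)$ is an injectivity statement, not the reason the projected subgroup is discrete; discreteness of the image follows because $V\times H_0$ is relatively compact for a small neighborhood $V$ of $1$ in $G$, so it meets the discrete closed subgroup $\calG(\calO)$ in finitely many points. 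The finiteness of $\Gamma\leftmod Y$ and the identification $\pi_1(\Gamma\leftmod Y)\cong\Gamma$ via contractibility of the affine building also match the paper.

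The genuine gap is in part (ii), in the step ``$\Gamma$ acts freely on $Y$, \emph{in particular} the quotient $\Gamma\leftmod Y$ inherits the structure of a simplicial complex along with a covering map.'' Freeness of a simplicial action does \emph{not} imply either conclusion. A minimal counterexample: let $Y$ be a $4$-cycle with vertices $1,2,3,4$ and let $C_2$ act by the antipodal map $1\leftrightarrow 3$, $2\leftrightarrow 4$. The action is free on vertices and edges, yet the quotient poset has two vertices joined by two distinct ``edges'' with the same vertex set, so it is not a simplicial complex, and the quotient map is not a covering. To rule this out one must choose $U$ to contain not only the nontrivial stabilizer elements of the orbit representatives $y_1,\dots,y_n$, but also all $\gamma\in\calG(\calO)$ that move some face of a fixed finite fundamental domain to a face that is ``too close'' to it (e.g.\ equal to it, adjacent to it, or sharing a neighbor with it); discreteness of $\calG(\calO)$ and openness/compactness of the face stabilizers guarantee that this larger set is still finite. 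This is precisely the content of \cite[Corollaries~3.11, 3.12]{First_2016_Ramanujan_propery_preprint_v3}, which the paper invokes at this point instead of arguing from freeness alone. With $U$ enlarged in this way your orbit-representative and normality bookkeeping goes through verbatim and the rest of the argument is correct.
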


% MR0258839 : quotient of G(k_s) by center is simple.

\begin{proof}
	(i) 
	As in the proof of Proposition~\ref{PR:density-in-arith-grp},	
	let $\bbA$ denote the ad\'el\`es ring of $K$,
	let $\bbA^S$ be the ad\'el\`es away from $S$ and set $\bbA_S=\prod_{\rho\in S}K_\rho$.
	Since $ \bfG$
	is $K$-anisotropic, 
	the quotient $ {\bfG}(K)\leftmod  {\bfG}(\bbA)$ is compact, see 
	\cite[Theorem~5.5]{Platonov_1994_algebraic_groups_and_number_theory}
	and
	\cite[Corollary~2.2.7]{Harder_1969_minkowskische_reduktionstheorie}.
%	Denote by $\bbA^S$ the ad\'el\`es away from $S$ and set $\bbA_S=\prod_{\rho\in S}K_\rho$,
%	so that $\bbA=\bbA^S\times \bbA_S$.
	Embedding $K$ diagonally in $\bbA^S\times \bbA_S$,
	we have $\calO=K\cap ([\prod_{\rho\notin S}\calO_\rho]\times \bbA_S)$.
	Thus, $\calG(\calO)=\bfG(K)\cap (U\times  {\bfG}(\bbA_S))$,
	where $U= \prod_{\rho\notin S}\calG(\calO_\rho) $ and the intersection is taken in $ {\bfG}(\bbA)$.
	It is now routine to check that the map 
	$\calG(\calO)\leftmod  {\bfG}(\bbA_S)\to  {\bfG}(K)\leftmod  {\bfG}(\bbA)/(U\times \{1_{ {\bfG}(\bbA_S)}\})$ given
	by $\calG(\calO)x \mapsto  {\bfG}(k)(1_{ {\bfG}(\bbA^S)},x)(U\times \{1_{ {\bfG}(\bbA_S)}\})$ is injective.
	By the Strong Approximation Theorem (\cite{Prasad_1977_strong_approximation},
	\cite{Margulis_1977_strong_apporximation}),
	this map is  also onto
	(here we need $\bfG$ to be simply-connected and $K_\nu$-isotropic).
	As it is open as well,
	it is a homeomorphism and we conclude that $\calG(\calO)\leftmod  {\bfG}(\bbA_S)$
	is compact. Since $\calG(\calO)\leftmod G $ the image of $\calG(\calO)\leftmod  {\bfG}(\bbA_S)$
	under a continuous map, $\calG(\calO)\leftmod G$ is also compact.
	Finally, note that $\calO $ is discrete in $\bbA_S$,
	and therefore $\calG(\calO )$ is a discrete subgroup of $ {\bfG}(\bbA_S)=\prod_{\rho\in S} {\bfG}(k_\rho)$.
	Since $ {\bfG}(k_\rho)$ is compact for all $\rho\in S-\{\nu\}$,
	the image of $ \calG(\calO )$ in $G=\bfG(K_\rho)$ is also discrete.
	
	(ii) 
	The building $Y$ attached to  $\bfG_{K_\nu}$ is constructed so that  
	the stabilizer of every nonempty face in $Y$  is compact and open in $G$.
	Let $y_1,\dots,y_t$ be representatives for the $G$-orbits in $Y$ and let $K_i=\{g\in G\suchthat gy_i=y_i\}$.
	Then the $G$-set $\bigsqcup_{i=1}^t G/K_i$ can be identified with $Y$ by mapping $gK_i$ to $gy_i$ 
	for all $g\in G$ and $i\in\{1,\dots,t\}$.
	Consequently, $\calG(\calO)\leftmod Y$ is in bijection with $\bigsqcup_{i=1}^t\calG(\calO)\leftmod G/K_i$,
	which is compact by (i).
	Since $\bigsqcup_{i=1}^t\calG(\calO)\leftmod G/K_i$ is also discrete (because each $K_i$ is open in $G$),
	it must be finite, and it follows that $\calG(\calO)\leftmod Y$ is finite.	
	Applying \cite[Corollaries 3.11, 3.12]{First_2016_Ramanujan_propery_preprint_v3} 
	(here we need the fact the $G(\calO)$ is discrete in $G(K_\nu)$)
	now gives the
	set $U$ and the desired
	conclusions. 
\end{proof}

\begin{cor}\label{CR:finite-quotients}
	Suppose that the assumptions of Proposition~\ref{PR:finite-quotients}
	hold. 
	Let $\{I_m\}_{m\in \N}$ be a decreasing
	sequence  of ideals of $\calO$ such
	that $\bigcap_{m\in\N} I_m=\{0\}$,
	let $\rho\in\calV-S$ and let $p=\Char k(\rho)$.
%	and let $p$ be a prime number such that there there exists $\rho\in\calV-S$
%	with $p=\Char k(\rho)$.
	Then:
	\begin{enumerate}[label=(\roman*)]
		\item There exists $m_0\in\N$ such that for every finite-index subgroup $\Gamma$ of $\calG(\calO;I_{m_0})$, 
		the action of
		$\Gamma$ on $Y$ is free, the quotient $\Gamma\leftmod Y$
		is a finite simplicial complex,
		$Y\to \Gamma\leftmod Y$
		is a covering map, and $\pi_1(\Gamma\leftmod Y)\cong \Gamma$.  
		\item 
		If moreover   $I_{m_0}\subseteq P_\rho$,
		then, for every  $\Gamma$ as in (i),
		the complex $\Gamma\leftmod Y$ has an infinite tower
		of connected $C_p$-Galois  coverings ($C_p$ is the cyclic group order $p$).
	\end{enumerate}
\end{cor}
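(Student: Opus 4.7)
For (i), Proposition~\ref{PR:finite-quotients}(ii) provides a finite set $U\subseteq \calG(\calO)-\{1_G\}$ such that the conclusions of (i) hold for any finite-index subgroup $\Gamma$ of $\calG(\calO;I)$ whenever $\calG(\calO;I)\cap U=\emptyset$. Fix a matrix embedding $f:\calG\hookrightarrow \uSL_{n}$ as in the proof of Proposition~\ref{PR:elementary-p-group}. For each $u\in U$, some coordinate of $f(u)-1$ is a nonzero element of $\calO$, so the hypothesis $\bigcap_m I_m=\{0\}$ forces it to fall outside $I_{m(u)}$ for some $m(u)\in\N$. Taking $m_0=\max_{u\in U}m(u)$ then settles (i).

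For (ii), fix $\Gamma$ as in (i) and set $N_r:=\Gamma\cap \calG(\calO;I_{m_0} P_\rho^r)$ for $r\geq 0$ (so $N_0=\Gamma$). Each $N_r$ is normal in $\Gamma$ as the preimage of a subgroup under the reduction homomorphism $\calG(\calO)\to \calG(\calO/I_{m_0} P_\rho^r)$. Since $I_{m_0}\subseteq P_\rho$, Proposition~\ref{PR:elementary-p-group} applied with $I=I_{m_0}P_\rho^{r-1}\subseteq J=P_\rho$ embeds $\calG(\calO;I_{m_0}P_\rho^{r-1})/\calG(\calO;I_{m_0}P_\rho^r)$ into $(I_{m_0}P_\rho^{r-1}/I_{m_0}P_\rho^r)^D$; as $\calO$ is a Dedekind domain and $I_{m_0}$ thus invertible, this is a finite $k(\rho)$-vector space, hence a finite elementary abelian $p$-group. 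The subquotient $N_{r-1}/N_r$ inherits the same property.

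To make the tower infinite, I first claim $\bigcap_r N_r=\{1\}$: the map $\gamma\mapsto f(\gamma)-1$ injects $\calG(\calO;I_{m_0}P_\rho^r)$ into $\nMat{I_{m_0}P_\rho^r}{n}$, so $\bigcap_r N_r$ lands in $\nMat{\bigcap_r I_{m_0}P_\rho^r}{n}=\{0\}$ by Krull's intersection theorem. Now $\Gamma$ is infinite, being of finite index in the infinite discrete subgroup $\calG(\calO;I_{m_0})$ of the non-compact group $G$ (non-compactness following from the $K_\nu$-isotropy of $\bfG$ via Bruhat--Tits--Rousseau). Hence $N_{r-1}\supsetneq N_r$ for infinitely many $r$. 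For each such $r$, pick a composition series of the elementary abelian $p$-group $N_{r-1}/N_r$ with all successive quotients $\cong C_p$, and pull it back to a chain of subgroups strictly between $N_r$ and $N_{r-1}$; normality at each step is automatic because $N_{r-1}/N_r$ is abelian and $N_r\triangleleft N_{r-1}$. Concatenating over $r$ yields an infinite descending chain $\Gamma=\Lambda_0\supset \Lambda_1\supset \Lambda_2\supset\cdots$ with $\Lambda_s\triangleleft \Lambda_{s-1}$ and $\Lambda_{s-1}/\Lambda_s\cong C_p$.

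Applying (i) to each $\Lambda_s$ (a finite-index subgroup of $\calG(\calO;I_{m_0})$), the quotient $X_s:=\Lambda_s\leftmod Y$ is a finite simplicial complex, $Y\to X_s$ is a covering map, and $\pi_1(X_s)\cong\Lambda_s$. The induced map $X_s\to X_{s-1}$ is then the covering corresponding to the normal inclusion $\Lambda_s\hookrightarrow\Lambda_{s-1}$ of index $p$, hence a $C_p$-Galois covering (with Galois group $\Lambda_{s-1}/\Lambda_s$, as in Example~\ref{EX:coverings-basic}), and each $X_s$ is connected because $Y$ is. The \emph{main technical obstacle} is verifying $\bigcap_r N_r=\{1\}$ together with the elementary abelian structure of $N_{r-1}/N_r$; once these are in hand via Proposition~\ref{PR:elementary-p-group} and Krull's theorem, the remainder is routine group theory combined with the standard dictionary between finite-index subgroups of $\pi_1(\Gamma\leftmod Y)=\Gamma$ and finite connected coverings of $\Gamma\leftmod Y$.
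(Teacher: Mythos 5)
Your proof is correct and follows essentially the same route as the paper's: part (i) reduces to finding $m_0$ with $\calG(\calO;I_{m_0})\cap U=\emptyset$ via Proposition~\ref{PR:finite-quotients}(ii), and part (ii) filters $\Gamma$ by $\Gamma\cap\calG(\calO;I_{m_0}P_\rho^r)$ and refines each elementary abelian $p$-group quotient (Proposition~\ref{PR:elementary-p-group}) into index-$p$ steps. The one point where you go beyond the paper's write-up is in explicitly verifying that the tower is \emph{infinite} --- via $\bigcap_r N_r=\{1\}$ (Krull intersection) together with the infiniteness of $\Gamma$ as a discrete cocompact subgroup of the non-compact group $G$ --- a step the paper's proof leaves implicit; this is a worthwhile addition rather than a deviation.
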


\begin{proof}
	(i) Let $U\subseteq \calG(\calO)$ be the subset from Proposition~\ref{PR:finite-quotients}(ii).
	Our assumptions on the sequence $\{I_m\}_{m\in \N}$
	imply that   $\bigcap_{m\in\N}\calG(\calO;I_m)=\{1_G\}$
	and $\calG(\calO;I_1)\supseteq \calG(\calO;I_2)\supseteq \dots$.
	Thus,
	there exists $m_0\in\N$
	such that $\calG(\calO;I_{m_0})\cap U=\emptyset $,
	and the conclusion follows from
	Proposition~\ref{PR:finite-quotients}(ii).
	
	(ii) %Let $J=\{x\in\calO\suchthat \rho(x)>0\}$ be the prime ideal corresponding to $\rho$.
	For every $i\geq 0$, put $\Gamma_i=\Gamma\cap \calG(\calO;I_{m_0}P_\rho^i)$.
	Then $\Gamma_i/\Gamma_{i+1}$ is isomorphic
	to a subgroup of $\calG(\calO;I_{m_0}P_\rho^i)/ \calG(\calO;I_{m_0}P_\rho^{i+1})$,
	which is an elementary abelian $p$-group by Proposition~\ref{PR:elementary-p-group}.
	This means that there are normal subgroups 
	\[\Gamma_i=\Gamma_{i,1}\supseteq \dots\supseteq \Gamma_{i,t(i)}\supseteq \Gamma_{i,t(i)+1}
	=\Gamma_{i+1}\]
	such that $|\Gamma_{i,k}/\Gamma_{i,k+1}|=p$ for all $k\in\{1,\dots,t(i)\}$.
	Put $X_{i,k}=\Gamma_{i,k}\leftmod Y$.
	Then 
	\[
	\cdots \to X_{2,2}\to X_{2,1}\cdots \to X_{1,2}\to X_{1,1}\to\dots\to X_{0,2}\to X_{0,1}=\Gamma\leftmod Y
	\]
	is the required tower of $C_p$-Galois coverings.
\end{proof}

\begin{remark}
	Suppose we are given a global field $K$, a finite place $\nu$, and an absolutely almost simple
	simply connected isotropic algebraic group $\bfH$ over $ K_\nu$, and we wish to complete
	this data to the setting of Notation~\ref{NT:general-num-thy-notation} in such a way
	that $\bfH=\bfG_{K_\nu}$ and the assumptions of Proposition~\ref{PR:finite-quotients} hold.
	This is known to be possible if 
	$\Char K=0$, and thus the affine building $Y$ of $\bfH$ covers 
	infinitely many finite similicial complexes.
	On the other hand, if $\Char K>0$ and $\rank \bfH>1$, 
	then completing the data in this manner
	is possible only if $\bfH$ is of type $A_m$.
\end{remark}

\subsection{The   Congruence Subgroup Property}
\label{subsec:csp}

Keep Notation~\ref{NT:general-num-thy-notation}.
We proceed by recalling the \emph{congruence subgroup property} and using 
it to bound the   the number of group homomorphisms from a principal congruence subgroup
$\calG(\calO;I)$ to the additive group of $\F_p$.

Let $\what{\calG}_K=\invlim \calG(K)/U$, where the limit ranges over the finite
index subgroups $U$ of $\calG(\calO)$, and let $\quo{\calG}_K =\invlim \calG(K)/\calG(\calO;I)$,
where the limit ranges over the nonzero ideals $I\idealof\calO$.
While $\calG(K)/U$, resp.\ $\calG(K)/\calG(\calO;I)$, are not groups, $\what{\calG}_K$
and $\quo{\calG}_K$ are groups, which we topologize by giving $\calG(K)/U$, resp.\ $\calG(K)/\calG(\calO;I)$,
the discrete topology and taking the limit topology.
There is an evident surjective group homomorphism $\what{\calG}_K\to \quo{\calG}_K$.
The kernel of this map, denoted $C^S(\bfG)$, is called the \emph{congruence kernel} of $(\bfG,S)$,
and $(\bfG,S)$ is said to satisfy the \emph{congruence subgroup property} (CSP)
if $C^S(\bfG)$ is finite. For example,   $C^S(\bfG)$ is trivial if and only if any finite index subgroup
of $\calG(\calO)$ contains a principal congruence subgroup. 
The question of which pairs $(\bfG,S)$ have CSP has a long and rich history; we refer 
the reader to \cite{Prasad_2010_developments_on_CSP}
for a survey, and to   
\cite[\S9.5]{Platonov_1994_algebraic_groups_and_number_theory} for an extensive discussion.
The main conjecture in this field is due to Serre:

\begin{cnj}[Serre] \label{CJ:csp}
	With notation as in Notation~\ref{NT:general-num-thy-notation}, 
	$(\bfG,S)$ has CSP if 
	$\bfG$ is $K_\rho$-isotropic for every $\rho\in S-\calV_\infty$
	and $\sum_{\rho\in S}  \rank_{K_\rho} G >1$.
	The pair $(\bfG,S)$ does not have CSP if $\sum_{\rho\in S}  \rank_{K_\rho} G =1$.
\end{cnj}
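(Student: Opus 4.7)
\medskip

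The plan for this conjecture has two asymmetric halves, corresponding to the two directions of the assertion, and only one of them should be attempted as a "proof" in any honest sense; the other is a widely-open problem that has resisted decades of attack. I would therefore frame the write-up as a survey of the available techniques, organised around what is actually provable.

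For the negative half (that $(\bfG, S)$ fails CSP whenever $\sum_{\rho\in S}\rank_{K_\rho}\bfG = 1$), the classical strategy, going back to Serre for $\uSL_2$ over number fields and extended by Lubotzky, Mason, Grunewald--Schwermer and others, is to exhibit enough finite-index non-congruence subgroups of $\calG(\calO)$ to force $C^S(\bfG)$ to be infinite. One   produces a surjection from a subgroup of finite index in $\calG(\calO)$ onto a non-abelian free group $F$, which can typically be found because, under the rank-one hypothesis, the action of $\calG(\calO)$ on the associated Bruhat--Tits tree (or its combination with the archimedean symmetric space) has a nontrivial decomposition as an amalgam or HNN-extension. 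Once $F$ is a quotient,   pulling back the enormous supply of finite quotients of $F$ gives uncountably many finite-index subgroups, whereas the   congruence topology is finitely generated at each level (bounded by $|\calG(\calO/I)|$), producing the desired infinite congruence kernel. This part of the conjecture is established in most cases; the residual cases are a matter of making the tree/amalgam description explicit for each type.

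For the positive half (CSP when the $S$-rank is $\geq 2$ and $\bfG$ is isotropic at every finite place in $S$), the standard template, developed by Bass--Milnor--Serre, Matsumoto, Raghunathan, Rapinchuk and Prasad--Rapinchuk, proceeds in three moves. First, one reformulates CSP as the statement that the congruence kernel $C^S(\bfG)$ is a finite central extension of $\what{\calG}_K$; concretely, one shows $C^S(\bfG)$ is central and then that it is isomorphic to a subgroup of a \emph{metaplectic kernel} $M(S,\bfG)$, which can be computed as a Galois-cohomological object (a quotient of   $\mu(K)$ by a local-global obstruction). Second, centrality of $C^S(\bfG)$ is reduced, via the Margulis--Platonov conjecture on normal subgroups of $\bfG(K)$, to an explicit commutator computation inside $\calG(\calO)$ using unipotent root subgroups; this requires the hypothesis that $\bfG$ is isotropic at every $\rho\in S\setminus\calV_\infty$, since the argument uses that elementary (unipotent) subgroups of $\calG(\calO)$ normally generate a subgroup of finite index, and it uses the higher $S$-rank to get two commuting copies of $\nGa{}$ inside $\calG$. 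Third, one computes $M(S,\bfG)$ by combining Matsumoto's central extension with class-field-theoretic  input.

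The hard part, and the reason this is still a conjecture, is the first move in the positive half for groups of type other than classical ones, especially over function fields: the Margulis--Platonov   conjecture and the \emph{bounded generation} of $S$-arithmetic groups by root subgroups are both   unknown in several exceptional and anisotropic cases. In positive characteristic there is the additional obstruction that strong approximation and the structure of unipotent subgroups behave less uniformly. A realistic proof proposal therefore is: follow the Raghunathan--Rapinchuk framework, verify centrality of $C^S(\bfG)$ case by case using the classification of isotropic forms at each $K_\rho$ ($\rho\in S$), and invoke the known computations of the metaplectic kernel where available; honestly flag the remaining cases (anisotropic-at-infinity exceptional types, function-field outer forms of $E_6$, $D_4$, etc.) as open, which is the best state of the art.
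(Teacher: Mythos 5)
This statement is Serre's conjecture on the congruence subgroup problem; the paper offers no proof of it and explicitly records it as an open conjecture, only citing the known special cases in Theorem~\ref{TH:csp} and Theorem~\ref{TH:trivial-cong-kernel}. You have correctly recognized this: your ``proposal'' is an honest survey of the Bass--Milnor--Serre / Raghunathan / Prasad--Rapinchuk framework (centrality of the congruence kernel via Margulis--Platonov, followed by computation of the metaplectic kernel) for the positive half, and the amalgam/free-quotient argument for the rank-one negative half, which is exactly the machinery the paper invokes by citation where the conjecture is known. There is nothing to check against, and your flagging of the genuinely open cases (anisotropic exceptional types, certain outer forms, function-field issues) is accurate and consistent with how the paper uses the conjecture --- namely, only in the cases where Theorem~\ref{TH:csp} applies or as an explicit hypothesis elsewhere.
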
 

The conjecture is known to hold in many cases.
The following two theorems are a culmination of many results due to  Borovoi, P.~Gille, Platonov, G.~Prasad, Raghunathan, A.~Rapinchuk, Segev, Seitz, Tomanov
and others. See \cite{Prasad_2010_developments_on_CSP}, \cite[\S9.5]{Platonov_1994_algebraic_groups_and_number_theory} and
the references therein.
%e will be most interested in the case where $\bfG$ is anisotropic of type $A_n$.

\begin{thm}\label{TH:csp}
	With notation as in Notation~\ref{NT:general-num-thy-notation},
	suppose that
	\begin{enumerate}[label=(\arabic*)]
		\item $\bfG$ is $K_\rho$-isotropic for all   $\rho\in S-\calV_\infty$, %which is non-Archimedean,
		\item $\sum_{\rho\in S}\rank_{K_\rho}\bfG \geq 2$, and
		\item $\bfG$ is $K$-isotropic, or
		of the types $B_n$ ($n\geq 2$), $C_n$ ($n\geq 3$), $D_n$ ($n\geq 5$),
		$E_7$, $E_8$, $F_4$, $G_2$, 
		or $\bfG=\uSU(f)$
		where $f$ is a nondegenerate hermitian form of dimension $\geq 4$ over a quadratic 
		Galois extension of $K$
		(cf.\ Example~\ref{EX:group-schemes}(iv)).
		%or inner of type $A_n$.
	\end{enumerate}
	Then $(\bfG,S)$ has CSP.
\end{thm}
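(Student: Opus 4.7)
The plan is to reduce the theorem to the assembly of several deep results from the literature; no new mathematics is required, so the ``proof'' is essentially an organized citation by case analysis on the global isotropy of $\bfG$ and, when $\bfG$ is $K$-anisotropic, on its type. First I would verify that the hypotheses of the theorem fit the standing setup of the cited results: condition (1) together with (2) rules out the known counterexamples to Serre's conjecture coming from $S$-rank one groups (see \cite[\S9.5]{Platonov_1994_algebraic_groups_and_number_theory}), and it allows invocation of the Strong Approximation Theorem and the Margulis--Raghunathan machinery wherever needed.

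Next I would dispatch the $K$-isotropic case of (3). Under (1) and (2), if $\bfG$ is $K$-isotropic, then by results of Raghunathan, Matsumoto, Bak--Rehmann, and others (a full account is in \cite[Theorem~9.24]{Platonov_1994_algebraic_groups_and_number_theory}), the congruence kernel $C^S(\bfG)$ is central and finite, hence $(\bfG,S)$ has CSP. The point here is that $K$-isotropy gives access to unipotent elements in $\calG(\calO)$, which one uses to generate principal congruence subgroups inside any finite-index subgroup.

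Third I would address the remaining $K$-anisotropic cases type by type, using the general criterion that CSP holds once one verifies an appropriate ``bounded generation by small-rank subgroups'' property together with the classification of normal subgroups of $\bfG(K)$. For the classical types $B_n\ (n\geq 2)$, $C_n\ (n\geq 3)$, $D_n\ (n\geq 5)$, CSP is due to Rapinchuk and Tomanov; for the exceptional types $E_7, E_8, F_4, G_2$, CSP is due to Platonov--Rapinchuk and G.~Prasad--Rapinchuk (cf.\ the survey \cite{Prasad_2010_developments_on_CSP}). For the outer type $A_n$ case $\bfG=\uSU(f)$ with $f$ a nondegenerate hermitian form of dimension $\geq 4$ over a quadratic Galois extension of $K$, CSP was established by Tomanov and, in various refinements, by Rapinchuk and others; this is the deepest part of the outer type $A$ story and is surveyed in \cite{Prasad_2010_developments_on_CSP}.

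The main obstacle is purely expository rather than mathematical: I have to be careful to align the hypotheses on $\bfG$, $K$, and $S$ used in each cited theorem with Notation~\ref{NT:general-num-thy-notation} (in particular, the insistence that $S\supseteq\calV_\infty$ and that $\bfG$ is simply connected), and to verify that the ``non-listed'' types in (3) are exactly those where CSP for $K$-anisotropic forms is still open, so that no case is accidentally claimed. Once the correspondence of hypotheses is checked, the theorem follows by combining the cited results; in particular no further argument is needed beyond invoking them.
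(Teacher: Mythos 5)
Your proposal is correct and takes essentially the same approach as the paper: both are literature-assembly proofs that reduce the statement to known centrality/CSP results, with the paper organizing the reduction uniformly through the criterion that, under (1) and (2), centrality of $C^S(\bfG)$ in $\what{\calG}_K$ implies CSP (Theorem~2 of \cite{Prasad_2010_developments_on_CSP}), and then citing Raghunathan for the isotropic case and \cite[Theorems~9.23, 9.24]{Platonov_1994_algebraic_groups_and_number_theory} for the anisotropic types. The only detail you omit is the caveat that Raghunathan's isotropic-case centrality for type $E_6$ relied on the Kneser--Tits property, settled later in \cite{Gille_2009_Kneser_Tits_problem}.
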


\begin{proof}
	Theorem~2 in \cite{Prasad_2010_developments_on_CSP} states that if   (1)
	and (2) hold, then CSP for $(\bfG,S)$ 
	follows from the centrality
	of   $C^S(\bfG)$  in $\what{\calG}_K$. The group
	$C^S(\bfG)$ is known to be central in $\what{\calG}_K$ when (3) holds;
	see 
	\cite{Raghunathan_1976_congruence_subgroup_problem}
	and \cite{Raghunathan_1986_congruence_subgroup_problem_II} for the 
	case where $\bfG$ is  isotropic
	(the assumption $G(k)=(k)^+$ for groups of type $  E_6$
	that was unknown at the time and was established in \cite{Gille_2009_Kneser_Tits_problem}),
	and  \cite[Theorems~9.23, 9.24]{Platonov_1994_algebraic_groups_and_number_theory}
	for the anisotropic cases.
\end{proof}

\begin{thm}\label{TH:trivial-cong-kernel}
	With notation as in Notation~\ref{NT:general-num-thy-notation},
	assume that conditions (1) and (2) of \ref{TH:csp} hold
	that $\bfG$ has CSP (this follows from (1) and (2) if Conjecture~\ref{CJ:csp} holds)
	and that 
	\begin{enumerate}[label=(\arabic*)]
		\item[(3$\,'$)] $\bfG$ is $K$-isotropic, or
		of the types $B_n$ ($n\geq 2$), $C_n$ ($n\geq 3$), $D_n$ ($n\geq 4$,
		excluding ${}^{3,6}D_4$),
		$E_7$, $E_8$, $F_4$, $G_2$, or 
		inner of type $A_n$,   		
		or $\bfG=\uSU(f)$
		where $f$ is a nondegenerate hermitian form of dimension $ \geq 3$ over a quadratic 
		Galois extension of $K$
		(cf.\ Example~\ref{EX:group-schemes}(iv)).
		%or inner of type $A_n$.
	\end{enumerate}
	Then $C^S(\bfG)$ is isomorphic to a subgroup of $\mu(K)$,
	the group of roots of unity in $K$.
	If moreover $S$ contains a non-archimedean place, the $C^S(\bfG)$ is trivial.
\end{thm}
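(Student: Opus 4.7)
The plan is to deduce this theorem from deep existing results in the literature on the metaplectic kernel, rather than reproving them from scratch. The statement is essentially a packaging of the Prasad--Rapinchuk metaplectic conjecture (now a theorem under the listed hypotheses) together with the centrality theorems of Raghunathan and subsequent authors. My role in the proof is to (a)~verify that the hypotheses of these results are met and (b)~deduce the two conclusions.

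First I would establish that $C^S(\bfG)$ is a \emph{central} subgroup of $\what{\calG}_K$. Under assumptions (1), (2), this is the content of Raghunathan's theorem \cite{Raghunathan_1976_congruence_subgroup_problem}, \cite{Raghunathan_1986_congruence_subgroup_problem_II} for the $K$-isotropic case; for the anisotropic classical groups of type $B_n,C_n,D_n$ ($n\geq 4$, with $D_4$ handled only in the non-triality form), $E_7,E_8,F_4,G_2$, and inner $A_n$, centrality is known by the compilation in \cite[Theorems 9.23, 9.24]{Platonov_1994_algebraic_groups_and_number_theory} and the references therein (noting that condition (3$\,'$) differs from (3) precisely in that $D_n$ is allowed down to $n=4$ and inner $A_n$ is allowed, both of which have subsequently been handled by Prasad--Rapinchuk and others). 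The unitary case $\uSU(f)$ with hermitian form of dimension $\geq 3$ is covered by the work of Tomanov and Prasad--Rapinchuk.

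Next, once centrality is established and CSP holds (so $C^S(\bfG)$ is a finite central subgroup of $\what{\calG}_K$), the identification of $C^S(\bfG)$ reduces to computing the \emph{metaplectic kernel}
\[
M(S,\bfG)=\ker\bigl(\HH^2_{\mathrm{cts}}(\bfG(\bbA_S),\R/\Z)\to\HH^2_{\mathrm{cts}}(\bfG(K),\R/\Z)\bigr),
\]
which is the Pontryagin dual of $C^S(\bfG)$. The Prasad--Rapinchuk metaplectic theorem (see the survey \cite{Prasad_2010_developments_on_CSP}) asserts precisely that, under condition (3$\,'$), the group $M(S,\bfG)$ embeds into $\mu(K)$ — in fact into the group of roots of unity of a canonically associated cyclotomic extension. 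Dualizing yields the desired embedding $C^S(\bfG)\hookrightarrow\mu(K)$.

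Finally, for the ``moreover'' statement, the plan is to exploit a non-archimedean place $\rho\in S$. At such a place the local second cohomology $\HH^2_{\mathrm{cts}}(\bfG(K_\rho),\R/\Z)$ is either trivial or controlled by the local fundamental class, and one may use Moore's computation of the topological central extensions of simple $K_\rho$-groups (as exposed in \cite{Prasad_2010_developments_on_CSP}) together with the fact that the image of $\mu(K)$ in the local Hilbert symbol group at a non-archimedean place generates the full local kernel. The local-global principle for $M(S,\bfG)$ then forces $M(S,\bfG)=0$, and hence $C^S(\bfG)=1$.

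The main obstacle is that ``verifying the hypotheses of the Prasad--Rapinchuk theorem'' is case-by-case across the list in (3$\,'$), and in a couple of borderline cases (most notably $\mathrm{SU}(f)$ of dimension $3$, and the exceptional forms of $D_n$ at $n=4$) the literature requires some care to assemble — one must ensure one is citing the correct generality. Since we use the theorem only as a black box, this paper does not redo any of this work; the proof is essentially a reference to \cite{Prasad_2010_developments_on_CSP} and \cite[\S9.5]{Platonov_1994_algebraic_groups_and_number_theory}.
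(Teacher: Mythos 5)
Your overall strategy---reduce everything to the Prasad--Rapinchuk computation of the metaplectic kernel $M(S,\bfG)$---is the same as the paper's, and the second and third steps (identifying $C^S(\bfG)$ with $M(S,\bfG)$ once the bridge is in place, then citing \cite{Prasad_1996_metaplectic_kernel} / \cite[Theorem~3]{Prasad_2010_developments_on_CSP} for the embedding into $\mu(K)$ and for triviality when $S$ contains a non-archimedean place, which by hypothesis (1) is automatically a place where $\bfG$ is isotropic) are fine. The problem is your first step.

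You claim that condition (3$\,'$) is the list of cases in which \emph{centrality} of $C^S(\bfG)$ in $\what{\calG}_K$ is known, citing Raghunathan and \cite[Theorems~9.23, 9.24]{Platonov_1994_algebraic_groups_and_number_theory}. That is a misattribution: those references cover the list in condition (3) of Theorem~\ref{TH:csp}, which is precisely why the paper uses (3) there and a \emph{different} list (3$\,'$) here. The cases added in (3$\,'$) relative to (3) --- anisotropic inner forms of type $A_n$, type $D_4$ (non-triality forms), and $\uSU(f)$ with $\dim f=3$ --- are exactly the cases where centrality is \emph{not} supplied by those sources; what is known there is the \emph{Margulis--Platonov conjecture} (via \cite{Rapinchuk_2006_Margulis_Platonov_conj_An_inner} for inner $A_n$, \cite{Gille_2009_Kneser_Tits_problem} for isotropic groups, and \cite[Appendix~A]{Rapinchuk_2001_valuation_like_maps} for the rest). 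The correct bridge, and the one the paper uses, is \cite[Theorem~2]{Prasad_2010_developments_on_CSP}: Margulis--Platonov for $\bfG$ together with the \emph{hypothesis} that $(\bfG,S)$ has CSP yields $C^S(\bfG)\cong M(S,\bfG)$. Note that the theorem deliberately assumes CSP rather than deriving it, precisely because finiteness/centrality is not available from (3$\,'$) alone; your argument, as written, waves this away with ``subsequently handled by Prasad--Rapinchuk and others'' and therefore has a genuine gap in the borderline cases you yourself flag. To repair it, replace ``establish centrality from (3$\,'$)'' with ``establish Margulis--Platonov from (3$\,'$)'' and invoke \cite[Theorem~2]{Prasad_2010_developments_on_CSP} in the form MP $+$ CSP $\Rightarrow C^S(\bfG)\cong M(S,\bfG)$.
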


\begin{proof}
	Condition (3$'$) lists the cases where the \emph{Margulis--Platonov conjecture} is
	known to hold,
	see \cite{Rapinchuk_2006_Margulis_Platonov_conj_An_inner} (inner type $A_n$),
	\cite{Gille_2009_Kneser_Tits_problem} (isotropic groups)
	and
	\cite[Appendix~A]{Rapinchuk_2001_valuation_like_maps} (all other cases).
	By \cite[Theorem~2]{Prasad_2010_developments_on_CSP},
	if the  Margulis--Platonov   conjecture  holds for 
	$\bfG$, and $(\bfG,S)$ has CSP, then $C^S(\bfG)$ is isomorphic to
	the \emph{metaplectic kernel} $M(S,\bfG)$. 
	G.~Prasad and A.~Rapinchuk
	\cite{Prasad_1996_metaplectic_kernel} (or
	\cite[Theorem~3]{Prasad_2010_developments_on_CSP}) 
	showed that the latter is isomorphic
	to  a  subgroup of $\mu(K)$, and it is moreover trivial
	if there is $\rho\in S-\calV_\infty$ such that $\bfG$
	is $K_\rho$-isotropic.
\end{proof}

%\begin{proof}
%	Briefly, as explain in [op.\ cit., \S3], if $C^S(\bfG)$
%	is central in $\bfG$, then there is exact sequence
%	\[
%	1\to K\to C^S(\bfG)^{\wedge}\to M(G,S)
%	\]
%
%	Condition (3) lists the cases where the \emph{Margulis--Platonov conjecture} is
%	known to hold,
%	see \cite{Rapinchuk_2006_Margulis_Platonov_conj_An_inner} (inner type $A_n$),
%	\cite{Gille_2009_Kneser_Tits_problem} (isotropic groups)
%	and
%	\cite[Appendix~A]{Rapinchuk_2001_valuation_like_maps} (all other cases).
%	The main theorem of \cite{Prasad_2016_congruence_kernel} now gives the result.
%\end{proof}

%\begin{thm}
%	With notation as in Notation~\ref{NT:general-num-thy-notation},
%	suppose that the
%	\begin{enumerate}[label=(\arabic*)]
%		\item $\bfG$ has CSP, and 
%		\item $\bfG$ is $K$-isotropic, or
%		of the types $B_n$ ($n\geq 2$), $C_n$ ($n\geq 3$), $D_n$ ($n\geq 4$,
%		excluding ${}^{3,6}D_4$),
%		$E_7$, $E_8$, $F_4$, $G_2$, or 
%		inner of type $A_n$,   		
%		or $\bfG=\uSU(f)$
%		where $f$ is a nondegenerate hermitian form of dimension $n\geq 3$ over a quadratic 
%		Galois extension of $K$
%		(cf.\ Example~\ref{EX:group-schemes}(v)).
%		%or inner of type $A_n$.
%	\end{enumerate}
%\end{thm}

In the following theorem, we use CSP in order to bound the number of group homomorphism
from a principal congruence subgroup to $\F_p$. 
If $A$ and $B$ are topological groups, we write $\Hom_c(A,B)$ for the set of continuous 
group homomorphisms from $A$ to $B$. We give $\F_p$ the discrete topology.

\begin{thm}\label{TH:no-of-subgroups}
	With notation as in Notation~\ref{NT:general-num-thy-notation},
	suppose that 
	$\sum_{\rho\in S}\rank_{K_\rho}\bfG\geq 2$,
	that $\bfG$ is $K_\rho$-isotropic for every $\rho\in S-\calV_\infty$
	and that $(\bfG,S)$ has CSP (this is superfluous if Conjecture~\ref{CJ:csp} holds).
%	\begin{enumerate}[label=(\arabic*)]
%		\item $(\bfG,S)$ has CSP,
%		\item $\rank_{K_\rho} \bfG\neq 1$ for every $\rho\in S$,
%		\item $\rank_{K_\rho} \bfG>0$ for some $\rho\in S$.
%	\end{enumerate}	 
	Let $p\in\N$ be a prime number, and let $I\idealof \calO$.
	Define the following sets of places:
	\begin{itemize}
		\item $T_1$ is the set of $\rho\in \calV-S$
		such that $\Char k(\rho)\neq p$ and $I_\rho\neq \calO_\rho$.
		\item $T_2$ is the set of $\rho\in \calV-S$ such that 
		$\Char k(\rho)\neq p$,
		$I_\rho=\calO_\rho$,		
		$\calG_{\calO_\rho}$ is absolutely almost simple simply connected
	and $|k(\rho)|\geq 4$.
		\item $T_3$ is the set of $\rho\in \calV-S$
		such that    $\Char k(\rho)=p$ and  $\calG_{\calO_\rho}$ is a \emph{split} absolutely almost simple simply connected. If  $\calG_{\calO_\rho}$ is of type
		$C_2$ or $G_2$, we further require that  $|k(\rho)|>2$.
%		\footnote{
%			The assumption $|k(\rho)|>2$ can be dropped
%			if $\Char K=0$, or $\bfG$ is not of the types $C_2$, $G_2$.		
%		}
		\item $T_4=\calV-S-T_1-T_2-T_3$.
	\end{itemize}		
	Then there are $C,M\in\N\cup\{0\}$, depending only on $\calG$ and $K$, such 
	that if   $\Char K=0$, then	
	\[
	\dim_{\F_p}\Hom(\calG(\calO;I),\F_p)\leq C |T_3|+\sum_{\rho\in T_4}
	\dim_{\F_p}\Hom_c(\calG(\calO_\rho;I_\rho),\F_p)+
	M.
	\]	
	and if $\Char K>0$, then
	\begin{align*}
	\dim_{\F_p}\Hom(\calG(\calO;I),\F_p)&\leq C \sum_{\rho\in T_3}\rho(I)\dim_{\F_p}k(\rho) +\sum_{\rho\in T_4}
	\dim_{\F_p}\Hom_c(\calG(\calO_\rho;I_\rho),\F_p)
	+
	M.
	\end{align*}
	Each term  $ 
	\dim_{\F_p}\Hom_c(\calG(\calO_\rho;I_\rho),\F_p)$
	is finite. Moreover, if $p\nmid |C^S(\bfG)|$,
	then we can take $M=0$.
\end{thm}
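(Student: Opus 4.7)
I would reduce the estimate to a sum of local $\F_p$-cohomology computations by passing to completions, invoking CSP to replace the profinite completion of $\calG(\calO;I)$ by its (essentially) adelic one, and then analysing each local factor according to the residue characteristic and the splitness of $\calG_{\calO_\rho}$.

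Since $\F_p$ is finite, every homomorphism $\calG(\calO;I)\to \F_p$ factors through the profinite completion $\widehat{\calG(\calO;I)}$. The isotropy and rank assumptions force $\bfG$ to be $K_\theta$-isotropic for some $\theta\in S$, so Proposition~\ref{PR:density-in-arith-grp} identifies the closure of $\calG(\calO;I)$ in the congruence completion with $\prod_{\rho\in\calV-S}\calG(\calO_\rho;I_\rho)$, and CSP yields a short exact sequence
$$1\to K\to \widehat{\calG(\calO;I)}\to \prod_{\rho\in\calV-S}\calG(\calO_\rho;I_\rho)\to 1$$
in which $K$ is a subquotient of $C^S(\bfG)$. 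A continuous homomorphism from a profinite product into a discrete finite abelian group has open kernel and therefore factors through, and splits as a direct sum over, a finite subproduct; hence
$$\dim_{\F_p}\Hom(\calG(\calO;I),\F_p)\le M+\sum_{\rho\in\calV-S}\dim_{\F_p}\Hom_c(\calG(\calO_\rho;I_\rho),\F_p),$$
with $M:=\dim_{\F_p}\Hom(C^S(\bfG),\F_p)$, vanishing if $p\nmid|C^S(\bfG)|$. Each local summand is finite because Proposition~\ref{PR:elementary-p-group} applied to the filtration $\{\calG(\calO_\rho;I_\rho\frakm_\rho^n)\}_{n\ge 0}$ exhibits $\calG(\calO_\rho;I_\rho)$ as topologically finitely generated.

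The vanishing at places in $T_1\cup T_2$ now follows quickly: for $\rho\in T_1$ the same filtration presents $\calG(\calO_\rho;I_\rho)$ as a pro-$\ell$ group with $\ell=\Char k(\rho)\ne p$, which has no nontrivial continuous $\F_p$-quotient; for $\rho\in T_2$ the pro-$\ell$ argument kills $\calG(\calO_\rho;\frakm_\rho)$, while the residue quotient $\calG(k(\rho))$ is perfect by the standard perfectness results for split simply-connected groups once $|k(\rho)|\ge 4$. The contributions from $T_4$ are simply retained in the bound as stated.

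The main obstacle is the uniform bound for $\rho\in T_3$. In the equicharacteristic case I would filter by $\{\calG(\calO_\rho;I_\rho\frakm_\rho^n)\}_{n\ge 0}$, use Proposition~\ref{PR:elementary-p-group} to embed each successive layer into a $\dim\bfG$-fold sum of $I_\rho\frakm_\rho^n/I_\rho\frakm_\rho^{n+1}\cong k(\rho)$, and deduce the bound $C\,\rho(I)\dim_{\F_p}k(\rho)$ by exploiting that, for $\calG_{\calO_\rho}$ split simply connected and $|k(\rho)|$ large enough, the adjoint action of $\calG(k(\rho))$ on $\frakg(k(\rho))$ has trivial coinvariants, so that after accounting for commutators only $O(1)$ layers can survive in the abelianization. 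In the zero-characteristic case $|T_3|$ is finite (the places above $p$), and the per-place constant $C$ arises via the Lazard correspondence: for sufficiently deep $I_\rho$ the pro-$p$ group $\calG(\calO_\rho;I_\rho)$ is uniform, and its continuous $\F_p$-cohomology is controlled by the cohomology of the simple $\Z_p$-Lie algebra $\frakg(\calO_\rho)$, which is bounded independently of $\rho(I)$. The exceptional small-prime and small-residue-field cases, where these vanishing statements can fail, are absorbed into the constant $M$.
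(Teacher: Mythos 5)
Your global reduction coincides with the paper's: pass to the profinite completion, use CSP to compare it with the congruence completion $\prod_{\rho\in\calV-S}\calG(\calO_\rho;I_\rho)$ (via strong approximation, which needs the isotropy forced by $\sum_{\rho\in S}\rank_{K_\rho}\bfG\geq 2$), absorb the congruence kernel into $M$, and then kill the $T_1$-factors by the pro-$\ell$ argument and the $T_2$-factors by perfectness of $\calG(k(\rho))$. Your treatment of $T_3$ in characteristic $0$ via Lazard/uniformity is a genuinely different route from the paper's (which instead uses $\calG(R;J)^p\supseteq\calG(R;pJ)$ to reduce to the finite $p$-group $\calG(R;J)/\calG(R;pJ)$ and counts layers); it is plausible, but you would still have to handle shallow $I_\rho$ by an inflation--restriction argument rather than waving at $M$, since $M$ may depend only on $\calG$ and $K$ while the set of places above $p$ varies with $p$.

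There are two genuine gaps. First, in the equicharacteristic $T_3$ case your mechanism is wrong: a homomorphism $\vphi:\calG(\calO_\rho;I_\rho)\to\F_p$ need only vanish on commutators of elements of $\calG(\calO_\rho;I_\rho)$ \emph{itself}, whereas triviality of the coinvariants of the adjoint action of $\calG(k(\rho))$ on $\frakg(k(\rho))$ controls commutators with the full group $\calG(\calO_\rho)$, under which $\vphi$ has no reason to be invariant. The correct tool is the relative commutator formula $[\calG(R;J),\calG(R;J)]\supseteq E(R;J^3)$ together with $E(R;J)=\calG(R;J)$ (Abe; Hazrat et al.), which shows $\vphi$ factors through $\calG(R;\frakm^s)/\calG(R;\frakm^{3s})$; this leaves $O(s)=O(\rho(I))$ surviving layers, not $O(1)$ as you claim --- indeed ``$O(1)$ layers'' is inconsistent with the linear-in-$\rho(I)$ bound you are trying to prove. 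Second, your finiteness argument for the local terms does not work: knowing that the successive congruence quotients are finite does not give topological finite generation (compare $\prod_{n\in\N}\Z/p$, which has such a filtration but infinite-dimensional $\Hom_c(-,\F_p)$). The paper instead deduces finiteness from the finite generation of the global arithmetic group $\calG(\calO;I)$ together with its density in $\calG(\calO_\rho;I_\rho)$.
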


The group scheme $\calG_{\calO_\rho}$ is   absolutely almost simple simply connected
for all but finitely many $\rho\in \calV-S$, see
\cite[Lemma~1.9]{Raghunathan_1976_congruence_subgroup_problem}.
Thus, if $\Char K=0$, then $\calV-S-T_2$ is finite.

\begin{proof}
	Put $\Gamma=\calG(\calO;I)$, let $\what{\Gamma}$ be the profinite
	completion of $\Gamma$, and let $\quo{\Gamma}=\invlim \calG(\calO;I)/\calG(\calO;I')$
	where $I'$ ranges over the nonzero ideals of $\calO$ contained in $I$.
	Then $\what{\Gamma}$ and $\quo{\Gamma}$ are  
	subgroups of $\what{\calG}_K$ and $\quo{\calG}_K$, respectively,
	and the natural
	map $\what{\calG}_K \to \quo{\calG}_K $ restricts to
	a surjective map $\what{\Gamma}\to \quo{\Gamma}$.
	The kernel $H:=\ker(\what{\Gamma}\to \quo{\Gamma})$
	is a subgroup of $C^S(\bfG)$, hence finite by our assumptions.
%	Since $(\bfG,S)$ has CSP, the kernel $C=\ker(\what{\Gamma}\to \quo{\Gamma})$
%	isomorphic to a subgroup of $C^S(\bfG)$. In particular, it is finite. 
	
	There is a one-to-one correspondence between
	group homomorphisms $\vphi:\Gamma\to \F_p$ and continuous 
	group homomorphisms $\hat{\vphi}:\what{\Gamma}\to\F_p$  
	($\F_p$ is regarded as a discrete tolopgical space).
	Thus, it is enough to bound the $\F_p$-dimension of
	$\Hom_c(\what{\Gamma},\F_p)$.
	The short exact sequence $1\to H\to \what{\Gamma}\to \quo{\Gamma}\to 1$
	gives rise to an exact sequence
	\[
	0\to \Hom_c(\quo{\Gamma},\F_p)\to \Hom_c(\what{\Gamma},\F_p)\to \Hom_c(H,\F_p),
	\] 
	so
	\[
	\dim \Hom(\Gamma,\F_p)\leq \dim \Hom_c(\quo{\Gamma},\F_p)+\dim \Hom (H,\F_p).
	\]
	We take $M=\max_{H'\leq C^S(\bfG)} \dim_{\F_p}\Hom(H',\F_p)$,
	so that $\dim_{\F_p}\Hom(H,\F_p)\leq M$.
	By strong approximation, the group $\quo{\Gamma}$
	is the profinite group $\prod_{\rho\in \calV-S}\calG(\calO_\rho;I_\rho)$,
	cf.\ Proposition~\ref{PR:density-in-arith-grp}.
	Since the kernel of every 
	continuous homomorphism  $\prod_{\rho\in \calV-S}\calG(\calO_\rho;I_\rho)\to \F_p$
	is open, 
	we have
	\[
	\Hom_c(\prod_{\rho\in \calV-S}\calG(\calO_\rho;I_\rho),\F_p)\cong
	\bigoplus_{\rho\in\calV-S}\Hom_c(\calG(\calO_\rho;I_\rho),\F_p).
	\]
	Putting everything together, we get
	\begin{align*}
	\dim \Hom(\Gamma,\F_p)
	&
	\leq \sum_{\rho\in \calV-S} \dim\Hom_c(\calG(\calO_\rho;I_\rho),\F_p) +M.
	\end{align*}
	Now, in order to prove the theorem, it remains to bound
	$\dim\Hom_c(\calG(\calO_\rho;I_\rho),\F_p)$  according to   whether $\rho$
	is in $T_1$, $T_2$, $T_3$ or $T_4$. We split into cases.
	
\smallskip	
	
	Suppose first that $\rho\in T_1$. 
	Then $I_\rho\subseteq\frakm_\rho$ and
	$\ell:=\Char k(\rho)\neq p$. %and either $J_\rho\subseteq\frakm_\rho$,
	We claim that  $\Hom_c(\calG(\calO_\rho ;I_\rho),\F_p)=0$.
	To see this, 
	note that there is some $n\in \N $
	such that $I_\rho = \frakm_\rho^n$.
	Let $\vphi:\calG(\calO_\rho ;\frakm_\rho^n)\to\F_p$ be a continuous homomorphism.
	Then $\ker\vphi$ is open and thus contains $\calG(\calO;\frakm_\rho^m)$
	for some $m\geq n$.
	By Proposition~\ref{PR:elementary-p-group},
	$\calG(\calO_\rho,\frakm_\rho^i)/\calG(\calO_\rho,\frakm_\rho^{i+1})$
	is an elementary abelian $\ell$-group for all $\ell$ and $i\geq 1$.
	Since $\ell\neq p$,
	this forces $\vphi$ to be $0$.

\smallskip		
	
	Suppose next that $\rho\in T_2$.
	Then $I_\rho=\calO_\rho$, $\ell:=\Char k(\rho)\neq p$,
	$\calG_{\calO_\rho}$ is absolutely almost simple simply connected 
	and $|k(\rho)|\geq 4$.
	We need to show that   $\Hom_c(\calG(\calO_\rho ;I_\rho),\F_p)=\Hom_c(\calG(\calO_\rho),\F_p)=0$.
	Let $\vphi:\calG(\calO_\rho)\to \F_p$ be a continuous homomorphism.
	By the previous paragraph, 
	$\vphi$  vanishes on
	$\calG(\calO_\rho;\frakm_\rho)$. 
	Since $\calG_{\calO_\rho}$ is absolutely almost simple simply connected,
	the same applies to  $\calG_{k(\rho)}$.
	Now, theorems of Chevalley, Steinberg and Tits 
	\cite[Proposition~7.5]{Platonov_1994_algebraic_groups_and_number_theory}
	tell us that $\calG(k(\rho))$ is a perfect group
	(here we need $|k(\rho)|\geq 4$). 
	Since $\calG_{\calO_\rho}\to \Spec \calO_\rho$	is smooth, $\calG(\calO_\rho)/\calG(\calO_\rho;\frakm_{\rho})
	\cong \calG(k(\rho))$.
	Since $\F_p$ is abelian,
	this forces $\vphi$ to be $0$, as claimed.
	
\smallskip		
	
	Suppose now that $\rho\in T_3$.
	Then $\Char k(\rho)=p$ and $\calG_{\calO_\rho}$ is a 
	split almost simple  simply connected
	group scheme over $\calO_\rho$.
	Write $R=\calO_\rho$, $\frakm=\frakm_{\rho}$ and $J=I_\rho$.
	We will make use of the (absolute) \emph{elementary subgroups} $E(R;J)$ of $\calG(R;J)$;
	see \cite[\S2]{Hazrat_2013_relative_comm_Chevalley_grps}
	and the references therein for their definition. In fact, in our case, $E(R;J)=\calG(R;J)$ by
	\cite[Propositions~2.3, 2.4]{Abe_1976_normal_subgroups_Chevalley_groups}.
	It follows from the definition of the elementary subgroups that
	$E(R;J)^p\supseteq E(R;pJ)$, hence
	$\calG(R;J)^p\supseteq \calG(R;pJ)$.
	Moreover, by 
	\cite[Lemma~17]{Hazrat_2013_relative_comm_Chevalley_grps} 
	and the comment following Lemma~18 in that source,
	we have $[E(R;J),E(R;J)]\supseteq   E(R;J^3)$ (in fact, we can replace
	$E(R;J^3)$ with $E(R;J^2)$ if $\calG$ is not of type $C_\ell$),
	and likewise for $\calG(R;J)$.
	Consequently, any group homomorphism $\vphi:\calG(R;J)\to \F_p$
	must vanish on $\calG(R;pJ)$ and $\calG(R;J^3)$.
	If $J=\calO_\rho$ (i.e.\ $\rho(I)=0$), then $\vphi$ must be zero, so assume $J\neq \calO_\rho$.
	In particular, $pR\subseteq \frakm $.
	We now break into subcases.
	
	Suppose first that $\Char K=0$. Then $pJ\neq 0$.
	Let $\quo{\vphi}$ denote the induced map $\calG(R;J)/\calG(R;pJ)\to \F_p$.
	We need to bound the number of these maps by $p^C$ for   $C$ depending only on
	$\calG$ and $K$.
	Write $t=\rho(p)>0$, $s=\rho(J)$ and $f=\dim_{\F_p}k(\rho)$.
	Proposition~\ref{PR:elementary-p-group} 
	tells us that there is a constant $D$ such that  
	$\calG(R;\frakm^i)/\calG(R;\frakm^{i+1})$ is an elementary abelian $p$-group of rank $\leq D\cdot f$.
	Thus, $\calG(R;J)/\calG(R;pJ)=\calG(R;\frakm^s)/\calG(R;\frakm^{t+s})$
	is a $p$-group with at most $p^{Dtf}$ elements, hence it admits at most $p^{Dtf}$
	homomorphisms into $\F_p$.
	Our assumption that $\Char K=0$ implies that only finitely many places $\rho\in\calV-S$
	divide $p$, so there is a constant $C_1\in \N$ such that $tf\leq C_1$ for all $\rho\in T_3$.
	We can take $C=DC_1$.

	Now suppose that $\Char K>0$.
	Then   $\Char K=\Char k(\rho)=p$.
	Let 
	$\quo{\vphi}$ denote the induced map $\calG(R;J)/\calG(R;J^3)\to \F_p$,
	and
	write $J= \frakm^s$ (so that $s=\rho(I)$).
	An argument similar to the previous paragraph shows that
	the dimension  of the $\F_p$-vector space of such $\quo{\vphi}$ is at most 
	$2sD\dim_{\F_p}k(\rho) =2D\rho(I)\dim_{\F_p}k(\rho)$.	
	Taking $C=2D$ completes the case $\rho\in T_3$.
	
	%	
%	\cite[Lemma~17]{Hazrat_2013_relative_comm_Chevalley_grps} (also comment after Lemma~18).
	
	Finally,  we need
	to show that $\Hom_c(\calG(\calO_\rho;I_\rho),\F_p)$ is finite for all $\rho\in \calV-S$.	
	To that end, it is enough to prove that $\calG(\calO_\rho;I_\rho)$
	is finitely generated as a profinite group.
	By Proposition~\ref{PR:density-in-arith-grp}, $\calG(\calO;I)$ is dense in $\calG(\calO_\rho;I_\rho)$.	
	Thus, it is enough to show that $\calG(\calO;I)$ is finitely generated.
	This holds by
	\cite[Theorem~5.11]{Platonov_1994_algebraic_groups_and_number_theory}
	if $\Char K=0$
	and    \cite{Behr_1969_finite_generation_of_arith_grps_func_field}
	(see also \cite{Behr_1987_finite_present_of_arith_grps_func_field}) if $\Char K>0$.
\end{proof}

\subsection{Proof of Theorem~\ref{TH:good-quotient}}
\label{subsec:construction-of-quotients}

We prove Theorem~\ref{TH:good-quotient} by exhibiting an  example  of a simpicial
complex and showing that it satisfies (i) and (ii). The example   can be   generalized, of course.
To that end, we would like  to apply   Corollary~\ref{CR:finite-quotients}
and Theorem~\ref{TH:no-of-subgroups} together. Their joint assumptions 
force the set of places $S$ to be   $\calV_\infty\cup\{\nu\}$, and moreover,
$\bfG$ has to be   $K_\nu$-isotropic and $K_\rho$-anisotropic for every $\rho\in\calV_\infty$.

\medskip

We first recall the following well-known fact from algebraic topology.

\begin{lem}\label{LM:fund-group-double-coverings}
	Let $X$ be a connected (finite) simplicial complex with fundamental group $\Gamma$
	and let $p$ be a prime number.
	Then $\HH^1(X,\F_p)\cong \Hom(\Gamma,\F_p)$ as $\F_p$-vector spaces.
\end{lem}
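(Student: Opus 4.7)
The plan is to reduce the identification to standard facts from algebraic topology by first passing from the sheaf-theoretic cohomology defined in this paper to ordinary singular cohomology of the topological realization $|X|$, and then invoking the universal coefficient theorem together with the Hurewicz isomorphism.

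First, I would apply Corollary~\ref{CR:cohomology-comparison} to the constant sheaf $\F_p$ on $X$ (viewed as an abelian group): this gives a canonical isomorphism
\[
\HH^1(X,\F_p)\cong \HH^1(|X|,\F_p),
\]
where the right-hand side is the singular cohomology of $|X|$ with $\F_p$-coefficients. This transforms the problem into a purely topological one about the finite CW-complex $|X|$, whose fundamental group is $\Gamma$.

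Next, I would invoke the universal coefficient theorem for singular cohomology: for any abelian group $A$,
\[
0 \to \Ext^1_\Z(\HH_0(|X|,\Z),A) \to \HH^1(|X|,A)\to \Hom_\Z(\HH_1(|X|,\Z),A)\to 0.
\]
Since $X$ is connected, $\HH_0(|X|,\Z)\cong \Z$ is free, so the Ext term vanishes and we obtain a natural isomorphism $\HH^1(|X|,\F_p)\cong \Hom_\Z(\HH_1(|X|,\Z),\F_p)$. Then Hurewicz's theorem identifies $\HH_1(|X|,\Z)$ with the abelianization $\Gamma^{\ab}$. Because $\F_p$ is abelian, $\Hom_\Z(\Gamma^{\ab},\F_p)=\Hom(\Gamma,\F_p)$, and stringing the isomorphisms together yields the claim.

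There is no real obstacle here; every step is classical, and the only slightly delicate point is making sure the comparison in Corollary~\ref{CR:cohomology-comparison} is applied to the \emph{non-reduced} (i.e.\ unaugmented) constant sheaf, so that we land in ordinary rather than reduced singular cohomology. Once that is in place, the argument is a one-line concatenation of comparison, universal coefficients, and Hurewicz.
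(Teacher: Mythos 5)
Your proof is correct, and it fills in exactly the standard argument: the paper itself offers no proof of this lemma, merely recalling it as a well-known fact from algebraic topology, and your chain of Corollary~\ref{CR:cohomology-comparison} (for the non-augmented constant sheaf), the universal coefficient theorem with the Ext term killed by connectedness, and Hurewicz plus $\Hom_\Z(\Gamma^{\mathrm{ab}},\F_p)=\Hom(\Gamma,\F_p)$ is precisely the justification being invoked. All the isomorphisms are $\F_p$-linear via the $\F_p$-structure on the target, so the statement as an isomorphism of $\F_p$-vector spaces follows as claimed.
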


\begin{construction}\label{CN:good-quotient-not-Ram}
	We specialize Notation~\ref{NT:general-num-thy-notation} as follows. Take
	\begin{itemize}
		\item $K=\Q$.
	\end{itemize}
	The set of places $\calV$ can be identified with the set of prime numbers
	together with $\infty$.
	Fix a   prime number $p$ with $p\equiv 1\bmod 4$ and take:
	\begin{itemize}
		\item $S=\{p,\infty\}$,
		\item $\nu=p$ ($\nu$ will also denote the $p$-adic valuation),
	\end{itemize}
	Thus, $\calO=\Z[\frac{1}{p}]$. Fix $d\geq 3$ and let
	\begin{itemize}
		\item $\calG=\uSU_{d+1}(\calO[i]/\calO)$, 
		\item $\bfG=\uSU_{d+1}(\Q[i]/\Q)$,
	\end{itemize}
	where the notation is as in    Example~\ref{EX:group-schemes}(iii).
	Thus, $ K_\nu=\Q_p$, and $Y$ is the affine building attached to $\bfG_{\Q_p}$.
	Our choice of $p$ implies that $\Q_p$ contains a square root of $-1$, so by
	Example~\ref{EX:simply-conn-groups}(ii), $\bfG_{\Q_p}\cong \uSL_{d+1}(\Q_p)$.
	Thus,
	$Y$ is the the familiar affine building of $\uSL_{d+1}(\Q_p)$. In particular, $\dim Y=d$
	and $Y$ is $(p+1)$-thick.
	
	Observe that $\bfG(\R)$ is nothing but the group of $(d+1)\times (d+1)$
	complex unitary matrices, so it is compact. Thus, $\bfG$ is $\R$-anisotropic, and therefore
	$\Q$-anisotropic. On the other hand, $\bfG$ is $\Q_p$-isotropic
	by Example~\ref{EX:simply-conn-groups}(i).
	This allows us to apply 
	Corollary~\ref{CR:finite-quotients} with   the sequence of ideals
	$I_m=2^{m }3\calO$; set $\Gamma=\calG(\calO;I_{m_0})=\calG(\calO;2^{m_0}3\calO)$,
	where $m_0\in\N$ is in the corollary.
	(Explicitly, $\Gamma$ is   the set of matrices in $\nSU{\Z[\frac{1}{p}][i]/\Z[\frac{1}{p} ]}{d+1}$
	which are congruent to the identity matrix modulo $ 2^{m_0}\cdot 3$.)
	Put $X:=\Gamma\leftmod Y$ and $X'_r=\calG(\calO;2^{m_0}3^{r+1} \calO)\leftmod Y$ for all $r\in\N\cup\{0\}$.
	Then $X=X'_0,X'_1,X'_2,\dots$ are connected simplicial complexes covered by $Y$,
	and the evident quotient maps give rise to a tower of coverings $\dots X'_2\to X'_1\to X'_0=X$.
\end{construction}

\begin{prp}\label{PR:good-quotient-not-Ram}
	The complex   $X$ and the tower $\dots X'_2\to X'_1\to X'_0=X$ 
	of Construction~\ref{CN:good-quotient-not-Ram} satisfy
	conditions (i) and (ii) of Theorem~\ref{TH:good-quotient}.
\end{prp}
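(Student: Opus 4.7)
The proof splits naturally: (i) will be a direct application of Corollary~\ref{CR:finite-quotients}(ii) with $p=2$, while (ii) will combine Proposition~\ref{PR:elementary-p-group} (for the parity of the degree) with Theorem~\ref{TH:no-of-subgroups} (for the uniform cohomology bound).

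For (i), I will realize any connected covering of $X$ as $\Gamma' \leftmod Y$ for some subgroup $\Gamma' \leq \Gamma$, and then apply Corollary~\ref{CR:finite-quotients}(ii) with $p = 2$ and the place $\rho = 2$. Two facts enable this: first, the hypothesis $p \equiv 1 \pmod 4$ forces $p \neq 2$, so $2$ lies in $\calV - S$; second, after arranging $m_0 \geq 1$ (permitted since the corollary only lower-bounds $m_0$), we have $I_{m_0} = 2^{m_0} 3\calO \subseteq 2\calO = P_2$. The corollary then produces an infinite tower of $C_2$-Galois coverings of $\Gamma' \leftmod Y$, which coincide with double coverings by Example~\ref{EX:coverings-basic}(i).

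For the parity in (ii), the degree $[X'_r : X]$ equals $[\Gamma : \Gamma_r]$, where $\Gamma_r := \calG(\calO; 2^{m_0} 3^{r+1}\calO)$. Proposition~\ref{PR:elementary-p-group} applied to the chain $\Gamma \supseteq \calG(\calO; 2^{m_0} 3^2\calO) \supseteq \dots \supseteq \Gamma_r$ with $J = 3\calO$ shows that each successive quotient embeds into an elementary abelian $3$-group, since $2^{m_0}3^s\calO / 2^{m_0}3^{s+1}\calO \cong \calO/3\calO \cong \F_3$; hence $[\Gamma : \Gamma_r]$ is a power of $3$ and in particular odd. For the cohomology bound, Corollary~\ref{CR:finite-quotients}(i) gives $\pi_1(X'_r) \cong \Gamma_r$, whereupon Lemma~\ref{LM:fund-group-double-coverings} reduces the task to bounding $\dim_{\F_2} \Hom(\Gamma_r, \F_2)$ uniformly in $r$ via Theorem~\ref{TH:no-of-subgroups} with $I = 2^{m_0} 3^{r+1}\calO$ and target prime $2$. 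The hypotheses are easy to check: $\bfG_{\Q_p} \cong \uSL_{d+1}(\Q_p)$ has $\Q_p$-rank $d \geq 3$ and is $\Q_p$-isotropic, and CSP for $(\bfG, S)$ follows from Theorem~\ref{TH:csp}(3) since $\bfG = \uSU(f)$ for a nondegenerate hermitian form of dimension $d+1 \geq 4$ over the quadratic Galois extension $\Q[i]/\Q$.

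The crux is the analysis of the partition $\calV - S = T_1 \cup T_2 \cup T_3 \cup T_4$: the prime $3$ lies in $T_1$; all primes $\rho \geq 5$ with $\rho \neq p$ lie in $T_2$, up to a finite exceptional set absorbed into the constant $M$; and $\rho = 2$ lies in $T_4$ (in fact $T_3 = \emptyset$, because by Example~\ref{EX:simply-conn-groups}(ii) the group scheme $\calG_{\Z_2} = \uSU_{d+1}(\Z_2[i]/\Z_2)$ is not split, as $-1 \notin \units{\Z_2}^2$; but the argument is indifferent to this point). Applying the $\Char K = 0$ bound of Theorem~\ref{TH:no-of-subgroups}, the only potentially $r$-dependent contribution is $\dim_{\F_2} \Hom_c(\calG(\Z_2; I_2), \F_2)$ from $T_4$, and the key observation is that $I_2 = 2^{m_0}\Z_2$ is \emph{independent of $r$} because $3 \in \units{\Z_2}$. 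Hence the entire bound collapses to a finite constant $C = C(\calG, K, m_0)$, as required. The main obstacle is simply the careful case-by-case sorting of places; once $\rho = 2$ is identified as the only problematic place and its local ideal is seen to be $r$-independent, uniformity of the bound is immediate.
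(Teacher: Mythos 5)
Your proposal is correct and follows essentially the same route as the paper's proof: Corollary~\ref{CR:finite-quotients}(ii) at $\rho=2$ for condition (i), Proposition~\ref{PR:elementary-p-group} for the odd (power-of-$3$) degree, and Lemma~\ref{LM:fund-group-double-coverings} plus Corollary~\ref{CR:finite-quotients}(i) plus Theorem~\ref{TH:no-of-subgroups} (with CSP from Theorem~\ref{TH:csp}) for the uniform bound on $\dim\HH^1(X'_r,\F_2)$, with the same sorting of places into $T_1,\dots,T_4$. Your extra observation that the $T_4$-contribution at $\rho=2$ is $r$-independent because $3\in\units{\Z_2}$ is a worthwhile point that the paper leaves implicit.
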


\begin{proof}
	Condition (i) follows from Corollary~\ref{CR:finite-quotients}(ii).
	%(ii) will follow once we establish (iii$'$).
	
	To show (ii), observe that Proposition~\ref{PR:elementary-p-group}
	implies that $\calG(\calO;2^{m_0}3^{r+1} \calO)/\calG(\calO;2^{m_0}3^{r+2} \calO)$
	is an elementary abelian $3$-group for all $r\geq 1$. Thus,
	$[X'_r:X]=[\calG(\calO;2^{m_0}3 \calO): \calG(\calO;2^{m_0}3^{r+1}   \calO)]$
	is a power of $3$, and in particular odd. (In the equality we  
	used the fact that the $\calG(\calO;2^{m_0}3   \calO)$ acts freely on $Y$,
	which we know by Corollary~\ref{CR:finite-quotients}(i).)
	
	Next, Lemma~\ref{LM:fund-group-double-coverings}
	and Corollary~\ref{CR:finite-quotients}(i)
	tell us that
	\[\dim\HH^1(X'_r,\F_2)=\dim_{\F_2} \Hom (\pi_1(X'_r),\F_2)=
	\dim_{\F_2}\Hom (\calG(\calO;2^{m_0}3^{r+1}\calO),\F_2).\]
	We bound the right hand side using Theorem~\ref{TH:no-of-subgroups} with $p=2$
	and $I=2^{m_0}3^{r+1}\calO$.
	The assumptions
	of the theorem hold because $\rank_{\R}\bfG=0$
	and $\rank_{\Q_p}\bfG=\rank \uSL_{d+1}(\Q_p)=d>1$ (Example~\ref{EX:simply-conn-groups}(i)),
	and $(\bfG,S)$ has CSP holds by Theorem~\ref{TH:csp} (here we need $d\geq 3$).
	In the notation of Theorem~\ref{TH:no-of-subgroups},
	we have $T_1=\{3\}$, $T_2=\calV-\{2,3,p,\infty\}$, $T_3=\emptyset$ and $T_4=\{2\}$,
	so the theorem implies that  
	$\dim_{\F_2}\Hom (\calG(\calO;2^{m_0}3^{r+1}\calO),\F_2)$
	is bounded by a constant $C$ independent of $r$.
\end{proof}

\begin{proof}[Proof of Theorem~\ref{TH:good-quotient}]
The theorem follows from  
Proposition~\ref{PR:good-quotient-not-Ram}. 
The thickness requirement on the building $Y$
can be taken care of by choosing $p$ in Construction~\ref{CN:good-quotient-not-Ram}
large enough in advance.
\end{proof} 

\begin{remark}\label{RM:good-quotient-in-dim-2}
	If Conjecture~\ref{CJ:csp} holds, then the proof of Theorem~\ref{TH:good-quotient}
	also applies with $d=2$. 
	In fact,
	the theorem holds when $d=2$ even without
	assuming Conjecture~\ref{CJ:csp}.
	The idea is to use the affine buildings of the (unique)
	simply connected algebraic group of type $G_2$ over $\Q_p$.
%	
%
%	We could similarly prove Theorem~\ref{TH:good-quotient} when $d=2$
%	by using the affine building
%	associated to the (unique)
%	simply connected algebraic group of type $G_2$ over $\Q_p$. 
	To that end,
	one could  
	take $\calG$ to be the automorphism group scheme of a $\Z$-order
	in the standard \emph{octonion algebra} over $\Q$ and argue
	as in the proof of Proposition~\ref{PR:good-quotient-not-Ram}.
	We omit the details.
\end{remark}

For later use, we also introduce a variation of Construction~\ref{CN:good-quotient-not-Ram}.

\begin{construction}\label{CN:good-quotient-no-double-covs}
	Let $K=\Q$ and $\calV$ be as in Construction~\ref{CN:good-quotient-not-Ram}.
	Let $E=\Q[\sqrt{-7}]$ and $A=\Z[\frac{\sqrt{-7}+1}{2}]\cong \Z[x\where x^2=x-2]$,
	and let $p$ be an odd prime number such
	that $x^2-x+2$ factors into two distinct factors
	modulo $p$, or equivalently,   $p$ is congruent to $1$, $2$, or $4$ modulo $7$.
%	We specialize Notation~\ref{NT:general-num-thy-notation} as follows. Take
%	\begin{itemize}
%		\item $K=\Q$.
%	\end{itemize}
%	The set of places $\calV$ can be identified with the set of prime numbers
%	together with $\infty$.
	%Fix a   prime number $p$ with $p\equiv 1\bmod 4$ and take:	
	Let 
	\begin{itemize}
		\item $S=\{p,\infty\}$ and
		\item $\nu=p$ ($\nu$ will also denote the $p$-adic valuation).
	\end{itemize}
	Then $\calO=\Z[\frac{1}{p}]$. 
	
	%Let $E=\Q[\sqrt{-7}]$ and $A=\Z[\frac{\sqrt{-7}+1}{2}]\cong \Z[x\suchthat x^2=x-2]$.
	Fix $d\geq 3$ and 	
	let $f:A^{d+1}\times A^{d+1}\to A$ denote the hermitian
	form $f((x_i),(y_i))=\sum_{i=1}^{d+1} \quo{x_i} y_i$, where $\quo{x_i}$
	is the complex conjugate of $x_i$. Now, using the notation
	of Example~\ref{EX:group-schemes}(iv) (with $R=\Z$, $r_0=-2$, $r_1=1$), define
	\begin{itemize}
		\item $\calG=\uSU(f)$, 
		\item $\bfG=\uSU(f_\Q)$.  %$\uSU_{d+1}(\Q[i]/\Q)$,
	\end{itemize}
	%where the notation is as in    Example~\ref{EX:group-schemes}(iv).
	Since the polynomial $x^2-x+2$ factors into two distinct factors
	modulo $p$,
	we have $E\otimes_\Z \Q_p \cong \Q_p\times \Q_p$
	and under  isomorphism,   complex conjugation becomes   swapping
	the coordinates. Thus, as in Example~\ref{EX:simply-conn-groups}(ii),
	we see that 	
	$\bfG_{K_\nu}\cong \uSL_{d+1}(\Q_{p})$,
	so
	$Y$ is the the affine building of $\uSL_{d+1}(\Q_{p})$.  
	
	Since $E\otimes_{\Q} \R=\C$,
	we again see  that $\bfG(\R)$ is    the group of $(d+1)\times (d+1)$
	complex unitary matrices, so again, 
	$\bfG$ is $\R$-anisotropic, and therefore
	$\Q$-anisotropic. On the other hand, $\bfG$ is $\Q_p$-isotropic
	by Example~\ref{EX:simply-conn-groups}(i).
	This allows us to apply 
	Corollary~\ref{CR:finite-quotients} with   the sequence of ideals
	$I_m=7^m 3\calO$; set $\Gamma=\calG(\calO;I_{m_0})=\calG(\calO;7^{m_0} 3\calO)$,
	where $m_0\in\N$ is in the corollary.
%	(Explicitly, $\Gamma$ is   the set of matrices in $\nSU{\Z[\frac{1}{p}][i]/\Z[\frac{1}{p} ]}{d+1}$
%	which are congruent to the identity matrix modulo $ 2^{m_0}\cdot 3$.)
	Put $X:=\Gamma\leftmod Y$.
	% and $X'_r=\calG(\calO;2^{m_0}3^{r+1} \calO)\leftmod Y$ for all $r\in\N\cup\{0\}$.
	Then $X$ is a connected simplicial complexes covered by $Y$.
\end{construction}

\begin{prp}\label{PR:good-quotient-no-double-covs}
	The complex   $X$  
	of Construction~\ref{CN:good-quotient-no-double-covs}  
	satisfies
	$\HH^1(X,\F_2)=0$.
\end{prp}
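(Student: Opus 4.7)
The plan is to combine Lemma~\ref{LM:fund-group-double-coverings} with Corollary~\ref{CR:finite-quotients}(i) to reduce the claim $\HH^1(X,\F_2)=0$ to showing that $\Hom(\Gamma,\F_2)=0$, where $\Gamma=\calG(\calO;I)$ with $I=7^{m_0}\cdot 3\calO$. The vanishing of this hom group will follow from a careful application of Theorem~\ref{TH:no-of-subgroups} with $p=2$ to the setup of Construction~\ref{CN:good-quotient-no-double-covs}.

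First I would verify the hypotheses of Theorem~\ref{TH:no-of-subgroups}: $\bfG$ is $\Q_p$-isotropic with $\rank_{\Q_p}\bfG=d\geq 3$, and it is $\R$-anisotropic. The congruence subgroup property for $(\bfG,S)$ follows from Theorem~\ref{TH:csp}, since $\bfG=\uSU(f_\Q)$ falls into case~(3) of that theorem (dimension $d+1\geq 4$). Moreover, by Theorem~\ref{TH:trivial-cong-kernel}, because $S$ contains the non-archimedean place $\nu=p$, the congruence kernel $C^S(\bfG)$ is trivial, so $M=0$ in Theorem~\ref{TH:no-of-subgroups}.

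Next I would determine $T_1,T_2,T_3,T_4$ for $I=7^{m_0}\cdot 3\calO$. The crucial observation is that the generators of $I$ are odd, so $I_\rho=\calO_\rho$ for every $\rho\in\calV-\{3,7,p,\infty\}$; in particular $I_2=\Z_2$. Thus $T_1=\{3,7\}$. For $\rho\neq 2,3,7,p$, the discriminant of $A/\Z$ being $-7$ implies that $A\otimes\Z_\rho$ is \'etale over $\Z_\rho$, so $\calG_{\calO_\rho}=\uSU(f)_{\calO_\rho}$ is absolutely almost simple simply connected, and $|k(\rho)|\geq 5\geq 4$, placing such $\rho$ in $T_2$; consequently $T_4=\emptyset$. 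It remains to analyze $\rho=2$. Since $p\equiv 1,2,4\pmod 7$ was arranged precisely so that the minimal polynomial $x^2-x+2$ of $A/\Z$ factors over $\F_2$ as $x(x-1)$, the prime $2$ splits in $E=\Q[\sqrt{-7}]$; hence $A\otimes\Z_2\cong\Z_2\times\Z_2$ with the swap involution, and (as in Example~\ref{EX:simply-conn-groups}(ii)) $\calG_{\Z_2}\cong\uSL_{d+1}(\Z_2)$, which is split AAS simply connected. Therefore $2\in T_3$.

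The heart of the argument --- and the only point of subtlety --- is that although $2\in T_3$, its contribution to the bound in Theorem~\ref{TH:no-of-subgroups} vanishes because $I_2=\calO_2$. Inspecting the $T_3$-case of that theorem's proof, when $J=I_\rho=\calO_\rho$ one has $E(\calO_\rho;J^3)=E(\calO_\rho;\calO_\rho)=\calG(\calO_\rho)$, so any continuous $\varphi\colon\calG(\calO_\rho;I_\rho)\to\F_2$ kills the whole group; in other words, the coarse estimate ``$C\lvert T_3\rvert$'' in the statement is zero for such places. Combining this with $T_4=\emptyset$ and $M=0$ gives $\dim_{\F_2}\Hom(\Gamma,\F_2)\leq 0$, hence $\HH^1(X,\F_2)=0$. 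The only obstacle worth flagging is this $T_3$ bookkeeping at $\rho=2$: it is precisely to exploit the vanishing there that Construction~\ref{CN:good-quotient-no-double-covs} picks $E=\Q[\sqrt{-7}]$ (so $2$ splits in $E$ and $\calG$ is locally split at $2$) while choosing $I$ with \emph{odd} generators so that $I_2=\calO_2$.
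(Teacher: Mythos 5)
Your proposal is correct and follows the same route as the paper: reduce to $\Hom(\Gamma,\F_2)=0$ via Lemma~\ref{LM:fund-group-double-coverings} and Corollary~\ref{CR:finite-quotients}(i), then apply Theorem~\ref{TH:no-of-subgroups} with $p=2$ after computing $T_1=\{3,7\}$, $T_2=\calV-S-\{2,3,7\}$, $T_3=\{2\}$, $T_4=\emptyset$, and invoke Theorem~\ref{TH:trivial-cong-kernel} to make $M=0$. Your explicit treatment of the place $2\in T_3$ --- replacing the coarse bound $C|T_3|$ in the statement by the finer fact from the theorem's proof that a place $\rho\in T_3$ with $I_\rho=\calO_\rho$ contributes nothing --- is precisely the step the paper's proof leaves implicit, and it is genuinely needed: applied as a black box, the theorem only yields $\dim_{\F_2}\Hom(\Gamma,\F_2)\le C$. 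One small misattribution: the congruence $p\equiv 1,2,4\pmod 7$ ensures that $x^2-x+2$ splits modulo $p$ (so that $\bfG_{\Q_p}\cong\uSL_{d+1}(\Q_p)$ and $Y$ is the right building); the splitting modulo $2$, which is what puts $2$ in $T_3$ and makes $\calG_{\Z_2}$ split, is automatic from the choice $E=\Q[\sqrt{-7}]$ itself (indeed $x^2-x+2\equiv x(x+1)\bmod 2$) and is independent of the condition imposed on $p$.
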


\begin{proof}
	As in the proof of Proposition~\ref{PR:good-quotient-not-Ram},
	we reduce into showing   $\Hom (\calG(\Z;7^{m_0}3\Z),\F_2)=0$.
	We show this by  applying Theorem~\ref{TH:no-of-subgroups} with $p=2$.
	Note that $x^2-x+2$ splits into two distinct factors
	modulo $2$, and thus $A\otimes_{\Z} \Z_2\cong \Z_2\times \Z_2$,
	so $\calG_{\Z_2}\cong\uSL_{d+1}(\Z_2)$ (cf.\ Example~\ref{EX:simply-conn-groups}(ii)).
	This means that, $T_1=\{7,3\}$, $T_2=\calV-S-\{2,3,7\}$,
	$T_3=\{2\}$ and $T_4=\emptyset$.
	By Theorem~\ref{TH:trivial-cong-kernel}, $C^S(\bfG)=1$,
	so we conclude that $\Hom(\calG(\Z;7^{m_0}3\Z),\F_2)=0$.
\end{proof}

\section{Sheaves of Large Dimension with Small Cohomology}
\label{sec:candidates}

This final section has to purposes.
First, we construct  examples of locally cosntant $\F$-sheaves
$\calF$ of arbitrarily large dimension
such that $h^1(\calF)\ll\dim \calF$.
In particular, we shall prove Theorem~\ref{TH:sheaves-with-small-coh} by
constructing examples with $h^0(\calF)=h^1(\calF)=0$ and $\dim\calF\to\infty$;
this    uses Theorem~\ref{TH:good-quotient} as a black box.
Sheaves with $h^1(\calF)\ll\dim \calF$ are natural candidates for
the iterative modification process discussed in Section~\ref{sec:modifying}.

Second, we will   show that every two of the conditions 
\ref{item:TH:tower-paradigm:tower}--\ref{item:TH:tower-paradigm:rate} of
the tower paradigm (Theorem~\ref{TH:tower-paradigm} with $k=0$) are
satisfied for some sheaved complex $(X,\calF)$. Otherwise stated, no two
of the prerequisites of the tower paradigm  are contradictory.

%This final section is dedicated to constructing interesting examples of sheaves.
%Specifically, we will show that there exist simplicial complexes $X$ admitting
%locally constant $\F_2$-sheaves of arbitrarily large dimension such that:
%\begin{enumerate}[label=(\roman*)]
%	\item $X$ is Ramanujan and $\dim \HH^1(X,\calF)=O(\log\dim \calF)$, or
%	\item $\HH^0(X,\calF)=\dim \HH^1(X,\calF)=0$.
%\end{enumerate}
%To that end, we will utilize the tower of coverings provided by Theorem~\ref{TH:good-quotient}(ii).
%
%Such sheaves as in (i) or (ii) 
%can serve as input to the iterative process   of Construction~\ref{CN:modification-process}.
%In particular, we will prove Theorem~\ref{TH:sheaves-with-small-coh}, thus completing
%the construction from \S\ref{subsec:candidates} of pairs $(X,\calF)$    which conjecturally satisfy assumptions
%\ref{item:TH:tower-paradigm:tower}--\ref{item:TH:tower-paradigm:rate} of the tower paradigm
%(Theorem~\ref{TH:tower-paradigm}). In particular, this will complete our construction
%of   infinite families of $2$-query LTCs with linear rate and conjecturally
%good distance.
%
%Separately, in \S\ref{subsec:three-cond-examples}, 
%we will also show that every two of the conditions 
%\ref{item:TH:tower-paradigm:tower}--\ref{item:TH:tower-paradigm:rate} of
%Theorem~\ref{TH:tower-paradigm} are
%satisfied for some sheaved complex $(X,\calF)$. Otherwise stated, no two
%of these conditions contradict each other.
%
%Recall that, given a sheaved complex $(X,\calF)$ with $\calF$ being an $\F$-sheaf,
%we abbreviate
%\[
%h^i(\calF):=\dim_\F\HH^i(X,\calF).
%\]

\subsection{Proof of Theorem~\ref{TH:sheaves-with-small-coh}}

We begin with proving the following lemma.
Recall that $\F_X$  denotes the constant sheaf $\F$ on a simplicial complex $X$.

\begin{lem}\label{LM:ker-of-counit}
	Let $\F$ be a field, let $u:Y\to X$ be a degree-$m$ covering of connected simplicial complexes,
	and suppose that $\Char \F\nmid m$.
	Define a sheaf morphism $\vphi:u_*\F_{ Y}\to \F_{ X}$
	by  $\vphi_x(\alpha_1,\dots,\alpha_m)=\sum_{i}\alpha_i$ for all $x\in X-\{\emptyset\}$,
	and
	put $\calG=\ker \vphi$. Then $h^0(\calG)=0$ and $h^1(\calG)=\dim\HH^1(Y,\F)-\dim\HH^1(X,\F)$.
\end{lem}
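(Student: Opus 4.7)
The plan is to produce, under the hypothesis $\Char\F\nmid m$, a splitting of a short exact sequence of sheaves on $X$ whose middle term has cohomology computable via Shapiro's lemma, and then read off the conclusion from the resulting direct-sum decomposition.

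First I would check that $\vphi:u_*\F_Y\to\F_X$ is indeed a well-defined morphism of sheaves, and that it is surjective at every face. Surjectivity at each $x\in X-\{\emptyset\}$ is obvious from the definition of $\vphi_x$. To see that $\vphi$ commutes with restrictions, I use the fact that because $u$ is a covering map, for every $\emptyset\neq x\subsetneq x'\in X$ the assignment $y'\mapsto y'(x)$ defines a bijection $u^{-1}(x')\to u^{-1}(x)$; the restriction map of $u_*\F_Y$ is a permutation of coordinates with respect to this bijection, and summing coordinates is invariant under such a permutation. Hence we obtain a short exact sequence of sheaves
\[
0\to\calG\to u_*\F_Y\xrightarrow{\vphi}\F_X\to 0.
\]

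The key step is now to split this sequence using the hypothesis on $\Char\F$. Define $s_x:\F_X(x)\to u_*\F_Y(x)=\F^{u^{-1}(x)}$ by $s_x(\alpha)=(m^{-1}\alpha,\ldots,m^{-1}\alpha)$; the element $m^{-1}$ exists because $\Char\F\nmid m$. The same coordinate-permutation argument as above shows that $s=(s_x)_x$ is a morphism of sheaves, and by construction $\vphi\circ s=\id_{\F_X}$. Thus the sequence is split, so $u_*\F_Y\cong \calG\oplus\F_X$ as sheaves on $X$.

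Next I would apply Shapiro's lemma (Lemma~\ref{LM:Shapiro}) to obtain $\HH^i(X,u_*\F_Y)\cong \HH^i(Y,\F)$ for every $i\geq 0$. Combined with the splitting, this gives a natural isomorphism
\[
\HH^i(Y,\F)\;\cong\;\HH^i(X,\calG)\oplus \HH^i(X,\F)
\]
for every $i\geq 0$. Taking $i=0$ and using that both $X$ and $Y$ are connected (so both $\HH^0(X,\F)$ and $\HH^0(Y,\F)$ are one-dimensional), the summand $\HH^0(X,\calG)$ must vanish, giving $h^0(\calG)=0$. Taking $i=1$ and subtracting $\dim\HH^1(X,\F)$ from both sides yields $h^1(\calG)=\dim\HH^1(Y,\F)-\dim\HH^1(X,\F)$, which is the claim.

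I do not anticipate a serious obstacle here: the only nontrivial input is the availability of the section $s$, which requires exactly the hypothesis $\Char\F\nmid m$. Checking that $\vphi$ and $s$ are genuine sheaf morphisms is the one bookkeeping step to be careful with, but it is immediate from the explicit description of the restriction maps in $u_*\F_Y$ recalled in \S\ref{subsec:pushforward}.
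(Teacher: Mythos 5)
Your proposal is correct and follows essentially the same route as the paper's proof: both split the short exact sequence via the section $\alpha\mapsto(m^{-1}\alpha,\dots,m^{-1}\alpha)$ (which is exactly where $\Char\F\nmid m$ is used), apply Shapiro's Lemma to identify $h^i(u_*\F_Y)$ with $h^i(Y,\F)$, and finish using the connectedness of $X$ and $Y$. The extra verification that $\vphi$ and the section commute with restriction maps is a welcome but routine addition.
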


\begin{proof}
	Note that $u_*\F_Y(x)=\prod_{y\in u^{-1}(x)}\F$  for all $x\in X-\{\emptyset\}$ and we implicitly
	identified $\prod_{y\in u^{-1}(x)}\F$ with $\F^m$ by numbering the faces in $ u^{-1}(x)$.
	Define $\psi:\F_{ X}\to u_*\F_{ Y}$
	by $\psi_x(\alpha)=(m^{-1}\alpha,\dots,m^{-1}\alpha)$ ($m$ times) for all $x\in X-\{\emptyset\}$
	(here we used the assumption $\Char \F\nmid m$).
	It is routine to check that $\psi$ is indeed a morphism of sheaves, and moreover,
	$\vphi\circ \psi= \id_{\F_{ X}}$.
	This means that  $u_*\F_{ Y}$ breaks as a product
	of the sheaves
	$\im \psi\cong \F_{ X}$ and $\ker \vphi=\calG$.
	Consequently, $h^i(\calG)=h^i(u_*\F_{ Y})-h^i( \F_{ X})$
	for all $i\geq 0$.
	By Lemma~\ref{LM:Shapiro}, we have $h^i(u_*\F_{ Y})=h^i(\F_{ Y})$.
	Since $X$ and $Y$ are connected
	$h^0(\F_X)=h^0(\F_Y)=1$, and lemma follows.
\end{proof}

\begin{proof}[Proof of Theorem~\ref{TH:sheaves-with-small-coh}]
Recall that we are given $q,d\in\N$ with $d\geq 2$,
and we need to construct a
$q$-thick $d$-dimensional affine building $Y$ covering   
a finite simplicial complex $X$ and a nonzero locally constant $\F_2$-sheaf 
$\calG$ such that $X$ admits an infinite tower of connected
double coverings and $h^0(\calG)=h^1(\calG)=0$.

By  Theorem~\ref{TH:good-quotient},
there exist  a $q$-thick affine
building $Y$ covering a simlicial complex $X'_0$
and  a tower of connected coverings $\dots\to X'_2\to X'_1\to X'_0$ such that every connected covering
of $X'_0$ admits an infinite
tower of connected double coverings, and $[ X'_r:X'_0]$ is odd and bounded by some $C\in \N$ for all $r\in\N\cup\{0\}$.
The latter implies that there are $t>s\geq 0$ with
	$\dim\HH^1(X'_t,\F_2)= \dim\HH^2(X'_s,\F_2)$.
	Let $u$ denote the covering map $X'_t\to X'_s$.
	The degree of $u$ --- call it $m$ --- is odd and greater than $1$, so Lemma~\ref{LM:ker-of-counit}
	provides us with a locally
	constant $\F_2$-sheaf $\calG$ on $X'_s$ of dimension $m-1>0$
	such that $h^0(\calG)=0$ and $h^1(\calG)=\dim\HH^1(X'_t,\F_2)- \dim\HH^2(X'_s,\F_2)=0$.
	Taking $X=X'_s$, we have obtained the desired
	sheaved $d$-complex $(X,\calG)$.
	Alternatively, writing $p$ for the covering map $X'_s\to X'_0$,
	we can also take $(X'_0,p_*\calG)$ thanks to Lemma~\ref{LM:Shapiro}.
%	
%	$(X'_s,\calG)$ is the required sheaved $d$-complex
%	($X'_s$ has an infinite tower of double coverings by
%	condition (i) of Theorem~\ref{TH:good-quotient}).
%
\end{proof}

\begin{remark}
	If $X$ admits a tower of coverings
	$\dots\to X'_2\to X'_1\to X'_0 =X$
	such that $\dim\HH^1(X'_r,\F_2) \ll [X'_r:X]$
	as $r$ grows, then by setting $\calF_r = (u_r)_*(\F_2)_{X'_r}$,
	where $u_r$ is the map $X'_r\to X$,
	we get a family $\{\calF_r\}_{r\in \N}$
	of $\F_2$-sheaves on $X$ with $\dim \calF_r \to \infty$
	and $h^1(\calF_r)\ll \dim \calF_r$
	as $r\to\infty$.
	
	Applying this approach to the Ramanujan complexes
	of \cite{Lubotzky_2005_explicit_constructions_of_Ramanujan_complexes}
	to construct $\F_2$-sheaves
	and then applying   the modification process
	of Construction~\ref{CN:modification-process}  to these sheaves
	gives   the explicit example considered in  
	\S\ref{subsec:explicit-example}.
%	
%	If Serre's Conjecture on the congruence subgroup property
%	holds, then one could use
%	Theorem~\ref{TH:no-of-subgroups}
%	in for fields of characteristic $2$
%	and the above argument in order to construct 
%	simplicial complexes $X$ covered by arbitrarily thick  affine  buildings
%	that admit $\F_2$-sheaves $\calF$ of arbitrarily large dimension
%	such that $h^1(\calF)=O(\log\dim\calF)$.
\end{remark}

\subsection{Satisfying Every Two of The Three Prerequisites of  The Tower Paradigm}
\label{subsec:three-cond-examples}

We finish     by demonstrating that every
two of the conditions
conditions \ref{item:TH:tower-paradigm:tower}--\ref{item:TH:tower-paradigm:rate} 
of Theorem~\ref{TH:tower-paradigm} with $k=0$ are
met for some sheaved complex $(X,\calF)$.
Theorem~\ref{TH:good-quotient} will play a  
role in all of the constructions.

\begin{example}[Conditions \ref{item:TH:tower-paradigm:tower} and 
\ref{item:TH:tower-paradigm:LTC} of Theorem~\ref{TH:tower-paradigm} can be met]
	\label{EX:tower-LTC}
	Fix $q,d\geq 3$ and let $Y$, $X$	
	be simplicial complexes satisfying condition  (i)  of Theorem~\ref{TH:good-quotient}.
	Then $Y$ is a $q$-thick $d$-dimensional affine building covering $X$
	and  $X$ has an infinite tower of connected double coverings.
	The latter implies   \ref{item:TH:tower-paradigm:tower}.
	Let $\calF$ denote the constant sheaf $\F_2$ on $X$.
	Choosing $q$ sufficiently large in advance allows 
	us to apply Theorem~\ref{TH:sheaves-on-quo-of-aff-buildings}(i) to $(X,\calF)$,
	thus establishing \ref{item:TH:tower-paradigm:LTC}.
	
	We observed in Proposition~\ref{PR:loc-constant-failure} that \ref{item:TH:tower-paradigm:rate}
	does not hold for this choice of $(X,\calF)$.
\end{example}

\begin{example}[Conditions \ref{item:TH:tower-paradigm:tower} and 
\ref{item:TH:tower-paradigm:rate} of Theorem~\ref{TH:tower-paradigm} can be met]
	\label{EX:tower-and-rate}
	Again, fix $d\geq 3$
	and let $X$ be a $d$-complex satisfying condition (i)
	of Theorem~\ref{TH:good-quotient}.
	Then $X$ has an infinite tower of double coverings, hence \ref{item:TH:tower-paradigm:tower} holds.
	
	Define a sheaf $\calF$ on $X$ by 
	setting $\calF(v)=\F_2^{k}$ for all $v\in X(0)$,
	$\calF(y)=\F_2$ for all $y\in X-X(0)-X(-1)$, and setting all the restriction maps
	$\res^\calF_{y\from x}$ to be $0$.
	Then $\dim \HH^0(X,\calF)=k\cdot |X(0)|$  while $\dim \HH^1(X,\calF)=|X(1)|$,
	so if we choose $k> \frac{|X(1)|}{|X(0)|}$, then condition \ref{item:TH:tower-paradigm:rate} is  satisfied.

	Of course, $(X_z,\calF_z)$ is a   poor coboundary expander,
	in all dimension, for all $z\in X-\{\emptyset\}$,
	so condition \ref{item:TH:tower-paradigm:LTC} does not hold for $(X,\calF)$. More generally,
	this highlights the difficulty in securing \ref{item:TH:tower-paradigm:LTC} when the dimensions of
	the spaces $\{\calF(y)\}_{y\in X(1)}$
	are significantly smaller than those of $\{\calF(x)\}_{x\in X(0)}$, which is the naive
	approach to making $h^0(\calF)$   large.
\end{example}

\begin{example}[Conditions \ref{item:LTC-cond-into} and 
\ref{item:TH:tower-paradigm:rate} of Theorem~\ref{TH:tower-paradigm} can be met]
	\label{EX:LTC-and-rate}
	Fix $d\geq 3$ and a prime number $p $ that is congruent
	to $1$, $2$ or $4$ modulo $7$.
	We apply Construction~\ref{CN:good-quotient-no-double-covs}
	with $p$ to get a $d$-dimensional affine building
	$Y$ covering a simplicial complex $X$.
	Let $\calF$ be the constant sheaf $\F_2$ on $X$.
	By Proposition~\ref{PR:good-quotient-no-double-covs},
	$\dim_{\F_2} \HH^1(X,\calF)=0$, while $\dim_{\F_2}\HH^0(X,\calF)=1$,
	so \ref{item:TH:tower-paradigm:rate} holds for $(X,\calF)$.
	In addition, choosing $p$ sufficiently large in advance allows
	us to apply Theorem~\ref{TH:sheaves-on-quo-of-aff-buildings}(i),
	which tells us that \ref{item:TH:tower-paradigm:LTC} holds for $(X,\calF)$.
	
	However, \ref{item:TH:tower-paradigm:tower} fails in this case
	because $X$ has no double coverings. Indeed, it is well-known
	that the double coverings of $X$
	are classified by $\HH^1(X,\F_2)$, which is $0$ in our case.
\end{example}

\appendix

\chapter*{Appendices}
\addcontentsline{toc}{chapter}{Appendices}

\renewcommand\thesection{\Alph{section}} % change section numbering to alphabetic

\section{Sheaves on Simplicial Comlexes versus Sheaves on Topological Spaces}

\label{sec:comparison}

In this appendix we explain the relation between the sheaves on simplicial
complexes defined in this paper (Section~\ref{sec:sheaves})
and the well-known sheaves on topological spaces.
Notably, we will show that sheaves on simplicial complexes
can be realized as sheaves on certain topological spaces in such a way 
that the   cohomologies agree.
The comparison will lead to a definition of the pushforward of a sheaf
along an arbitrary morphism of simplicial complex, extending the
definition given in \S\ref{subsec:pushforward} for dimension preserving maps.

We have made the first two subsections of this appendix accessible
to readers with no prior knowledge of sheaves. However, the more advanced
topics considered in the remaining sections require some familiarity
with pushforward, pullback and sheaf cohomology; the relevant background material can be found in
\cite{Iversen_1986_cohomology_of_sheaves}, for instance.

Throughout, simplicial complexes are allowed to be infinite.
We denote the category of sheaves on a simplicial
complex $X$ by $\Sh(X)$ (cf.\ Remark~\ref{RM:sheaf-category}).

\subsection{Sheaves on Topological Spaces: a Quick Introduction}

Let $Y$ be a topological space.
Recall that a  sheaf (of abelian groups) $\calF$ on  $Y$ consists of 
\begin{enumerate}[label=(\arabic*)]
	\item an abelian group $\calF(U)$ for every open subset $U\subseteq Y$ and
	\item a group homomorphism $\res^{\calF}_{V\from U}:\calF(U)\to \calF(V)$
	for every $V\subseteq U$ open in $Y$ 	
\end{enumerate} 
such that the following conditions are met:
\begin{enumerate}[label=(S\arabic*)]
	\item $\res^{\calF}_{U\from U} = \id_{\calF(U)}$ for every open
	$U\subseteq Y$.
	\item $\res^{\calF}_{W\from V}\circ \res^{\calF}_{V\from U}=\res^{\calF}_{W\from U}$
	for all $W\subseteq V\subseteq U$ open in $Y$.
	\item \label{item:sheaf-topl:gluing} Given open subsets $\{U_i\}_{i\in I}$  of $Y$ 
	and elements $f_i\in \calF(U_i)$ for all $i\in I$ such that,   for all $i,j\in I$,
	we have $\res^{\calF}_{U_i\cap U_j\from U_i} f_i = 
	\res^{\calF}_{U_i\cap U_j\from U_j} f_j$ in $\calF(U_i\cap U_j)$, there exists a unique
	$f\in \calF(U)$, where $U=\bigcup_{i\in I}U_i$, such that $f_i=\res^{\calF}_{U_i\from U}f$ for all $i\in I$.
\end{enumerate}
The maps $\res^{\calF}_{V\from U}$ are called   \emph{restriction maps} 
and elements of $\calF(U)$ are called \emph{$U$-sections}, or just \emph{sections}.
Elements of $\calF(X)$ are called \emph{global sections}.
It is common to abbreviate $\res^{\calF}_{V\from U}f$ to $f|_{U\to V}$ or $f|_V$.
The abelian group $\calF(U)$ is also written $\Gamma(U,\calF)$.

If $\F$ is a field, then a sheaf of $\F$-vector spaces on $Y$
is defined similarly, by requiring each $\calF(U)$ to be an $\F$-vector space
and each restriction map to be an $\F$-linear map.
In the same manner, one can define sheaves of groups, $R$-modules, sets (the restriction maps
are arbitrary functions), and so on.
%Sheaves of $\F$-vectors space (with $\F$ a field), groups, set, $R$-modules and so on
%are defined similarly, by replacing ``abelian groups'' with the relevant object and
%``group hoomorphism'' with the relevant kind of map.

\begin{remark}\label{RM:sheaf-on-emptyset}
	Condition (S3) is also required to hold with $I=\emptyset$,
	in which case $\{U_i\}_{i\in I}$ is an empty
	collection and $U$ must be $\emptyset$. This choice of $\{U_i\}_{i\in I}$
	tells us that $\calF(\emptyset)$ is the trivial group.
\end{remark}

The most fundamental example of a sheaf on $Y$ is obtained by setting
\[
\calF(U) = \{f:U\to \R\suchthat\text{$f$ is continuous}\},
\]
with $\res^{\calF}_{U\from V}$ being given by $\res^{\calF}_{U\from V} f= f|_V$
(the right hand side is the restriction of $f$ to a function from $V$ to $\R$).
The addition law in $\calF(U)$ is point-wise addition.
Notice that  in this case,
$\res^{\calF}_{U\from V}$ is literally the restriction-of-domain
operation. Conditions (S1)--(S3) now become to the following 
simple facts:
\begin{enumerate}[label=(S\arabic*$'$)]
	\item If $U $ is open in $Y$ and $f:U\to\R$ is continuous, then $f|_U=f$.
	\item If $W\subseteq V\subseteq U$ are open in $Y$
	and $f:U\to \R$ is continuous, then $(f|_V)|_W=f|_W$.
	\item  Given open subsets $\{U_i\}_{i\in I}$  of $Y$
	and, for each $i\in I$, a
	continuous function $f_i:U_i\to \R$ 
	such that $f_i|_{U_i\cap U_j}=f_j|_{U_i\cap U_j}$
	for all $i,j\in I$,
	then all the $f_i$ glue  uniquely  to a
	continuous function $f:\bigcup_{i\in I}U_i\to \R$
	such that $f|_{U_i}=f_i$ for all $i\in I$.
\end{enumerate}
The sheaf $\calF$ is actually a sheaf of $\R$-vector spaces.

Similarly, given an abelian group $A$, we could define a sheaf $\calF_A$ on $Y$
by setting $\calF_A(U)$ to be the abelian group of \emph{all} functions from $U$ to $A$,
and again define the restriction maps by restriction of the domain.
If $A$ were a topological group, we could replace ``all'' with ``continuous'' and get
a sheaf as well; the example from the previous paragraph is the special case $A=\R$.

In light of the previous examples, the concept of a sheaf on $Y$
can be seen as axiomatizing an ensemble of ``good'' (e.g.\ continuous) functions
from open subsets of $Y$ to some fixed target space, but without specifying what ``good'' means, or what
is the target. 

The following example is one reason why elements 
of $\calF(U)$ are called sections.

\begin{example}
	Let $X$ be another topological space and let $p:X\to Y$
	be a continuous function. Recall that a (continuous) section
	of $p$ is a continuous function $f:Y\to X$ such that $p\circ f=\id_Y$.
	More generally, given an open subset $U\subseteq Y$,
	we say that a continuous function $f:U\to Y$ is a section of $p$
	if $p\circ f=\id_U$. Denote by
	$\calF_p(U)$   the set of sections $f:U\to Y$ of $p$.
	Then $\calF_p$ defines a set-sheaf on $Y$ by setting $\res^{\calF_p}_{V\from U} f=f|_V$.
	Moreover, the $U$-sections of $\calF_p$ are the exactly the sections of $p$
	defined on $U$. 
\end{example}

If $\calF$ and $\calG$ are sheaves on $Y$, then
a \emph{morphism}  $\vphi$ from $\calF$ to $\calG$ consists
of a group homomorphism $\vphi_U:\calF(U)\to \calG(U)$
for every open $U\subseteq Y$
such that 
\[\vphi_V\circ \res^{\calF}_{V\from U}= \res^{\calG}_{V\from U}\circ \vphi_U\]
for all $V\subseteq U$ open in $Y$. If $\calF$ and $\calG$ are sheaves
of $\F$-vector spaces (resp.\ rings, sets, etc.), then 
we instead require that each $\vphi_U$ is a linear
transformation (resp.\ ring homomorphism, any function, etc.).
The sheaves on $Y$ and the morphisms between them form a category denoted
$\Sh(Y)$.

\subsection{Sheaves on Simplicial Complexes as Sheaves on Topological Spaces}
\label{subsec:sheaves-as-ordinary-sheaves}

Let $X$ be a simplicial complex.
We say that a subset $U\subseteq X-\{\emptyset\}$ is \emph{simplicially open} (in $X$)
if $x\in U$ implies that $X_{\supseteq x}\subseteq U$.
Informally, the set $X_{\supseteq x}$ may be regarded
as the smallest simplicial neighborhood  of $x$ in $X$. A subset of $U$
of $X-\{\emptyset\}$ is therefore simplicially open if and only if it contains a   simplicial
neighborhood of every face in $U$. 
%\gap{See figure ?? for an illustration.}
The collection of simplicially open sets forms a topology on $X-\{\emptyset\}$,
and we denote by $\topl{X}$ the resulting topological space. By design, the subcollection
$\{X_{\supseteq x}\where x\in X-\{\emptyset\}\}$ is a basis of $\topl{X}$.

Let $\calG$ be a sheaf on $\topl{X}$ and let $U\subseteq \topl{X}$ be an open subset.
Condition \ref{item:sheaf-topl:gluing} in the definition of a sheaf on a topological space
implies that we can recover
$\calG(U)$ (up to isomorphism) by knowing the groups
$\{\calG(X_{\supseteq x})\where x\in \topl{X}\}$
and the restriction maps between them.
More precisely, $\calG(U)$ may be naturally
identified with the set of ensembles
$(g_x)_{x\in U}$ where $g_x\in \calG(X_{\supseteq x})$ for all $x\in U$ and such
that $g_x|_{X_{\supseteq x}\cap X_{\supseteq y}} = g_y|_{X_{\supseteq x}\cap X_{\supseteq y}}$
for all $x,y\in U$. 
Indeed,   such a collection $(g_x)_{x\in U}$ determines
a  unique $g\in \calF(U)$ with $g|_{X_{\supseteq x}}=g_x$ for all $x\in U$. 
Note also that   $X_{\supseteq x}\cap X_{\supseteq y}$ is $X_{\supseteq x\cup y}$ if $x\cup y$ is a face
of $X$,
and $\emptyset$ otherwise. Thus, the condition 
on the $(g_x)_{x\in X}$ is equivalent to having
$g_x|_{X_{\supseteq z}}=g_y|_{X_{\supseteq z}}$ whenever $x,y,z\in U$ and $x,y\subseteq z$.
Taking $y=z$ or $x=z$, this is in turn  equivalent to having
$g_x|_{X_{\supseteq y}}=g_y$ for all $x,y\in U$ with $x\subsetneq y$. 
Now, abbreviating $\calG(X_{\supseteq x})$ to $\smp{\calG}(x)$ and $\res^{\calG}_{X_{\supseteq y}
\from X_{\supseteq x}}$ to $\res^{\smp{\calG}}_{y\from x}$ for every $x,y\in U$ with $x\subsetneq y$,
we find that $\smp{\calG}$ is a sheaf on $X$ in the sense of \S\ref{subsec:sheaves},
and we can recover $\calG$ (up to isomorphism) from $\smp{\calG}$ via
\[
\calG(U)\cong \{(g_x)_{x\in U}\in \prod_{x\in U}\smp{\calG}(x)
\suchthat 
\text{$\res^{\smp{\calG}}_{y
\from x} g_x =g_y$ for all $x,y\in U$ with $x\subsetneq y$}\},
\]
where the isomorphism is given by $g\mapsto (g|_{X_{\supseteq x}})_{x\in U}$.
To conclude, each sheaf $\calG$ on $\topl{X}$ determines a sheaf $\smp{\calG}$ on $X$,
and we can recover $\calG$ from $\smp{\calG}$.

Conversely, we may start with a sheaf $\calF$ on $X$ in the sense of \S\ref{subsec:sheaves}
and construct a sheaf $\topl{\calF}$ on $\topl{X}$ %with $\smp{(\topl{\calF})}\cong \calF$
as follows:
Given an open subset $U\subseteq \topl{X}$, let $\topl{\calF}(U)$ denote
the set of $(f_x)_{x\in U}\in \prod_{x\in U}\calF(x)$ such that
$\res^{\calF}_{y\from x} f_x= f_y$ for all $x,y\in U$ with $x\subsetneq y$.
Then, given $V\subseteq U$ open in $\topl{X}$, define $\res^{\topl{\calF}}_{V\from U}:\topl{\calF}(U)
\to \topl{\calF}(V)$ by $(f_x)_{x\in U}\mapsto (f_x)_{x\in V}$. It is routine to check
that this defines a sheaf on $\topl{X}$.   

As we shall now see,
up to sheaf isomorphism, the constructions $\calG\mapsto \smp{\calG}$
and $\calF\mapsto \topl{\calF}$ are   inverse to each other.
Thus, sheaves on the topological space $\topl{X}$ and sheaves
on the simplicial complex $X$ are essentially the same thing.
Here is a precise statement:

\begin{thm}\label{TH:sheaf-category-equiv}
	The assignment $\calG\mapsto \smp{\calG}$
	extends naturally to a functor $\Sh(\topl{X})\to \Sh(X)$,
	and the assignment $\calF\mapsto\topl{\calF}$
	extends naturally to a functor $\Sh(X)\to \Sh(\topl{X})$.
	These functors are mutual  inverses, up to natural isomorphism.
\end{thm}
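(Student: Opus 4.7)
The plan is to complete the sketch already given in the paragraphs preceding the theorem. The two constructions are essentially inverse by design; what remains is to verify functoriality and the naturality of the two isomorphisms.

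First I would extend the assignments to functors. Given a morphism $\vphi:\calG\to \calG'$ in $\Sh(\topl{X})$, define $(\smp{\vphi})_x=\vphi_{X_{\supseteq x}}:\smp{\calG}(x)\to\smp{\calG'}(x)$ for every $x\in X-\{\emptyset\}$; compatibility with the simplicial restriction maps follows immediately from compatibility of $\vphi$ with restriction along the inclusion $X_{\supseteq y}\subseteq X_{\supseteq x}$ whenever $x\subsetneq y$. In the other direction, given a morphism $\psi:\calF\to\calF'$ in $\Sh(X)$, define $(\topl{\psi})_U$ by $(f_x)_{x\in U}\mapsto (\psi_x(f_x))_{x\in U}$; the compatibility of $\psi$ with the $\res^{\calF}_{y\from x}$ ensures the image lies in $\topl{\calF'}(U)$, and compatibility with the topological restrictions $\res^{\topl{\calF}}_{V\from U}$ (which are just truncation of the index set) is immediate. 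Functoriality of both assignments is  trivial to check.

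Next I would construct the natural isomorphism $\eta:\mathrm{id}_{\Sh(X)}\Rightarrow {\smp{(-)}}\circ {\topl{(-)}}$. For $\calF\in\Sh(X)$ and $x\in X-\{\emptyset\}$, the face $x$ is the unique minimum of $X_{\supseteq x}$, so the projection $(f_y)_{y\in X_{\supseteq x}}\mapsto f_x$ gives a group homomorphism $\smp{(\topl{\calF})}(x)=\topl{\calF}(X_{\supseteq x})\to \calF(x)$ whose inverse sends $f\in\calF(x)$ to the tuple $(\res^{\calF}_{y\from x}f)_{y\in X_{\supseteq x}}$; this inverse really does land in $\topl{\calF}(X_{\supseteq x})$ by the transitivity \eqref{EQ:sheaf-cond} of the restriction maps. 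Compatibility with the simplicial restrictions of $\smp{(\topl{\calF})}$ and $\calF$ is  built into the definitions, and naturality in $\calF$ is trivial.

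The only step with any real content is constructing the natural isomorphism $\epsilon:\mathrm{id}_{\Sh(\topl{X})}\Rightarrow {\topl{(-)}}\circ{\smp{(-)}}$, and the main obstacle is to verify that the sheaf axiom (S3) for $\calG$ applied to the cover $\{X_{\supseteq x}\}_{x\in U}$ of an open $U\subseteq \topl{X}$ reduces exactly to the compatibility condition defining $\topl{(\smp{\calG})}(U)$, namely that $g_x|_{X_{\supseteq y}}=g_y$ for all $x,y\in U$ with $x\subsetneq y$. The reduction has already been carried out informally before the theorem: any intersection $X_{\supseteq x}\cap X_{\supseteq y}$ is either empty or of the form $X_{\supseteq x\cup y}$, and compatibility of a family $(g_x)_{x\in U}$ on all such intersections is equivalent, by taking $z=x\cup y$ and then restricting further via \eqref{EQ:sheaf-cond} applied in $\calG$, to the $x\subsetneq y$ condition. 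Once this is formalized, (S3) provides a bijection $\calG(U)\to \topl{(\smp{\calG})}(U)$ sending $g$ to $(g|_{X_{\supseteq x}})_{x\in U}$; its inverse is the unique gluing. Compatibility of these bijections with restriction of opens $V\subseteq U$ is immediate (both sides truncate the tuple), as is naturality in $\calG$, which finishes the proof.
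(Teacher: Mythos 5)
Your proposal is correct and follows essentially the same route as the paper's own proof: the same functor extensions on morphisms, the same projection/gluing isomorphism $\smp{(\topl{\calF})}(x)\cong\calF(x)$, and the same use of axiom (S3) on the cover $\{X_{\supseteq x}\}_{x\in U}$ together with the reduction of intersection-compatibility to the $x\subsetneq y$ condition worked out before the theorem. No gaps.
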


\begin{proof}[Proof (sketch)]
	The extension of $\calG\mapsto \smp{\calG}$ (resp.\ $\calF\mapsto\topl{\calF}$)
	to a functor is straightforward, but we include it for the sake of completeness.
	Given a morphism $\vphi:\calG_1\to\calG_2$ between two sheaves
	on $\topl{X}$, define $\smp{\vphi}:\smp{\calG_1}\to \smp{\calG_2}$
	by $\smp{\vphi}_x=\vphi_{X_{\supseteq x}}$ for all $x\in X-\{\emptyset\}$.
	Given a morphism $\psi:\calF_1\to\calF_2$ between two sheaves
	on $X$, define $\topl{\psi}:\topl{\calF_1}\to\topl{\calF_2}$
	by $\topl{\psi}_U((f_x)_{x\in U})=(\psi_x f_x)_{x\in U}$ for all open
	$U\subseteq \topl{X}$.
	We leave it to the reader to check that these constructions
	determine functors $(-)\mapsto \smp{(-)}:\Sh(\topl{X})\to \Sh(X)$
	and $(-)\mapsto \topl{(-)}:\Sh(X)\to\Sh(\topl{X})$,
	and proceed with showing   that these functors   are  
	inverse to each other up to natural isomorphism.
	
	Given a sheaf $\calF$   on $X$ and $x\in X-\{\emptyset\}$, observe
	that 
	\[\smp{(\topl{\calF})}(x)=\topl{\calF}(X_{\supseteq x})=
	\{(f_y)_{y}\in \prod_{y\in X_{\supseteq x}}\calF(y) \suchthat 
	\text{$f_x|_y=f_y$ for all $y\in X_{\supseteq x}-\{x\}$}\} . \]
	Define $\psi_{\calF,x}:\smp{(\topl{\calF})}(x)\to \calF(x)$
	by $\psi_{\calF,x}((f_y)_{y\supseteq x})=f_x$.
	It is routine to check that $\psi_{\calF}:=\{\psi_{\calF,x}\}_{x\in X-\{\emptyset\}}$
	is a sheaf morphism from $\smp{(\topl{\calF})} $ to $\calF$, with inverse
	given by $\psi^{-1}_{\calF,x}(f_x)=(f_x|_y)_{y\in X_{\supseteq x}}$.
	Moreover, it is straightforward to check that $\psi_{\calF}:\smp{(\topl{\calF})}
	\to \calF$ is natural in $\calF$.
	
	Next, let $\calG$ be a sheaf on $\topl{X}$. Then for any
	open $U\subseteq \topl{X}$, we have
	\[
	\topl{(\smp{\calG})}(U)=
	\{(g_x)_x\in \prod_{x\in U}\calG(X_{\supseteq x})
	\suchthat 
	\text{$g_x|_{X_{\supseteq y}}=g_y$ for all $y\in X_{\supseteq x}$}\}.
	\]
	Using this, 
	define $\vphi_{\calG,U}:\calG(U)\to \topl{(\smp{\calG})}(U)$
	by $\vphi_{\calG,U}(f)=(f|_{X_{\supseteq x}})_{x\in U}$.
	It is straightforward to check that $\vphi_{\calG}:=\{\vphi_{\calG,U}\}_{\text{$U$ open in 
	$\topl{X}$}}$ defines a morphism of sheaves from $\calG$ to $\topl{(\smp{\calG})}$.
	Moreover,
	we observed earlier that condition \ref{item:sheaf-topl:gluing} (and the nullity
	of $\calG(\emptyset)$, see Remark~\ref{RM:sheaf-on-emptyset})
	implies that each $\psi_{\calG,U}$ is bijective, so $\psi_{\calG}$ is a sheaf
	isomorphism. Checking that $\psi_{\calG}: \calG\to \topl{(\smp{\calG})}$
	is natural in $\calG$ is routine.  This completes the proof.
\end{proof}

\begin{remark}
	A similar argument shows that categories
	of sheaves of $\F$-vector spaces (resp.\
	groups, rings, sets, etc.) over $X$ and   $\topl{X}$ are equivalent.\footnote{
		In fact, it is enough to show this for set-valued sheaves, since
		all other types of sheaves can be defined internally within
		the \emph{topoi}  of sheaves on $X$ and $\topl{X}$.	
	}
\end{remark}

\subsection{Comparing Additional Structure: Pullback, Pushforward and Cohomology}
\label{subsec:comparision-of-structure}

Keep the notation of \S\ref{subsec:sheaves-as-ordinary-sheaves}.
Having  identified sheaves on $X$ with sheaves on $\topl{X}$,
we turn to show that this identification respects pullback, pushforward and cohomology.
Thus, the theory of sheaves on simplicial complexes introduced in Section~\ref{sec:sheaves}
is really a special case of the theory of sheaves on a topological space,
which can be described in a more elementary way using the combinatorics of the simplicial complex at hand.

\medskip

We begin with the following lemma.

\begin{lem}
	Let $f:Y\to X$ be a morphism of simplicial complexes (see~\S\ref{subsec:complexes}),
	and
	let $\topl{f}$ denote
	the induced map $f:Y-\{\emptyset\}\to X-\{\emptyset\}$.
	Then $\topl{f}:\topl{Y}\to \topl{X}$
	is continuous.
\end{lem}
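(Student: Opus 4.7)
The plan is to verify continuity directly from the definition of the simplicial topology. Since the collection $\{X_{\supseteq x} : x \in X - \{\emptyset\}\}$ forms a basis for $\topl{X}$, it suffices to prove that $(\topl{f})^{-1}(U)$ is simplicially open in $\topl{Y}$ for every simplicially open $U \subseteq \topl{X}$, or equivalently (and a bit more cleanly) for every basic open $U = X_{\supseteq x}$.

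I would unwind the definitions and use the fact that $f$, being a morphism of simplicial complexes, is monotone with respect to face inclusion: if $y \subseteq y'$ are nonempty faces of $Y$, then, regarding faces as vertex sets, $f(y) = \{f(v) : v \in y\} \subseteq \{f(v) : v \in y'\} = f(y')$. Given a simplicially open $U \subseteq \topl{X}$, I take $y \in (\topl{f})^{-1}(U)$, meaning $f(y) \in U$, and any $y' \in Y - \{\emptyset\}$ with $y \subseteq y'$. Monotonicity gives $f(y) \subseteq f(y')$, and the assumption that $U$ is simplicially open (so $f(y) \in U$ forces $X_{\supseteq f(y)} \subseteq U$, hence $f(y') \in U$) yields $y' \in (\topl{f})^{-1}(U)$. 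Thus $(\topl{f})^{-1}(U)$ is simplicially open, completing the proof.

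There is no real obstacle here; the only thing to be careful about is that the morphism $f$ is defined on the vertex set and then extended to faces by image, so the inclusion $f(y) \subseteq f(y')$ for $y \subseteq y'$ is automatic from this definition. The proof is essentially one paragraph, and no additional machinery from the paper is needed beyond the definitions of $\topl{X}$ (from \S\ref{subsec:sheaves-as-ordinary-sheaves}) and of a morphism of simplicial complexes (from \S\ref{subsec:complexes}).
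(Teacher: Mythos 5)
Your proof is correct and is essentially the same elementary definitional check as the paper's: the paper computes $(\topl{f})^{-1}(X_{\supseteq x})=\bigcup_{y\in f^{-1}(x)}Y_{\supseteq y}$ directly, while you verify that preimages of simplicially open sets are upward-closed using the monotonicity $y\subseteq y'\Rightarrow f(y)\subseteq f(y')$; these are two phrasings of the same one-line argument.
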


\begin{proof}
	Let $x\in X-\{\emptyset\}$. Then $(\topl{f})^{-1}(X_{\supseteq x})
	=\cup_{y\in f^{-1}(x)}Y_{\supseteq y}$, which is open in $\topl{Y}$.
	Since the sets $\{X_{\supseteq x}\where x\in X-\{\emptyset\}\}$
	form a basis to $\topl{X}$, this means that $\topl{f}$ 
	is continuous.  
\end{proof}

Let $Y$ and $Y'$ be topological spaces and let $u:Y'\to Y$
be a continuous map.
Given a sheaf $\calG'$ on $Y'$, recall that the \emph{pushforward} of $\calG'$
along $u$ is the sheaf $u_*\calG'$ determined by 
\[u_*\calG'(U)=\calG'(u^{-1}(U))
\qquad\text{and}\qquad
\res^{u_*\calG'}_{V\from U} = \res^{\calG'}_{u^{-1}(V)\from u^{-1}(U)}
\]
for all $V\subseteq U$ open in $Y$.\footnote{
	Recommended exercise for beginners: check that $u_*\calG'$ is a sheaf on $Y$.
}
The counterpart of this construction is the \emph{pullback},
which takes a sheaf  $\calG$ on $Y$  and produces
a sheaf $u^*\calG$ on $Y'$. In contrast with pullback of sheaves
on simplicial complexes (see \S\ref{subsec:pushforward}), the construction of $u^*\calG$ is somewhat more involved
and can be found in \cite[II.\S4]{Iversen_1986_cohomology_of_sheaves}
or \cite[Tag \href{https://stacks.math.columbia.edu/tag/008C}{008C}]{DeJong_2020_stacks_project}, for instance.
The functor   $u^*:\Sh(Y)\to \Sh(Y')$ can be implicitly
defined  as the \emph{left adjoint} of $u_*:\Sh(Y')\to\Sh(Y)$.

Under the equivalence of Theorem~\ref{TH:sheaf-category-equiv},
pushforward and pullback of sheaves on simplicial complexes
 corresponds
to  pushforward and pullback of sheaves on the associated topological spaces.
Formally:

\begin{thm}\label{TH:pullback-comparison}
	Let $u:Y\to X$ be a morphism of simplicial complexes, let $\calF$
	be a sheaf on $X$ and let $\calG$ be a sheaf on $Y$. Then:
	\begin{enumerate}[label=(\roman*)]
		\item If $u$ is dimension-preserving (see~\S\ref{subsec:complexes}),
		then there is a natural isomorphism $\topl{(u_*\calG)}\cong (\topl{u})_*\topl{\calG}$.
		\item 
		There is a natural isomorphism $\topl{(u^*\calF)}\cong (\topl{u})^*\topl{\calF}$.
	\end{enumerate}
\end{thm}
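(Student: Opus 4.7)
The plan is to leverage the equivalence of categories $\Sh(X)\simeq \Sh(\topl{X})$ from Theorem~\ref{TH:sheaf-category-equiv}, under which $\topl{(-)}$ is inverse to $\smp{(-)}$. Since $\topl{(-)}$ is an equivalence, establishing the natural isomorphisms in (i) and (ii) is the same as exhibiting natural isomorphisms $u_*\calG \cong \smp{((\topl{u})_*\topl{\calG})}$ and $u^*\calF \cong \smp{((\topl{u})^*\topl{\calF})}$ on the simplicial side.

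For (i), I would compute directly. Given $x\in X-\{\emptyset\}$, one has
\[
\smp{((\topl{u})_*\topl{\calG})}(x)
= (\topl{u})_*\topl{\calG}(X_{\supseteq x})
= \topl{\calG}(U_x),
\qquad
U_x := \{y\in Y-\{\emptyset\} \suchthat u(y)\supseteq x\}.
\]
An element of $\topl{\calG}(U_x)$ is a compatible family $(g_y)_{y\in U_x}$ with $g_y\in \calG(y)$. Because $u$ is dimension-preserving, each $y\in U_x$ contains a unique face $y(x)\subseteq y$ with $u(y(x))=x$, and compatibility forces $g_y = \res^\calG_{y\from y(x)}(g_{y(x)})$. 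Hence the family is determined by its values on $u^{-1}(x)$, giving a bijection $\topl{\calG}(U_x)\cong \prod_{z\in u^{-1}(x)}\calG(z)=(u_*\calG)(x)$. Checking that these bijections intertwine the restriction maps along $x\subsetneq x'$ is a direct bookkeeping exercise using the same minimality argument, and naturality in $\calG$ is automatic.

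For (ii), direct verification is obstructed by the fact that $(\topl{u})^*$ is defined not by a pointwise formula but as the left adjoint of $(\topl{u})_*$. The plan is therefore to invoke uniqueness of adjoints. First, extend the simplicial pushforward to arbitrary simplicial morphisms by defining
\[
(u_*\calG)(x) := \{(g_y)_{y: u(y)\supseteq x} \suchthat \res^\calG_{y'\from y}g_y=g_{y'}~\text{for all}~y\subsetneq y'~\text{with}~u(y)\supseteq x\},
\]
with the evident restriction maps; part~(i)'s argument, with its use of $y(x)$ replaced by the tautological definition, shows $\topl{(u_*\calG)}\cong (\topl{u})_*\topl{\calG}$ for all (not necessarily dimension-preserving) $u$, and recovers the definition of \S\ref{subsec:pushforward} when $u$ preserves dimension. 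Next, I would verify the simplicial adjunction
\[
\Hom_{\Sh(Y)}(u^*\calF,\calG) \cong \Hom_{\Sh(X)}(\calF, u_*\calG),
\]
by sending $\phi=(\phi_y)_y$ to the morphism $\psi$ with $\psi_x(f)=(\phi_y(\res^\calF_{u(y)\from x}f))_{y:u(y)\supseteq x}$, and conversely sending $\psi$ to $\phi_y(f):=\psi_{u(y)}(f)_y$ for $f\in\calF(u(y))=u^*\calF(y)$; compatibility of $\phi$ with restriction, i.e.\ $\res^\calG_{y'\from y}\phi_y=\phi_{y'}\res^\calF_{u(y')\from u(y)}$, corresponds exactly to compatibility of $\psi$ with restriction in $X$, so these assignments are mutually inverse and natural. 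Since $(\topl{u})^*$ is left adjoint to $(\topl{u})_*$ by construction, and the equivalence $\Sh(X)\simeq\Sh(\topl{X})$ transports $u_*$ to $(\topl{u})_*$ by the extended version of~(i), uniqueness of left adjoints (up to natural isomorphism) yields (ii).

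The main obstacle is part (ii): the absence of an explicit combinatorial formula for $(\topl{u})^*$ on the open sets $X_{\supseteq x}$ prevents the direct pointwise comparison that worked for (i). Sidestepping this via adjunction is clean, but it requires first promoting the simplicial pushforward to arbitrary morphisms and then carefully unpacking the adjunction bijection; the latter is where most of the diagrammatic verification will live.
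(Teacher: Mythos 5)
Your part (i) is essentially the paper's argument, just phrased on the basis opens $X_{\supseteq x}$ rather than on an arbitrary simplicially open $U$; the dimension-preserving hypothesis enters at the same point (uniqueness of $y(x)$) and the rest is the same bookkeeping.

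Your part (ii) genuinely diverges. The paper proves it by unfolding the definition of $(\topl{u})^*\topl{\calF}$ as the sheafification of a presheaf $\calP$, exploiting the combinatorial feature that in $\topl{X}$ every subset $T$ has a smallest open set $T^\wedge$ containing it (so $\calP$ has a closed-form description), and then verifying that the comparison map $\calP\to\topl{(u^*\calF)}$ becomes an isomorphism on stalks. You instead bypass sheafification entirely: you promote the simplicial pushforward to arbitrary $u$ via the explicit formula over $\{y : u(y)\supseteq x\}$ (which is isomorphic to what the paper records in the Remark following the theorem), extend the conclusion of (i) to this generality, verify directly the simplicial adjunction $\Hom_{\Sh(Y)}(u^*\calF,\calG)\cong\Hom_{\Sh(X)}(\calF,u_*\calG)$, and conclude by uniqueness of left adjoints transported through the equivalences $\Sh(X)\simeq\Sh(\topl X)$ and $\Sh(Y)\simeq\Sh(\topl Y)$. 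I checked the adjunction unit/counit you describe, including the edge case $u(y)=u(y')$ with $y\subsetneq y'$, which is handled by the inner compatibility condition in the extended $u_*\calG(x)$; it goes through. What the paper's approach buys is avoiding any verification of the simplicial adjunction (it never proves $u^*\dashv u_*$ on the simplicial side) at the cost of working with sheafification and stalks; what your approach buys is a purely combinatorial/diagrammatic argument that never invokes sheafification, at the cost of the adjunction check and the slightly delicate extension of (i) to non-dimension-preserving $u$. Both are correct; be sure in a written-out version to actually carry out the restriction-map and naturality checks you defer, since the outer compatibility for the extended $u_*$ is exactly where your ``tautological definition'' claim needs care.
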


\begin{proof}
	(i) We will actually show that $\topl{(u_*\calG)}= (\topl{u})_*\topl{\calG}$.
	Let $U\subseteq \topl{X}$ be an open subset.
	Then 
	\[(\topl{u})_*\topl{\calG}(U)=\topl{\calG}(u^{-1}(U))
	=\{(g_y)_{y\in u^{-1}(U)}\in \prod_{y\in u^{-1}(U)}\calG(y) \suchthat 
	\text{$\res^{\calG}_{y'\from y} g_y=g_{y'}$ whenever $y\subsetneq y'$}\}.\]
	On the other hand, 
	\[
	\topl{(u_*\calG)}(U) = \{(\tilde{g}_x)_{x\in U}\in \prod_{x\in U} u_*\calG(x)
	\suchthat
	\text{$\res^{u_*\calG}_{x'\from x} \tilde{g}_x = \tilde{g}_{x'}$
	whenever $x\subsetneq x'$}\}.
	\]
	Recall from \S\ref{subsec:pushforward} that
	$u^*\calG(x)=\prod_{y\in u^{-1}(x)} \calG(y)$,
	so each $\tilde{g}_x$ is a collection $(g_y)_{y\in u^{-1}( x )}$
	with $g_y\in \calG(y)$ for all $y$.
	Moreover, for all $x\subsetneq x'$ in $U$, we have
	\[
	\res^{u_*\calG}_{x'\from x} \tilde{g}_x = 
	\res^{u_*\calG}_{x'\from x}((g_y)_{y\in u^{-1}( x )})
	= (\res^{\calG}_{y'\from y'(x)}g_{y'(x)})_{y'\in u^{-1}(x')},
	\]
	where $y'(x)$ denotes the unique face of $y'$ mapping to $x$ (it is unique
	because $u$ is dimension-preserving).
	Thus, the condition $\res^{u_*\calG}_{x'\from x} \tilde{g}_x = \tilde{g}_{x'}$
	is equivalent to having $\res^{\calG}_{y'\from y }g_y = g_{y'}$
	for all $y'\in u^{-1}(x')$ and $y\in u^{-1}(x)$ with $y\subsetneq y'$.
	Now, identifying $(\tilde{g}_{x})_{x\in U}=((g_y)_{y\in u^{-1}(x)})_{x\in U}$
	with $(g_y)_{y\in u^{-1}(U)}$, we see that
	$\topl{(u_*\calG)}(U) = (\topl{u})_*\topl{\calG}(U)$.
	
	A similar argument shows that for every $V\subseteq U$ open in $\topl{X}$, we have
	$\res^{\topl{(u_*\calG)}}_{V\from U} = \res^{(\topl{u})_*\topl{\calG}}_{V\from U}$,
	so $\topl{(u_*\calG)}= (\topl{u})_*\topl{\calG}$. That this isomorphism is natural 
	in $\calG$ is routine.
	
	(ii) Unfortunately, we shall need to unfold the definition of the pullback $(\topl{u})^*\topl{\calF}$.
	We use the definition
	in \cite[Tag \href{https://stacks.math.columbia.edu/tag/008C}{008C}]{DeJong_2020_stacks_project},
	which makes use of \emph{presheaves}\footnote{
		Presheaves on  a topological space are defined   like sheaves, but without the requiring condition
		\ref{item:sheaf-topl:gluing}.
	}, \emph{sheafification} and \emph{stalks}; 
	see
	\cite[Tag \href{https://stacks.math.columbia.edu/tag/006A}{006A}]{DeJong_2020_stacks_project} 
	for details.
	Recall that for a sheaf $\calH$ on $\topl{X}$,
	the pullback $(\topl{u})^*\calH$ is the sheafification of the presheaf $\calP$ on $Y$ given
	by
	$\calP(U)=\dirlim_{V\supseteq u(U)} \calH(V)$, where $V$ ranges over the open subsets of
	$\topl{X}$ containing $u(U)$. Fortunately, in our situation, every subset
	$T\subseteq \topl{X}$ admits a minimal open subset  containing it,
	namely, $T^\wedge := \bigcup_{x\in T} X_{\supseteq x}$.
	The definition of the presheaf $\calP $ therefore simplifies to 
	$\calP(U)=\calH(u(U)^\wedge)$. Taking $\calH=\topl{\calF}$, we find that
	$(\topl{u})^*\topl{\calF}$ is the sheafification of the presheaf $\calP$ on $Y$ given by
	\[
	\calP(U) =\topl{\calF}(u(U)^\wedge)
	=\{(f_x)_{x}\in \prod_{x\in u(U)^\wedge}\calF(x)
	\suchthat
	\text{$\res^{\calF}_{x'\from x} f_x =f_{x'}$ whenever $x\subsetneq x'$}\}
	\]
	and $\res^{\calP}_{V\from U} ((f_x)_{x\in u(U)^\wedge})=
	(f_x)_{x\in u(V)^\wedge}$ for all $V\subseteq U$ open in $\topl{Y}$. 
	On the other hand, 
	by unfolding the definitions, we find that
	\[
	(u^*\calF)^\circ(U)=\{(f_y)_y\in \prod_{y\in U}\calF(u(y))\suchthat
	\text{$\res^{\calF}_{u(y')\from u(y)} f_y=f_{y'}$
	whenever $y\subsetneq y'$}\}.
	\]
	Define $\vphi_U:\calP(U)\to (u^*\calF)^\circ(U)$ by 
	$\vphi_U((f_x)_{x\in u(U)^\wedge})=(f_{u(y)})_{y\in U}$.
	It is routine to check that this is well-defined and that $\vphi=(\vphi_U)_{U}$
	is a morphism of presheaves from $\calP$ to $(u^*\calF)^\circ$.
	
	By the universal property of sheafification
	\cite[Tag \href{https://stacks.math.columbia.edu/tag/0080}{0080}]{DeJong_2020_stacks_project}, $\vphi$
	determines a sheaf morphism $\vphi^{\rm a}$ from $(\topl{u})^*\topl{\calF}$
	(the sheafification of $\calP$) to $(u^*\calF)^\circ$.
	Moreover, in order to show that $\vphi^{\rm a}$ is an isomorphism,
	it is enough to check that $\vphi:\calP\to (u^*\calF)^\circ$
	induces an isomorphism at the stalks 
	\cite[Tags \href{https://stacks.math.columbia.edu/tag/007Z}{007Z},
	\href{https://stacks.math.columbia.edu/tag/007T}{007T}]{DeJong_2020_stacks_project}.
	Recall that if $\calH$ is a presheaf on $Y$, then the stalk of $\calH$
	at $y\in \calH$ is $\calH_y:=\dirlim_{V\ni y} \calH(V)$ where
	$V$ ranges over the open sets containing $y$.
	In our situation, there is a unique minimal open subset of $\topl{Y}$
	containing $y$, namely $Y_{\supseteq y}$, so the stalk $\calH_y$
	is just $\calH(Y_{\supseteq y})$. We are therefore reduced
	to showing that $\vphi_{Y_{\supseteq y}}: \calP(Y_{\supseteq y})\to (u^*\calF)^\circ(Y_{\supseteq y})$
	is an isomorphism
	for all $y\in Y$. Write $x=u(y)$. Then $u(Y_{\supseteq y})^\wedge = X_{\supseteq x}$.
	It is   routine to check that $(f_{y'})_{y'\in Y_{\supseteq y}}\mapsto 
	(\res^{\calF}_{x'\from x}f_y)_{x'\in X_{\supseteq x}}$ defines
	an inverse to $\vphi_{Y_{\supseteq y}}$. This shows that 
	$\vphi^{\rm a}: (\topl{u})^*\topl{\calF}\to (u^*\calF)^\circ$
	is a sheaf isomorphism.
	
	One readily checks that the formation of $\calP$ is funtorial in $\calF$
	and that $\vphi:\calP\to (u^*\calF)^\circ$ is natural
	in $\calF$, so $\vphi^{\rm a}: (\topl{u})^*\topl{\calF}\to (u^*\calF)^\circ$ is
	also natural in $\calF$.
\end{proof}

\begin{remark}
	Theorem~\ref{TH:pullback-comparison}(i) suggests a way
	to define the pushforward of a sheaf on a simplicial complex along an 
	\emph{arbitrary} morphism
	of simplicial complexes. Specifically, if $u:Y\to X$ is such a morphism
	and $\calG$ is a sheaf on $Y$, define $u_*\calG$ to be
	$\smp{((\topl{u})_*\topl{G})}$. (This is conceptually correct because
	$u_*:\Sh(Y)\to\Sh(X)$ is a right adjoint of $u^*:\Sh(X)\to\Sh(Y)$.) Unfolding this definition, 
	we find that  for $x\in X-\{\emptyset\}$,  we have
	\[
	u_*\calG(x)=\{(f_y)_{y}\in \prod_{y\in u^{-1}(x)^{\wedge}}  \calG(y)
	\suchthat
	\text{$\res^{\calG}_{y'\from y} f_y = f_{y'}$ whenver
	$y\subsetneq y'$}\},
	\]
	where $u^{-1}(x)^{\wedge}=\bigcup_{y\in u^{-1}(x)} Y_{\supseteq y}$,
	and the restriction maps $\res^{u_*\calG}_{x'\from x}:u_*\calG(x)\to u_*\calG(x')$
	are given by forgetting  coordinates.
	It is not difficult to check that 
	$(f_y)_{y\in u^{-1}(x)^{\wedge}}\mapsto (f_y)_{y\in u^{-1}(x)}$
	defines an isomorphism from $u_*\calG(x)$
	to 
	\[
	\{(f_y)_{y}\in \prod_{y\in u^{-1}(x) }   \calG(y)
	\suchthat
	\text{$\res^{\calG}_{y'\from y} f_y = f_{y'}$ whenver
	$y\subsetneq y'$}\},
	\]
	%cf.\ \S\ref{subsec:sheaves-as-ordinary-sheaves},
	so we may take the latter as the definition of $u^*\calG(x)$. The restriction
	maps are then   given by $\res^{u_*\calG}_{x'\from x}(f_y)_{y\in \pi^{-1}(x)}
	=(\res_{y'\from y'(x)}f_{y'(x)})_{y'\in \pi^{-1}(x')}$,
	where $y'(x)$ is an arbitrary face of $y'$ mapping to $x$ (its choice is inconsequential).
	When $u:Y\to X$ is dimension preserving, no   face  in $u^{-1}(x)$
	contains another such face, so $u^*\calG(x)=\prod_{y\in u^{-1}(x)} \calG(y)$
	and we recover  the definition
	of the pullback given in \S\ref{subsec:pushforward}.
\end{remark}

	Recall that if $Y$ is a topological space
	and $\calG$ is a sheaf on $Y$, then we write $\Gamma(Y,\calG)$ for $Y(\calG)$,
	the group of    global sections  of $\calG$.
	Letting $\calG$ vary, the 
	assignment $\Gamma(Y,-)$ defines a left exact functor from $\Sh(Y)$ to abelian groups,
	and  its right derived functors are denoted $\{\HH^i(Y,-)\}_{i\geq 0}$.
	The group  $\HH^i(Y,\calG)$ is the \emph{$i$-th cohomology group} of the sheaf $\calG$;
	see \cite[II.\S3]{Iversen_1986_cohomology_of_sheaves} for further details.
	For example, $\HH^0(Y,\calG)$ is just $\Gamma(Y,\calG)=\calG(Y)$.

	We finish this section by showing that  the equivalence of Theorem~\ref{TH:sheaf-category-equiv} 
	is compatible with taking cohomology.
	More precisely:
	
	\begin{thm}\label{TH:comparison-of-cohs}
		Let $X$ be a simplicial complex.
		\begin{enumerate}[label=(\roman*)]
			\item For every sheaf $\calF$ on $X$ and $i\geq 0$, there is an
			isomorphism $\HH^i(X,\calF)\cong \HH^i(\topl{X},\topl{\calF})$
			natural in $\calF$.
			\item If $0\to \calF \to \calF' \to \calF''\to 0$
			is a short exact sequence of sheaves on $X$,
			then $0\to\topl{\calF}\to\topl{\calF'}\to \topl{\calF''}\to 0$
			is a short exact sequence of sheaves on $\topl{X}$, and
			there is a commutative diagram
			\[
			\xymatrix{
			\cdots \ar[r] &
			\HH^i(X,\calF) \ar[r] \ar[d]
			&
			\HH^i(X,\calF') \ar[r] \ar[d]
			&
			\HH^i(X,\calF'') \ar[r] \ar[d]
			&
			\HH^{i+1}(X,\calF) \ar[r] \ar[d]
			&
			\cdots \\
			\cdots \ar[r] &
			\HH^i(\topl{X},\topl{\calF}) \ar[r] 
			&
			\HH^i(\topl{X}, \topl{\calF'}) \ar[r]
			&
			\HH^i(\topl{X}, \topl{\calF''}) \ar[r]
			&
			\HH^{i+1}(\topl{X},\topl{\calF}) \ar[r] 
			&
			\cdots
			}
			\]		
			in which the rows are the long cohomology exact sequences associated
			to 	$0\to \calF \to \calF' \to \calF''\to 0$
			(see \S\ref{subsec:sheaf-coh}) and $0\to\topl{\calF}\to\topl{\calF'}\to \topl{\calF''}\to 0$,
			and the vertical maps are isomorphism  from (i).
		\end{enumerate}
	\end{thm}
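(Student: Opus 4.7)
The plan is to reduce everything to standard derived-functor machinery, using the equivalence of categories from Theorem~\ref{TH:sheaf-category-equiv} and the identification of $\HH^i(X,-)$ as a right derived functor supplied by Appendix~\ref{sec:derived}. The technical ingredients needed are: an identification of $\HH^0$ on both sides, exactness of the functor $(-)^{\circ}$, and preservation of injectives.

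First I would verify part (i) for $i=0$. By definition, $\Gamma(\topl{X},\topl{\calF}) = \topl{\calF}(\topl{X})$ consists of compatible families $(f_x)_{x \in X-\{\emptyset\}}$ with $f_x \in \calF(x)$ and $\res^{\calF}_{y\from x}f_x = f_y$ whenever $x \subsetneq y$. Such a family is determined by its restriction to $X(0)$, since $f_y = \res^{\calF}_{y\from v}f_v$ for any vertex $v \subseteq y$; conversely, any family $(f_v)_{v \in X(0)}$ satisfying the $0$-cocycle condition $\res^{\calF}_{\{u,v\}\from u}f_u = \res^{\calF}_{\{u,v\}\from v}f_v$ extends uniquely. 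This gives a natural isomorphism $\Gamma(\topl{X},\topl{\calF}) \cong Z^0(X,\calF) = \HH^0(X,\calF)$ (where the last equality uses $B^0(X,\calF)=0$ for non-augmented sheaves).

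Next I would leverage the equivalence of categories $(-)^{\circ}:\Sh(X) \xrightarrow{\sim} \Sh(\topl{X})$ from Theorem~\ref{TH:sheaf-category-equiv}. Being an equivalence of abelian categories, $(-)^{\circ}$ is exact and preserves injective objects. By Appendix~\ref{sec:derived}, the family $\{\HH^i(X,-)\}_{i \geq 0}$ is the sequence of right derived functors of $\HH^0(X,-):\Sh(X) \to \catGrp$. On the other hand, the identification from the previous paragraph says that $\HH^0(X,-) = \Gamma(\topl{X},-) \circ (-)^{\circ}$. Since $(-)^{\circ}$ is exact and sends injectives to injectives, a Grothendieck composition-of-derived-functors argument (or directly applying the universal property of derived functors on the equivalent category $\Sh(\topl{X})$) yields natural isomorphisms
\[
\HH^i(X,\calF) \;=\; R^i\HH^0(X,-)(\calF) \;\cong\; R^i\Gamma(\topl{X},-)(\topl{\calF}) \;=\; \HH^i(\topl{X},\topl{\calF}),
\]
proving (i). Naturality in $\calF$ is automatic from the construction.

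For part (ii), I would first observe that $(-)^{\circ}$, being exact, sends the short exact sequence $0 \to \calF \to \calF' \to \calF'' \to 0$ to a short exact sequence of sheaves on $\topl{X}$. The commutativity of the long exact sequence diagram then reduces to the statement that the isomorphisms of (i) intertwine the two connecting homomorphisms. This is a general property of any natural isomorphism between two cohomological $\delta$-functors that agrees in degree $0$: once the isomorphism is identified via the universal property of derived functors (as in the previous paragraph), it is automatically compatible with the $\delta$-maps, since these maps are also part of the derived-functor structure. The main obstacle I anticipate is bookkeeping: one should verify carefully that the connecting homomorphism $\delta_i$ defined combinatorially in \S\ref{subsec:sheaf-coh} indeed corresponds, under the isomorphism of (i), to the $\delta$-functor connecting map produced by the derived-functor formalism on $\Sh(\topl{X})$. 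This can be done either by tracing through the snake-lemma construction explicitly for the normalized cochain complex (which, via Appendix~\ref{sec:derived}, computes $\HH^i(X,\calF)$ as an instance of derived functors), or more elegantly by invoking the uniqueness clause of the universal $\delta$-functor property and checking agreement only in degree $0$, where it was verified above.
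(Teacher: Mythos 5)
Your proposal is correct and follows essentially the same route as the paper: identify $\HH^0(X,-)$ with $\Gamma(\topl{X},(-)^{\circ})$ via the explicit compatible-family computation, use the fact that the equivalence of Theorem~\ref{TH:sheaf-category-equiv} is exact (and carries injectives to injectives) so that $\{\HH^i(\topl{X},(-)^{\circ})\}_i$ are the right derived functors of $\HH^0(X,-)$, and conclude by the uniqueness of (universal $\delta$-)derived functors, which also handles the compatibility of the long exact sequences in (ii). The paper's proof is exactly this reduction, with the degree-zero identification isolated as a separate lemma.
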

		
	\begin{proof}
		The categories $\Sh(X)$ and $\Sh(\topl{X})$
		are abelian, so the equivalence $\calF\mapsto \topl{\calF}:\Sh(X)\to \Sh(\topl{X})$
		of Theorem~\ref{TH:sheaf-category-equiv} is necessarily exact.
		This shows that the sequence
		$0\to\topl{\calF}\to\topl{\calF'}\to \topl{\calF''}\to 0$
		in (ii) is exact.
		The equivalence  also implies that the right derived
		functors of $\calF\mapsto \Gamma(\topl{X},\topl{\calF})$
		from $\Sh(X)$ to $\mathrm{Ab}$ --- the category of abelian groups ---
		are $\{\calF\mapsto \HH^i(\topl{X},\topl{\calF})\}_{i\geq 0}$.
		We will show in Appendix~\ref{sec:derived} that the functors
		$\{\HH^i(X,-)\}_{i\geq 0}$ are    right derived functors
		of $\HH^0(X,-):\Sh(X)\to \mathrm{Ab}$. Since derived functors are unique
		up to a natural isomorphism, the theorem will follow
		if we show that $\HH^0(X,\calF)$
		is naturally isomorphic to $\Gamma(\topl{X},\topl{\calF})=\topl{\calF}(\topl{X})$.
		This is the content of the following lemma. 
	\end{proof}
	
	\begin{lem}
		Let $X$ be a simplicial complex. 
		There is a natural isomorphism
		$\HH^0(X,\calF)\cong \topl{\calF}(\topl{X})$.
	\end{lem}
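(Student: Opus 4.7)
The plan is to write down an explicit map $\Phi_{\calF} : \topl{\calF}(\topl{X}) \to \HH^0(X,\calF)$ by restriction to vertices, and verify that it is a natural isomorphism. Recall that $\HH^0(X,\calF) = Z^0(X,\calF)$ consists of families $(g_v)_{v \in X(0)}$ with $\calF$-sections at each vertex satisfying $\res^\calF_{e\from u} g_u = \res^\calF_{e\from v} g_v$ for every edge $e = \{u,v\}$, while by the computation in \S\ref{subsec:sheaves-as-ordinary-sheaves},
\[
\topl{\calF}(\topl{X}) = \{(f_x)_{x \in X-\{\emptyset\}} \in {\textstyle \prod_{x \in X-\{\emptyset\}}} \calF(x) \suchthat \res^\calF_{y\from x} f_x = f_y \text{ whenever } \emptyset\neq x\subsetneq y\}.
\]

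I will define $\Phi_{\calF}$ by forgetting all coordinates except the ones indexed by vertices: $\Phi_{\calF}((f_x)_x) = (f_v)_{v \in X(0)}$. The target lies in $Z^0(X,\calF)$ because if $e = \{u,v\} \in X(1)$, then the compatibility condition on $(f_x)_x$ gives $\res^\calF_{e\from u}f_u = f_e = \res^\calF_{e\from v}f_v$. Naturality of $\Phi_{\calF}$ in $\calF$ is immediate from how morphisms of sheaves on $X$ act on sections.

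For injectivity, if $\Phi_{\calF}((f_x)_x) = 0$, then $f_v = 0$ for every vertex $v$; given any nonempty $x \in X$, pick a vertex $v \subseteq x$ to get $f_x = \res^\calF_{x\from v} f_v = 0$. For surjectivity, given $(g_v)_v \in Z^0(X,\calF)$, I will construct $(f_x)_x \in \topl{\calF}(\topl{X})$ as follows: for $x \in X - \{\emptyset\}$, pick any vertex $v \subseteq x$ and set $f_x = \res^\calF_{x\from v} g_v$. The key point is that this is independent of the choice of $v$: for two vertices $v,v' \subseteq x$, put $e = \{v,v'\}$; the sheaf condition \eqref{EQ:sheaf-cond} gives $\res^\calF_{x\from v} g_v = \res^\calF_{x\from e} \res^\calF_{e\from v} g_v$ and similarly with $v'$ in place of $v$, and the cocycle condition on $(g_v)_v$ makes these agree. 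The compatibility $\res^\calF_{y\from x} f_x = f_y$ for $x \subsetneq y$ then follows by picking a common vertex $v \subseteq x \subseteq y$ and applying \eqref{EQ:sheaf-cond} once more. Clearly $\Phi_{\calF}((f_x)_x) = (g_v)_v$, so $\Phi_{\calF}$ is onto.

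There is no substantial obstacle here; the entire argument is a direct unfolding of definitions, and the only thing one must be careful about is that the cocycle condition on edges propagates to consistency on higher faces, which is precisely what the sheaf condition \eqref{EQ:sheaf-cond} is designed to provide.
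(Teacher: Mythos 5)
Your proof is correct and follows essentially the same route as the paper's: both define the isomorphism by forgetting all coordinates except those at vertices, and both invert it by sending $(g_v)_v$ to $(\res^{\calF}_{x\from v(x)}g_{v(x)})_x$ for an arbitrary choice of vertex $v(x)\subseteq x$, with well-definedness secured by combining the cocycle condition on edges with \eqref{EQ:sheaf-cond}. The only cosmetic difference is that you phrase invertibility as injectivity plus surjectivity rather than exhibiting the inverse map directly, but the content is identical.
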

	
	\begin{proof}
		Unfolding the definitions, we find that
		\[\topl{\calF}(\topl{X})=\{(f_x)_x\in \prod_{x\in X-\{\emptyset\}} \calF(x)
		\suchthat
		\text{$\res^{\calF}_{y\from x}f_x =f_y$ whenever $x\subsetneq y$}\},\]
		whereas
		\begin{align*}
		\HH^0(X,\calF) = \{(f_v)_{v\in X(0)} \in \prod_{v\in X(0)} \calF(v)
		\suchthat~ & 
		\text{$\res^{\calF}_{e\from v} f_v = \res^\calF_{e\from u} f_u$} 
		\\
		& \text{for all $u,v\in X(0)$ with $e=u\cup v\in X(1)$}\}.
		\end{align*}
		Define $\vphi_{\calF}:\topl{\calF}(\topl{X})
		\to \HH^0(X,\calF)$ by $\vphi_{\calF}((f_x)_{x\in X-\{\emptyset\}})=(f_v)_{v\in X(0)}$.
		It is routine to check that $\vphi_{\calF}$ is well-defined 
		and natural in $\calF$.
		
		To see that $\vphi_\calF$ is invertible, observe that
		if  $(f_v)_{v\in X(0)}\in \HH^0(X,\calF) $, $x\in X-\{\emptyset\}$
		and $u,v$ are two $0$-faces of $x$, then $f_v|_x= f_u|_x$.
		Indeed, this is clear if $u=v$, and otherwise, we  have
		$f_v|_{u\cup v} =f_u|_{u\cup v}$ and applying $\res^{\calF}_{x\from u\cup v}$
		to both sides gives the desired equality.
		This allows us to define $\psi_{\calF}:\HH^0(X,\calF)\to \topl{\calF}(\topl{X})$
		by $\psi_{\calF}((f_v)_{v\in X(0)})=(f_{v(x)}|_x)_{x\in X-\{\emptyset\}}$,
		where $v(x)$ is an arbitrary $0$-face of $x$. It is straightforward to
		check that $\psi_{\calF}$ is  defines an inverse to $\vphi_{\calF}$,
		so  
		$\vphi_{\calF}$ is a   isomorphism.
	\end{proof}

\subsection{Aside: Augmented Sheaves as Sheaves on Topological Spaces}

	We finish with explaining how some of the results in 
	\S\ref{subsec:sheaves-as-ordinary-sheaves} 
	and \S\ref{subsec:comparision-of-structure} may be adapted 
	to    augmented sheaves.
	
	Let $X$ be a simplicial complex.
	We let $\topl{X}_+$ denote the set $X$ together with the topology
	consisting of all simplicially
	open subsets $U\subseteq X-\{\emptyset\}$ and   the set $X$.
	As in \S\ref{subsec:sheaves-as-ordinary-sheaves}, given a sheaf
	$\calG$ on $\topl{X}_+$, we can define an \emph{augmented} sheaf
	$\smp{\calG}$ on $X$ by setting  $\smp{\calG}(x)=\calG(X_{\supseteq x})$
	and
	$
	\res^{\smp{\calG}}_{y\from x}=\res^{\calG}_{X_{\supseteq y}\from X_{\supseteq x}}
	$; mind that $x$ is allowed to be the empty face. Conversely, an augmented sheaf
	$\calF$ on $X$ gives rise to a sheaf $\topl{\calF}$ on $\topl{X}_+$
	defined using the same formulas as in the non-augmented sheaf case.
	The same argument as  in the proof of Theorem~\ref{TH:sheaf-category-equiv}
	shows that $\calG\mapsto  \smp{\calG}$ 
	defines  an equivalence of categories  
	from $\Sh(\topl{X}_+)$ to the category of augmented sheaves on $X$,
	and   $\calF\mapsto \topl{\calF}$ is its inverse up to natural isomorphism.
	Thus, augmented sheaves on the simplicial complex $X$ are essentially the same thing as sheaves
	on the topological space $\topl{X}_+$.
	
	However, in contrast with the non-augmented sheaf case,
	the equivalence     between augmented sheaves on $X$
	and sheaves on $\topl{X}_+$ does not respect cohomology.
	Rather, the dimensions are shifted by $1$, i.e., 
	there is a natural isomorphism $\HH^{i-1}(X,\calF)\cong \HH^{i}(\topl{X}_+,\topl{\calF})$
	for every augmented sheaf $\calF$ and every $i\in\N\cup \{0\}$.
	This can be shown as in the proof of Theorem~\ref{TH:comparison-of-cohs},
	except now one has to establish a natural isomorphism
	$\HH^{-1}(X,\calF)\cong \Gamma(\topl{X}_+,\topl{\calF})$
	and show that $\HH^i(X,-)$ is the $(i+1)$-th right derived functor of $\HH^{-1}(X,-)$.
	
	Finally, while we have not defined in \S\ref{subsec:pushforward} the pushforward and pullback
	of augmented sheaves on simplicial complexes, the equivalence with $\Sh(\topl{X}_+)$
	suggests a way one might   define them. That is, given a morphism of simplicial
	complexes $u:Y\to X$, an augmented sheaf $\calF$ on $X$ and an augmented sheaf
	$\calG$ on $Y$, let $u^*\calF=\smp{((\topl{u}_+)^*\topl{\calF})}$
	and $u_*\calG=\smp{((\topl{u}_+)_*\topl{\calG})}$, where $\topl{u}_+$
	is just $u$ viewed as a continuous function from $\topl{Y}_+ $
	to $\topl{X}_+ $. We leave it to the reader to work out what $u^*\calF$
	and $u_*\calG$ turn out to be. Beware, however,  that
	these constructions may   present exceptional behavior over the empty face.
	For example, if $u:Y\to X$ is a covering of degree $n$, 
	and $\aug{\F}$ denotes the constant
	augmented sheaf on $Y$ associated to a field $\F$, then $u_*(\aug{\F})(\emptyset)=\F$
	while $u_*(\aug{\F})(x)\cong \F^n$ for all $x\in X-\{\emptyset\}$.
	(The conceptual reason for this is that   $\topl{u}_+:\topl{Y}_+\to \topl{X}_+$
	is generally not a degree-$n$ covering of topological spaces.)

\section{Sheaf Cohomology is a Right Derived Functor}
\label{sec:derived}

Throughout, $X$ is a possibly-infinite simplicial complex. Recall that $\Sh(X)$ denotes the category
of sheaves on $X$ and let $\mathrm{Ab}$ denote the category 
of abelian groups. 
Then $\HH^0(X,-)$ defines a left exact functor
from $\Sh(X)$ to $\mathrm{Ab}$. The purpose of this appendix is to prove
that the higher  cohomology groups $\HH^i(X,-)$
defined in  \S\ref{subsec:sheaf-coh} are the   \emph{right derived functors}
of $\HH^0(X,-)$. 
In particular, the category $\Sh(X)$ has \emph{enough injectives}  so that 
the right derived functors of $\HH^0(X,-)$ are defined.
The necessary material about derived
functors can be found in \cite{Iversen_1986_cohomology_of_sheaves}
and \cite[Tag \href{https://stacks.math.columbia.edu/tag/010P}{010P}]{DeJong_2020_stacks_project}, for instance.

\medskip

We begin by  introducing the following construction.

\begin{construction}\label{CN:skyscraper}
Let $x\in X-\{\emptyset\}$
and let $A$ be an abelian group. Define a sheaf $A_x=A_{X,x}$ on $X$ by
\[
A_x(y) =\left\{\begin{array}{ll}
A & y\subseteq x \\
0 & y\nsubseteq x
\end{array}\right.,
\qquad
\res^{A_x}_{z\from y} =
\left\{\begin{array}{ll}
\id_A & z\subseteq x \\
0 & z\nsubseteq x
\end{array}\right.
\]
for all $y,z\in X-\{\emptyset\}$ with $y\subsetneq z$.
\end{construction}

\begin{remark}
	Under the equivalence of Theorem~\ref{TH:sheaf-category-equiv},
	the sheaf $A_x$   corresponds to a \emph{skyscraper} sheaf.
\end{remark}

\begin{lem}\label{LM:skyscrapter-coh}
	With notation as in Construction~\ref{CN:skyscraper}, we have $\HH^i(X,A_x)=0$
	for all $i\geq 1$.
\end{lem}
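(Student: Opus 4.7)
The plan is to identify the cochain complex $C^\bullet(X, A_x)$ with the cochain complex of the constant sheaf $A$ on the subsimplicial complex
\[
\Delta := \{y \in X - \{\emptyset\} \suchthat y \subseteq x\},
\]
which is isomorphic to the full simplex on the vertex set of $x$. The cohomology of $A_x$ will then reduce to the cohomology of a contractible topological space.

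First I would observe that since $A_x(y) = 0$ whenever $y \nsubseteq x$, only coordinates indexed by $y \in \Delta_{\ord}(i)$ contribute to $\tilde{C}^i(X, A_x) = \prod_{y \in X_{\ord}(i)} A_x(y)$. Extending cochains by zero therefore gives an $\F_2$-linear (or abelian group) bijection
\[
\iota_i : C^i(\Delta, A) \xrightarrow{\ \sim\ } C^i(X, A_x)
\]
that respects the antisymmetry condition on both sides. Next I would verify that these bijections commute with the coboundary maps. For $y \in X_{\ord}(i+1)$ with $y \nsubseteq x$, both $(d_i \iota_i f)(y)$ and $\iota_{i+1}(d_i f)(y)$ are zero, using $A_x(y) = 0$ on the left and the convention that $f$ vanishes outside $\Delta$ on the right. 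For $y \subseteq x$, every face $y_j$ lies in $\Delta$ and the restriction map $\res^{A_x}_{y \from y_j}$ equals $\id_A$, so the formula for $d_i$ on $C^\bullet(X, A_x)$ specializes to the coboundary formula for the constant sheaf $A$ on $\Delta$. Thus $\iota_\bullet$ is an isomorphism of cochain complexes, and consequently $\HH^i(X, A_x) \cong \HH^i(\Delta, A)$ for every $i \geq 0$.

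To finish, I invoke Corollary~\ref{CR:cohomology-comparison} to identify $\HH^i(\Delta, A)$ with the singular cohomology $\HH^i(|\Delta|, A)$ of the geometric realization. Because $\Delta$ consists of all nonempty subsets of the vertex set of $x$, its realization $|\Delta|$ is the standard topological $n$-simplex with $n = \dim x$, which is contractible; its singular cohomology with coefficients in $A$ therefore vanishes in all positive degrees. Combining this with the isomorphism of the previous paragraph gives $\HH^i(X, A_x) = 0$ for all $i \geq 1$, as desired. There is no real obstacle here: the argument is a direct bookkeeping of definitions followed by a standard topological fact, and the only point requiring care is verifying that the identification of cochain complexes handles both the ``$y \subseteq x$'' and ``$y \nsubseteq x$'' cases of the coboundary formula correctly.
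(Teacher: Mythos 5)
Your proof is correct and follows essentially the same route as the paper: the paper's proof says ``we may forget about all faces of $X$ not contained in $x$'' and then applies Corollary~\ref{CR:cohomology-comparison} to the resulting simplex, which is exactly the reduction you carry out (with the cochain-level verification spelled out in more detail). No issues.
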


\begin{proof}
	Let $d=\dim x$.
	We many forget about all faces in $X$ not contained in $x$,
	and thus assume that $X$ has   a single $d$-face $x$,
	and $A_x$ is the constant sheaf $A$. 
	In this case, the topological realization $|X|$ of $X$
	is contractible, so by Corollary~\ref{CR:cohomology-comparison},
	$\HH^i(X,A)=\HH^i(|X|,A)=0$ for $i\geq 1$.
\end{proof}

\begin{lem}\label{LM:maps-to-skyscraper}
	Let $X,x,A$ be as in Construction \ref{CN:skyscraper}
	and let $\calF$ be any sheaf on $X$.
	There is a natural (in $\calF$ and $A$) bijection between
	$\Hom_{\Sh(X)}(\calF,A_{ x})$ and $\Hom_{\mathrm{Ab}}(\calF(x),A)$
	given by $\vphi\mapsto \vphi_x$.
\end{lem}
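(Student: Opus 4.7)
The plan is to show that the map $\vphi \mapsto \vphi_x$ from $\Hom_{\Sh(X)}(\calF, A_x)$ to $\Hom_{\mathrm{Ab}}(\calF(x), A)$ is a bijection by explicitly describing the inverse, and then remark that naturality is automatic from the construction.

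First, I would establish injectivity by showing that a morphism $\vphi : \calF \to A_x$ is completely determined by $\vphi_x$. For any $y \in X - \{\emptyset\}$ with $y \nsubseteq x$, the target $A_x(y)$ is zero, so $\vphi_y = 0$ is forced. For $y \subsetneq x$, the compatibility condition $\res^{A_x}_{x \from y} \circ \vphi_y = \vphi_x \circ \res^{\calF}_{x \from y}$, combined with $\res^{A_x}_{x \from y} = \id_A$, gives
\[
\vphi_y = \vphi_x \circ \res^{\calF}_{x \from y}.
\]
In particular $\vphi$ is fully recovered from $\vphi_x$, so the evaluation map is injective.

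For surjectivity, given an arbitrary $\psi \in \Hom_{\mathrm{Ab}}(\calF(x), A)$, I would define $\vphi_y := \psi \circ \res^{\calF}_{x \from y}$ for every $y \subseteq x$ (with the convention $\res^{\calF}_{x \from x} = \id_{\calF(x)}$), and $\vphi_y := 0$ otherwise. The only work is checking the sheaf-morphism compatibility $\vphi_z \circ \res^{\calF}_{z \from y} = \res^{A_x}_{z \from y} \circ \vphi_y$ for every $\emptyset \neq y \subsetneq z$. If $z \subseteq x$, both sides equal $\psi \circ \res^{\calF}_{x \from y}$ using the functoriality $\res^{\calF}_{x \from z} \circ \res^{\calF}_{z \from y} = \res^{\calF}_{x \from y}$ of $\calF$. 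If $z \nsubseteq x$, then $\res^{A_x}_{z \from y} = 0$ and $\vphi_z = 0$, so both sides vanish. This $\vphi$ visibly satisfies $\vphi_x = \psi$.

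Finally, naturality in $\calF$ and $A$ is a direct check: a morphism of sheaves $\calF \to \calF'$ (resp.\ an abelian group homomorphism $A \to A'$) commutes with $\res^{\calF}_{x \from y}$ (resp.\ with $\psi$) by definition, so both sides of the alleged natural bijection transform identically. There is no serious obstacle here; the argument is purely a verification that the very special shape of $A_x$ reduces the data of a sheaf morphism into it to a single group homomorphism at the face $x$.
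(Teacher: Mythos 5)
Your proposal is correct and follows essentially the same route as the paper's proof: injectivity via the forced formula $\vphi_y=\vphi_x\circ\res^{\calF}_{x\from y}$ (and $\vphi_y=0$ when $y\nsubseteq x$), surjectivity via the same explicit construction, and naturality as a routine check. The only difference is that you spell out the compatibility verification for the constructed morphism, which the paper leaves as "routine."
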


\begin{proof}
	Let us first show that $\vphi\mapsto \vphi_x: \Hom_{\Sh(X)}(\calF,A_{ x})
	\to \Hom_{\mathrm{Ab}}(\calF(x),A)$ is injective.
	Let $\psi:\calF\to A_x$ be another morphism with $\psi_x=\vphi_x$,
	and let $y\in X-\{\emptyset\}$.
	If $y\nsubseteq x$, then we must have $\psi_y=0=\vphi_x$, because $A_x(y)=0$.
	On the other hand, if $y\subseteq x$, then
	$\psi_y=\res^{A_x}_{x\from y}\circ \psi_y=\psi_x\circ \res^{\calF}_{x\from y}
	=\vphi_x\circ \res^{\calF}_{x\from y}=\res^{A_x}_{x\from y}\circ \vphi_y=
	\vphi_y$  (with $\res^{\calF}_{x\from x}$ being $\id_{\calF(x)}$). We conclude that $\psi=\vphi$.
	
	Conversely, given an abelian group homomorphism $\vphi_0:\calF(x)\to A$,
	we can define a morphism $\vphi:\calF\to A_x$ by
	setting $\vphi_y=0$ if $y\nsubseteq x$ and $\vphi_y = \vphi_0\circ \res^{\calF}_{x\from y}$
	if $y\subseteq x$ (with $\res^{\calF}_{x\from x}$ being $\id_{\calF(x)}$).
	It is routine to check that $\vphi$ is indeed a sheaf morphism 
	and $\vphi_x=\vphi_0$, so the map in the lemma is   onto.
	
	That $\vphi\mapsto \vphi_x$ is natural in $\calF$ and $A$ is   straightforward.
\end{proof}

We can now prove the following key lemma.
 
\begin{lem}\label{LM:coh-is-effaceable}
	Let $\calF$ be a sheaf on $X$. Then there exists 
	a sheaf $\calG$ on $X$ and a monomorphism $j:\calF\to \calG$
	such that $\HH^i(X,\calG)=0$ for all $i\geq 1$. Moreover, $\calG$
	can be taken to be injective in $\Sh(X)$.
\end{lem}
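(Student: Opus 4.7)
The plan is to embed $\calF$ into a product of skyscraper sheaves built from injective abelian groups, using Lemmas~\ref{LM:skyscrapter-coh} and~\ref{LM:maps-to-skyscraper} as the two key tools.

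First, I would use the well-known fact that $\mathrm{Ab}$ has enough injectives: for every $x \in X-\{\emptyset\}$ pick an embedding $\iota_x:\calF(x)\hookrightarrow I_x$ into an injective abelian group $I_x$. By Lemma~\ref{LM:maps-to-skyscraper} applied to $A=I_x$, the map $\iota_x$ corresponds to a sheaf morphism $j_x:\calF\to (I_x)_x$ whose component at $x$ is exactly $\iota_x$. Then I would set
\[
\calG=\prod_{x\in X-\{\emptyset\}}(I_x)_x,\qquad j=(j_x)_x:\calF\to\calG,
\]
where the product is formed componentwise in $\Sh(X)$ (it exists since products in $\Sh(X)$ are just componentwise products on each face, because limits commute with limits).

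Next, I would verify the three claimed properties. For the \emph{monomorphism} claim: at each face $y$, the map $j_y:\calF(y)\to\calG(y)=\prod_{x\supseteq y}I_x$ projects onto the $y$-factor as $\iota_y:\calF(y)\hookrightarrow I_y$, hence $j_y$ is injective, hence $j$ is a monomorphism in $\Sh(X)$. For the \emph{vanishing cohomology} claim: the cochain group $C^i(X,-)$ is (a subgroup cut out by equivariance conditions in) $\prod_{x\in X_{\ord}(i)}(-)(x)$, and both products and the symmetry-cutting operation commute with products of sheaves; since $\ker$ and $\im$ also commute with products in $\mathrm{Ab}$, we get $\HH^i(X,\prod_\alpha \calF_\alpha)\cong \prod_\alpha\HH^i(X,\calF_\alpha)$. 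Combined with Lemma~\ref{LM:skyscrapter-coh}, this yields $\HH^i(X,\calG)=0$ for $i\geq 1$.

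Finally, for \emph{injectivity} of $\calG$: by Lemma~\ref{LM:maps-to-skyscraper}, $\Hom_{\Sh(X)}(-,(I_x)_x)$ is naturally isomorphic to the composition of the evaluation functor $\calF\mapsto\calF(x)$ with $\Hom_{\mathrm{Ab}}(-,I_x)$. Evaluation at $x$ is exact on $\Sh(X)$ (because a sequence of sheaves on $X$ is exact iff it is exact on each face), and $\Hom_{\mathrm{Ab}}(-,I_x)$ is exact since $I_x$ is injective in $\mathrm{Ab}$; therefore $(I_x)_x$ is injective in $\Sh(X)$. Products of injective objects are injective in any abelian category, so $\calG$ is injective as required. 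The only step that needs any genuine care is the commutation of $\HH^i$ with products, and that reduces to the elementary fact that products are exact in $\mathrm{Ab}$; there is no real obstacle here beyond bookkeeping.
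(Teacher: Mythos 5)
Your proposal is correct and follows essentially the same route as the paper: embed $\calF$ into a product of skyscraper sheaves $\prod_x (I_x)_x$ via Lemma~\ref{LM:maps-to-skyscraper}, then invoke Lemma~\ref{LM:skyscrapter-coh} together with the fact that cohomology commutes with products. The only cosmetic difference is that you build $\calG$ from injective abelian groups $I_x$ in a single step, whereas the paper first uses $A=\calF(x)$ to get the vanishing-cohomology version and then swaps in injective envelopes $E^{(x)}$ to secure injectivity; you also spell out the injectivity-of-$(I_x)_x$ argument (composition of evaluation with $\Hom_{\mathrm{Ab}}(-,I_x)$) that the paper only asserts ``readily.''
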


\begin{proof}
	Let $x\in X-\{\emptyset\}$. By Lemma~\ref{LM:maps-to-skyscraper}
	(applied with $A=\calF(x)$),
	the identity map $\id:\calF(x)\to \calF(x)$
	gives rise to  a sheaf morphism $j^{(x)}:\calF\to \calF(x)_x$.
	Let $\calG=\prod_{x\in X-\{\emptyset\}}\calF(x)_x$ (note that the product may be infinite).
	Then the morphisms $\{j^{(x)}\}_{x\in X-\{\emptyset\}}$
	determine a morphism $j:\calF\to \calG$
	given by $j_y(f)=(j^{(x)}_y(f))_{x\in X-\{\emptyset\}}$.
	Since the the $y$-component of $j_y(f)$ is just $f$, we have $\ker j=0$.
	Moreover, by Lemma~\ref{LM:skyscrapter-coh},
	$\HH^i(X,\calF(x)_x)=0$ for all $i\geq 1$,
	which means that the cochain complex
	$C^\bullet(X,\calF(x)_x)$ is exact in degrees $ \geq 1$.
	Since $C^\bullet(X,\calG)$ is the product of the cochain
	complexes $\{C^\bullet(X,\calF(x)_x)\}_{x\in X-\{\emptyset\}}$,
	it is also exact in degrees $ \geq 1$, and we conclude that
	$\HH^i(X,\calG)=0$ for $i\geq 1$.
	
	In order to choose $\calG$ that is also injective, for every $x\in X-\{\emptyset\}$,
	choose an embedding $i_x:\calF(x)\to E^{(x)}$ of $\calF(x)$
	into an injective $\Z$-module $E^{(x)}$, and use the $i_x$ to construct
	the $j^{(x)}:\calF\to (E^{(x)})_x$ and $j:\calF\to \calG:=\prod_x (E^{(x)})_x$.
	Lemma~\ref{LM:maps-to-skyscraper} implies readily that each of the
	sheaves $(E^{(x)})_x$ is injective, and therefore, so is $\calG$.
	%The universal property of the product and Lemma~\ref{LM:maps-to-skyscraper}
	%imply that $\calG$ is an injective object in $\Sh(X)$.
\end{proof}

\begin{thm}\label{TH:cohomology-is-derived-func}
Let $X$ be a (possibly infinite) simplicial complex. Then:
\begin{enumerate}[label=(\roman*)]
	\item The   abelian category $\Sh(X)$ has enough injectives.
	\item The functor  $ \HH^i(X,-) :\Sh(X)\to\mathrm{Ab}$
	of \S\ref{subsec:sheaf-coh} is the $i$-th right derived
	functor of $\HH^0(X,-)$.
\end{enumerate}
\end{thm}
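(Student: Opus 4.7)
The plan is to observe that Lemma~\ref{LM:coh-is-effaceable} (more precisely, its last paragraph) already does essentially all of the work; the theorem reduces to standard homological-algebra packaging, so only the interface with derived functors needs to be set up carefully.

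First I would dispatch part (i) immediately: the last paragraph of the proof of Lemma~\ref{LM:coh-is-effaceable} constructs, for every sheaf $\calF$, a monomorphism $j:\calF\to \calG$ where $\calG=\prod_{x\in X-\{\emptyset\}}(E^{(x)})_x$ with $E^{(x)}$ an injective $\Z$-module containing $\calF(x)$. By Lemma~\ref{LM:maps-to-skyscraper}, the functor $\Hom_{\Sh(X)}(-,A_x)$ is naturally isomorphic to $\Hom_{\mathrm{Ab}}(-(x),A)$, and since the evaluation functor $\calF\mapsto \calF(x)$ is exact (kernels, images and cokernels are computed face-by-face), $A_x$ is injective in $\Sh(X)$ whenever $A$ is injective in $\mathrm{Ab}$. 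Products of injectives are injective, so $\calG$ is injective, proving (i).

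For (ii), the plan is to invoke the universality of effaceable cohomological $\delta$-functors. The ingredients are: $\HH^0(X,-)$ is left exact (immediate from the definition $\HH^0(X,\calF)=\ker d_0$ and the face-wise exactness mentioned above); the family $\{\HH^i(X,-)\}_{i\geq 0}$ is a cohomological $\delta$-functor, since for any short exact sequence $0\to \calF\to \calG\to \calH\to 0$ the cochain complexes fit in a short exact sequence $0\to C^\bullet(X,\calF)\to C^\bullet(X,\calG)\to C^\bullet(X,\calH)\to 0$ of ordinary cochain complexes of abelian groups (again by face-wise exactness), giving the long exact sequence \eqref{EQ:long-coh-exact-seq} with connecting maps $\delta_i$ natural in the short exact sequence by the snake-lemma recipe. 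Finally, effaceability in positive degrees is Lemma~\ref{LM:coh-is-effaceable}: every $\calF$ embeds in some $\calG$ with $\HH^i(X,\calG)=0$ for $i\geq 1$, so every class in $\HH^i(X,\calF)$ with $i\geq 1$ is killed by a monomorphism.

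From here the argument is completely formal. One invokes the standard theorem (see, e.g., \cite[Tag \href{https://stacks.math.columbia.edu/tag/010P}{010P}]{DeJong_2020_stacks_project}) that an effaceable cohomological $\delta$-functor $(T^i)$ agreeing with $(R^i T^0)$ in degree $0$ is canonically isomorphic to it. Applied to $T^i=\HH^i(X,-)$, this identifies $\HH^i(X,-)$ with $R^i\HH^0(X,-)$ as $\delta$-functors; in particular both sides can be computed by an injective resolution $0\to\calF\to\calI^\bullet$, which makes sense thanks to (i). Equivalently one could conclude by showing directly that injective sheaves have no higher cohomology, deducing this from Lemma~\ref{LM:coh-is-effaceable} and the fact that an embedding of an injective $\calI$ into $\calG$ with $\HH^{\geq 1}(X,\calG)=0$ splits, whence $\HH^i(X,\calI)$ is a direct summand of $0$.

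There is no serious obstacle; the only points that require a moment of care are the verification that the connecting maps $\delta_i$ in \eqref{EQ:long-coh-exact-seq} are natural in short exact sequences of sheaves (so that $\{\HH^i(X,-)\}$ is genuinely a $\delta$-functor, not merely a sequence of functors admitting long exact sequences) and the confirmation that skyscraper sheaves $A_x$ of Construction~\ref{CN:skyscraper} are injective — both of which follow by routine diagram chases from the exactness of evaluation at a face $y\in X-\{\emptyset\}$ and from Lemma~\ref{LM:maps-to-skyscraper}, respectively.
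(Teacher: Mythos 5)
Your proposal is correct and follows essentially the same route as the paper: part (i) via the injective version of Lemma~\ref{LM:coh-is-effaceable} (with the injectivity of the skyscraper sheaves coming from Lemma~\ref{LM:maps-to-skyscraper} and exactness of evaluation at a face), and part (ii) by exhibiting $\{\HH^i(X,-)\}_{i\geq 0}$ as an effaceable cohomological $\delta$-functor and invoking the uniqueness of universal $\delta$-functors, with effaceability supplied by Lemma~\ref{LM:coh-is-effaceable}. The extra details you supply (naturality of the connecting maps, the splitting argument showing injectives are acyclic) are correct refinements of the same argument.
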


\begin{proof} 
	(i) This is Lemma~\ref{LM:coh-is-effaceable}. Alternatively, 
	by Theorem~\ref{TH:sheaf-category-equiv}, 
	$\Sh(X)$ is equivalent to $\Sh(\topl{X})$ and the latter
	is well-known to have enough injectives \cite[II, Theorem~3.1]{Iversen_1986_cohomology_of_sheaves}. 
%	Alternatively,
%	for every $x\in X-\{\emptyset\}$, choose an mononmorphism $i_x:\calF(x)\to E^{(x)}$
%	with $E^{(x)}$ being an injective $\Z$-module, and let $j_x:\calF\to (E^{(x)})_x$
%	denote the sheaf morphism induced by Lemma~\ref{LM:maps-to-skyscraper}.
%	Then $\calE=\prod_{x\in X-\{\emptyset\}} (E^{(x)})_x$ is an injective
%	object of $\Sh(X)$ (use Lemma~\ref{LM:maps-to-skyscraper} and the injectivity of the $E^{(x)}$),
%	and the map $j:\calF\to \calE$ determined by the $j_x$ is injective.

	(ii) 
	The derived functors of $\HH^0(X,-)$ form a universal cohomological $\delta$-functor,
	and we observed in \S\ref{subsec:sheaf-coh}
	that  the    functors $\{H^i(X,-)\}_{i\geq 0}$ 
	form a   cohomological $\delta$-functor. Since universal $\delta$-functors
	are unique up to natural isomorphism, it is enough 
	to show that the $\{H^i(X,-)\}_{i\geq 0}$  are  universal. 
	By \cite[Tag \href{https://stacks.math.columbia.edu/tag/010P}{010T}]{DeJong_2020_stacks_project}, 
	this will follow if we show that every
	sheaf $\calF$ on $X$ embeds in a sheaf $\calG$ with $\HH^i(X,\calG)=0$
	for all $i\geq 1$, and that is exactly what Lemma~\ref{LM:coh-is-effaceable}
	tells us.
\end{proof}

\begin{remark}
	Let $R$ be a ring, and let $\Sh_R(X)$ denote
	the category of sheaves of left $R$-modules.
	The cohomology groups of a sheaf in $\Sh_R(X)$
	defined in \S\ref{subsec:sheaf-coh}
	are naturally left $R$-modules, so we may regard $\HH^i(X,-)$
	as a functor from $\Sh_R(X)$ to the category of left $R$-modules,
	denoted $\lMod{R}$.
	The  same argument as in the proof of  Theorem~\ref{TH:cohomology-is-derived-func}
	can be used to show  that $\HH^i(X,-):\Sh_R(X)\to \lMod{R}$
	is the $i$-th right derived functor of $\HH^0(X,-):\Sh_R(X)\to \lMod{R}$.
\end{remark}

\bibliographystyle{alpha}
\bibliography{MyBib_18_05}

\newcommand{\etalchar}[1]{$^{#1}$}
\def\cprime{$'$}
\begin{thebibliography}{DDHR20}

\bibitem[AB08]{Abramenko_2008_Buildings}
Peter Abramenko and Kenneth~S. Brown.
\newblock {\em Buildings}, volume 248 of {\em Graduate Texts in Mathematics}.
\newblock Springer, New York, 2008.
\newblock Theory and applications.

\bibitem[AN02]{Abramenko_2002_models_for_affine_buildings}
Peter Abramenko and Gabriele Nebe.
\newblock Lattice chain models for affine buildings of classical type.
\newblock {\em Math. Ann.}, 322(3):537--562, 2002.

\bibitem[AS76]{Abe_1976_normal_subgroups_Chevalley_groups}
Eiichi Abe and Kazuo Suzuki.
\newblock On normal subgroups of {C}hevalley groups over commutative rings.
\newblock {\em Tohoku Math. J. (2)}, 28(2):185--198, 1976.

\bibitem[Beh69]{Behr_1969_finite_generation_of_arith_grps_func_field}
Helmut Behr.
\newblock Endliche {E}rzeugbarkeit arithmetischer {G}ruppen \"{u}ber
  {F}unktionenk\"{o}rpern.
\newblock {\em Invent. Math.}, 7:1--32, 1969.

\bibitem[Beh87]{Behr_1987_finite_present_of_arith_grps_func_field}
Helmut Behr.
\newblock Finite presentability of arithmetic groups over global function
  fields.
\newblock volume~30, pages 23--39. 1987.
\newblock Groups---St. Andrews 1985.

\bibitem[BSGS03]{BenSasson_2003_bounds_2query_codes}
Eli Ben-Sasson, Oded Goldreich, and Madhu Sudan.
\newblock Bounds on 2-query codeword testing.
\newblock In {\em Approximation, randomization, and combinatorial
  optimization}, volume 2764 of {\em Lecture Notes in Comput. Sci.}, pages
  216--227. Springer, Berlin, 2003.

\bibitem[BSS08]{BenSasson_2008_short_PCPs_polylog_query}
Eli Ben-Sasson and Madhu Sudan.
\newblock Short {PCPS} with polylog query complexity.
\newblock {\em SIAM J. Comput.}, 38(2):551--607, 2008.

\bibitem[BT72]{Bruhat_1972_groupes_reductifs_corps_local_I}
F.~Bruhat and J.~Tits.
\newblock Groupes r\'{e}ductifs sur un corps local.
\newblock {\em Inst. Hautes \'{E}tudes Sci. Publ. Math.}, (41):5--251, 1972.

\bibitem[CS{\.Z}03]{Carwright_2003_Ramanujan_geometries_An}
Donald~I. Cartwright, Patrick Sol{\'e}, and Andrzej {\.Z}uk.
\newblock Ramanujan geometries of type {$\tilde{A}_n$}.
\newblock {\em Discrete Math.}, 269(1-3):35--43, 2003.

\bibitem[DDHR20]{Dikstein-LTC}
Yotam Dikstein, Irit Dinur, Prahladh Harsha, and Noga Ron{-}Zewi.
\newblock Locally testable codes via high-dimensional expanders.
\newblock {\em CoRR}, abs/2005.01045, 2020.

\bibitem[DEL{\etalchar{+}}21]{Dinur_2021_locally_testable_codes_preprint}
Irit Dinur, Shai Evra, Ron Livne, Alexander Lubotzky, and Shahar Mozes.
\newblock Locally testable codes with constant rate, distance, and locality,
  2021.
\newblock Preprint.

\bibitem[Din07]{Dinur_2007_PCP_theorem_gap_amplification}
Irit Dinur.
\newblock The {PCP} theorem by gap amplification.
\newblock {\em J. ACM}, 54(3):Art. 12, 44, 2007.

\bibitem[DKW18]{Dotterrer_2018_topological_overlap}
Dominic Dotterrer, Tali Kaufman, and Uli Wagner.
\newblock On expansion and topological overlap.
\newblock {\em Geom. Dedicata}, 195:307--317, 2018.

\bibitem[EK17]{Evra_2016_cosystolic_expanders_arxiv_version}
Shai Evra and Tali Kaufman.
\newblock Bounded degree cosystolic expanders of every dimension.
\newblock 2017.
\newblock Summary appeared in S{TOC}'16---{P}roceedings of the 48th {A}nnual
  {ACM} {SIGACT} {S}ymposium on {T}heory of {C}omputing.

\bibitem[Fir16]{First_2016_Ramanujan_propery_preprint_v3}
Uriya~A. First.
\newblock The ramanujan property for simplicial complexes, 2016.
\newblock Preprint. Available on https://arxiv.org/abs/1605.02664v3.

\bibitem[Fir22]{First_2021_congruence_subgroups_via_group_schemes_preprint}
Uriya~A. First.
\newblock Congruence subgroups: group schemes versus embeddings in
  {$\mathrm{GL}_n$}.
\newblock 2022.

\bibitem[FK21]{First_2021_weighted_mixing_lemmas_preprint}
Uriya~A. First and Tali Kaufman.
\newblock Weighted mixing lemmas and coboundary expansion of sheaves on graphs.
\newblock 2021.
\newblock Preprint.

\bibitem[Fri15]{Friedman_2015_sheaves_on_graphs}
Joel Friedman.
\newblock Sheaves on graphs, their homological invariants, and a proof of the
  {H}anna {N}eumann conjecture: with an appendix by {W}arren {D}icks.
\newblock {\em Mem. Amer. Math. Soc.}, 233(1100):xii+106, 2015.
\newblock With an appendix by Warren Dicks.

\bibitem[FS95]{Friedl_1995_total_degree_tests}
Katalin Friedl and Madhu Sudan.
\newblock Some improvements to total degree tests.
\newblock In {\em Third {I}srael {S}ymposium on the {T}heory of {C}omputing and
  {S}ystems ({T}el {A}viv, 1995)}, pages 190--198. IEEE Comput. Soc. Press, Los
  Alamitos, CA, 1995.

\bibitem[Gil09]{Gille_2009_Kneser_Tits_problem}
Philippe Gille.
\newblock Le probl\`eme de {K}neser-{T}its.
\newblock Number 326, pages Exp. No. 983, vii, 39--81 (2010). 2009.
\newblock S\'{e}minaire Bourbaki. Vol. 2007/2008.

\bibitem[Gol11]{Goldreich_2011_locally_testable_codes_proofs}
Oded Goldreich.
\newblock Short locally testable codes and proofs.
\newblock In {\em Studies in complexity and cryptography}, volume 6650 of {\em
  Lecture Notes in Comput. Sci.}, pages 333--372. Springer, Heidelberg, 2011.

\bibitem[Gro10]{Gromov_2010_expanders_and_top_II}
Mikhail Gromov.
\newblock Singularities, expanders and topology of maps. {P}art 2: {F}rom
  combinatorics to topology via algebraic isoperimetry.
\newblock {\em Geom. Funct. Anal.}, 20(2):416--526, 2010.

\bibitem[Har69]{Harder_1969_minkowskische_reduktionstheorie}
G.~Harder.
\newblock Minkowskische {R}eduktionstheorie \"{u}ber {F}unktionenk\"{o}rpern.
\newblock {\em Invent. Math.}, 7:33--54, 1969.

\bibitem[(ht]{SashaP_2022_MO_answer}
SashaP (https://mathoverflow.net/users/39304/sashap).
\newblock Examples of a group {$G$} and an {$F$}-representation {$V$} where
  {$\cup:H^1(G,F)\otimes H^1(G,V)\to H^2(G,V)$} is injective.
\newblock MathOverflow.
\newblock URL:https://mathoverflow.net/q/420945 (version: 2022-04-23).

\bibitem[HVZ13]{Hazrat_2013_relative_comm_Chevalley_grps}
Roozbeh Hazrat, Nikolai Vavilov, and Zuhong Zhang.
\newblock Relative commutator calculus in {C}hevalley groups.
\newblock {\em J. Algebra}, 385:262--293, 2013.

\bibitem[Ive86]{Iversen_1986_cohomology_of_sheaves}
Birger Iversen.
\newblock {\em Cohomology of sheaves}.
\newblock Universitext. Springer-Verlag, Berlin, 1986.

\bibitem[JL97]{Jordan_1997_Ramanujan_local_systems}
Bruce~W. Jordan and Ron Livn\'{e}.
\newblock Ramanujan local systems on graphs.
\newblock {\em Topology}, 36(5):1007--1024, 1997.

\bibitem[KKL16]{Kaufman_2016_isoperimetic_inequalities}
Tali Kaufman, David Kazhdan, and Alexander Lubotzky.
\newblock Isoperimetric inequalities for {R}amanujan complexes and topological
  expanders.
\newblock {\em Geom. Funct. Anal.}, 26(1):250--287, 2016.

\bibitem[KL14]{Kaufman_2014_high_dim_expanders_property_testing}
Tali Kaufman and Alexander Lubotzky.
\newblock High dimensional expanders and property testing.
\newblock In {\em I{TCS}'14---{P}roceedings of the 2014 {C}onference on
  {I}nnovations in {T}heoretical {C}omputer {S}cience}, pages 501--506. ACM,
  New York, 2014.

\bibitem[KM18]{Kaufman_2018_cosystolic_expanders}
Tali Kaufman and David Mass.
\newblock Cosystolic expanders over any abelian group.
\newblock {\em Electronic Colloquium on Computational Complexity (ECCC)},
  25:134, 2018.

\bibitem[KO18]{Kaufman_2018_local_spectral_high_dim_exps}
Tali Kaufman and Izhar Oppenheim.
\newblock Construction of new local spectral high dimensional expanders.
\newblock In {\em S{TOC}'18---{P}roceedings of the 50th {A}nnual {ACM} {SIGACT}
  {S}ymposium on {T}heory of {C}omputing}, pages 773--786. ACM, New York, 2018.

\bibitem[KO21]{Kaufman_2021_amplified_local_testability_preprint}
Tali Kaufman and Izhar Oppenheim.
\newblock High dimensional expansion implies amplified local testability.
\newblock {\em CoRR}, abs/2107.10488, 2021.

\bibitem[KR12]{KolR12}
Gillat Kol and Ran Raz.
\newblock Bounds on locally testable codes with unique tests.
\newblock In Shafi Goldwasser, editor, {\em Innovations in Theoretical Computer
  Science 2012, Cambridge, MA, USA, January 8-10, 2012}, pages 190--202. {ACM},
  2012.

\bibitem[KR16]{Kol_2016_bounds_two_query_LTCs}
Gillat Kol and Ran Raz.
\newblock Bounds on 2-query locally testable codes with affine tests.
\newblock {\em Inform. Process. Lett.}, 116(8):521--525, 2016.

\bibitem[KS08]{KaufmanS08-affine-LTC}
Tali Kaufman and Madhu Sudan.
\newblock Algebraic property testing: the role of invariance.
\newblock In Cynthia Dwork, editor, {\em Proceedings of the 40th Annual {ACM}
  Symposium on Theory of Computing, Victoria, British Columbia, Canada, May
  17-20, 2008}, pages 403--412. {ACM}, 2008.

\bibitem[Li04]{Li_2004_ramanujan_hypergraphs}
W.-C.~W. Li.
\newblock Ramanujan hypergraphs.
\newblock {\em Geom. Funct. Anal.}, 14(2):380--399, 2004.

\bibitem[LLZ21]{Leverrier_2021_LT_quantum_codes_preprint}
Anthony Leverrier, Vivien Londe, and Gilles Zémor.
\newblock Towards local testability for quantum coding.
\newblock 2021.
\newblock Preprint. arXiv:1911.03069.

\bibitem[LM06]{Linial_2006_homological_connectivity}
Nathan Linial and Roy Meshulam.
\newblock Homological connectivity of random 2-complexes.
\newblock {\em Combinatorica}, 26(4):475--487, 2006.

\bibitem[LMM16]{Lubotzky_2016_expansion_of_buildings}
Alexander Lubotzky, Roy Meshulam, and Shahar Mozes.
\newblock Expansion of building-like complexes.
\newblock {\em Groups Geom. Dyn.}, 10(1):155--175, 2016.

\bibitem[LSV05a]{Lubotzky_2005_explicit_constructions_of_Ramanujan_complexes}
Alexander Lubotzky, Beth Samuels, and Uzi Vishne.
\newblock Explicit constructions of {R}amanujan complexes of type
  {$\tilde{A}_d$}.
\newblock {\em European J. Combin.}, 26(6):965--993, 2005.

\bibitem[LSV05b]{Lubotzky_2005_Ramanujan_complexes_Ad}
Alexander Lubotzky, Beth Samuels, and Uzi Vishne.
\newblock Ramanujan complexes of type {$\tilde{A}_d$}.
\newblock {\em Israel J. Math.}, 149:267--299, 2005.
\newblock Probability in mathematics.

\bibitem[Mar77]{Margulis_1977_strong_apporximation}
G.~A. Margulis.
\newblock Cobounded subgroups in algebraic groups over local fields.
\newblock {\em Funkcional. Anal. i Prilo\v{z}en.}, 11(2):45--57, 95, 1977.

\bibitem[Mes18]{Meshulam_2018_graph_codes_arxiv_version}
Roy Meshulam.
\newblock Graph codes and local systems, 2018.
\newblock arXiv:1803.05643.

\bibitem[MLM94]{MacLane_1994_intro_to_topos_theory}
Saunders Mac~Lane and Ieke Moerdijk.
\newblock {\em Sheaves in geometry and logic}.
\newblock Universitext. Springer-Verlag, New York, 1994.
\newblock A first introduction to topos theory, Corrected reprint of the 1992
  edition.

\bibitem[MW09]{Meshulam_2009_homological_connectivity}
R.~Meshulam and N.~Wallach.
\newblock Homological connectivity of random {$k$}-dimensional complexes.
\newblock {\em Random Structures Algorithms}, 34(3):408--417, 2009.

\bibitem[Opp15]{Oppenheim_2015_vanishing_of_cohomology}
Izhar Oppenheim.
\newblock Vanishing of cohomology and property ({T}) for groups acting on
  weighted simplicial complexes.
\newblock {\em Groups Geom. Dyn.}, 9(1):67--101, 2015.

\bibitem[PK21]{Panteleev_2021_good_quantum_LTCs}
Pavel Panteleev and Gleb Kalachev.
\newblock Asymptotically good quantum and locally testable classical {LDPC}
  codes, 2021.
\newblock Preprint.

\bibitem[PR94]{Platonov_1994_algebraic_groups_and_number_theory}
Vladimir Platonov and Andrei Rapinchuk.
\newblock {\em Algebraic groups and number theory}, volume 139 of {\em Pure and
  Applied Mathematics}.
\newblock Academic Press, Inc., Boston, MA, 1994.
\newblock Translated from the 1991 Russian original by Rachel Rowen.

\bibitem[PR96]{Prasad_1996_metaplectic_kernel}
Gopal Prasad and Andrei~S. Rapinchuk.
\newblock Computation of the metaplectic kernel.
\newblock {\em Inst. Hautes \'{E}tudes Sci. Publ. Math.}, (84):91--187 (1997),
  1996.

\bibitem[PR10]{Prasad_2010_developments_on_CSP}
G.~Prasad and A.S. Rapinchuk.
\newblock Developments on the congruence subgroup problem after the work of
  bass, {M}ilnor and {S}erre.
\newblock In {\em Collected papers of {J}ohn {M}ilnor. {V}. {A}lgebra.}, pages
  307--326. American Mathematical Society, Providence, RI, 2010.
\newblock Edited by Hyman Bass and T. Y. Lam.

\bibitem[Pra77]{Prasad_1977_strong_approximation}
Gopal Prasad.
\newblock Strong approximation for semi-simple groups over function fields.
\newblock {\em Ann. of Math. (2)}, 105(3):553--572, 1977.

\bibitem[Pra82]{Prasad_1982_elementary_proof_BTR}
Gopal Prasad.
\newblock Elementary proof of a theorem of {B}ruhat-{T}its-{R}ousseau and of a
  theorem of {T}its.
\newblock {\em Bull. Soc. Math. France}, 110(2):197--202, 1982.

\bibitem[Rag76]{Raghunathan_1976_congruence_subgroup_problem}
Madabusi~S. Raghunathan.
\newblock On the congruence subgroup problem.
\newblock {\em Publications Math\'ematiques de l'IH\'ES}, 46:107--161, 1976.

\bibitem[Rag86]{Raghunathan_1986_congruence_subgroup_problem_II}
M.~S. Raghunathan.
\newblock On the congruence subgroup problem. {II}.
\newblock {\em Invent. Math.}, 85(1):73--117, 1986.

\bibitem[Rap06]{Rapinchuk_2006_Margulis_Platonov_conj_An_inner}
Andrei~S. Rapinchuk.
\newblock The {M}argulis-{P}latonov conjecture for {${\rm SL}_{1,D}$} and
  2-generation of finite simple groups.
\newblock {\em Math. Z.}, 252(2):295--313, 2006.

\bibitem[RS96]{Rubinfeld_1996_characterizations_of_polys}
Ronitt Rubinfeld and Madhu Sudan.
\newblock Robust characterizations of polynomials with applications to program
  testing.
\newblock {\em SIAM J. Comput.}, 25(2):252--271, 1996.

\bibitem[RS01]{Rapinchuk_2001_valuation_like_maps}
Andrei~S. Rapinchuk and Yoav Segev.
\newblock Valuation-like maps and the congruence subgroup property.
\newblock {\em Invent. Math.}, 144(3):571--607, 2001.

\bibitem[Sar07]{Sarveniazi_2007_explicit_Ramanujan_hypergraphs}
Alireza Sarveniazi.
\newblock Explicit construction of a {R}amanujan
  {$(n_1,n_2,\dots,n_{d-1})$}-regular hypergraph.
\newblock {\em Duke Math. J.}, 139(1):141--171, 2007.

\bibitem[SS96]{Sipser_1996_expander_codes}
Michael Sipser and Daniel~A. Spielman.
\newblock Expander codes.
\newblock volume~42, pages 1710--1722. 1996.
\newblock Codes and complexity.

\bibitem[{Sta}20]{DeJong_2020_stacks_project}
The {Stacks Project Authors}.
\newblock \textit{Stacks Project}.
\newblock \url{https://stacks.math.columbia.edu}, 2020.

\bibitem[Tit79]{Tits_1979_reductive_groups_over_local_fields}
Jacques Tits.
\newblock Reductive groups over local fields.
\newblock In {\em Automorphic forms, representations and {$L$}-functions
  ({P}roc. {S}ympos. {P}ure {M}ath., {O}regon {S}tate {U}niv., {C}orvallis,
  {O}re., 1977), {P}art 1}, Proc. Sympos. Pure Math., XXXIII, pages 29--69.
  Amer. Math. Soc., Providence, R.I., 1979.

\bibitem[Wat79]{Waterhouse_1979_intro_affine_group_schemes}
William~C. Waterhouse.
\newblock {\em Introduction to affine group schemes}, volume~66 of {\em
  Graduate Texts in Mathematics}.
\newblock Springer-Verlag, New York-Berlin, 1979.

\end{thebibliography}

%%% Erase from here

%\bibliographystyle{alpha}
%\bibliography{MyBib_18_05}

\end{document}